\newtheoremstyle{introthms}
{}{}{\itshape}{}{\bfseries }{}{ }
{\thmname{#1} \thmnumber{#2}. \thmnote{\bfseries{(#3)}}}
\theoremstyle{introthms}
\newtheorem{introthm}{Theorem}
\theoremstyle{plain}
\newtheorem{theorem}[equation]{Theorem}
\newtheorem{proposition}[equation]{Proposition}
\newtheorem{lemma}[equation]{Lemma}
\newtheorem{corollary}[equation]{Corollary}
\theoremstyle{definition}
\newtheorem{definition}[equation]{Definition}
\newtheorem{example}[equation]{Example}
\newtheorem{remark}[equation]{Remark}
\newtheorem{construction}[equation]{Construction}
\let\scr=\mathcal
\let\phi=\varphi
\let\into=\hookrightarrow
\let\onto=\twoheadrightarrow
\def\AA{\scr A}
\def\BB{\scr B}
\def\CC{\scr C}
\def\DD{\scr D}
\def\EE{\scr E}
\def\GG{\scr G}
\def\II{\scr I}
\def\JJ{\scr J}
\def\KK{\scr K}
\def\LL{\scr L}
\def\OO{\scr O}
\def\RR{\scr R}
\def\SS{\scr S}
\def\WW{\scr W}
\def\XX{\scr X}
\def\YY{\scr Y}
\def\ZZ{\scr Z}
\def\BBB{\widehat{\BB}}
\def\SSS{\widehat{\SS}}
\def\SSSS{\hathat{\SS}}
\def\BBBB{\hathat{\BB}}
\def\bU{\mathbf{U}}
\def\bV{\mathbf{V}}
\def\bW{\mathbf{W}}
\DeclareMathOperator{\Cat}{Cat}
\DeclareMathOperator{\ICat}{\mathsf{Cat}}
\newcommand{\CatS}{\Cat_{\infty}}
\newcommand{\CatSS}{\widehat{\Cat}_\infty}
\DeclareMathOperator{\IFilt}{\mathsf{Filt}}
\DeclareMathOperator{\IwFilt}{\mathsf{wFilt}}
\DeclareMathOperator{\Acc}{Acc}
\DeclareMathOperator{\Pos}{Pos}
\newcommand{\LPr}{\operatorname{Pr}^{\operatorname{L}}}
\newcommand{\RPr}{\operatorname{Pr}^{\operatorname{R}}}
\newcommand{\LPrS}{\LPr_\infty}
\newcommand{\RPrS}{\RPr_\infty}
\newcommand{\ILPr}{\mathsf{Pr}^{\operatorname{L}}}
\newcommand{\IRPr}{\mathsf{Pr}^{\operatorname{R}}}
\newcommand{\LTop}{\operatorname{Top}^{\operatorname{L}}}
\newcommand{\RTop}{\operatorname{Top}^{\operatorname{R}}}
\newcommand{\LTopEt}{\operatorname{Top}^{\operatorname{L, \acute{e}t}}}
\newcommand{\LTopS}{\LTop_\infty}
\newcommand{\RTopS}{\RTop_\infty}
\newcommand{\LTopEtS}{\LTopEt_\infty}
\newcommand{\ILTop}{\mathsf{Top}^{\operatorname{L}}}
\newcommand{\IRTop}{\mathsf{Top}^{\operatorname{R}}}
\newcommand{\ihom}{\underline{\operatorname{Hom}}}
\DeclareMathOperator{\Fun}{Fun}
\DeclareMathOperator{\PSh}{PSh}
\DeclareMathOperator{\Ind}{Ind}
\DeclareMathOperator{\Shv}{Sh}
\DeclareMathOperator{\IShv}{\mathsf{Sh}}
\DeclareMathOperator{\Loc}{Loc}
\DeclareMathOperator{\ILoc}{\mathsf{Loc}}
\DeclareMathOperator{\LConst}{LConst}
\DeclareMathOperator{\ILConst}{\mathsf{LConst}}
\DeclareMathOperator{\Ret}{Ret}
\DeclareMathOperator{\IRet}{\mathsf{Ret}}
\DeclareMathOperator{\Sub}{Sub}
\DeclareMathOperator{\IPt}{\mathsf{Pt}}
\DeclareMathOperator{\Grpd}{Grpd}
\DeclareMathOperator{\Mod}{Mod}
\DeclareMathOperator{\IMod}{\mathsf{Mod}}
\DeclareMathOperator{\Alg}{Alg}
\DeclareMathOperator{\IAlg}{\mathsf{Alg}}
\DeclareMathOperator{\ICAlg}{\mathsf{CAlg}}
\DeclareMathOperator{\CAlg}{CAlg}
\DeclareMathOperator{\CMon}{CMon}
\DeclareMathOperator{\ICMon}{\mathsf{CMon}}
\DeclareMathOperator{\Op}{Op}
\DeclareMathOperator{\IOp}{\mathsf{Op}}
\DeclareMathOperator{\POp}{POp}
\DeclareMathOperator{\IPerf}{\mathsf{Perf}}
\DeclareMathOperator{\Contr}{\mathsf{Contr}}
\DeclareMathOperator{\Flat}{Flat}
\newcommand{\iFun}[1][\BB]{\underline{\mathsf{Fun}}_{#1}}
\DeclareMathOperator{\ISub}{\mathsf{Sub}}
\DeclareMathOperator{\IPos}{\mathsf{Pos}}
\DeclareMathOperator{\Disc}{Disc}
\newcommand{\ILLoc}{\ILoc^{\operatorname{L}}}
\newcommand{\LLoc}{\operatorname{Loc}^{\operatorname{L}}}
\DeclareMathOperator{\ICov}{\mathsf{Cov}}
\newcommand{\IFun}[1][\BB]{\underline{\mathsf{Fun}}_{#1}}
\newcommand{\IPSh}[1][\BB]{\underline{\mathsf{PSh}}_{#1}}
\newcommand{\IInd}[1][\BB]{\underline{\mathsf{Ind}}_{#1}}
\newcommand{\ISml}[1][\BB]{\underline{\mathsf{Small}}_{#1}}
\newcommand{\IFlat}[1][\BB]{\underline{\mathsf{Flat}}_{#1}}
\newcommand{\Simp}[1]{#1_{\Delta}}
\newcommand{\mSimp}[1]{#1_{\Delta}^+}
\newcommand{\SGrpd}{\mathsf{Seg}^\simeq}
\newcommand{\SGrpdEff}{\SGrpd_{\operatorname{eff}}}
\newcommand{\Free}[2][\BB]{\Univ[#1]\left[#2\right]}
\newcommand{\Comma}[3]{{#1}\downarrow_{#2}{#3}}
\DeclareMathOperator{\Cart}{Cart}
\DeclareMathOperator{\ICart}{\mathsf{Cart}}
\DeclareMathOperator{\ICocart}{\mathsf{Cocart}}
\DeclareMathOperator{\St}{St}
\DeclareMathOperator{\RFib}{RFib}
\DeclareMathOperator{\LFib}{LFib}
\DeclareMathOperator{\ILFib}{\mathsf{LFib}}
\DeclareMathOperator{\IRFib}{\mathsf{RFib}}
\DeclareMathOperator{\ITw}{\mathsf{Tw}}
\DeclareMathOperator{\Set}{Set}
\DeclareMathOperator{\Sp}{Sp}
\DeclareMathOperator{\ISp}{\mathsf{Sp}}
\DeclareMathOperator{\Fin}{Fin}
\DeclareMathOperator{\IFin}{\mathsf{Fin}}
\DeclareMathOperator{\Assoc}{Assoc}
\NewDocumentCommand{\Univ}{o}{%
	\IfNoValueTF{#1}{%
		\I{\Omega}%
	}{%
		\I{\Omega}_{#1}%
	}%
}
\DeclareMathOperator{\const}{const}
\DeclareMathOperator{\diag}{diag}
\DeclareMathOperator{\pr}{pr}
\DeclareMathOperator{\id}{id}
\DeclareMathOperator{\ev}{ev}
\DeclareMathOperator{\res}{res}
\DeclareMathOperator{\lift}{lift}
\newcommand{\bil}{{\operatorname{bil}}}
\newcommand{\cart}{{\operatorname{cart}}}
\newcommand{\acc}{{\operatorname{acc}}}
\newcommand{\fin}{{\operatorname{fin}}}
\newcommand{\LAdj}{{\operatorname{LAdj}}}
\newcommand{\RAdj}{{\operatorname{RAdj}}}
\newcommand{\disc}{{\operatorname{disc}}}
\newcommand{\Zar}{{\operatorname{Zar}}}
\newcommand{\ex}{{\operatorname{ex}}}
\newcommand{\lin}{{\operatorname{lin}}}
\newcommand{\dual}{{\operatorname{dual}}}
\newcommand{\reg}{{\operatorname{reg}}}
\newcommand{\alg}{{\operatorname{alg}}}
\newcommand{\geom}{{\operatorname{geom}}}
\newcommand{\lex}{{\operatorname{lex}}}
\newcommand{\cocart}{{\operatorname{cocart}}}
\newcommand{\cc}{{\operatorname{cc}}}
\newcommand{\gen}{{\operatorname{gen}}}
\newcommand{\gp}{{\operatorname{gpd}}}
\newcommand{\op}{{\operatorname{op}}}
\newcommand{\sh}{{\operatorname{sh}}}
\newcommand{\lax}{{\operatorname{lax}}}
\newcommand{\compact}{{\operatorname{cpt}}}
\renewcommand{\c}{{\operatorname{cont}}}
\newcommand{\Lad}{{\operatorname{L}}}
\newcommand{\Rad}{{\operatorname{R}}}
\newcommand{\cocont}[1]{#1\operatorname{-cc}}
\newcommand{\cont}[1]{#1\operatorname{-cont}}
\newcommand{\mult}[1]{#1\operatorname{-mult}}
\newcommand{\core}{\simeq}
\newcommand{\cpt}[1]{#1\operatorname{-cpt}}
\let\lim=\relax
\DeclareMathOperator*{\lim}{lim}
\DeclareMathOperator*{\colim}{colim}
\DeclareMathOperator{\Map}{map}
\newcommand{\map}[1]{\Map_{#1}}
\newcommand{\Hom}{\underline{\operatorname{Hom}}}
\newcommand{\Eq}{\mathrm{eq}}
\newcommand{\ord}[1]{\langle#1\rangle}
\newcommand{\Over}[2]{#1_{\hspace{-1pt}/#2}}
\newcommand{\Under}[2]{#1_{\hspace{-1pt}#2/}}
\newcommand{\sslash}{\mathbin{/\mkern-6mu/}}
\newcommand{\I}[1]{\mathsf{#1}}
\g@addto@macro\bfseries{\boldmath}
\newcommand{\hathatInternal}[2]{%
	\begingroup%
	\let\macc@kerna\z@%
	\let\macc@kernb\z@%
	\let\macc@nucleus\@empty%
	\widehat{\raisebox{#2}{\vphantom{\ensuremath{#1}}}\smash{\widehat{#1}}}%
	\endgroup%
}
\newcommand{\hathat}[1]{\mathchoice
	{\hathatInternal{#1}{.2ex}}
	{\hathatInternal{#1}{.2ex}}
	{\hathatInternal{#1}{-1.5pt}}
	{\hathatInternal{#1}{1pt}}
}
\numberwithin{equation}{subsection}
\numberwithin{section}{chapter}
\title{Presentability and topoi in internal higher category theory}
\author{Louis Martini}
\address{Alfred Getz' vei 1, 7034 Trondheim, Norway}
\email{louis.o.martini@ntnu.no}
\author{Sebastian Wolf}
\address{Universitätsstraße 31, 93053 Regensburg, Germany}
\email{sebastian1.wolf@mathematik.uni-regensburg.de}
\date{\today}
\begin{document}
\frontmatter
\begin{abstract}
	The goal of this article is to develop the theory of presentable categories and topoi internal to an arbitrary $\infty$-topos $\BB$.
	Our main results are internal analogues of Lurie's and Lurie-Simpson's characterisations of presentable $\infty$-categories and $\infty$-topoi.
	In the process, we introduce a theory of internal filteredness and accessible internal categories and establish a number of structural results about presentable $\BB$-categories such as adjoint functor theorems and the existence of an internal analogue of the Lurie tensor product.
	We also compare these internal notions with external variants.
	We show that $\BB$-modules embed fully faithfully into presentable $\BB$-categories and prove that there is an equivalence between topoi internal to $\BB$ and $\infty$-topoi over $\BB$.
	We also include a number of applications of our results, such as a general version of Diaconescu's theorem for $\infty$-topoi and a characterisation of locally contractible geometric morphisms in terms of smoothness.
\end{abstract}
\maketitle

\setcounter{tocdepth}{1}
\tableofcontents


\chapter*{Introduction}
	\subsection*{Motivation and Main results}
	Internal higher category theory is an approach to study sheaves of $\infty$-categories on an $\infty$-topos $\BB$ by regarding them as internal categories in $\BB$, i.e.\ as simplicial objects in $\BB$ that satisfy the Segal conditions and univalence. 
	Such structures, which we simply refer to as \emph{$\BB$-categories}, come up in many areas of algebra and geometry in which the objects of interest carry additional data that can be captured through a parametrisation by an $\infty$-topos.
	For instance, this idea has already found application in equivariant homotopy theory.
	Indeed, in their work on \emph{parametrized homotopy theory and higher category theory}, Barwick, Dotto, Glasman, Nardin and Shah study categories that are paramerized over presheaves on the orbit category of a finite group $ G $, see for instance \cite{barwick2016parametrized}, \cite{shah2016}, \cite{zbMATH07706485}, \cite{shah2022parametrized}.
	Today, the methods introuced there are widely used and have become standard tools in equivariant homotopy theory.
	A different kind of example arises from topology.
	Recall that any continuous map of topological spaces $X\to Y$ gives rise to a geometric morphism $\Shv(X) \to \Shv(Y)$ of $\infty$-topoi.
	From this one can obtain a $\Shv(Y)$-category $\IShv(X)$ whose underlying $\infty$-category recovers $\Shv(X)$ but carries additional information and captures topological properties of the map, see e.g. \cite{MW23}.
	
	The main reason why systematically studying sheaves of $ \infty $-categories through this lens is useful is that this often allows to hide the complexity of the topos $ \BB $.
	Indeed, using this perspective, many definitions and statements that are complicated to formulate or prove for sheaves of $ \infty $-categories become familiar statements from higher categories, just interpreted internally to $ \BB $.
	We have already made use of this point of view in our previous papers on the subject ~\cite{MYoneda, MWColimits,MCocartesian} and established an abundance of basic techniques for working with $\BB$-categories, such as the theory of adjunctions, Kan extensions, and (co)limits.
	In this paper, we continue the story by developing the theory of presentable $ \infty $-categories and $ \infty $-topoi internal to an $ \infty $-topos $ \BB $.
	
	With the rise of higher categories, particularly through Luries foundational work \cite{htt}, the notion of presentable $\infty$-categories has gained a central role in the theory of higher categories.
	This is due to the many favourable properties of presentable $\infty$-categories, such as the presence of adjoint functor theorems and the existence of a well-behaved and explicit tensor product.
	Furthermore, almost all cocomplete $ \infty $-categories that arise in practice are in fact presentable, which allows for wide applications of these general results.
	Therefore, it seems desirable to develop an analogue of this notion in the setting of $\BB$-categories, which is one of the main goals of this paper.
	
	In our previous paper \cite{MWColimits}, we developed a formalism that allows one to freely add colimits indexed by a fixed class of diagram shapes to an arbitrary $\BB$-category $\I{C}$.
	We apply this machinery to develop the notion of \emph{accessible} $\BB$-categories. 
	These are precisely the $\BB$-categories that are obtained by freely adding \emph{$\I{U}$-filtered} colimits to a $\BB$-category, where $\I{U}$ is what we call a \emph{sound doctrine}, a concept that is to be thought of as an internal analogue of regular cardinals.
	Combining accessibility with the condition of being cocomplete then yields the notion of presentability.
	Our first main theorem is the following characterization of presentable $ \BB $-categories:
	
	\begin{introthm}
		\label{thm:A}
		For a large $\BB$-category $\I{D}$, the following are equivalent:
		\begin{enumerate}
			\item $\I{D}$ is presentable;
			\item there is a $\BB$-category $\I{C}$ and a sound doctrine $\I{U}$ such that $\I{D}$ is a $\I{U}$-accessible Bousfield localisation of $\IPSh(\I{C})$;
			\item $\I{D}$ is accessible and cocomplete;
			\item there is a sound doctrine $\I{U}$ such that $\I{D}$ is $\I{U}$-accessible and $\I{D}^{\cpt{\I{U}}}$ is $\op(\I{U})$-cocomplete;
			\item there is a doctrine $\I{U}$ and a small $\op(\I{U})$-cocomplete $\BB$-category $\I{C}$ such that $\I{D}\simeq\IShv_{\Univ}^{\I{U}}(\I{C})$ (see Definition~\ref{def:Usheaves}).
			\item The following two conditions are satisfied:
				\begin{enumerate}
					\item the associated sheaf $\I{D}\colon\BB^\op\to\CatSS$ takes values in the $\infty$-category $\LPrS$ of presentable $\infty$-categories and colimit-preserving functors;
					\item for every map $s\colon B\to A$ in $\BB$ the associated transition functor $s^\ast\colon \I{D}(A)\to\I{D}(B)$ admits a left adjoint $s_!$, and for every pullback square in $\BB$ the induced commutative square in $\LPrS$ is left adjointable.
				\end{enumerate}
		\end{enumerate}
	\end{introthm}
	
	Note that the equivalence of conditions (1)-(5) is parallel to the characterization of presentable $\infty$-categories due to Lurie and Simpson~\cite[Theorem~5.5.1.1]{htt}.
	Condition (6), on the other hand, gives a very explicit and simple way of checking whether a $ \BB $-category is presentable in practice.
	Having such a plethora of equivalent characterisations of the notion of presentability at hand, it is straightforward to prove the expected theorems that revolve around these objects: among other things, we prove adjoint functor theorems for presentable $\BB$-categories, and we discuss how one can construct (internal) limits and colimits of such $\BB$-categories.
	Furthermore, we construct the \emph{tensor product} of presentable $\BB$-categories and we show:
	
	\begin{introthm}
        \label{thm:B}
		There is a strong monoidal and fully faithful functor $\Mod_{\BB}(\LPrS)\into \LPr(\BB)$, where $\BB$ is viewed as a presentably symmetric monoidal $\infty$-category via its cartesian monoidal structure and where $\LPr(\BB)$ is the $\infty$-category of presentable $\BB$-categories and cocontinuous functors.
		Furthermore this functor is an equivalence whenever $ \BB $ is generated under colimits by $ (-1) $-truncated objects.
	\end{introthm}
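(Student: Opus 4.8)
The plan is to build the functor using the universal property of the $\infty$-category of modules, which makes the strong monoidal structure automatic, and then to deduce full faithfulness by computing the right adjoint.

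\emph{Construction.} Let $p\colon\BB\to\SS$ be the essentially unique geometric morphism. Pullback of presentable internal categories along geometric morphisms --- part of the functoriality of $\LPr(-)$ that underlies the construction of the tensor product (compare also \cite{Martini2021a}) --- supplies a strong monoidal, colimit-preserving functor $p^\ast\colon\LPrS\simeq\LPr(\SS)\to\LPr(\BB)$ with right adjoint $p_\ast=\Gamma$ the global sections functor. The monoidal unit $\Univ=\IPSh_\Univ(\ast)$ of $\LPr(\BB)$ has underlying $\infty$-category $\Gamma(\Univ)\simeq\BB$, and --- equipping $\BB$ with its cartesian symmetric monoidal structure --- the identity of $\BB$ is adjoint under $p^\ast\dashv p_\ast$ to a morphism $\alpha\colon p^\ast(\BB)\to\Univ$ in $\CAlg(\LPr(\BB))$ (over $A\in\BB$ it is the functor $\Over{\BB}{A}\otimes_\SS\BB\to\Over{\BB}{A}$ classified by $(Z,Y)\mapsto Z\times(Y\times A)$). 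By the universal property of $\Mod_\BB(\LPrS)$ --- strong monoidal colimit-preserving functors $\Mod_\BB(\LPrS)\to\mathcal W$ over $\LPrS$ correspond to morphisms $\phi(\BB)\to\mathbf 1_{\mathcal W}$ in $\CAlg(\mathcal W)$, $\phi$ being the structure map --- the datum $\alpha$ produces a strong monoidal, colimit-preserving functor
\[
	G\colon\Mod_\BB(\LPrS)\longrightarrow\LPr(\BB),\qquad G(\CC)\simeq\Univ\otimes_{p^\ast(\BB)}p^\ast(\CC),
\]
with $G(\BB)\simeq\Univ$. A transitivity-of-base-change computation identifies the associated sheaf: $G(\CC)(A)\simeq\Over{\BB}{A}\otimes_\BB\CC$ for all $A\in\BB$, and in particular $G(\CC)(1)\simeq\CC$. (Alternatively one may take this last formula as the definition and verify, via Theorem~\ref{thm:A}, that it lands in $\LPr(\BB)$: descent comes from universality of colimits in $\BB$, and the left adjoints $s_!$ together with the Beck--Chevalley condition come from the étale geometric morphisms $\Over{\BB}{B}\to\Over{\BB}{A}$ and base change in $\BB$.)

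\emph{Full faithfulness.} Since $G$ is the composite of $p^\ast$ with extension of scalars along $\alpha$, its right adjoint $G^R$ is the composite of restriction of scalars along $\alpha$ with $p_\ast$; unwinding, $G^R$ sends a presentable $\BB$-category $\I{D}$ to its underlying $\infty$-category $\Gamma(\I{D})=\I{D}(1)$ with its canonical tensoring over $\BB$ (namely $B\otimes d=\pi_{B!}\,\pi_B^\ast(d)$ for $\pi_B\colon B\to1$, using the left adjoints furnished by Theorem~\ref{thm:A}). In other words $G^R=\ev_1$. Combining this with the computation of the associated sheaf gives $\ev_1\circ G(\CC)\simeq G(\CC)(1)\simeq\CC$, compatibly with the $\BB$-tensorings, so that $\ev_1\circ G\simeq\id$. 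Therefore the unit $\id\to G^R\circ G$ of the adjunction $G\dashv\ev_1$ is an equivalence, which is exactly the assertion that $G$ is fully faithful.

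The main obstacle is the coherence bookkeeping around the two symmetric monoidal structures and the adjunction: that $p^\ast\colon\LPr(\SS)\to\LPr(\BB)$ is strong monoidal with right adjoint the global sections functor, the base-change identification $G(\CC)(A)\simeq\Over{\BB}{A}\otimes_\BB\CC$, and --- most delicately --- that $G^R$ agrees, as a functor valued in $\Mod_\BB(\LPrS)$, with $\ev_1$ (so that $\BB$-module structures and not merely underlying $\infty$-categories are matched) and that the induced equivalence $\ev_1\circ G\simeq\id$ is genuinely the unit of $G\dashv\ev_1$. Granting these, both the strong monoidal statement and full faithfulness are formal consequences of the universal property of $\Mod_\BB(\LPrS)$ and the adjunction $p^\ast\dashv p_\ast$.
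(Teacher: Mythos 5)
Your strategy—construct the left adjoint directly via the universal property of $\Mod_{\BB}(\LPrS)$, so that strong monoidality comes for free, and then check full faithfulness by computing sections—is a viable alternative to the paper's route, but as written it outsources exactly the statements that carry the weight. The paper first proves that $\Gamma\colon\LPr(\BB)\to\LPrS$ preserves colimits and admits the left adjoint $-\otimes\Univ$ (a genuine computation with $\kappa$-compactly generated presentables), deduces from \cite[Corollary~4.7.3.16]{Lurie2017} that $\Gamma^{\lin}$ has a fully faithful left adjoint, identifies that left adjoint with $-\otimes_{\BB}\Univ$ by bar resolutions, and only then upgrades it to a strong monoidal functor by an oplax argument reduced to free modules and presheaf categories. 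Your appeal to ``functoriality of $\LPr(-)$'' for the facts that $p^\ast=-\otimes\Univ$ is strong monoidal, colimit-preserving and left adjoint to $\Gamma$ therefore assumes a substantial part of what is being proved in this framework; neither fact is off the shelf here. Likewise the formula $G(\CC)(A)\simeq\Over{\BB}{A}\otimes_{\BB}\CC$ is not a formal ``transitivity of base change'': evaluation at $A$ is only \emph{lax} monoidal on $(\LPr(\BB),\otimes)$ (e.g.\ $(\Univ\otimes\Univ)(1)\simeq\BB$ while $\Gamma(\Univ)\otimes\Gamma(\Univ)\simeq\BB\otimes\BB$), so you must separately prove that colimits in $\LPr(\BB)$ are computed section-wise (via $\LPr(\BB)\simeq\RPr(\BB)^{\op}$) and establish a projection formula $(\I{X}\otimes(\DD\otimes\Univ))(A)\simeq\I{X}(A)\otimes\DD$ before the bar construction can be evaluated at $A$.

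The more serious gap is the full-faithfulness step. From an abstract natural equivalence $\ev_1\circ G\simeq\id$ one cannot conclude that the \emph{unit} of $G\dashv G^R$ is an equivalence: full faithfulness of a left adjoint is the invertibility of the unit itself, and identifying $G^RG$ with the identity in some way does not identify the unit with that equivalence. You flag this point but then simply grant it. To close it one has to pin down the unit, for instance: use that the forgetful functor $U\colon\Mod_{\BB}(\LPrS)\to\LPrS$ is conservative and colimit-preserving and that $\Gamma$ (hence $G^R$) preserves colimits, write an arbitrary $\BB$-module as a geometric realisation of free modules as in \cite[Remark~4.7.3.15]{Lurie2017}, and verify the unit on free modules $\BB\otimes\DD$ using the universal property of the free module and the compatibility of $G\dashv G^R$ with $(-\otimes\Univ)\dashv\Gamma$. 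That reduction is in substance the content of \cite[Corollary~4.7.3.16]{Lurie2017}, which the paper invokes directly; alternatively you could keep the paper's order—full faithfulness first, via $4.7.3.16$ applied to $\Gamma$ and $-\otimes\Univ$—and let your universal-property construction do what it is genuinely good at, namely streamlining the monoidality argument.
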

	
	Many of the explicit examples of presentable $\BB$-categories that we have in mind arise in the following way:
	To any geometric morphism $ f_\ast \colon \mathcal{X} \to \BB$ of $\infty$-topoi with left adjoint $f^\ast$, one can associate a presentable $\BB$-category $\I{X}$ given by the sheaf of $\infty$-categories $\Over{\XX}{f^\ast(-)}$ on $\BB$ (see \cite[Example 5.3.10]{MWColimits}). In fact, $\BB$-categories that arise via the above construction have even more favourable properties than just presentability. They give examples of $\BB$-topoi, the second theme of this paper.
	
	The message that we wish to convey is that studying relative $\infty$-topoi over an arbitrary base $\BB$ in this way is useful because it is in many ways not more complicated than the study of ordinary $\infty$-topoi.
	Indeed, using the machinery that we already developed, one can interpret the notion of an $\infty$-topos internally in $\BB$, leading to the theory of \emph{$\BB$-topoi}, which can be worked with in essentially the same way as with usual $\infty$-topoi.  However, when adopting an external point of view, the theory of $\BB$-topoi turns out to be equivalent to the theory of relative $\infty$-topoi over $\BB$ (see Theorem~\ref{thm:D}).
	
	More precisely, we use the straightening equivalence for (co)cartesian fibrations between $\BB$-categories in~\cite{MCocartesian} to define the notion of \emph{descent} for $ \BB $-categories by requiring that the unstraightening of the codomain fibration straightens to a limit preserving functor.
	A $ \BB $\emph{-topos} is then simply defined to be a presentable $ \BB $-category satisfying descent.
	Our first main result on $\BB$-topoi is the following characterisation:
	\begin{introthm}
		\label{thm:C}
		For a large $\BB$-category $\I{X}$, the following are equivalent:
		\begin{enumerate}
			\item $\I{X}$ is a $\BB$-topos;
			\item $\I{X}$ satisfies the \emph{internal Giraud axioms}:
			\begin{enumerate}
				\item $\I{X}$ is presentable;
				\item $\I{X}$ has universal colimits;
				\item groupoid objects in $\I{X}$ are effective;
				\item $\Univ$-colimits in $\I{X}$ are disjoint.
			\end{enumerate}
			\item there is a $\BB$-category $\I{C}$ such that $\I{X}$ arises as a left exact and accessible Bousfield localisation of $\IPSh(\I{C})$;
			\item $\I{X}$ is $\Univ$-cocomplete and takes values in the $\infty$-category $\LTopEtS$ of $\infty$-topoi and \'etale algebraic morphisms;
			\item $\I{X}$ is an $\LTopEtS$-valued sheaf that preserves pushouts.
		\end{enumerate}
	\end{introthm}
	The first three items in Theorem~\ref{thm:C} can be understood as the $\BB$-categorical analogue of Lurie's characterisation of $\infty$-topoi in~\cite[Theorem~6.1.0.1]{htt}. By contrast, the last two items of the theorem provide an \emph{external} characterisation of $\BB$-topoi, i.e.\ one in terms of the underlying sheaves of $\infty$-categories.
	
	Theorem~\ref{thm:C} is the main ingredient that goes into proving our comparison of $\BB$-topoi with $\infty$-topoi over $\BB$, which comprises our second main result on $\BB$-topoi:
	\begin{introthm}
		\label{thm:D}
		The global sections functor induces an equivalence $\Gamma\colon \RTop(\BB)\simeq\Over{(\RTopS)}{\BB}$ between the $\infty$-category of $\BB$-topoi and the $\infty$-category of $\infty$-topoi over $\BB$.
	\end{introthm}
	The inverse to the equivalence in Theorem~\ref{thm:D} can also be described very explicitly: 
	It is precisely the construction already described above that sends a geometric morphism $f_\ast \colon \XX \to \BB$ to the $ \BB $-category given by the sheaf of $\infty$-categories $f_\ast\Univ[\XX]=\Over{\XX}{f^\ast(-)}$ on $\BB$.
	
	The principle that relative $\infty$-topoi and internal topoi are to be thought of as essentially the same has been well-known for $1$-topoi for quite some time, thanks to the work of Moens~\cite{Moens1982}. In fact, large parts of Johnstone's \emph{Sketches of an Elephant}~\cite{johnstone2002} are precisely about passing back and forth between internal and relative topos theory. In this paper, we begin an analogous, albeit much more modest journey in the $\infty$-categorical context. Our long-standing goal is to study relative $\infty$-topoi that arise in geometric situations through the lens of internal higher topos theory.
	Here, we lay the groundwork for this and give a small insight into how the internal machinery can be used to answer questions in (relative) higher topos theory.
	
	\subsection*{Applications}
	As one application of our general theory on presentability, we study compactly generated $ \BB $-categories.
	We prove that for any $ \mathbb{E}_\infty $-ring object $ R $ in $ \BB $, the $ \BB $-category of $R$-modules $\IMod_{R}^\BB $ is compactly generated as a $ \BB $-category and we identify the compact generators with the subcategory of $ \IMod_{R}^\BB $ spanned by the dualisable objects, see Theorem~\ref{thm:Dualizable=Compact=Perfect}.
	If $ R $ is just an $ \mathbb{E}_\infty $-ring, this can be used to give a quick proof that the dualisable objects in the category of sheaves of $ R $-modules on $ \BB $ are precisely locally constant sheaves with perfect values, see Corollary~\ref{cor:Dualizable=LocallyPerfect}.
	
	Furthermore, we use the theory of filtered $ \BB $-categories, that we develope for the study of accessible $ \BB $-categories, to define the notion of a \emph{flat functor} from a small cateory $ \CC $ to an $ \infty $-topos $ \BB $.
	This allows to prove a general version of Diaconescu's theorem, see Theorem~\ref{cor:Diaconescu} which shows that flat functors are exactly those such that the Yoneda-extension of $ \PSh(\CC) \to \BB $ is left exact.
	In fact, we prove a more general relative version of Diaconsecu's Theorem over any $ \infty $-topos $ \BB $, and the above result is the special case $ \BB = \mathcal{S}$, see Theorem~\ref{thm:internalDiaconescu}.
	We also show that if $ \BB $ is hypercomplete, our notion of flatness agrees with the explicit condition given in \cite{Raptis2022}.
	
	Moreover, we apply the correspondence between $\BB$-topoi and $\infty$-topoi over $\BB$ to derive a formula for the pullback of $\infty$-topoi.
	In fact, if $\XX\to \BB$ and $\YY\to\BB$ are geometric morphisms, their pullback is given by the \emph{product} of the associated $\BB$-topoi.
	But in perfect analogy to the situation for $\infty$-topoi, the product of $\BB$-topoi can be computed by using the \emph{tensor product} of the underlying presentable $\BB$-categories, Proposition~\ref{prop:coproductsBTopoi}.
	Combined with Theorem~\ref{thm:B}, this implies that whenever $ \BB $ is generated under colimits by $ (-1) $-truncated objects, the fiber product of topoi $ \XX \times_\BB \YY $ agrees with the relative tensor product $ \XX \otimes_\BB \YY $ in $ \LPr $, see Corollary~\ref{cor:O-trunc_Tensor=Product}.
	
	We also apply the framework developed in this paper to derive an internal characterisation of \emph{smooth} geometric morphisms of $\infty$-topoi.
	A map $f_\ast\colon\XX\to\BB$ is said to be smooth if for every rectangle
	\begin{equation*}
		\begin{tikzcd}
			\WW^\prime\arrow[d]\arrow[r]& \ZZ^\prime\arrow[d]\arrow[r] & \XX\arrow[d]\\
			\WW\arrow[r] & \ZZ\arrow[r] & \BB
		\end{tikzcd}
	\end{equation*}
	of $\infty$-topoi in which both squares are pullbacks, the left square is vertically left adjointable. In other words, $f_\ast$ is smooth if it stably satisfies smooth base change. 
	In Theorem~\ref{thm:locallyContractibleSmoothBaseChange} we characterise the class of smooth geometric morphisms as precisely those whose associated $\BB$-topoi are \emph{locally contractible}, i.e.\ for which the left adjoint of the (internal) global sections functor (i.e.\ the unique geometric morphism to the final $\BB$-topos) admits a further left adjoint.
	There is a dual notion to smoothness, that of a geometric morphism being \emph{proper}~\cite[\S~7.3]{htt}.
	In a companion paper~\cite{MW23}, we will characterise this class of maps as the internally \emph{compact} $\BB$-topoi.

	Finally, we introduce the notion of a localic $\BB$-topos, the main result being that if $ \BB $ is itself a localic $ \infty $-topos, then there is an equivalence between localic $ \BB $-topoi and locales over the locale of subobjects of the terminal object $ \Sub_{\BB} $.
	We also study a number of compactness conditions for $ \BB $-locales, which we will be useful to apply the aforementioned characterization of proper geometric morphisms in topology.
	
	\subsection*{Other related work}
	In parametrised higher category theory, Jay Shah developed a notion of parametrised filteredness, which he used to construct free cocompletions under filtered colimits in the parametrised setting~\cite{zbMATH07706485}. Within the same framework, Kaif Hilman introduced and studied a notion of presentability~\cite{hilman2022}. 
	In particular, Hilman discussed parametrised accessibility, the adjoint functor theorem and tensor products of presentable parametrised $\infty$-categories.
	We would like to mention the work of Ad\'amek-Borceux-Lack-Rosick\'y~\cite{Adamek2002} and Charles Rezk~\cite{rezk2021} on the concept of accessibility relative to an arbitrary class of $1$-categories and $\infty$-categories, respectively. Their ideas substantially influenced our approach to internal accessibility and presentability.
	Moen's theorem, that can be thought of as a predecessor of Theorem~\ref{thm:D}, has been proven in the context of synthetic $ (\infty,1)$-category theory by Jonathan Weinberger in \cite{Weinberger2022}.
	Locally contractible geometric morphisms of $ \infty $-topoi were also introduced and studied, by Avraham Aizenbud and Shachar Carmeli in \cite{aizenbud2021relative} and by Marco Volpe in \cite{volpe2023operations}.

 \subsection*{Acknowledgements}
We would like to thank our respective advisors Rune Haugseng and Denis-Charles Cisinksi for their advice and support while writing this article.
We are also grateful to Geoerge Raptis and Daniel Schäppi for interesting discussions about Diaconescu's theorem.
Furthermore we would like to thank Mathieu Anel for interesting discussions about the content of this paper.
Moreover, we are grateful to Bastiaan Cnossen for many helpful suggestions.
The first-named author was partially supported by the project Pure Mathematics in Norway, funded by Trond Mohn Foundation and Tromsø Research Foundation.
The second-named author gratefully acknowledges support from the SFB 1085 Higher Invariants in Regensburg, funded by the DFG.

\mainmatter
\chapter{Background on $\BB$-category theory}
\label{chapter:Background}

\section{Conventions and notation}
In this section we recall the basic framework of higher categories internal to an $\infty$-topos from~\cite{MYoneda}. We refer the reader to~\cite{MYoneda} for proofs and a more detailed discussion.

\subsection{Initial remarks}
\label{sec:conventions}
Throughout this paper we freely make use of the language of higher category theory. We will generally follow a model-independent approach to higher categories. This means that as a general rule, all statements and constructions that are considered herein will be invariant under equivalences in the ambient $\infty$-category, and we will always be working within such an ambient $\infty$-category.

We denote by $\Delta$ the simplex category, i.e.\ the category of non-empty totally ordered finite sets with order-preserving maps. Every natural number $n\in\mathbb N$ can be considered as an object in $\Delta$ by identifying $n$ with the totally ordered set $\ord{n}=\{0,\dots n\}$. For $i=0,\dots,n$ we denote by $\delta^i\colon \ord{n-1}\to \ord{n}$ the unique injective map in $\Delta$ whose image does not contain $i$. Dually, for $i=0,\dots n$ we denote by $\sigma^i\colon \ord{n+1}\to \ord{n}$ the unique surjective map in $\Delta$ such that the preimage of $i$ contains two elements. Furthermore, if $S\subset n$ is an arbitrary subset of $k$ elements, we denote by $\delta^S\colon \ord{k}\to \ord{n}$ the unique injective map in $\Delta$ whose image is precisely $S$. In the case that $S$ is an interval, we will denote by $\sigma^S\colon \ord{n}\to \ord{n-k}$ the unique surjective map that sends $S$ to a single object. If $\CC$ is an $\infty$-category, we refer to a functor $C\colon\Delta^{\op}\to\CC$ as a simplicial object in $\CC$. We write $C_n$ for the image of $n\in\Delta$ under this functor, and we write $d_i$, $s_i$, $d_S$ and $s_S$ for the image of the maps $\delta^i$, $\sigma^i$, $\delta^S$ and $\sigma^S$ under this functor. Dually, a functor $C^{\bullet}\colon \Delta\to\CC$ is referred to as a cosimplicial object in $\CC$. In this case we denote the image of $\delta^i$, $\sigma^i$, $\delta^S$ and $\sigma^S$ by $d^i$, $s^i$, $d^S$ and $\sigma^S$.

The $1$-category $\Delta$ embeds fully faithfully into the $\infty$-category of $\infty$-categories by means of identifying posets with $0$-categories and order-preserving maps between posets with functors between such $0$-categories. We denote by $\Delta^n$ the image of $\ord{n} \in\Delta$ under this embedding.

\subsection{Set-theoretical foundations}
\label{sec:setTheory}
Once and for all we will fix three Grothendieck universes $\bU\in\bV\in\bW$ that contain the first infinite ordinal $\omega$. A set is \emph{small} if it is contained in $\bU$, \emph{large} if it is contained in $\bV$ and \emph{very large} if it is contained in $\bW$. An analogous naming convention will be adopted for $\infty$-categories and $\infty$-groupoids. The large $\infty$-category of small $\infty$-groupoids is denoted by $\SS$, and the very large $\infty$-category of large $\infty$-groupoids by $\SSS$. The (even larger) $\infty$-category of very large $\infty$-groupoids will be denoted by $\SSSS$. Similarly, we denote the large $\infty$-category of small $\infty$-categories by $\CatS$ and the very large $\infty$-category of large $\infty$-categories by $\CatSS$. We shall not need the $\infty$-category of very large $\infty$-categories in this article.

\subsection{$\infty$-topoi}
\label{sec:inftyTopoi}
For $\infty$-topoi $\AA$ and $\BB$, a \emph{geometric morphism} is a functor $f_\ast\colon \BB\to \AA$ that admits a left exact left adjoint, and an \emph{algebraic morphism} is a left exact functor $f^\ast\colon \AA\to \BB$ that admits a right adjoint. The \emph{global sections} functor is the unique geometric morphism $\Gamma_{\BB}\colon \BB\to \SS$ into the $\infty$-topos of $\infty$-groupoids $\SS$. Dually, the unique algebraic morphism originating from $\SS$ is denoted by $\const_{\BB}\colon \SS\to \BB$ and referred to as the \emph{constant sheaf} functor. We will often omit the subscripts if they can be inferred from the context.
For an object $A \in \BB$, we denote the induced étale geometric morphism by $(\pi_A)_\ast \colon \BB_{/A} \rightarrow \BB$.

\subsection{Universe enlargement}
\label{sec:universeEnlargement}
If $\BB$ is an $\infty$-topos, we define its \emph{universe enlargement} $\BBB=\Shv_{\SSS}(\BB)$, where the right-hand side denotes the $\infty$-category of presheaves $\BB^{\op}\to\SSS$ which preserve small limits; this is an $\infty$-topos relative to the larger universe $\bV$~\cite[Remark~6.3.5.17]{htt}. Moreover, the Yoneda embedding gives rise to an inclusion $\BB\into\BBB$ that commutes with small limits and colimits and with the internal hom~\cite[Proposition~2.4.4]{MYoneda}. The operation of enlarging universes is transitive: when defining the $\infty$-topos $\BBBB$ relative to $\bW$ as the universe enlargement of $\BBB$ with respect to the inclusion $\bV\in\bW$, the $\infty$-category $\BBBB$ is equivalent to the universe enlargement of $\BB$ with respect to $\bU\in\bW$~\cite[Remark~2.4.1]{MYoneda}.

\subsection{Factorisation systems} 
\label{sec:factorisationSystems}
If $\CC$ is a presentable $\infty$-category and if $S$ is a small set of maps in $\CC$, there is a unique factorisation system $(\LL,\RR)$ in which a map is contained in $\RR$ if and only if it is \emph{right orthogonal} to the maps in $S$, and where $\LL$ is dually defined as the set of maps that are left orthogonal to the maps in $\RR$. We refer to $\LL$ as the \emph{saturation} of $S$; this is the smallest set of maps containing $S$ that is stable under pushouts, contains all equivalences and is stable under small colimits in $\Fun(\Delta^1,\CC)$. An object $c\in\CC$ is said to be \emph{$S$-local} if the unique morphism $c\to 1$ is contained in $\RR$. 

If $\CC$ is cartesian closed, one can analogously construct a factorisation system $(\LL^\prime,\RR^\prime)$ in which $\RR^\prime$ is the set of maps in $\BB$ that are \emph{internally} right orthogonal to the maps in $S$. Explicitly, a map is contained in $\RR^\prime$ if and only if it is right orthogonal to maps of the form $s\times \id_c$ for any $s\in S$ and any $c\in \CC$. The left complement $\LL^\prime$ is comprised of the maps in $\CC$ that are left orthogonal to the maps in $\RR^\prime$ and is referred to as the \emph{internal} saturation of $S$. Equivalently, $\LL^\prime$ is the saturation of the set of maps $s\times\id_c$ for $s\in S$ and $c\in\CC$. An object $c\in\CC$ is said to be \emph{internally $S$-local} if the unique morphism $c\to 1$ is contained in $\RR^\prime$. 

Given any factorisation system $(\LL,\RR)$ in $\CC$ in which $\LL$ is the saturation of a small set of maps in $\CC$, the inclusion $\RR\into\Fun(\Delta^1,\CC)$ admits a left adjoint that carries a map $f\in\Fun(\Delta^1,\CC)$ to the map $r\in\RR$ that arises from the unique factorisation $f\simeq rl$ into maps $l\in \LL$ and $r\in \RR$. By taking fibres over an object $c\in\CC$, one furthermore obtains a Bousfield localisation $\Over{\CC}{c}\leftrightarrows\Over{\RR}{c}$ such that if $f\colon d\to c$ is an object in $\Over{\CC}{c}$ and if $f\simeq rl$ is its unique factorisation into maps $l\in \LL$ and $r\in \RR$, the adjunction unit is given by $l$.

\section{The language of $\BB$-categories}

\subsection{Simplicial objects, $\BB$-categories and $\BB$-groupoids}
\label{sec:BCategories}
If $\BB$ is an arbitrary $\infty$-topos, we denote by $\Simp{\BB}=\Fun(\Delta^{\op},\BB)$ the $\infty$-topos of simplicial objects in $\BB$. Note that the adjunction $(\const\dashv \Gamma)\colon\SS\leftrightarrows\BB$ yields via postcomposition an induced adjunction $(\const\dashv\Gamma)\colon \Simp{\SS}\leftrightarrows \Simp\BB$ on the level of simplicial objects. We will often implicitly identify a simplicial $\infty$-groupoid $K$ with its image in $\Simp\BB$ along $\const_{\BB}$ and refer to it as the \emph{constant simplicial object} in $\BB$ associated with $K$.

For every $n\geq 1$, we denote by $I^n=\Delta^1\sqcup_{\Delta^0}\cdots\sqcup_{\Delta^0}\Delta^1\into\Delta^n$ the $n$-spine, viewed as a simplicial $\infty$-groupoid. Furthermore, we denote by $E^1=(\Delta^0\sqcup\Delta^0)\sqcup_{(\Delta^1\sqcup\Delta^1)}\Delta^3$ the walking equivalence.
\begin{definition}[{\cite[Definitions~3.1.5 and~3.2.1]{MYoneda}}]
	\label{def:BCategories}
	A \emph{$\BB$-category} is a simplicial object $\I{C}\in\Simp\BB$ that is internally local with respect to $I^2\into\Delta^2$ (Segal conditions) and $E^1\to 1$ (univalence). We denote by $\Cat(\BB)\into\Simp\BB$ the full subcategory spanned by the $\BB$-categories. A \emph{$\BB$-groupoid} is a simplicial object $G\in\Simp\BB$ which is internally local with respect to $\Delta^1\to\Delta^0$. We denote by $\Grpd(\BB)\into\Simp\BB$ the full subcategory spanned by the $\BB$-groupoids.
\end{definition}

\begin{remark}[{\cite[Proposition~3.2.7]{MYoneda}}]
	\label{rem:BCategoriesExplicitly}
	More explicitly, a simplicial object $C$ is a $\BB$-category if and only if for all $n\geq 2$ the maps $C_n\to C_1\times_{C_0} \cdots\times_{C_0} C_1$ as well as the map $C_0\to (C_0\times C_0)\times_{C_1\times C_1} C_3$ are equivalences.
\end{remark}
\begin{remark}
	\label{rem:walkingEquivalence}
	There are several non-equivalent definitions of the walking equivalence. For example, Charles Rezk~\cite[\S~6]{rezk2001} defines the walking equivalence as the simplicial set $J$ that arises as the nerve of the category with two objects and a unique isomorphism between them. Our model $E^1$ (that we adopted from~\cite[Notation~1.1.12]{lurie2009b}), on the other hand, is comprised of a map together with \emph{separate} left and right inverses. Nevertheless, either choice gives rise to the same notion of $\BB$-categories: there is a natural map $E^1\to J$ which is contained in the internal saturation of $I^2\into\Delta^2$, i.e.\ which becomes an equivalence when imposing the Segal conditions. This can be extracted from the discussion in~\cite[\S~6]{rezk2001}, see also~\cite[\S~2.4]{rasekh2018}.
\end{remark}

\begin{proposition}[{\cite[Proposition~3.2.9, Remark~3.2.10 and Proposition~3.2.11]{MYoneda}}]
	\label{prop:CatBPresentable}
	The inclusion $\Cat(\BB)\into\Simp\BB$ preserves filtered colimits and admits a left adjoint which preserves finite products. Therefore, $\Cat(\BB)$ is presentable and an exponential ideal in $\Simp\BB$, so in particular cartesian closed.
\end{proposition}
We will denote by $\IFun(-,-)$ the internal hom in $\Cat(\BB)$ and refer to it as the \emph{functor $\BB$-category} bifunctor.

\begin{proposition}[{\cite[after Corollary~3.2.12]{MYoneda}}]
	\label{prop:GroupoidificationCore}
	A simplicial object in $\BB$ is a $\BB$-groupoid if and only if it is contained in the essential image of the diagonal embedding $\iota\colon\BB\into\Simp\BB$, and every $\BB$-groupoid is automatically a $\BB$-category. Moreover, the resulting embedding $\BB\simeq \Grpd(\BB)\into\Cat(\BB)$ admits both a left adjoint $(-)^{\gp}$ (the \emph{groupoidification functor}) and a right adjoint $(-)^\core$ (the \emph{core $\BB$-groupoid functor}). Explicitly, if $\I{C}$ is a $\BB$-category, one has $\I{C}^\gp \simeq \colim_{\Delta^{\op}}\I{C}$ and $\I{C}^{\simeq}\simeq \I{C}_0$.
\end{proposition}

\begin{definition}
	\label{def:oppositeBCategory}
	If $\I{C}$ is a $\BB$-category, we denote by $\I{C}^\op$ the simplicial object that is obtained by precomposing $\I{C}\colon\Delta^\op\to\BB$ with the involution $(-)^\op\colon\Delta\simeq\Delta$ that carries $\ord{n}$ (viewed as a $0$-category) to its opposite $\ord{n}^\op$. The simplicial object $\I{C}^\op$ is again a $\BB$-category that we refer to as the \emph{opposite $\BB$-category} of $\I{C}$.
\end{definition}

\begin{remark}
	\label{rem:OppositeIdentityGroupoids}
	The equivalence $(-)^\op\colon\Cat(\BB)\simeq\Cat(\BB)$ from Definition~\ref{def:oppositeBCategory} restricts to the identity on $\Grpd(\BB)$. In fact, this follows immediately from the observation that $\BB$-groupoids are constant in the simplicial direction, see Proposition~\ref{prop:GroupoidificationCore}.
\end{remark}

\begin{remark}[{\cite[\S~3.3]{MYoneda}}]
	\label{rem:functorialityBCategories}
	If $f_\ast\colon \BB\to\AA$ is a geometric morphism and if $f^\ast$ is the associated algebraic morphism, postcomposition induces an adjunction $f^\ast\dashv f_\ast\colon \Cat(\AA)\leftrightarrows\Cat(\BB)$. In particular, one obtains an adjunction $\const_{\BB}\dashv\Gamma_{\BB}\colon \CatS\leftrightarrows\Cat(\BB)$. We will often implicitly identify an $\infty$-category $\CC$ with the associated \emph{constant $\BB$-category} $\const_{\BB}(\CC)\in\Cat(\BB)$. Furthermore, if the geometric morphism $f_\ast$ is \emph{\'etale}, the further left adjoint $f_!$ of $f^\ast$ also induces a functor $f_!\colon \Cat(\BB)\to\Cat(\AA)$ that identifies $\Cat(\BB)$ with $\Over{\Cat(\AA)}{f_! 1}$. 
\end{remark}

By making use of the adjunction $\const_{\BB}\dashv\Gamma_{\BB}\colon \CatS\leftrightarrows\Cat(\BB)$ and the internal hom $\IFun(-,-)$ as well as the product $-\times -$ in $\Cat(\BB)$, one can define bifunctors
\begin{align}
	\Fun_{\BB}(-,-)=\Gamma_{\BB}\circ\IFun(-,-)&\colon \Cat(\BB)^\op\times\Cat(\BB)\to\CatS \tag{Functor $\infty$-category} \\
	(-)^{(-)}=\IFun(\const_{\BB}(-),-)&\colon \CatS^\op\times\Cat(\BB)\to\Cat(\BB) \tag{Powering}\\
	-\otimes - = \const_{\BB}(-)\times -&\colon\CatS\times\Cat(\BB)\to\Cat(\BB) \tag{Tensoring}
\end{align}
which fit into equivalences
\begin{equation*}
	\map{\Cat(\BB)}(-\otimes -, -)\simeq \map{\CatS}(-,\Fun_{\BB}(-,-))\simeq\map{\Cat(\BB)}(-, (-)^{(-)})
\end{equation*}
(see~\cite[\S~3.4]{MYoneda}). In particular, we have $\Fun_{\BB}(-,-)^\core\simeq\map{\Cat(\BB)}(-,-)$, so that $\Fun_{\BB}(-,-)$ gives rise to a $\CatS$-enrichement of $\Cat(\BB)$ and therefore an $(\infty,2)$-categorical enhancement of $\Cat(\BB)$~\cite[Remark~3.4.3]{MYoneda}.

\begin{remark}[{\cite[Proposition~3.1.2]{MYoneda}}]
	\label{rem:identificationObjectOfNMorphisms}
	There is an equivalence of functors $\id_{\Cat(\BB)}\simeq ((-)^{\Delta^\bullet})^\simeq$. In other words, for any $\BB$-category $\I{C}$ and any integer $n\geq 0$ one may canonically identify $\I{C}_n\simeq (\I{C}^{\Delta^n})_0$.
\end{remark}

We conclude this section with a remark on \emph{large} $\BB$-categories: observe that postcomposition with the universe enlargement $\BB\into\BBB$ from \S~\ref{sec:universeEnlargement} determines an inclusion $\Cat(\BB)\into\Cat(\BBB)$ that is natural in $\BB$ both with respect to geometric and algebraic morphisms of $\infty$-topoi~\cite[\S~3.3]{MYoneda}. Furthermore, the inclusion commutes with small limits and the internal hom~\cite[Proposition~3.4.1]{MYoneda} and therefore also the tensoring, powering and functor $\infty$-category bifunctors~\cite[Corollary~3.4.2]{MYoneda}. We refer to the objects in $\Cat(\BBB)$ as \emph{large} $\BB$-categories (or as $\BBB$-categories) and to the objects in $\Cat(\BB)$ as \emph{small} $\BB$-categories. If not specified otherwise, every $\BB$-category is small. Note, however, that by replacing the universe $\bU$ with the larger universe $\bV$ (i.e.\ by working internally to $\BBB$), every statement about $\BB$-categories carries over to one about large $\BB$-categories as well. Also, we will often omit specifying the relative size of a $\BB$-category if it is evident from the context, and we will continue writing $\IFun(\I{C},\I{D})$ for the internal hom even if $\I{C}$ and $\I{D}$ are large.

\subsection{$\BB$-categories as sheaves of $\infty$-categories}
\label{sec:parametrisedCategories}
One may equivalently regard a $\BB$-category as a \emph{sheaf} of $\infty$-categories on $\BB$, by which we mean a functor $\BB^{\op}\to\CatS$ that preserves small limits:
\begin{proposition}[{\cite[Proposition~3.5.1 and Remark~3.5.6]{MYoneda}}]
	\label{prop:equivalenceBCategoriesSheaves}
	There is a natural equivalence of $\infty$-categories $\Cat(\BB)\simeq\Shv_{\CatS}(\BB)$ that sends $\I{C}\in\Cat(\BB)$ to the sheaf $\Fun_{\BB}(\iota(-),\I{C})$ (where $\iota\colon \BB\into\Cat(\BB)$ is the diagonal embedding) and that restricts along the diagonal embedding $\iota\colon \BB\into\Cat(\BB)$ to the equivalence $\BB\simeq \Shv_{\SS}(\BB)$ that is determined by the Yoneda embedding.
\end{proposition}
Hereafter, we will often implicitly identify a $\BB$-category $\I{C}$ with the associated sheaf $\Fun_{\BB}(\iota(-),\I{C})$. That is, we usually write $\I{C}(A)=\Fun_{\BB}(\iota(A),\I{C})$ for the $\infty$-category of \emph{local sections} over $A\in \BB$, and we write $s^\ast\colon \I{C}(A)\to\I{C}(B)$ for the restriction functor along a map $s\colon B\to A$ in $\BB$.

\begin{remark}[{cf.~\cite[Remark~3.1.1]{MYoneda}}]
	\label{rem:sheafFromBCategoryExplicitly}
	More explicitly, the $\infty$-category $\I{C}(A)=\Fun_{\BB}(\iota(A),\I{C})$ is given by the complete Segal space whose space of $n$-morphisms is given by the $\infty$-groupoid $\map{\BB}(A, \I{C}_n)$. In particular, the equivalence $\Cat(\BB)\simeq\Shv_{\CatS}(\BB)$ from Proposition~\ref{prop:equivalenceBCategoriesSheaves} commutes both with taking core $\BB$-groupoids and opposite $\BB$-categories, in the sense that we have equivalences of sheaves $\I{C}^\core(-)\simeq\I{C}(-)^\core$ and $\I{C}^\op(-)\simeq\I{C}(-)^\op$.
\end{remark}

\begin{remark}
	\label{rem:internalVsParametrised}
	One may interpret Proposition~\ref{prop:equivalenceBCategoriesSheaves} as a correspondence between \emph{internal} and \emph{parametrised} higher category theory. Both approaches have their specific advantages: the upshot of the internal approach is that one can often use a statement about $\infty$-categories and simply interpret it internally in $\BB$ in order to obtain the corresponding statement for $\BB$-categories. On the other hand, it is usually easier to construct a particular $\BB$-category via its associated sheaf of $\infty$-categories. In fact, most examples that are of practical interest arise in this way.
\end{remark}

\begin{remark}[{\cite[\S~3.5]{MYoneda}}]
	\label{rem:BCategoriesSheavesFunctoriality}
	The equivalence $\Cat(\BB)\simeq\Shv_{\CatS}(\BB)$ is natural in $\BB$: if $f_\ast\colon \BB\to\AA$ is a geometric morphism and $f^\ast$ denotes its left adjoint, one obtains commutative squares
	\begin{equation*}
		\begin{tikzcd}
			\Cat(\BB)\arrow[d, "f_\ast"]\arrow[r, "\simeq"] & \Shv_{\CatS}(\BB)\arrow[d, "f_\ast"] & & \Cat(\BB)\arrow[from=d, "f^\ast"]\arrow[r, "\simeq"] & \Shv_{\CatS}(\BB)\arrow[from=d, "f^\ast"]\\
			\Cat(\AA)\arrow[r, "\simeq"] & \Shv_{\CatS}(\AA) && \Cat(\AA)\arrow[r, "\simeq"] & \Shv_{\CatS}(\AA).
		\end{tikzcd}
	\end{equation*}
	Explicitly, $f_\ast\colon \Shv_{\CatS}(\BB)\to\Shv_{\CatS}(\AA)$ is given by restriction along $f^\ast\colon \AA\to\BB$. In particular, we may identify $\I{C}(1)\simeq\Gamma_{\BB}(\I{C})$ for every $\BB$-category $\I{C}$. Furthermore,  $f^\ast\colon\Shv_{\CatS}(\AA)\to\Shv_{\CatS}(\BB)$ is given by left Kan extension along $f^\ast\colon\AA\to\BB$. Thus, if the latter functor admits an additional left adjoint $f_!$, then $f^\ast\colon\Shv_{\CatS}(\AA)\to\Shv_{\CatS}(\BB)$ is simply given by precomposition with $f_!$.
\end{remark}

\begin{remark}[{\cite[Proposition~3.5.1]{MYoneda}}]
	\label{rem:BCategoriesSheavesSize}
	The equivalence between $\BB$-categories and sheaves of $\infty$-categories respects universe enlargement in the following sense: there is a commutative square
	\begin{equation*}
		\begin{tikzcd}
			\Cat(\BB)\arrow[r, "\simeq"]\arrow[d, hookrightarrow] & \Shv_{\CatS}(\BB)\arrow[d, hookrightarrow]\\
			\Cat(\BBB)\arrow[r, "\simeq"] & \Shv_{\CatSS}(\BB)
		\end{tikzcd}
	\end{equation*}
	in which the lower horizontal equivalence is obtained by sending a large $\BB$-category $\I{C}$ to $\Fun_{\BBB}(\iota(-), \I{C})$, where $\iota\colon \BB\into\BBB\into \Cat(\BBB)$ is the inclusion.
\end{remark}

We conclude this section by noting that the sheaf-theoretic perspective on $\BB$-categories also gives rise to a \emph{fibrational} point of view: on account of the inclusion $\Shv_{\CatSS}(\BB)\into\PSh_{\CatSS}(\BB)$ and by making use of the straightening/unstraightening equivalence $\PSh_{\CatSS}(\BB)\simeq\Cart(\BB)$ between $\CatSS$-valued presheaves on $\BB$ and \emph{cartesian fibrations} over $\BB$ (see~\cite[\S~3.2]{htt}), we obtain a full embedding $\Cat(\BBB)\into\Cart(\BB)$ which sends a (large) $\BB$-category $\I{C}$ to its underlying cartesian fibration $\int\I{C}\to\BB$.

\subsection{Objects and morphisms in $\BB$-categories}
\label{sec:objectsMorphisms}
Observe that by combining Proposition~\ref{prop:equivalenceBCategoriesSheaves} with the two-variable adjunctions between the bifunctors $\Fun_{\BB}(-,-)$, $-\otimes -$ and $(-)^{(-)}$, one obtains equivalences
\begin{equation*}
	\I{C}^{\Delta^n}(A)^\core\simeq\map{\Cat(\BB)}(A, \I{C}^{\Delta^n})\simeq \map{\Cat(\BB)}(\Delta^n\otimes A, \I{C})\simeq \map{\CatS}(\Delta^n, \I{C}(A))
\end{equation*}
for every $A\in\BB$, every $\I{C}\in\Cat(\BB)$ and each $n\in\mathbb N$
(where we leave the diagonal embedding $\BB\into\Cat(\BB)$ implicit). Moreover, by combining Proposition~\ref{prop:GroupoidificationCore} with Remark~\ref{rem:identificationObjectOfNMorphisms}, we may furthermore compute
\begin{equation*}
	\map{\Cat(\BB)}(A,\I{C}^{\Delta^n})\simeq\map{\BB}(A, \I{C}_n).
\end{equation*}
In other words, the datum of a map $A\to \I{C}^{\Delta^n}$ in $\Cat(\BB)$ is equivalent to that of a map $\Delta^n\otimes A\to \I{C}$ in $\Cat(\BB)$, a map $A\to\I{C}_n$ in $\BB$ as well as a functor $\Delta^n\to\I{C}(A)$ of $\infty$-categories.

\begin{definition}
	\label{def:nMorphisms}
	Let $\I{C}$ be a $\BB$-category and let $A\in\BB$ be an object. For a given integer $n\geq 0$, an \emph{$n$-morphism in $\I{C}$ in context $A$} is a map $A\to \I{C}^{\Delta^n}$ in $\Cat(\BB)$. If $n=0$, we simply speak of an \emph{object} in $\I{C}$ in context $A$, and for $n=1$ we refer to such a map as a \emph{morphism} in $\I{C}$ in context $A$. Given objects $c,d\colon A\rightrightarrows \I{C}$, one defines the \emph{mapping $\Over{\BB}{A}$-groupoid} $\map{\I{C}}(c,d)$ as the pullback
	\begin{equation*}
		\begin{tikzcd}
			\map{\I{C}}(c,d)\arrow[r]\arrow[d] & \I{C}_1\arrow[d, "{(d_1,d_0)}"]\\
			A\arrow[r, "{(c,d)}"]& \I{C}_0\times\I{C}_0.
		\end{tikzcd}
	\end{equation*}
	We denote a section $f\colon A\to \map{\I{C}}(c,d)$ by $f\colon c\to d$. 
\end{definition}

\begin{remark}[{\cite[\S~3.6]{MYoneda}}]
	Equivalently, the mapping $\Over{\BB}{A}$-groupoid $\map{\I{C}}(c,d)$ can be defined as the pullback of $(d_1,d_0)\colon\I{C}^{\Delta^1}\to\I{C}\times\I{C}$ along $(c,d)\colon A\to\I{C}\times\I{C}$.
\end{remark}

\begin{remark}
	Viewed as an $\SS$-valued sheaf on $\Over{\BB}{A}$, the object $\map{\I{C}}(c,d)$ from Definition~\ref{def:nMorphisms} is given by the assignment
	\begin{equation*}
		\Over{\BB}{A}\ni(s\colon B\to A)\mapsto \map{\I{C}(B)}(s^\ast c,s^\ast d)
	\end{equation*}
	where $s^\ast c= c s$ and likewise for $d$.
\end{remark}

More generally, if $c_0,\dots,c_n$ are objects in context $A$ in $\I{C}$, one writes $\map{\I{C}}(c_0,\dots,c_n)$ for the pullback of $(d_n,\dots,d_0)\colon \I{C}_n\to \I{C}_0^{n+1}$ along the map $(c_0,\dots,c_n)\colon A\to \I{C}_0^{n+1}$. Using the Segal conditions, one obtains an equivalence
\begin{equation*}
	\map{\I{C}}(c_0,\dots,c_n)\simeq\map{\I{C}}(c_0,c_1)\times_A\cdots\times_A\map{\I{C}}(c_{n-1},c_n).
\end{equation*}
By combining this identification with the map $\map{\I{C}}(c_0,\dots,c_n)\to\map{\I{C}}(c_0,c_n)$ that is induced by the map $d_{\{0,n\}}\colon \I{C}_n\to\I{C}_1$, one obtains a composition map
\begin{equation*}
	\map{\I{C}}(c_0,c_1)\times_A\cdots\times_A\map{\I{C}}(c_{n-1},c_n)\to\map{\I{C}}(c_0,c_n).
\end{equation*}
Given maps $f_i\colon c_{i-1}\to c_i$ in $\I{C}$ for $i=1,\dots, n$, we write $f_1\cdots f_n$ for their composition. By making use of the simplicial identities, it is straightforward to verify that composition is associative and unital, i.e.\ that the relations $f(gh)\simeq (fg)h$ and $f\id\simeq f\simeq \id f$ as well as their higher analogues hold whenever they make sense, see~\cite[Proposition~5.4]{rezk2001} for a proof.

\begin{remark}
	\label{rem:contexts}
	As a $\BB$-category $\I{C}$ is determined by the associated sheaf of $\infty$-categories on $\BB$ but not just by the underlying $\infty$-category $\Gamma_{\BB}(\I{C})$ of global sections, it is crucial that we allow objects and morphisms in $\I{C}$ to have arbitrary context $A\in\BB$. In other words, we need to allow objects and morphisms to be only \emph{locally} defined, where by the term \emph{local} we allude to the point of view that the base $\infty$-topos $\BB$ can be thought of as a spatial object. Alternatively, this phenomenon can be viewed as a shadow of the notion of contexts in type theory (hence the name), where they are needed to keep track of the types of the variables that occur in a formula. More precisely, when regarding the theory of $\BB$-categories as a model of simplicial homotopy type theory~\cite{shulman2017}, the type-theoretic notion of contexts exactly translates into our notion of contexts.
\end{remark}

\begin{remark}
	\label{rem:localityPrincipleObjectsMorphisms}
	At first, the fact that objects and morphisms of a $\BB$-category $\I{C}$ have non-global context $A$ might appear to complicate things, but in practice this is usually not the case: in fact, by making use of the adjunction $(\pi_A)_!\dashv\pi_A^\ast\colon \Over{\BB}{A}\leftrightarrows\BB$ and by the observations made in Remark~\ref{rem:functorialityBCategories}, the datum of an object $c\colon A\to \I{C}$ precisely corresponds to that of an object $\bar c\colon1_{\Over{\BB}{A}}\to \pi_A^\ast\I{C}$, where $\pi_A^\ast\I{C}\in\Cat(\Over{\BB}{A})$ is the image of $\I{C}$ along the base change functor $\pi_A^\ast\colon \Cat(\BB)\to\Cat(\Over{\BB}{A})\simeq\Over{\Cat(\BB)}{A}$. In other words, upon replacing $\BB$ with $\Over{\BB}{A}$ and $\I{C}$ with $\pi_A^\ast\I{C}$, object in context $A$ are turned into objects in global context. Very often, we will make use of this correspondence in order to be able to restrict our attention to objects and morphisms in global context.
\end{remark}

\begin{remark}
	\label{rem:tautologicalObject}
	Observe that for every $\BB$-category $\I{C}$ there is a distinguished object $\tau\colon \I{C}_0\to \I{C}$ that is determined by the counit of the adjunction $\iota\dashv (-)_0\colon \Cat(\BB)\leftrightarrows\BB$ from Proposition~\ref{prop:GroupoidificationCore}. We refer to $\tau$ as the \emph{tautological} object of $\I{C}$. By definition, \emph{every} object $c\colon A\to\I{C}$ arises as a pullback of $\tau$, in the sense that we have $c\simeq c^\ast \tau$ (where $c^\ast\colon \I{C}(\I{C}_0)\to\I{C}(A)$ is the restriction functor). In that way, many questions about an arbitrary object in a $\BB$-category can be reduced to questions about the tautological object.
\end{remark}

We conclude this section with a discussion of \emph{equivalences} in $\BB$-categories. To that end, given any object $c\colon A\to\I{C}$ in a $\BB$-category $\I{C}$, let us denote by $\id_c\colon c\to c$ the morphism that is determined by the lift $s_0 c\colon A\to \I{C}_0\to\I{C}_1$ of $(c,c)\colon A\to \I{C}_0\times\I{C}_0$.
\begin{definition}
	\label{def:equivalencesBCategories}
	A morphism $f\colon c\to d$ in $\I{C}$ is an \emph{equivalence} if there are maps $g\colon c\to d$ and $h\colon c\to d$ (all in context $A$) such that $gf\simeq \id_c$ and $fh\simeq \id_d$.
\end{definition}

As a consequence of univalence, one finds:
\begin{proposition}[{\cite[Corollary~3.6.3]{MYoneda}}]
	\label{prop:characterisationEquivalences}
	A map $f\colon A\to \I{C}^{\Delta^1}$ in a $\BB$-category $\I{C}$ is an {equivalence} precisely if it factors through $s_0\colon \I{C}\into\I{C}^{\Delta^1}$.
\end{proposition}
In other words, every equivalence $f\colon A\to\I{C}_1$ is equivalent (in the $\infty$-groupoid $\I{C}_1(A)$) to an identity.

\subsection{Fully faithful functors and full subcategories}
\label{sec:fullyFaithfulFunctors}
A functor $f\colon \I{C}\to\I{D}$ between $\BB$-categories is said to be \emph{fully faithful} if it is internally right orthogonal to the map $\Delta^0\sqcup\Delta^0\to \Delta^1$. Dually, a functor is \emph{essentially surjective} if is (internally) left orthogonal to the class of fully faithful functors. Therefore, it formally follows that fully faithful functors are stable under small limits in $\Fun(\Delta^1,\Cat(\BB))$ and are preserved by the endofunctor $\IFun(\I{C},-)$ for every $\BB$-category $\I{C}$~\cite[Proposition~3.8.4]{MYoneda}. Moreover, a functor of $\BB$-categories is an equivalence if and only if it is fully faithful and essentially surjective~\cite[Proposition~3.8.3]{MYoneda}, and every functor can be uniquely factored into an essentially surjective and a fully faithful functor. In other words, the \emph{essential image} of a functor between $\BB$-categories is well-defined.

Fully faithful and essentially surjective functors can be characterised as follows:
\begin{proposition}[{\cite[Proposition~3.8.6 and~3.8.7]{MYoneda}}]
	\label{prop:classificationFullSubcategories}
	For a functor $f\colon \I{C}\to\I{D}$ of $\BB$-categories, the following are equivalent:
	\begin{enumerate}
		\item The functor $ f $ is fully faithful;
		\item the square
		\begin{equation*}
			\begin{tikzcd}
				\I{C}_1\arrow[r, "f_1"]\arrow[d] & \I{D}_1\arrow[d]\\
				\I{C}_0\times \I{C}_0\arrow[r, "f_0\times f_0"] & \I{D}_0\times \I{D}_0
			\end{tikzcd}
		\end{equation*}
		is a pullback;
		\item for every $A\in\BB$ and any two objects $c_0,c_1\colon A\to \I{C}$ in context $A$, the morphism
		\begin{equation*}
			\map{\I{C}}(c_0,c_1)\to\map{\I{D}}(f(c_0), f(c_1))
		\end{equation*}
		that is induced by $ f $ is an equivalence in $\Over{\BB}{A}$;
		\item for every $A\in \BB$ the functor $f(A)\colon \I{C}(A)\to\I{D}(A)$ of $\infty$-categories is fully faithful.
	\end{enumerate}
\end{proposition}
\begin{proposition}[{\cite[Corollary~3.8.12]{MYoneda}}]
	\label{prop:characterisationEssentiallySurjective}
	A functor $f\colon \I{C}\to\I{D}$ is essentially surjective if and only if $f_0\colon\I{C}_0\to\I{D}_0$ is a cover (i.e.\ an effective epimorphism) in $\BB$.
\end{proposition}

Fully faithful functors are in particular $(-1)$-truncated maps in $\Cat(\BB)$, hence the full subcategory $\Sub_{\mathrm{full}}(\I{D})\into\Over{\Cat(\BB)}{\I{D}}$ that is spanned by the fully faithful functors into $\I{D}$ is a poset whose objects we call \emph{full subcategories} of $\I{D}$.
\begin{proposition}[{\cite[Proposition~3.9.3]{MYoneda}}]
	\label{prop:fullSubcategoriesParametrisation}
	Taking core $\BB$-groupoids determines an equivalence of posets $\Sub_{\mathrm{full}}(\I{D})\simeq \Sub(\I{D}_0)$ between the poset of full subcategories of $\I{D}$ and the poset of subobjects of $\I{D}_0\in\BB$. 
\end{proposition}
In particular, Proposition~\ref{prop:fullSubcategoriesParametrisation} implies that specifying a full subcategory of $\I{D}$ is equivalent to specifying a subobject of $\I{D}_0$. Therefore, if $(d_i\colon A_i\to \I{D})_{i\in I}$ is a family of objects in $\I{D}$, we may define the full subcategory of $\I{D}$ that is \emph{spanned} by these objects as the unique full subcategory of $\I{D}$ whose core $\BB$-groupoid is given by the image of the induced morphism $(d_i)\colon\bigsqcup_i A_i\to \I{D}$ in $\BB$~\cite[Definition~3.9.7]{MYoneda}. Note that this is possible even if the family is large~\cite[Remark~3.9.8]{MYoneda}.

\subsection{Monomorphisms and non-full subcategories}
A functor $f\colon \I{C}\to\I{D}$ between $\BB$-categories is a monomorphism (i.e.\ a $(-1)$-truncated map in the $\infty$-category $\Cat(\BB)$) precisely if it is internally right orthogonal to the codiagonal $\Delta^0\sqcup\Delta^0\to\Delta^0$. Dually, we say that a functor is a \emph{strong epimorphism} if it is (internally) left orthogonal to the class of monomorphisms. As in the previous section, this formally implies that monomorphisms satisfy certain stability conditions and that every functor can be uniquely factored into a strong epimorphism followed by a monomorphism. In other words, the \emph{$1$-image} of a functor is well-defined.

We have the following characterisation of monomorphisms of $\BB$-categories:
\begin{proposition}[{\cite[Proposition~B.1.2]{MWColimits}}]
	\label{prop:characterisationMonomorphismMappingGroupoids}
	Let $f\colon\I{C}\to \I{D}$ be a functor between $\BB$-categories. Then the following are equivalent:
	\begin{enumerate}
		\item $f$ is a monomorphism;
		\item $f^{\core}$ is a monomorphism in $\BB$, and for any $A\in \BB$ and any two objects $c_0,c_1\colon A\to \I{C}$ in context $A\in\BB$, the morphism
		\begin{equation*}
			\map{\I{C}}(c_0,c_1)\to\map{\I{D}}(f(c_0), f(c_1))
		\end{equation*}
		that is induced by $f$ is a monomorphism in $\Over{\BB}{A}$;
		\item for every $A\in \BB$ the functor $f(A)\colon \I{C}(A)\to\I{D}(A)$ is a monomorphism of $\infty$-categories;
	\end{enumerate}
\end{proposition}
\begin{example}[{\cite[Example~B.1.3]{MWColimits}}]
	For every $\BB$-category $\I{C}$, the map $\I{C}^\core\to\I{C}$ is a monomorphism.
\end{example}

We may now define the poset $\Sub_{\Cat(\BB)}(\I{C})$ of \emph{subcategories} of a $\BB$-category $\I{C}$ as the full subcategory of $(-1)$-truncated objects in $\Over{\Cat(\BB)}{\I{C}}$. We would like to have an analogue of Proposition~\ref{prop:fullSubcategoriesParametrisation} for non-full subcategories that gives us a way to construct a subcategory of $\I{C}$ by specifying a collection of morphisms in $\I{C}$. To that end, note that evaluation at $\ord{1}\in\Delta$ gives rise to a map $(-)_1\colon \Sub_{\Cat(\BB)}(\I{C})\to\Sub_{\BB}(\I{C}_1)$. We now obtain:
\begin{proposition}[{\cite[Proposition~B.2.7]{MWColimits}}]
	\label{prop:parametrisationSubcategories}
	For every $\BB$-category $\I{C}$, the canonical morphism of posets $ (-)_1\colon \Sub_{\Cat(\BB)}(\I{C})\to\Sub_{\BB}(\I{C}_1)$ exhitibs $\Sub_{\Cat(\BB)}(\I{C})$ as a reflective subposet of $\Sub_{\BB}(\I{C}_1)$. In other words, $(-)_1$ is fully faithful and has a left adjoint $\langle - \rangle\colon \Sub_{\BB}(\I{C}_1)\to\Sub_{\Cat(\BB)}(\I{C})$ that carries a subobject $E\into \I{C}_1$ to the subcategory $\langle E\rangle\into \I{C}$ \emph{generated} by $E$.
\end{proposition}
In the situation of Proposition~\ref{prop:parametrisationSubcategories}, note that we in general have little control over the subcategory $\langle E\rangle$: we don't know which objects of $\I{C}$ are contained in $\langle E\rangle$, and there might also be many more morphisms in $\langle E \rangle$ than are contained in the subobject $E\into \I{C}_1$. The issue is, of course, that a priori the maps in $E$ need not be closed under composition and equivalences. Luckily, this is the only obstruction:
\begin{proposition}[{\cite[Proposition~B.2.9]{MWColimits}}]
	\label{prop:classificationSubcategories}
	For any $\BB$-category $\I{C}$, a subobject $E\into\I{C}_1$ lies in the essential image of the inclusion $\Sub_{\Cat(\BB)}(\I{C})\into\Sub_{\BB}(\I{C}_1)$ if and only if
	\begin{enumerate}
		\item it is closed under equivalences, i.e.\ the map $(s_0d_1,s_0d_0)\colon E\sqcup E\to \I{C}_1$ factors through $E\into\I{C}_1$;
		\item it is closed under composition, i.e.\ the restriction of the composition map $d_1\colon \I{C}_1\times_{\I{C}_0}\I{C}_1\to \I{C}_1$ along the inclusion $E\times_{\I{C}_0}E\into\I{C}_1\times_{\I{C}_0}\I{C}_1$ factors through $E\into\I{C}_1$.
	\end{enumerate}
\end{proposition}

\subsection{The $\BB$-categories of $\BB$-groupoids and $\BB$-categories}
\label{sec:universe}
By straightening the codomain fibration $\Fun(\Delta^1,\BB)\to\BB$, one obtains a functor $\Over{\BB}{-}\colon \BB^{\op}\to\CatSS$ that preserves small limits since $\BB$ is an $\infty$-topos~\cite[Proposition~6.1.3.9]{htt}. In other words, $\Over{\BB}{-}$ is a sheaf of (large) $\infty$-categories and therefore (by Remark~\ref{rem:BCategoriesSheavesSize}) determined by a large $\BB$-category $\Univ[\BB]$ that we refer to as the \emph{universe for $\BB$-groupoids}~\cite[\S~3.7]{MYoneda}. We will often omit the subscript if it is clear from the context. By definition, we have equivalences $\Univ(A)\simeq\Over{\BB}{A}\simeq\Grpd(\Over{\BB}{A})$. In other words, the objects in $\Univ$ in context $A$ are precisely given by the $\Over{\BB}{A}$-groupoids, an observation which justifies its name. Moreover, we have:
\begin{proposition}[{\cite[Proposition~4.4.11]{MWColimits}}]
	\label{prop:mappingObjectsInternalUniverse}
	For every $A\in\BB$ and every two $\Over{\BB}{A}$-groupoids $\I{G}$, $\I{H}$, viewed as objects of $\Univ$ in context $A$, there is an equivalence
	\begin{equation*}
		\map{\Univ}(\I{G},\I{H})\simeq \Hom_{\Over{\BB}{A}}(\I{G},\I{H})
	\end{equation*}
	(where $\Hom_{\Over{\BB}{A}}(-,-)$ denotes the internal  hom in $\Over{\BB}{A}$)  that is natural in $\I{G}$ and $\I{H}$. 
\end{proposition}

We refer to a full subcategory of $\Univ$ as a \emph{subuniverse}. It follows from item~(4) of Proposition~\ref{prop:classificationFullSubcategories} and the definition of $\Univ$ that every such subuniverse corresponds precisely to \emph{local class} of morphisms in $\BB$, i.e.\ a class $S$ that satisfies the condition that a morphism $p\colon P\to A$ in $\BB$ is contained in $S$ if and only if it is \emph{locally} contained in $S$, i.e.\ if and only if there is a cover $(s_i)\colon\bigsqcup_i A_i\onto A$ in $\BB$ such that the maps $s_i^\ast(p)\colon A_i\times_A P\to A_i$ are contained in $S$ (see~\cite[\S~6.1.3 and Proposition~6.2.3.14]{htt}). In other words, we have:
\begin{proposition}[{\cite[Proposition~3.9.12]{MYoneda}}]
	\label{prop:classificationSubuniverses}
	There is an equivalence between the partially ordered set of local classes in $\BB$ and $\Sub_{\mathrm{full}}(\Univ)$.
\end{proposition}
For a given local class $S$, we denote the associated subuniverse by $\Univ[S]$.

\begin{example}[{see the discussion towards the end of~\cite[\S~4.5]{MYoneda}}]
	\label{ex:smallLargeUniverse}
	Let us say that a map $p\colon P\to A$ in $\BBB$ is \emph{small} if for every map $A^\prime\to A$ in which $A^\prime \in \BB$, the pullback $A^\prime\times_A P$ is contained in $\BB$ as well. This determines a local class of morphisms in $\BBB$ and therefore by Proposition~\ref{prop:classificationSubuniverses} a subuniverse of $\Univ[\BBB]\in\Cat(\BBBB)$ which can be identified with $\Univ[\BB]\in\Cat(\BBB)\into\Cat(\BBBB)$. This exhibits $\Univ[\BB]$ as a full subcategory of $\Univ[\BBB]$.
\end{example}

It is also possible, by a similar construction, to define the (large) $\BB$-category of all (small) $\BB$-categories. Namely, it can be deduced from Remark~\ref{rem:functorialityBCategories} that the assignment $A\mapsto \Cat(\Over{\BB}{A})$ determines a functor $\BB^\op\to\CatSS$ (that carries a map $f\colon B\to A$ in $\BB$ to the pullback functor $f^\ast\colon \Cat(\Over{\BB}{A})\to\Cat(\Over{\BB}{B})$), and one can check that this functor preserves small limits~\cite[\S~A.1]{MWColimits}. 
Therefore, one obtains a (large) $\BB$-category $\ICat_{\BB}$ that is referred to as the \emph{$\BB$-category of $\BB$-categories}. Analogous to Proposition~\ref{prop:mappingObjectsInternalUniverse}, one obtains:
\begin{proposition}[{\cite[Corollary~4.5.5]{MWColimits}}]
	\label{prop:morphismsCatB}
	For every $A\in\BB$ and every two $\Over{\BB}{A}$-categories $\I{C}$ and $\I{D}$, viewed as objects of $\ICat_{\BB}$ in context $A$, there is an equivalence
	\begin{equation*}
		\map{\ICat_{\BB}}(\I{C},\I{D})\simeq \IFun[\Over{\BB}{A}](\I{G},\I{H})^\core
	\end{equation*}
	in $\Over{\BB}{A}$ that is natural in $\I{C}$ and $\I{D}$.
\end{proposition}
One moreover has the following internal enhancements of the functors appearing in Proposition~\ref{prop:GroupoidificationCore} and Remark~\ref{rem:OppositeIdentityGroupoids}:
\begin{proposition}[{\cite[Proposition~3.2.14 and Remark~A.4]{MWColimits}}]
	\label{prop:GroupoidificationCoreInternally}
	The inclusion $\Over{\BB}{A}\into\Cat(\Over{\BB}{A})$ and both its left adjoint $(-)^\gp$ and its right adjoint $(-)^\core$ are functorial in $A\in\BB$ and therefore determine functors
	\begin{align*}
		\iota&\colon \Univ\into\ICat_{\BB}\\
		(-)^\gp&\colon\ICat_{\BB}\to\Univ\\
		(-)^\core&\colon\ICat_{\BB}\to\Univ
	\end{align*}
	of which the first is fully faithful. Similarly, the equivalence $(-)^\op\colon \Cat(\Over{\BB}{A})\simeq\Cat(\Over{\BB}{A})$ is functorial in $A\in\BB$ and thus gives rise to an equivalence $(-)^\op\colon \ICat_{\BB}\simeq\ICat_{\BB}$.
\end{proposition}
In general, we will refer to a full subcategory $\I{U}\into\ICat_{\BB}$ as an \emph{internal class} of $\BB$-categories. Thus, Proposition~\ref{prop:GroupoidificationCoreInternally} implies that $\Univ$ provides an example of such an internal class. We may also regard $\ICat_{\BB}$ itself as an internal class of \emph{large} $\BB$-categories in the following way:
\begin{example}[{\cite[Remark~A.5]{MWColimits}}]
	The inclusion $\Cat(\Over{\BB}{A})\into\Cat(\Over{\BBB}{A})$ is natural in $A\in\BB$ and therefore determines a fully faithful functor $\ICat_{\BB}\into\ICat_{\BBB}$ of very large $\BB$-categories (cf.~Remark~\ref{rem:BCategoriesSheavesSize}).
\end{example}

\subsection{Left fibrations and the Grothendieck construction}
\label{sec:leftFibrations}
We call a functor $p\colon \I{P}\to\I{C}$ between $\BB$-categories a \emph{left fibration} if it is internally right orthogonal to the map $d^1\colon \Delta^0\into\Delta^1$. A functor that is internally left orthogonal to the class of left fibrations is said to be \emph{initial}. In this way, one obtains a factorisation system between initial maps and left fibrations.
\begin{proposition}[{\cite[Proposition~4.1.3]{MYoneda}}]
	\label{prop:leftFibrationsExplicitly}
	For a map $p\colon \I{P}\to\I{C}$ of $\BB$-categories, the following are equivalent:
	\begin{enumerate}
		\item $p$ is a left fibration;
		\item for every $n\geq 1$ the commutative square
		\begin{equation*}
			\begin{tikzcd}
				\I{P}_n\arrow[r, "p_n"]\arrow[d, "d_{\{0\}}"] & \I{C}_n\arrow[d, "d_{\{0\}}"]\\
				\I{P}_0\arrow[r, "p_0"] & \I{C}_0
			\end{tikzcd}
		\end{equation*}
		is a pullback.
		\item for every $A\in\BB$ the map $p(A)\colon\I{P}(A)\to\I{C}(A)$ is a left fibration of $\infty$-categories.
	\end{enumerate}
\end{proposition}
The restriction of the codomain fibration $d_0\colon\Fun(\Delta^1,\Cat(\BB))\to\Cat(\BB)$ to the full subcategory of $\Fun(\Delta^1,\Cat(\BB))$ that is spanned by the left fibrations is a cartesian fibration (as left fibrations are stable under pullback) and therefore determines via straightening a functor $\LFib\colon \Cat(\BB)^\op\to\CatSS$. By precomposing this functor with the product bifunctor $-\times -\colon \BB\times\Cat(\BB)\to\Cat(\BB)$ (where we leave the diagonal embedding $\BB\into\Cat(\BB)$ implicit), we therefore end up with a functor
\begin{equation*}
	\LFib(-\times -)\colon \Cat(\BB)^\op\to\PSh_{\CatSS}(\BB),\quad \I{C}\mapsto \ILFib_{\I{C}}=\LFib(-\times\I{C}).
\end{equation*}

\begin{theorem}[{\cite[Theorem~4.5.1]{MYoneda}}]
	\label{thm:straightening}
	For every $\BB$-category $\I{C}$, the presheaf $\ILFib_{\I{C}}$ is a sheaf and therefore defines a large $\BB$-category. Furthermore, there is an equivalence
	\begin{equation*}
		\ILFib_{\I{C}}\simeq\IFun(\I{C},\Univ)
	\end{equation*}
	of large $\BB$-categories that is natural in $\I{C}\in\Cat(\BB)$.
\end{theorem}
\begin{remark}
	Theorem~\ref{thm:straightening} is the $\BB$-categorical analogue of straightening/unstraightening for left fibrations~\cite[Theorem~2.2.1.2]{htt}.
\end{remark}

\begin{remark}
	\label{rem:LFibExplicitly}
	By means of the projection $\pr_0\colon A\times\I{C}\to A$, every functor $p\colon\I{P}\to A\times\I{C}$ can be regarded as a map in $\Over{\Cat(\BB)}{A}\simeq\Cat(\Over{\BB}{A})$ (cf.~Remark~\ref{rem:functorialityBCategories}). Now since the forgetful functor $(\pi_A)_!\colon \Over{\BB}{A}\to\BB$ creates pullbacks, it follows (using Proposition~\ref{prop:leftFibrationsExplicitly}) that $p$ is a left fibration of $\Over{\BB}{A}$-categories if and only if it is a left fibration of $\BB$-categories. Consequently, the functor $(\pi_A)_!$ induces an equivalence
	\begin{equation*}
		\LFib_{\Over{\BB}{A}}(\pi_A^\ast\I{C})\simeq\LFib_{\BB}(A\times\I{C})
	\end{equation*}
	(where the subscript indicates internal to which $\infty$-topos we are taking left fibrations). In other words, the objects of $\ILFib_{\I{C}}$ in context $A$ are precisely given by the left fibrations of $\Over{\BB}{A}$-categories over $\pi_A^\ast\I{C}$.
\end{remark}

\begin{remark}
	\label{rem:rightFibrations}
	Dually, a functor $p\colon \I{P}\to\I{C}$ of $\BB$-categories is a \emph{right fibration} if it is internally right orthogonal to $d^0\colon\Delta^0\into\Delta^1$, and a functor that is contained in the internal saturation of the latter map is said to be \emph{final}. Equivalently, $p$ is a right fibration precisely if $p^\op$ (see Definition~\ref{def:oppositeBCategory}) is a left fibration, and a functor $j$ is final if and only if $j^\op$ is initial. Again, one obtains a factorisation system between final maps and right fibrations, and by the same construction as for left fibrations (or by simply dualising this construction in the appropriate way) one ends up with a functor
	\begin{equation*}
		\RFib(-\times -)\colon \Cat(\BB)^\op\to\PSh_{\CatSS}(\BB),\quad \I{C}\mapsto \IRFib_{\I{C}}=\RFib(\I{C}\times -).
	\end{equation*}
	For every $\BB$-category $\I{C}$, we have $\IRFib_{\I{C}}\simeq \ILFib_{\I{C}^\op}$, hence $\IRFib_{\I{C}}$ defines a large $\BB$-category as well, and one furthermore obtains a natural straightening/unstraightening equivalence
	\begin{equation*}
		\IRFib_{\I{C}}\simeq\IPSh(\I{C}),
	\end{equation*}
	where $\IPSh(\I{C})=\IFun(\I{C}^\op,\Univ)$ is the large $\BB$-category of \emph{presheaves} on $\I{C}$.
\end{remark}

\subsection{Cocartesian fibrations and straightening}
\label{sec:cocartesianFibrations}
We may generalise the discussion in \S~\ref{sec:leftFibrations} to  \emph{cocartesian fibrations}:
\begin{definition}[{\cite[Proposition~3.1.4 and Definition~5.2.3]{MCocartesian}}]
	A functor $p\colon\I{P}\to\I{C}$ is a \emph{cocartesian fibration} if
	\begin{enumerate}
		\item $p(A)$ is a cocartesian fibration of $\infty$-categories for every $A\in\BB$;
		\item for every map $s\colon B\to A$ in $\BB$ the functor $s^\ast\colon \I{P}(A)\to\I{P}(B)$ carries $p(A)$-cocartesian maps to $p(B)$-cocartesian maps.
	\end{enumerate}
	A \emph{cocartesian functor} between cocartesian fibrations $p\colon \I{P}\to\I{C}$ and $q\colon \I{Q}\to\I{C}$ is a map $f\colon \I{P}\to\I{Q}$ over $\I{C}$ such that $f(A)$ carries $p(A)$-cocartesian maps to $q(A)$-cocartesian maps for every $A\in\BB$.
	The (large) $\BB$-category $\ICocart_{\I{C}}$ of cocartesian fibrations over $\I{C}$ is defined as the sheaf on $\BB$ that carries $A\in\BB$ to the (non-full) subcategory of $\Over{\Cat(\Over{\BB}{A})}{\pi_A^\ast\I{C}}$ that is spanned by the cocartesian fibrations over $\pi_A^\ast\I{C}$ and the cocartesian functors between them.
\end{definition}

\begin{remark}
	There is also a more intrinsic definition of cocartesian fibrations in $\BB$-category theory, but we shall have no use for it.
\end{remark}
\begin{remark}
	One can define \emph{cartesian fibrations} over a $\BB$-category $\I{C}$ in the evident dual way, and one similarly obtains a $\BB$-category $\ICart_{\I{C}}$ of cartesian fibrations over $\I{C}$.
\end{remark}

(Co)cartesian fibrations of $\BB$-categories model $\ICat_{\BB}$-valued (co)presheaves on $\I{C}$, in that there are the following \emph{straightening equivalences} that generalise Lurie's straightening equivalences~\cite[Theorem~3.2.0.2]{htt} to $\BB$-categories:

\begin{proposition}[{\cite[Theorem~6.3.1 and Remark~6.3.5]{MCocartesian}}]
	For every $\BB$-category $\I{C}$, there are equivalences
\begin{equation*}
	\St_{\I{C}}\colon\ICocart_{\I{C}}\simeq \IFun(\I{C}, \ICat_{\BB})
\end{equation*}
and
\begin{equation*}
	\St_{\I{C}}\colon\ICart_{\I{C}}\simeq\IFun(\I{C}^\op, \ICat_{\BB})
\end{equation*}
which are natural in $\I{C}$.
\end{proposition}

\subsection{Slice $\BB$-categories and initial objects}
\label{sec:sliceBCategories}
We now turn to the most important example of a left fibration:
\begin{definition}
	\label{def:sliceBCategories}
	For any $\BB$-category $\I{C}$ and any object $c\colon A\to \I{C}$, one defines the \emph{slice $\BB$-category} $\Under{\I{C}}{c}$ via the pullback
	\begin{equation*}
		\begin{tikzcd}
			\Under{\I{C}}{c}\arrow[d, "(\pi_c)_!"]\arrow[r] & \IFun(\Delta^1,\I{C})\arrow[d, "{(d^1,d^0)}"]\\
			A\times\I{C}\arrow[r, "{c\times\id}"] & \I{C}\times\I{C}.
		\end{tikzcd}
	\end{equation*}
\end{definition}

\begin{remark}[{\cite[Remark~4.2.2]{MYoneda}}]
	\label{rem:baseChangeSlice}
	In the situation of Definition~\ref{def:sliceBCategories}, Remark~\ref{rem:localityPrincipleObjectsMorphisms} allows us to transpose $c\colon A\to\I{C}$ to an object $\bar{c}\colon 1_{\Over{\BB}{A}}\to \pi_A^\ast\I{C}$. Thus, we can also define the slice $\Over{\BB}{A}$-category $\Under{(\pi_A^\ast\I{C})}{\bar c}$, which also comes with a projection $(\pi_{\bar c})_!\colon \Under{(\pi_A^\ast\I{C})}{\bar c}\to\pi_A^\ast\I{C}$. This turns out to produce the same result, in the sense that when applying the forgetful functor $(\pi_A)_!\colon\Cat(\Over{\BB}{A})\to\Cat(\BB)$ to the map $(\pi_{\bar c})_!\colon \Under{(\pi_A^\ast\I{C})}{\bar c}\to\pi_A^\ast\I{C}$, we recover the map $(\pi_c)_!\colon \Under{\I{C}}{c}\to A\times\I{C}$ from Definition~\ref{def:sliceBCategories}. Thus, when regarded as  a $\Over{\BB}{A}$-category, we may identify $\Under{\I{C}}{c}$ with $\Under{(\pi_A^\ast\I{C})}{\bar c}$.
\end{remark}

\begin{remark}
	\label{rem:dualSliceBCategory}
	Dually, by performing the pullback of $(d^1,d^0)$ along $\id\times c\colon \I{C}\times A\to\I{C}\times\I{C}$, one defines the slice $\BB$-category $\Over{\I{C}}{c}$ together with its projection $(\pi_c)_!\colon \Over{\I{C}}{c}\to\I{C}\times A$. Alternatively, this $\BB$-category can be defined via the identity $\Over{\I{C}}{c}\simeq(\Under{\I{C}^\op}{c})^\op$.
\end{remark}

\begin{proposition}[{\cite[Proposition~4.2.7]{MYoneda}}]
	\label{prop:sliceProjectionLeftFibration}
	For every object $c\colon A\to\I{C}$ in a $\BB$-category $\I{C}$, the functor $(\pi_c)_!\colon\Under{\I{C}}{c}\to A\times\I{C}$ is a left fibration of $\BB$-categories.
\end{proposition}

\begin{remark}
	By Remark~\ref{rem:baseChangeSlice}, the functor $(\pi_c)_!$ in Proposition~\ref{prop:sliceProjectionLeftFibration} can be regarded as a map in $\Cat(\Over{\BB}{A})$ and is as such a left fibration as well (by either applying Proposition~\ref{prop:sliceProjectionLeftFibration} to the transposed object $\bar c\colon 1_{\Over{\BB}{A}}\to\pi_A^\ast\I{C}$ or by using Remark~\ref{rem:LFibExplicitly}).
\end{remark}

\begin{definition}
	\label{def:initialObject}
	Let $\I{C}$ be a $\BB$-category. An object $c\colon A\to \I{C}$ is said to be \emph{initial} if the transpose map $1\to \pi_A^\ast\I{C}$ defines an initial functor in $\Cat(\Over{\BB}{A})$. 
\end{definition}

\begin{remark}
	In the situation of Definition~\ref{def:initialObject}, one dually says that $c$ is \emph{final} if the transpose map $1\to \pi_A^\ast \I{C}$ defines a final functor in $\Cat(\Over{\BB}{A})$.
\end{remark}

\begin{remark}[{\cite[Remark~4.3.7]{MYoneda}}]
	\label{rem:clarificationInitialObject}
	The forgetful functor $(\pi_A)_!\colon\Cat(\Over{\BB}{A})\to\Cat(\BB)$ creates initial maps for every $A\in\BB$. Therefore, if $\I{C}$ is a $\BB$-category, an object $c\colon A\to\I{C}$ is initial if and only if the map $(c,\id)\colon A\to\I{C}\times A$ is an initial functor in $\Cat(\BB)$.
\end{remark}

Observe that if $c\colon A\to\I{C}$ is an object in a $\BB$-category $\I{C}$, the identity $\id_c\colon A\to \I{C}^{\Delta^1}$ takes values in $\Under{\I{C}}{c}$. We therefore obtain a section $\id_c\colon A\to \Under{\I{C}}{c}$ of the structure map $\Under{\I{C}}{c}\to A$ (which coincides with the image of $\id_{\bar c}\colon 1_{\Over{\BB}{A}}\to \Under{(\pi_A^\ast\I{C})}{\bar c}$ along the forgetful functor $(\pi_A)_!$, see Remark~\ref{rem:baseChangeSlice}).

\begin{proposition}[{\cite[Proposition~4.3.9 and Remark~4.3.10]{MYoneda}}]
	\label{prop:initialityCanonicalSection}
	For any $\BB$-category and any object $c\colon A\to\I{C}$, the section $\id_c\colon A\to \Under{\I{C}}{c}$ is initial as a map in $\Cat(\Over{\BB}{A})$ and therefore defines an initial object of $\Under{\I{C}}{c}$.
\end{proposition}

\begin{corollary}[{\cite[Corollary~4.3.19]{MYoneda}}]
	\label{cor:factorisationInternalObject}
	Let $\I{C}$ be a $\BB$-category and let $c\colon A\to\I{C}$ be an object in $\I{C}$. The factorisation of $c$ into an initial map and a left fibration is given by the composition $\pr_1(\pi_c)_!\id_c\colon A\to \Under{(\I{C})}{c}\to \I{C}$ where $\pr_1\colon A\times\I{C}\to\I{C}$ is the projection.
\end{corollary}

\begin{proposition}[{\cite[Proposition~4.3.20]{MYoneda}}]
	\label{prop:characterisationinitialObject}
	Let $\I{C}$ be a $\BB$-category. For any object $c\colon A\to \I{C}$, the following are equivalent:
	\begin{enumerate}
		\item  $c$ is an initial object;
		\item the projection $(\pi_c)_!\colon\Under{\I{C}}{c}\to A\times \I{C}$ is an equivalence;
		\item for any object $d\colon B\to \I{C}$ the map $\map{\I{C}}(\pr_0^\ast c,\pr_1^\ast d)\to A\times B$ is an equivalence in $\BB$.
	\end{enumerate}
\end{proposition}

\begin{corollary}[{\cite[Corollary~4.3.21]{MYoneda}}]
	\label{cor:universalPropertyInitialObject}
	Let $\I{C}$ be a $\BB$-category and let $c$ and $d$ be objects in $\I{C}$ in context $A\in\BB$ such that $c$ is initial. Then there is a unique map $c\to d$ in $\I{C}$ in context $A$ that is an equivalence if and only if $d$ is initial as well.
\end{corollary}

\subsection{Yoneda's lemma}
\label{sec:YonedaLemma}
The theory of left fibrations can be used to derive a version of Yoneda's lemma for $\BB$-categories. First, we need a functorial version of the mapping $\BB$-groupoid construction. To that end, let us denote by $-\star -\colon\Delta\times\Delta\to\Delta$ the ordinal sum bifunctor. We may now define:

\begin{definition}[{\cite[Definition~4.2.4]{MYoneda}}]
	\label{def:twistedArrow}
	Let $\epsilon\colon \Delta\to\Delta$ denote the functor $\ord{n}\mapsto \ord{n}^{\op}\star \ord{n}$. For any $\BB$-category $\I{C}$, we define the \emph{twisted arrow $\BB$-category} $\ITw(\I{C})$ to be the simplicial object given by the composition
	\begin{equation*}
		\Delta^{\op}\xrightarrow{\epsilon^{\op}} \Delta^{\op}\xrightarrow{\I{C}} \BB.
	\end{equation*}
	This defines a functor $\ITw\colon \Cat(\BB)\to\Simp\BB$.
\end{definition}

Note that the functor $\epsilon$ in Definition~\ref{def:twistedArrow} comes along with two canonical natural transformations
\begin{equation*}
	(-)^{\op}\to \epsilon \leftarrow \id_{\Delta}
\end{equation*}
which induces a map of simplicial objects
\begin{equation*}
	\ITw(\I{C})\to \I{C}^{\op}\times\I{C}
\end{equation*}
that is natural in $\I{C}$.
\begin{proposition}[{\cite[Proposition~4.2.5]{MYoneda}}]
	For every $\BB$-category $\I{C}$, the simplicial object $\ITw(\I{C})$ is a $\BB$-category, and the map $\ITw(\I{C})\to\I{C}^\op\times\I{C}$ is a left fibration.
\end{proposition}

By applying the straightening/unstraightening equivalence from Theorem~\ref{thm:straightening} to the left fibration $\ITw(\I{C})\to\I{C}^\op\times\I{C}$, one now ends up with a bifunctor
\begin{equation*}
	\map{\I{C}}\colon\I{C}^\op\times\I{C}\to\Univ
\end{equation*}
that sends a pair of objects $(c,d)\colon A\to\I{C}^{\op}\times\I{C}$ to the object $\map{\I{C}}(c,d)\in\Over{\BB}{A}$ from Definition~\ref{def:nMorphisms}.
Upon transposing this bifunctor across the adjunction $\I{C}^\op\times -\dashv \IFun(\I{C}^\op,-)$, one obtains the \emph{Yoneda embedding}
\begin{equation*}
	h_{\I{C}}\colon\I{C}\to\IPSh(\I{C}).
\end{equation*}

\begin{theorem}[{\cite[Theorem~4.7.8]{MYoneda}}]
	\label{thm:YonedaLemma}
	For any $\BB$-category $\I{C}$, there is a commutative diagram
	\begin{equation*}
		\begin{tikzcd}
			\I{C}^{\op}\times \IPSh(\I{C})\arrow[dr, "\ev"'] 
			\arrow[r, "h\times \id"] & {\IPSh(\I{C})^{\op}}\times\IPSh(\I{C})\arrow[d, "{\map{\IPSh(\I{C})}(-,-)}"]  \\
			& \Univ
		\end{tikzcd}
	\end{equation*}
	in $\Cat(\BBB)$ (where $\ev$ is the evaluation map).
\end{theorem}

\begin{corollary}[{\cite[Corollary~4.7.16]{MYoneda}}]
	\label{cor:YonedaEmbedding}
	For every $\BB$-category $\I{C}$, the Yoneda embedding $h_{\I{C}}$ is fully faithful.
\end{corollary}

\begin{remark}[{\cite[Proposition~4.7.20]{MYoneda}}]
	\label{rem:representableFunctorsFibrationalCriterion}
	Explicitly, an object $A\to \IPSh(\I{C})$ is contained in $\I{C}$ if and only if the associated right fibration $p\colon \I{P}\to \I{C}\times A$ admits a final section $A\to \I{P}$ over $A$ (i.e.\ if $\I{P}$ has a final object in global context when viewed as a $\Over{\BB}{A}$-category). If this is the case, one obtains an equivalence $\Over{\I{C}}{c}\simeq \I{P}$ over $\I{C}\times A$ where $c$ is the image of the final section $A\to \I{P}$ along the functor $\I{P}\to \I{C}$.
\end{remark}

\subsection{Adjunctions}
\label{sec:adjunctions}
Recall that the bifunctor $\Fun_{\BB}(-,-)$ exhibits $\Cat(\BB)$ as an $\infty$-category enriched in $\CatS$ and therefore in particular as an $(\infty,2)$-category. One can therefore make sense of the usual $2$-categorical definition of an adjunction in $\Cat(\BB)$. Explicitly, this can be spelled out as follows:
\begin{definition}[{\cite[Definition~3.1.1]{MWColimits}}]
	\label{def:internalAdjunction}
	Let $\I{C}$ and $ \I{D}$ be $\BB$-categories. An \emph{ adjunction} between $\I{C}$ and $\I{D}$ is a tuple $(l,r,\eta,\epsilon)$, where $l\colon \I{C}\to\I{D}$ and $r\colon\I{D}\to\I{C}$ are functors and where $\eta\colon \id_{ \I{D}}\to rl$ and $\epsilon \colon lr\to \id_{ \I{C}}$ are maps such that there are commutative triangles
	\begin{equation*}
		\begin{tikzcd}
			l\arrow[r, "l\eta"]\arrow[dr, "\id"'] & lrl \arrow[d, "\epsilon l"] & & rlr \arrow[from=r, "\eta r"']\arrow[d, "r\epsilon"'] & r \arrow[dl, "\id"]\\
			& l & & r
		\end{tikzcd}
	\end{equation*}
	in $\Fun_{\BB}(\I{C}, \I{D})$ and in $\Fun_{\BB}(\I{D}, \I{C})$, respectively. We denote such an adjunction by $l\dashv r$, and we refer to $\eta$ as the \emph{unit} and to $\epsilon$ as the \emph{counit} of the adjunction. We say that a pair $(l,r)\colon \I{C}\leftrightarrows \I{D}$ \emph{defines an adjunction} if there exist transformations $\eta$ and $\epsilon$ as above such that the tuple $(l,r,\eta,\epsilon)$ is an adjunction.
\end{definition}
Note that in order to make sense of Definition~\ref{def:internalAdjunction}, one does not actually require the full $(\infty,2)$-categorical structure on $\Cat(\BB)$. In fact, the only additional structure one needs to give a meaning to the maps appearing in the two commutative triangles is the bifunctor $\Fun_{\BB}(-,-)$.

Analogous to the situation in (higher) category theory, one can alternatively define the notion of an adjunction between $\BB$-categories via the datum of a bifunctorial equivalence on mapping $\BB$-groupoids:
\begin{proposition}[{\cite[Proposition~3.3.4 and Corollary~3.3.5]{MWColimits}}]
	\label{prop:characterisationAdjunctionMappingGroupoids}
	A pair of functors $(l, r)\colon\I{C}\leftrightarrows\I{D}$ between $\BB$-categories defines an adjunction if and only if there is an equivalence of functors
	\begin{equation*}
		\alpha\colon\map{\I{D}}(l(-),-)\simeq\map{\I{C}}(-,r(-)).
	\end{equation*}
	In particular, the functor $r\colon\I{D}\to\I{C}$ admits a left adjoint if and only if the copresheaf $\map{\I{C}}(c,r(-))$ is corepresentable for every object $c\colon A\to\I{D}$, and the dual statement for the existence of a \emph{right} adjoint holds as well.
\end{proposition}

\begin{remark}
	In view of Remark~\ref{rem:representableFunctorsFibrationalCriterion}, Proposition~\ref{prop:characterisationAdjunctionMappingGroupoids} implies that a functor $r\colon \I{D}\to\I{C}$ admits a left adjoint if and only if for every $c\colon A\to\I{C}$ the right fibration $\Under{\I{D}}{c}=\Under{\I{C}}{c}\times_{\I{C}}\I{D}\to A\times\I{D}$ admits an initial section $A\to \Under{\I{D}}{c}$ over $A$.
\end{remark}

Lastly, we also have a convenient and very explicit sheaf-theoretic criterion for the existence of a left or right adjoint:
\begin{proposition}[{\cite[Proposition~3.2.9]{MWColimits}}]
	\label{prop:existenceAdjointsBeckChevalley}
	A functor $r\colon\I{C}\to\I{D}$ in $\Cat(\BB)$ is a right adjoint if and only if the following two conditions hold:
	\begin{enumerate}
		\item For every object $A\in \BB$, the induced functor $r(A)\colon \I{C}(A)\to \I{D}(A)$ is the right adjoint in an adjunction $(l_A,r(A),\eta_A,\epsilon_A)$.
		\item For every morphism $s\colon B\to A$ in $\BB$, the mate of the commutative square
		\begin{equation*}
			\begin{tikzcd}
				\I{C}(A)\arrow[r, "r(A)"] \arrow[d, "s^\ast"'] & \I{D}(A) \arrow[d, "s^\ast"] \arrow[dl, Rightarrow, "\simeq"', shorten=2mm]\arrow[d]\\
				\I{C}(B)\arrow[r, "r(B)"]  & \I{D}(B)
			\end{tikzcd}
		\end{equation*}
		commutes.
	\end{enumerate}
	If this is the case, then the left adjoint $l$ of $r$ is given on objects $A\in \BB$ by $l_A$ and on morphisms $s\colon B\to A$ by the mate of the commutative square defined by $r(s)$. The dual statement for the existence of a \emph{right} adjoint holds as well.
\end{proposition}

\subsection{Limits and colimits}
\label{sec:limitsColimits}
Using the notion of an adjunction between $\BB$-categories, it is now straightforward to define internal limits and colimits:

\begin{definition}[{\cite[Propositions~4.1.12 and~4.2.4]{MWColimits}}]
	\label{def:limitsColimits}
	If $\I{I}$ and $\I{C}$ are $\BB$-categories, the \emph{colimit} functor $\colim_{\I{I}}$ is defined to be the left adjoint of the diagonal map $\diag\colon\I{C}\to\IFun(\I{I},\I{C})$, provided that it exists. Dually, the \emph{limit} functor $\lim_{\I{I}}$ is defined as the right adjoint of the diagonal map. A functor $f\colon\I{C}\to\I{D}$ \emph{preserves} $\I{I}$-indexed colimits if the canonical map $\colim_{\I{I}} f_\ast\to f\colim_{\I{I}}$ (which is given by the mate transformation of the equivalence $f_\ast \diag\simeq\diag f$) is an equivalence. Dually, $f$ preserves $\I{I}$-indexed limits if the map $f\lim_{\I{I}}\to\lim_{\I{I}}f_\ast$ is an equivalence.
\end{definition}
\begin{remark}
	 In the context of Definition~\ref{def:limitsColimits}, even if a colimit or limit functor does not exist, one can define the colimit $\colim d$ of a diagram $d\colon A\to\IFun(\I{I},\I{C})$ as a corepresenting object of the copresheaf $\map{\IFun(\I{I},\I{C})}(d,\diag(-))$, and the limit $\lim d$ of the diagram $d$ as a representing object of the presheaf $\map{\IFun(\I{I},\I{C})}(\diag(-), d)$. Using Remark~\ref{rem:representableFunctorsFibrationalCriterion}, this precisely means that $d$ admits a colimit if and only if the left fibration $\Under{\I{C}}{d}=\Under{\IFun(\I{I}, \I{C})}{d}\times_{\IFun(\I{I},\I{C})}\I{C}\to A\times\I{C}$ admits an initial section $A\to \Under{\I{C}}{d}$ over $A$. We refer to $\Under{\I{C}}{d}$ as the $\Over{\BB}{A}$-category of \emph{cocones} over the diagram $d$. The initial section $A\to \Under{\I{C}}{d}$ then picks out the \emph{colimit cocone} under $d$. 
\end{remark}
By making use of Proposition~\ref{prop:existenceAdjointsBeckChevalley}, one now arrives at the following two fundamental classes of examples for internal (co)limits:

\begin{example}[{\cite[Example~4.1.13]{MWColimits}}]
	\label{ex:groupoidalLimitsColimits}
	Let $\I{C}$ be a $\BB$-category and $\I{G}$ be a $\BB$-groupoid. Then the following two conditions are equivalent:
	\begin{enumerate}
		\item $\I{C}$ admits $\I{G}$-indexed colimits;
		\item for every $A\in\BB$ the functor $\pi_{\I{G}}^\ast\colon \I{C}(A)\to\I{C}(\I{G}\times A)$ admits a left adjoint $(\pi_{\I{G}})_!$ such that for every map $s\colon B\to A$ in $\BB$ the natural morphism $(\pi_{\I{G}})_!s^\ast\to s^\ast (\pi_{\I{G}})_!$ is an equivalence.
	\end{enumerate}
	Similarly, if $f\colon \I{C}\to\I{D}$ is a functor of $\BB$-categories that admit $\I{G}$-indexed colimits, then the following are equivalent:
	\begin{enumerate}
		\item $f$ preserves $\I{G}$-indexed colimits;
		\item for every $A\in \BB$ the natural map $(\pi_G)_! f(\I{G}\times A)\to f(A)(\pi_G)_!$ is an equivalence.
	\end{enumerate}
	The dual statements for $\I{G}$-indexed limits hold as well.
\end{example}

\begin{example}[{\cite[Example~4.1.14]{MWColimits}}]
	\label{ex:externalLimitsColimits}
	Let $\I{C}$ be a $\BB$-category and let $\II$ be an $\infty$-category. Then the following two conditions are equivalent:
	\begin{enumerate}
		\item $\I{C}$ admits $\II$-indexed colimits;
		\item for every $A\in\BB$ the $\infty$-category $\I{C}(A)$ admits $\II$-indexed colimits, and for every map $s\colon B\to A$ in $\BB$ the functor $s^\ast\colon \I{C}(A)\to\I{C}(B)$ preserves such colimits. 
	\end{enumerate}
	Similarly, if $f\colon \I{C}\to\I{D}$ is a functor of $\BB$-categories that admit $\II$-indexed colimits, then the following are equivalent:
	\begin{enumerate}
	\item $f$ preserves $\II$-indexed colimits;
	\item for every $A\in \BB$ the functor $f(A)\colon \I{C}(A)\to\I{D}(A)$ preserves $\II$-indexed colimits.
	\end{enumerate}
	The dual statements for $\II$-indexed limits hold as well.
\end{example}

\subsection{Cocompleteness}
\label{sec:Cocompleteness}
Using the notion of internal (co)limits, we may now define what it means for a (large) $\BB$-category to be \emph{$\I{U}$-(co)complete} and a functor of (large) $\BB$-categories to be \emph{$\I{U}$-(co)continuous} with respect to an arbitrary internal class (i.e.\ full subcategory) $\I{U}\into\ICat_{\BB}$:

\begin{definition}[{\cite[Definition~5.2.1]{MWColimits}}]
	\label{def:cocomplete}
	Let $\I{U}$ be an internal class of $\BB$-categories. A $\BB$-category $\I{C}$ is said to be \emph{$\I{U}$-cocomplete} if $\pi_A^\ast\I{C}$ admits $\I{I}$-indexed colimits for every object $\I{I}\in\I{U}(A)$ and every $A\in\BB$. Similarly, if $f\colon \I{C}\to\I{D}$ is a functor between $\BB$-categories that are both $\I{U}$-cocomplete, we say that $f$ is \emph{$\I{U}$-cocontinuous} if $\pi_A^\ast f$ preserves $\I{I}$-indexed colimits for any $A\in\BB$ and any $\I{I}\in\I{U}(A)$. We simply say that a (large) $\BB$-category $\I{C}$ is \emph{cocomplete} if it is $\ICat_{\BB}$-cocomplete (when viewing $\ICat_{\BB}$ as an internal class of large $\BB$-categories), and we call a functor between cocomplete (large) $\BB$-categories \emph{cocontinuous} if it is $\ICat_{\BB}$-cocontinuous.
	
	Dually, we say that a $\BB$-category $\I{C}$ is \emph{$\I{U}$-complete} if $\pi_A^\ast\I{C}$ admits $\I{I}$-indexed limits for every object $\I{I}\in\I{U}(A)$ and every $A\in\BB$. If $f\colon \I{C}\to\I{D}$ is a functor between $\BB$-categories that are both $\I{U}$-complete, we say that $f$ is \emph{$\I{U}$-continuous} if $\pi_A^\ast f$ preserves $\I{I}$-indexed limits for any $A\in\BB$ and any $\I{I}\in\I{U}(A)$. We simply say that a (large) $\BB$-category $\I{C}$ is \emph{complete} if it is $\ICat_{\BB}$-complete, and we call a functor between complete (large) $\BB$-categories \emph{continuous} if it is $\ICat_{\BB}$-continuous.
\end{definition}
The notions of $\I{U}$-(co)completeness and $\I{U}$-(co)continuity have the expected stability properties:
\begin{proposition}[{\cite[Proposition~5.2.5, 5.2.6 and 5.2.7]{MWColimits}}]
	\label{prop:UCocontinuityProperties}
	Let $\I{U}$ be an arbitrary internal class of $\BB$-categories. Then:
	\begin{enumerate}
		\item for every $\BB$-category $\I{C}$, the functor $\IFun(\I{C},-)$ preserves $\I{U}$-(co)completeness and $\I{U}$-(co)continuity;
		\item For every $\I{U}$-(co)complete $\BB$-category $\I{D}$, the functor $\IFun(-,\I{D})$ carries every map in $\Cat(\BB)$ to a $\I{U}$-(co)continuous functor.
		\item every right (left) adjoint functor between $\I{U}$-(co)complete $\BB$-categories is $\I{U}$-(co)continuous;
		\item every reflective subcategory of a $\I{U}$-(co)complete $\BB$-category (i.e.\ full subcategory for which the inclusion admits a left adjoint) is $\I{U}$-(co)complete as well.
	\end{enumerate}
\end{proposition}

\begin{example}[{\cite[Example~5.2.8]{MWColimits}}]
	\label{ex:UnivCocomplete}
	The universe $\Univ$ is a complete and cocomplete $\BB$-category, and therefore so is $\IPSh(\I{C})$ for every $\I{C}\in\Cat(\BB)$. Since $\ICat_{\BB}$ arises as a reflective subcategory of $\IPSh(\Delta)$, this implies that $\ICat_{\BB}$ is complete and cocomplete as well.
\end{example}

\begin{example}[{\cite[Proposition~5.4.2]{MWColimits}}]
	\label{ex:UcolimitsGroupoids}
	Let $S$ be a local class of maps in $\BB$ and let $\Univ[S]$ be the associated subuniverse, where we view $\Univ[S]$ as an internal class of $\BB$-categories. Then a $\BB$-category $\I{C}$ is $\Univ[S]$-cocomplete if and only if the following two conditions are satisfied:
	\begin{enumerate}
		\item for every map $p\colon P\to A$ in $S$, the functor $p^\ast\colon \I{C}(A)\to\I{C}(P)$ admits a left adjoint $p_!$;
		\item for every pullback square
		\begin{equation*}
			\begin{tikzcd}
				Q\arrow[r, "t"]\arrow[d, "q"] & P\arrow[d, "p"]\\
				B\arrow[r, "s"] & A
			\end{tikzcd}
		\end{equation*}
		in $\BB$ in which $p$ and $q$ are contained in $S$, the natural map $q_!t^\ast\to s^\ast p_!$ is an equivalence.
	\end{enumerate}
	Furthermore, a functor $f\colon\I{C}\to\I{D}$ between  $\Univ[S]$-cocomplete $\BB$-categories is $\Univ[S]$-cocontinuous precisely if for every map $p\colon P\to A$ in $S$ the natural map $p_! f(P)\to f(A)p_!$ is an equivalence.
\end{example}

\begin{example}[{\cite[Proposition~5.4.5]{MWColimits}}]
	\label{prop:LConstCocomplete}
	If $\KK\subset\CatS$ is a class of $\infty$-categories, we denote by $\ILConst_{\KK}\into\ICat_{\BB}$ the internal class of \emph{locally $\KK$-constant $\BB$-categories}, which is defined by the sheaf that sends $A\in\BB$ to the full subcategory of $\Cat(\Over{\BB}{A})$ that is spanned by the {locally $\KK$-constant $\Over{\BB}{A}$-categories}. Here a $\Over{\BB}{A}$-category $\I{C}$ is said to be locally $\KK$-constant if there is a cover $(s_i)\colon\bigsqcup_i A_i\onto A$ in $\BB$ such that each $s_i^\ast\I{C}$ is equivalent to $\const_{\BB_{/A_i}}(K_i)$ for some $K_i\in\KK$. We will write $\ILConst$ for the internal class of \emph{all} locally constant $\BB$-categories (i.e.\ for the case where $\KK=\CatS$). Now a $\BB$-category $\I{C}$ is $\ILConst_{\KK}$-cocomplete precisely if the sheaf $\I{C}(-)$ takes values in the $\infty$-category $\CatS^{\cocont{\KK}}$ of $\KK$-cocomplete $\infty$-categories and $\KK$-cocontinuous functors. Furthermore, a functor $f\colon\I{C}\to\I{D}$ between $\ILConst_\KK$-cocomplete $\BB$-categories is $\ILConst_\KK$-cocontinuous if and only if for all $A\in\BB$ the functor $f(A)$ is $\KK$-cocontinuous.
\end{example}

The notions of cocompleteness and cocontinuity (i.e\ the case $\I{U}=\ICat_{\BB}$) will be of particular interest to us, so it is important that we have good control over them. Luckily, we have a way to decompose arbitrary internal colimits into \emph{$\BB$-groupoidal} and \emph{$\infty$-categorical} ones:
\begin{proposition}[{\cite[Proposition~5.4.1]{MWColimits}}]
	\label{prop:CocompleteGroupoidsExternal}
	A (large) $\BB$-category $\I{C}$ is cocomplete if and only if it is both $\Univ$- and $\ILConst$-cocomplete, and a functor between cocomplete large $\BB$-categories is cocontinuous if and only if it is both $\Univ$- and $\ILConst$-cocontinuous.
\end{proposition}
As a consequence, we may deduce from the two examples above:
\begin{corollary}[{\cite[Corollary~5.4.7]{MWColimits}}]
	\label{cor:cocompletenessExplicitly}
	A (large) $\BB$-category $\I{C}$ is cocomplete if and only if
	\begin{enumerate}
		\item $\I{C}(-)$ takes values in the $\infty$-category $\CatS^\cc$ of cocomplete $\infty$-categories and cocontinuous functors;
		\item for every $s\colon B\to A$ in $\BB$ the transition functor $s^\ast\colon \I{C}(A)\to\I{C}(B)$ has a left adjoint;
		\item $\I{C}(-)$ sends every pullback square in $\BB$ to a left adjointable square of $\infty$-categories.
	\end{enumerate}
	Moreover, a functor $f\colon \I{C}\to\I{D}$ of cocomplete $\BB$-categories is cocontinuous if and only if for each $A\in\BB$ the functor $f(A)$ preserves colimits, and for every map $p\colon B\to A$ in $\BB$ the natural map $p_! f(P)\to f(A) p_!$ is an equivalence.
\end{corollary}

Lastly, we note that the collection of $\I{U}$-cocomplete $\BB$-categories and $\I{U}$-cocontinuous functors can be assembled into a $\BB$-category as well:
\begin{definition}[{\cite[Definitions~5.3.1 and~5.3.3 and Remark~5.3.2]{MWColimits}}]
	\label{def:CatU}
	For every internal class $\I{U}$ of $\BB$-categories, the (large) $\BB$-category $\ICat_{\BB}^{\cocont{\I{U}}}$ is defined as the sheaf of $\infty$-categories on $\BB$ that carries $A\in\BB$ to the (non-full) subcategory of $\Cat(\Over{\BB}{A})$ that is spanned by the $\pi_A^\ast\I{U}$-cocomplete $\Over{\BB}{A}$-categories and the $\pi_A^\ast\I{U}$-cocontinuous functors between them. If $\I{C}$ and $\I{D}$ are $\I{U}$-cocomplete $\BB$-categories, we denote by $\IFun^{\cocont{\I{U}}}(\I{C},\I{D})$ the full subcategory of $\IFun(\I{C},\I{D})$ that is spanned in context $A\in\BB$ by the $\pi_A^\ast\I{U}$-cocontinuous functors $\pi_A^\ast\I{C}\to\pi_A^\ast\I{D}$.
\end{definition}

\subsection{Kan extensions}
\label{sec:Kan extensions}
(Ordinary) Kan extensions of functors between $\BB$-categories can be defined via their usual universal property:
\begin{definition}
	\label{def:KanExtension}
	A \emph{left Kan extension} of a functor $F\colon \I{C}\to\I{E}$ along $f \colon \I{C} \to \I{D}$ is a functor $f_!F\colon \I{D}\to\I{E}$ together with an equivalence 
	\begin{equation*}
		\map{\IFun(\I{D},\I{E})}(f_! F,-)\simeq \map{\IFun(\I{C},\I{E})}(F, f^\ast(-)).
	\end{equation*}
	The \emph{functor of left Kan extension} $f_!$ is the left adjoint of $f^\ast\colon \IFun(\I{D},\I{E})\to\IFun(\I{C},\I{E})$, provided that it exists.
	
	Dually, a \emph{right Kan extension} of $F$ along $f$ is a functor $f_\ast F\colon \I{D}\to\I{E}$ together with an equivalence
	\begin{equation*}
		\map{\IFun(\I{D},\I{E})}(-,f_\ast F)\simeq \map{\IFun(\I{C},\I{E})}(f^\ast(-), F).
	\end{equation*}
	The \emph{functor of right Kan extension} $f_\ast$ is the right adjoint of $f^\ast\colon \IFun(\I{D},\I{E})\to\IFun(\I{C},\I{E})$, provided that it exists.
\end{definition}

The main existence theorem of left Kan extensions in $\BB$-category theory can be phrased as follows:
\begin{proposition}[{\cite[Theorem~6.3.5]{MWColimits}}]
	\label{prop:existenceKanExtension}
	Let $\I{U}$ be an internal class of $\BB$-categories such that for every object $d\colon A\to\I{D}$ in context $A\in\BB$ the $\Over{\BB}{A}$-category $\Over{\I{C}}{d}$ is contained in $\I{U}(A)$.
	Then, whenever $\I{E}$ is $\I{U}$-cocomplete, the functor of left Kan extension $f_!$ exists. Moreover, $f_!$ is fully faithful whenever $f$ is fully faithful.
\end{proposition}

A large $\BB$-category $\I{D}$ is said to be \emph{locally small} if the mapping $\BB$-groupoid functor $\map{\I{D}}(-,-)$ takes values in $\Univ[\BB]\into\Univ[\BBB]$ (see~\cite[\S~4.7]{MYoneda}). Proposition~\ref{prop:existenceKanExtension} now implies:
\begin{corollary}[{\cite[Corollary~6.3.7]{MWColimits}}]
	\label{cor:leftKanExtensionsSmall}
	Let $f\colon \I{C}\to\I{D}$ be a functor of $\BB$-categories such that $\I{C}$ is small and $\I{D}$ is locally small (but not necessarily small). 
	If $\I{E}$ is a cocomplete large $\BB$-category, the functor of left Kan extension $f_!$ always exists.
\end{corollary}
\begin{example}[{\cite[Proposition~3.3.1]{MWColimits}}]
	\label{ex:LKEPresheaves}
	Let $f\colon \I{C}\to\I{D}$ be a functor of $\BB$-categories. Then the functor of left Kan extension $f_!\colon\IPSh(\I{C})\to\IPSh(\I{D})$ can be explicitly described as follows: if $F\colon\I{C}^\op\to\Univ$ is a presheaf and $\Over{\I{C}}{F}\to\I{C}$ is the associated right fibration, the right fibration $\Over{\I{D}}{f_!(F)}\to\I{D}$ (i.e.\ the one classified by $f_!(F)\colon\I{D}^\op\to\Univ$) is the unique right fibration over $\I{D}$ that fits into a commutative square
	\begin{equation*}
		\begin{tikzcd}
			\Over{\I{C}}{F}\arrow[d] \arrow[r, "j"] &\Over{\I{D}}{f_!(F)}\arrow[d]\\
			\I{C}\arrow[r, "f"] & \I{D}
		\end{tikzcd}
	\end{equation*}
	in which $j$ is a final functor.
\end{example}


\subsection{Free $\I{U}$-cocompletion and the universal property of presheaves}
The Yoneda embedding $h_{\I{C}}\colon\I{C}\into\IPSh(\I{C})$ exhibits the large $\BB$-category $\IPSh(\I{C})$ as the \emph{universal} cocomplete $\BB$-category that admits a map from $\I{C}$:
\begin{proposition}[{\cite[Theorem~7.1.1]{MWColimits}}]
	\label{prop:universalPropertyPSh}
	For every $\BB$-category $\I{C}$ and every cocomplete large $\BB$-category $\I{E}$, the functor of left Kan extension $(h_{\I{C}})_!$ along the Yoneda embedding $h_{\I{C}}\colon \I{C}\into\IPSh(\I{C})$ induces an equivalence
	\begin{equation*}
		(h_{\I{C}})_!\colon\IFun(\I{C},\I{E})\simeq \IFun^\cc(\IPSh(\I{C}),\I{E}).
	\end{equation*}
\end{proposition}
One can leverage Proposition~\ref{prop:universalPropertyPSh} to construct the \emph{free $\I{U}$-cocompletion} $\IPSh^{\I{U}}(\I{C})$ of any $\BB$-category $\I{C}$:
\begin{definition}[{\cite[Definition~7.1.6]{MWColimits}}]
	\label{def:freeCocompletion}
	For every $\BB$-category $\I{C}$ and every internal class $\I{U}$ of $\BB$-categories, we define the \emph{free $\I{U}$-cocompletion} $\IPSh^{\I{U}}(\I{C})$ of $\I{C}$ as the smallest full subcategory of $\IPSh(\I{C})$ that contains $\I{C}$ and is closed under $\I{U}$-colimits in $\IPSh(\I{C})$ (i.e.\ that is $\I{U}$-cocomplete and for which the inclusion is $\I{U}$-cocontinuous). 
\end{definition}
The terminology in Definition~\ref{def:freeCocompletion} is justified by the following result:
\begin{proposition}[{\cite[Theorem~7.1.13]{MWColimits}}]
	\label{prop:freeCocompletion}
	Let $\I{C}$ be a $\BB$-category, let $\I{U}$ be an internal class of $\BB$-categories and let $\I{E}$ be a $\I{U}$-cocomplete (large) $\BB$-category. Then the functor of left Kan extension along $h_{\I{C}}\colon \I{C}\into\IPSh^{\I{U}}(\I{C})$ exists and determines an equivalence
	\begin{equation*}
		(h_{\I{C}})_!\colon \IFun(\I{C},\I{E})\simeq \IFun^{\cocont{\I{U}}}(\IPSh^{\I{U}}(\I{C}),\I{E}) .
	\end{equation*}
\end{proposition}

\subsection{Localisations}
\label{sec:localisations}
Lastly, we mention that there is also a $\BB$-categorical analogue of the theory of \emph{(Dwyer-Kan) localisations}:
\begin{definition}[{\cite[Definition~C.3]{MWColimits}}]
	\label{def:localisationAtSomeS}
	Let $\I{C}$ be a $\BB$-category and let $\I{S}\to \I{C}$ be a functor. The \emph{localisation} of $\I{C}$ at $\I{S}$ is the $\BB$-category $\I{S}^{-1}\I{C}$ defined by the pushout
	\begin{equation*}
		\begin{tikzcd}
			\I{S}\arrow[d]\arrow[r] \arrow[dr, phantom, "\lrcorner", very near end]& \I{S}^{\gp}\arrow[d]\\
			\I{C}\arrow[r, "L"] & \I{S}^{-1}\I{C}
		\end{tikzcd}
	\end{equation*}
	in $\Cat(\BB)$.
	We refer to the map $L\colon\I{C}\to \I{S}^{-1}\I{C}$ as the \emph{localisation functor} that is associated with the map $\I{S}\to\I{C}$. We say that this is a \emph{Bousfield} localisation if $L$ admits a right adjoint.
\end{definition}

\begin{remark}[{\cite[Proposition~3.4.6 and Remark~3.4.8]{MWColimits}}]
	If $L\colon \I{C}\to\I{S}^{-1}\I{C}$ is a Bousfield localisation, then the right adjoint of $L$ is automatically fully faithful and therefore exhibits $\I{S}^{-1}\I{C}$ as a reflective subcategory of $\I{C}$. Conversely, any reflective subcategory $\I{D}\into\I{C}$ (with left adjoint $L\colon\I{C}\to\I{D}$) can be identified with the localisation of $\I{C}$ at $L^{-1}(\I{D}^\core)\into\I{C}$.
\end{remark}

Given a functor $\I{S}\to\I{C}$ as well as another $\BB$-category $\I{D}$, we denote by $\IFun(\I{C},\I{D})_{\I{S}}$ the full subcategory of $\IFun(\I{C},\I{D})$ that is spanned by the objects $A\to\IFun(\I{C},\I{D})$ in arbitrary context $A\in\BB$ that encode functors $\pi_A^\ast\I{C}\to\pi_A^\ast\I{D}$ for which the composition $\pi_A^\ast\I{S}\to\pi_A^\ast\I{C}\to\pi_A^\ast\I{D}$ factors through the inclusion $\pi_A^\ast\I{D}^\core\into\pi_A^\ast\I{D}$.
\begin{proposition}[{\cite[Proposition~C.13]{MWColimits}}]
	\label{prop:universalPropertyLocalisation}
	Let $\I{C}$ be a $\BB$-category and let $\I{S}\to\I{C}$ be a functor. Then precomposition with the localisation functor $L\colon\I{C}\to \I{S}^{-1}\I{C}$ induces an equivalence
	\begin{equation*}
		L^\ast\colon\IFun(\I{S}^{-1}\I{C},\I{D})\simeq \IFun(\I{C},\I{D})_{\I{S}}
	\end{equation*}
	for any $\BB$-category $\I{D}$.
\end{proposition}

\chapter{Accessible and presentable $\BB$-categories}
\label{chap:AccessiblePresentable}

Our theory of presentable $\BB$-categories relies on the interplay between internal classes $\I{U}$ of $\BB$-categories and their associated internal classes $\IFilt_{\I{U}}$ of \emph{$\I{U}$-filtered} $\BB$-categories. We study the concept of $\I{U}$-filteredness in \S~\ref{chap:filtered}. 
In particular, we study certain conditions on the internal class $\I{U}$ which guarantee that every $\BB$-category can be decomposed into a $\I{U}$-filtered colimit of objects in $\I{U}$. 
This is a technical condition which is crucial for the development of accessibility in the world of $\BB$-categories. 
In~\S~\ref{chap:cardinals}, we discuss how one can construct an ample amount of internal classes $\I{U}$ which satisfy these conditions. 
Building upon these rather technical preparations, we then define the concept of $\I{U}$-accessibility in~\S~\ref{chap:accessible} and prove a few basic results that we will need for our discussion of presentable $\BB$-categories. For example, we give a characterisation of $\I{U}$-accessible $\BB$-categories by making use of the notion of \emph{$\I{U}$-compactness}. 
In \S~\ref{chap:presentable}, we introduce and study presentable $\BB$-categories. 
Aside from discussing multiple characterisations of these $\BB$-categories, we prove an adjoint functor theorem and discuss limits and colimits of presentable $\BB$-categories. In the final two sections of this chapter, we switch gears and study a few concepts of higher algebra in the world of $\BB$-categories. 
In \S~\ref{chap:higherAlgebra}, we set up the main framework and use it to characterise dualisable objects in the $\BB$-category of modules over an $\mathbb{E}_\infty$-ring in $\BB$. 
In \S~\ref{chap:tensorProductBCategories}, we discuss tensor products of $\BB$-categories and in particular a symmetric monoidal structure on the $\BB$-category of presentable $\BB$-categories. We use this structure to realise $\BB$-modules in the $\infty$-category of presentable $\infty$-categories as presentable $\BB$-categories.

\section{Filtered $\BB$-categories}
\label{chap:filtered}
Classically, if $\kappa$ is a (regular) cardinal, a $1$-category $\JJ$ is said to be \emph{$\kappa$-filtered} if the colimit functor $\colim_{\JJ}\colon \Fun(\JJ,\Set)\to\Set$ commutes with $\kappa$-small limits. In~\cite{htt}, Lurie generalised this concept to the notion of a \emph{$\kappa$-filtered $\infty$-category} $\JJ$, which is an $\infty$-category for which $\colim_{\JJ}\colon\Fun(\JJ,\SS)\to\SS$ preserves $\kappa$-small limits. The main goal of this section is to discuss an analogous concept for $\BB$-categories. Following ideas originally introduced in $1$-category theory by Ad\'amek-Borceux-Lack-Rosick\'y~\cite{Adamek2002} and later generalised to $\infty$-categories by Charles Rezk~\cite{rezk2021}, we will introduce the notion of a \emph{$\I{U}$-filtered $\BB$-category}, where $\I{U}$ is an arbitrary internal class, i.e.\ a full subcategory of the large $\BB$-category $\ICat_{\BB}$ of $\BB$-categories (cf.~\S~\ref{sec:universe}). The main definitions and basic properties of such $\I{U}$-filtered $\BB$-categories are discussed in \S~\ref{sec:UFilteredness}. In \S~\ref{sec:weaklyUFiltered}, we introduce a slightly weaker notion, that of a \emph{weakly} $\I{U}$-filtered $\BB$-category. Classically, a $\kappa$-filtered ($\infty$-)category can be equivalently described as an ($\infty$-)category in which every $\kappa$-small diagram has a cocone. The notion of weak $\I{U}$-filteredness is a generalisation of this condition. However, as the terminology suggests, this notion is a priori weaker than that of $\I{U}$-filteredness. Following~\cite{Adamek2002}, we will call an internal class $\I{U}$ a \emph{doctrine} if both conditions happen to be equivalent. In \S~\ref{sec:regularClasses} and \S~\ref{sec:decompositionProperty}, we will study two other important properties of internal classes: \emph{regularity} and the \emph{decomposition property}. Recall that a cardinal $\kappa$ is said to be regular if $\kappa$-small sets are closed under $\kappa$-small sums. The notion of regularity for internal classes aims at capturing this property in the world of $\BB$-categories. The decomposition property, on the other hand, is the condition that every $\BB$-category can be obtained as a $\I{U}$-filtered colimit of objects in $\I{U}$. Hence, this notion can be viewed as an analogue to the fact that every ($\infty$-)category is a $\kappa$-filtered colimit of $\kappa$-small ($\infty$-)categories. We will make use of this property when we discuss the notion of $\I{U}$-compactness in \S~\ref{sec:UCompactObjects}.

\subsection{$\I{U}$-filtered $\BB$-categories}
\label{sec:UFilteredness}
In this section, we introduce and study the notion of $\I{U}$-filteredness in the world of $\BB$-categories, where $\I{U}$ is an arbitrary internal class. We begin with the following definition, which is an evident generalisation of the classical concept of a $\kappa$-filtered ($\infty$-)category:
\begin{definition}
	\label{def:filteredCategories}
	For any internal class $\I{U}$ of $\BB$-categories, a $\BB$-category $\I{J}$ is said to be \emph{$\I{U}$-filtered} if the colimit functor $\colim\colon\IFun(\I{J},  \Univ)\to\Univ$ is $\I{U}$-continuous. We define the internal class $\IFilt_{\I{U}}$ of $\I{U}$-filtered categories as the full subcategory of $\ICat_{\BB}$ that is spanned by those $\Over{\BB}{A}$-categories $\I{J}$ that are $\pi_A^\ast\I{U}$-filtered, for every $A\in\BB$.
\end{definition}

\begin{remark}
	\label{rem:BCFilt}
	In the situation of Definition~\ref{def:filteredCategories}, the fact that $\I{U}$-continuity is a local condition~\cite[Remark~5.2.3]{MWColimits} implies that \emph{every} object $A\to\IFilt_{\I{U}}$ is $\pi_A^\ast\I{U}$-filtered (which a priori has no reason to be true). In particular, the sheaf associated with $\IFilt_{\I{U}}$ is given on local sections over $A\in\BB$ by the full subcategory of $\Cat(\Over{\BB}{A})$ that is spanned by the $\pi_A^\ast\I{U}$-filtered categories. For any $A\in\BB$, we therefore obtain a canonical equivalence $\pi_A^\ast\IFilt_{\I{U}}\simeq \IFilt_{\pi_A^\ast\I{U}}$.
\end{remark}

\begin{remark}
	\label{rem:FiltGaloisConnection}
	Clearly, if $\I{U}\into\I{V}$ is an inclusion of internal classes, every $\I{V}$-filtered $\BB$-category is in particular $\I{U}$-filtered. Therefore, one obtains an inclusion $\IFilt_{\I{V}}\into\IFilt_{\I{U}}$.
\end{remark}

\begin{remark}
	\label{rem:dualDefinitionFiltered}
	If $\I{I}$ and $\I{J}$ are $\BB$-categories, note that the horizontal mate of the commutative square
	\begin{equation*}
		\begin{tikzcd}
			\IFun(\I{I}\times\I{J},  \Univ)\arrow[from=r, "\diag_\ast"'] \arrow[d, "\colim_\ast"] & \IFun(\I{J},  \Univ)\arrow[d, "\colim"]\\
			\IFun(\I{I},  \Univ)\arrow[from=r, "\diag"] & \Univ
		\end{tikzcd}
	\end{equation*}
	(with respect to the two adjunctions $\diag\dashv \lim$ and $\diag_\ast\dashv\lim_\ast$) is equivalent to the horizontal mate of the commutative square
	\begin{equation*}
		\begin{tikzcd}
			\IFun(\I{I},  \Univ)\arrow[r, "\diag_\ast"] \arrow[d, "\lim"]& \IFun(\I{I}\times\I{J},  \Univ)\arrow[d, "\lim_\ast"]\\
			\Univ\arrow[r, "\diag"] & \IFun(\I{J},  \Univ).
		\end{tikzcd}
	\end{equation*}
	(with respect to the two adjunctions $\colim\dashv\diag$ and $\colim_\ast\dashv\diag_\ast$). As a consequence, the functor $\colim\colon \IFun(\I{J},  \Univ)\to\Univ$ commutes with $\I{I}$-indexed limits if and only if the functor $\lim\colon\IFun(\I{I},  \Univ)\to\Univ$ commutes with $\I{J}$-indexed colimits. Thus, if $\I{U}$ is an internal class of $\BB$-categories, a $\BB$-category $\I{J}$ is $\I{U}$-filtered precisely if for all $A\in\BB$ and all $\I{I}\in\I{U}(A)$ the limit functor $\lim\colon \IFun(\I{I},  \Univ[\Over{\BB}{A}])\to\Univ[\Over{\BB}{A}]$ commutes with $\pi_A^\ast\I{J}$-indexed colimits.
\end{remark}

Recall from~\cite[\S~4.2]{MYoneda} the definition of the \emph{right cone} $\I{J}^\triangleright$ of a $\BB$-category $\I{J}$. It comes with an inclusion $\iota\colon\I{J}\into\I{J}^\triangleright$ such that for every $\BB$-category $\I{C}$ that admits $\I{J}$-indexed colimits, the functor of left Kan extension $\iota_!$ exists and carries an $\I{I}$-indexed diagram in $\I{C}$ to its colimit cocone~\cite[Proposition~6.3.8]{MWColimits}. We now obtain:
\begin{proposition}
	\label{prop:characterisationFilteredCategoryCocones}
	A $\BB$-category $\I{J}$ is $\I{U}$-filtered with respect to some internal class $\I{U}$ if and only if the inclusion $\iota_!\colon \IFun(\I{J},  \Univ)\into\IFun(\I{J}^\triangleright,  \Univ)$ is $\I{U}$-continuous.
\end{proposition}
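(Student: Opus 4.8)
The plan is to compare $\iota_!$ with evaluation at the cone point $\infty\colon 1\to\I{J}^\triangleright$. By~\cite[Proposition~6.3.8]{Martini2021a} the functor $\iota_!\colon\iFun{\I{J}}{\Univ}\to\iFun{\I{J}^\triangleright}{\Univ}$ carries a diagram to its colimit cocone, so evaluation at $\infty$ yields an equivalence $\ev_\infty\circ\iota_!\simeq\colim$ of functors $\iFun{\I{J}}{\Univ}\to\Univ$. Furthermore the restriction functor $\iota^\ast$ satisfies $\iota^\ast\iota_!\simeq\id$, since $\iota$ is fully faithful, and both $\iota^\ast$ and $\ev_\infty$ preserve all limits and colimits, these being computed objectwise in functor $\BB$-categories. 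Since $\Univ$ is complete and cocomplete, all the (co)limits below exist, and since the formation of left Kan extensions, of colimits and of right cones is compatible with the base change functors $\pi_A^\ast$, it suffices to prove, for a single $\BB$-category $\I{I}$, that $\colim\colon\iFun{\I{J}}{\Univ}\to\Univ$ preserves $\I{I}$-indexed limits if and only if $\iota_!$ does; the proposition then follows by letting $A$ range over $\BB$ and $\I{I}$ over $\I{U}(A)$, using Remark~\ref{rem:BCFilt}.

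One implication is immediate: if $\iota_!$ preserves $\I{I}$-indexed limits, then so does $\colim\simeq\ev_\infty\circ\iota_!$, because $\ev_\infty$ preserves all limits.

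For the converse, suppose $\colim$ preserves $\I{I}$-indexed limits, let $d\colon\I{I}\to\iFun{\I{J}}{\Univ}$ be a diagram, and consider the canonical comparison map $\lambda\colon\iota_!(\lim_{\I{I}}d)\to\lim_{\I{I}}(\iota_!\circ d)$ in $\iFun{\I{J}^\triangleright}{\Univ}$. To see that $\lambda$ is an equivalence it is enough, since equivalences of $\Univ$-valued functors are detected on objects and every object of $\I{J}^\triangleright$ is, up to a cover, either in the image of $\iota$ or equal to $\infty$, to check that $\iota^\ast\lambda$ and $\ev_\infty(\lambda)$ are equivalences. Applying $\iota^\ast$ and using that $\iota^\ast$ preserves limits together with $\iota^\ast\iota_!\simeq\id$ identifies both source and target of $\iota^\ast\lambda$ with $\lim_{\I{I}}d$, and one checks that $\iota^\ast\lambda$ becomes the identity map. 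Applying $\ev_\infty$ and using $\ev_\infty\circ\iota_!\simeq\colim$ together with the fact that $\ev_\infty$ preserves limits identifies $\ev_\infty(\lambda)$ with the canonical comparison map $\colim(\lim_{\I{I}}d)\to\lim_{\I{I}}(\colim\circ d)$, which is an equivalence by assumption. Hence $\lambda$ is an equivalence and $\iota_!$ preserves $\I{I}$-indexed limits.

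The one genuinely fiddly point is verifying that, under the equivalences $\iota^\ast\iota_!\simeq\id$ and $\ev_\infty\circ\iota_!\simeq\colim$, the map $\lambda$ really is carried to the identity of $\lim_{\I{I}}d$ and to the comparison map measuring the compatibility of $\colim$ with $\I{I}$-indexed limits, respectively; this is a diagram chase with the universal properties of left Kan extensions and of (co)limits, and is where I would expect to have to be careful. An alternative, essentially equivalent, formulation of the converse is to observe that the essential image of the fully faithful functor $\iota_!$ — the full subcategory of $\iFun{\I{J}^\triangleright}{\Univ}$ spanned by those $G$ for which the canonical map $\colim(\iota^\ast G)\to G(\infty)$ is an equivalence — is closed under $\I{I}$-indexed limits whenever $\colim$ preserves them, so that $\iota_!$ preserves such limits; but the comparison-map argument seems the most economical.
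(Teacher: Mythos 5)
Your proof is correct and follows essentially the same route as the paper: reduce by étale base change to a single $\I{I}\in\I{U}(1)$, use $\ev_\infty\circ\iota_!\simeq\colim$ together with the fact that $\ev_\infty$ preserves limits for the easy direction, and deduce the converse from the joint conservativity of $(\iota^\ast,\ev_\infty)$ applied to the comparison map. The "fiddly point" you flag — identifying $\iota^\ast\lambda$ and $\ev_\infty(\lambda)$ with the identity and with the comparison map for $\colim$, respectively — is exactly what the paper disposes of by invoking the functoriality of the mate construction.
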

\begin{proof}
	By definition, the functor $\iota_!$ is $\I{U}$-continuous if and only if for all $A\in\BB$ the functor $\pi_A^\ast(\iota_!)\simeq (\pi_A^\ast\iota)_!$ (cf.~\cite[Lemma~4.2.3]{MYoneda} and~\cite[Corollary~3.1.9]{MWColimits}) preserves limits of $\I{I}$-indexed diagrams for all $\I{I}\in\I{U}(A)$, and $\I{J}$ is $\I{U}$-filtered if and only if the colimit functor $\colim\colon \IFun[\Over{\BB}{A}](\pi_A^\ast\I{J},\Univ[\Over{\BB}{A}])\to\Univ[\Over{\BB}{A}]$ commutes with $\I{I}$-indexed limits for all $\I{I}\in\I{U}(A)$. By replacing $\BB$ with $\Over{\BB}{A}$, it therefore suffices to show that for any $\I{I}\in\I{U}(1)$, the functor $\iota_!$ commutes with $\I{I}$-indexed limits if and only if $\colim\colon \IFun(\I{J},  \Univ)\to\Univ$ preserves $\I{I}$-limits. Note that by combining~\cite[Proposition~4.6.1]{MWColimits} with the fact that the cone point $\infty\colon 1\to \I{I}^\triangleright$ is final, one finds that the colimit functor $\colim\colon \IFun(\I{J}^\triangleright,  \Univ)\to\Univ$ is given by evaluation at $\infty$. As a consequence,~\cite[Proposition~4.3.1]{MWColimits} implies that the colimit functor $\colim\colon \IFun(\I{J}^\triangleright,  \Univ)\to\Univ$ preserves $\I{I}$-indexed limits. Owing to the commutative diagram
	\begin{equation*}
		\begin{tikzcd}
			\IFun(\I{J},  \Univ)\arrow[dr, "\colim_{\I{J}}"] \arrow[d, hookrightarrow, "\iota_!"]& \\
			\IFun(\I{J}^\triangleright,  \Univ)\arrow[r, "\colim_{\I{J}^\triangleright}"'] & \Univ,
		\end{tikzcd}
	\end{equation*}
	the functoriality of mates thus implies that $i_!$ preserving $\I{I}$-indexed limits implies that $\colim_{\I{J}}$ commutes with $\I{I}$-indexed limits as well. The converse direction, on the other hand, follows from combining the functoriality of the mate construction with the straightforward observation that $(\iota^\ast,\infty^\ast)\colon \IFun(\I{J}^\triangleright,  \Univ)\to\IFun(\I{J},  \Univ)\times\Univ$ is a conservative functor.
\end{proof}
By an analogous argument as in the proof of Proposition~\ref{prop:characterisationFilteredCategoryCocones} and by furthermore using Remark~\ref{rem:dualDefinitionFiltered}, one obtains:
\begin{proposition}
	\label{prop:characterisationFilteredCategoryCones}
	A category $\I{J}$ in $\BB$ is $\I{U}$-filtered with respect to some internal class $\I{U}$ if and only if for all $A\in\BB$ and all $\I{I}\in\I{U}(A)$ the functor of right Kan extension
	\begin{equation*}
		\iota_\ast\colon \IFun(\I{I},  \Univ[\Over{\BB}{A}])\into\IFun(\I{I}^\triangleleft,\Univ[\Over{\BB}{A}])
	\end{equation*}
	preserves $\pi_A^\ast\I{J}$-indexed colimits.\qed
\end{proposition}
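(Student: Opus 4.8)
The plan is to mimic the argument of Proposition~\ref{prop:characterisationFilteredCategoryCocones} but with the roles of limits and colimits (and of left and right cones) interchanged, using the duality worked out in Remark~\ref{rem:dualDefinitionFiltered} to pass between the two pictures. Concretely, unravelling the definitions and replacing $\BB$ by $\Over{\BB}{A}$ reduces us to the following assertion: for a fixed $\I{I}\in\I{U}(1)$, the functor of right Kan extension $\iota_\ast\colon\iFun{\I{I}}{\Univ}\into\iFun{\I{I}^\triangleleft}{\Univ}$ along the inclusion $\iota\colon\I{I}\into\I{I}^\triangleleft$ preserves $\I{J}$-indexed colimits if and only if the limit functor $\lim\colon\iFun{\I{I}}{\Univ}\to\Univ$ preserves $\I{J}$-indexed colimits, the latter condition being equivalent by Remark~\ref{rem:dualDefinitionFiltered} to $\I{J}$ being $\I{U}$-filtered.

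First I would observe that the limit functor $\lim\colon\iFun{\I{I}^\triangleleft}{\Univ}\to\Univ$ is computed by evaluation at the cone point $-\infty\colon 1\to\I{I}^\triangleleft$; this is the dual of the fact (used in the proof of Proposition~\ref{prop:characterisationFilteredCategoryCocones}, via~\cite[Example~4.1.7]{Martini2021a} and~\cite[Proposition~3.1.15]{Martini2021a}) that the colimit functor out of a right cone is evaluation at its cone point. Being an evaluation functor, $\lim\colon\iFun{\I{I}^\triangleleft}{\Univ}\to\Univ$ is both continuous and cocontinuous by~\cite[Proposition~4.3.1]{Martini2021a}, and in particular preserves $\I{J}$-indexed colimits. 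Now consider the commutative triangle
\begin{equation*}
\begin{tikzcd}
\iFun{\I{I}}{\Univ}\arrow[dr, "\lim"'] \arrow[r, hookrightarrow, "\iota_\ast"]& \iFun{\I{I}^\triangleleft}{\Univ}\arrow[d, "\lim"]\\
& \Univ.
\end{tikzcd}
\end{equation*}
By functoriality of the mate construction applied to $\I{J}$-colimits, if $\iota_\ast$ preserves $\I{J}$-indexed colimits then so does $\lim\colon\iFun{\I{I}}{\Univ}\to\Univ$, which gives one implication. For the converse I would use that the pair $(\iota^\ast,(-\infty)^\ast)\colon\iFun{\I{I}^\triangleleft}{\Univ}\to\iFun{\I{I}}{\Univ}\times\Univ$ is conservative (the dual of the conservativity statement at the end of the proof of Proposition~\ref{prop:characterisationFilteredCategoryCocones}): it suffices to check that the relevant mate transformation becomes an equivalence after applying $\iota^\ast$ and after applying $(-\infty)^\ast\simeq\lim$. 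After $\iota^\ast$ this is immediate since $\iota^\ast\iota_\ast\simeq\id$; after $\lim$ it is exactly the hypothesis that $\lim\colon\iFun{\I{I}}{\Univ}\to\Univ$ preserves $\I{J}$-indexed colimits, again invoking functoriality of mates along the triangle above.

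The main obstacle I anticipate is not conceptual but bookkeeping: one must make sure that the "$A\in\BB$, $\I{I}\in\I{U}(A)$" quantifiers and the base-change compatibilities ($\pi_A^\ast\iota_\ast\simeq(\pi_A^\ast\iota)_\ast$, $\pi_A^\ast\lim\simeq\lim$ for the relevant diagram shapes, and the stability of all the cited properties under restriction to $\Over{\BB}{A}$) are handled exactly as in Proposition~\ref{prop:characterisationFilteredCategoryCocones}, so that the reduction to the case $A=1$ is legitimate; and one must confirm that "right Kan extension along $\I{I}\into\I{I}^\triangleleft$ computes the limit cone", i.e. the dual of~\cite[Proposition~6.3.8]{Martini2021a}, so that the cone-point evaluation description of $\lim$ on $\iFun{\I{I}^\triangleleft}{\Univ}$ is valid. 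Given the symmetry already set up in Remark~\ref{rem:dualDefinitionFiltered}, these are routine, and the proof should be short enough to leave implicit, as the statement indicates with "\qed".
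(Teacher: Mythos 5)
Your proposal is correct and is essentially the paper's own argument: the paper proves this proposition precisely by dualising the proof of Proposition~\ref{prop:characterisationFilteredCategoryCocones} via Remark~\ref{rem:dualDefinitionFiltered} (limit-over-$\I{I}$ commuting with $\I{J}$-colimits), using that $\lim$ on $\iFun{\I{I}^\triangleleft}{\Univ}$ is evaluation at the cone point, the commutative triangle with $\iota_\ast$, functoriality of mates, and conservativity of $(\iota^\ast,(-\infty)^\ast)$ — exactly the steps you carry out.
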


\begin{remark}
	\label{rem:FiltUColimitClass}
	By Proposition~\ref{prop:characterisationFilteredCategoryCones} and~\cite[Proposition~4.6.1]{MWColimits}, if $\I{J}\to\I{K}$ is a final functor such that $\I{J}$ is $\I{U}$-filtered, then $\I{K}$ must be $\I{U}$-filtered as well. Since the final $\BB$-category $1$ is trivially $\I{U}$-filtered for every choice of internal class $\I{U}$, this means that $\IFilt_{\I{U}}$ is a \emph{colimit class} in the sense of~\cite[Definition~5.3.5]{MWColimits}.
\end{remark}

For later use, let us note the following closure property of $\I{U}$-filtered $\BB$-categories:
\begin{proposition}
	\label{prop:UFilteredUCocomplete}
	For any internal class $\I{U}$, the internal class $\IFilt_{\I{U}}$ is closed under $\IFilt_{\I{U}}$-colimits in $\ICat_{\BB}$.
\end{proposition}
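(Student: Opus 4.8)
By definition, closure under $\IFilt_{\I{U}}$-colimits means the following: colimits in $\ICat_{\BB}$ that are indexed by a $\I{U}$-filtered $\BB$-category, taken of diagrams whose values lie in $\IFilt_{\I{U}}$, again lie in $\IFilt_{\I{U}}$. Since by Remark~\ref{rem:BCFilt} one has $\pi_A^\ast\IFilt_{\I{U}}\simeq\IFilt_{\pi_A^\ast\I{U}}$ and membership in $\IFilt_{\I{U}}$ is preserved by base change, after replacing $\BB$ by $\Over{\BB}{A}$ it suffices to prove the following absolute statement over a general $\infty$-topos: for a $\I{U}$-filtered $\BB$-category $\I{I}$ and a diagram $d\colon\I{I}\to\IFilt_{\I{U}}\into\ICat_{\BB}$, the colimit $\I{J}:=\colim_{\I{I}}d$ is $\I{U}$-filtered, i.e.\ the functor $\colim\colon\iFun{\I{J}}{\Univ}\to\Univ$ is $\I{U}$-continuous. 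Since all the constructions below are natural in $\BB$ and are phrased with objects taken in arbitrary contexts, proving this over a general base actually yields $\I{U}$-continuity rather than mere preservation of absolute $\I{U}$-limits, which is precisely what membership in $\IFilt_{\I{U}}$ demands.

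The heart of the argument is an iterated-colimit (Fubini) description of $\colim_{\I{J}}$. The plan is to factor it as
\[\colim\colon\iFun{\I{J}}{\Univ}\xrightarrow{\ \Phi\ }\iFun{\I{I}}{\Univ}\xrightarrow{\ \colim\ }\Univ,\]
where $\Phi$ sends a presheaf $F$ to its diagram of fibrewise colimits $i\mapsto\colim(F|_{d(i)})$. To construct $\Phi$ and justify the factorisation, I would let $q\colon\I{E}\to\I{I}$ be the cocartesian fibration classified by $d$ (with fibre $d(i)$ over $i$), cf.~\cite{Martini2022}, and let $L\colon\I{E}\to\I{J}$ be the canonical functor exhibiting $\I{J}=\colim_{\I{I}}d$ as the localisation of $\I{E}$ at the $q$-cocartesian morphisms. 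Then $L^\ast$ is fully faithful, so the counit equivalence $L_!L^\ast\simeq\id$ together with $\diag_{\I{E}}\simeq L^\ast\diag_{\I{J}}$ (and passing to left adjoints) gives $\colim_{\I{J}}\simeq\colim_{\I{E}}\circ L^\ast$; combined with the general ``colimits in stages'' identity $\colim_{\I{E}}\simeq\colim_{\I{I}}\circ q_!$ (left adjoint to $\diag_{\I{E}}\simeq q^\ast\diag_{\I{I}}$), this lets me set $\Phi:=q_!\circ L^\ast$. The pointwise formula for left Kan extension along the cocartesian fibration $q$, where the fibre $d(i)$ is final in the relevant comma $\BB$-category, then identifies $\Phi(F)(i)\simeq\colim(F|_{d(i)})$. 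Alternatively, the same factorisation can be extracted without fibrations from cartesian closedness of $\Cat(\BB)$, which yields $\iFun{\I{J}}{\Univ}\simeq\lim_{\I{I}^\op}\iFun{d(-)}{\Univ}$, followed by a direct comparison of corepresented copresheaves via $\colim\dashv\diag$.

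With the factorisation $\colim_{\I{J}}\simeq\colim_{\I{I}}\circ\Phi$ in hand, it remains to check that each factor preserves $\I{U}$-indexed limits. The functor $\colim\colon\iFun{\I{I}}{\Univ}\to\Univ$ is $\I{U}$-continuous by the very definition of $\I{I}$ being $\I{U}$-filtered. For $\Phi=q_!\circ L^\ast$ I would argue objectwise: since $\I{U}$-limits in $\iFun{\I{I}}{\Univ}$ are computed pointwise, $\Phi$ is $\I{U}$-continuous once $\ev_i\circ\Phi$ is, for every object $i\colon C\to\I{I}$; and by the fibrewise formula $\ev_i\circ\Phi\simeq\colim_{d(i)}\circ(-)|_{d(i)}$. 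Here the restriction $(-)|_{d(i)}$ preserves all limits, being a precomposition functor, while $\colim_{d(i)}$ is $\pi_C^\ast\I{U}$-continuous precisely because $d(i)$ lies in $\IFilt_{\I{U}}$ and is therefore $\pi_C^\ast\I{U}$-filtered (Remark~\ref{rem:BCFilt}) — this is the one and only point where the hypothesis on the values of $d$ enters. Hence $\Phi$ is $\I{U}$-continuous, and so is the composite $\colim_{\I{J}}\simeq\colim_{\I{I}}\circ\Phi$, which is exactly the assertion that $\I{J}$ is $\I{U}$-filtered.

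The main obstacle is confined to the second paragraph: rigorously producing the factorisation through $\iFun{\I{I}}{\Univ}$, i.e.\ the compatibility between $\I{I}$-indexed colimits in $\ICat_{\BB}$ and the colimit functors on internal presheaf $\BB$-categories. Concretely this rests on the internal straightening equivalence, on the identification of $\colim_{\I{I}}d$ with the localisation of the classifying cocartesian fibration at its cocartesian edges, and on the fibrewise pointwise formula for $q_!$. Once these are available, the remainder is formal bookkeeping: left adjoints compose, precomposition preserves limits, limits in functor $\BB$-categories are pointwise, and — the conceptual crux — fibrewise colimits commute with $\I{U}$-limits exactly because the fibres $d(i)$ are $\I{U}$-filtered.
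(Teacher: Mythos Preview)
Your argument is sound, but the paper takes a different and somewhat slicker route. Instead of analysing $\colim_{\I{J}}$ directly, the paper works with the \emph{dual} characterisation of $\I{U}$-filteredness from Proposition~\ref{prop:characterisationFilteredCategoryCones}: a $\BB$-category $\I{K}$ is $\I{U}$-filtered iff for every $\I{I}\in\I{U}(A)$ the right Kan extension $\iota_\ast\colon\iFun{\I{I}}{\Univ[\Over{\BB}{A}]}\into\iFun{\I{I}^\triangleleft}{\Univ[\Over{\BB}{A}]}$ preserves $\pi_A^\ast\I{K}$-indexed colimits. By that same criterion, $\iota_\ast$ is $\IFilt_{\I{U}}$-cocontinuous between cocomplete $\BB$-categories, so the general decomposition result Proposition~\ref{prop:decompositionExistenceColimits} (applied with the internal class $\IFilt_{\I{U}}$ in place of $\I{U}$) immediately gives that $\iota_\ast$ preserves $\I{K}$-indexed colimits, whence $\I{K}$ is $\I{U}$-filtered.

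Both arguments rest on the same Fubini principle, but package it differently. Your route is more explicit and conceptually transparent, factoring $\colim_{\I{J}}$ through fibrewise colimits; its cost is the need for the cocartesian-fibration description of colimits in $\ICat_{\BB}$ and the fibrewise formula for $q_!$, neither of which is used elsewhere in the paper. The paper's route hides the Fubini step inside the already-established Proposition~\ref{prop:decompositionExistenceColimits} (proved via presheaves and right fibrations rather than cocartesian fibrations), so it avoids introducing new infrastructure and reduces the proof to two citations.
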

\begin{proof}
	By Remark~\ref{rem:BCFilt}, it suffices to show that if $\I{J}$ is a $\I{U}$-filtered $\BB$-category and $d\colon \I{J}\to\IFilt_{\I{U}}$ is a diagram, its colimit $\I{K}$ in $\ICat_{\BB}$ is also $\I{U}$-filtered. Given any $\I{I}\in\I{U}(1)$, Proposition~\ref{prop:decompositionExistenceColimits} shows that the functor of right Kan extension $\iota_\ast\colon \IFun(\I{I},  \Univ)\into\IFun(\I{I}^\triangleleft, \Univ)$ commutes with $\I{K}$-indexed colimits. As for any $A\in\BB$ the $\Over{\BB}{A}$-category $\pi_A^\ast\I{K}$ is the colimit of $\pi_A^\ast d$, the same argument also shows that for all $\I{I}\in\I{U}(A)$ the functor $\iota_\ast\colon\IFun(\I{I}^\triangleleft,\Univ[\Over{\BB}{A}])\into\IFun(\I{I},  \Univ[\Over{\BB}{A}])$ commutes with $\pi_A^\ast\I{K}$-indexed colimits. Hence Proposition~\ref{prop:characterisationFilteredCategoryCones} implies that $\I{K}$ is $\I{U}$-filtered.
\end{proof}

\subsection{Weakly $\I{U}$-filtered $\BB$-categories}
\label{sec:weaklyUFiltered}
Recall from~\cite[Remark~5.2.2]{MWColimits} that if $\I{U}$ is an internal class, we denote by $\op(\I{U})$ the internal class that arises as the image of $\I{U}$ along the equivalence $(-)^\op\colon \ICat_{\BB}\simeq\ICat_{\BB}$.
In practice, we will often require that every $\op(\I{U})$-cocomplete $\BB$-category is $\I{U}$-filtered. However, this is not true for every internal class $\I{U}$, not even in the case $\BB=\SS$~\cite[\S 6]{rezk2021}. In this section, we will therefore study a slightly weaker notion than that of a filtered $\I{U}$-category, which will encompass the class of $\op(\I{U})$-cocomplete $\BB$-categories. We adopted the idea of weak $\I{U}$-filteredness from Charles Rezk~\cite{rezk2021}, who in turn generalised ideas from~\cite{Adamek2002} to $\infty$-categories.

\begin{definition}
	\label{def:weaklyFiltered}
	If $\I{U}$ is an internal class of $\BB$-categories, a $\BB$-category $\I{J}$ is \emph{weakly $\I{U}$-filtered} if for every $A\in\BB$ and every $\I{I}\in\I{U}(A)$ the diagonal functor $\pi_A^\ast\I{J}\to\IFun[\Over{\BB}{A}](\I{I}^{\op},\pi_A^\ast\I{J})$ is final. We define the internal class $\IwFilt_{\I{U}}$ as the full subcategory of $\ICat_{\BB}$ that is spanned by the weakly $\pi_A^\ast\I{U}$-filtered $\Over{\BB}{A}$-categories, for every $A\in\BB$.
\end{definition}

\begin{remark}
	\label{rem:BCwFilt}
	In the situation of Definition~\ref{def:weaklyFiltered}, as the condition of a functor of $\BB$-categories being final is \emph{local} in $\BB$~\cite[Remark~4.4.9]{MYoneda}, every object $A\to\IwFilt_{\I{U}}$ is weakly $\pi_A^\ast\I{U}$-filtered. In particular, there is a canonical equivalence $\pi_A^\ast\IwFilt_{\I{U}}\simeq\IwFilt_{\pi_A^\ast\I{U}}$ for all $A\in\BB$.
\end{remark}
\begin{remark}
	\label{rem:wFiltGaloisConnection}
	If $\I{U}\into\I{V}$ is an inclusion of internal classes, every weakly $\I{V}$-filtered $\BB$-category is in particular weakly $\I{U}$-filtered. One therefore obtains an inclusion $\IwFilt_{\I{V}}\into\IwFilt_{\I{U}}$.
\end{remark}

\begin{example}
	\label{ex:UCocompleteWeaklyFiltered}
	By Quillen's theorem A for $\BB$-categories~\cite[Corollary~4.4.8]{MYoneda}, every functor that admits a left adjoint is final. Consequently, every $\op(\I{U})$-cocomplete $\BB$-category $\I{I}$ is in particular weakly $\I{U}$-filtered.
\end{example}

\begin{proposition}
	\label{prop:characterisationWeaklyFiltered}
	A $\BB$-category $\I{J}$ is weakly $\I{U}$-filtered if and only if for every $A\in\BB$ and every $\I{I}\in\I{U}(A)$ the colimit functor $\colim\colon \IFun[\Over{\BB}{A}](\pi_A^\ast\I{J},\Univ[\Over{\BB}{A}])\to\Univ[\Over{\BB}{A}]$ preserves $\I{I}$-indexed limits of corepresentables.
\end{proposition}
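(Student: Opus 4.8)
The plan is to reformulate both conditions as the finality of the diagonal functor and to pass between the two formulations via Quillen's theorem~A. Since finality and preservation of a fixed class of limits are both local and stable under base change along the \'etale projections $\pi_A$, and since weak $\I{U}$-filteredness of $\I{J}$ unwinds by Definition~\ref{def:weaklyFiltered} to the finality of $\diag\colon\pi_A^\ast\I{J}\to\iFun{\I{I}^\op}{\pi_A^\ast\I{J}}$ for all $A\in\BB$ and $\I{I}\in\I{U}(A)$, it suffices to prove: for any $\infty$-topos $\BB$ and any $\BB$-categories $\I{I},\I{J}$, the diagonal $\diag\colon\I{J}\to\I{K}:=\iFun{\I{I}^\op}{\I{J}}$ is final if and only if $\colim\colon\iFun{\I{J}}{\Univ}\to\Univ$ preserves $\I{I}$-indexed limits of corepresentables. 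I would use Quillen's theorem~A for $\BB$-categories~\cite[Corollary~4.4.8]{Martini2021} in the form: a functor $g\colon\I{C}\to\I{D}$ is final if and only if the canonical transformation $\colim_{\I{C}}\circ\, g^\ast\to\colim_{\I{D}}$ of functors $\iFun{\I{D}}{\Univ}\to\Univ$ is an equivalence. Both sides are cocontinuous and $\iFun{\I{D}}{\Univ}\simeq\IPSh_{\Univ}(\I{D}^\op)$ is generated under colimits by the corepresentables $\map{\I{D}}(d,-)$, so it is enough to test this transformation on corepresentables; and there $\colim_{\I{D}}\map{\I{D}}(d,-)\simeq 1$ is terminal, because $\map{\I{D}}(d,-)$ classifies the left fibration $\Under{\I{D}}{d}\to\I{D}$ whose total $\BB$-category has the initial object $\id_d$.

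First I would compute $\diag^\ast\map{\I{K}}(G,-)$ for an object $G\colon A\to\I{K}$, i.e.\ for an $\I{I}^\op$-indexed diagram $G$ in $\I{J}$ over $A$. The functor $\map{\I{K}}(G,\diag(-))\colon\I{J}\to\Univ$ sends an object $j$ to the $\BB$-groupoid of cocones on $G$ with tip $j$; using the description of mapping $\BB$-groupoids in functor $\BB$-categories together with the observation that mapping into a constant diagram only records the indexing $\BB$-category, this is identified naturally in $j$ with the $\I{I}$-indexed limit $\lim_{\I{I}}\map{\I{J}}(G(-),-)$ formed pointwise in $\iFun{\I{J}}{\Univ}$, i.e.\ with the limit of the $\I{I}$-diagram $a\mapsto\map{\I{J}}(G(a),-)$ of corepresentables. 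Conversely, the corepresentables span a full subcategory of $\iFun{\I{J}}{\Univ}$ equivalent to $\I{J}^\op$ via the Yoneda embedding, so every $\I{I}$-indexed diagram of corepresentables in $\iFun{\I{J}}{\Univ}$, in any context $A$, arises from a unique such $G\colon A\to\I{K}$.

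Then I would assemble the equivalence. Fix $G\colon A\to\I{K}$. Since $\colim_{\I{J}}$ sends every corepresentable to the terminal object, the composite $\I{I}$-diagram $a\mapsto\colim_{\I{J}}\map{\I{J}}(G(a),-)$ is constant at $1$, whence $\lim_{\I{I}}\colim_{\I{J}}\map{\I{J}}(G(-),-)\simeq 1$; invoking the identification of the second paragraph, $\colim_{\I{J}}$ preserves the $\I{I}$-indexed limit of the diagram of corepresentables attached to $G$ if and only if $\colim_{\I{J}}\bigl(\diag^\ast\map{\I{K}}(G,-)\bigr)\simeq 1$. As $\colim_{\I{K}}\map{\I{K}}(G,-)\simeq 1$ as well, this in turn says exactly that the comparison transformation $\colim_{\I{J}}\circ\,\diag^\ast\to\colim_{\I{K}}$ is an equivalence at the corepresentable attached to $G$. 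Letting $G$ range over all objects of $\I{K}$ in all contexts, the finality criterion from the first paragraph shows that the preservation property holds for every such $G$ precisely when $\diag\colon\I{J}\to\I{K}$ is final, which is what the reduction at the start requires.

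The step I expect to be the main obstacle is the identification $\diag^\ast\map{\I{K}}(G,-)\simeq\lim_{\I{I}}\map{\I{J}}(G(-),-)$: one has to keep the variances straight so that the limit is genuinely $\I{I}$-indexed (matching the statement and the $\I{I}^\op$ appearing in Definition~\ref{def:weaklyFiltered}), upgrade the objectwise comparison to an equivalence of functors in the free variable, and verify stability under base change along the $\pi_A$. Everything else is a formal manipulation of the universal properties of colimits and of presheaf $\BB$-categories; the one remaining point needing care is the legitimacy of the ``test on corepresentables'' form of the finality criterion, which rests on the cocontinuity of $\colim_{\I{J}}\circ\,\diag^\ast$ and $\colim_{\I{K}}$ in the presheaf variable.
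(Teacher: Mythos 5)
Your proof is correct and follows essentially the same route as the paper: your identification $\diag^\ast\map{\iFun{\I{I}^{\op}}{\I{J}}}(G,-)\simeq\lim_{\I{I}}\map{\I{J}}(G(-),-)$ is precisely the paper's chain of equivalences computing $\lim h_{\I{J}^{\op}}d$, and your comparison-transformation form of Quillen's theorem A, evaluated on corepresentables, boils down to the same condition $(\Under{\I{J}}{d^{\op}})^{\gp}\simeq 1$ that the paper feeds into \cite[Corollary~4.4.8]{Martini2021}. The only difference is packaging (testing the mate $\colim_{\I{J}}\diag^\ast\to\colim$ on corepresentables rather than directly computing $\colim(\lim h_{\I{J}^{\op}}d)$ as a groupoidification), and the verifications you flag are indeed routine.
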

\begin{proof}
	To begin with, note that since the colimit of a diagram $f\colon\I{J}\to\Univ$ is given by the groupoidification of the associated left fibration $\Under{\I{J}}{f}$~\cite[Proposition~4.4.1]{MWColimits}, the colimit of every corepresentable is given by the final object in $\Univ$. In other words, there is a commutative square
	\begin{equation*}
		\begin{tikzcd}
			\I{J}^{\op}\arrow[r, hookrightarrow, "h_{\I{J}^{\op}}"]\arrow[d, "\pi_{\I{J}^\op}"] & \IFun(\I{J},  \Univ)\arrow[d, "\colim"]\\
			1\arrow[r, "1_{\Univ}", hookrightarrow] & \Univ.
		\end{tikzcd}
	\end{equation*}
	As a result, for any diagram $d\colon\I{I}\to\I{J}^{\op}$, the presheaf $\colim h_{\I{J}^{\op}}d$ is equivalent to the constant functor $1_{\Univ}\pi_{\I{J}^\op}\colon \I{J}^{\op}\to\Univ$. As the inclusion $1_{\Univ}\into\Univ$ admits a left adjoint~\cite[Example~4.1.11]{MWColimits} and is therefore continuous~\cite[Proposition~5.2.5]{MWColimits}, we conclude that the limit $\lim(\colim h_{\I{J}^{\op}} d)$ is given by the final object in $\Univ$. Hence the canonical map
	\begin{equation*}
		\colim(\lim h_{\I{J}^{\op}} d)\to\lim(\colim h_{\I{J}^{\op}} d)
	\end{equation*}
	is an equivalence if and only if the domain of this map is the final object as well. On account of the chain of equivalences
	\begin{align*}
		\lim h_{\I{J}^{\op}}d&\simeq\map{\IFun(\I{J},  \Univ)}(h_{\I{J}^{\op}}(-),\lim h_{\I{J}^\op}d)\\
		&\simeq\map{\IFun(\I{I},\IFun(\I{J},  \Univ))}(\diag h_{\I{J}^{\op}}(-), h_{\I{J}^{\op}} d)\\
		&\simeq \map{\IFun(\I{I},\I{J}^{\op})}(\diag(-), d),
	\end{align*}
	the functor $\lim h_{\I{J}^{\op}}d$ classifies the left fibration $\Under{\I{J}}{d^{\op}}\to\I{J}$. Hence $\colim(\lim h_{\I{J}^{\op}}d)$ is the final object if and only if $(\Under{\I{J}}{d^{\op}})^\gp\simeq 1$. By replacing $\BB$ with $\Over{\BB}{A}$, the same argumentation goes through for any $\I{I}\in\I{U}(A)$ and any diagram $d\colon \I{I}\to\pi_A^\ast\I{J}^{\op}$. By Quillen's theorem A for $\BB$-categories~\cite[Corollary~4.4.8]{MYoneda}, the result thus follows.
\end{proof}

\begin{corollary}
	\label{cor:inclusionWFiltFilt}
	For every internal class $\I{U}$ of $\BB$-categories, any $\I{U}$-filtered $\BB$-category is weakly $\I{U}$-filtered. In other words, there is an inclusion $\IFilt_{\I{U}}\into\IwFilt_{\I{U}}$ of internal classes.\qed
\end{corollary}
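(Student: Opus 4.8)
The plan is to deduce this immediately from the characterisation of weak $\I{U}$-filteredness established in Proposition~\ref{prop:characterisationWeaklyFiltered}. Concretely, I would first unwind what it means for $\I{J}$ to be $\I{U}$-filtered: by definition the functor $\colim\colon\iFun{\I{J}}{\Univ}\to\Univ$ is $\I{U}$-continuous, which — using the base-change identification $\pi_A^\ast(\colim)\simeq\colim$ already exploited in the proof of Proposition~\ref{prop:characterisationFilteredCategoryCocones} — means precisely that for every $A\in\BB$ and every $\I{I}\in\I{U}(A)$ the colimit functor $\colim\colon\iFun{\pi_A^\ast\I{J}}{\Univ[\Over{\BB}{A}]}\to\Univ[\Over{\BB}{A}]$ commutes with $\I{I}$-indexed limits.

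The key (and essentially only) observation is then that preservation of \emph{all} $\I{I}$-indexed limits by this colimit functor trivially entails preservation of those $\I{I}$-indexed limits whose underlying diagram factors through the opposite Yoneda embedding, i.e.\ of $\I{I}$-indexed limits of corepresentables. Thus the hypothesis of Proposition~\ref{prop:characterisationWeaklyFiltered} is satisfied for $\I{J}$, and we conclude that $\I{J}$ is weakly $\I{U}$-filtered.

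Finally, to promote this to the asserted inclusion $\IFilt_{\I{U}}\into\IwFilt_{\I{U}}$ of internal classes, I would run the same argument after base change: an arbitrary object $A\to\IFilt_{\I{U}}$ corresponds under Remark~\ref{rem:BCFilt} (via $\pi_A^\ast\IFilt_{\I{U}}\simeq\IFilt_{\pi_A^\ast\I{U}}$) to a $\pi_A^\ast\I{U}$-filtered $\Over{\BB}{A}$-category, hence by the above is weakly $\pi_A^\ast\I{U}$-filtered, hence defines an object of $\IwFilt_{\I{U}}$ by Remark~\ref{rem:BCwFilt} (via $\pi_A^\ast\IwFilt_{\I{U}}\simeq\IwFilt_{\pi_A^\ast\I{U}}$). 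Since $\IFilt_{\I{U}}$ and $\IwFilt_{\I{U}}$ are full subcategories of $\ICat_{\BB}$, the containment of objects yields the inclusion.

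I do not expect any genuine obstacle: all the substance is in Proposition~\ref{prop:characterisationWeaklyFiltered}, and the corollary is a formal consequence of the elementary implication ``preserves all $\I{I}$-indexed limits $\Rightarrow$ preserves $\I{I}$-indexed limits of corepresentables'', together with the routine base-change bookkeeping supplied by Remarks~\ref{rem:BCFilt} and~\ref{rem:BCwFilt}.
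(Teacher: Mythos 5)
Your proof is correct and matches the paper's intent exactly: the corollary is stated with no proof precisely because it is the immediate specialisation of Proposition~\ref{prop:characterisationWeaklyFiltered}, since $\I{U}$-filteredness means the colimit functor preserves all $\I{I}$-indexed limits (after étale base change), hence in particular limits of corepresentables. The base-change bookkeeping via Remarks~\ref{rem:BCFilt} and~\ref{rem:BCwFilt} that you supply to upgrade the object-level statement to the inclusion of internal classes is the same routine step the paper leaves implicit.
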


Following the terminology introduced in~\cite{Adamek2002}, we may now make the following definition:
\begin{definition}
	\label{def:soundness}
	An internal class $\I{U}$ of $\BB$-categories is said to be \emph{sound} if the inclusion $\IwFilt_{\I{U}}\into\IFilt_{\I{U}}$ is an equivalence. It is called \emph{weakly sound} if for every $A\in\BB$, every $\op(\pi_A^\ast\I{U})$-cocomplete $\Over{\BB}{A}$-category is $\pi_A^\ast\I{U}$-filtered.
\end{definition}

\begin{remark}
	\label{rem:BCSoundness}
	On account of Remark~\ref{rem:BCFilt} and Remark~\ref{rem:BCwFilt}, the \'etale base change of a (weakly) sound internal class is (weakly) sound as well.
\end{remark}

We finish this section with another characterisation of weakly $\I{U}$-filtered $\BB$-categories that will be useful later. Recall from~\cite[Definition~6.2.1]{MWColimits} that if $\I{C}$ is an arbitrary $\BB$-category and $\I{V}$ is an arbitrary internal class of $\BB$-categories, we denote by $\ISml^{\I{V}}(\I{C})\into\IPSh(\I{C})$ the full subcategory that is spanned by those objects $F\colon A\to\IPSh(\I{C})$ for which the domain of the associated right fibration $\Over{\I{C}}{F}$ is contained in $\I{V}^{\colim}(A)$ (where $\I{V}^{\colim}$ is the smallest colimit class containing $\I{V}$, see~\cite[Definition~5.3.5]{MWColimits}). We now obtain:
\begin{proposition}
	\label{prop:weaklyFilteredInd}
	A $\BB$-category $\I{J}$ is weakly $\I{U}$-filtered if and only if the inclusion
	\begin{equation*}
		\I{J}\into\ISml^{\op(\I{U})}(\I{J})
	\end{equation*}
	induced by the Yoneda embedding is final.
\end{proposition}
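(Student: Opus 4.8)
The plan is to unravel the finality of $h\colon\I{J}\into\Sml^{\op(\I{U})}_\Univ(\I{J})$ by means of Quillen's theorem A for $\BB$-categories~\cite[Corollary~4.4.8]{Martini2021} together with the Grothendieck construction, and then to match the resulting condition with the defining one for weak $\I{U}$-filteredness.

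First one checks that $h$ does take values in $\Sml^{\op(\I{U})}_\Univ(\I{J})$: the Grothendieck construction of a representable presheaf $\map{\I{J}}(-,c)$ is the slice $\Over{\I{J}}{c}$, which carries a final object and hence lies in every colimit class. Since the inclusion $\Sml^{\op(\I{U})}_\Univ(\I{J})\into\IPSh_\Univ(\I{J})$ is fully faithful and $h$ is fully faithful by Yoneda's lemma, for any object $F\colon A\to\Sml^{\op(\I{U})}_\Univ(\I{J})$ the comma $\Over{\BB}{A}$-category $\Comma{\pi_A^\ast\I{J}}{h}{F}$ is the right fibration over $\pi_A^\ast\I{J}$ classified by $\map{\IPSh_\Univ(\I{J})}(h(-),F)\simeq F$; in other words it is equivalent over $\pi_A^\ast\I{J}$ to the Grothendieck construction $\Over{\I{J}}{F}$. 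By~\cite[Proposition~4.4.1]{Martini2021a} together with the invariance of groupoidification under $(-)^\op$, one has $(\Over{\I{J}}{F})^\gp\simeq\colim F$. Thus Quillen's theorem A shows that $h$ is final if and only if $\colim F\simeq 1$ for every $A\in\BB$ and every object $F$ of $\Sml^{\op(\pi_A^\ast\I{U})}_\Univ(\pi_A^\ast\I{J})$. As both this condition and weak $\I{U}$-filteredness are étale-local in $\BB$ (Remark~\ref{rem:BCwFilt} and the base change compatibility of $\Sml^{\op(\I{U})}_\Univ(-)$), we may reduce to the terminal context over an arbitrary $\infty$-topos. On the other side, applying Quillen's theorem A directly to the diagonal $\diag\colon\I{J}\to\iFun{\I{I}^\op}{\I{J}}$ and identifying the cone $\BB$-category of a diagram $d\colon\I{I}^\op\to\I{J}$ with the Grothendieck construction $\Over{\I{J}}{\lim_{\I{I}^\op}(h\circ d)}$ — equivalently, with the $\I{I}^\op$-indexed limit in $\Cat(\BB)$ of the slices $\Over{\I{J}}{d(i)}$ — weak $\I{U}$-filteredness of $\I{J}$ becomes the assertion that $\colim\bigl(\lim_{\I{I}^\op}(h\circ d)\bigr)\simeq 1$ for every $\I{I}\in\I{U}$ and every such $d$.

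It therefore remains to see that these two families of presheaves impose the same condition. On one hand, $\lim_{\I{I}^\op}(h\circ d)$ belongs to $\Sml^{\op(\I{U})}_\Univ(\I{J})$, since its Grothendieck construction is the $\I{I}^\op$-indexed limit of the slices $\Over{\I{J}}{d(i)}$, each of which carries a final object, so that it lies in $\op(\I{U})^{\colim}$ by the structural properties of colimit classes and of $\Sml^{\op(\I{U})}_\Univ(-)$ from~\cite[\S~6.2]{Martini2021a}; restricting the first paragraph's characterisation to these presheaves yields the implication ``$h$ final $\Rightarrow$ $\I{J}$ weakly $\I{U}$-filtered''. Conversely, those same structural results exhibit every object of $\Sml^{\op(\I{U})}_\Univ(\I{J})$ as built from representables by iterated formation of $\I{I}^\op$-indexed limits of representables ($\I{I}\in\I{U}$); since $\colim\colon\IPSh_\Univ(\I{J})\to\Univ$ is cocontinuous and annihilates representables, and weak $\I{U}$-filteredness is precisely the statement that $\colim$ annihilates each such limit, an induction along this generation process gives $\colim F\simeq 1$ for all $F$ in $\Sml^{\op(\I{U})}_\Univ(\I{J})$, hence $h$ is final.

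The main obstacle is exactly the bookkeeping around $\Sml^{\op(\I{U})}_\Univ(-)$ and colimit classes: Quillen's theorem A applied to $h$ tests vanishing of $\colim$ on the presheaves whose category of elements lies in $\op(\I{U})^{\colim}$, whereas weak $\I{U}$-filteredness tests it on the cone presheaves of $\I{I}$-indexed diagrams, and the heart of the argument is to show that the latter generate the former under the relevant closure operations. Once this is in place, everything else is a formal manipulation with Quillen's theorem A, Yoneda's lemma, and the previously recorded stability properties of (weak) $\I{U}$-filteredness.
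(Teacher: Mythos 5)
Your argument contains a fatal variance error in both applications of Quillen's theorem A. Finality of $h\colon\I{J}\into\Sml_{\Univ}^{\op(\I{U})}(\I{J})$ is detected by the comma categories $F\downarrow h$, that is by $\Under{\I{J}}{F}$, the left fibration over $\I{J}$ classified by the copresheaf $\map{\IPSh_{\Univ}(\I{J})}(F,h(-))$ --- not by $\Over{\I{J}}{F}$, the right fibration classified by $F$ itself, which is what you use. Consequently the criterion you extract, ``$h$ is final iff $\colim F\simeq 1$ for every $F$ in $\Sml_{\Univ}^{\op(\I{U})}(\I{J})$'', is false: for $F\simeq\colim h_{\I{J}}d$ with $d\colon\I{I}^{\op}\to\I{J}$ one computes $\colim F\simeq(\I{I}^{\op})^{\gp}\simeq\I{I}^{\gp}$, so your criterion would force every member of $\I{U}$ to be weakly contractible; it already fails for $\I{J}=\mathbb{N}$ and $\I{U}$ containing a parallel pair, even though the inclusion is final there by the proposition. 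The same flip occurs on the other side: finality of the diagonal at $d\colon\I{I}^{\op}\to\pi_A^\ast\I{J}$ is tested on the $\BB$-category of \emph{cocones under} $d$, namely $\Under{\I{J}}{d}$, which is classified by a limit of \emph{corepresentables}, not on the category of cones over $d$, i.e.\ the Grothendieck construction of $\lim_{\I{I}^{\op}}(h\circ d)$; your reformulation therefore characterises the dual condition (initiality of the diagonal, equivalently weak filteredness of $\I{J}^{\op}$ with respect to $\op(\I{U})$), not weak $\I{U}$-filteredness of $\I{J}$.

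The bridging step in your last paragraph does not survive correction either. Objects of $\Sml_{\Univ}^{\op(\I{U})}(\I{J})$ are, locally, $\op(\I{U})$-indexed \emph{colimits} of representables, not iterated limits of representables; colimit classes are closed under the relevant colimits and under final functors, but not under limits (the limit of the slices $\Over{\I{J}}{d(i)}$ can even be empty), so the membership claim for $\lim_{\I{I}^{\op}}(h\circ d)$ is unjustified, and the induction that pushes $\colim$ through such limits by invoking its cocontinuity is circular: commuting $\colim$ past these limits is exactly the content of filteredness, not something you may assume. The correct route is in fact shorter and needs no generation argument: with the correct comma categories, for $F\simeq\colim h_{\I{J}}d$ one has $\Under{\I{J}}{F}\simeq\Under{\I{J}}{d}$, because maps out of a colimit of representables are precisely cocones under $d$; since every object of $\Sml_{\Univ}^{\op(\I{U})}(\I{J})$ is locally of this form and vanishing of groupoidifications is a local condition, two applications of Quillen's theorem A (one for $h$, one for the diagonal functors) match the two conditions directly.
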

\begin{proof}
	By Quillen's theorem A for $\BB$-categories~\cite[Corollary~4.4.8]{MYoneda}, the inclusion $\I{J}\into\ISml^{\op(\I{U})}(\I{J}$) is final if and only if for every $A\in\BB$ and every $\op(\I{U})$-small presheaf $F\colon A\to \ISml^{\op(\I{U})}(\I{J})$ the groupoidification of the $\Over{\BB}{A}$-category $\Under{\I{J}}{F}$ is the final object in $\Over{\BB}{A}$. By the same reasoning, $\I{J}$ is weakly $\I{U}$-filtered if and only if for every $\I{I}\in\I{U}(A)$ and every diagram $d\colon \I{I}^\op\to\pi_A^\ast\I{J}$ the groupoidification of the $\Over{\BB}{A}$-category $\Under{\pi_A^\ast\I{J}}{d}$ is final in $\Over{\BB}{A}$. Hence it suffices to show that for every such diagram $d$, there is an object $F\colon A\to\ISml^{\op(\I{U})}(\I{J})$ such that $\Under{\pi_A^\ast\I{J}}{d}\simeq\Under{\I{J}}{F}$, and vice versa. By replacing $\BB$ with $\Over{\BB}{A}$ and by using~\cite[Remark~6.2.2]{MWColimits}, we may assume that $A\simeq 1$. Now by~\cite[Proposition~6.2.6]{MWColimits}, the colimit of $h_{\I{J}} d\colon\I{I}\to\IPSh(\I{J})$ is contained in $\ISml^{\op(\I{U})}(\I{J})$ and therefore defines a $\I{U}$-small presheaf $F$. By construction, we have an equivalence $\Under{\ISml^{\I{U}}(\I{J})}{F}\simeq \Under{\ISml^{\I{U}}(\I{J})}{h_{\I{J}}d}$ whose pullback along the Yoneda embedding determines an equivalence $\Under{\I{J}}{d}\simeq \Under{\I{J}}{F}$. Hence, if $\Under{\I{J}}{F}^\gp$ is final, so is $\Under{\I{J}}{d}^\gp$. Conversely, if we are given an arbitrary $\I{U}$-small presheaf $F$, the fact that $\Under{\I{J}}{F}^\gp$ being final is \emph{local} in $\BB$ implies (by definition of what it means for a presheaf to be $\I{U}$-small) that we may safely assume that there is a diagram $d\colon\I{I}^\op\to\I{J}$ with $\I{I}\in\I{U}(1)$ such that $F\simeq \colim h_{\I{J}}d$. By the same argument as above, we thus conclude that if $\Under{\I{J}}{d}^\gp$ is final, so is $\Under{\I{J}}{F}$, which finishes the proof.
\end{proof}

\subsection{Regular classes}
\label{sec:regularClasses}
Recall that a cardinal $\kappa$ is said to be \emph{regular} if it is infinite and if any $\kappa$-small union of $\kappa$-small sets is still $\kappa$-small. In this section, we will study an analogue of this condition in the context of internal classes of $\BB$-categories. To that end, recall from the discussion in~\cite[\S~6.1]{MCocartesian} that the Yoneda embedding $\Delta\into\IPSh(\Delta)$ (where $\Delta$ is implicitly regarded as a constant $\BB$-category) factors through the embedding $\ICat_{\BB}\into\IPSh(\Delta)$, so that we may regard $\Delta$ as an internal class of $\BB$-categories. We may now define:
\begin{definition}
	\label{def:regularisation}
	An internal class $\I{U}$ is said to be \emph{right regular} if $\I{U}$ contains $\Delta$ and if $\I{U}$ is closed under $\I{U}$-colimits in $\ICat_{\BB}$. We define the \emph{right regularisation} $\I{U}^\reg_\rightarrow$ of $\I{U}$ to be the smallest right regular class that contains $\I{U}$.
	
	Dually, $\I{U}$ is called \emph{left regular} if it contains $\Delta$ and is closed under $\op(\I{U})$-colimits in $\ICat_{\BB}$, and we define the \emph{left regularisation} $\I{U}^\reg_\leftarrow$ as the smallest left regular class that contains $\I{U}$.
	
	Finally, we say that $\I{U}$ is \emph{regular} if it is both left and right regular, and we define the \emph{regularisation} $\I{U}^\reg$ of  $\I{U}$ as the smallest regular class that contains $\I{U}$.
\end{definition}

\begin{remark}
	An internal class $\I{U}$ of $\BB$-categories is left regular if and only if $\op(\I{U})$ is right regular, and there is an evident equivalence $\op(\I{U}^\reg_\leftarrow)\simeq \op(\I{U})^\reg_\rightarrow$ of internal classes.
	In particular, if we have an equivalence $\I{U}\simeq\op(\I{U})$ of internal classes, then the notions of left and right regularity collaps to the notion of regularity, and the left/right regularisation of $\I{U}$ is already its regularisation (cf.\ Corollary~\ref{cor:regularisationMinimalCondition} below).
\end{remark}

\begin{remark}
	\label{rem:BCRegularisation}
	By the same argument as in the proof of~\cite[Proposition~7.1.11]{MWColimits}, there is an equivalence $\pi_A^\ast(\I{U}^\reg_\rightarrow)\simeq(\pi_A^\ast\I{U})^\reg_\rightarrow$ for any internal class $\I{U}$ and any $A\in\BB$. In particular, the \'etale base change of a right regular class is still right regular. Similar observations can be made for the (left) regularisation of $\I{U}$.
\end{remark}

\begin{proposition}
	\label{prop:invarianceCocompletenessRegularisation}
	For every internal class $\I{U}$ of $\BB$-categories, a $\BB$-category is $\I{U}$-cocomplete if and only if it is $\I{U}^\reg_\rightarrow$-cocomplete, and a functor between $\BB$-categories is $\I{U}$-cocontinuous if and only if it is $\I{U}^\reg_\rightarrow$-cocontinuous.
	
	Dually, a $\BB$-category is $\I{U}$-complete if and only if it is $\I{U}^\reg_\leftarrow$-complete, and a functor between $\BB$-categories is $\I{U}$-complete if and only if it is $\I{U}^\reg_\leftarrow$-continuous.
\end{proposition}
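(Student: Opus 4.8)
The plan is to prove the statements about the right regularisation and cocompleteness first, and then to deduce those about the left regularisation and completeness by passing to opposite $\BB$-categories. Since $\I{U}\into\I{U}^\reg_\rightarrow$, every $\I{U}^\reg_\rightarrow$-cocomplete $\BB$-category is automatically $\I{U}$-cocomplete and every $\I{U}^\reg_\rightarrow$-cocontinuous functor is automatically $\I{U}$-cocontinuous, so only the converse implications need an argument.

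For the statement about $\BB$-categories I would fix a $\I{U}$-cocomplete large $\BB$-category $\I{C}$ and consider the internal class $\I{V}\into\ICat_{\BB}$ whose objects in context $A\in\BB$ are precisely the $\I{I}\colon A\to\ICat_{\BB}$ for which $\pi_A^\ast\I{C}$ admits $\I{I}$-indexed colimits; this is a well-defined full subcategory since the property of admitting $\I{I}$-colimits is stable under \'etale base change \cite{Martini2021a}, and tautologically $\I{C}$ is $\I{W}$-cocomplete for an internal class $\I{W}$ if and only if $\I{W}\into\I{V}$. As $\I{U}\into\I{V}$, it is enough to show that $\I{V}$ is right regular, for then $\I{U}^\reg_\rightarrow\into\I{V}$ by minimality. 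That $\Delta\in\I{V}$ is immediate: the constant $\BB$-category $\Delta$ has $\ord{0}$ as a final object, so for any $\BB$-category $\I{D}$ the colimit functor $\colim\colon\iFun{\Delta}{\I{D}}\to\I{D}$ exists and is computed by evaluation at $\ord{0}$ \cite{Martini2021a}, and this persists under base change. The remaining point is that $\I{V}$ is closed under $\I{V}$-colimits in $\ICat_{\BB}$: if $\I{J}\in\I{V}(A)$ and $d\colon\I{J}\to\ICat_{\Over{\BB}{A}}$ is a diagram landing in $\pi_A^\ast\I{V}$, I must show that $\pi_A^\ast\I{C}$ admits colimits indexed by $\colim_{\I{J}}d$, and this is exactly the decomposition principle for the existence of colimits (Proposition~\ref{prop:decompositionExistenceColimits}, applied over $\Over{\BB}{A}$): $\pi_A^\ast\I{C}$ admits $\I{J}$-colimits and admits $d(j)$-colimits for every object $j$ of $\I{J}$, hence it admits colimits indexed by the colimit of $d$.

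For the statement about functors I would argue in the same way with the internal class $\I{W}$ of all $\I{I}$ for which $\pi_A^\ast f$ preserves $\I{I}$-indexed colimits, where $f\colon\I{C}\to\I{D}$ is the given $\I{U}$-cocontinuous functor and $\I{C},\I{D}$ are already known (by the previous paragraph) to be $\I{U}^\reg_\rightarrow$-cocomplete. Again $\I{U}\into\I{W}$, and it suffices to see that $\I{W}$ is right regular: it contains $\Delta$ because $\colim_\Delta\simeq\ev_{\ord{0}}$ is preserved by every functor, and it is closed under $\I{W}$-colimits by the form of the decomposition principle that records preservation along a functor — precisely the form in which it is used in the proof of Proposition~\ref{prop:UFilteredUCocomplete} — since a $\colim_{\I{J}}d$-indexed colimit is assembled from a $\I{J}$-indexed colimit of $d(j)$-indexed colimits, each of which $f$ preserves. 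The statements for $\I{U}^\reg_\leftarrow$ and completeness then follow formally: $\I{C}$ is $\I{U}$-complete exactly when $\I{C}^\op$ is $\op(\I{U})$-cocomplete and $f$ is $\I{U}$-continuous exactly when $f^\op$ is $\op(\I{U})$-cocontinuous, so applying the right-regular case to the internal class $\op(\I{U})$ and invoking the identification $\op(\I{U}^\reg_\leftarrow)\simeq\op(\I{U})^\reg_\rightarrow$ recorded after Definition~\ref{def:regularisation} gives the claim.

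The only genuinely non-formal input is the decomposition principle Proposition~\ref{prop:decompositionExistenceColimits}, which is what makes $\I{V}$ and $\I{W}$ closed under $\I{V}$- respectively $\I{W}$-indexed colimits; granting it (in both its bare form and the form recording preservation by a functor), the rest is a routine closure-class argument of the same shape as the proof of Proposition~\ref{prop:UFilteredUCocomplete}. I expect this closure step to be the crux of the write-up.
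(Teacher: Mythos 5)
Your proof is correct and follows essentially the same route as the paper: there, too, one takes the maximal internal class $\I{V}$ for which $\I{C}$ is $\I{V}$-cocomplete (and the analogous class for the functor), checks that it contains $\Delta$ and is closed under $\I{V}$-colimits by Proposition~\ref{prop:decompositionExistenceColimits}, concludes $\I{U}^\reg_\rightarrow\into\I{V}$ by minimality, and deduces the left-regular statements by dualisation. The only cosmetic point is that ``contains $\Delta$'' refers to the simplices $\Delta^n$ viewed inside $\ICat_{\BB}$ rather than the simplex category as a single object, but your final-object argument applies verbatim to these, so nothing in your write-up needs to change in substance.
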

\begin{proof}
	We only prove the first statement, the second one follows by dualisation. So let $\I{C}$ be a $\I{U}$-cocomplete $\BB$-category, and let $\I{V}$ be the largest internal class of $\BB$-categories subject to the condition that $\I{C}$ is $\I{V}$-cocomplete. Clearly $\I{V}$ contains $\Delta$ since every $\BB$-category is $\Delta$-cocomplete (cf.~\cite[Remark~5.2.4]{MWColimits}). Moreover, Proposition~\ref{prop:decompositionExistenceColimits} implies that for any $\I{I}\in\I{V}(1)$ and any diagram $d\colon\I{I}\to\I{V}$ with colimit $\I{K}$, the $\BB$-category $\I{C}$ admits $\I{K}$-indexed colimits, which implies that $\I{K}\in \I{V}(1)$ by maximality of $\I{V}$. Upon replacing $\BB$ with $\Over{\BB}{A}$ and repeating the same argument, one concludes that $\I{V}$ is closed under $\I{V}$-colimits in $\ICat_{\BB}$ and must therefore contain $\I{U}^\reg_\rightarrow$. An analogous argument also shows that every $\I{U}$-cocontinuous functor is $\I{U}^\reg_\rightarrow$-cocontinuous.
\end{proof}

\begin{corollary}
	\label{cor:regularisationMinimalCondition}
	The right (left) regularisation of an internal class $\I{U}$ is the smallest internal class that contains $\I{U}$ and $\Delta$ and that is closed under $\I{U}$-colimits ($\op(\I{U})$-colimits) in $\ICat_{\BB}$.
\end{corollary}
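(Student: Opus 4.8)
The plan is to treat the right regularisation; the statement for the left regularisation then follows by applying the result to $\op(\I{U})$ together with the identity $\op(\I{U}^\reg_\leftarrow)\simeq\op(\I{U})^\reg_\rightarrow$ recorded after Definition~\ref{def:regularisation}. Write $\I{W}$ for the smallest internal class that contains $\I{U}$ and $\Delta$ and is closed under $\I{U}$-colimits in $\ICat_\BB$; the goal is to identify $\I{W}$ with $\I{U}^\reg_\rightarrow$. One inclusion is formal: any right regular class contains $\Delta$ and is closed under its own colimits, so $\I{U}^\reg_\rightarrow$ contains $\Delta$ and, since $\I{U}\into\I{U}^\reg_\rightarrow$, is in particular closed under $\I{U}$-colimits; as it also contains $\I{U}$, minimality of $\I{W}$ gives $\I{W}\into\I{U}^\reg_\rightarrow$.

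For the reverse inclusion it suffices to show that $\I{W}$ is right regular, since $\I{W}$ contains $\I{U}$ and so a right regular $\I{W}$ must contain the smallest right regular class containing $\I{U}$, namely $\I{U}^\reg_\rightarrow$. By construction $\I{W}$ contains $\Delta$, so the issue is to show that $\I{W}$ is closed under $\I{W}$-colimits in $\ICat_\BB$. Here I would follow the strategy of the proof of Proposition~\ref{prop:invarianceCocompletenessRegularisation}, with the fixed cocomplete target there replaced by the full subcategory $\I{W}\into\ICat_\BB$. Let $\I{V}$ be the largest internal class such that $\I{W}$ is closed under $\I{V}$-colimits in $\ICat_\BB$; this exists as the union of all internal classes with this property, and it is stable under \'etale base change because the condition of a full subcategory of $\ICat_\BB$ being closed under a prescribed class of colimits is local on $\BB$. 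It then suffices to prove $\I{W}\into\I{V}$, for which I check the three conditions defining $\I{W}$.

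First, $\I{U}\into\I{V}$ holds by the very definition of $\I{W}$. Second, $\Delta\into\I{V}$: for each $A\in\BB$ the $\Over{\BB}{A}$-category $\pi_A^\ast\Delta\simeq\const_{\Over{\BB}{A}}(\Delta)$ has a terminal object, the image of $\ord{0}$, so every colimit over $\pi_A^\ast\Delta$ in $\ICat_{\Over{\BB}{A}}$ is computed by evaluation at that object; in particular the colimit of any diagram landing in $\pi_A^\ast\I{W}$ again lies in $\pi_A^\ast\I{W}$, so $\I{W}$ is closed under $\Delta$-colimits. Third, $\I{V}$ is closed under $\I{U}$-colimits: given $A\in\BB$, some $\I{I}\in\I{U}(A)$ and a diagram $d\colon\I{I}\to\pi_A^\ast\I{V}$ with colimit $\I{K}$ in $\ICat_{\Over{\BB}{A}}$, Proposition~\ref{prop:decompositionExistenceColimits} lets one compute $\I{K}$-indexed colimits as the $\I{I}$-indexed colimit of the fibrewise $d(a)$-indexed colimits. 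Since $\pi_A^\ast\I{W}$ is closed under $\I{I}$-colimits (as $\I{I}$ lies in $\pi_A^\ast\I{U}$) and under $d(a)$-colimits (as $d$ takes values in $\pi_A^\ast\I{V}$), it follows that $\pi_A^\ast\I{W}$ is closed under $\I{K}$-colimits, i.e.\ $\I{K}\in\I{V}(A)$. By minimality of $\I{W}$ we conclude $\I{W}\into\I{V}$, hence $\I{W}$ is closed under $\I{W}$-colimits and therefore right regular, giving $\I{U}^\reg_\rightarrow\into\I{W}$.

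The step that needs the most care is the third one: one must invoke Proposition~\ref{prop:decompositionExistenceColimits} in its ``closure'' incarnation — a full subcategory of a cocomplete $\BB$-category that is closed both under the indexing category $\I{I}$ and under all the fibrewise indexing categories $d(a)$ is closed under the total colimit $\colim_{\I{I}}d$ — and one must track base change carefully enough to see that $\I{V}$ is a genuine internal class; both points are handled exactly as in the proof of Proposition~\ref{prop:invarianceCocompletenessRegularisation}. By contrast, the second step is immediate once one observes that $\Delta$ has a terminal object, which is also what silently underlies the assertion ``$\I{V}$ contains $\Delta$'' there.
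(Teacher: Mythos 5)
Your proposal is correct and in substance follows the paper's route: where the paper simply invokes Proposition~\ref{prop:invarianceCocompletenessRegularisation} (an internal class is closed under $\I{U}$-colimits in $\ICat_{\BB}$ if and only if it is closed under $\I{U}^\reg_\rightarrow$-colimits, since closure under colimits amounts to cocompleteness of the subcategory plus cocontinuity of its inclusion), you instead re-run the proof of that proposition, i.e.\ the largest-class argument together with Proposition~\ref{prop:decompositionExistenceColimits}, directly for the subcategory $\I{W}$. Both inclusions, and the reduction of the left-regular statement via $\op(-)$, agree with the paper's reasoning.
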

\begin{proof}
	This is an immediate conseqence of the observation that by Proposition~\ref{prop:invarianceCocompletenessRegularisation}, an internal class $\I{V}$ of $\BB$-categories is closed under $\I{U}$-colimits ($\op(\I{U})$-colimits) in $\ICat_{\BB}$ if and only if it is closed under $\I{U}^\reg_\rightarrow$-colimits
	($\op(\I{U}^\reg_\leftarrow)$-colimits) in $\ICat_{\BB}$.
\end{proof}

\begin{proposition}
	\label{prop:regularisationFilt}
	For every internal class $\I{U}$, the inclusion $\IFilt_{\I{U}^\reg_\leftarrow}\into\IFilt_{\I{U}}$ is an equivalence.
\end{proposition}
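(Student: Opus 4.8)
The inclusion in the statement is the one induced by $\I{U}\subseteq\I{U}^\reg_\leftarrow$ through Remark~\ref{rem:FiltGaloisConnection}, and since it is an inclusion of full subcategories of $\ICat_{\BB}$, proving that it is an equivalence amounts to establishing the reverse containment $\IFilt_{\I{U}}\subseteq\IFilt_{\I{U}^\reg_\leftarrow}$. So the plan is to fix a $\I{U}$-filtered $\BB$-category $\I{J}$ and to show that it is already $\I{U}^\reg_\leftarrow$-filtered. To this end I would introduce the full subcategory $\I{V}\into\ICat_{\BB}$ spanned by those objects $\I{I}\colon A\to\ICat_{\BB}$ for which the colimit functor $\colim\colon\iFun{\pi_A^\ast\I{J}}{\Univ}\to\Univ$ preserves $\I{I}$-indexed limits. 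Since preservation of limits is a local condition and colimit functors are compatible with \'etale base change, $\I{V}$ is a genuine internal class, and unwinding Definition~\ref{def:filteredCategories} shows that for an arbitrary internal class $\I{W}$ one has $\I{W}\subseteq\I{V}$ precisely when $\I{J}$ is $\I{W}$-filtered. In particular $\I{U}\subseteq\I{V}$ by hypothesis, so the problem reduces to proving $\I{U}^\reg_\leftarrow\subseteq\I{V}$, which by Corollary~\ref{cor:regularisationMinimalCondition} will follow once $\I{V}$ is shown to contain $\Delta$ and to be closed under $\op(\I{U})$-colimits in $\ICat_{\BB}$.

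The containment $\Delta\subseteq\I{V}$ is straightforward, since a limit indexed by a $\BB$-category with an initial object is computed by evaluation at that object, and every colimit functor trivially preserves limits of this form. The substantive part — and the step I expect to be the main obstacle — is the closure of $\I{V}$ under $\op(\I{U})$-colimits. Using that all the relevant notions are compatible with \'etale base change (Remarks~\ref{rem:BCFilt} and~\ref{rem:BCRegularisation}), it suffices to argue in context $1$: given $\I{I}_0\in\I{U}(1)$, a diagram $d\colon\I{I}_0^{\op}\to\I{V}$ and its colimit $\I{K}=\colim_{\I{I}_0^{\op}}d$ in $\ICat_{\BB}$, one must show that $\colim\colon\iFun{\I{J}}{\Univ}\to\Univ$ preserves $\I{K}$-indexed limits. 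Here the key point is that forming internal-hom $\BB$-categories into $\Univ$ turns colimits of $\BB$-categories into limits, so that the equivalence $\iFun{\I{K}}{\Univ}\simeq\lim_{\I{I}_0}\iFun{d(-)}{\Univ}$ identifies the limit functor $\lim_{\I{K}}$ with the composite of the pointwise limits $\lim_{d(-)}$ followed by $\lim_{\I{I}_0}$; in other words, a $\I{K}$-indexed limit decomposes as an $\I{I}_0$-indexed limit of $d(l)$-indexed limits, which is the limit analogue of Proposition~\ref{prop:decompositionExistenceColimits} and can be argued as in the proof of Proposition~\ref{prop:UFilteredUCocomplete}. Granting this, the functoriality of the mate construction shows that $\colim$ preserves $\I{K}$-limits as soon as it preserves $\I{I}_0$-limits and $d(l)$-limits for every object $l$; but $\I{I}_0\in\I{U}\subseteq\I{V}$ and each $d(l)$ lies in $\I{V}$ by construction, so both conditions hold and hence $\I{K}\in\I{V}$.

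Combining these observations, $\I{V}$ is an internal class containing $\I{U}$ and $\Delta$ that is closed under $\op(\I{U})$-colimits, whence $\I{U}^\reg_\leftarrow\subseteq\I{V}$ by Corollary~\ref{cor:regularisationMinimalCondition}; by the characterisation of $\I{V}$ established above, this says exactly that $\I{J}$ is $\I{U}^\reg_\leftarrow$-filtered, completing the argument. The only delicate point is to make the decomposition of $\I{K}$-indexed limits precise in the parametrised setting and to verify that it is compatible with the mate transformations governing preservation of limits; everything else is formal bookkeeping.
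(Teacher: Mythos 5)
Your argument is correct, but it is essentially the paper's own argument with one of its citations unpacked. The paper proves this proposition in two lines: $\I{U}$-filteredness of $\I{J}$ means that $\colim\colon\iFun{\I{J}}{\Univ}\to\Univ$ is $\I{U}$-continuous, and the dual half of Proposition~\ref{prop:invarianceCocompletenessRegularisation} says that a functor is $\I{U}$-continuous if and only if it is $\I{U}^\reg_\leftarrow$-continuous, which is exactly the statement you need. Your maximal-class argument --- introduce the class $\I{V}$ of all indexing $\BB$-categories whose limits $\colim_{\I{J}}$ preserves, check that it contains $\Delta$ and $\I{U}$ and is closed under $\op(\I{U})$-colimits, and conclude $\I{U}^\reg_\leftarrow\subseteq\I{V}$ from Corollary~\ref{cor:regularisationMinimalCondition} --- is precisely how the paper proves Proposition~\ref{prop:invarianceCocompletenessRegularisation} in the first place, so the two routes coincide mathematically; yours simply re-derives the invariance statement for the single functor $\colim_{\I{J}}$ instead of citing it (and the citation has the advantage of being reusable elsewhere). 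One refinement on the step you flag as delicate: you do not need to make the heuristic ``a $\I{K}$-indexed limit is an $\I{I}_0$-indexed limit of $d(l)$-indexed limits'' precise, nor to argue via mates. It suffices to apply the dual of Proposition~\ref{prop:decompositionExistenceColimits} directly: $\colim_{\I{J}}$ is by construction of $\I{V}$ a $\I{V}$-continuous functor between the complete $\BB$-categories $\iFun{\I{J}}{\Univ}$ and $\Univ$, the diagram $d\colon\I{I}_0^{\op}\to\I{V}$ has indexing category whose opposite $\I{I}_0$ lies in $\I{V}(1)$, and the dualised proposition then says verbatim that $\colim_{\I{J}}$ preserves $\I{K}$-indexed limits for $\I{K}=\colim d$; the same argument over $\Over{\BB}{A}$ gives the closure in every context, so the only remaining bookkeeping is the base-change compatibility of $\I{V}$, which you have already noted.
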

\begin{proof}
	In light of Remark~\ref{rem:BCRegularisation} and Remark~\ref{rem:BCFilt}, it suffices to show that every $\I{U}$-filtered $\BB$-category $\I{J}$ is already $\I{U}^\reg_\leftarrow$-filtered. This amounts to showing that the functor $\colim_{\I{J}}\colon\IFun(\I{I},  \Univ)\to\Univ$
	is $\I{U}^\reg_\leftarrow$-continuous. By Proposition~\ref{prop:invarianceCocompletenessRegularisation}, this is immediate.
\end{proof}
\begin{corollary}
	\label{cor:regularisationSoundness}
	The left regularisation of a (weakly) sound internal class is also (weakly) sound.
\end{corollary}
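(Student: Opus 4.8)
The plan is to reduce everything to the three facts already in place: Proposition~\ref{prop:regularisationFilt} (the class of $\I{U}$-filtered $\BB$-categories is unchanged under \emph{left} regularisation, because filteredness is governed by the opposite class occurring on the limit side), Proposition~\ref{prop:invarianceCocompletenessRegularisation} (cocompleteness is unchanged under \emph{right} regularisation), together with the two bookkeeping equivalences $\op(\I{U}^\reg_\leftarrow)\simeq\op(\I{U})^\reg_\rightarrow$ from the remark after Definition~\ref{def:regularisation} and $\pi_A^\ast(\I{U}^\reg_\leftarrow)\simeq(\pi_A^\ast\I{U})^\reg_\leftarrow$ from Remark~\ref{rem:BCRegularisation}. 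Both cases of the corollary then become purely formal manipulations; the only real point of care is keeping the $\op$'s on the correct side and observing that every instance of ``filtered'' and ``cocomplete'' may be taken after étale base change to a slice $\Over{\BB}{A}$, which is legitimate since all inputs are stable under such base change (Remarks~\ref{rem:BCFilt}, \ref{rem:BCwFilt} and~\ref{rem:BCRegularisation}).

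For the \textbf{weakly sound} case I would argue as follows. Assume $\I{U}$ is weakly sound, fix $A\in\BB$, and let $\I{J}$ be an $\op(\pi_A^\ast(\I{U}^\reg_\leftarrow))$-cocomplete $\Over{\BB}{A}$-category; the goal is to show $\I{J}$ is $\pi_A^\ast(\I{U}^\reg_\leftarrow)$-filtered. Rewriting $\pi_A^\ast(\I{U}^\reg_\leftarrow)\simeq(\pi_A^\ast\I{U})^\reg_\leftarrow$ and hence $\op(\pi_A^\ast(\I{U}^\reg_\leftarrow))\simeq(\op(\pi_A^\ast\I{U}))^\reg_\rightarrow$, Proposition~\ref{prop:invarianceCocompletenessRegularisation} identifies $(\op(\pi_A^\ast\I{U}))^\reg_\rightarrow$-cocompleteness with $\op(\pi_A^\ast\I{U})$-cocompleteness, so $\I{J}$ is $\op(\pi_A^\ast\I{U})$-cocomplete. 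Weak soundness of $\I{U}$, applied at the object $A$, then gives that $\I{J}$ is $\pi_A^\ast\I{U}$-filtered, and Proposition~\ref{prop:regularisationFilt} applied internally to $\Over{\BB}{A}$ (so that $\IFilt_{(\pi_A^\ast\I{U})^\reg_\leftarrow}\simeq\IFilt_{\pi_A^\ast\I{U}}$) upgrades this to $\I{J}$ being $(\pi_A^\ast\I{U})^\reg_\leftarrow\simeq\pi_A^\ast(\I{U}^\reg_\leftarrow)$-filtered, as required.

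For the \textbf{sound} case I would avoid a separate analysis of $\IwFilt$ and instead use a squeeze. Assume $\I{U}$ is sound, i.e.\ $\IFilt_{\I{U}}\simeq\IwFilt_{\I{U}}$. Combining Proposition~\ref{prop:regularisationFilt}, Remark~\ref{rem:wFiltGaloisConnection} for the inclusion $\I{U}\into\I{U}^\reg_\leftarrow$, and Corollary~\ref{cor:inclusionWFiltFilt} for the class $\I{U}^\reg_\leftarrow$, one gets a chain
\[
\IFilt_{\I{U}^\reg_\leftarrow}\simeq\IFilt_{\I{U}}\simeq\IwFilt_{\I{U}}\supseteq\IwFilt_{\I{U}^\reg_\leftarrow}\supseteq\IFilt_{\I{U}^\reg_\leftarrow},
\]
which forces $\IwFilt_{\I{U}^\reg_\leftarrow}\simeq\IFilt_{\I{U}^\reg_\leftarrow}$, i.e.\ $\I{U}^\reg_\leftarrow$ is sound.

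There is no substantial obstacle here — the corollary is a direct consequence of the preceding two propositions. If anything, the ``hard part'' is bookkeeping: making sure one invokes left regularisation for the filteredness side and right regularisation (via $\op$) for the cocompleteness side, and checking that the base-change reductions in the weakly sound case are permitted; both are handled by the remarks cited above.
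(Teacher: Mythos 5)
Your proof is correct and follows essentially the same route as the paper: the sound case is exactly the paper's squeeze via Proposition~\ref{prop:regularisationFilt}, Remark~\ref{rem:wFiltGaloisConnection} and Corollary~\ref{cor:inclusionWFiltFilt}, and your explicit weakly sound argument (via $\op(\I{U}^\reg_\leftarrow)\simeq\op(\I{U})^\reg_\rightarrow$, Proposition~\ref{prop:invarianceCocompletenessRegularisation} and the base-change remarks) is just the ``similar argument'' the paper leaves implicit. No gaps.
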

\begin{proof}
	Suppose that $\I{U}$ is sound, i.e.\ that $\IFilt_{\I{U}}\into\IwFilt_{\I{U}}$ is an equivalence. Since $\I{U}\into\I{U}^\reg_\leftarrow$ implies that we have an inclusion $\IwFilt_{\I{U}^\reg_\leftarrow}\into\IwFilt_{\I{U}}$, Proposition~\ref{prop:regularisationFilt} implies that the inclusion $\IFilt_{\I{U}^\reg_\leftarrow}\into\IwFilt_{\I{U}^\reg}$ is also an equivalence, hence $\I{U}^\reg_\leftarrow$ is sound. The case where $\I{U}$ is weakly sound follows from a similar argument.
\end{proof}

For the study of accessibility and presentability of $\BB$-categories, we will generally need to restrict our attention to those internal classes of $\BB$-categories that are themselves \emph{small} $\BB$-categories. It will therefore be useful to give such internal classes a dedicated name. Again following~\cite{Adamek2002}, we thus define:
\begin{definition}
	\label{def:doctrine}
	An internal class $\I{U}$ of $\BB$-categories is a \emph{doctrine} if $\I{U}$ is a small $\BB$-category.
\end{definition}
\begin{proposition}
	\label{prop:regularisationDoctrine}
	The (left/right) regularisation of a doctrine is still a doctrine.
\end{proposition}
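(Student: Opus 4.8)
The plan is to realise all three regularisations by explicit transfinite constructions inside $\ICat_{\BB}$, in the spirit of the classical arguments of~\cite{Adamek2002, Rezk2021}; the only real point is that these constructions never leave the world of small $\BB$-categories. It suffices to treat the \emph{right} regularisation. The left case follows from it: since $(-)^{\op}\colon\ICat_{\BB}\simeq\ICat_{\BB}$ preserves smallness, $\op(\I{U})$ is a doctrine whenever $\I{U}$ is, hence so is $\op(\I{U})^{\reg}_{\rightarrow}\simeq\op(\I{U}^{\reg}_{\leftarrow})$ (the Remark following Definition~\ref{def:regularisation}), hence so is $\I{U}^{\reg}_{\leftarrow}$. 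For the two-sided regularisation, let $\I{U}_{0}$ be the internal class spanned by $\Delta$ and the members of $\I{U}$, and set $\I{U}_{n+1}=((\I{U}_{n})^{\reg}_{\rightarrow})^{\reg}_{\leftarrow}$; by the right and left cases each $\I{U}_{n}$ is a doctrine, the sequence is increasing, and $\bigcup_{n}\I{U}_{n}$ is again a doctrine because its object space is a countable union along monomorphisms of small objects of $\BBB$ and $\BB\subseteq\BBB$ is closed under countable colimits. Using Corollary~\ref{cor:regularisationMinimalCondition} one checks that $\bigcup_{n}\I{U}_{n}$ contains $\Delta$ and $\I{U}$, lies inside every regular class containing $\I{U}$, and is closed under colimits indexed by its own members and by their opposites (any such index $\BB$-category and the diagram in question, being small, factor through some stage $\I{U}_{n}$, whence the colimit lands in $(\I{U}_{n})^{\reg}_{\rightarrow}\subseteq\I{U}_{n+1}$ in the first case and in $\I{U}_{n+1}=((\I{U}_{n})^{\reg}_{\rightarrow})^{\reg}_{\leftarrow}$ in the second); thus $\bigcup_{n}\I{U}_{n}\simeq\I{U}^{\reg}$.

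For $\I{U}^{\reg}_{\rightarrow}$ with $\I{U}$ a doctrine, Corollary~\ref{cor:regularisationMinimalCondition} presents it as the closure of $\I{V}_{0}:=\I{U}_{0}$ under $\I{U}$-colimits in $\ICat_{\BB}$, which I build transfinitely: $\I{V}_{\alpha+1}$ is the full subcategory of $\ICat_{\BB}$ spanned by $\I{V}_{\alpha}$ together with all colimits $\colim_{\I{I}}d$ for $A\in\BB$, $\I{I}\in\I{U}(A)$ and $d\colon\I{I}\to\pi_{A}^{\ast}\I{V}_{\alpha}$, and $\I{V}_{\lambda}=\bigcup_{\alpha<\lambda}\I{V}_{\alpha}$ at limit stages. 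Fixing a regular cardinal $\kappa$ large enough that every member of $\I{U}$ is $\kappa$-compact in $\ICat_{\BB}$ (possible since $\I{U}$ is a small $\BB$-category), the stage $\I{V}_{\kappa}$ is closed under $\I{U}$-colimits: a diagram $\I{I}\to\pi_{A}^{\ast}\I{V}_{\kappa}\simeq\colim_{\alpha<\kappa}\pi_{A}^{\ast}\I{V}_{\alpha}$ with $\I{I}\in\I{U}(A)$ factors through some $\pi_{A}^{\ast}\I{V}_{\alpha}$, so its colimit lies in $\I{V}_{\alpha+1}\subseteq\I{V}_{\kappa}$. As $\I{V}_{\kappa}$ contains $\Delta$ and is visibly contained in every right regular class containing $\I{U}$, Corollary~\ref{cor:regularisationMinimalCondition} identifies $\I{V}_{\kappa}\simeq\I{U}^{\reg}_{\rightarrow}$.

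The crux is that each $\I{V}_{\alpha}$ is \emph{small}; granting this, $\I{V}_{\kappa}$ is small because its object space is the small colimit $\colim_{\alpha<\kappa}(\I{V}_{\alpha})_{0}$ of small objects, and one proceeds by transfinite induction (the limit and base steps being immediate). For the successor step I would exhibit $\I{V}_{\alpha+1}$ as the essential image of a functor out of a small $\BB$-category: the inclusion $\iota_{\alpha}\colon\I{V}_{\alpha}\into\ICat_{\BB}$ into the cocomplete $\BB$-category $\ICat_{\BB}$ extends along the Yoneda embedding to a cocontinuous functor $(\iota_{\alpha})_{!}\colon\IPSh_{\Univ}(\I{V}_{\alpha})\to\ICat_{\BB}$, whose restriction to the full subcategory $\Sml_{\Univ}^{\I{U}}(\I{V}_{\alpha})\into\IPSh_{\Univ}(\I{V}_{\alpha})$ of $\I{U}$-small presheaves has essential image precisely $\I{V}_{\alpha+1}$, since the objects of $\Sml_{\Univ}^{\I{U}}(\I{V}_{\alpha})$ are exactly the $\I{U}$-colimits of representables and $(\iota_{\alpha})_{!}$ sends $\colim_{\I{I}}h_{\I{V}_{\alpha}}d$ to $\colim_{\I{I}}\iota_{\alpha}d$. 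Since the essential image of a functor with small domain is small (in $\BBB$, the image of a morphism out of a small object is small), it remains to know that $\Sml_{\Univ}^{\I{U}}(\I{V}_{\alpha})$ is small whenever $\I{V}_{\alpha}$ is small and $\I{U}$ is a doctrine — equivalently, that the $\BB$-category of $\I{U}$-indexed diagrams in a small $\BB$-category is small. This last fact is the genuine input and, I expect, the main obstacle; I would deduce it from the theory of (co)cartesian fibrations and relative functor $\BB$-categories of~\cite{Martini2022}, such diagrams being sections of the relative functor $\BB$-category formed from the cocartesian fibration over $\I{U}$ classified by $\I{U}\into\ICat_{\BB}$ and the constant family with value $\I{V}_{\alpha}$. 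Everything else reduces to routine bookkeeping with the standard closure properties of the class of small objects.
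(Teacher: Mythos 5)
Your transfinite-closure strategy founders at the two places where you silently convert \emph{smallness} into \emph{compactness}. In the two-sided case you claim that a diagram $d\colon\I{I}\to\bigcup_n\I{U}_n$ factors through some stage "being small". Smallness gives no such thing in an $\infty$-topos: factoring through a stage of an increasing union of full subcategories means factoring $\I{I}_0\to\colim_n(\I{U}_n)_0$ (and $A\to\colim_n(\I{U}_n)_0$) through one of the subobjects $(\I{U}_n)_0$, and a small object of $\BB$ need not be $\omega$-compact — in $\Shv(\mathbb{R})$ the identity of the terminal object does not factor through any stage of the union of the subterminals $(-n,n)$. So your countable union has no reason to be closed under its own colimits. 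The same confusion underlies the pivotal step of the right regularisation: you "fix $\kappa$ so that every member of $\I{U}$ is $\kappa$-compact in $\ICat_{\BB}$, possible since $\I{U}$ is small", but the members of $\I{U}$ are objects $A\to\I{U}$ in \emph{arbitrary} contexts $A\in\BB$, a proper class of $\Over{\BB}{A}$-categories, and for $A$ not $\kappa$-compact even $\pi_A^\ast\Delta^1$ fails to be $\kappa$-compact in $\Cat(\Over{\BB}{A})$ — exactly the base-change subtlety recorded in Remark~\ref{rem:BCKappaSmallCategories}. This step is repairable (closure under $\I{U}$-colimits is a local condition, so one may restrict to contexts in a small generating subcategory and then choose $\kappa$ adapted to that \emph{small} family of diagram shapes and their objects-of-objects), and the two-sided case then needs the same $\kappa$-length alternating iteration rather than an $\omega$-indexed union; but as written both justifications are wrong.

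Beyond that, your successor step hinges on the smallness of $\Sml_{\Univ}^{\I{U}}(\I{V}_\alpha)$, which you explicitly leave open as "the genuine input". That statement is in fact available — it follows from the smallness of the free $\I{U}$-cocompletion of a small $\BB$-category when $\I{U}$ is a doctrine (the argument of \cite[Remark~6.1.12]{Martini2021a}, invoked in the same way in Remark~\ref{rem:CocompletionRelationsSmall}), since $\Sml_{\Univ}^{\I{U}}(\I{V}_\alpha)$ is a full subcategory of $\IPSh_{\Univ}^{\I{U}}(\I{V}_\alpha)$ — but a proof whose key lemma is flagged as an expected obstacle is not complete; and your general principle "the image of a morphism out of a small object is small in $\BBB$" is false (take the map $1\to K(G,1)$ for a large group $G$); it holds here only because $\ICat_{\BB}$ is locally small. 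Note finally that the paper avoids all of this machinery: since the (left/right) regularisation is the \emph{smallest} such class containing $\I{U}$, it suffices to embed $\I{U}$ into a single regular doctrine, and \S~\ref{sec:kappaSmall} provides one, namely $\ICat_{\BB}^\kappa$ for a $\BB$-regular cardinal $\kappa$ chosen so that the small set of generators of $\I{U}$ consists of $\kappa$-small categories (Remark~\ref{rem:regularClassesExhaustive}); a full subcategory of a small $\BB$-category is small, so no transfinite construction is needed.
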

\begin{proof}
	It suffices to show that any doctrine $\I{U}$ is contained in a regular doctrine $\I{V}$. We will explicitly construct such a doctrine in~\S~\ref{sec:kappaSmall} below, cf.~Remark~\ref{rem:regularClassesExhaustive}.
\end{proof}

\subsection{The decomposition property}
\label{sec:decompositionProperty}
It is well-known that for every regular cardinal $\kappa$, any $\infty$-category can be written as a $\kappa$-filtered colimit of $\kappa$-small $\infty$-categories. In order to obtain a well-behaved notion of accessibility internal to $\BB$, it will be crucial to have an analogue of this property for $\BB$-categories. This leads us to the following definition:

\begin{definition}
	\label{def:decompositionProperty}
	An internal class $\I{U}$ of $\BB$-categories is said to have the \emph{decomposition property} if for every $A\in\BB$ and every $\Over{\BB}{A}$-category $\I{C}$, there is a $\pi_A^\ast\I{U}$-filtered $\Over{\BB}{A}$-category $\I{J}$ and a diagram $d\colon\I{J}\to\pi_A^\ast\I{U}$ with colimit $\I{C}$.
\end{definition}

\begin{remark}
	\label{rem:decompositionPropertyVariant}
	In the situation of Definition~\ref{def:decompositionProperty}, by applying the decomposition property to $\I{D}=\I{C}^{\op}$, one deduces that $\I{C}$ can also be obtained as a $\pi_A^\ast\I{U}$-filtered colimit of a diagram in $\op(\pi_A^\ast\I{U})$.
\end{remark}

The main goal of this section is to show:
\begin{proposition}
	\label{prop:decompositionColimitsCatB}
	Every left regular and weakly sound internal class $\I{U}$ has the decomposition property.
\end{proposition}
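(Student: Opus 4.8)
The plan is to realise the required diagram as the $\BB$-category of \emph{all} $\I{U}$-small objects lying over $\I{C}$, in analogy with the classical fact that an $\infty$-category is the $\kappa$-filtered colimit of its $\kappa$-small subcategories; the two hypotheses on $\I{U}$ will be used precisely to make this category $\I{U}$-filtered.

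First I would reduce to the case $A=1$. The étale base change of a left regular internal class is again left regular (Remark~\ref{rem:BCRegularisation}), that of a weakly sound class is again weakly sound (Remark~\ref{rem:BCSoundness}), and moreover $\pi_A^\ast\IFilt_{\I{U}}\simeq\IFilt_{\pi_A^\ast\I{U}}$ (Remark~\ref{rem:BCFilt}); hence it suffices to prove the following: for every left regular and weakly sound internal class $\I{U}$ and every $\BB$-category $\I{C}$ there is a $\I{U}$-filtered $\BB$-category $\I{J}$ together with a functor $d\colon\I{J}\to\I{U}$ whose colimit in $\ICat_{\BB}$ is $\I{C}$.

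I would then set $\I{J}\coloneqq\I{U}\times_{\ICat_{\BB}}\Over{(\ICat_{\BB})}{\I{C}}$, the full subcategory of the slice $\Over{(\ICat_{\BB})}{\I{C}}$ spanned by the objects $f\colon\I{D}\to\I{C}$ with $\I{D}$ in $\I{U}$, and let $d\colon\I{J}\to\I{U}$ be the projection $(\I{D}\to\I{C})\mapsto\I{D}$; when $\I{U}$ is a doctrine this $\I{J}$ is small, and in general one allows it to be large, which does no harm since the colimit below will be left Kan extended from a small diagram. The key point — and the step I expect to be the real content of the argument — is that $\I{J}$ is $\I{U}$-filtered. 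To see this I would first check that $\I{J}$ is $\op(\I{U})$-cocomplete: the forgetful functor $\Over{(\ICat_{\BB})}{\I{C}}\to\ICat_{\BB}$ creates colimits (as in the classical theory, cf.\ \cite{Martini2021a}), $\ICat_{\BB}$ is cocomplete, and $\I{U}$, being left regular (Definition~\ref{def:regularisation}), is closed under $\op(\I{U})$-colimits in $\ICat_{\BB}$; as left regularity is stable under étale base change, the same holds in every context, so $\I{J}$ admits all $\op(\I{U})$-colimits, created from $\ICat_{\BB}$. Now weak soundness of $\I{U}$ (Definition~\ref{def:soundness}) says exactly that every $\op(\I{U})$-cocomplete $\BB$-category is $\I{U}$-filtered, so $\I{J}$ is $\I{U}$-filtered. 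Everything else in the proof is formal modulo the cited structural results, and the two hypotheses on $\I{U}$ enter only here: left regularity for the cocompleteness, weak soundness for the upgrade to filteredness.

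It remains to identify $\colim_{\I{J}}d\simeq\I{C}$. Let $\I{E}\coloneqq\Delta\times_{\ICat_{\BB}}\Over{(\ICat_{\BB})}{\I{C}}$ be the $\BB$-category of simplices of $\I{C}$; since $\I{U}$ contains $\Delta$ there is a canonical inclusion $\iota\colon\I{E}\into\I{J}$, and $d\iota\colon\I{E}\to\I{U}$ is the diagram of cells $(\Delta^n\to\I{C})\mapsto\Delta^n$. Viewing $\I{C}$ as a presheaf on $\Delta$ and using that $\ICat_{\BB}$ is a reflective subcategory of $\IPSh_{\Univ}(\Delta)$ with $\I{C}$ local, the density of representable presheaves \cite{Martini2021a} gives $\colim_{\I{E}}(d\iota)\simeq\I{C}$ in $\ICat_{\BB}$. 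I claim moreover that $d\simeq\iota_!(d\iota)$: the pointwise formula for the left Kan extension along $\iota$ (valid since $\I{E}$ is small, $\I{J}$ is locally small and $\ICat_{\BB}$ is cocomplete) computes $\iota_!(d\iota)$ at an object $(\I{D},f)$ of $\I{J}$ as the colimit of $d\iota$ over the comma $\BB$-category $\iota\downarrow(\I{D},f)$, which is canonically the $\BB$-category of simplices of $\I{D}$ with $d\iota$ restricting there to the diagram of cells of $\I{D}$; by density again this colimit is $\I{D}\simeq d(\I{D},f)$, and the counit of the adjunction $\iota_!\dashv\iota^\ast$ realises this density equivalence, so $\iota_!(d\iota)\simeq d$ naturally. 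Therefore $\colim_{\I{J}}d\simeq\colim_{\I{J}}\iota_!(d\iota)\simeq\colim_{\I{E}}(d\iota)\simeq\I{C}$, where the middle equivalence is the standard identity $\colim_{\I{J}}\circ\,\iota_!\simeq\colim_{\I{E}}$ (which also guarantees that the colimit on the left exists, being corepresented by the small colimit $\colim_{\I{E}}(d\iota)$). Combined with the previous paragraph, this exhibits $\I{J}$ and $d$ with all the required properties, so that $\I{U}$ has the decomposition property (Definition~\ref{def:decompositionProperty}).
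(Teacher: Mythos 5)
Your proof is correct and takes essentially the same route as the paper: the same indexing $\BB$-category $\Over{\I{U}}{\I{C}}\simeq\I{U}\times_{\ICat_{\BB}}\Over{(\ICat_{\BB})}{\I{C}}$, left regularity used to show this slice is $\op(\I{U})$-cocomplete (the paper packages this via Corollary~\ref{cor:sliceCategoryUCocomplete} and Lemma~\ref{lem:pullbackCocompleteRightFibration}), and weak soundness to upgrade to $\I{U}$-filteredness. The only deviation is that you identify $\colim_{\I{J}}d\simeq\I{C}$ by hand through the category of simplices and a left Kan extension along $\I{E}\into\I{J}$, which amounts to reproving the relevant instance of the paper's Lemma~\ref{lem:coYonedaGeneralised}.
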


Before we can prove Proposition~\ref{prop:decompositionColimitsCatB}, we need a few preparations.
\begin{lemma}
	\label{lem:coYonedaGeneralised}
	Let $\I{C}$ be a small $\BB$-category and let $\I{D}\into\IPSh(\I{C})$ be a full subcategory that contains $\I{C}$. Then any presheaf $F\colon\I{C}^{\op}\to\Univ$ is the colimit of the diagram $\Over{\I{D}}{F}\to\I{D}\into\IPSh(\I{C})$.
\end{lemma}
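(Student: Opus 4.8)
The plan is to deduce this from the ordinary co-Yoneda lemma (the density theorem for presheaves, which says $F\simeq\colim(\Over{\I{C}}{F}\to\I{C}\xhookrightarrow{h}\IPSh_{\Univ}(\I{C}))$, i.e.\ the case $\I{D}=\I{C}$) by showing that passing from the slice $\Over{\I{C}}{F}$ to the larger slice $\Over{\I{D}}{F}$ does not change the colimit. The key observation is that the inclusion $j\colon\I{C}\into\I{D}$ induces, upon slicing over $F$, a functor $\Over{\I{C}}{F}\to\Over{\I{D}}{F}$, and I claim this functor is final; granting that, Quillen's theorem A for $\BB$-categories gives $\colim(\Over{\I{D}}{F}\to\IPSh_{\Univ}(\I{C}))\simeq\colim(\Over{\I{C}}{F}\to\IPSh_{\Univ}(\I{C}))$, and the right-hand side is $F$ by co-Yoneda, finishing the proof.

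First I would set up the two slices correctly. Since $\I{D}\into\IPSh_{\Univ}(\I{C})$ is fully faithful and contains $\I{C}$ (via the Yoneda embedding $h$), the slice $\Over{\I{D}}{F}$ is the full subcategory of $\Over{\IPSh_{\Univ}(\I{C})}{F}$ on those objects whose source lies in $\I{D}$, and similarly $\Over{\I{C}}{F}$ sits inside it as the full subcategory on objects with source in $\I{C}$. Concretely, using the Grothendieck-construction description, $\Over{\IPSh_{\Univ}(\I{C})}{F}$ is classified by the right fibration over $\IPSh_{\Univ}(\I{C})$ associated to $\map{\IPSh_{\Univ}(\I{C})}(-,F)$, and $\Over{\I{C}}{F}$ is its pullback along $h$; by Yoneda's lemma this pullback is the right fibration classifying $F$ itself, i.e.\ $\Over{\I{C}}{F}\simeq \Over{\I{C}}{F}$ in the notation of the excerpt where $\Over{\I{C}}{c}$ for an object already makes sense and the presheaf-slice is recovered as $\Under{\I{C}}{\text{—}}$-type construction. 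The point is that all three are full subcategories of $\Over{\IPSh_{\Univ}(\I{C})}{F}$ determined by a condition on the source object.

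Next, the heart of the argument: the inclusion $k\colon\Over{\I{C}}{F}\into\Over{\I{D}}{F}$ is final. By Quillen's theorem A for $\BB$-categories~\cite[Corollary~4.4.8]{Martini2021}, this amounts to checking that for every $A\in\BB$ and every object $(G,\alpha)\colon A\to\Over{\I{D}}{F}$ — i.e.\ an object $G\in\I{D}(A)$ together with a map $\alpha\colon G\to F$ in $\IPSh_{\pi_A^\ast\Univ}(\pi_A^\ast\I{C})$ — the $\Over{\BB}{A}$-category $\Over{(\Over{\I{C}}{F})}{(G,\alpha)}$ has a contractible groupoidification. But that slice is exactly $\Over{\I{C}}{G}$ (the objects are objects $c$ of $\I{C}$ with a map $h(c)\to G$ over $F$, and compatibility over $F$ is automatic), and $\Over{\I{C}}{G}$ has a final object given by $\id_G$ when $G$ is representable — more precisely, for general $G\in\I{D}(A)\subseteq\IPSh_{\pi_A^\ast\Univ}(\pi_A^\ast\I{C})(A)$, the co-Yoneda lemma applied to $G$ (the $\I{D}=\I{C}$ case, which we may take as known, e.g.\ as in~\cite[Theorem~7.2.6]{Martini2021a} or the discussion of the Yoneda embedding in the preliminaries) says $G\simeq\colim(\Over{\I{C}}{G}\to\I{C}\to\IPSh)$, and in particular $\Over{\I{C}}{G}$ is weakly contractible; combined with Quillen A again this is what finalness of $k$ requires. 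I would phrase this last step as: finalness of $k$ is a local condition, and locally it reduces to the statement that $\Over{\pi_A^\ast\I{C}}{G}$ has trivial groupoidification for every presheaf $G$, which is precisely the (already-established) co-Yoneda lemma.

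The main obstacle I anticipate is the bookkeeping in identifying $\Over{(\Over{\I{C}}{F})}{(G,\alpha)}$ with $\Over{\I{C}}{G}$ and confirming that the "over $F$" data is genuinely no extra constraint — this is a standard fact about slices of slices ($\Over{(\Over{\EE}{F})}{(G\to F)}\simeq\Over{\EE}{G}$) but in the $\BB$-categorical setting it should be cited or re-derived from the behaviour of right fibrations under pullback and the Grothendieck construction as recalled in \S\ref{sec:recollection}. Everything else is a direct application of Quillen's theorem A and the ordinary co-Yoneda lemma, so no genuinely new input is needed; the lemma is really just a slight strengthening of density, with $\I{D}$ a "thickened" index category.
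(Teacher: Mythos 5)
Your overall strategy cannot work, because its central claim---that the inclusion $k\colon\Over{\I{C}}{F}\into\Over{\I{D}}{F}$ is final---is false in general. Take $\BB=\SS$, $\I{C}=\Delta^0$ (so $\IPSh_{\Univ}(\I{C})\simeq\SS$), let $\I{D}\subseteq\SS$ be the full subcategory spanned by $\ast$ and $S^1$, and let $F=B\mathbb{Z}/2$. For the object $(S^1,\alpha)$ of $\Over{\I{D}}{F}$ with $\alpha$ the nontrivial map, the comma category relevant for Quillen's theorem A, namely the category of objects of $\Over{\I{C}}{F}$ equipped with a map \emph{from} $(S^1,\alpha)$, is the category of factorisations of $\alpha$ through a point of $F$, i.e.\ of null-homotopies of $\alpha$; it is empty, so $k$ is not final. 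The lemma is nonetheless true (the two colimits happen to agree for this particular diagram), so finality is simply not the mechanism by which the $\I{C}$-indexed and $\I{D}$-indexed decompositions are compared, and the reduction to the ordinary co-Yoneda lemma along these lines breaks down.

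There are also two errors inside your finality check. First, with the paper's conventions (final $=$ colimit-cofinal), Quillen's theorem A requires the groupoidification of the comma categories $\Under{(\Over{\I{C}}{F})}{(G,\alpha)}$ of objects under $(G,\alpha)$ to be trivial; what you compute, $\Over{(\Over{\I{C}}{F})}{(G,\alpha)}\simeq\Over{\I{C}}{G}$, is the criterion for \emph{initiality}. Second, even for that category the contractibility claim is wrong: the co-Yoneda lemma says that $G$ is the colimit of the canonical diagram, equivalently that $(\Over{\I{C}}{G})^{\gp}$ is the \emph{colimit of $G$} in $\Univ$, not the final object (take $G$ empty, or a coproduct of two representables). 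For comparison, the paper's proof avoids any finality statement: using Proposition~\ref{prop:sliceFibrationColimits} and the equivalence $\Over{\IPSh_{\Univ}(\I{C})}{F}\simeq\IPSh_{\Univ}(\Over{\I{C}}{F})$ it reduces to the case $F=1$, enlarges the universe so that $\I{D}$ may be assumed small, and then observes that $j\colon\I{D}\into\IPSh_{\Univ}(\I{C})$ is the left Kan extension of the Yoneda embedding along $\I{C}\into\I{D}$, whence $j_!(j)\simeq\id$ and the pointwise formula for Kan extensions yields $1\simeq\colim j$. Some argument of this Kan-extension type (or another replacement for the false finality claim) is genuinely needed.
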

\begin{proof}
	By Proposition~\ref{prop:sliceFibrationColimits}, it suffices to show that the final object in $\Over{\IPSh(\I{C})}{F}$ is the colimit of the inclusion $\Over{\I{D}}{F}\into\Over{\IPSh(\I{C})}{F}$. In light of the inclusions $\Over{\I{C}}{F}\into\Over{\I{D}}{F}\into\Over{\IPSh(\I{C})}{F}$ and by making use of the equivalence $\Over{\IPSh(\I{C})}{F}\simeq\IPSh(\Over{\I{C}}{F})$ from~\cite[Lemma~6.1.5]{MWColimits}, we may thus assume that $F$ is the final object in $\IPSh(\I{C})$. Moreover, since the inclusion $\Univ[\BB]\into\Univ[\BBB]$ is cocontinuous~\cite[Example~5.2.8]{MWColimits}, we may enlarge our universe and thus assume without loss of generality that $\I{D}$ is \emph{small}.
	Now let $i\colon\I{C}\into\I{D}$ and $j\colon\I{D}\into\IPSh(\I{C})$ be the inclusions. 
	Since the identity on $\IPSh(\I{C})$ is the left Kan extension of the Yoneda embedding $h$ along itself~\cite[Theorem~6.1.1]{MWColimits}, we obtain equivalences $j\simeq j^\ast j_!i_!(h)\simeq i_!(h)$, where the functor $i_!$ exists since $\I{D}$ is small~\cite[Corollary~6.3.7]{MWColimits}. Therefore, the identity on $\IPSh(\I{C})$ is also the left Kan extension of $j$ along itself. The claim now follows from the explicit description of the left Kan extension in~\cite[Remark 6.3.6]{MWColimits}, which implies that we have equivalences $1_{\IPSh(\I{C})}\simeq j_!(j)(1_{\IPSh(\I{C})})\simeq \colim j$. 
\end{proof}

\begin{lemma}
	\label{lem:pullbackCocompleteRightFibration}
	Let $\I{U}$ be an internal class of $\BB$-categories and let
	\begin{equation*}
		\begin{tikzcd}
			\I{Q}\arrow[r, "g"]\arrow[d, "q"] & \I{P}\arrow[d, "p"]\\
			\I{D}\arrow[r, "f"] & \I{C}
		\end{tikzcd}
	\end{equation*}
	be a pullback square in $\Cat(\BB)$ in which $\I{D}$, $\I{C}$ and $\I{P}$ are $\I{U}$-cocomplete and both $f$ and $p$ are $\I{U}$-cocontinuous. Then $\I{Q}$ is $\I{U}$-cocomplete and both $q$ and $g$ are $\I{U}$-cocontinuous.
\end{lemma}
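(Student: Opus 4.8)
The plan is to establish that $\I{U}$-indexed colimits in $\I{Q}$ exist and are computed componentwise along $q$ and $g$. First we reduce to a manageable situation: since $\pi_A^\ast$ carries the pullback square to a pullback square and preserves $\I{U}$-cocompleteness and $\I{U}$-cocontinuity of all the relevant functors, and since any diagram becomes a global one after a suitable base change, it suffices to fix $\I{I}\in\I{U}(1)$ together with a diagram $d\colon\I{I}\to\I{Q}$ and to produce a colimit of $d$ in $\I{Q}$ that is preserved by $q$ and $g$; by the adjoint functor criterion for the diagonal this will also yield that $\I{Q}$ is $\I{U}$-cocomplete. Because $\iFun{\I{I}}{-}$ is a right adjoint it preserves the pullback square, so $\iFun{\I{I}}{\I{Q}}\simeq\iFun{\I{I}}{\I{D}}\times_{\iFun{\I{I}}{\I{C}}}\iFun{\I{I}}{\I{P}}$, and under this equivalence $d$ corresponds to a triple $(qd,gd,\alpha)$, where $\alpha\colon fqd\simeq pgd$ is the coherence datum of the square.

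Next we construct the candidate colimit. Set $x=\colim(qd)\in\I{D}$ and $y=\colim(gd)\in\I{P}$; these exist since $\I{D}$ and $\I{P}$ are $\I{U}$-cocomplete. As $f$ and $p$ are $\I{U}$-cocontinuous and $\I{C}$ is cocomplete, the comparison maps identify $f(x)\simeq\colim(fqd)$ and $p(y)\simeq\colim(pgd)$, so applying $\colim$ to $\alpha$ produces an equivalence $\beta\colon f(x)\simeq p(y)$ in $\I{C}$. Using the equivalence $\I{Q}(1)\simeq\I{D}(1)\times_{\I{C}(1)}\I{P}(1)$ of $\infty$-categories (which holds because $\Fun_{\BB}(1,-)$ preserves limits), the triple $(x,y,\beta)$ defines an object $z\colon 1\to\I{Q}$ whose images along $q$ and $g$ are canonically $x$ and $y$.

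It then remains to identify $z$ with $\colim d$, which we do by showing that $z$ corepresents the copresheaf $\map{\iFun{\I{I}}{\I{Q}}}(d,\diag(-))\colon\I{Q}\to\Univ$. The key ingredient is that mapping $\BB$-groupoids commute with pullbacks of $\BB$-categories: writing $\map{\I{E}}(e_0,e_1)$ over $T\in\BB$ as $\map{\I{E}(T)}(e_0|_T,e_1|_T)$ and using that $\Fun_{\BB}(T,-)$ preserves limits, this reduces to the corresponding statement for mapping spaces in a pullback of $\infty$-categories. Applying this to $\I{Q}$ and to $\iFun{\I{I}}{\I{Q}}$, and using that $x$, $y$ and $f(x)$ corepresent $\map{\iFun{\I{I}}{\I{D}}}(qd,\diag(-))$, $\map{\iFun{\I{I}}{\I{P}}}(gd,\diag(-))$ and $\map{\iFun{\I{I}}{\I{C}}}(fqd,\diag(-))$, we obtain a chain of equivalences of copresheaves on $\I{Q}$:
\begin{align*}
\map{\iFun{\I{I}}{\I{Q}}}(d,\diag(-))
&\simeq\map{\iFun{\I{I}}{\I{D}}}(qd,\diag(q(-)))\times_{\map{\iFun{\I{I}}{\I{C}}}(fqd,\diag(fq(-)))}\map{\iFun{\I{I}}{\I{P}}}(gd,\diag(g(-)))\\
&\simeq\map{\I{D}}(x,q(-))\times_{\map{\I{C}}(f(x),fq(-))}\map{\I{P}}(y,g(-))\\
&\simeq\map{\I{Q}}(z,-),
\end{align*}
where the final step uses that the description $z=(x,y,\beta)$ matches up the pullback structure maps (the one induced by $p$ and $\beta$ corresponds to the one induced by $p$ and $\colim(\alpha)$). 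Hence $z\simeq\colim d$, and since $q(z)\simeq\colim(qd)$ and $g(z)\simeq\colim(gd)$ via the canonical comparison maps, $q$ and $g$ preserve this colimit. As $A$, $\I{I}$ and $d$ were arbitrary, $\I{Q}$ is $\I{U}$-cocomplete and $q,g$ are $\I{U}$-cocontinuous.

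The main obstacle will be the bookkeeping in the displayed chain of equivalences---i.e.\ making precise the slogan that ``colimits in a pullback of $\BB$-categories are computed componentwise'' with all the pullback structure maps matching, including the comparison $\beta=\colim(\alpha)$. If organising this directly proves awkward, an alternative is to argue via the right cone: by \cite[Proposition~6.3.8]{Martini2021a} the left Kan extensions $\iota_!$ along $\I{I}\into\I{I}^\triangleright$ exist for $\I{D},\I{C},\I{P}$ (as these admit $\I{I}$-indexed colimits), the squares relating them along $f$ and $p$ are left adjointable by cocontinuity of $f$ and $p$, and gluing these adjunctions along the pullback yields the corresponding left Kan extension $\iota_!$ for $\I{Q}$, whose value at the cone point is again $z$.
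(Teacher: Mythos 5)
Your proof is correct and essentially follows the paper's own argument: both construct the candidate colimit from the componentwise colimits using the universal property of the pullback, and both verify the universal property by the same key computation, namely that mapping $\BB$-groupoids in a pullback of $\BB$-categories decompose as a pullback of mapping groupoids, with the resulting cospan collapsing to the point because the component cocones are colimit (hence initial) cocones. The only difference is packaging: the paper builds the colimit cocone $\overline{d}$ in one step via the universal property of the pullback of under-categories $\Under{\I{Q}}{d}\simeq\Under{\I{D}}{qd}\times_{\Under{\I{C}}{pgd}}\Under{\I{P}}{gd}$ and shows $\overline{d}$ is initial, which absorbs exactly the coherence bookkeeping you flag at the end and is close to the cone-based alternative you sketch.
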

\begin{proof}
	By replacing $\BB$ with $\Over{\BB}{A}$ for $A\in\BB$ if necessary and using~\cite[Remarks~4.1.3 and~4.2.2]{MWColimits}, it will suffice to prove that any diagram $d\colon \I{K}\to\I{Q}$ with $\I{K}\in\I{U}(1)$ admits a colimit in $\I{Q}$ and that furthermore $q$ preserves this colimit. The pullback square in the statement of the lemma induces a commutative diagram
	\begin{equation*}
		\begin{tikzcd}[row sep={3em,between origins}, column sep={4em,between origins}]
			& 1\arrow[rr]\arrow[dd]\arrow[dl, "\overline{d}"'] && 1\arrow[dd]\arrow[dl, "\overline{gd}"']\\
			\Under{\I{Q}}{d}\arrow[dd, "q_\ast", near start]\arrow[rr, crossing over, "g_\ast", near end] && \Under{\I{P}}{gd} & \\
			& 1\arrow[rr]\arrow[dl, "\overline{qd}"'] && 1\arrow[dl, "\overline{pgd}"']\\
			\Under{\I{D}}{qd}\arrow[rr, "f_\ast", near end] && \Under{\I{C}}{pgd}\arrow[from=uu, crossing over, "p_\ast", near start]
		\end{tikzcd}
	\end{equation*}
	in which the front square is a pullback, the three cocones $\overline{qd}$, $\overline{gd}$ and $\overline{pgd}$ are colimit cocones and the cocone $\overline{d}$ is determined by the universal property of pullbacks. To finish the proof, it suffices to show that $\overline{d}$ is a colimit cocone, i.e.\ initial. Given any $\overline{d}^\prime\colon 1\to\Under{\I{Q}}{{d}}$, we obtain a pullback square
	\begin{equation*}
		\begin{tikzcd}
			\map{\Under{\I{Q}}{d}}(\overline{d},\overline{d}^\prime)\arrow[r]\arrow[d] & \map{\Under{\I{P}}{gd}}(\overline{gd}, g_\ast\overline{d}^\prime)\arrow[d]\\
			\map{\Under{\I{D}}{fd}}(\overline{qd}, q_\ast\overline{d}^\prime)\arrow[r] & \map{\Under{\I{C}}{pgd}}(\overline{pgd}, p_\ast g_\ast \overline{d}^\prime)
		\end{tikzcd}
	\end{equation*}
	in $\BB$. Since $\overline{qd}$, $\overline{gd}$ and $\overline{pgd}$ are initial, it follows that the cospan in the lower right corner is constant on the final object $1\in\BB$, hence $\map{\Under{\I{Q}}{d}}(\overline{d},\overline{d}^\prime)\simeq 1$. By replacing $\BB$ with $\Over{\BB}{A}$ and $\overline{d}$ with $\pi_A^\ast(\overline{d})$, the same is true for any object $\overline{d}^\prime\colon A\to\Under{\I{Q}}{d}$. As a consequence, $\overline{d}$ must be initial.
\end{proof}

\begin{proof}[{Proof of Proposition~\ref{prop:decompositionColimitsCatB}}]
	By Remark~\ref{rem:BCRegularisation} and Remark~\ref{rem:BCSoundness}, it suffices to show that every $\BB$-category $\I{C}$ is a $\I{U}$-filtered colimit of a diagram in $\I{U}$.
	As $\I{U}$ by regularity contains $\Delta$ and since the localisation functor $\IPSh(\Delta)\to\ICat_{\BB}$ is cocontinuous, we deduce from Lemma~\ref{lem:coYonedaGeneralised} that $\I{C}$ arises as the colimit of the diagram $\Over{\I{U}}{\I{C}}\to\I{U}\into\ICat_{\BB}$. We therefore only need to show that $\Over{\I{U}}{\I{C}}$ is $\I{U}$-filtered. Using that $\I{U}$ is weakly sound, it will suffice to show that $\Over{\I{U}}{\I{C}}$ is $\op(\I{U})$-cocomplete.
	By Corollary~\ref{cor:sliceCategoryUCocomplete}, the $\BB$-category $\Over{(\ICat_{\BB})}{\I{C}}$ is cocomplete and the projection $(\pi_{\I{C}})_!$ is cocontinuous. As the inclusion $\I{U}\into\ICat_{\BB}$ is closed under $\op(\I{U})$-colimits, the desired result follows from Lemma~\ref{lem:pullbackCocompleteRightFibration}.
\end{proof}

\begin{corollary}
	\label{cor:criterionCocompletenesDecomposition}
	Let $\I{U}$ be a weakly sound internal class of $\BB$-categories. Then a (large) $\BB$-category $\I{C}$ is cocomplete if and only if $\I{C}$ is both $\op(\I{U})$- and $\IFilt_{\I{U}}$-cocomplete. Similarly, a functor $f\colon \I{C}\to\I{D}$ between cocomplete (large) $\BB$-categories is cocontinuous if and only if it is both $\op(\I{U})$- and $\IFilt_{\I{U}}$-cocontinuous.
\end{corollary}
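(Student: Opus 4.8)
The plan is to reduce to the case of a left regular class and then combine the decomposition property with the pasting result for colimits indexed by a colimit of $\BB$-categories, i.e.\ Proposition~\ref{prop:decompositionExistenceColimits}.

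First I would replace $\I{U}$ by its left regularisation $\I{U}^\reg_\leftarrow$, which changes none of the conditions in the statement: by Corollary~\ref{cor:regularisationSoundness} the class $\I{U}^\reg_\leftarrow$ is again weakly sound, by Proposition~\ref{prop:regularisationFilt} we have $\IFilt_{\I{U}^\reg_\leftarrow}\simeq\IFilt_{\I{U}}$, and since $\op(\I{U}^\reg_\leftarrow)\simeq\op(\I{U})^\reg_\rightarrow$, Proposition~\ref{prop:invarianceCocompletenessRegularisation} shows that a $\BB$-category is $\op(\I{U})$-cocomplete (resp.\ a functor is $\op(\I{U})$-cocontinuous) precisely when it is $\op(\I{U}^\reg_\leftarrow)$-cocomplete (resp.\ $\op(\I{U}^\reg_\leftarrow)$-cocontinuous). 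We may therefore assume that $\I{U}$ is left regular and weakly sound, so that by Proposition~\ref{prop:decompositionColimitsCatB} it has the decomposition property.

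The forward implications are immediate, since a cocomplete $\BB$-category is $\I{V}$-cocomplete for every internal class $\I{V}$, and likewise for cocontinuous functors. For the converse, suppose $\I{C}$ is $\op(\I{U})$- and $\IFilt_{\I{U}}$-cocomplete; the goal is to produce $\I{K}$-indexed colimits in $\I{C}$ for every small $\BB$-category $\I{K}$. Since cocompleteness, $\op(\I{U})$-cocompleteness and $\IFilt_{\I{U}}$-cocompleteness are all local in $\BB$ — for the last one using $\pi_A^\ast\IFilt_{\I{U}}\simeq\IFilt_{\pi_A^\ast\I{U}}$ from Remark~\ref{rem:BCFilt} — and since left regularity and weak soundness pass to étale base changes (Remarks~\ref{rem:BCRegularisation} and~\ref{rem:BCSoundness}), it suffices to treat the absolute case $\I{K}\in\Cat(\BB)$. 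Here I would apply the decomposition property in the form of Remark~\ref{rem:decompositionPropertyVariant}: there exist a $\I{U}$-filtered $\BB$-category $\I{J}$ and a diagram $d\colon\I{J}\to\op(\I{U})$ with $\colim_{\I{J}}d\simeq\I{K}$ in $\ICat_{\BB}$. Then $\I{C}$ admits $\I{J}$-indexed colimits, because $\I{J}$ and each base change $\pi_B^\ast\I{J}$ is $\I{U}$-filtered and $\I{C}$ is $\IFilt_{\I{U}}$-cocomplete; and for every object $j\colon B\to\I{J}$ one has $d(j)\in\op(\I{U})(B)$, so $\I{C}$ admits $d(j)$-indexed colimits because it is $\op(\I{U})$-cocomplete. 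Proposition~\ref{prop:decompositionExistenceColimits} then produces $\I{K}$-indexed colimits in $\I{C}$. The statement about a functor $f\colon\I{C}\to\I{D}$ is handled identically: with $\I{K}\simeq\colim_{\I{J}}d$ as above, the functoriality half of Proposition~\ref{prop:decompositionExistenceColimits} reduces preservation of $\I{K}$-indexed colimits to preservation of $\I{J}$-indexed colimits (granted by $\IFilt_{\I{U}}$-cocontinuity of $f$) and of the $d(j)$-indexed colimits (granted by $\op(\I{U})$-cocontinuity of $f$).

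The real content is Proposition~\ref{prop:decompositionColimitsCatB}, which is already available; given it, the only thing requiring care is the bookkeeping of contexts — passing between $\BB$ and $\Over{\BB}{A}$ in the locality step, and verifying the hypotheses of Proposition~\ref{prop:decompositionExistenceColimits} over each context $B$ arising among the objects of $\I{J}$ — rather than any genuinely new obstacle.
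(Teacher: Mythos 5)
Your proof is correct and follows essentially the same route as the paper's: reduce to the left regular case via Propositions~\ref{prop:invarianceCocompletenessRegularisation} and~\ref{prop:regularisationFilt}, obtain the decomposition property from Proposition~\ref{prop:decompositionColimitsCatB}, and conclude by pasting colimits along the decomposition using Proposition~\ref{prop:decompositionExistenceColimits}. The only cosmetic difference is that the paper packages the last step as the identity $(\op(\I{U})\cup\IFilt_{\I{U}})^\reg_{\rightarrow}=\ICat_{\BB}$ combined with Proposition~\ref{prop:invarianceCocompletenessRegularisation} (whose proof is precisely your direct application, applied to the union class), whereas you apply the appendix proposition directly to each decomposed $\I{K}$.
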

\begin{proof}
	We prove the first statement, the second one follows by a similar argument. Since the claim is clearly necessary, it suffices to prove the converse. So let us assume that $\I{C}$ is both $\op(\I{U})$- and $\IFilt_{\I{U}}$-cocomplete. By Proposition~\ref{prop:invarianceCocompletenessRegularisation} and Proposition~\ref{prop:regularisationFilt}, we may assume without loss of generality that $\I{U}$ is left regular. Proposition~\ref{prop:decompositionColimitsCatB} now implies that $\I{U}$ has the decomposition property. By definition and in light of Remark~\ref{rem:decompositionPropertyVariant}, this means that $(\op(\I{U})\cup\IFilt_{\I{U}})^\reg_{\rightarrow}=\ICat_{\BB}$. Appealing once more to Proposition~\ref{prop:invarianceCocompletenessRegularisation}, the claim follows.
\end{proof}

\subsection{$\I{U}$-compact objects}
\label{sec:UCompactObjects}
Recall from~\cite[\S~7.2]{MWColimits} that if $\I{V}$ is an internal class and if $\I{C}$ is a $\I{V}$-cocomplete $\BB$-category, we say that an object $c\colon A\to\I{C}$ is \emph{$\I{V}$-cocontinuous} if the functor $\map{\I{C}}(c,-)\colon \pi_A^\ast\I{C}\to\Univ[\Over{\BB}{A}]$ is $\pi_A^\ast\I{V}$-cocontinuous. In this section, we specialise this concept to the case where $\I{V}=\IFilt_{\I{U}}$ for some internal class $\I{U}$. This leads us to the notion of a \emph{$\I{U}$-compact object}, which is the internal analogue of the concept of a $\kappa$-compact object in an $\infty$-category, where $\kappa$ is a cardinal.

\begin{definition}
	\label{def:UCompact}
	Let $\I{U}$ be an internal class of $\BB$-categories, and let $\I{C}$ be a $\IFilt_{\I{U}}$-cocomplete $\BB$-category. An object $c\colon A\to \I{C}$ in context $A\in\BB$ is said to be  \emph{$\I{U}$-compact} if it is $\IFilt_{\I{U}}$-cocontinuous, i.e.\ if the functor $\map{\I{C}}(c,-)\colon \pi_A^\ast\I{C}\to\Univ[\Over{\BB}{A}]$ is $\IFilt_{\pi_A^\ast\I{U}}$-cocontinuous. We denote by $\I{C}^{\cpt{\I{U}}}$ the full subcategory of $\I{C}$ that is spanned by the $\I{U}$-compact objects.
\end{definition}

\begin{remark}
	\label{rem:BCUComp}
	In the situation of Definition~\ref{def:UCompact}, an object $c\colon A\to \I{C}$ is contained in $\I{C}^{\cpt{\I{U}}}$ if and only if it is $\I{U}$-compact. This is a direct consequence of~\cite[Remark~7.2.2]{MWColimits}. Together with Remark~\ref{rem:BCFilt}, this implies that if $A\in\BB$ is an arbitrary object in $\BB$, there is a natural equivalence $\pi_A^\ast(\I{C}^{\cpt{\I{U}}})\simeq (\pi_A^\ast\I{C})^{\cpt{\pi_A^\ast\I{U}}}$.
\end{remark}

\begin{lemma}
	\label{lem:stabilityFiltUCocontinuousFunctors}
	Let $\I{U}$ be an internal class of $\BB$-categories, and let $\I{C}$ be a $\IFilt_{\I{U}}$-cocomplete $\BB$-category. Then the full subcategory $\IFun^{\cocont{\IFilt_{\I{U}}}}(\I{C},\Univ)\into\IFun(\I{C},\Univ)$ of $\IFilt_{\I{U}}$-cocontinuous functors is closed under $\I{U}$-limits.
\end{lemma}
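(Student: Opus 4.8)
The statement is that the full subcategory $\iFun{\I{C}}{\Univ}^{\cocont{\IFilt_{\I{U}}}}$ is closed under $\I{U}$-limits inside $\iFun{\I{C}}{\Univ}$, and the plan is to reduce this by \'etale base change to a single interchange-of-(co)limits computation whose only nontrivial input is the definition of $\I{U}$-filteredness. \emph{The reduction.} Both the property of a full subcategory being closed under $\I{U}$-limits and the property of a presheaf being $\IFilt_{\I{U}}$-cocontinuous are tested context by context, and both are stable under \'etale base change: the functor $\pi_A^\ast$ preserves limits (it has the left adjoint $\pi_{A!}$, since $\pi_A$ is \'etale), commutes with the formation of internal functor $\BB$-categories and with the universe, and satisfies $\pi_A^\ast\IFilt_{\I{U}}\simeq\IFilt_{\pi_A^\ast\I{U}}$ by Remark~\ref{rem:BCFilt}, so that it carries $\IFilt_{\I{U}}$-cocontinuous presheaves to $\IFilt_{\pi_A^\ast\I{U}}$-cocontinuous ones; moreover $\pi_A^\ast(\lim_{\I{I}}d)\simeq\lim_{\pi_A^\ast\I{I}}(\pi_A^\ast d)$. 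Unwinding all of this, it suffices to prove: for $\I{I}\in\I{U}(1)$ and any diagram $d\colon\I{I}\to\iFun{\I{C}}{\Univ}^{\cocont{\IFilt_{\I{U}}}}$, the presheaf $F\coloneqq\lim_{\I{I}}d$ preserves $\I{J}$-indexed colimits for every $\I{U}$-filtered $\BB$-category $\I{J}$ (in context $1$).

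\emph{The computation.} Fix such a $\I{J}$ and a diagram $g\colon\I{J}\to\I{C}$; the colimit $\colim_{\I{J}}g$ exists since $\I{C}$ is $\IFilt_{\I{U}}$-cocomplete, and one must show that the mate map $\colim_{\I{J}}(Fg)\to F(\colim_{\I{J}}g)$ is an equivalence. I would form the bifunctor $D\colon\I{I}\times\I{J}\to\Univ$ with $D(i,j)=d(i)(g(j))$ and obtain the equivalence by concatenating three facts. First, since limits in $\iFun{-}{\Univ}$ are computed pointwise, $(\lim_{\I{I}}d)(c)\simeq\lim_{\I{I}}\bigl(d(-)(c)\bigr)$ for every object $c$ of $\I{C}$; evaluating this at $c=g(j)$ and at $c=\colim_{\I{J}}g$ and combining with the standard Fubini-type identities relating iterated and product-indexed (co)limits gives $\colim_{\I{J}}(Fg)\simeq\colim_{\I{J}}\lim_{\I{I}}D$ and $F(\colim_{\I{J}}g)\simeq\lim_{\I{I}}\bigl(d(-)(\colim_{\I{J}}g)\bigr)$. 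Second, each $d(i)$ is $\IFilt_{\I{U}}$-cocontinuous, so $d(i)(\colim_{\I{J}}g)\simeq\colim_{\I{J}}D(i,-)$ and hence $\lim_{\I{I}}\bigl(d(-)(\colim_{\I{J}}g)\bigr)\simeq\lim_{\I{I}}\colim_{\I{J}}D$. Third, the interchange $\colim_{\I{J}}\lim_{\I{I}}D\simeq\lim_{\I{I}}\colim_{\I{J}}D$ holds because it is precisely the assertion that $\colim\colon\iFun{\I{J}}{\Univ}\to\Univ$ preserves $\I{I}$-indexed limits, which is what it means for $\I{J}$ to be $\I{U}$-filtered together with $\I{I}\in\I{U}(1)$ (Definition~\ref{def:filteredCategories}; cf.\ Remark~\ref{rem:dualDefinitionFiltered}).

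\emph{The main obstacle.} The one point that requires genuine care — and which I expect to be the crux — is to verify that this concatenation of equivalences \emph{is} the canonical mate map $\colim_{\I{J}}(Fg)\to F(\colim_{\I{J}}g)$, and not merely an abstract equivalence between its source and target. As in the proofs of Proposition~\ref{prop:characterisationFilteredCategoryCocones} and Proposition~\ref{prop:characterisationWeaklyFiltered}, this is a diagram chase that follows formally from the functoriality of the mate construction, once one writes out the relevant squares of (co)limit functors and pointwise-evaluation functors; the pointwise nature of (co)limits in $\iFun{-}{\Univ}$ plays here the role that conservativity of evaluation functors played there. The remaining ingredients — pointwise (co)limits in functor $\BB$-categories, exactness of $\pi_A^\ast$, and the base-change identity $\pi_A^\ast\IFilt_{\I{U}}\simeq\IFilt_{\pi_A^\ast\I{U}}$ — are all recorded in \S~\ref{sec:recollection} or established above, so no further preparation is needed.
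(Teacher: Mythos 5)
Your strategy is correct and uses the same two ingredients as the paper (the dual description of filteredness from Remark~\ref{rem:dualDefinitionFiltered}, and the fact that (co)limits in functor $\BB$-categories are detected objectwise), but the packaging differs at exactly the point you flag as the crux. The paper does not compute an objectwise interchange at all: it observes that, since limits in $\iFun{\I{C}}{\Univ}$ are pointwise, the limit $F=\lim_{\I{I}}d$ \emph{is} the composite $\I{C}\xrightarrow{d'}\iFun{\I{I}}{\Univ}\xrightarrow{\lim_{\I{I}}}\Univ$, where $d'$ is the transpose of $d$; then $d'$ is $\IFilt_{\I{U}}$-cocontinuous by (the dual of) the objectwise criterion for colimit preservation, $\lim_{\I{I}}$ is $\IFilt_{\I{U}}$-cocontinuous by Remark~\ref{rem:dualDefinitionFiltered}, and a composite of $\IFilt_{\I{U}}$-cocontinuous functors is $\IFilt_{\I{U}}$-cocontinuous because the mate of a composite is the composite of the mates. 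This factorisation is precisely what dissolves your ``main obstacle'': you never have to check that an ad hoc chain of equivalences between $\colim_{\I{J}}(Fg)$ and $F(\colim_{\I{J}}g)$ agrees with the canonical mate, since the only mate-functoriality fact needed is the trivial one for composites. If you carried out your promised diagram chase honestly (rewriting $F$ via the bifunctor $D$ and tracking the comparison maps), you would in effect be reconstructing this factorisation, so your argument is not wrong, just longer and with its delicate step left as a sketch; the paper's version is the same proof with that step made automatic.
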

\begin{proof}
	Using~\cite[Remark~5.3.4]{MWColimits} and Remark~\ref{rem:BCFilt}, it will suffice to show that whenever $\I{I}$ is a $\BB$-category that is contained in $\I{U}(1)$ and $d\colon \I{I}\to \IFun(\I{C},\Univ)^{\IFilt_{\I{U}}}$ is a diagram, then the limit $\lim d$ in $\IFun(\I{C},\Univ)$ is $\IFilt_{\I{U}}$-cocontinuous.  We may compute $\lim d$ as the composition
	\begin{equation*}
		\begin{tikzcd}
			\I{C}\arrow[r, "d^\prime"] & \IFun(\I{I},  \Univ)\arrow[r, "\lim_{\I{I}}"]  & \Univ,
		\end{tikzcd}
	\end{equation*}
	where $d^\prime$ is the transpose of $d$.
	Since $\lim_{\I{I}}$ is $\IFilt_{\I{U}}$-cocontinuous by Remark~\ref{rem:dualDefinitionFiltered}, it thus suffices to show that $d^\prime$ is $\IFilt_{\I{U}}$-cocontinuous as well. This can be extracted as a special case of Lemma~\ref{lem:limitpreservationswap} below.
\end{proof}
\begin{proposition}
	\label{prop:UCompactUcocomplete}
	Let $\I{U}$ be an internal class and let $\I{C}$ be an $\op(\I{U})$- and $\IFilt_{\I{U}}$-cocomplete $\BB$-category. Then the subcategory $\I{C}^{\cpt{\I{U}}}\into\I{C}$ is closed under $\op(\I{U})$-colimits in $\I{C}$.
\end{proposition}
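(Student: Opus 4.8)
The plan is to reduce the statement to the behaviour of mapping $\BB$-groupoids out of a colimit and then invoke the duality between filtered colimits and limits established in Remark~\ref{rem:dualDefinitionFiltered}. Concretely, I would argue that for a fixed $\I{I}\in\I{U}(1)$ and a diagram $e\colon\I{I}^{\op}\to\I{C}^{\cpt{\I{U}}}$, the colimit $c\simeq\colim e$ computed in $\I{C}$ lands again in $\I{C}^{\cpt{\I{U}}}$. By Remark~\ref{rem:BCUComp}, it suffices to check this at the level of the base $\BB$ (i.e. over the final object), since after replacing $\BB$ with $\Over{\BB}{A}$ the same argument applies verbatim and $\pi_A^\ast$ commutes with all the relevant constructions. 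So I must show that $\map{\I{C}}(c,-)\colon\I{C}\to\Univ$ is $\IFilt_{\I{U}}$-cocontinuous whenever each $\map{\I{C}}(e(i),-)$ is.

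The key computation is the identification $\map{\I{C}}(\colim e,-)\simeq\lim_{\I{I}}\bigl(\map{\I{C}}(e(-),-)\bigr)$, i.e. mapping out of an $\I{I}^{\op}$-colimit is an $\I{I}$-limit of mapping objects; this is a formal consequence of the (co)Yoneda description of colimits together with the fact that $\map{\I{C}}(-,-)\colon\I{C}^{\op}\times\I{C}\to\Univ$ sends colimits in the first variable to limits (cf. the discussion of mapping bifunctors and the universal property of presheaves in \S\ref{sec:recollection}, and the analogue of~\cite[Proposition~4.3.1]{Martini2021a}). Concretely, writing $F=\map{\I{C}}(e(-),-)\colon\I{C}\to\iFun{\I{I}}{\Univ}$ for the transpose of the diagram of copresheaves, one has $\map{\I{C}}(c,-)\simeq\lim_{\I{I}}\circ F$. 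Now each component of $F$ is $\IFilt_{\I{U}}$-cocontinuous by hypothesis (the objects $e(i)$ are $\I{U}$-compact), hence $F$ itself is $\IFilt_{\I{U}}$-cocontinuous by the dual of~\cite[Proposition~4.3.2]{Martini2021a} (a functor into a functor $\BB$-category is cocontinuous iff all its components are), and the functor $\lim_{\I{I}}\colon\iFun{\I{I}}{\Univ}\to\Univ$ is $\IFilt_{\I{U}}$-cocontinuous by Remark~\ref{rem:dualDefinitionFiltered}, precisely because $\I{I}\in\I{U}(1)$ and $\IFilt_{\I{U}}$-filtered colimits in $\Univ$ commute with $\I{I}$-indexed limits. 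Composing, $\map{\I{C}}(c,-)$ is $\IFilt_{\I{U}}$-cocontinuous, so $c$ is $\I{U}$-compact.

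To make the first reduction precise I would invoke Lemma~\ref{lem:stabilityFiltUCocontinuousFunctors}, which already packages exactly the statement that $\iFun{\I{C}}{\Univ}^{\cocont{\IFilt_{\I{U}}}}$ is closed under $\I{U}$-limits inside $\iFun{\I{C}}{\Univ}$; the diagram $i\mapsto\map{\I{C}}(e(i),-)$ takes values in this subcategory, so its limit $\map{\I{C}}(c,-)$ does too. The only remaining point is to verify that this limit, computed in the functor $\BB$-category, really is $\map{\I{C}}(c,-)$ for $c=\colim e$ — that is the colimit-to-limit interchange for the mapping bifunctor mentioned above, applied in the first variable. Then closure of $\I{C}^{\cpt{\I{U}}}$ under $\op(\I{U})$-colimits follows, first over the final object and then, by $\pi_A^\ast$-base change and Remark~\ref{rem:BCUComp}, in the form required by Definition~\ref{def:UCompact} (working in $\Over{\BB}{A}$ with $\pi_A^\ast\I{U}$).

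I expect the main obstacle to be bookkeeping rather than anything deep: one must be careful that $\op(\I{U})$-colimits in $\I{C}$ correspond under $\map{\I{C}}(-,-)$ to $\I{U}$-limits of copresheaves (note the $\op$), that the relevant $\BB$-category $\I{I}$ genuinely lies in $\I{U}(1)$ (not merely in some larger colimit class), and that the $\I{U}$-cocompleteness hypothesis on $\I{C}$ — namely $\op(\I{U})$-cocompleteness together with $\IFilt_{\I{U}}$-cocompleteness — is exactly what is needed for the colimit $c$ and all its base changes to exist. Each of these is handled by the local-on-$\BB$ principle together with the cited results; there is no genuinely hard step.
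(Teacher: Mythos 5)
Your argument is correct and follows essentially the same route as the paper: reduce via Remark~\ref{rem:BCUComp} to a diagram $\I{I}^{\op}\to\I{C}^{\cpt{\I{U}}}$ with $\I{I}\in\I{U}(1)$, identify $\map{\I{C}}(\colim e,-)$ with the $\I{I}$-indexed limit of the copresheaves $\map{\I{C}}(e(i),-)$ using continuity of the Yoneda embedding in the first variable, and conclude by closure of $\IFilt_{\I{U}}$-cocontinuous functors under $\I{U}$-limits. The only difference is cosmetic: the paper simply cites Lemma~\ref{lem:stabilityFiltUCocontinuousFunctors} at this last step, whereas you also inline its proof (transpose plus $\IFilt_{\I{U}}$-cocontinuity of $\lim_{\I{I}}$ from Remark~\ref{rem:dualDefinitionFiltered}).
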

\begin{proof}
	By using Remark~\ref{rem:BCUComp}, it suffices to show that whenever $\I{I}$ is a $\BB$-category that is contained in $\I{U}(1)$ and $d\colon \I{I}^\op\to\I{C}^{\cpt{\I{U}}}$ is a diagram, the colimit $\colim d$ in $\I{C}$ is $\I{U}$-compact. As we have noted in~\cite[Proposition~5.2.9]{MWColimits}, the Yoneda embedding $h_{\I{C}^{\op}}\colon\I{C}^{\op}\into\IFun(\I{C},\Univ)$ is $\op(\I{U})$-continuous, so that we can identify $\map{\I{C}}(\colim d,-)$ with the limit of the diagram $h_{\I{C}^{\op}} d^{\op}$. The desired result now follows from Lemma~\ref{lem:stabilityFiltUCocontinuousFunctors}.
\end{proof}

\begin{definition}
	\label{def:retracts}
	If $\I{C}\into\I{D}$ is a fully faithful functor of $\BB$-categories, the $\BB$-category $\IRet_{\I{D}}(\I{C})$ of \emph{retracts} of $\I{C}$ in $\I{D}$ is the full subcategory of $\I{D}$ that is spanned by those objects $d\colon A\to \I{D}$ in context $A\in\BB$ for which there is an object $c\colon A\to \I{C}$ and a commutative diagram
	\begin{equation*}
		\begin{tikzcd}[column sep=small]
			& c \arrow[dr]&\\
			d\arrow[ur]\arrow[rr, "\id"] && d.
		\end{tikzcd}
	\end{equation*}
\end{definition}

\begin{remark}
	\label{rem:retracts}
	In the situation of Definition~\ref{def:retracts}, there are inclusions $\I{C}\into\IRet_{\I{D}}(\I{C})\into\I{D}$. Furthermore, an object $d\colon A\to \I{D}$ is contained in $\IRet_{\I{C}}(\I{D})$ precisely if there is a cover $(s_i)\colon \bigsqcup_i A_i\onto A$ such that $s_i^\ast(d)\colon A_i\to \I{D}$ is a retract of an object $c\colon A_i\to \I{C}$. Therefore, if $\I{C}$ is small and $\I{D}$ is locally small (in the sense of~\cite[Definition~4.7.1]{MYoneda}), then $\IRet_{\I{D}}(\I{C})$ is small as well: in fact, by~\cite[Proposition~4.7.4]{MYoneda}, this follows once we verify that $\IRet_{\I{D}}(\I{C})_0$ is small. Since the latter admits a small cover
	\begin{equation*}
		\bigsqcup_{G\in\GG} \bigsqcup_{d\in \Ret_{\I{D}(G)}(\I{C}(G))} G\onto \IRet_{\I{D}}(\I{C})_0
	\end{equation*}
	where $\GG\subset \BB$ is a small generating subcategory and where $\Ret_{\I{D}(G)}(\I{C}(G))$ denotes the full subcategory of $\I{D}(G)$ that is spanned by the retracts of $\I{C}(G)$, which is clearly a small $\infty$-category, this is immediate.
\end{remark}

\begin{lemma}
	\label{lem:retractsCocontinuousFunctors}
	Let $\I{U}$ be an internal class of $\BB$-categories and let $\I{C}$ be a $\I{U}$-cocomplete $\BB$-category. Then the full subcategory $\IFun^{\cocont{\I{U}}}(\I{C},\Univ)\into\IFun(\I{C},\Univ)$ of $\I{U}$-cocontinuous functors is closed under retracts.
\end{lemma}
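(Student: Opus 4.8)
The plan is to observe that whether a functor preserves a given class of colimits is detected by a single comparison map, that forming this comparison map is functorial in the functor, and hence that a retract of functors induces a retract of comparison maps; one then finishes because equivalences are closed under retracts. Concretely, I would first reduce: since being $\I{U}$-cocontinuous is stable under the base change functors $\pi_A^\ast$ and is a local condition in $\BB$, Remark~\ref{rem:retracts} lets us assume that we are handed an honest retract diagram, i.e.\ a functor $F\colon\I{C}\to\Univ$ together with morphisms $i\colon F\to G$ and $r\colon G\to F$ in $\iFun{\I{C}}{\Univ}$ with $ri\simeq\id_F$, where $G$ is $\I{U}$-cocontinuous; and since proving the statement over every base $\infty$-topos at once recovers full $\I{U}$-cocontinuity by replacing $\BB$ with $\Over{\BB}{A}$, it suffices to show that such an $F$ preserves $\I{I}$-indexed colimits for every $\I{I}\in\I{U}(1)$.

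So fix $\I{I}\in\I{U}(1)$. As $\I{C}$ is $\I{U}$-cocomplete and $\Univ$ is cocomplete, the colimit functors $\colim_{\I{I}}$ on $\iFun{\I{I}}{\I{C}}$ and on $\iFun{\I{I}}{\Univ}$ exist, and by definition $F$ preserves $\I{I}$-indexed colimits precisely if the comparison map $\alpha_F\colon\colim_{\I{I}}\circ F_\ast\to F\circ\colim_{\I{I}}$, obtained as the mate of the equivalence $F_\ast\circ\diag\simeq\diag\circ F$ along the adjunctions $\colim_{\I{I}}\dashv\diag$, is an equivalence. Postcomposition with $i$ and $r$ exhibits $F_\ast$ as a retract of $G_\ast$, the two composites being identities since $(ri)_\ast\simeq\id$. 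By the functoriality of the mate construction, the induced retract of the defining squares of $\alpha_F$ and $\alpha_G$ produces a retract diagram exhibiting $\alpha_F$ as a retract of $\alpha_G\colon\colim_{\I{I}}\circ G_\ast\to G\circ\colim_{\I{I}}$ in the arrow $\infty$-category of $\Fun_{\BB}(\iFun{\I{I}}{\I{C}},\Univ)$.

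Finally, $G$ being $\I{U}$-cocontinuous and $\I{I}\in\I{U}(1)$ forces $\alpha_G$ to be an equivalence; since the class of equivalences in any $\infty$-category is closed under retracts, $\alpha_F$ is an equivalence as well, so $F$ preserves $\I{I}$-indexed colimits. As $\I{I}\in\I{U}(1)$ was arbitrary, $F$ is $\I{U}$-cocontinuous, which is what we wanted. The only step that is not purely formal is the functoriality of the mate construction in the functor variable, which supplies the compatibility of the comparison maps with the retract; together with the initial locality reduction this is standard, so I do not expect a genuine obstacle here.
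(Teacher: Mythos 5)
Your argument is correct, and it differs from the paper's proof mainly in how the retract datum is handled. You keep the retract as a pair of natural transformations $i\colon F\to G$, $r\colon G\to F$ with $ri\simeq\id_F$, apply the functoriality of the mate construction in the functor variable to get that the comparison map $\alpha_F\colon\colim_{\I{I}}F_\ast\to F\colim_{\I{I}}$ is a retract of $\alpha_G$, and conclude because equivalences are closed under retracts; note that for this last step it suffices that the relevant squares commute in the homotopy category of $\Fun_{\BB}(\iFun{\I{I}}{\I{C}}, \Univ)$, which is exactly what naturality of mates gives, so no higher coherence of the retract is actually needed. The paper instead packages the retract into a single functor $\I{C}\to\Univ^R$, where $R=\Delta^2\sqcup_{\Delta^1}\Delta^0$ is the walking retract, observes that the middle evaluation $d_{\{1\}}\colon\Univ^R\to\Univ$ is conservative (a retract of an equivalence is an equivalence) and cocontinuous, deduces via one application of mate functoriality and the equivalence $d_{\{1\}}r\simeq G$ that $r$ is $\I{U}$-cocontinuous, and then recovers $F$ by postcomposing with the cocontinuous evaluation $d_{\{0\}}$. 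The two proofs rest on the same two ingredients (functoriality of mates and stability of equivalences under retracts); the cotensor-by-$R$ device buys the paper a coherent treatment of the retract with conservativity doing the bookkeeping, while your version is more direct but relies on the (true and standard) naturality of the comparison maps in $F$ together with the homotopy-category reduction indicated above.
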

\begin{proof}
	By~\cite[Remark~5.3.4]{MWColimits}, it will suffice to show that whenever a copresheaf $F\colon\I{C}\to\Univ$ is a retract of a $\I{U}$-cocontinuous functor $G\colon\I{C}\to\Univ$, then $F$ is $\I{U}$-cocontinuous as well. Let $R=\Delta^2\sqcup_{\Delta^1}\Delta^0$ be the walking retract diagram, i.e.\ the quotient of $\Delta^2$ that is obtained by collapsing $d^1\colon\Delta^1\into\Delta^2$ to a point. Then the datum of retract $F\to G\to F$ is tantamount to a map $r\colon\I{C}\to\Univ^R$. Since the retract of an equivalence is an equivalence as well, the functor $d_{\{1\}}\colon\Univ^R\to \Univ$ that is obtained by evaluation at $\{1\}\in\Delta^2\onto R$ must be conservative. By combining this observation with the fact that $d_{\{1\}}$ is cocontinuous, the equivalence $d_{\{1\}}r\simeq G$ and the functoriality of mates, we conclude that the map $\colim_{\I{I}} r_\ast\to r\colim_{\I{I}}$ is an equivalence for every $\I{I}\in\I{U}(1)$. Upon replacing $\BB$ with $\Over{\BB}{A}$ and repeating the same argument, we thus find that $r$ is $\I{U}$-cocontinuous. As we recover $F$ by postcomposing $r$ with the cocontinuous functor $d_{\{0\}}\colon \Univ^R\to \Univ$, the claim follows.
\end{proof}

\begin{proposition}
	\label{prop:RetractsUCompact}
	Let $\I{U}$ be an internal class of $\BB$-categories and let $\I{C}$ be a $\IFilt_{\I{U}}$-cocomplete $\BB$-category. Then $\I{C}^{\cpt{\I{U}}}$ is closed under retracts in $\I{C}$, in the sense that the inclusion $\I{C}^{\cpt{\I{U}}}\into\IRet_{\I{C}}(\I{C}^{\cpt{\I{U}}})$ is an equivalence.
\end{proposition}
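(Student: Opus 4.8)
The plan is to deduce this from Lemma~\ref{lem:retractsCocontinuousFunctors} by transporting a retract datum across the Yoneda embedding. Since the inclusion $\I{C}^{\cpt{\I{U}}}\into\IRet_{\I{C}}(\I{C}^{\cpt{\I{U}}})$ is always an equivalence onto a full subcategory, we only need to check that every object of $\IRet_{\I{C}}(\I{C}^{\cpt{\I{U}}})$ is $\I{U}$-compact. By the description of $\IRet_{\I{C}}(\I{C}^{\cpt{\I{U}}})$ in Remark~\ref{rem:retracts} together with the fact that $\I{U}$-compactness is a local condition (Remark~\ref{rem:BCUComp} together with Remark~\ref{rem:BCFilt}), it suffices to show that whenever $d\colon A\to\I{C}$ is a retract of an object $c\colon A\to\I{C}^{\cpt{\I{U}}}$ over some context $A$ in the sense of Definition~\ref{def:retracts}, then $d$ is $\I{U}$-compact. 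Replacing $\BB$ by $\Over{\BB}{A}$ and $\I{C}$ by $\pi_A^\ast\I{C}$, and using Remark~\ref{rem:BCUComp} once more, we may assume $A=1$.

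In this situation, I would pass to copresheaves via the fully faithful Yoneda embedding $h_{\I{C}^{\op}}\colon \I{C}^{\op}\into\iFun{\I{C}}{\Univ}$, which on objects sends $c$ to $\map{\I{C}}(c,-)$. Since $d$ being a retract of $c$ in $\I{C}$ is the same as $d$ being a retract of $c$ in $\I{C}^{\op}$ -- both amount to a map out of the walking retract diagram $R=\Delta^2\sqcup_{\Delta^1}\Delta^0$ -- applying the functor $h_{\I{C}^{\op}}$ to this datum exhibits $\map{\I{C}}(d,-)$ as a retract of $\map{\I{C}}(c,-)$ in $\iFun{\I{C}}{\Univ}$. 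By hypothesis $c$ is $\I{U}$-compact, so $\map{\I{C}}(c,-)$ is $\IFilt_{\I{U}}$-cocontinuous; since $\I{C}$ is $\IFilt_{\I{U}}$-cocomplete, Lemma~\ref{lem:retractsCocontinuousFunctors} (applied with $\IFilt_{\I{U}}$ in place of $\I{U}$) shows that the full subcategory $\iFun{\I{C}}{\Univ}^{\cocont{\IFilt_{\I{U}}}}$ is closed under retracts, so $\map{\I{C}}(d,-)$ is $\IFilt_{\I{U}}$-cocontinuous as well. By Definition~\ref{def:UCompact} this means precisely that $d$ is $\I{U}$-compact, which finishes the argument.

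I do not expect a genuine obstacle here: the statement is essentially a formal consequence of the already-established closure of $\IFilt_{\I{U}}$-cocontinuous copresheaves under retracts. The only points deserving care are the verification that the Yoneda embedding carries the contextual retract datum of Definition~\ref{def:retracts} to a retract in the functor $\BB$-category -- which is immediate from functoriality once both descriptions of a retract are unwound -- and the reduction to context $1$, which is the routine base-change argument enabled by the locality statement in Remark~\ref{rem:BCUComp}.
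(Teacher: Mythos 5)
Your proof is correct and follows the same route as the paper: the paper's own argument is exactly the reduction to showing that a retract of a $\I{U}$-compact object is $\I{U}$-compact, which it likewise deduces from Lemma~\ref{lem:retractsCocontinuousFunctors} applied to $\IFilt_{\I{U}}$-cocontinuous copresheaves; you have simply spelled out the base-change reduction and the passage through the Yoneda embedding that the paper leaves implicit.
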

\begin{proof}
	It suffices to show that the retract of a $\I{U}$-compact object in $\I{C}$ is $\I{U}$-compact as well, which immediately follows from Lemma~\ref{lem:retractsCocontinuousFunctors}.
\end{proof}

We conclude this section with a characterisation of $\I{U}$-compact objects in presheaf $\BB$-categories. This will require the following lemma:
\begin{lemma}
	\label{lem:compactObjectRetract}
	Let $\I{U}$ be an internal class of $\BB$-categories and let $\I{C}\into\I{D}$ be a full inclusion of $\BB$-categories such that $\I{D}$ is $\IFilt_{\I{U}}$-cocomplete. Let $\I{J}$ be a $\I{U}$-filtered $\BB$-category, let $d\colon \I{J}\to\I{C}\into\I{D}$ be a diagram and suppose that $F=\colim d$ is a $\I{U}$-compact object in $\I{D}$. Then $F$ is contained in $\IRet_{\I{D}}(\I{C})$.
\end{lemma}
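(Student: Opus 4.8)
The plan is to internalise the classical argument: a $\kappa$-compact object that is written as a $\kappa$-filtered colimit must be a retract of one of the objects appearing in the diagram, because the identity of the colimit factors through one of them. Internally to $\BB$, such a factorisation will only become available after passing to a suitable cover of the base, and extracting it from the relevant colimit is the technical heart of the argument. (If $d$ is given as a family of diagrams over some context $A\in\BB$ rather than as a single functor $\I{J}\to\I{D}$, one replaces $\BB$ by $\Over{\BB}{A}$, so we may assume $F\colon 1\to\I{D}$.)

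First, since $\I{J}$ is $\I{U}$-filtered we have $\I{J}\in\IFilt_{\I{U}}(1)$ by Remark~\ref{rem:BCFilt}, so $\I{D}$ admits $\I{J}$-indexed colimits and $F=\colim d$ is defined; and since $F$ is $\I{U}$-compact, the functor $\map{\I{D}}(F,-)\colon\I{D}\to\Univ$ preserves this colimit. Concretely, writing $g\coloneqq\map{\I{D}}(F,-)\circ d\colon\I{J}\to\Univ$ and $\iota_j\colon d(j)\to\pi_A^\ast F$ for the leg of the colimit cocone of $d$ at an object $j\colon A\to\I{J}$, the comparison map $\colim g\to\map{\I{D}}(F,F)$ is an equivalence; by construction it is the map induced on colimits by the cocone under $g$ whose leg at $j$ is $\map{\I{D}}(F,\iota_j)$.

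Second, I would make the colimit $\colim g$ explicit: by~\cite[Proposition~4.4.1]{Martini2021a} it is the groupoidification $\I{E}^\gp$ of the total space $\I{E}\to\I{J}$ of the left fibration classified by $g$, with colimit legs induced by the fibre inclusions. The canonical map $\I{E}_0\to\I{E}^\gp$ is an effective epimorphism, being the map from the object of $0$-simplices of a simplicial object of $\BB$ to its geometric realisation. Pulling back the section $\id_F\colon 1\to\map{\I{D}}(F,F)\simeq\I{E}^\gp$ along this effective epimorphism produces a cover $s\colon A\onto 1$ together with a lift $A\to\I{E}_0$. Unwinding the description of $\I{E}$ as the pullback of the universal left fibration along $g$, such a lift amounts to an object $j\colon A\to\I{J}$ together with a point of the $\Over{\BB}{A}$-groupoid $g(j)\simeq\map{\pi_A^\ast\I{D}}(s^\ast F,d(j))$, that is, a morphism $\alpha\colon s^\ast F\to d(j)$ in $\pi_A^\ast\I{D}$; moreover $d(j)$ lies in $\I{C}(A)$ because $d$ factors through $\I{C}$.

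Finally, I would evaluate the image of the chosen point of $\I{E}_0$ under $\I{E}_0\to\I{E}^\gp\simeq\map{\I{D}}(F,F)$ in two ways. On the one hand, by construction of the pullback it is $s^\ast(\id_F)=\id_{s^\ast F}$. On the other hand, using that $\I{E}_0\to\I{E}^\gp$ restricts on the fibre over $j$ to the $j$-th colimit leg of $g$, together with the description of the comparison equivalence from the second paragraph, it equals $\map{\I{D}}(F,\iota_j)(\alpha)=\iota_j\circ\alpha$. Hence $\iota_j\circ\alpha\simeq\id_{s^\ast F}$, which exhibits $s^\ast F$ as a retract of $d(j)\in\I{C}(A)$ in the sense of Definition~\ref{def:retracts}; since $s$ is a cover, Remark~\ref{rem:retracts} yields $F\in\IRet_{\I{D}}(\I{C})$. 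The step I expect to require the most care is making these two computations of the image literally agree --- that is, tracking through the identification $\colim g\simeq\I{E}^\gp$ that the comparison equivalence supplied by $\I{U}$-compactness matches the fibrewise cocone structure on $\I{E}$; the rest is a routine unwinding of definitions.
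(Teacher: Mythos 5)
Your proposal is correct and follows essentially the same route as the paper's own proof: use $\I{U}$-compactness to identify $\map{\I{D}}(F,F)$ with the colimit of $\map{\I{D}}(F,d(-))$, realise that colimit as the groupoidification of the classifying left fibration over $\I{J}$, and lift $\id_F$ along the (locally) essentially surjective map from the total space after passing to a cover, yielding a local factorisation of the identity through some $d(j)$. The final compatibility you flag (that the compactness equivalence matches the fibrewise cocone legs under $\colim g\simeq\I{E}^\gp$) is treated at the same level of detail in the paper, so no gap remains.
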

\begin{proof}
	The object $F$ being $\I{U}$-compact implies that the canonical map
	\begin{equation*}
		\phi\colon \colim \map{\I{D}}(F, d(-))\to \map{\I{D}}(F,F)
	\end{equation*}
	must be an equivalence.
	Thus the identity on $F$ gives rise to a global section
	\begin{equation*}
		\id_F \colon 1 \to \colim \map{\I{D}}(F, d(-)).
	\end{equation*}
	
	Let $ p \colon \I{P} \to \I{J} $ be the left fibration that is classified by the copresheaf $ \map{\I{D}}(F, d(-)) $.
	Since the map $ \I{P} \to \I{P}^\gp \simeq \colim \map{\I{D}}(F, d(-))$ is essentially surjective (by~\cite[Lemma~3.8.8]{MYoneda}), the map $\I{P}_0\to\I{P}^\gp$ is a cover in $\BB$~\cite[Corollary~3.9.5]{MYoneda}, so that we can find a cover $ s \colon A \onto 1 $ in $\BB$ and a local section $ x \colon A \to \I{P} $ such that the composite with $\I{P} \to \I{P}^\gp  $ recovers $ \pi_{A}^*\id_F $. Let $j=p(x)$. Then $x$ defines an object $f\colon A\to \I{P}\vert_j\simeq \map{\I{D}}(\pi_A^\ast F,d(j))$ that is carried to $\pi_A^\ast\id_F$ by the canonical morphism $\map{\I{D}}(\pi_A^\ast F,d(j))\to \map{\I{D}}(\pi_A^\ast F,\pi_A^\ast D)$.  In other words, composing $f\colon \pi_A^\ast F\to d(j)$ with the map $d(j) \to \pi_A^* F $ into the colimit yields $\pi_A^\ast F$. As this precisely means that $ \pi_A^* F $ is a retract of $ d(j) $, the claim follows.
\end{proof}
\begin{proposition}
	\label{prop:characterisationCompactObjectsPSh}
	Let $\I{U}$ be an internal class of $\BB$-categories that has the decomposition property, and let $\I{C}$ be a $\BB$-category. Then there is an equivalence
	\begin{equation*}
		\IPSh(\I{C})^{\cpt{\I{U}}}\simeq\IRet_{\IPSh(\I{C})}(\ISml^{\op(\I{U})}(\I{C}))
	\end{equation*}	
	of full subcategories in $\IPSh(\I{C})$. In particular, $\IPSh(\I{C})^{\cpt{\I{U}}}$ is small.
\end{proposition}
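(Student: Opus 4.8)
The plan is to prove the two inclusions making up the asserted equivalence separately, deriving the first from the fact that representables and their $\op(\I{U})$-indexed colimits are $\I{U}$-compact, and the second from the decomposition property together with Lemma~\ref{lem:compactObjectRetract}.

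\emph{First inclusion} ($\IRet_{\IPSh_{\Univ}(\I{C})}(\Sml_{\Univ}^{\op(\I{U})}(\I{C}))\subseteq\IPSh_{\Univ}(\I{C})^{\cpt{\I{U}}}$). I would begin by observing that every representable $h(c)$, for $c\colon A\to\I{C}$, is $\I{U}$-compact: by Yoneda's lemma $\map{\IPSh_{\Univ}(\I{C})}(h(c),-)$ is the evaluation functor at $c$, which preserves all colimits since colimits in presheaf $\BB$-categories are computed pointwise, and by Remark~\ref{rem:BCUComp} $\I{U}$-compactness may be checked after base change. As $\IPSh_{\Univ}(\I{C})$ is cocomplete, Proposition~\ref{prop:UCompactUcocomplete} shows that $\IPSh_{\Univ}(\I{C})^{\cpt{\I{U}}}$ is closed under $\op(\I{U})$-colimits, hence (cocompleteness conditions depending only on the generated colimit class) under $\op(\I{U})^{\colim}$-colimits. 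Since by the co-Yoneda lemma (Lemma~\ref{lem:coYonedaGeneralised} with $\I{D}=\I{C}$) every $F\in\Sml_{\Univ}^{\op(\I{U})}(\I{C})$ is the colimit of $\Over{\I{C}}{F}\to\I{C}\xrightarrow{h}\IPSh_{\Univ}(\I{C})$, and $\Over{\I{C}}{F}\in\op(\I{U})^{\colim}$ by definition of $\Sml$, it follows that $\Sml_{\Univ}^{\op(\I{U})}(\I{C})\subseteq\IPSh_{\Univ}(\I{C})^{\cpt{\I{U}}}$; and $\I{U}$-compact objects are stable under retracts by Proposition~\ref{prop:RetractsUCompact}, which yields the inclusion.

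\emph{Second inclusion.} Let $F\colon A\to\IPSh_{\Univ}(\I{C})$ be $\I{U}$-compact; passing to $\Over{\BB}{A}$ (the decomposition property being étale-local) we may assume $A=1$. By the co-Yoneda lemma $F$ is the colimit of $\Over{\I{C}}{F}\to\I{C}\xrightarrow{h}\IPSh_{\Univ}(\I{C})$, and the decomposition property applied to $\Over{\I{C}}{F}$, in the form of Remark~\ref{rem:decompositionPropertyVariant}, provides a $\I{U}$-filtered $\BB$-category $\I{J}$ and a diagram $e\colon\I{J}\to\op(\I{U})$ with $\colim e\simeq\Over{\I{C}}{F}$. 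Rewriting the colimit defining $F$ as the iterated colimit along this decomposition of its indexing category gives $F\simeq\colim_{j\in\I{J}}F_j$, where $F_j$ is the colimit of $e(j)\to\Over{\I{C}}{F}\to\I{C}\xrightarrow{h}\IPSh_{\Univ}(\I{C})$; being an $e(j)$-indexed colimit of representables with $e(j)\in\op(\I{U})$, each $F_j$ lies in $\Sml_{\Univ}^{\op(\I{U})}(\I{C})$. Thus $F$ is a $\I{U}$-filtered colimit of objects of $\Sml_{\Univ}^{\op(\I{U})}(\I{C})$, so Lemma~\ref{lem:compactObjectRetract}, applied to the full inclusion $\Sml_{\Univ}^{\op(\I{U})}(\I{C})\into\IPSh_{\Univ}(\I{C})$, shows $F\in\IRet_{\IPSh_{\Univ}(\I{C})}(\Sml_{\Univ}^{\op(\I{U})}(\I{C}))$. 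The final smallness assertion then follows from Remark~\ref{rem:retracts}, using the smallness of $\Sml_{\Univ}^{\op(\I{U})}(\I{C})$ and the local smallness of $\IPSh_{\Univ}(\I{C})$.

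\emph{Main obstacle.} The one nonformal ingredient, needed in both directions, is the behaviour of $\Sml_{\Univ}^{\op(\I{U})}(\I{C})$ under $\op(\I{U})$-indexed colimits: that it is closed under such colimits inside $\IPSh_{\Univ}(\I{C})$, equivalently that it coincides with the closure of the representables under $\op(\I{U})^{\colim}$-colimits — this is what makes the partial colimits $F_j$ genuinely land in it and what licenses treating $\Sml$ as "the $\op(\I{U})$-colimits of representables". This rests on the cocontinuity of the Grothendieck construction $\Over{\I{C}}{-}$, on $\op(\I{U})^{\colim}$ being closed under $\op(\I{U})^{\colim}$-indexed colimits of its members (the analogue of Proposition~\ref{prop:UFilteredUCocomplete} for general colimit classes), and on the compatibility of colimits with a decomposition $\colim e\simeq\Over{\I{C}}{F}$ of an indexing category; all of these belong to the formalism of colimit classes and free cocompletions of~\cite{Martini2021a}, and everything else reduces to Yoneda's lemma, the decomposition property, Proposition~\ref{prop:UCompactUcocomplete}, Proposition~\ref{prop:RetractsUCompact} and Lemma~\ref{lem:compactObjectRetract}.
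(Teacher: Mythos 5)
Your argument is correct and is essentially the paper's: representables are $\I{U}$-compact by Yoneda's lemma, the $\I{U}$-compact objects are closed under $\op(\I{U})$-colimits and retracts (Propositions~\ref{prop:UCompactUcocomplete} and~\ref{prop:RetractsUCompact}), which gives one inclusion, and the converse follows from Remark~\ref{rem:decompositionPropertyVariant} together with Lemma~\ref{lem:compactObjectRetract} and a base-change argument. The only place you diverge is the step you yourself flag as the main obstacle: passing from the decomposition $\Over{\I{C}}{F}\simeq\colim e$ to an actual $\I{J}$-indexed diagram $j\mapsto F_j$ landing in $\Sml_{\Univ}^{\op(\I{U})}(\I{C})$ with colimit $F$. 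This is precisely Proposition~\ref{prop:decompositionColimitsPSh} in the appendix, which the paper cites at exactly this point; it is proved there not by a generic ``iterated colimit along a decomposition of the indexing category'' principle, but by regarding $e$ as a diagram in $\Over{(\ICat_{\BB})}{\I{C}}$ (whose colimit is $\Over{\I{C}}{F}\to\I{C}$ by Proposition~\ref{prop:sliceFibrationColimits}) and applying the colimit-preserving reflection $L\colon\Over{(\ICat_{\BB})}{\I{C}}\to\IRFib_{\I{C}}\simeq\IPSh_{\Univ}(\I{C})$ of Lemma~\ref{lem:RFibReflectiveSubcategory}; this supplies in one stroke the functoriality of $j\mapsto F_j$, the fact that each $F_j$ lies in $\Sml_{\Univ}^{\op(\I{U})}(\I{C})$, and the identification $\colim_j F_j\simeq F$. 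Your appeal to the general formalism of~\cite{Martini2021a} for the decomposition-of-colimits formula is the one soft spot — the formula is true (and provable by a corepresentability argument), but it is not stated there in the generality you need, and the functoriality of the partial colimits is not a formality — so you should replace that appeal by a citation of Proposition~\ref{prop:decompositionColimitsPSh}. With that substitution the rest of your proof, including the smallness assertion via Remark~\ref{rem:retracts}, matches the paper's.
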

\begin{proof}
	Yoneda's lemma implies that every representable presheaf is $\I{U}$-compact. By combining this observation with Proposition~\ref{prop:RetractsUCompact} and Proposition~\ref{prop:UCompactUcocomplete}, one thus obtains an inclusion
	\begin{equation*}
		\IRet_{\IPSh(\I{C})}(\ISml^{\op(\I{U})}(\I{C}))\into\IPSh(\I{C})^{\cpt{\I{U}}}.
	\end{equation*}
	As for the converse inclusion, suppose that $F\colon\I{C}^{\op}\to\Univ$ is a $\I{U}$-compact presheaf. By Remark~\ref{rem:decompositionPropertyVariant}, there exists a $\I{U}$-filtered $\BB$-category $\I{J}$ and a diagram $d\colon \I{J}\to\op(\I{U})$ such that $\Over{\I{C}}{F}\simeq\colim d$ in $\ICat_{\BB}$. Proposition~\ref{prop:decompositionColimitsPSh} then shows that $F$ is the colimit of a $\I{J}$-indexed diagram in $\ISml^{\op(\I{U})}(\I{C})$.
	As $F$ is $\I{U}$-compact and $\I{J}$ is $\I{U}$-filtered, Lemma~\ref{lem:compactObjectRetract} shows that $F$ is locally a retract of an object in $\ISml^{\op(\I{U})}(\I{C})$. By Remark~\ref{rem:BCUComp} and~\cite[Remark~6.2.2]{MWColimits}, if $F\colon A\to \IPSh(\I{C})^{\cpt{\I{U}}}$ is an arbitrary object, we can replace $\BB$ by $\Over{\BB}{A}$ and carry out the same argument as above, which shows that $\IPSh(\I{C})^{\cpt{\I{U}}}$ is contained in $\IRet_{\IPSh(\I{C})}(\ISml^{\op(\I{U})}(\I{C}))$.
\end{proof}

\begin{corollary}
	\label{cor:inclusionCompactFinal}
	Let $\I{U}$ be an internal class of $\BB$-categories that has the decomposition property, and let $\I{J}$ be a weakly $\I{U}$-filtered $\BB$-category. Then the inclusion $\I{J}\into\IPSh(\I{J})^{\cpt{\I{U}}}$ is final.
\end{corollary}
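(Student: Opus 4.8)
The plan is to factor the Yoneda inclusion $\I{J}\into\IPSh_{\Univ}(\I{J})^{\cpt{\I{U}}}$ through the $\BB$-category $\Sml_{\Univ}^{\op(\I{U})}(\I{J})$ of $\op(\I{U})$-small presheaves, writing it as the composite $\I{J}\into\Sml_{\Univ}^{\op(\I{U})}(\I{J})\into\IPSh_{\Univ}(\I{J})^{\cpt{\I{U}}}$, and to check that both of these functors are final; since the class of final functors is the internal saturation of $d^0\colon\Delta^0\into\Delta^1$ and is hence closed under composition, this will suffice. The first functor is final because $\I{J}$ is weakly $\I{U}$-filtered, by Proposition~\ref{prop:weaklyFilteredInd}. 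For the second, note that since $\I{U}$ has the decomposition property, Proposition~\ref{prop:characterisationCompactObjectsPSh} identifies $\IPSh_{\Univ}(\I{J})^{\cpt{\I{U}}}$ with the retract closure $\IRet_{\IPSh_{\Univ}(\I{J})}(\Sml_{\Univ}^{\op(\I{U})}(\I{J}))$ -- which in particular exhibits $\Sml_{\Univ}^{\op(\I{U})}(\I{J})$ as a small $\BB$-category -- so it is enough to prove the general statement that for a small $\BB$-category $\I{C}$ and a fully faithful functor $\I{C}\into\I{D}$, the inclusion $\I{C}\into\IRet_{\I{D}}(\I{C})$ is final.

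To prove this I would invoke Quillen's theorem A for $\BB$-categories~\cite[Corollary~4.4.8]{Martini2021}: the inclusion is final if and only if for every $A\in\BB$ and every object $d\colon A\to\IRet_{\I{D}}(\I{C})$ the groupoidification of the comma $\Over{\BB}{A}$-category $\pi_A^\ast\I{C}\times_{\pi_A^\ast\I{D}}\Under{\pi_A^\ast\I{D}}{d}$ is the final object of $\Over{\BB}{A}$. As the formation of this comma category and the property of being final are both \'etale-local on $\BB$~\cite[Remark~4.4.9]{Martini2021}, and as by Remark~\ref{rem:retracts} the object $d$ is locally on $\BB$ a retract of an object of $\I{C}$, one reduces to the case $A=1$ in which $d$ is a retract of some $c\colon 1\to\I{C}$, witnessed by a diagram as in Definition~\ref{def:retracts} with maps $s\colon d\to c$ and $r\colon c\to d$ satisfying $rs\simeq\id_d$. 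Now the comma category $\I{C}\times_{\I{D}}\Under{\I{D}}{d}$ is the left fibration over $\I{C}$ classified by the restriction of $\map{\I{D}}(d,-)$ along $\I{C}\into\I{D}$, so by~\cite[Proposition~4.4.1]{Martini2021a} its groupoidification is $\colim_{\I{C}}\bigl(\map{\I{D}}(d,-)\vert_{\I{C}}\bigr)$. The retract datum for $d$, transported through the mapping bifunctor $\map{\I{D}}$ -- which is contravariant in its first variable -- exhibits $\map{\I{D}}(d,-)\vert_{\I{C}}$ as a retract of $\map{\I{D}}(c,-)\vert_{\I{C}}\simeq\map{\I{C}}(c,-)$ in $\iFun{\I{C}}{\Univ}$. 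Applying the functor $\colim\colon\iFun{\I{C}}{\Univ}\to\Univ$, which preserves retracts, one finds that $(\I{C}\times_{\I{D}}\Under{\I{D}}{d})^\gp$ is a retract of $\colim_{\I{C}}\map{\I{C}}(c,-)$. But the colimit of the copresheaf corepresented by $c$ is the final object of $\Univ$ (as established inside the proof of Proposition~\ref{prop:characterisationWeaklyFiltered}), and the only retract of a terminal object is itself (a terminal object has a contractible space of endomorphisms, so its only idempotent is the identity); hence $(\I{C}\times_{\I{D}}\Under{\I{D}}{d})^\gp$ is the final object, as required.

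The conceptual content is carried entirely by Proposition~\ref{prop:weaklyFilteredInd} and Proposition~\ref{prop:characterisationCompactObjectsPSh}; the only genuine work left is the general fact about retract closures, and within its proof the two points demanding care are getting the variance right when converting the retract $d\to c\to d$ into a retract of corepresentable copresheaves, and the \'etale-locality reduction needed because the retract datum for an object of $\IRet_{\I{D}}(\I{C})$ is only available after passing to a cover. I do not expect any of these to present a serious obstacle.
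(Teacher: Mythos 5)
Your proof is correct and follows essentially the same route as the paper's: factor the inclusion through $\Sml_{\Univ}^{\op(\I{U})}(\I{J})$, use Proposition~\ref{prop:weaklyFilteredInd} for the first leg, and for the second leg combine Proposition~\ref{prop:characterisationCompactObjectsPSh} with a local retract argument and Quillen's theorem A. The only cosmetic difference is that you package the second leg as a general statement about retract closures and verify it via the classifying copresheaf $\map{\I{D}}(d,-)\vert_{\I{C}}$ being a retract of a corepresentable, whereas the paper argues directly that the relevant comma category for $F$ is a retract of the one for $G$, which has trivial groupoidification.
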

\begin{proof}
	Proposition~\ref{prop:characterisationCompactObjectsPSh} shows that any $\I{U}$-compact presheaf $F\colon \I{J}^{\op}\to\Univ$ arises as a retract of some object $G\colon 1\to\ISml^{\op(\I{U})}(\I{J})$ after passing to a suitable cover of $1\in\BB$.
	Thus, the right fibration $\ISml^{\op(\I{U})}(\I{J})_{/F} \to\ISml^{\op(\I{U})}(\I{J})$ is \emph{locally} a retract of a representable right fibration, so that we must have $(\ISml^{\op(\I{U})}(\I{J})_{/F})^\gp \simeq 1$ as the latter property can be checked locally in $\BB$.
	By Remark~\ref{rem:BCUComp} and~\cite[Remark~6.2.2]{MWColimits}, we may replace $\BB$ with $\Over{\BB}{A}$ to arrive at the same conclusion for any object $F\colon A\to\IPSh(\I{J})^{\cpt{\I{U}}}$. 
	Using Quillen's theorem A~\cite[Corollary~4.4.8]{MYoneda}, this shows that the inclusion $\ISml^{\op(\I{U})}(\I{J}) \into\IPSh(\I{J})^{\cpt{\I{U}}}$ is final. Hence the claim follows from Proposition~\ref{prop:weaklyFilteredInd}.
\end{proof}

\begin{corollary}
	\label{cor:regularClassSoundWeaklySound}
	A left regular class $\I{U}$ is sound if and only if it is weakly sound.
\end{corollary}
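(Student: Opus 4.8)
The plan is to prove the two implications separately; left regularity is needed only for the direction ``weakly sound $\Rightarrow$ sound''. For the easy direction, suppose $\I{U}$ is sound. By Remark~\ref{rem:BCSoundness} every \'etale base change $\pi_A^\ast\I{U}$ is again sound, and by Example~\ref{ex:UCocompleteWeaklyFiltered} every $\op(\pi_A^\ast\I{U})$-cocomplete $\Over{\BB}{A}$-category is weakly $\pi_A^\ast\I{U}$-filtered, hence $\pi_A^\ast\I{U}$-filtered by soundness of $\pi_A^\ast\I{U}$. This is exactly the statement that $\I{U}$ is weakly sound.

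For the converse, assume $\I{U}$ is left regular and weakly sound. Since $\IFilt_{\I{U}}\into\IwFilt_{\I{U}}$ always holds (Corollary~\ref{cor:inclusionWFiltFilt}) and all hypotheses are stable under \'etale base change (Remarks~\ref{rem:BCFilt}, \ref{rem:BCwFilt}, \ref{rem:BCRegularisation} and~\ref{rem:BCSoundness}), it suffices to show that an arbitrary weakly $\I{U}$-filtered $\BB$-category $\I{J}$ is $\I{U}$-filtered. The strategy is to embed $\I{J}$ into a $\I{U}$-filtered $\BB$-category by a \emph{fully faithful, final} functor and then transfer filteredness back along it. I would take $\I{K}:=\IPSh_{\Univ}(\I{J})^{\cpt{\I{U}}}$. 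Since $\IPSh_{\Univ}(\I{J})$ is cocomplete, it is in particular $\op(\I{U})$- and $\IFilt_{\I{U}}$-cocomplete, so Proposition~\ref{prop:UCompactUcocomplete} shows that $\I{K}$ is closed under $\op(\I{U})$-colimits in $\IPSh_{\Univ}(\I{J})$; as $\op(\I{U})$-cocompleteness is a local condition this persists after \'etale base change, and weak soundness of $\I{U}$ then gives that $\I{K}$ is $\I{U}$-filtered. Moreover, by Proposition~\ref{prop:decompositionColimitsCatB} the class $\I{U}$ has the decomposition property, so Corollary~\ref{cor:inclusionCompactFinal} applies: the (corestricted) Yoneda embedding $i\colon\I{J}\into\I{K}$ is fully faithful and final, and $\I{K}$ is small by Proposition~\ref{prop:characterisationCompactObjectsPSh}.

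It then remains to establish the key lemma: a fully faithful, final functor $i\colon\I{J}\into\I{K}$ into a $\I{U}$-filtered $\BB$-category has $\I{U}$-filtered source. Since finality, full faithfulness and $\I{U}$-filteredness are all local, and $\pi_A^\ast\I{K}$ again has the form $\IPSh_{\Univ[\Over{\BB}{A}]}(\pi_A^\ast\I{J})^{\cpt{\pi_A^\ast\I{U}}}$ (Remark~\ref{rem:BCUComp}), it is enough to fix $\I{I}\in\I{U}(1)$ and show that $\colim\colon\iFun{\I{J}}{\Univ}\to\Univ$ preserves $\I{I}$-indexed limits. The restriction functor $i^\ast\colon\iFun{\I{K}}{\Univ}\to\iFun{\I{J}}{\Univ}$ preserves all limits and colimits (they are computed pointwise); its left Kan extension $i_!$ exists because $\I{J}$ and $\I{K}$ are small and satisfies $i^\ast i_!\simeq\id$ because $i$ is fully faithful; and $\colim_{\I{J}}\circ\, i^\ast\simeq\colim_{\I{K}}$ because $i$ is final. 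Hence, for a diagram $d\colon\I{I}\to\iFun{\I{J}}{\Univ}$, putting $\widetilde d:=i_!\circ d$ (so that $i^\ast\widetilde d\simeq d$),
\begin{align*}
\colim_{\I{J}}(\lim_{\I{I}} d) &\simeq \colim_{\I{J}}(i^\ast\lim_{\I{I}}\widetilde d) \simeq \colim_{\I{K}}(\lim_{\I{I}}\widetilde d) \simeq \lim_{\I{I}}(\colim_{\I{K}}\widetilde d)\\
&\simeq \lim_{\I{I}}(\colim_{\I{J}} i^\ast\widetilde d) \simeq \lim_{\I{I}}(\colim_{\I{J}} d),
\end{align*}
where the third equivalence uses that $\I{K}$ is $\I{U}$-filtered. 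This shows $\I{J}$ is $\I{U}$-filtered, and hence that $\I{U}$ is sound.

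I expect the main obstacle to be exactly this last lemma. It is tempting but false in general that a final functor into a $\I{U}$-filtered $\BB$-category forces its source to be $\I{U}$-filtered --- already the collapse functor from a weakly contractible but non-filtered $\infty$-category to the point is final. What rescues the argument is that the functor supplied by Corollary~\ref{cor:inclusionCompactFinal} is in addition \emph{fully faithful}, and this is precisely the input that allows one to lift any diagram along $i^\ast$ by means of $i_!$; recognising that this, together with the formal behaviour of $i^\ast$ and the finality of $i$, is all that is required is the crux. A secondary, purely bookkeeping, matter is to check carefully that all the conditions in play ($\op(\I{U})$-cocompleteness, finality, $\I{U}$-filteredness, the shape of $\pi_A^\ast\I{K}$) are genuinely local, so that the reduction to the context $A=1$ inside the lemma is legitimate.
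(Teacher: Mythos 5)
Your proof is correct and follows essentially the same route as the paper: reduce to showing that a weakly $\I{U}$-filtered $\I{J}$ is $\I{U}$-filtered, use the decomposition property and Corollary~\ref{cor:inclusionCompactFinal} to get the final inclusion $\I{J}\into\IPSh_{\Univ}(\I{J})^{\cpt{\I{U}}}$, observe via Proposition~\ref{prop:UCompactUcocomplete} and weak soundness that the target is $\I{U}$-filtered, and then transfer filteredness back along the inclusion by extending diagrams over it. The only (cosmetic) difference is that you package the transfer as a lemma about fully faithful final functors and extend diagrams $\I{I}\to\iFun{\I{J}}{\Univ}$ via $i_!$, whereas the paper extends the transposed diagrams $\I{J}\to\iFun{\I{I}}{\Univ}$ using the universal property of presheaf $\BB$-categories; these are the same argument up to transposition.
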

\begin{proof}
	Using Remarks~\ref{rem:BCFilt} and~\ref{rem:BCSoundness}, it suffices to show that whenever $\I{U}$ is weakly sound, every weakly $\I{U}$-filtered $\BB$-category $\I{J}$ is $\I{U}$-filtered. Since Proposition~\ref{prop:decompositionColimitsCatB} implies that $\I{U}$ has the decomposition property, Corollary~\ref{cor:inclusionCompactFinal} shows that the inclusion $\I{J}\into\IPSh(\I{J})^{\cpt{\I{U}}}$ is final.  By Proposition~\ref{prop:UCompactUcocomplete}, the $\BB$-category $\IPSh(\I{J})^{\cpt{\I{U}}}$ is $\op(\I{U})$-cocomplete and therefore $\I{U}$-filtered since $\I{U}$ is by assumption weakly sound. Now if $\I{I}\in\I{U}(1)$ is chosen arbitrarily, the fact that $\IFun(\I{I},  \Univ)$ is cocomplete allows us to extend any diagram $d\colon \I{J}\to\IFun(\I{I},  \Univ)$ to a diagram $d^\prime\colon\IPSh(\I{J})^{\cpt{\I{U}}} \to \IFun(\I{I},  \Univ)$, using the universal property of presheaf $\BB$-categories. As the inclusion $\I{J}\into\IPSh(\I{J})^\cpt{\I{U}}$ is final, the limit functor $\lim_{\I{I}}\colon\IFun(\I{I},  \Univ) \to \Univ$ preserves the colimit of $d$ if and only if it preserves the colimit of $d^\prime$ (see~\cite[Proposition~4.6.1]{MWColimits}), which is indeed the case as $\IPSh(\I{C})^{\cpt{\I{U}}}$ is $\I{U}$-filtered. By replacing $\BB$ with $\Over{\BB}{A}$ and carrying out the same argument (which is possible by Remark~\ref{rem:BCUComp}), this already implies that $\I{J}$ must be $\I{U}$-filtered, as desired.
\end{proof}

\section{Cardinality in internal higher category theory}
\label{chap:cardinals}
In our treatment of accessibility and presentability for $\BB$-categories later in this paper, we will rely on the existence of an ample amount of doctrines that satisfy the decomposition property. Therefore, it will be crucial to know that there are sufficiently many \emph{(left) regular and sound doctrines} in any $\infty$-topos $\BB$. The main objective of this section is to construct such internal classes. More precisely, our approach is to first construct what we call the \emph{canonical bifiltration} of the $\BB$-category $\ICat_{\BB}$, i.e.\ a $2$-dimensional filtration by internal classes which can be regarded as a way to order $\BB$-categories by \emph{size}. The first dimension of this bifiltration is parametrised by \emph{cardinals}, and the second one by the poset of \emph{local classes} in $\BB$. The canonical bifiltration will be \emph{exhaustive}, so that every $\BB$-category can be assigned an upper bound in size, and it will be exclusively comprised of regular doctrines. We carry out the construction of this bifiltration in \S~\ref{sec:bifiltration}. In \S~\ref{sec:kappaSmall}, we discuss how one can extract a particularly well-behaved subfiltration from the canonical bifiltration that is still exhaustive and in which each member is \emph{sound}. The latter will be parametrised by a class of cardinals that satisfy a property which depends on the $\infty$-topos $\BB$ and that we refer to as \emph{$\BB$-regularity}. Finally, we discuss a particular member of the canonical bifiltration in \S~\ref{sec:finiteBCategories}, that of \emph{finite} $\BB$-categories.

\subsection{The canonical bifiltration of the $\BB$-category of $\BB$-categories}
\label{sec:bifiltration}
Recall that when $\KK$ is an arbitrary class of $\infty$-categories (i.e.\ a full subcategory of $\CatS$), we denote by $\ILConst_{\KK}$ the essential image of the canonical functor $\KK\into \CatS\to \ICat_{\BB}$ in which the second map is obtained as the transpose of $\const_{\BB}\colon \CatS\to \Cat(\BB)=\Gamma\ICat_{\BB}$. If $S$ is a local class of morphisms in $\BB$, we denote by $\ord{\KK,S}$ the internal class of $\BB$-categories that is generated by $\ILConst_{\KK}$ and $\Univ[S]$.

\begin{definition}
	\label{def:canonicalBifiltration}
	Let $\KK\subset\CatS$ be a class of $\infty$-categories and let $S$ be a local class of morphisms in $\BB$. We define the internal class  $\ICat_{\BB}^{\ord{\KK,S}}$ of  \emph{$\ord{\KK,S}$-small $\BB$-categories} as the left regularisation of $\ord{\KK,S}$. We denote its underlying $\infty$-category of global sections by $\Cat(\BB)^{\ord{\KK,S}}$.
\end{definition}
\begin{remark}
	\label{rem:BCCanonicalBifiltration}
	In the situation of Definition~\ref{def:canonicalBifiltration}, let us denote by $\pi_A^\ast S$ the class of those maps in $\Over{\BB}{A}$ whose underlying map in $\BB$ is contained in $S$. Since $(\pi_A)_!$ preserves small colimits and covers, this is still a local class, and one has a natural equivalence $\pi_A^\ast\Univ[S]\simeq\Univ[\pi_A^\ast S]$ of subuniverses. With this understood, Remark~\ref{rem:BCRegularisation} gives rise to a canonical equivalence $\pi_A^\ast\ICat_{\BB}^{\ord{\KK,S}}\simeq\ICat_{\Over{\BB}{A}}^{\ord{\KK,\pi_A^\ast S}}$ for every $A\in\BB$.
\end{remark}

By combining Proposition~\ref{prop:invarianceCocompletenessRegularisation} with~\cite[Example~5.4.11]{MWColimits}, one finds:
\begin{proposition}
	\label{prop:KSCocompleteness}
	A (large) $\BB$-category $\I{C}$ is $\ICat_{\BB}^{\ord{\KK,S}}$-complete precisely if
	\begin{enumerate}
		\item The $\infty$-category $\I{C}(A)$ admits limits indexed by objects in $\KK$, and for every map $s\colon B\to A$ the transition functor $s^* \colon \I{C}(A) \rightarrow \I{C}(B)$ preserves these limits;
		\item For every map $p \colon P \to A $ in $ S $, the functor $s^*$ admits a right adjoint $s_* \colon \I{C}(P) \to \I{C}(A)$ and for every cartesian square
		\begin{equation*}
			\begin{tikzcd}
				Q\arrow[r, "t"]\arrow[d, "q"] & P\arrow[d, "p"]\\
				B\arrow[r, "s"] & A
			\end{tikzcd}
		\end{equation*}
		in $\BB$ in which $p$ (and therefore $q$) are contained in $S$, the natural map $s^\ast p_\ast \to q_\ast t^\ast$ is an equivalence.
	\end{enumerate}
	Moreover,  a functor $f\colon \I{C}\to\I{D}$ of $\ICat_{\BB}^{\ord{\KK,S}}$-complete $\BB$-categories is $\ICat_{\BB}^{\ord{\KK,S}}$-continuous precisely if for all $A\in\BB$ the functor $f(A)$ preserves limits indexed by objects in $\KK$, and for all maps $p\colon P\to A$ in $S$ the natural morphism $f(A)p_\ast \to p_\ast f(P)$ is an equivalence.
	
	The dual statements about cocompleteness and cocontinuity (both understood with respect to the right regular class $\op(\ICat_{\BB}^{\ord{\KK,S}})$) hold as well.\qed
\end{proposition}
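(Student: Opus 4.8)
The plan is to reduce the assertion, by two formal steps, to the concrete description of $\ord{\KK,S}$-completeness and $\ord{\KK,S}$-continuity, and then to quote \cite[Example~5.3.11]{Martini2021a}. First I would invoke the dual part of Proposition~\ref{prop:invarianceCocompletenessRegularisation}: since by Definition~\ref{def:canonicalBifiltration} the internal class $\ICat_{\BB}^{\ord{\KK,S}}$ is precisely the left regularisation of $\ord{\KK,S}$, a (large) $\BB$-category $\I{C}$ is $\ICat_{\BB}^{\ord{\KK,S}}$-complete if and only if it is $\ord{\KK,S}$-complete, and a functor between such $\BB$-categories is $\ICat_{\BB}^{\ord{\KK,S}}$-continuous if and only if it is $\ord{\KK,S}$-continuous. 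Next I would unwind that $\ord{\KK,S}$ is the internal class generated by $\ILConst_{\KK}$ and $\Univ[S]$; concretely, a $\BB$-category lies in $\ord{\KK,S}$ exactly when, locally on $\BB$, it is either a constant $\BB$-category on some $\II\in\KK$ or an object of the subuniverse $\Univ[S]$. Since whether a diagram of a given shape admits a limit may be tested locally on $\BB$ and depends only on the underlying objects of the indexing internal class, $\I{C}$ is $\ord{\KK,S}$-complete if and only if it is simultaneously $\ILConst_{\KK}$-complete and $\Univ[S]$-complete, and likewise for continuity of functors.

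It then remains to translate $\ILConst_{\KK}$- and $\Univ[S]$-completeness into the stated form, which is exactly the content of \cite[Example~5.3.11]{Martini2021a}, applied over each slice $\Over{\BB}{A}$ (using the base-change identifications of Remark~\ref{rem:BCCanonicalBifiltration}). On the one hand, $\ILConst_{\KK}$-completeness unwinds to condition~(1) — every $\I{C}(A)$ admits $\KK$-indexed limits and every transition functor preserves them, these limits then being computed sectionwise — and $\ILConst_{\KK}$-continuity of $f$ unwinds to the requirement that each $f(A)$ preserve $\KK$-indexed limits. On the other hand, an object of $\Univ[S]$ in context $A$ is a map $p\colon P\to A$ in $S$ viewed as an $\Over{\BB}{A}$-groupoid, so a limit over it is precisely a right adjoint $p_\ast$ to the transition functor $p^\ast$, and the demand that these right adjoints assemble into a functor of $\BB$-categories is precisely the Beck--Chevalley condition along pullbacks of maps in $S$; thus $\Univ[S]$-completeness unwinds to condition~(2), and $\Univ[S]$-continuity of $f$ unwinds to the condition that the mate $f(A)p_\ast\to p_\ast f(P)$ be an equivalence. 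Combining the two descriptions gives the proposition. The dual statements about cocompleteness and cocontinuity with respect to $\op(\ICat_{\BB}^{\ord{\KK,S}})$ then follow by applying the above to the opposite $\BB$-category $\I{C}^{\op}$ and the opposite functor $f^{\op}$, since $\I{C}$ is $\op(\ICat_{\BB}^{\ord{\KK,S}})$-cocomplete exactly when $\I{C}^{\op}$ is $\ICat_{\BB}^{\ord{\KK,S}}$-complete and $\I{C}^{\op}(A)\simeq\I{C}(A)^{\op}$.

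No step should be difficult, since the genuine input is the already-available Proposition~\ref{prop:invarianceCocompletenessRegularisation} together with \cite[Example~5.3.11]{Martini2021a}. The two points requiring care are, first, verifying that passing to the internal class generated by $\ILConst_{\KK}$ and $\Univ[S]$ introduces no limit shapes on $\I{C}$ beyond those two families — this is where locality of limits over $\BB$ enters — and, second, matching an abstract $\Univ[S]$-indexed limit in $\I{C}$ with a relative right adjoint to $p^\ast$ satisfying base change, which is the heart of condition~(2). I would regard the latter as the main, if modest, obstacle.
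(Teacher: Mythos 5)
Your proposal is correct and follows exactly the route the paper takes: the paper derives the statement by combining (the dual of) Proposition~\ref{prop:invarianceCocompletenessRegularisation} — reducing $\ICat_{\BB}^{\ord{\KK,S}}$-(co)completeness to $\ord{\KK,S}$-(co)completeness — with the explicit description of $\ILConst_{\KK}$- and $\Univ[S]$-completeness and continuity from \cite[Example~5.3.11]{Martini2021a}. Your additional unwinding of the two generating families and of the dual statement is just a more detailed spelling-out of the same argument.
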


In the situation of Definition~\ref{def:canonicalBifiltration}, note that whenever $\KK$ is a doctrine (i.e.\ a small $\infty$-category) and $S$ is bounded (i.e.\ the subuniverse $\Univ[S]$ that corresponds to $S$ is small), Proposition~\ref{prop:regularisationDoctrine} implies that $\ICat_{\BB}^{\ord{\KK,S}}$ is a doctrine. Therefore, assigning to a pair $(\KK,S)$ the regular class $\ICat_{\BB}^{\ord{\KK,S}}$ defines a map of posets
\begin{equation*}
	\Sub_{\mathrm{full}}^{\mathrm{small}}(\CatS)\times\Sub_{\mathrm{full}}^{\mathrm{small}}(\Univ)\to \Sub_{\mathrm{full}}^{\mathrm{small}}(\ICat_{\BB})
\end{equation*}
that we refer to as the \emph{canonical bifiltration of $\ICat_{\BB}$}. 

\begin{remark}
	The canonical bifiltration is \emph{exhaustive}. In fact, if $\I{C}$ is an arbitrary $\Over{\BB}{A}$-category, we may find a small $\infty$-category $\JJ$ and a diagram $d\colon \JJ\to \Cat(\Over{\BB}{A})$ with colimit $\I{C}$ such that for all $j\in\JJ$ one has $d(j)\simeq\Delta^n\otimes B$ for some $n\geq 0$ and some $B\in\Over{\BB}{A}$. Note that $\Delta^n\otimes B$ can be identified with the $B$-indexed colimit of the constant diagram in $\ICat_{\Over{\BB}{A}}$ with value $\Delta^n$. Therefore, by choosing $\KK$ to be the doctrine of $\infty$-categories spanned by the single object $\JJ\in\CatS$ and choosing $S$ to be the bounded local class that is generated by the maps $B\to A$, we find that $\I{C}$ is $\ord{\KK, S}$-small.
\end{remark}

\begin{example}
	\label{ex:CatSLConstRegularisation}
	Since every $\BB$-category is a small colimit of objects of the form $\Delta^n\otimes A$ with $n\geq 0$ and $A\in\BB$, we deduce that the regularisation of $\ord{\CatS, \mathrm{all}}$ is $\ICat_{\BB}$ (where the local class $\mathrm{all}$ is the class of all morphisms in $\BB$).
\end{example}

\subsection{$\kappa$-small $\BB$-categories}
\label{sec:kappaSmall}

Let $\kappa$ be a cardinal. Recall from~\cite[\S~6.1.6]{htt} that a map $p\colon P\to A$ in $\BB$ is said to be \emph{relatively $\kappa$-compact} if for every $\kappa$-compact $B\in\BB$ and every map $s\colon B\to A$, the pullback $s^\ast P$ is $\kappa$-compact as well. We denote by $\cpt{\kappa}$ the local class of morphisms in $\BB$ that is generated by the relatively $\kappa$-compact morphisms, and we let $\Univ[\BB]^\kappa$ be the associated subuniverse. Explicitly, a map $p\colon P\to A$ is contained in $\cpt{\kappa}$ precisely if there is a cover $(s_i)_i\colon \bigsqcup_i A_i\onto A$ such that $s_i^\ast p$ is relatively $\kappa$-compact for each $i$.

Let us denote by $\CatS^\kappa$ the doctrine of $\kappa$-small $\infty$-categories. We may now define:
\begin{definition}
	\label{def:kappaSmall}
	A $\BB$-category is said to be \emph{$\kappa$-small} if it is $\ord{\CatS^\kappa,\cpt{\kappa}}$-small. We will use the notation $\ICat_{\BB}^\kappa=\ICat_{\BB}^{\ord{\CatS^\kappa, \cpt{\kappa}}}$ to denote the internal class of $\kappa$-small $\BB$-categories, and we denote its underlying $\infty$-category of global sections by $\Cat(\BB)^\kappa$.
\end{definition}
\begin{remark}
	\label{rem:BCKappaSmallCategories}
	Note that for general $A\in\BB$ there is no reason to expect an equivalence $\pi_A^\ast\Univ[\BB]^\kappa\simeq\Univ[\Over{\BB}{A}]^\kappa$. Therefore, we can also not expect to have an equivalence $\pi_A^\ast\ICat_{\BB}^\kappa\simeq\ICat_{\Over{\BB}{A}}^\kappa$. The situation improves, however, when $A$ is assumed to be $\kappa$-compact. In this case, the observation that an object in $\Over{\BB}{A}$ is $\kappa$-compact if and only if its underlying object in $\BB$ is $\kappa$-compact implies that a map in $\Over{\BB}{A}$ is relatively $\kappa$-compact if and only its underlying map in $\BB$ is relatively $\kappa$-compact, so that we obtain an equivalence $\pi_A^\ast\Univ[\BB]^\kappa\simeq\Univ[\Over{\BB}{A}]^\kappa$. By using Remark~\ref{rem:BCCanonicalBifiltration}, this equivalence in turn induces an equivalence $\pi_A^\ast\ICat_{\BB}^\kappa\simeq\ICat_{\Over{\BB}{A}}^\kappa$.
\end{remark}
The internal class $\ICat_{\BB}^{\kappa}$ is not very well-behaved for arbitrary cardinals $\kappa$. Therefore, we will restrict our attention to a certain class of cardinals that are in a sense \emph{adapted} to the $\infty$-topos $\BB$.

\begin{definition}
	\label{def:BRegularCardinal}
	We say that cardinal $\kappa$ is \emph{$\BB$-regular} if
	\begin{enumerate}
		\item $\kappa$ is regular and uncountable;
		\item $\BB$ is $\kappa$-accessible;
		\item the full subcategory $\BB^{\cpt{\kappa}}\into\BB$ of $\kappa$-compact objects in $\BB$ is closed under finite limits and subobjects in $\BB$.
	\end{enumerate}
\end{definition}

\begin{remark}
	\label{rem:SRegularCardinals}
	Every uncountable regular cardinal $\kappa$ is $\SS$-regular. In fact, condition~(2) is immediate, and $1\in\SS$ is certainly $\kappa$-compact. Moreover, if $P=A\times_C B$ is a pullback of $\kappa$-compact $\infty$-groupoids, descent implies $P\simeq \colim_{a\in A} P\vert_a$. 
	Since $\kappa$-compact $\infty$-groupoids are precisely those which are $\kappa$-small~\cite[Corollary~5.4.1.5]{htt} and since $\kappa$-compact objects in $\SS$ are stable under $\kappa$-small colimits, it suffices to show that $P\vert_a$ is $\kappa$-compact. We may therefore reduce to the case where $A\simeq 1$. By the same reasoning, we can assume $B\simeq 1$ as well. But then $P$ can be identified with a mapping $\infty$-groupoid of $C$, which is $\kappa$-small by again making use of~\cite[Corollary~5.4.1.5]{htt}. Finally, the identification of $\kappa$-compact $\infty$-groupoids with $\kappa$-small $\infty$-groupoids also shows that these are stable under subobjects. Hence condition~(3) is satisfied as well.
\end{remark}

\begin{remark}
	\label{rem:enoughBRegularCardinals}
	Note that there is an ample amount of $\BB$-regular cardinals, in the sense that if $\kappa^\prime$ is an arbitrary cardinal one can always find a larger cardinal $\kappa\geq \kappa^\prime$ that is $\BB$-regular. Indeed, by enlarging $\kappa^\prime$ if necessary one can always arrange for $\BB$ to be $\kappa^\prime$-accessible. Then for any (uncountable) $\kappa\gg\kappa^\prime$ (in the sense of~\cite[Definition~A.2.6.3]{htt}) for which $\BB^{\cpt{\kappa^\prime}}$ is $\kappa$-small, an object in $\BB$ is $\kappa$-compact if and only if the underlying presheaf on $\BB^{\cpt{\kappa^\prime}}$ takes values in the full subcategory $\SS^{\cpt{\kappa}}\into\SS$ of $\kappa$-compact $\infty$-groupoids~\cite[Lemma~5.4.7.5]{htt}. In combination with Remark~\ref{rem:SRegularCardinals}, this shows that $\kappa$ is $\BB$-regular. In particular, this argument shows that we can always find a $\BB$-regular $\kappa$ such that $\kappa\gg \kappa^\prime$.
\end{remark}

\begin{remark}
	\label{rem:BCkappaSmall}
	If $\kappa$ is a $\BB$-regular cardinal, then $\kappa$ is also $\Over{\BB}{A}$-regular for every \emph{$\kappa$-compact} object $A\in\BB$. In fact, since an object in $\Over{\BB}{A}$ is $\kappa$-compact if and only if its image along $(\pi_A)_!$ is $\kappa$-compact, every object in $\Over{\BB}{A}$ is a $\kappa$-filtered colimit of $\kappa$-compact objects, which shows that~(2) is satisfied. Condition~(3) follows from $A$ being $\kappa$-compact, together with the fact that $(\pi_A)_!$ preserves pullbacks (and consequently also subobjects).
\end{remark}

\begin{remark}
	\label{rem:BPresentationCompactObjects}
	For every $\BB$-regular cardinal $\kappa$, the $\infty$-topos $\BB$ admits a presentation by the full subcategory $\BB^{\cpt{\kappa}}\subset \BB$ of $\kappa$-compact objects, in the sense that its Yoneda extension $\PSh(\BB^{\cpt{\kappa}})\to \BB$ is a left exact and accessible Bousfield localisation~\cite[Proposition~6.1.5.2]{htt}. Moreover, the inclusion $\BB\into\PSh(\BB^{\cpt{\kappa}})$ commutes with $\kappa$-filtered colimits. In particular, the global sections functor $\Gamma\colon \BB\to\SS$ commutes with $\kappa$-filtered colimits, so that $\const_{\BB}$ restricts to a functor $\SS^{\cpt{\kappa}}\to \BB^{\cpt{\kappa}}$.
\end{remark}

The first main result in this section will be the following characterisation of $\kappa$-small $\BB$-categories when $\kappa$ is $\BB$-regular:
\begin{proposition}
	\label{prop:characterisationKappaSmallCategories}
	Let $\kappa$ be a $\BB$-regular cardinal, and let $\I{C}$ be a $\BB$-category. Then the following are equivalent:
	\begin{enumerate}
		\item $\I{C}$ is $\kappa$-small;
		\item $\I{C}$ is a $\kappa$-compact object in $\Cat(\BB)$;
		\item $\I{C}$ is contained in the smallest full subcategory of $\Cat(\BB)$ that is spanned by objects of the form $\Delta^n\otimes G$ for $n\geq 0$ and $G\in\BB^{\cpt{\kappa}}$ and that is closed under $\kappa$-small colimits;
		\item $\I{C}$ is a $\kappa$-compact object in $\Simp\BB$;
		\item $\I{C}_0$ and $\I{C}_1$ are $\kappa$-compact objects in $\BB$.
	\end{enumerate}
\end{proposition}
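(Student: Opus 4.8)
The plan is to prove the five conditions equivalent by treating $(1)$--$(3)$ as a ``size'' package and $(4)$--$(5)$ as a ``compactness in $\Simp\BB$'' package, and then bridging the two; all of the topos-theoretic input is extracted from $\BB$-regularity at the outset.

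\emph{Preliminary reductions.} First I would record that $\const_\BB\colon\SS\to\BB$ carries $\kappa$-small $\infty$-groupoids to $\kappa$-compact objects, since $\Gamma_\BB$ commutes with $\kappa$-filtered colimits by Remark~\ref{rem:BPresentationCompactObjects}. Hence the finite simplicial sets $\Delta^n$ and the maps $I^2\hookrightarrow\Delta^2$, $E^1\to\Delta^0$ become $\kappa$-compact in $\Simp\BB$, and --- using that $\BB^{\cpt{\kappa}}$ is closed under finite products, which is part of $\BB$-regularity --- $\Simp\BB$ is $\kappa$-compactly generated with $\Simp\BB^{\cpt{\kappa}}$ the idempotent completion of the class obtained from $\{\Delta^n\otimes G : n\geq 0,\, G\in\BB^{\cpt{\kappa}}\}$ by closing under $\kappa$-small colimits. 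The crucial structural fact is that the reflective inclusion $\Cat(\BB)\hookrightarrow\Simp\BB$ preserves $\kappa$-filtered colimits: being a $\BB$-category is being internally local with respect to $I^2\hookrightarrow\Delta^2$ and $E^1\to\Delta^0$, and since products in $\Simp\BB$ distribute over colimits this is the same as being local with respect to the maps $g\times c$ with $c\in\Simp\BB^{\cpt{\kappa}}$ --- a small set of maps between $\kappa$-compact objects, whose class of local objects is automatically stable under $\kappa$-filtered colimits. Consequently $\kappa$-filtered colimits of $\BB$-categories are computed levelwise, $\Cat(\BB)$ is itself $\kappa$-compactly generated, and (by the standard theory of accessible localisations) $\Cat(\BB)^{\cpt{\kappa}}$ is the idempotent completion of $L(\Simp\BB^{\cpt{\kappa}})$, where $L$ denotes the localisation functor.

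\emph{The size package $(1)\Leftrightarrow(2)\Leftrightarrow(3)$.} For $(3)\Rightarrow(2)$ I would use the natural identification $\map{\Cat(\BB)}(\Delta^n\otimes G,\I{D})\simeq\map{\BB}(G,\I{D}_n)$ together with the levelwise computation of $\kappa$-filtered colimits to conclude that each $\Delta^n\otimes G$ with $G\in\BB^{\cpt{\kappa}}$ is $\kappa$-compact in $\Cat(\BB)$; closure of $\Cat(\BB)^{\cpt{\kappa}}$ under $\kappa$-small colimits then gives all of class $(3)$. Conversely $\Cat(\BB)^{\cpt{\kappa}}$ equals the idempotent completion of the $\kappa$-small colimits \emph{in $\Cat(\BB)$} of these generators (here I use that $L$ is cocontinuous and fixes the $\Delta^n\otimes G$, which are already $\BB$-categories), and this class is already idempotent complete because $\kappa$ is uncountable, so it coincides with class $(3)$. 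For $(1)\Leftrightarrow(3)$ I would invoke Proposition~\ref{prop:explicitDescriptionKSSmall} with $\KK=\CatS^\kappa$ and $S=\cpt{\kappa}$ --- noting that $\cpt{\kappa}$ is closed under composition since relatively $\kappa$-compact maps are, using that $\BB^{\cpt{\kappa}}$ is stable under pullback --- and then upgrade its ``local'' conclusion to the global statement $(3)$: refine the given cover to one consisting of $\kappa$-compact objects (possible since $\BB$ is $\kappa$-accessible), use Remark~\ref{rem:BCKappaSmallCategories} to see that over a $\kappa$-compact base the condition is exactly condition $(3)$ relative to the corresponding slice topos, and descend.

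\emph{The compactness package and the bridge.} For $(5)\Rightarrow(4)$: the Segal condition expresses $\I{C}_n$ as the finite limit $\I{C}_1\times_{\I{C}_0}\cdots\times_{\I{C}_0}\I{C}_1$, which is $\kappa$-compact because $\BB^{\cpt{\kappa}}$ is closed under finite limits; thus $\I{C}$ is levelwise $\kappa$-compact, and the skeletal filtration $\I{C}\simeq\colim_m\operatorname{sk}_m\I{C}$ in $\Simp\BB$ writes $\I{C}$ as a sequential colimit of the $\operatorname{sk}_m\I{C}$, each of which is a finite colimit of objects of the form $\Delta^k\otimes\I{C}_j$ ($j\leq m$) and hence $\kappa$-compact in $\Simp\BB$; since $\kappa$ is uncountable a sequential colimit of $\kappa$-compacts is $\kappa$-compact. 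For $(4)\Rightarrow(5)$ one applies the colimit-preserving functors $\ev_0,\ev_1$ to a presentation of $\I{C}$ as a retract of a $\kappa$-small colimit of $\Delta^n\otimes G$'s, observing that $(\Delta^n\otimes G)_0$ and $(\Delta^n\otimes G)_1$ are finite coproducts of $G$. For $(4)\Rightarrow(2)$ one uses the levelwise computation of $\kappa$-filtered colimits of $\BB$-categories to transport the mapping space from $\Simp\BB$ to $\Cat(\BB)$. The same skeletal argument also yields $(5)\Rightarrow(3)$: $\I{C}\simeq\colim_m L(\operatorname{sk}_m\I{C})$ in $\Cat(\BB)$ exhibits $\I{C}$ as a sequential --- hence $\kappa$-small --- colimit of objects of class $(3)$. \textbf{The one genuinely delicate point is the remaining bridge $(3)\Rightarrow(4)$}, equivalently the assertion that the full subcategory of $\Cat(\BB)$ on those $\I{C}$ with $\I{C}_0,\I{C}_1\in\BB^{\cpt{\kappa}}$ is closed under $\kappa$-small colimits: since a $\kappa$-small colimit of category objects is levelwise $\kappa$-compact \emph{before} localisation (again by the Segal condition and $\BB$-regularity), this reduces to showing that the Segalification and the Rezk completion functors preserve levelwise $\kappa$-compactness. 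I expect this to be the main obstacle; it should be handled by the explicit descriptions of these functors --- Segalification as an $\omega$-indexed colimit of composition-gluing pushouts, completion as the realisation of $[k]\mapsto\map{\Simp\BB}(E^k,-)$, whose levels are finite limits of the levels of the argument --- each step being built from $\kappa$-small colimits and finite limits of $\kappa$-compact objects, so that $\kappa$ uncountable and $\BB$-regular suffices, following the pattern of the argument for $\CatS$ in~\cite{htt}. Assembling the package equivalences with the two bridges then closes the cycle.
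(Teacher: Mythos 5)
Your architecture is essentially the paper's -- identify the $\kappa$-compact objects of $\Simp\BB$ as the levelwise $\kappa$-compact ones generated under $\kappa$-small colimits by the $\Delta^n\otimes G$ (Proposition~\ref{prop:SimpBAccessibleCompactObjects}), use that $\Cat(\BB)\into\Simp\BB$ is closed under filtered colimits so that the reflector preserves $\kappa$-compacts, get $(2)\Leftrightarrow(3)$ by the retract trick, and reduce $(1)$ to the rest via Proposition~\ref{prop:explicitDescriptionKSSmall} and base change to $\kappa$-compact slices -- but the two steps you leave open are exactly the two technical pillars of the paper's proof, and as written they are genuine gaps. For the bridge $(3)\Rightarrow(4)$ you propose explicit Segalification and Rezk-completion formulas. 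The paper instead proves Lemma~\ref{lem:kappaCompactFibrantReplacement}: the reflector $L\colon\Simp\BB\to\Cat(\BB)$ preserves $\kappa$-compactness, via the $\infty$-categorical small object argument; the point there is that the $\omega$-indexed iteration converges to an \emph{internally} local object only because each stage is a pushout decorated by the cotensors $C(k)^{K}$ and $C(k)^{L}$ for the generating maps $K\to L$ in $\{I^2\into\Delta^2,\,E^1\to 1\}$, and these cotensors preserve $\kappa$-compactness and commute with filtered colimits (Lemmas~\ref{lem:stabilityKappaCompactInternalHom} and~\ref{lem:SimplicesInternallyCompact}). Your ``composition-gluing pushouts'' omit precisely this bookkeeping, and your claim that the levels of the completion, computed as a realisation of $E^k$-cotensors, are finite limits of levels of the argument requires replacing the infinite simplicial sets $E^k$ by finite models using the Segal condition; none of this is in~\cite{htt}, so this step is a plausible plan but not yet a proof (it is repairable along the paper's lines).

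The second gap is the one you do not flag: in $(1)\Rightarrow(3)$ you refine the cover to $\kappa$-compact objects $A_i$ and then write ``and descend''. Descent of $\kappa$-compactness along a cover is not automatic; in the paper it is carried by Lemma~\ref{lem:stabilityRelativelyKappaSmallSum} and Proposition~\ref{prop:relativelyKappaCompactLocal} (locality of relatively $\kappa$-compact maps), and this is exactly where the ``stable under subobjects'' clause of $\BB$-regularity is used -- the paper even remarks that it could not reproduce Lurie's argument without this added hypothesis. Concretely, from $\pi_{A_i}^\ast\I{C}$ being $\kappa$-compact in $\Over{(\Simp\BB)}{A_i}$ you only know that each $A_i\times\I{C}_n$ is $\kappa$-compact in $\BB$, and you still have to conclude that $\I{C}_n$ itself is. This needs an argument: either quote Proposition~\ref{prop:relativelyKappaCompactLocal} as the paper does, or argue directly (since $1$ is $\kappa$-compact by $\BB$-regularity one may extract a $\kappa$-small subcover $\bigsqcup_{i\in J}A_i\onto 1$, and then $\I{C}_n$ is the colimit of the \v{C}ech nerve of $\bigsqcup_{i\in J}A_i\times\I{C}_n\onto\I{C}_n$, a countable colimit of $\kappa$-small coproducts of finite limits of $\kappa$-compact objects). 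As submitted, the implication $(1)\Rightarrow(3)$ is therefore not established.
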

\begin{remark}
	\label{rem:characterisationKappaSmallCategories}
	On account of Remark~\ref{rem:BCKappaSmallCategories}, Proposition~\ref{prop:characterisationKappaSmallCategories} implies that for every $\kappa$-compact object $A\in\BB$, we can identify $\ICat_{\BB}^\kappa(A)$ with the full subcategory of $\kappa$-compact objects in $\Cat(\Over{\BB}{A})$.
\end{remark}
The proof of Proposition~\ref{prop:characterisationKappaSmallCategories} requires a few preparations. We begin by establishing that the class of relatively $\kappa$-compact maps in $\BB$ is already local.
\begin{lemma}
	\label{lem:stabilityRelativelyKappaSmallSum}
	Let $\kappa$ be a $\BB$-regular cardinal and let $I$ be a small set. For every $i\in I$, let $P_i\to A_i$ be a relatively $\kappa$-compact map in $\BB$. Then $\bigsqcup_i P_i\to \bigsqcup_i A_i$ is relatively $\kappa$-compact.
\end{lemma}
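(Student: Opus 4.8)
The plan is to unwind the definition of relative $\kappa$-compactness and exploit the exactness properties of $\infty$-topoi, namely that coproducts are disjoint and universal and that the initial object is strict. Write $P=\bigsqcup_i P_i$, $A=\bigsqcup_i A_i$ and let $p\colon P\to A$ be the induced map. Fix a $\kappa$-compact object $B\in\BB$ together with a morphism $s\colon B\to A$; we must show that $s^\ast P$ is $\kappa$-compact. Since coproducts are universal, the decomposition $A\simeq\bigsqcup_i A_i$ induces a decomposition $B\simeq\bigsqcup_i B_i$ with $B_i\mathrel{:=}s^\ast A_i=B\times_A A_i$, equipped with maps $s_i\colon B_i\to A_i$, and moreover $s^\ast P\simeq\bigsqcup_i s_i^\ast P_i$.

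The first key step is to show that all but a $\kappa$-small set of the $B_i$ are initial. Since $\kappa$ is regular, the poset $\mathcal{P}_\kappa(I)$ of subsets of $I$ of cardinality ${<}\kappa$ is $\kappa$-filtered, and $\bigsqcup_{i\in I}B_i\simeq\colim_{J\in\mathcal{P}_\kappa(I)}\bigsqcup_{i\in J}B_i$ exhibits $B$ as a $\kappa$-filtered colimit. As $B$ is $\kappa$-compact, the identity of $B$ factors through $\bigsqcup_{i\in J}B_i$ for some $J\in\mathcal{P}_\kappa(I)$. Because coproducts in $\BB$ are disjoint, the summand inclusion $\bigsqcup_{i\in J}B_i\into B$ is a monomorphism; a monomorphism that admits a section is an equivalence, so this inclusion is an equivalence. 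Using disjointness once more, the complementary summand $\bigsqcup_{i\in I\setminus J}B_i$ is then equivalent to $\emptyset$, and since $\emptyset$ is strict initial, $B_i\simeq\emptyset$ — hence $s_i^\ast P_i\simeq\emptyset$ — for every $i\notin J$. Therefore $s^\ast P\simeq\bigsqcup_{i\in J}s_i^\ast P_i$ is a $\kappa$-small coproduct.

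It now suffices to prove that each summand $s_i^\ast P_i$ with $i\in J$ is $\kappa$-compact: indeed $\BB$ is $\kappa$-accessible and $\kappa$ is regular, so $\kappa$-compact objects are closed under $\kappa$-small colimits by~\cite[Corollary~5.3.4.15]{htt}. For $i\in J$ the summand inclusion $B_i\into B$ is a monomorphism, so $B_i$ is a subobject of the $\kappa$-compact object $B$ and hence $\kappa$-compact by the third condition in the definition of $\BB$-regularity. Since $P_i\to A_i$ is relatively $\kappa$-compact and $B_i$ is $\kappa$-compact, the pullback $s_i^\ast P_i=B_i\times_{A_i}P_i$ is $\kappa$-compact, which completes the argument.

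The genuinely non-formal step is the second one: extracting from the bare $\kappa$-compactness of $B$ a uniform bound on how many pieces of its canonical coproduct decomposition over $I$ are nontrivial. Once this is in hand, everything else follows from the standard exactness properties of $\infty$-topoi together with the stability of $\kappa$-compact objects under subobjects and under $\kappa$-small colimits that is built into the notion of $\BB$-regularity.
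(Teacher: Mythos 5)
Your proof is correct and follows essentially the same route as the paper: filter $I$ by its $\kappa$-small subsets, use $\kappa$-compactness of the test object to reduce to a $\kappa$-small subcoproduct, and conclude via universality/disjointness of coproducts together with the stability of $\kappa$-compact objects under subobjects (condition~(3) of $\BB$-regularity) and under $\kappa$-small colimits. The only cosmetic difference is that the paper factors the map $s\colon B\to\bigsqcup_i A_i$ through a $\kappa$-small subcoproduct of $\bigsqcup_i A_i$ directly and then identifies the pullbacks by descent, whereas you apply compactness to $\id_B$ over the pulled-back decomposition of $B$ and dispose of the tail summands via the monomorphism-plus-section and strict-initiality argument.
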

\begin{proof}
	Let $G$ be $\kappa$-compact, and let $s\colon G\to \bigsqcup_i A_i$ be a map. Write $I=\colim_j I_j$ as a $\kappa$-filtered union of its $\kappa$-small subsets, so that one obtains equivalences $\bigsqcup_i A_i\simeq\colim_j\bigsqcup_{i\in I_j} A_i$ and $\bigsqcup_i P_i\simeq \colim_j \bigsqcup_{i\in I_j} P_i$. As $G$ is $\kappa$-compact, there is some $j$ such that $s$ factors through the inclusion $\bigsqcup_{i\in I_{j}}A_i\into\bigsqcup_i A_i$. By descent, we obtain a pullback diagram
	\begin{equation*}
		\begin{tikzcd}
			\bigsqcup_{i\in I_j} P_i\arrow[d]\arrow[r] & \bigsqcup_i P_i\arrow[d]\\
			\bigsqcup_{i\in I_j} A_i\arrow[r] & \bigsqcup_i A_i,
		\end{tikzcd}
	\end{equation*}
	which implies that the pullback of $\bigsqcup_i P_i\to\bigsqcup_i A_i$ to $G$ is equivalent to the pullback of $\bigsqcup_{i\in I_j}P_i\to\bigsqcup_{i\in I_j} A_i$ to $G$. By again using descent, this pullback can be identified with the coproduct $\bigsqcup_{i\in I_j} P_i\times_{A_i} G_i$, where $G_i=G\times_{\bigsqcup_i A_i} A_i$. As $G_i$ is a subobject of $G$ and therefore $\kappa$-compact, the fibre product $P_i\times_{A_i} G_i$ is $\kappa$-compact as well. Since $I_j$ is $\kappa$-small, we conclude that also $\bigsqcup_{i\in I_j} P_i\times_{A_i} G_i$ is $\kappa$-compact, as desired.
\end{proof}

\begin{proposition}
	\label{prop:relativelyKappaCompactLocal}
	Let $\kappa$ be a $\BB$-regular cardinal. Then every object in $\Univ[\BB]^\kappa$ is already relatively $\kappa$-compact. In other words, the class of relatively $\kappa$-compact maps in $\BB$ is local.
\end{proposition}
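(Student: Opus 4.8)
The plan is to reduce the statement to the case of a $\kappa$-compact base and then argue by Čech descent. Since $\cpt{\kappa}$ is a local class it is in particular closed under base change, so it suffices to prove that if $A\in\BB$ is $\kappa$-compact and $p\colon P\to A$ lies in $\cpt{\kappa}$, then $P$ is $\kappa$-compact. Indeed, granting this, let $p\colon P\to A$ be an arbitrary map in $\cpt{\kappa}$, let $G\in\BB$ be $\kappa$-compact and let $s\colon G\to A$ be any map; then $s^\ast p\colon G\times_A P\to G$ again lies in $\cpt{\kappa}$, and applying the special case to $G$ and $s^\ast p$ shows that $G\times_A P$ is $\kappa$-compact. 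As $G$ and $s$ were arbitrary, $p$ is relatively $\kappa$-compact.

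So assume $A$ is $\kappa$-compact and $p\colon P\to A$ lies in $\cpt{\kappa}$. By the definition of $\cpt{\kappa}$ together with Lemma~\ref{lem:stabilityRelativelyKappaSmallSum}, there is a cover $q\colon A'\onto A$ such that $q^\ast p\colon A'\times_A P\to A'$ is relatively $\kappa$-compact. The first — and, I expect, most delicate — step is to refine this cover to one whose total space is $\kappa$-compact. Since $\BB$ is $\kappa$-accessible, write $A'\simeq\colim_j A'_j$ as a $\kappa$-filtered colimit with each $A'_j$ $\kappa$-compact, and factor each composite $A'_j\to A'\xrightarrow{q}A$ as an effective epimorphism followed by a monomorphism, $A'_j\onto U_j\into A$. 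Passing to the $\kappa$-filtered colimit over $j$ yields a factorisation $A'\to\colim_j U_j\into A$ of $q$ in which the first map is an effective epimorphism — effective epimorphisms being the left class of a factorisation system on $\BB$, hence closed under colimits in $\Fun(\Delta^1,\BB)$ — and the second is a monomorphism, being a $\kappa$-filtered colimit of subobjects of $A$. By uniqueness of the effective epimorphism–monomorphism factorisation of the cover $q$, this forces $\colim_j U_j\simeq A$; since $A$ is $\kappa$-compact, the identity of $A$ factors through some $U_{j_0}\into A$, so $U_{j_0}\simeq A$ and $A_0:=A'_{j_0}\onto A$ is a cover with $A_0$ $\kappa$-compact that moreover factors through $q$. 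Consequently the pullback $p_0\colon P_0:=A_0\times_A P\to A_0$ of $q^\ast p$ along $A_0\to A'$ is relatively $\kappa$-compact.

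Finally I would run Čech descent along $A_0\onto A$. Writing $A_0^\bullet$ for its Čech nerve, one has $A\simeq\colim_{\Delta^{\op}}A_0^\bullet$, and by universality of colimits $P\simeq A\times_A P\simeq\colim_{\Delta^{\op}}(A_0^\bullet\times_A P)$. Each $A_0^n$ is an $(n+1)$-fold fibre power of $A_0$ over $A$, hence a finite limit of $\kappa$-compact objects of $\BB$; since $\kappa$ is $\BB$-regular, $\kappa$-compact objects are stable under finite limits, so $A_0^n$ is $\kappa$-compact. Using the projection $A_0^n\to A_0$ and the fact that $p_0$ is relatively $\kappa$-compact, the object $A_0^n\times_A P\simeq A_0^n\times_{A_0}P_0$ is then $\kappa$-compact for every $n$. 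As $\kappa$ is uncountable, $\Delta^{\op}$ is $\kappa$-small, and $\kappa$-compact objects of the presentable $\infty$-category $\BB$ are stable under $\kappa$-small colimits; hence $P$, being the colimit over $\Delta^{\op}$ of the diagram $A_0^\bullet\times_A P$, is $\kappa$-compact. This finishes the plan; the one genuinely topos-theoretic input is the cover-refinement step of the second paragraph, where both the $\kappa$-accessibility of $\BB$ and the regularity of $\kappa$ are used.
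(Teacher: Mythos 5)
Your proof is correct, but it follows a genuinely different route from the one in the paper. The paper never reduces to a $\kappa$-compact base: after invoking Lemma~\ref{lem:stabilityRelativelyKappaSmallSum} to replace the given cover by a single relatively $\kappa$-compact map $\bigsqcup_i P_i\to\bigsqcup_i A_i$, it expresses the arrow $P\to A$ as a $\Delta^{\op}$-indexed colimit in $\Fun(\Delta^1,\BB)$ via \v Cech descent along the cover, and then proves that relatively $\kappa$-compact morphisms are closed under such colimits by reducing to pushouts (quoting \cite[Lemma~6.1.6.6]{htt}) and small coproducts (the same lemma again). You instead first use that $\cpt{\kappa}$ is pullback-stable to reduce to a $\kappa$-compact base $A$, then refine the cover to one with $\kappa$-compact total space --- using $\kappa$-accessibility of $\BB$, the functorial (effective epi, mono) factorisation, left-exactness of filtered colimits (so that a $\kappa$-filtered colimit of subobjects of $A$ is again a subobject), and $\kappa$-compactness of $A$ --- and finally run \v Cech descent on objects rather than on arrows, using stability of $\kappa$-compact objects under finite limits (condition~(3) of $\BB$-regularity) and under $\kappa$-small colimits ($\kappa$ regular and uncountable, so $\Delta^{\op}$ is $\kappa$-small). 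Both arguments pass through Lemma~\ref{lem:stabilityRelativelyKappaSmallSum}, which is where the subobject-stability clause of $\BB$-regularity enters. What the paper's argument buys is brevity and a slightly stronger intermediate statement (closure of relatively $\kappa$-compact maps under $\Delta^{\op}$-colimits in the arrow category), at the price of importing \cite[Lemma~6.1.6.6]{htt} and the generation of colimits by coproducts and pushouts; what yours buys is a cover-refinement statement of independent interest (any cover of a $\kappa$-compact object in a $\kappa$-accessible $\infty$-topos is refined by one with $\kappa$-compact total space), and a proof whose only external inputs are descent and the standard stability properties of $\kappa$-compact objects, though it uses the uncountability and finite-limit clauses of $\BB$-regularity more directly than the paper does. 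The only step I would ask you to spell out slightly more is the assertion that $\colim_j U_j\into A$ is a monomorphism, which needs the left-exactness of $\kappa$-filtered colimits together with cofinality of the diagonal $J\to J\times J$; as written it is correct but terse.
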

\begin{proof}
	Let $P\to A$ be an object in $\Univ[\BB]^\kappa(A)$. By definition, there is a cover $(s_i)\colon \bigsqcup_{i\in I} A_i\onto A$ such that $s_i^\ast P\to A_i$ is relatively $\kappa$-compact. By Lemma~\ref{lem:stabilityRelativelyKappaSmallSum}, the map $\bigsqcup_i P_i\to \bigsqcup_i A_i$ is relatively $\kappa$-compact. The result therefore follows once we show that relatively $\kappa$-compact maps are stable under $\Delta^{\op}$-indexed colimits in $\Fun(\Delta^1,\BB)$. By~\cite[Lemma~6.1.6.6]{htt} they are stable under pushouts, so we only need to consider the case of small coproducts, which again follows from Lemma~\ref{lem:stabilityRelativelyKappaSmallSum}.
\end{proof}
\begin{remark}
	In~\cite[Proposition~6.1.6.7]{htt}, Lurie shows that the class of relatively $\kappa$-compact maps in $\BB$ is local already when $\BB$ is $\kappa$-accessible and $\BB^{\cpt{\kappa}}$ is stable under finite limits in $\BB$. However, we failed to understand how Lurie derives this result without the additional assumption that $\BB^{\cpt{\kappa}}$ is also stable under subobjects in $\BB$. Therefore, we decided to reiterate Lurie's proof with this added assumption.
\end{remark}

Next, we need to establish that every $\BB$-regular cardinal is also $\Simp\BB$-regular. This will be a consequence of the following characterisation of $\kappa$-compact simplicial objects in $\BB$:
\begin{proposition}
	\label{prop:SimpBAccessibleCompactObjects}
	If $\kappa$ is a $\BB$-regular cardinal, then the $\infty$-topos $\Simp\BB$ is $\kappa$-accessible, and if $C$ is a simplicial object in $\BB$, the following are equivalent:
	\begin{enumerate}
		\item $C$ is $\kappa$-compact;
		\item $C_n\in \BB^{\cpt{\kappa}}$ for all $n\geq 0$;
		\item $C$ is contained in the smallest subcategory of $\Simp\BB$ that is spanned by objects of the form $\Delta^n\otimes G$ for $n\geq 0$ and $G\in\BB^{\cpt{\kappa}}$ and that is closed under $\kappa$-small colimits;
	\end{enumerate}
\end{proposition}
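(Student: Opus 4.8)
The plan is to exploit the adjoint triple $\Delta^n\otimes(-)\dashv\ev_n\dashv R_n$ around evaluation at $[n]$, where $\ev_n\colon\Simp\BB\to\BB$ is restriction along $\{[n]\}\into\Delta^\op$, its left adjoint $\Delta^n\otimes(-)$ is left Kan extension (so that $\map{\Simp\BB}(\Delta^n\otimes G,C)\simeq\map{\BB}(G,C_n)$), and its right adjoint $R_n$ is right Kan extension, with $(R_nD)_m$ a finite power of $D$. Since colimits in $\Simp\BB$ are computed levelwise, $\ev_n$ preserves all colimits; and since finite limits commute with filtered colimits in the $\infty$-topos $\BB$, the functor $R_n$ preserves filtered colimits. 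From this $(3)\Rightarrow(1)$ is immediate: for $G\in\BB^{\cpt{\kappa}}$ the functor $\map{\Simp\BB}(\Delta^n\otimes G,-)\simeq\map{\BB}(G,\ev_n(-))$ preserves $\kappa$-filtered colimits, so $\Delta^n\otimes G$ is $\kappa$-compact, and $\kappa$-compact objects are stable under $\kappa$-small colimits because $\kappa$ is regular. Dually $(1)\Rightarrow(2)$: if $C$ is $\kappa$-compact then $\map{\BB}(C_n,-)\simeq\map{\Simp\BB}(C,R_n(-))$ preserves $\kappa$-filtered colimits because $R_n$ does, so $C_n\in\BB^{\cpt{\kappa}}$.

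\textbf{The implication $(2)\Rightarrow(3)$.}
Write $\mathcal A\subseteq\Simp\BB$ for the subcategory described in (3). Given $C$ with all $C_n\in\BB^{\cpt{\kappa}}$, I would run the skeletal filtration $C\simeq\colim_{n\ge 0}\operatorname{sk}_nC$, which is a $\kappa$-small colimit since $\kappa$ is uncountable, and prove $\operatorname{sk}_nC\in\mathcal A$ by induction on $n$. The base case is $\operatorname{sk}_0C\simeq\Delta^0\otimes C_0\in\mathcal A$, and for $n\ge 1$ one uses the standard pushout square
\begin{equation*}
\begin{tikzcd}
(\Delta^n\otimes L_nC)\sqcup_{\partial\Delta^n\otimes L_nC}(\partial\Delta^n\otimes C_n)\arrow[r]\arrow[d] & \operatorname{sk}_{n-1}C\arrow[d]\\
\Delta^n\otimes C_n\arrow[r] & \operatorname{sk}_nC,
\end{tikzcd}
\end{equation*}
where $L_nC=\colim_{[n]\twoheadrightarrow[k],\,k<n}C_k$ is the $n$-th latching object. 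The key points are: $L_nC$ is a finite colimit of the $\kappa$-compact objects $C_k$ ($k<n$), hence again $\kappa$-compact; and $\partial\Delta^n\otimes D$ is a finite colimit of objects $\Delta^k\otimes D$ (since $K\mapsto K\otimes D$ is cocontinuous and $\partial\Delta^n$ is a finite colimit of simplices), hence lies in $\mathcal A$ whenever $D\in\BB^{\cpt{\kappa}}$. Thus all four corners of the square lie in $\mathcal A$, so the finite colimit $\operatorname{sk}_nC$ does, and then so does the $\kappa$-small colimit $C$. This closes the cycle $(1)\Leftrightarrow(2)\Leftrightarrow(3)$.

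\textbf{$\kappa$-accessibility of $\Simp\BB$.}
Here I would first note that $\Simp\BB=\Fun(\Delta^\op,\BB)$ is presentable because $\BB$ is, so $(\Simp\BB)^{\cpt{\kappa}}$ is essentially small, and by the equivalences just established it coincides with $\mathcal A$. Next, the objects $\Delta^n\otimes G$ with $G\in\BB^{\cpt{\kappa}}$ generate $\Simp\BB$ under colimits: $\BB$ is $\kappa$-accessible, so every object of $\BB$ is a colimit of $\kappa$-compact ones, $\Delta^n\otimes(-)$ is cocontinuous, and the skeletal/latching presentation above then writes an arbitrary $C\in\Simp\BB$ as a colimit of such objects. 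Consequently the canonical cocontinuous functor $\Ind_\kappa(\mathcal A)\to\Simp\BB$ is fully faithful (its restriction to $\mathcal A$ is fully faithful and lands in $\kappa$-compact objects), and its essential image, being closed under colimits and containing a family that generates $\Simp\BB$ under colimits, is all of $\Simp\BB$; hence it is an equivalence and $\Simp\BB$ is $\kappa$-accessible.

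\textbf{Main obstacle.}
The only genuinely non-formal input is the latching presentation of the skeletal filtration of a simplicial object, together with the observation that the latching objects of a levelwise $\kappa$-compact simplicial object are themselves $\kappa$-compact; once these are in place the argument is bookkeeping with the adjoint triple $\Delta^n\otimes(-)\dashv\ev_n\dashv R_n$ and the standard relationship between $\kappa$-accessibility and generation under colimits.
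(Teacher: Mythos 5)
Your proof is correct, but it takes a genuinely different route from the paper's, so let me compare. The paper closes the cycle in the opposite direction: (2)$\Rightarrow$(1) is immediate from the fact that $\Delta$ is a $\kappa$-small category (so levelwise $\kappa$-compact implies $\kappa$-compact in the functor $\infty$-category, \cite[Proposition~5.3.4.13]{htt}); (3)$\Rightarrow$(2) follows by evaluating levelwise, since $\Delta^n_k$ is a finite set; and (1)$\Rightarrow$(3) is the standard retract trick — write $C$ as a small colimit of the generators $\Delta^n\otimes G$ with $G\in\BB^{\cpt{\kappa}}$, reorganise it as a $\kappa$-filtered colimit of $\kappa$-small such colimits, and use $\kappa$-compactness to exhibit $C$ as a retract, which is a countable and hence $\kappa$-small colimit. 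Note that uncountability of $\kappa$ enters there exactly where it enters your argument, namely in making a countable colimit (the retract diagram for the paper, the skeletal filtration $C\simeq\colim_n\operatorname{sk}_nC$ for you) $\kappa$-small. Your two genuinely new ingredients are: the direct proof of (1)$\Rightarrow$(2) via the right adjoint $R_n$ of $\ev_n$, which is a clean argument the paper does not need; and the constructive proof of (2)$\Rightarrow$(3) via the latching-object pushout squares, which avoids retracts altogether and shows directly that the $\kappa$-compacts coincide with the subcategory in (3). The price is that this imports $\infty$-categorical Reedy theory — the pushout square expressing $\operatorname{sk}_nC$ in terms of $\operatorname{sk}_{n-1}C$, $C_n$ and $L_nC$ is true for simplicial objects in any cocomplete $\infty$-category, but it is a heavier input than anything the paper uses and should be given a reference or a proof; you correctly flag it as the only non-formal step. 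For $\kappa$-accessibility the paper simply observes that the inclusion $\Simp\BB\into\PSh(\Delta\times\BB^{\cpt{\kappa}})$ preserves $\kappa$-filtered colimits, whereas your argument via generation by the $\kappa$-compact objects $\Delta^n\otimes G$ and the equivalence $\Ind_\kappa$ of the subcategory in (3) with $\Simp\BB$ is a standard, slightly longer alternative; both are fine.
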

\begin{proof}
	Remark~\ref{rem:BPresentationCompactObjects} implies that the inclusion $\Simp\BB\into\PSh(\Delta\times\BB^{\cpt{\kappa}})$ commutes with $\kappa$-filtered colimits, which immediately implies that $\Simp\BB$ is $\kappa$-accessible. Moreover, since $\Delta$ is a $\kappa$-small $\infty$-category, every simplicial object in $\BB$ that is level-wise $\kappa$-compact is also $\kappa$-compact in $\Simp\BB$~\cite[Proposition~5.3.4.13]{htt}, hence~(2) implies~(1).  If $C$ satisfies~(3), the fact that for every $k\geq 0$ the functor $(-)_k$ commutes with small colimits implies that $C_k$ is contained in the smallest full subcategory of $\BB$ that contains all objects of the form $\Delta^n_k\times G$ for $G\in\BB^{\cpt{\kappa}}$ and $n\geq 0$ and that is closed under $\kappa$-small colimits. Since $\Delta^n_k$ is a finite set, this implies that $C_k$ is $\kappa$-compact, hence~(2) follows. Finally, suppose that $C$ is $\kappa$-compact. We may write $C$ as a small colimit of objects of the form $\Delta^n\otimes G$ for $n\geq 0$ and $G\in\BB^{\cpt{\kappa}}$ and therefore by~\cite[Corollary~4.2.3.10]{htt} as a $\kappa$-filtered colimit $C\simeq\colim_i C^i$ where each $C^i$ is a $\kappa$-small colimits of objects of the form $\Delta^n\otimes G$. As $C$ is $\kappa$-compact, there is some $i_0$ such that the identity on $C$ factors through $C^{i_0}\to\I{C}$. In other words, $C$ is a retract of $C^{i_0}$. As retracts are countable and therefore a fortiori $\kappa$-small colimits,~(3) follows.
\end{proof}

\begin{corollary}
	\label{cor:BRegularImpliesSimpBRegular}
	If $\kappa$ is a $\BB$-regular cardinal, then $\kappa$ is $\Simp\BB$-regular as well. Moreover, a map in $\Simp\BB$ is relatively $\kappa$-compact if and only if it is level-wise given by a relatively $\kappa$-compact morphism in $\BB$.\qed
\end{corollary}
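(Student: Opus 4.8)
The plan is to check the three defining conditions of $\Simp\BB$-regularity and then to establish the levelwise description of relatively $\kappa$-compact maps. The condition that $\kappa$ be regular and uncountable depends only on $\kappa$, and the accessibility of $\Simp\BB$ is the first assertion of Proposition~\ref{prop:SimpBAccessibleCompactObjects}. For the remaining condition, one invokes the equivalence between $(1)$ and $(2)$ in that proposition: a simplicial object of $\BB$ is $\kappa$-compact if and only if it is levelwise $\kappa$-compact. Since finite limits and subobjects in $\Simp\BB=\Fun(\Delta^\op,\BB)$ are computed levelwise, and $\BB^{\cpt{\kappa}}$ is stable under finite limits and subobjects by $\BB$-regularity, the class $(\Simp\BB)^{\cpt{\kappa}}$ inherits these stability properties; hence $\kappa$ is $\Simp\BB$-regular.

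For the levelwise characterisation I would argue both implications directly, using throughout that both finite limits and $\kappa$-compactness in $\Simp\BB$ are detected levelwise. If $f\colon C\to D$ is levelwise relatively $\kappa$-compact and $E\in\Simp\BB$ is $\kappa$-compact with a map $E\to D$, then $E$ is levelwise $\kappa$-compact, so $(E\times_D C)_n=E_n\times_{D_n}C_n$ is $\kappa$-compact for every $n$; hence $E\times_D C$ is $\kappa$-compact and $f$ is relatively $\kappa$-compact.

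Conversely, suppose $f\colon C\to D$ is relatively $\kappa$-compact in $\Simp\BB$, fix $n\geq 0$, and let $g\colon G\to D_n$ be a map with $G\in\BB^{\cpt{\kappa}}$. I would test $f_n$ against $g$ by transposing $g$ to a map $\Delta^n\otimes G\to D$ along the adjunction between $\ev_n\colon\Simp\BB\to\BB$ and its left adjoint $G\mapsto\Delta^n\otimes G$. Since $\Delta^n\otimes G$ is levelwise a finite coproduct of copies of $G$, it is $\kappa$-compact, so $(\Delta^n\otimes G)\times_D C$ is $\kappa$-compact and therefore levelwise $\kappa$-compact. Computing its $n$-th level via universality of coproducts identifies it with $\coprod_{\phi\in\Hom_\Delta([n],[n])}(G\times_{D_n}C_n)$, where the summand indexed by $\phi$ maps to $D_n$ through $\phi^\ast\circ g$; in particular the summand indexed by $\id_{[n]}$ is $G\times_{D_n,g}C_n$. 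It then remains to observe that a coproduct summand of a $\kappa$-compact object of $\BB$ is again $\kappa$-compact: this summand is the pullback of $\coprod_\phi(G\times_{D_n}C_n)$ along the inclusion of a point $\{\id\}\hookrightarrow\Hom_\Delta([n],[n])$ into the discrete object $\Hom_\Delta([n],[n])$, which is finite and hence $\kappa$-compact, and $\BB^{\cpt{\kappa}}$ is closed under finite limits by $\BB$-regularity. Thus $f_n$ is relatively $\kappa$-compact.

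The only step that requires a moment's thought is this last one, isolating a single coproduct summand, which is precisely where one uses that the $\kappa$-compact objects of $\BB$ are closed under finite limits rather than merely under $\kappa$-small colimits; the rest is routine levelwise bookkeeping supported by Proposition~\ref{prop:SimpBAccessibleCompactObjects}.
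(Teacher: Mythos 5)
Your proof is correct and follows the route the paper leaves implicit: the corollary is stated with a \qed as an immediate consequence of Proposition~\ref{prop:SimpBAccessibleCompactObjects}, and your verification of $\Simp\BB$-regularity via the levelwise description of $\kappa$-compact simplicial objects, together with the levelwise characterisation of relative $\kappa$-compactness by testing against $\Delta^n\otimes G$ and isolating the identity summand using the finite-limit (equivalently subobject) stability of $\BB^{\cpt{\kappa}}$, is exactly the intended bookkeeping.
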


\begin{lemma}
	\label{lem:stabilityKappaCompactInternalHom}
	If $\kappa$ is a $\BB$-regular cardinal and if $C$ is a $\kappa$-compact simplicial object in $\BB$, then $C^{K}$ is $\kappa$-compact for every $\omega$-compact simplicial $\infty$-groupoid $K$.
\end{lemma}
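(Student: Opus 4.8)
The plan is to reduce to the level-wise characterisation of $\kappa$-compact simplicial objects supplied by Proposition~\ref{prop:SimpBAccessibleCompactObjects}: it suffices to prove that $(C^K)_m$ is a $\kappa$-compact object of $\BB$ for every $m\geq 0$. Since $C$ is $\kappa$-compact, each $C_j$ lies in $\BB^{\cpt{\kappa}}$, and since $\kappa$ is $\BB$-regular the subcategory $\BB^{\cpt{\kappa}}\subset\BB$ is closed under finite limits and retracts; so it will be enough to exhibit $(C^K)_m$ as a retract of a finite limit of objects among the $C_j$.

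I would first collect the formal input. The $\infty$-topos $\Simp\BB=\Fun(\Delta^{\op},\BB)$ is cartesian closed, and $C^K=\iFun{\const_{\BB}(K)}{C}$, where $\const_{\BB}\colon\Simp\SS\to\Simp\BB$ is postcomposition with the constant sheaf functor; as $\const_{\BB}\colon\SS\to\BB$ preserves colimits and finite products, so does $\const_{\BB}\colon\Simp\SS\to\Simp\BB$, and the internal-hom functor $\iFun{-}{C}\colon(\Simp\BB)^{\op}\to\Simp\BB$ carries colimits to limits and retracts to retracts. Moreover, for each $m$ the evaluation functor $(-)_m\colon\Simp\BB\to\BB$ admits a left adjoint $F_m$, and one checks that $F_m(X)\simeq c(X)\times\const_{\BB}(\Delta^m)$, where $c(X)$ denotes the constant simplicial object on $X\in\BB$. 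Finally, an $\omega$-compact object $K$ of $\Simp\SS=\PSh(\Delta)$ is a retract of a finite colimit $\colim_{i\in I}\Delta^{n_i}$ of representables, and $\Delta^m\times\Delta^n$, being the nerve of the finite poset $[m]\times[n]$, is a finite colimit $\colim_{\alpha\in J}\Delta^{k_\alpha}$ of representables in $\PSh(\Delta)$.

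Now for the computation: for $X\in\BB$, using $F_m\dashv(-)_m$, the formula for $F_m$, the fact that $\const_{\BB}$ preserves products and colimits, and that products distribute over colimits in $\Simp\BB$, one obtains
\[
\map{\BB}(X,(C^{\Delta^n})_m)\simeq\map{\Simp\BB}\bigl(c(X)\times\const_{\BB}(\Delta^m\times\Delta^n),C\bigr)\simeq\map{\BB}\bigl(X,\textstyle\lim_{\alpha\in J^{\op}}C_{k_\alpha}\bigr),
\]
where in the second equivalence one rewrites $c(X)\times\const_{\BB}(\Delta^m\times\Delta^n)\simeq\colim_{\alpha\in J}F_{k_\alpha}(X)$ and applies $F_{k_\alpha}\dashv(-)_{k_\alpha}$. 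Thus $(C^{\Delta^n})_m$ is a finite limit of objects $C_{k_\alpha}\in\BB^{\cpt{\kappa}}$, and therefore lies in $\BB^{\cpt{\kappa}}$. Writing $K$ as a retract of $\colim_{i\in I}\Delta^{n_i}$ and invoking the properties of $\const_{\BB}$ and $\iFun{-}{C}$ above exhibits $C^K$ as a retract of $\lim_{i\in I^{\op}}C^{\Delta^{n_i}}$, so $(C^K)_m$ is a retract of the finite limit $\lim_{i\in I^{\op}}(C^{\Delta^{n_i}})_m$ of objects of $\BB^{\cpt{\kappa}}$, hence $(C^K)_m\in\BB^{\cpt{\kappa}}$. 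By Proposition~\ref{prop:SimpBAccessibleCompactObjects} this shows $C^K$ is $\kappa$-compact. The one non-formal step is the identification of $(C^{\Delta^n})_m$ as a finite limit of the $C_j$'s, i.e.\ unwinding the cartesian closed structure of $\Simp\BB$ together with the combinatorics of the \emph{prism} $\Delta^m\times\Delta^n$; everything else is a direct consequence of $\BB$-regularity and the cited characterisation of $\kappa$-compact simplicial objects.
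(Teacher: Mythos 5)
Your proposal is correct and follows essentially the same route as the paper's proof: reduce $K$ to a retract of a finite colimit of simplices, use the decomposition of the prism $\Delta^m\times\Delta^n$ into finitely many simplices to exhibit each level of $C^{\Delta^n}$ as a finite limit of the objects $C_l\in\BB^{\cpt{\kappa}}$, and conclude via the level-wise criterion of Proposition~\ref{prop:SimpBAccessibleCompactObjects}. The only difference is cosmetic bookkeeping — you make the adjunction $F_m\dashv(-)_m$ explicit where the paper instead invokes the identity $(C^{\Delta^n})_k\simeq(C^{\Delta^n\times\Delta^k})_0$ together with closure of $\kappa$-compacts in $\Simp\BB$ under finite limits.
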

\begin{proof}
	As $\kappa$-compact objects in $\Simp\BB$ are stable under retracts and as every $\omega$-compact simplicial $\infty$-groupoid is a retract of a finite colimit of $n$-simplices, we may assume without loss of generality that $K$ is a finite colimit of $n$-simplices. Therefore $C^K$ is a finite limit of objects of the form $C^{\Delta^n}$, so that Corollary~\ref{cor:BRegularImpliesSimpBRegular} implies that we may reduce to the case $K=\Delta^n$. Now on account of the identity $(C^{\Delta^n})_k\simeq (C^{\Delta^n\times\Delta^k})_0$ and by using the fact that $\Delta^n\times\Delta^k$ is again $\omega$-compact, we can identify $(C^{\Delta^n})_k$ as a finite limit of objects of the form $(C^{\Delta^l})_0\simeq C_l$, which shows that $(C^{\Delta^n})_k$ is $\kappa$-compact. By Proposition~\ref{prop:SimpBAccessibleCompactObjects}, one concludes that $C^{\Delta^n}$ is $\kappa$-compact in $\Simp\BB$.
\end{proof}

\begin{lemma}
	\label{lem:SimplicesInternallyCompact}
	For every $\omega$-compact simplicial $\infty$-groupoid $K$, the functor $(-)^K\colon\Simp\BB\to\Simp\BB$ commutes with filtered colimits.
\end{lemma}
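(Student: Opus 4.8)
The plan is to reduce to the case $K=\Delta^n$ and then argue levelwise. First I would record two formal stability properties of the assignment $K\mapsto (-)^K$ from $(\Simp\SS)^{\op}$ to $\Fun(\Simp\BB,\Simp\BB)$. Since $(-)^K=\iFun{\const_\BB K}{-}$ is right adjoint to $\const_\BB K\times(-)$ in the cartesian closed $\infty$-topos $\Simp\BB$ and $\const_\BB$ preserves finite products, this assignment carries colimits in $K$ to limits of endofunctors; in particular, if $K\simeq\colim_{\alpha\in I}\Delta^{n_\alpha}$ with $I$ finite then $(-)^K\simeq\lim_{\alpha}(-)^{\Delta^{n_\alpha}}$, and if $K$ is a retract of $K'$ then $(-)^K$ is a retract of $(-)^{K'}$. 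Now every $\omega$-compact object of $\Simp\SS=\PSh(\Delta)$ is a retract of a finite colimit of representables $\Delta^n$; since filtered colimits in the $\infty$-topos $\Simp\BB$ commute with finite limits, and since a retract of a functor that preserves filtered colimits again preserves them (the comparison map is, in the arrow $\infty$-category, a retract of an equivalence), it suffices to treat $K=\Delta^n$.

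For $K=\Delta^n$ I would check the statement one level at a time, using that colimits in $\Simp\BB$ are computed pointwise: it is enough to show that for each $m\geq 0$ the functor $C\mapsto (C^{\Delta^n})_m\colon\Simp\BB\to\BB$ preserves filtered colimits. Invoking the identity $(C^{\Delta^n})_m\simeq (C^{\Delta^n\times\Delta^m})_0$ (valid for arbitrary simplicial objects, exactly as used in the proof of Lemma~\ref{lem:stabilityKappaCompactInternalHom}), this functor is $C\mapsto (C^{\Delta^n\times\Delta^m})_0$. The product $\Delta^n\times\Delta^m$ is a finite simplicial set, hence a finite colimit of representables $\Delta^p$ in $\Simp\SS$; therefore $C^{\Delta^n\times\Delta^m}$ is a finite limit of the $C^{\Delta^p}$, and evaluating at $0$ exhibits $C\mapsto (C^{\Delta^n\times\Delta^m})_0$ as a finite limit of the evaluation functors $\ev_p\colon\Simp\BB\to\BB$, $C\mapsto C_p$. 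Each $\ev_p$ preserves all colimits, and finite limits commute with filtered colimits in the $\infty$-topos $\BB$; hence the composite preserves filtered colimits, which completes the proof.

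I do not expect a serious obstacle here: the mathematical content is entirely contained in the reduction to representables together with the elementary fact that a cotensor by a finite simplicial set is, levelwise, a finite limit of evaluations. The only points requiring a little care are the bookkeeping in the first step — that $(-)^{(-)}$ really is a two-variable right adjoint so that colimits in the exponent turn into limits, and that passing to retracts of functors preserves the property of commuting with filtered colimits — together with the left-exactness of filtered colimits in $\BB$ (equivalently in $\Simp\BB$), which is the standard fact for $\infty$-topoi. No input beyond the identification of the compact objects of $\PSh(\Delta)$ is needed.
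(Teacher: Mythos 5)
Your proof is correct and follows essentially the same route as the paper: reduce to $K=\Delta^n$ via the fact that $\omega$-compact simplicial $\infty$-groupoids are retracts of finite colimits of simplices, then check levelwise using $(C^{\Delta^n})_m\simeq (C^{\Delta^n\times\Delta^m})_0$ to express everything as a finite limit of evaluation functors, and conclude since filtered colimits commute with finite limits in $\BB$. The only difference is that you spell out the bookkeeping (exponent-to-limit conversion, stability under retracts) that the paper leaves implicit.
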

\begin{proof}
	As every $\omega$-compact simplicial $\infty$-groupoid $K$ is a retract of a finite colimit of $n$-simplices, we may assume without loss of generality $K=\Delta^n$. As it suffices to show that$(-)^{\Delta^n}_k$ commutes with filtered colimits for all $k\geq 0$, the same argumentation as in the proof of Lemma~\ref{lem:stabilityKappaCompactInternalHom} shows that we may reduce to showing that $(-)^{\Delta^n}_0$ commutes with filtered colimits. On account of the equivalence $(-)^{\Delta^n}_0\simeq (-)_n$, this is immediate.
\end{proof}

\begin{lemma}
	\label{lem:kappaCompactFibrantReplacement}
	Let $\kappa$ be a $\BB$-regular cardinal, let $C$ be a $\kappa$-compact simplicial object in $\Simp\BB$ and let $C\to L(C)$ be the unit of the adjunction $(L\dashv i)\colon \Cat(\BB)\leftrightarrows\Simp\BB$. Then $L(C)$ is $\kappa$-compact as well.
\end{lemma}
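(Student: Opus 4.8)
The plan is to reduce the statement to the assertion that the inclusion $i\colon\Cat(\BB)\into\Simp\BB$ preserves $\kappa$-filtered colimits. Granting this, the lemma follows formally: the adjunction $L\dashv i$ gives an equivalence $\map{\Cat(\BB)}(L(C),-)\simeq\map{\Simp\BB}(C,i(-))$ of functors $\Cat(\BB)\to\SS$, and since $C$ is $\kappa$-compact in $\Simp\BB$ the functor $\map{\Simp\BB}(C,-)$ preserves $\kappa$-filtered colimits; if $i$ does too, then the composite preserves $\kappa$-filtered colimits, so $L(C)$ is $\kappa$-compact in $\Cat(\BB)$.

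So the task becomes showing that $\Cat(\BB)$ is closed under $\kappa$-filtered colimits in $\Simp\BB$. Recall that $\Cat(\BB)$ is the full subcategory of those $X\in\Simp\BB$ that are internally local with respect to $s\in\{I^2\into\Delta^2,\ E^1\to 1\}$, i.e.\ local with respect to $s\times\id_d$ for every $d\in\Simp\BB$. The first step is to cut down the test objects $d$: for $s\colon A\to B$ the assignment $d\mapsto(s\times\id_d)$ defines a colimit-preserving functor $\Simp\BB\to\Fun(\Delta^1,\Simp\BB)$ (since $A\times(-)$ and $B\times(-)$ preserve colimits), and locality of a fixed $X$ with respect to a colimit of arrows follows from locality with respect to each of the arrows; hence the collection of $d$ for which $X$ is local with respect to $s\times\id_d$ is closed under colimits. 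Since $\Simp\BB$ is generated under colimits by the objects $\Delta^n\otimes G$ with $n\geq 0$ and $G\in\BB^{\cpt{\kappa}}$ (Proposition~\ref{prop:SimpBAccessibleCompactObjects}), it suffices to take these as test objects, so that $X\in\Cat(\BB)$ if and only if
\begin{equation*}
\map{\Simp\BB}\bigl(\mathrm{cod}(s)\times(\Delta^n\otimes G),\,X\bigr)\longrightarrow\map{\Simp\BB}\bigl(\mathrm{dom}(s)\times(\Delta^n\otimes G),\,X\bigr)
\end{equation*}
is an equivalence for all $s$, all $n\geq 0$ and all $G\in\BB^{\cpt{\kappa}}$.

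The second step is the observation that each of the objects $\mathrm{dom}(s)\times(\Delta^n\otimes G)$ and $\mathrm{cod}(s)\times(\Delta^n\otimes G)$ is $\kappa$-compact in $\Simp\BB$: since $I^2,\Delta^2,E^1,\Delta^0$ are finite simplicial sets, each of these objects is of the form $K\otimes G$ for a finite simplicial set $K$, hence is levelwise a finite coproduct of copies of $G$ and therefore levelwise $\kappa$-compact in $\BB$, so $\kappa$-compact in $\Simp\BB$ by Proposition~\ref{prop:SimpBAccessibleCompactObjects}. Consequently, for a $\kappa$-filtered diagram $(X_i)$ in $\Cat(\BB)$ with colimit $X$ in $\Simp\BB$, mapping out of these objects commutes with the colimit, and a $\kappa$-filtered colimit of equivalences is again an equivalence; so the displayed map is an equivalence for $X$, whence $X\in\Cat(\BB)$.

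The argument is essentially routine given the preparatory results; the only point requiring care is the first reduction step, namely replacing the (a priori proper class of) test objects $d$ in the definition of internal locality by the small $\kappa$-compact generating family $\{\Delta^n\otimes G:G\in\BB^{\cpt{\kappa}}\}$, which rests on $(-)\times d$ being colimit-preserving together with the characterisation of $\kappa$-compact simplicial objects in Proposition~\ref{prop:SimpBAccessibleCompactObjects}. Alternatively, one could prove the lemma by carrying out an explicit small-object-argument construction of $L$ and tracking $\kappa$-compactness through it, making use of Lemmas~\ref{lem:stabilityKappaCompactInternalHom} and~\ref{lem:SimplicesInternallyCompact}.
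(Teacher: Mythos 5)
Your reduction of internal locality to the small family of test maps $s\times\id_{\Delta^n\otimes G}$, and the resulting closure of $\Cat(\BB)$ under $\kappa$-filtered colimits in $\Simp\BB$, are correct as far as they go (this closure in fact already follows from $\Cat(\BB)$ being an $\omega$-accessible localisation of $\Simp\BB$, which the paper invokes elsewhere; also, a minor slip: $E^1$ is not levelwise a finite set, only levelwise $\omega$-compact, so your test objects are finite colimits rather than finite coproducts of copies of $G$ — harmless). The real problem is the conclusion you draw: your argument shows that $L(C)$ is $\kappa$-compact as an object of $\Cat(\BB)$, but that is not what the lemma asserts. Here ``$\kappa$-compact as well'' means $\kappa$-compact in $\Simp\BB$ (equivalently, levelwise $\kappa$-compact, by Proposition~\ref{prop:SimpBAccessibleCompactObjects}); this is exactly how the lemma is used in the proof of Proposition~\ref{prop:characterisationKappaSmallCategories}, where it supplies the implication (3)$\Rightarrow$(4), namely that a $\kappa$-small colimit in $\Cat(\BB)$ of objects $\Delta^n\otimes G$ is $\kappa$-compact \emph{in $\Simp\BB$}. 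Compactness in $\Cat(\BB)$ is statement (2) of that proposition, whose equivalence with (4) is precisely what is being established there, so appealing to it would be circular; and your adjunction argument cannot be upgraded, since for a reflective subcategory closed under $\kappa$-filtered colimits an object that is $\kappa$-compact in the subcategory need not be $\kappa$-compact in the ambient category — mapping \emph{out of} $i(X)$ does not interact with the adjunction $L\dashv i$.

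What is genuinely needed, and what your proof does not supply, is a size bound on the localisation itself: applying the reflector to a levelwise $\kappa$-compact simplicial object must again produce a levelwise $\kappa$-compact one. This is the content of the paper's proof via the explicit small object argument: $L(C)$ is exhibited as a countable colimit in $\Simp\BB$ of objects $C(k)$ obtained from $C$ by pushouts involving tensors and cotensors with the finite simplicial objects $I^2$, $\Delta^2$, $E^1$, each $C(k)$ is $\kappa$-compact in $\Simp\BB$ by Corollary~\ref{cor:BRegularImpliesSimpBRegular} and Lemma~\ref{lem:stabilityKappaCompactInternalHom} (with Lemma~\ref{lem:SimplicesInternallyCompact} guaranteeing the construction applies), and the countable colimit is again $\kappa$-compact because $\kappa$ is uncountable and regular. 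The route you mention only in passing at the end is therefore not an optional alternative but the essential argument.
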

\begin{proof}
	We will make use of the $\infty$-categorical version of the small object argument as developed in~\cite[\S~2.3]{Anel2020a}. For the convenience of the reader, we briefly explain the setup, at least in the special case that is relevant for this proof. Suppose that $S$ is a finite set of maps in $\Simp\SS$ such that for every map $s\colon K\to L$ in $S$ the functors $(-)^K$ and $(-)^L$ commute with filtered colimits in $\Simp\BB$. Let $(\LL,\RR)$ be the factorisation system in $\Simp\BB$ that is internally generated by the set $S$. To any object $C\in \Simp\BB$, we can now assign a sequence
	\begin{equation*}
		\mathbb N\to \Simp\BB,\quad k\mapsto C(k)
	\end{equation*}
	by setting $C(0)=C$ and by recursively defining a map $C(k)\to C(k+1)$ via the pushout
	\begin{equation*}
		\begin{tikzcd}
			\bigsqcup_{s\colon K\to L} L\otimes C(k)^L\sqcup_{K\otimes C(k)^L} K\otimes C(k)^K\arrow[d]\arrow[r] & C(k)\arrow[d]\\
			\bigsqcup_{s\colon K\to L} L\otimes C(k)^K\arrow[r] & C(k+1) 
		\end{tikzcd}
	\end{equation*}
	in which the coproduct ranges over all maps $s\colon K\to L$ in $S$. Then~\cite[Theorem~2.3.4]{Anel2020a} shows that the object $\colim_k C(k)$ is internally local with respect to the maps in $S$, i.e.\ contained in $\Over{\RR}{1}$, and that furthermore the map $C\to \colim_k C(k)$ is contained in $\LL$, so that it is equivalent to the unit of the adjunction $\Over{\RR}{1}\leftrightarrows\Simp\BB$ evaluated at $C\in\Simp\BB$.
	
	Now if we let $S$ be the set $\{E^1\to 1, I^2\into \Delta^2\}$, Lemma~\ref{lem:SimplicesInternallyCompact} shows that we are in the above situation. Consequently, if $C$ is a $\kappa$-compact object in $\Simp\BB$, the $\BB$-category $L(C)$ can be computed as a countable colimit of the objects $C(k)$ as constructed above. Hence it suffices to show that each $C(k)$ is $\kappa$-compact, which easily follows from $\kappa$ being $\Simp\BB$-regular (Corollary~\ref{cor:BRegularImpliesSimpBRegular}) and Lemma~\ref{lem:stabilityKappaCompactInternalHom}.
\end{proof}

\begin{proof}[{Proof of Proposition~\ref{prop:characterisationKappaSmallCategories}}]
	We first show that~(2)--(5) are equivalent.
	By combining Proposition~\ref{prop:SimpBAccessibleCompactObjects} with the Segal conditions, one finds that~(4) and~(5) are equivalent. Moreover, since $\Cat(\BB)$ is an $\omega$-accessible localisation of $\Simp\BB$, the localisation functor preserves $\kappa$-compact objects, which shows that~(4) implies~(2). Suppose now that $\I{C}$ is a $\kappa$-compact object in $\Cat(\BB)$. As in the proof of Proposition~\ref{prop:SimpBAccessibleCompactObjects}, we can find a $\kappa$-filtered $\infty$-category $\JJ$ such that $\I{C}\simeq \colim_{j\in \JJ} \I{C}^{j}$ where each $\I{C}^j$ is a $\kappa$-small colimit of objects of the form $\Delta^n\otimes G$, where $n\geq 0$ and $G\in\BB^{\cpt{\kappa}}$. Hence $\I{C}$ is a retract of some $\I{C}^j$, so that~(3) holds. Lastly, since we can compute any small colimit in $\Cat(\BB)$ by first taking the colimit of the underlying diagram in $\Simp\BB$ and then applying the reflector $L\colon \Simp\BB\to\Cat(\BB)$, Lemma~\ref{lem:kappaCompactFibrantReplacement} implies that every $\kappa$-small colimit in $\Cat(\BB)$ of objects of the form $\Delta^n\otimes G$ with $n\geq 0$ and $G\in \BB^{\cpt{\kappa}}$ is also $\kappa$-compact in $\Simp\BB$. Thus~(3) implies~(4).
	
	Finally, since $\ICat_{\BB}^\kappa$ is closed under both $\ILConst_{\CatS^\kappa}$- and $\Univ[\BB]^\kappa$-colimits and since $\Delta^n\otimes G$ can be regarded as the colimit of the constant $G$-indexed diagram with value $\Delta^n$, it is clear that~(3) implies~(1). To show the converse, let $\I{V}$ be the internal class that is spanned by those $\Over{\BB}{A}$-categories (for $A\in\BB^{\cpt{\kappa}}$) that satisfy the $\Over{\BB}{A}$-categorical analogue of~(3). Note that if $\bigsqcup_i A_i\onto 1$ is a cover by $\kappa$-compact objects and if $\I{D}$ is a $\BB$-category such that $\pi_{A_i}^\ast\I{D}$ satisfies the $\Over{\BB}{{A_i}}$-categorical analogue of condition~(3), then $\I{D}$ satisfies~(3): in fact, since we have already established that~(3) and~(4) are equivalent, this is a consequence of Proposition~\ref{prop:relativelyKappaCompactLocal} and Corollary~\ref{cor:BRegularImpliesSimpBRegular}. As a consequence, for every $\kappa$-compact object $A\in\BB$, we can identify $\I{V}(A)$ with the class of $\Over{\BB}{A}$-categories that satisfy the $\Over{\BB}{A}$-categorical version of condition~(3). As $\I{V}$ clearly contains both $\ILConst_{\CatS^\kappa}$ and $\Univ[\BB]^\kappa$, the proof will be complete once we show that $\I{V}$ is closed under both $\ILConst_{\CatS^\kappa}$- and $\Univ[\BB]^\kappa$-colimits. By our description of $\I{V}(A)$ for every $\kappa$-compact $A\in\BB$ and the fact that colimits can be computed locally~\cite[Remark~4.1.8]{MWColimits}, this is clear for the first case. To show the second case, we need to verify that for every relatively $\kappa$-compact map $p\colon P\to A$, the functor $p_!\colon \Cat(\Over{\BB}{P})\to\Cat(\Over{\BB}{A})$ restricts to a map $\I{V}(P)\to\I{V}(A)$. Using again that the class of relatively $\kappa$-compact maps is local, it is enough to consider the case where $A$ (and therefore also $P$) is $\kappa$-compact. To show the claim, we may again use the explicit description of $\I{V}(A)$ and $\I{V}(P)$ to deduce that it suffices to verify that $p_!$ carries $\kappa$-small colimits of objects in $\Cat(\Over{\BB}{P})$ of the form $\Delta^n\otimes Q$ (with $Q\to P$ relatively $\kappa$-compact) to $\kappa$-small colimits of objects in $\Cat(\Over{\BB}{A})$ of the form $\Delta^n\otimes Q$ (with $Q\to A$ $\kappa$-compact). Since $p_!$ preserves small colimits and acts by postcomposition with $p$, this follows from the fact that relatively $\kappa$-compact maps are closed under composition.
\end{proof}

\begin{corollary}
	For every $\BB$-regular cardinal $\kappa$, the internal class $\ICat_{\BB}^\kappa$ is a doctrine.
\end{corollary}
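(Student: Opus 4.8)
The plan is to deduce the corollary from the general fact, recorded in the discussion preceding Proposition~\ref{prop:characterisationKappaSmallCategories}, that $\ICat_{\BB}^{\ord{\KK,S}}$ is a doctrine whenever $\KK$ is a doctrine and $S$ is a bounded local class. Since $\ICat_{\BB}^\kappa=\ICat_{\BB}^{\ord{\CatS^\kappa,\cpt{\kappa}}}$ by Definition~\ref{def:kappaSmall}, and since the $\kappa$-small $\infty$-categories visibly form a small $\infty$-category so that $\CatS^\kappa$ is a doctrine, everything reduces to verifying that the local class $\cpt{\kappa}$ is bounded, i.e.\ that the subuniverse $\Univ[\BB]^\kappa$ is a small $\BB$-category.

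To establish this I would first invoke Proposition~\ref{prop:relativelyKappaCompactLocal}, which (crucially using $\BB$-regularity of $\kappa$) identifies $\cpt{\kappa}$ with the class of relatively $\kappa$-compact morphisms of $\BB$, so that $\Univ[\BB]^\kappa(A)$ is the full subgroupoid of $\Over{\BB}{A}$ on the relatively $\kappa$-compact maps into $A$. It then suffices to show that the underlying object of $\Univ[\BB]^\kappa$ lies in $\BB$, and for this I would exhibit a small cover of it: the coproduct, over all $G\in\BB^{\cpt{\kappa}}$ and all objects $P\to G$ of $(\Over{\BB}{G})^{\cpt{\kappa}}$, of the objects $G\to\Univ[\BB]^\kappa$ classifying these maps (each of which is relatively $\kappa$-compact since $G$ is $\kappa$-compact and $\BB^{\cpt{\kappa}}$ is stable under finite limits). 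This is a cover because $\BB$ is $\kappa$-accessible — hence any $A$ is covered by $\kappa$-compact objects $A_j$ — and the base change of a relatively $\kappa$-compact map along $A_j\to A$ has $\kappa$-compact total space (take $B=A_j$ in the definition and use stability of $\BB^{\cpt{\kappa}}$ under finite limits). As the indexing data and the summands are small, the source is small; and since $\Univ[\BB]^\kappa$ is a full subcategory of the locally small $\BB$-category $\Univ$, a small cover of its underlying object forces that object — and then also the object of morphisms — to be small. Hence $\Univ[\BB]^\kappa$ is a small $\BB$-category, and the corollary follows.

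The main obstacle is the last bookkeeping step: making precise that a subobject of the (large) underlying object of $\Univ$ which admits a small cover is itself small. This rests on the local smallness of $\Univ$ (the relevant mapping $\BB$-groupoids are internal mapping objects of the slice topoi, hence small) together with descent. One must also take care that the étale base-change identity used so freely elsewhere, $\pi_A^\ast\ICat_{\BB}^\kappa\simeq\ICat_{\Over{\BB}{A}}^\kappa$, is only available for $\kappa$-compact $A$ (Remark~\ref{rem:BCKappaSmallCategories}) — which is precisely why the cover must be assembled from $\kappa$-compact objects. As an alternative to the step with $\Univ[\BB]^\kappa$, one can run the same cover argument directly on $\ICat_{\BB}^\kappa$: using Proposition~\ref{prop:explicitDescriptionKSSmall} and the fact that the bottom-level functors $(-)_0,(-)_1\colon\Cat(\BB)\to\BB$ preserve colimits, a $\kappa$-small $\BB$-category $\I{C}$ over $A$ has relatively $\kappa$-compact $\I{C}_0\to A$ and $\I{C}_1\to A$, so that on a $\kappa$-compact cover of $A$ it becomes $\kappa$-small in the sense of Proposition~\ref{prop:characterisationKappaSmallCategories}(5); one may also simply cite~\cite[\S~6.1.6]{htt} for the boundedness of $\cpt{\kappa}$, with the proviso about the subobject hypothesis already noted after Proposition~\ref{prop:relativelyKappaCompactLocal}.
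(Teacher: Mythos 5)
Your main route has a circularity problem. The ``general fact'' you lean on -- that $\ICat_{\BB}^{\ord{\KK,S}}$ is a doctrine whenever $\KK$ is a doctrine and $S$ is bounded -- is obtained in the paper as a consequence of Proposition~\ref{prop:regularisationDoctrine}, and the proof of that proposition is explicitly deferred to the construction in \S~\ref{sec:kappaSmall}, i.e.\ to Remark~\ref{rem:regularClassesExhaustive}, which needs precisely the statement you are trying to prove (that $\ICat_{\BB}^\kappa$ is a small $\BB$-category). So you cannot reduce the corollary to ``$\CatS^\kappa$ is small and $\cpt{\kappa}$ is bounded'' without independently proving that the \emph{left regularisation} of the small class $\ord{\CatS^\kappa,\cpt{\kappa}}$ stays small; that closure statement is the entire content of the corollary, and your careful argument for boundedness of $\cpt{\kappa}$ (which is fine, and essentially contained in Proposition~\ref{prop:relativelyKappaCompactLocal}) does not touch it. The paper instead proves the corollary directly: since $\BB$ is generated by $\BB^{\cpt{\kappa}}$, Remark~\ref{rem:BCKappaSmallCategories} reduces the claim to showing that the collection of $\kappa$-small $\BB$-categories is small, and this is immediate from the equivalence (1)$\Leftrightarrow$(2) in Proposition~\ref{prop:characterisationKappaSmallCategories}, because the $\kappa$-compact objects of the presentable $\infty$-category $\Cat(\BB)$ form an essentially small collection.

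Your ``alternative'' at the end is much closer to the correct (and to the paper's) argument, but as written it also has a gap: the functors $(-)_0,(-)_1\colon\Cat(\BB)\to\BB$ do \emph{not} preserve colimits, since colimits in $\Cat(\BB)$ are computed by applying the localisation $L\colon\Simp\BB\to\Cat(\BB)$ to level-wise colimits, and $L$ changes the levels; controlling this is exactly the point of Lemma~\ref{lem:kappaCompactFibrantReplacement} in the paper. There is no need to reprove this: once you invoke Proposition~\ref{prop:characterisationKappaSmallCategories} (any of the equivalent conditions (2), (4) or (5)) together with Remark~\ref{rem:BCKappaSmallCategories}, the smallness of $\ICat_{\BB}^\kappa$ follows in one line, with no appeal to the regularisation-of-doctrines machinery at all.
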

\begin{proof}
	As $\BB$ is generated by $\BB^\kappa$, Remark~\ref{rem:BCKappaSmallCategories} implies that we only need to show that the collection of $\kappa$-small $\BB$-categories is small, which is an immediate consequence of~(2) in Proposition~\ref{prop:characterisationKappaSmallCategories}
\end{proof}

By construction, the internal class $\ICat_{\BB}^\kappa$ is regular for every cardinal $\kappa$. We conclude this section by proving that whenever $\kappa$ is $\BB$-regular, the doctrine $\ICat_{\BB}^\kappa$ is sound.
\begin{lemma}
	\label{lem:externallyFilteredImpliesInternallyFiltered}
	Let $\kappa$ be a $\BB$-regular cardinal, and let $\JJ$ be a $\kappa$-filtered $\infty$-category. Then $\JJ$ is $\ICat_{\BB}^\kappa$-filtered when viewed as a constant $\BB$-category.
\end{lemma}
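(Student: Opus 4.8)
The plan is to deduce the statement from the explicit description of $\ICat_\BB^\kappa$-continuity provided by Proposition~\ref{prop:KSCocompleteness}, applied to the colimit functor $\colim\colon\iFun{\const_\BB(\JJ)}{\Univ}\to\Univ$. Since $\Univ$ is cocomplete this functor exists, and it is a map between complete — hence $\ICat_\BB^\kappa$-complete — $\BB$-categories, because $\Univ$ is complete and limits in $\iFun{\const_\BB(\JJ)}{\Univ}$ are computed pointwise. Recalling that $\ICat_\BB^\kappa=\ICat_\BB^{\ord{\CatS^\kappa,\cpt{\kappa}}}$ and that, by Proposition~\ref{prop:relativelyKappaCompactLocal}, the local class $\cpt{\kappa}$ coincides with the class of relatively $\kappa$-compact maps in $\BB$, Proposition~\ref{prop:KSCocompleteness} reduces the claim to two assertions:
\begin{enumerate}
\item for every $A\in\BB$, the functor induced by $\colim$ on sections over $A$ preserves limits indexed by $\kappa$-small $\infty$-categories;
\item for every relatively $\kappa$-compact map $p\colon P\to A$ in $\BB$, the comparison morphism $\colim(A)\circ p_\ast\to p_\ast\circ\colim(P)$ is an equivalence, where the two occurrences of $p_\ast$ denote the right adjoints of the transition functors along $p$.
\end{enumerate}

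First I would make $\colim$ and the transition functors explicit. Via the powering adjunction and the identification $\Univ(A)\simeq\Over{\BB}{A}$, the sheaf underlying $\iFun{\const_\BB(\JJ)}{\Univ}$ sends $A\in\BB$ to $\Fun(\JJ,\Over{\BB}{A})$, with transition functor along $s\colon B\to A$ given by postcomposition with the pullback functor $s^\ast\colon\Over{\BB}{A}\to\Over{\BB}{B}$; under this identification the diagonal $\Univ\to\iFun{\const_\BB(\JJ)}{\Univ}$ restricts on sections over $A$ to the constant-diagram functor $\Over{\BB}{A}\to\Fun(\JJ,\Over{\BB}{A})$. As $\colim$ exists internally, restricting the adjunction $\colim\dashv\diag$ to sections over $A$ identifies $\colim(A)$ with the left adjoint of the constant-diagram functor, that is, with the ordinary colimit functor $\colim_\JJ$ of the $\infty$-topos $\Over{\BB}{A}$; similarly, for $p\colon P\to A$ the right adjoint of the transition functor along $p$ is postcomposition with the topos pushforward $p_\ast\colon\Over{\BB}{P}\to\Over{\BB}{A}$ on $\iFun{\const_\BB(\JJ)}{\Univ}$, and is this pushforward itself on $\Univ$. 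Hence assertion~(1) says that $\JJ$-indexed colimits commute with $\kappa$-small limits in $\Over{\BB}{A}$ (limits in $\Fun(\JJ,\Over{\BB}{A})$ being pointwise), and the morphism in assertion~(2) is the canonical comparison map $\colim_\JJ(p_\ast\circ d)\to p_\ast(\colim_\JJ d)$ for diagrams $d\colon\JJ\to\Over{\BB}{P}$.

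It then remains to verify (1) and (2), both of which reflect the compatibility of $\kappa$-filtered colimits with $\kappa$-small (relative) limits. For (1): since $\JJ$ is $\kappa$-filtered, $\JJ$-indexed colimits commute with $\kappa$-small limits in $\BB$; this follows from the corresponding statement in $\SS$ by means of the presentation $\BB\into\PSh(\BB^{\cpt{\kappa}})$ of Remark~\ref{rem:BPresentationCompactObjects}, which — as $\kappa$ is $\BB$-regular — preserves $\kappa$-filtered colimits and, being reflective, also preserves $\kappa$-small limits, so that both are computed as in the presheaf $\infty$-category and thereby reduce pointwise to $\SS$. The case of a slice $\Over{\BB}{A}$ then follows routinely, as $\kappa$-filtered colimits and $\kappa$-small limits in $\Over{\BB}{A}$ can be computed in $\BB$, the latter after adjoining a terminal cone point to the indexing diagram. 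For (2): one must show that the pushforward $p_\ast\colon\Over{\BB}{P}\to\Over{\BB}{A}$ preserves $\JJ$-indexed, hence $\kappa$-filtered, colimits; since $p_\ast$ is right adjoint to $p^\ast$, by~\cite[Proposition~5.5.7.2]{htt} this is equivalent to $p^\ast$ preserving $\kappa$-compact objects, and because an object $W\to P$ of $\Over{\BB}{P}$ is $\kappa$-compact precisely when $W$ is $\kappa$-compact in $\BB$ (and likewise over $A$), the latter is exactly the relative $\kappa$-compactness of $p$. Combining (1) and (2) via Proposition~\ref{prop:KSCocompleteness} gives the lemma. The delicate step is the sectionwise description of $\colim$ and of the right adjoints of its transition functors carried out in the second paragraph; granting that, (1) and (2) amount to bookkeeping around the fact that $\kappa$-filtered colimits commute with $\kappa$-small limits in $\SS$.
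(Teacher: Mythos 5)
Your proof is correct, but it follows a genuinely different route from the one in the paper. The paper verifies the cocone criterion of Proposition~\ref{prop:characterisationFilteredCategoryCocones}: it disposes of the $\CatS^\kappa$-part in one sentence and handles the $\Univ[\BB]^\kappa$-part by first reducing, via locality and Remark~\ref{rem:BCkappaSmall}, to a $\kappa$-compact context $A\simeq 1$, and then proving that $(\pi_G)_\ast$ preserves $\JJ$-indexed colimits for $\kappa$-compact $G$ by an internal-hom/Yoneda argument combined with the commutation of filtered colimits with finite limits. You instead apply the explicit criterion of Proposition~\ref{prop:KSCocompleteness} directly to $\colim\colon\iFun{\JJ}{\Univ}\to\Univ$, use Proposition~\ref{prop:relativelyKappaCompactLocal} to replace the local class $\cpt{\kappa}$ by the relatively $\kappa$-compact maps, and deduce that $p_\ast$ preserves $\JJ$-indexed colimits from the compact-object criterion for adjoints of~\cite[Proposition~5.5.7.2]{htt}. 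What your route buys is that it never passes to a $\kappa$-compact context and it makes explicit, via the presheaf presentation of Remark~\ref{rem:BPresentationCompactObjects}, why $\JJ$-indexed colimits commute with $\kappa$-small limits section-wise --- a point the paper's proof treats as immediate; what it costs is that you must work over arbitrary $A\in\BB$ in the slices $\Over{\BB}{A}$ and import external accessibility machinery, whereas the paper stays inside its internal toolkit. Two details in your write-up deserve a sentence each: the direction of \cite[Proposition~5.5.7.2]{htt} that you use ($p^\ast$ preserves $\kappa$-compact objects, hence $p_\ast$ is $\kappa$-continuous) requires $\Over{\BB}{A}$ to be generated under $\kappa$-filtered colimits by its $\kappa$-compact objects; this does hold for arbitrary $A$, since $\BB$ is $\kappa$-accessible by $\BB$-regularity and, by the slice-compactness characterisation you already invoke, a $\kappa$-filtered presentation of the underlying object of $X\to A$ by $\kappa$-compact objects lifts to one in $\Over{\BB}{A}$. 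Likewise, the ``routine'' slice reduction in your step~(1) implicitly uses that the $\JJ$-indexed colimit of the constant diagram at $A$ is $A$ (weak contractibility of the filtered $\infty$-category $\JJ$, together with universality of colimits) in order to match the cone-point values of the two comparison diagrams.
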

\begin{proof}
	By Proposition~\ref{prop:characterisationFilteredCategoryCocones}, we need to show that the inclusion $\IFun(\JJ,\Univ)\into\IFun(\JJ^\triangleright,\Univ)$ is $\ICat_{\BB}^\kappa$-continuous. Since $\JJ$ is $\kappa$-filtered, the inclusion section-wise  preserves $\kappa$-small limits. It therefore suffices to show that it is $\Univ^\kappa$-continuous. This amounts to showing that for every $A\in\BB$ and every $G\in\Univ^\kappa(A)$ the geometric morphism $\Over{\BB}{G}\to\Over{\BB}{A}$ commutes with $\JJ$-indexed colimits. As the preservation of colimits is a local condition~\cite[Remark~4.2.1]{MWColimits} and as $\BB$ is generated by the $\kappa$-compact objects in $\BB$, we may assume that $A$ is $\kappa$-compact. In light of Remark~\ref{rem:BCkappaSmall}, we may thus replace $\BB$ with $\Over{\BB}{A}$ and can therefore reduce to the case $A\simeq 1$.  As $\kappa$ is $\BB$-regular, the collection of $\kappa$-compact objects in $\BB$ is stable under finite limits. Therefore, for every $H\in \BB^{\cpt{\kappa}}$ the functor $\map{\BB}(G\times H,-)$ preserves $\JJ$-filtered colimits. By Yoneda's lemma, this implies that the functor $\Hom_{\BB}(G,-)$ also preserves $\JJ$-filtered colimits. On account of the pullback square
	\begin{equation*}
		\begin{tikzcd}
			(\pi_G)_\ast\arrow[r]\arrow[d] & \Hom_{\BB}(G,(\pi_G)_!(-))\arrow[d]\\
			\diag(1)\arrow[r, "\id_G"] & \diag(\Hom_{\BB}(G,G))
		\end{tikzcd}
	\end{equation*}
	in $\Fun(\Over{\BB}{G},\BB)$ and the fact that the cospan in the lower right corner consists of functors which preserve $\JJ$-indexed colimits, the claim follows from the fact that $\JJ$-indexed colimits commute with finite limits.
\end{proof}

\begin{lemma}
	\label{lem:CocompleteImpliesFinallyConstant}
	Let $\kappa$ be a $\BB$-regular cardinal, and let $\I{J}$ be a $\ICat_{\BB}^\kappa$-cocomplete $\BB$-category. Then the canonical functor $\Gamma\I{J}\to\I{J}$ that is obtained from the counit of the adjunction $\const_{\BB}\dashv \Gamma$ is final.
\end{lemma}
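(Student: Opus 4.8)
The plan is to verify finality of $\epsilon\colon\const_{\BB}\Gamma_{\BB}\I{J}\to\I{J}$ by means of Quillen's theorem~A for $\BB$-categories \cite[Corollary~4.4.8]{Martini2021}: $\epsilon$ is final precisely if, for every object $c\colon A\to\I{J}$, the comma $\Over{\BB}{A}$-category $\pi_A^\ast(\const_{\BB}\Gamma_{\BB}\I{J})\times_{\pi_A^\ast\I{J}}\Under{(\pi_A^\ast\I{J})}{c}$ has a final groupoidification. First I would observe that $\pi_A^\ast(\const_{\BB}\Gamma_{\BB}\I{J})\simeq\const_{\Over{\BB}{A}}(\I{J}(1))$ and that this comma category is the left fibration over $\const_{\Over{\BB}{A}}(\I{J}(1))$ obtained by pulling back the left fibration $\Under{(\pi_A^\ast\I{J})}{c}\to\pi_A^\ast\I{J}$ (classified by $\map{\pi_A^\ast\I{J}}(c,-)$) along $\pi_A^\ast\epsilon$; hence it is classified by $\map{\pi_A^\ast\I{J}}(c,\pi_A^\ast\epsilon(-))$ and, by \cite[Proposition~4.4.1]{Martini2021a}, its groupoidification is $\colim_{\const_{\Over{\BB}{A}}(\I{J}(1))}\map{\pi_A^\ast\I{J}}(c,\pi_A^\ast\epsilon(-))$. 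Next I would unwind the transposes: a direct computation with the adjunction $\const_{\Over{\BB}{A}}\dashv\Gamma_{\Over{\BB}{A}}$ shows that $\pi_A^\ast\epsilon$ is the transpose of the restriction functor $(\pi_A)^\ast\colon\I{J}(1)\to\I{J}(A)$, so that $\map{\pi_A^\ast\I{J}}(c,\pi_A^\ast\epsilon(-))$ is the transpose of the functor $\I{J}(1)\to\Over{\BB}{A}$ carrying $j$ to $\map{\I{J}}(c,\pi_A^\ast j)$; and, writing $\I{J}(1)$ as the colimit of its category of simplices and using that $\const_{\Over{\BB}{A}}$ is cocontinuous, that colimits commute with colimits, and that $\colim_{\Delta^{n}}$ is evaluation at the final object $n\in\Delta^{n}$, one identifies the colimit over the constant $\BB$-category $\const_{\Over{\BB}{A}}(\I{J}(1))$ with the ordinary colimit in the $\infty$-topos $\Over{\BB}{A}$ of the transposed diagram. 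The upshot is that $\epsilon$ is final if and only if, for every $c\colon A\to\I{J}$, the colimit $\colim_{j\in\I{J}(1)}\map{\I{J}}(c,\pi_A^\ast j)$, formed now in the $\infty$-topos $\Over{\BB}{A}$, is the terminal object.

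To prove this I would fix $c\colon A\to\I{J}$ and exploit that, since $\kappa$ is $\BB$-regular, $\BB$ is the $\infty$-topos of sheaves on the site $\BB^{\cpt{\kappa}}$ (Remark~\ref{rem:BPresentationCompactObjects}), so that $\Over{\BB}{A}$ is the $\infty$-topos of sheaves on the slice site of pairs $(C,x\colon C\to A)$ with $C\in\BB^{\cpt{\kappa}}$, and colimits in $\Over{\BB}{A}$ are computed by sheafifying objectwise colimits of presheaves. On such a pair the objectwise value of our colimit is $\colim_{j\in\I{J}(1)}\map{\I{J}(C)}(x^\ast c,(\pi_C)^\ast j)$, which factors through the restriction functor $(\pi_C)^\ast\colon\I{J}(1)\to\I{J}(C)$. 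Because $C$ is $\kappa$-compact, the map $\pi_C\colon C\to1$ is relatively $\kappa$-compact (the $\kappa$-compact objects of $\BB$ being stable under finite products) and hence lies in $\cpt{\kappa}$, so Proposition~\ref{prop:KSCocompleteness} together with the $\ICat_{\BB}^{\kappa}$-cocompleteness of $\I{J}$ provides a left adjoint to $(\pi_C)^\ast$; in particular $(\pi_C)^\ast$ is cofinal. Therefore the objectwise value equals $\colim_{j'\in\I{J}(C)}\map{\I{J}(C)}(x^\ast c,j')$, which is the point, since the colimit of a corepresentable functor is the point (the slice $\I{J}(C)_{x^\ast c/}$ having an initial object). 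Hence the objectwise colimit presheaf is the terminal presheaf, and as sheafification is left exact its sheafification is the terminal sheaf; this is the desired identification, and $\epsilon$ is final.

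I expect the only nontrivial use of cocompleteness of $\I{J}$ to be the existence of left adjoints to the restriction functors $(\pi_C)^\ast\colon\I{J}(1)\to\I{J}(C)$ along $\kappa$-compact objects $C$, which is precisely what makes the objectwise colimits collapse. The step I anticipate as the main obstacle is the reduction in the first paragraph: identifying the groupoidification of the comma category with a colimit over a constant $\BB$-category, and that in turn with an ordinary colimit in $\Over{\BB}{A}$, together with the accompanying transpose computations for the base-changed counit under $\const\dashv\Gamma$ and \'etale base change; these are routine but fiddly and will need to be carried out with some care.
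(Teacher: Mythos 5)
Your argument is correct, but it takes a genuinely different route from the paper. You unwind the internal finality criterion: Quillen's theorem~A reduces the claim to showing that, for every object $c\colon A\to\I{J}$, the colimit in $\Over{\BB}{A}$ of the transposed diagram $j\mapsto\map{\I{J}}(c,\pi_A^\ast j)$ is terminal, and you then verify this objectwise over the site $\BB^{\cpt{\kappa}}$ (using that $\Over{\BB}{A}$ is a left exact localisation of presheaves on the slice site), where the left adjoints $(\pi_C)_!$ supplied by $\ICat_{\BB}^\kappa$-cocompleteness make the restriction $\pi_C^\ast\colon\I{J}(1)\to\I{J}(C)$ final and collapse each objectwise colimit to the classifying space of a coslice with initial object. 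The paper instead argues structurally: the same left adjoints show that $\epsilon$ is \emph{section-wise} final at every $\kappa$-compact $G$, viewed in $\Cat(\PSh(\BB^{\cpt{\kappa}}))$; since the evaluation functors $\ev_G$ are algebraic morphisms, hence preserve both final functors and right fibrations and are jointly conservative, factoring $\epsilon$ into a final functor followed by a right fibration shows that the right fibration part is an equivalence, so $\epsilon$ is final internally in the presheaf topos, and applying the algebraic morphism $L\colon\PSh(\BB^{\cpt{\kappa}})\to\BB$ transfers finality to $\BB$. Both proofs hinge on exactly the same inputs ($\BB$-regularity giving the $\kappa$-compact presentation, and $\ICat_{\BB}^\kappa$-cocompleteness giving the left adjoints), but the paper's factorisation-system argument avoids Quillen's theorem~A and all explicit colimit bookkeeping, isolating a reusable transfer principle (``section-wise final over a generating site implies final''), whereas your computation is more self-contained and makes the contractibility visible concretely, at the cost of the identifications you flag as fiddly — the comparison of internal colimits over constant $\BB$-categories with external colimits of the transpose is indeed available in the framework (it is the section-wise description of $\ILConst$-indexed colimits in \cite{Martini2021a}), so none of those steps is an actual gap.
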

\begin{proof}
	For every $G\in\BB^{\cpt{\kappa}}$, the functor $\I{J}(1)\to\I{J}(G)$ admits a left adjoint and is therefore in particular final. In other words, if $i\colon \BB\into\PSh(\BB^{\cpt{\kappa}})$ denotes the inclusion, then the functor $\epsilon\colon\Gamma_{\PSh(\BB^{\cpt{\kappa}})}i\I{J}\to i\I{J}$ is section-wise final. But since the local sections functor $\ev_G\colon \PSh(\BB^{\cpt{\kappa}})\to\SS$ defines an algebraic morphism of $\infty$-topoi and since every algebraic morphism preserves both final functors and right fibrations, applying $\ev_G$ to any factorisation of $\epsilon$ in $\Cat(\PSh(\BB^{\cpt{\kappa}}))$ into a final functor and a right fibration yields a factorisation of $\epsilon(G)$ into a final functor and a right fibration in $\CatS$. Consequently, the map $\epsilon$ must already be final. As we recover the map $\Gamma\I{J}\to\I{J}$ by applying the algebraic morphism $L\colon\PSh(\BB^{\cpt{\kappa}})\to\BB$ to $\epsilon$, the claim follows.
\end{proof}

\begin{proposition}
	\label{prop:kappaSmallSound}
	If $\kappa$ is a $\BB$-regular cardinal, then $\ICat_{\BB}^\kappa$ is sound.
\end{proposition}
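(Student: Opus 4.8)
The plan is to deduce this from Corollary~\ref{cor:regularClassSoundWeaklySound} together with the two lemmas immediately preceding the statement. Since $\ICat_\BB^\kappa$ is regular by construction, Corollary~\ref{cor:regularClassSoundWeaklySound} reduces the claim to showing that $\ICat_\BB^\kappa$ is \emph{weakly} sound. A preliminary observation that I would record is that $\ICat_\BB^\kappa$ is invariant under $(-)^\op$: the subuniverse $\Univ[\BB]^\kappa$ is spanned by $\BB$-groupoids and hence is fixed by $(-)^\op$, while $\ILConst_{\CatS^\kappa}$ is fixed by $(-)^\op$ because $\CatS^\kappa$ is; as $(-)^\op$ is an automorphism of $\ICat_\BB$ that preserves regularity, the class $\ord{\CatS^\kappa,\cpt{\kappa}}$, and therefore its regularisation $\ICat_\BB^\kappa$, is $(-)^\op$-invariant. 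In particular, ``$\op(\ICat_\BB^\kappa)$-cocomplete'' is literally the same condition as ``$\ICat_\BB^\kappa$-cocomplete'', which is what will make Lemma~\ref{lem:CocompleteImpliesFinallyConstant} applicable to the $\BB$-categories appearing in the definition of weak soundness.

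The core of the argument is the case of the base topos, i.e.\ showing that every $\ICat_\BB^\kappa$-cocomplete $\BB$-category $\I{J}$ is $\ICat_\BB^\kappa$-filtered, and the idea is to pass to global sections. Since $\ICat_\BB^\kappa$ contains $\ILConst_{\CatS^\kappa}$, the $\BB$-category $\I{J}$ admits $\const_\BB(\KK)$-indexed colimits for every $\kappa$-small $\infty$-category $\KK$; applying $\Gamma_\BB$ and using the identification $\Fun_\BB(\const_\BB(\KK),\I{J})\simeq\Fun(\KK,\Gamma_\BB\I{J})$ I would conclude that the $\infty$-category $\Gamma_\BB\I{J}$ admits all $\kappa$-small colimits, hence is $\kappa$-filtered. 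Lemma~\ref{lem:externallyFilteredImpliesInternallyFiltered} then says that the constant $\BB$-category on $\Gamma_\BB\I{J}$ is $\ICat_\BB^\kappa$-filtered, Lemma~\ref{lem:CocompleteImpliesFinallyConstant} says that the counit $\const_\BB(\Gamma_\BB\I{J})\to\I{J}$ is final, and Remark~\ref{rem:FiltUColimitClass} (finality transports $\I{U}$-filteredness onto the target) then gives that $\I{J}$ is $\ICat_\BB^\kappa$-filtered.

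Finally, to obtain weak soundness for an arbitrary context $A\in\BB$, I would choose a cover $\bigsqcup_i G_i\onto A$ by $\kappa$-compact objects of $\BB$, which exists since $\BB$ is generated under colimits by $\BB^{\cpt{\kappa}}$ (Remark~\ref{rem:BPresentationCompactObjects}). Restriction of an $\op(\pi_A^\ast\ICat_\BB^\kappa)$-cocomplete $\Over\BB A$-category along each $G_i\to A$ yields an $\op(\pi_{G_i}^\ast\ICat_\BB^\kappa)$-cocomplete $\Over\BB{G_i}$-category, and because $G_i$ is $\kappa$-compact one has $\pi_{G_i}^\ast\ICat_\BB^\kappa\simeq\ICat_{\Over\BB{G_i}}^\kappa$ with $\kappa$ still $\Over\BB{G_i}$-regular (Remarks~\ref{rem:BCKappaSmallCategories} and~\ref{rem:BCkappaSmall}); so the case already treated, applied with $\BB$ replaced by $\Over\BB{G_i}$, shows each such restriction is $\pi_{G_i}^\ast\ICat_\BB^\kappa$-filtered, and since membership in $\IFilt_{\pi_A^\ast\ICat_\BB^\kappa}\simeq\pi_A^\ast\IFilt_{\ICat_\BB^\kappa}$ is a local condition (Remark~\ref{rem:BCFilt}), the original $\Over\BB A$-category is $\pi_A^\ast\ICat_\BB^\kappa$-filtered. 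The two points requiring care are exactly the $(-)^\op$-invariance of $\ICat_\BB^\kappa$ and the fact that $\pi_A^\ast\ICat_\BB^\kappa$ only agrees with $\ICat_{\Over\BB A}^\kappa$ once $A$ is $\kappa$-compact — the latter is what forces the covering step rather than a direct reduction to $A=1$ — while all the actual geometric content (the $\BB$-regularity of $\kappa$) has been absorbed into Lemmas~\ref{lem:externallyFilteredImpliesInternallyFiltered} and~\ref{lem:CocompleteImpliesFinallyConstant} and the remarks on $\kappa$-compact objects.
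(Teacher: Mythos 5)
Your proposal is correct and takes essentially the same route as the paper's proof: reduce to weak soundness via Corollary~\ref{cor:regularClassSoundWeaklySound}, use Lemma~\ref{lem:CocompleteImpliesFinallyConstant} together with Remark~\ref{rem:FiltUColimitClass} to pass to the constant $\BB$-category on $\Gamma\I{J}$ (which admits $\kappa$-small colimits and is hence $\kappa$-filtered by~\cite[Proposition~5.3.3.3]{htt}), conclude with Lemma~\ref{lem:externallyFilteredImpliesInternallyFiltered}, and treat arbitrary contexts by covering with $\kappa$-compact objects via Remarks~\ref{rem:BCKappaSmallCategories}, \ref{rem:BCkappaSmall} and~\ref{rem:BCFilt}. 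The one place where your reasoning is looser than it should be is the $(-)^\op$-invariance aside: since $\ICat_{\BB}^\kappa$ is defined as a \emph{left} regularisation and $(-)^\op$ interchanges left and right regularisations, ``$(-)^\op$ preserves regularity'' does not by itself give invariance of the regularisation; the clean justification (needed only to identify $\op(\ICat_{\BB}^\kappa)$-cocompleteness with $\ICat_{\BB}^\kappa$-cocompleteness, a point the paper glosses over) is the manifestly $\op$-invariant characterisation of $\kappa$-small $\BB$-categories in Proposition~\ref{prop:characterisationKappaSmallCategories}, which is available precisely because $\kappa$ is $\BB$-regular.
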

\begin{proof}
	On account of Corollary~\ref{cor:regularClassSoundWeaklySound}, it suffices to show that $\ICat_{\BB}^\kappa$ is weakly sound. Together with the fact that $\BB$ is generated by its $\kappa$-compact objects and Remark~\ref{rem:BCKappaSmallCategories}, it is therefore enough to prove that every $\ICat_{\BB}^\kappa$-cocomplete $\BB$-category $\I{J}$ is $\ICat_{\BB}^\kappa$-filtered. By Lemma~\ref{lem:CocompleteImpliesFinallyConstant} and Remark~\ref{rem:FiltUColimitClass}, we can furthermore assume that $\I{J}$ is the constant $\BB$-category associated with an $\infty$-category that admits $\kappa$-small colimits and that is therefore $\kappa$-filtered~\cite[Proposition~5.3.3.3]{htt}. As a consequence, the result follows from Lemma~\ref{lem:externallyFilteredImpliesInternallyFiltered}.
\end{proof}

\begin{remark}
	\label{rem:regularClassesExhaustive}
	As a consequence of Proposition~\ref{prop:kappaSmallSound}, if $\I{C}$ is an arbitrary $\BB$-category, there is always a regular and sound doctrine $\I{U}$ such that $\I{C}\in\I{U}(1)$. In fact, we only need to choose a $\BB$-regular cardinal $\kappa$ such that $\I{C}$ is $\kappa$-compact (and therefore $\kappa$-small by Proposition~\ref{prop:characterisationKappaSmallCategories}) and set $\I{U}=\ICat_{\BB}^\kappa$.
	More generally, if $\I{V}$ is a doctrine, we can find a $\BB$-regular cardinal $\kappa$ such that $\I{V}_0$ is $\kappa$-compact and such that the tautological object $\tau\colon \I{V}_0\to\I{V}$ corresponds to a $\kappa$-small $\Over{\BB}{\I{V}_0}$-category. As every object of $\I{V}$ (in arbitrary context $A\in\BB$) arises as a pullback of $\tau$, this implies that $\I{V}$ is contained in $\ICat_{\BB}^\kappa$.
\end{remark}

\begin{corollary}
	\label{cor:kappaCompactObjectsUniverse}
	For every $\BB$-regular cardinal $\kappa$, there is an equivalence
	\begin{equation*}
		\Univ[\BB]^{\cpt{\ICat_{\BB}^\kappa}}\simeq\Univ[\BB]^\kappa
	\end{equation*} 
	of full subcategories in $\Univ$.
\end{corollary}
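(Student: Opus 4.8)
The plan is to prove the stronger and more symmetric statement that an object $(P\to A)$ of $\Over{\BB}{A}$, regarded as an object of $\Univ[\BB]$ of context $A$, is $\ICat_{\BB}^\kappa$-compact if and only if it is (locally) relatively $\kappa$-compact, i.e.\ contained in the subuniverse $\Univ[\BB]^\kappa$ (this last identification being Proposition~\ref{prop:relativelyKappaCompactLocal}). Both assignments define full subcategories of $\Univ[\BB]$, and both conditions are local on $A$: the first because $\IFilt_{\ICat_{\BB}^\kappa}$-cocontinuity is a local property and by Remark~\ref{rem:BCUComp}, the second by definition of a local class. Using that $\BB$ is generated under colimits by its $\kappa$-compact objects (Remark~\ref{rem:BPresentationCompactObjects}), I would first pass to a cover of $A$ by $\kappa$-compact objects; when $A$ is $\kappa$-compact, Remark~\ref{rem:BCkappaSmall}, Remark~\ref{rem:BCKappaSmallCategories} and the base change equivalence $\pi_A^\ast\Univ[\BB]\simeq\Univ[\Over{\BB}{A}]$ reduce the claim to the case $A=1$. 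Since $1$ is $\kappa$-compact and, by $\BB$-regularity, $\kappa$-compact objects of $\BB$ are stable under finite products, the map $G\to 1$ is relatively $\kappa$-compact exactly when $G$ is a $\kappa$-compact object of $\BB$. So everything reduces to showing that, for $G\in\BB$, the functor $\map{\Univ}(G,-)\colon\Univ\to\Univ$ is $\IFilt_{\ICat_{\BB}^\kappa}$-cocontinuous if and only if $G$ is $\kappa$-compact in $\BB$.

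For the ``only if'' direction: by Lemma~\ref{lem:externallyFilteredImpliesInternallyFiltered}, every $\kappa$-filtered $\infty$-category $\JJ$, viewed as a constant $\BB$-category $\const_{\BB}(\JJ)$, is $\ICat_{\BB}^\kappa$-filtered, so $\map{\Univ}(G,-)$ preserves $\const_{\BB}(\JJ)$-indexed colimits. At the context $1$ the functor $\map{\Univ}(G,-)$ is, by definition of the universe, the internal hom $\iFun{G}{-}\colon\BB\to\BB$, and colimits of constant-shape diagrams in $\Univ$ are computed as ordinary colimits on underlying $\infty$-categories (this follows from the description of colimits in $\Univ$ via groupoidification, see \cite{Martini2021a}); hence $\iFun{G}{-}$ preserves $\kappa$-filtered colimits in $\BB$. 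Postcomposing with $\Gamma$, which preserves $\kappa$-filtered colimits because $\kappa$ is $\BB$-regular (Remark~\ref{rem:BPresentationCompactObjects}), shows that the functor $\BB\to\SS$ corepresented by $G$, which is $\Gamma\circ\iFun{G}{-}$, preserves $\kappa$-filtered colimits, i.e.\ $G$ is $\kappa$-compact in $\BB$.

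For the ``if'' direction, the first step is to check that a $\kappa$-compact $G\in\BB$ already has $\map{\Univ}(G,-)$ preserving $\const_{\BB}(\JJ)$-indexed colimits for every $\kappa$-filtered $\infty$-category $\JJ$. It suffices to see that for \emph{every} $A\in\BB$ the functor $\iFun{\pi_A^\ast G}{-}\colon\Over{\BB}{A}\to\Over{\BB}{A}$ preserves $\kappa$-filtered colimits and to combine this with the identification of constant-shape colimits with ordinary colimits. For this, $\Over{\BB}{A}$ is generated under colimits by the objects $(B\to A)$ with $B\in\BB^{\cpt\kappa}$; for such $B$ the product $(B\to A)\times\pi_A^\ast G$ taken in $\Over{\BB}{A}$ has underlying object $B\times G\in\BB$, which is $\kappa$-compact by $\BB$-regularity, so $(B\to A)\times\pi_A^\ast G$ is $\kappa$-compact in $\Over{\BB}{A}$ (colimits in $\Over{\BB}{A}$ are created by $(\pi_A)_!$, and $\kappa$-filtered colimits commute with the finite limits defining mapping spaces in $\Over{\BB}{A}$). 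Testing the canonical comparison map against these generators then gives the claim.

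Finally, and this is the main point, I would promote the previous step to arbitrary $\ICat_{\BB}^\kappa$-filtered $\BB$-categories. By locality of colimit preservation and the equivalence $\pi_A^\ast\ICat_{\BB}^\kappa\simeq\ICat_{\Over{\BB}{A}}^\kappa$ for $\kappa$-compact $A$, it is enough to show that $\map{\Univ}(G,-)$ preserves $\I{J}$-indexed colimits for every $\ICat_{\BB}^\kappa$-filtered $\BB$-category $\I{J}$. Since $\ICat_{\BB}^\kappa$ is sound (Proposition~\ref{prop:kappaSmallSound}) and has the decomposition property (Proposition~\ref{prop:decompositionColimitsCatB}), Corollary~\ref{cor:inclusionCompactFinal} shows that $\I{J}\into\I{K}:=\IPSh_{\Univ}(\I{J})^{\cpt{\ICat_{\BB}^\kappa}}$ is final; the $\BB$-category $\I{K}$ is small by Proposition~\ref{prop:characterisationCompactObjectsPSh}, and it is $\ICat_{\BB}^\kappa$-cocomplete by Proposition~\ref{prop:UCompactUcocomplete} together with the observation that $\op(\ICat_{\BB}^\kappa)=\ICat_{\BB}^\kappa$ (the conditions in Proposition~\ref{prop:characterisationKappaSmallCategories} are manifestly invariant under $(-)^\op$). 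Consequently $\JJ':=\Gamma\I{K}$ admits $\kappa$-small colimits, hence is $\kappa$-filtered \cite[Proposition~5.3.3.3]{htt}, and by Lemma~\ref{lem:CocompleteImpliesFinallyConstant} the functor $\const_{\BB}(\JJ')\into\I{K}$ is final. Given any diagram $d\colon\I{J}\to\Univ$, the universal property of presheaf $\BB$-categories extends it to a cocontinuous functor on $\IPSh_{\Univ}(\I{J})$ whose restriction $\bar d\colon\I{K}\to\Univ$ satisfies $\bar d|_{\I{J}}=d$; using finality of $\I{J}\into\I{K}$ and of $\const_{\BB}(\JJ')\into\I{K}$ one gets $\colim_{\I{J}}d\simeq\colim_{\I{K}}\bar d\simeq\colim_{\const_{\BB}(\JJ')}(\bar d|_{\const_{\BB}(\JJ')})$ and likewise after postcomposing with $\map{\Univ}(G,-)$; since the latter preserves $\const_{\BB}(\JJ')$-indexed colimits by the previous step, it preserves the colimit of $d$. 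The hard part is exactly this reduction: a general $\ICat_{\BB}^\kappa$-filtered $\BB$-category need not admit a final functor from a constant $\kappa$-filtered $\infty$-category, and one repairs this by passing to the $\ICat_{\BB}^\kappa$-cocomplete ``envelope'' $\IPSh_{\Univ}(\I{J})^{\cpt{\ICat_{\BB}^\kappa}}$, over which preservation of colimits by $\map{\Univ}(G,-)$ can be tested against constant $\kappa$-filtered diagram shapes.
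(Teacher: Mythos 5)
Your argument is correct, but it takes a genuinely different route from the paper's. The paper treats $\Univ\simeq\IPSh_{\Univ}(1)$ and simply applies Proposition~\ref{prop:characterisationCompactObjectsPSh} (available because $\ICat_{\BB}^\kappa$ is sound and hence has the decomposition property) to identify $\Univ[\BB]^{\cpt{\ICat_{\BB}^\kappa}}$ with $\IRet_{\Univ}(\Sml_{\Univ}^{\ICat_{\BB}^\kappa}(1))$, and then matches this against $\Univ[\BB]^\kappa$ using the characterisation of $\kappa$-small $\BB$-categories from Proposition~\ref{prop:characterisationKappaSmallCategories} (the groupoidification of a $\kappa$-small $\BB$-category is a countable, hence $\kappa$-small, colimit of $\kappa$-compact objects) together with closure of $\kappa$-compact objects under retracts, which is where the uncountability bookkeeping enters. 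You instead prove the pointwise statement that an object of $\Univ$ in context $A$ is $\ICat_{\BB}^\kappa$-compact precisely when the classified map is relatively $\kappa$-compact, reducing by locality and \'etale base change to showing that for $G\in\BB$ the functor $\map{\Univ}(G,-)\simeq\iFun{G}{-}$ is $\IFilt_{\ICat_{\BB}^\kappa}$-cocontinuous if and only if $G$ is $\kappa$-compact; the ``only if'' direction via Lemma~\ref{lem:externallyFilteredImpliesInternallyFiltered} and the fact that $\Gamma$ preserves $\kappa$-filtered colimits is fine, and your promotion from constant $\kappa$-filtered shapes to arbitrary $\ICat_{\BB}^\kappa$-filtered $\BB$-categories (via Corollary~\ref{cor:inclusionCompactFinal}, Proposition~\ref{prop:UCompactUcocomplete} together with $\op(\ICat_{\BB}^\kappa)\simeq\ICat_{\BB}^\kappa$, and Lemma~\ref{lem:CocompleteImpliesFinallyConstant}) reproduces, in this special case, exactly the reduction carried out in Proposition~\ref{prop:CocompleteKappaCocontinuousInternalExternal}; since $\Univ$ is cocomplete and your first step shows that $\map{\Univ}(G,-)$ is section-wise $\kappa$-accessible, you could have cited that proposition directly and shortened the argument considerably. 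What your route buys is a sharper and arguably more transparent statement — internal $\ICat_{\BB}^\kappa$-compactness in $\Univ$ coincides, object by object, with relative $\kappa$-compactness — with no retract bookkeeping; what the paper's route buys is brevity given Proposition~\ref{prop:characterisationCompactObjectsPSh}, plus the explicit description of the internally compact objects as retracts of colimits of $\kappa$-small $\BB$-groupoids.
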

\begin{proof}
	Since $\ICat_{\BB}^\kappa$ is a sound doctrine by Proposition~\ref{prop:kappaSmallSound}, it has the decomposition property. We may therefore apply Proposition~\ref{prop:characterisationCompactObjectsPSh} to deduce an equivalence
	\begin{equation*}
		\Univ[\BB]^{\cpt{\ICat_{\BB}^\kappa}}\simeq\IRet_{\Univ}(\ISml^{\ICat_{\BB}^\kappa}(1)).
	\end{equation*} 
	Note that if $\I{G}\colon 1\to \ISml^{\ICat_{\BB}^\kappa}(1)$ is an arbitrary object, there is a cover $\bigsqcup_i A_i\onto 1$ of $\BB$ (without loss of generality by $\kappa$-compact objects) and for each $i$ a $\kappa$-small $\Over{\BB}{A_i}$-category $\I{J}$ such that $\pi_{A_i}^\ast\I{G}\simeq\I{J}^\gp$. Since $\kappa$ is by definition uncountable, we thus find that $\pi_{A_i}^\ast\I{G}$ arises as a $\kappa$-small colimit of $\kappa$-compact objects in $\Over{\BB}{A_i}$ (using the characterisation of $\kappa$-small $\Over{\BB}{A_i}$-categories in Proposition~\ref{prop:characterisationKappaSmallCategories}) and is therefore itself $\kappa$-compact. Using Proposition~\ref{prop:relativelyKappaCompactLocal}, this implies that $\I{G}$ is $\kappa$-compact itself. By~\cite[Remark~6.2.2]{MWColimits}, the same argument can be carried out for every object in $\ISml^{\ICat_{\BB}^\kappa}(1)$ in context $A\in\BB^{\cpt{\kappa}}$, and since the collection of $\kappa$-compact objects in $\BB$ generate $\BB$ under small colimits, this implies that we have an inclusion
	\begin{equation*}
		\ISml^{\ICat_{\BB}^\kappa}(1)\into\Univ[\BB]^\kappa.
	\end{equation*}
	Using again that $\kappa$ is uncountable, the collection of $\kappa$-compact objects in $\BB$ is closed under retracts, and as the same is true for the class of $\kappa$-compact objects in $\Over{\BB}{A}$ for every $A\in\BB^{\cpt{\kappa}}$, we find that $\Univ[\BB]^\kappa$ is closed under retracts in $\Univ$, so that we obtain an inclusion
	\begin{equation*}
		\Univ[\BB]^{\cpt{\ICat_{\BB}^\kappa}}\into\Univ[\BB]^\kappa.
	\end{equation*} 
	Conversely, it is clear that whenever $\I{G}$ is a $\kappa$-small $\BB$-groupoid, the associated object $\I{G}\colon 1\to \Univ$ is contained in $\ISml^{\ICat_{\BB}^\kappa}(1)$. Again, the same is true for every $\kappa$-small $\Over{\BB}{A}$-groupoid whenever $A$ is $\kappa$-compact. Hence we obtain an inclusion
	\begin{equation*}
		\Univ[\BB]^\kappa\into\Univ[\BB]^{\cpt{\ICat_{\BB}^\kappa}},
	\end{equation*}
	which finishes the proof.
\end{proof}

\subsection{Finite $\BB$-categories}
\label{sec:finiteBCategories}
In this section we will discuss another important example of a regular and sound doctrine.
Recall that a quasicategory $ \CC $ is called \emph{finite} if there is a finite simplicial set and a Joyal equivalence $ K \rightarrow \CC $. 
This is equivalent to $ \CC $ being contained in the smallest subcategory of $ \CatS $ that contains $ \varnothing$, $\Delta^0$ and $ \Delta^1 $ and is closed under pushouts (see \cite[Proposition 2.5]{volpe2022verdier}). We denote the associated doctrine of $\infty$-categories by $\Fin_{\SS}$.
Let us denote by $\Eq$ the local class of equivalences in $\BB$. We may now define:
\begin{definition}
	\label{def:finiteBCategories}
	A $\BB$-category is said to be \emph{finite} if it is $(\Fin_{\SS},\Eq)$-small, and we shall denote by  $\IFin_\BB=\ICat_{\BB}^{\ord{\Fin_{\SS},\Eq}}$ the associated regular doctrine of finite $\BB$-categories. We will denote by $\Fin(\BB)$ the underlying $\infty$-category of global sections.
	We say that a $ \BB $-category $ \I{I} $ is \emph{filtered} if it is $ \IFin_\BB$-filtered.
	We will say that a $ \BB $-category \emph{has finite (co)limits} if it is $ \IFin_\BB $-(co)complete, and a functor \emph{preserves finite (co)limits} if it is $ \IFin_\BB $-(co)continuous. Dually, we say that a $\BB$-category \emph{has filtered colimits} if it is $\IFilt$-cocomplete, and a functor \emph{preserves filtered colimits} if it is $\IFilt$-cocontinuous. If $\I{C}$ is a $\BB$-category that has filtered colimits, an object $c\colon A\to\I{C}$ is said to be \emph{compact} if it is $\IFilt_{\IFin_\BB}$-compact, and we denote the full subcategory of compact objects in $\I{C}$ by $\I{C}^\compact$.
\end{definition}

\begin{remark}
	\label{rem:BCFinB}
	By Remark~\ref{rem:BCCanonicalBifiltration} and the evident fact that $\pi_A^\ast\Eq=\Eq$ as local classes in $\Over{\BB}{A}$, there is a canonical equivalence $\pi_A^\ast\IFin_\BB\simeq\IFin_{\Over{\BB}{A}}$ for all $A\in\BB$.
\end{remark}

\begin{remark}
	\label{rem:filteredContractible}
	Every filtered $\BB$-category $\I{J}$ satisfies $\I{J}^\gp\simeq 1$. In fact, by Corollary~\ref{cor:inclusionWFiltFilt} it is weakly filtered, thus in particular the unique functor $I \rightarrow \IFun(\varnothing, \I{I}) \simeq 1$ is final.
\end{remark}

\begin{proposition}
	\label{prop:FinBDoctrine}
	There is an equivalence $\IFin_{\BB}\simeq\ILConst_{\Fin_{\SS}}$ of internal classes. In other words, a finite $\BB$-category is simply a locally constant sheaf of finite $\infty$-categories.
\end{proposition}
\begin{proof}
	Since $\Univ[\Eq]\simeq 1_{\Univ}$ as full subcategories, we can describe $\IFin_\BB$ as the regularisation of $\ILConst_{\Fin_{\SS}}$.
	But since $\CatS$ is compactly generated, we may apply Corollary~\ref{cor:LConstPO} and conclude that $\ILConst_{\Fin_{\SS}}$ is already closed under $\ILConst_{\Fin_{\SS}}$-colimits in $\ICat_{\BB}$. Hence the claim follows.
\end{proof}

By Proposition~\ref{prop:KSCocompleteness}, finite limits and preservation of finite limits can be checked section-wise:
\begin{proposition}
	\label{prop:critfinlim}
	Let $ \I{C}$ be a $ \BB $-category. Then
	\label{prop:criterionforfinitelimits}
	\begin{enumerate}
		\item $ \I{C} $ has finite limits if and only if $ \I{C}(A)$ has finite limits for every $ A \in \BB$ and for every $ s \colon B \to A$ the functor $ s^* \colon \I{C}(A) \rightarrow \I{C}(B) $ preserves finite limits.
		\item  A functor $ f \colon \I{C} \rightarrow \I{D} $ between $\BB$-categories that have finite limits preserves such limits if and only if $ f(A) \colon \I{C}(A) \rightarrow \I{D}(A)$ preserves finite limits for every $ A \in \BB $.
	\end{enumerate}
	The dual statements about finite colimits hold as well.\qed
\end{proposition}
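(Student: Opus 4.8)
The plan is to deduce everything directly from Proposition~\ref{prop:KSCocompleteness} by specialising to the pair $(\KK,S)=(\Fin_{\SS},\Eq)$, since $\ICat_{\BB}^{\ord{\Fin_{\SS},\Eq}}=\IFin_{\BB}$ by Definition~\ref{def:finiteBCategories}, and since by the same definition ``$\I{C}$ has finite limits'' means precisely that $\I{C}$ is $\IFin_{\BB}$-complete, while ``$f$ preserves finite limits'' means that $f$ is $\IFin_{\BB}$-continuous. So the entire content of the proposition is the translation of the two criteria of Proposition~\ref{prop:KSCocompleteness} in this particular case.

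The main point to observe is that for the local class $S=\Eq$ the second condition of Proposition~\ref{prop:KSCocompleteness} is vacuous. Indeed, if $p\colon P\to A$ lies in $\Eq$ then $p$ is an equivalence in $\BB$, so $p^{\ast}\colon\I{C}(A)\to\I{C}(P)$ is an equivalence of $\infty$-categories; it therefore automatically admits a right adjoint (a quasi-inverse), and in any cartesian square built from $p$ every edge is again an equivalence, so the Beck--Chevalley map $s^{\ast}p_{\ast}\to q_{\ast}t^{\ast}$ is automatically an equivalence. The same remark shows that for every functor $f$ and every $p\in\Eq$ the comparison map $f(A)p_{\ast}\to p_{\ast}f(P)$ is an equivalence, so the corresponding clause in the ``furthermore'' part of Proposition~\ref{prop:KSCocompleteness} is likewise automatic. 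It remains only to note that an $\infty$-category admits limits indexed by every object of $\Fin_{\SS}$ if and only if it has finite limits in the usual sense (equivalently, a terminal object and pullbacks, since $\Fin_{\SS}$ is generated under pushouts by $\varnothing$, $\Delta^{0}$ and $\Delta^{1}$), and similarly for the preservation of such limits.

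Granting these two points, Proposition~\ref{prop:KSCocompleteness} reads: $\I{C}$ is $\IFin_{\BB}$-complete if and only if each $\I{C}(A)$ has finite limits and each transition functor $s^{\ast}$ preserves them, which is assertion~(1); and a functor $f$ between $\BB$-categories with finite limits is $\IFin_{\BB}$-continuous if and only if each $f(A)$ preserves finite limits, which is assertion~(2). For the dual statements about finite colimits I would run the same argument through the dual part of Proposition~\ref{prop:KSCocompleteness}, using that $\Fin_{\SS}$ and $\Eq$ are both stable under passing to opposite categories (so that $\op(\IFin_{\BB})\simeq\IFin_{\BB}$). I do not expect any real obstacle here: the only thing that genuinely needs checking is the vacuity of the second clause of Proposition~\ref{prop:KSCocompleteness}, which is exactly the reason the local class $\Eq$ was chosen.
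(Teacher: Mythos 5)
Your proof is correct and is exactly the paper's argument: the paper proves this proposition by simply citing Proposition~\ref{prop:KSCocompleteness} (with the pair $(\Fin_{\SS},\Eq)$, for which the second clause is vacuous) and ending with \qed. Your write-up just makes explicit the details the paper leaves implicit.
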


One can construct an ample amount of filtered $\BB$-categories from \emph{presheaves} of filtered $\infty$-categories:
\begin{proposition}
	\label{prop:sectionwisefilteredgivesfiltered}
	Say that $ \BB $ is given as a left exact accessible localisation $ L \colon \PSh(\CC) \rightarrow \BB $ where $ \CC $ is a small $ \infty $-category. Let $ \I{J} $ be any $ \PSh(\CC) $-category such $ \I{J}(c) $ is filtered for every $ c \in \CC$.
	Then $ L \I{J} $ is a filtered $ \BB $-category. 
\end{proposition}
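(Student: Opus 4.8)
The strategy is to use the section-wise criterion for finite limits of Proposition~\ref{prop:critfinlim} to reduce to a statement about global colimit functors, to verify that statement over $\PSh(\CC)$ by a direct computation, and then to transport it along the algebraic morphism $L$. Call a triple $(\CC,L,\I{J})$ \emph{admissible} if it has the form occurring in the statement. Since $\IFin_\BB=\ILConst_{\Fin_\SS}$ by Remark~\ref{rem:FinBDoctrine} and $\Univ$ is complete and cocomplete, $L\I{J}$ is filtered (i.e.\ $\colim_{L\I{J}}$ is $\IFin_\BB$-continuous, i.e.\ preserves finite limits) if and only if, by Proposition~\ref{prop:critfinlim}(2) applied to $\colim_{L\I{J}}\colon\iFun{L\I{J}}{\Univ}\to\Univ$, the functor $\colim_{L\I{J}}\colon\iFun{L\I{J}}{\Univ}(A)\to\Univ(A)$ preserves finite limits for every $A\in\BB$. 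By étale base change this functor is the global colimit functor $\Fun_{\Over\BB A}(\pi_A^\ast(L\I{J}),\Univ[\Over\BB A])\to\Over\BB A$. Now if $\bar A\in\PSh(\CC)$ is the image of $A$ under $\BB\into\PSh(\CC)$, then $\Over\BB A$ is a left exact accessible localisation $L'$ of $\Over{\PSh(\CC)}{\bar A}\simeq\PSh(\CC')$, where $\CC'$ is the (small) category of elements of $\bar A$, one has $\pi_A^\ast L\simeq L'\pi_{\bar A}^\ast$, and $\pi_{\bar A}^\ast\I{J}$ is a $\PSh(\CC')$-category whose section over $(c,x)\in\CC'$ is $\I{J}(c)$, hence filtered. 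Thus $(\CC',L',\pi_{\bar A}^\ast\I{J})$ is admissible, and it suffices to prove: \emph{for every admissible triple, the functor $\colim\colon\Fun_{\BB}(L\I{J},\Univ[\BB])\to\BB$ preserves finite limits.}

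Consider first $\BB=\PSh(\CC)$ with $L=\id$. Here $\Fun_{\PSh(\CC)}(\I{J},\Univ[\PSh(\CC)])$ is the $\infty$-category of left fibrations over $\I{J}$; since internal left fibrations over a presheaf topos are detected section-wise over $\CC$, straightening identifies this with $\Fun(\int_\CC\I{J},\SS)$, where $\int_\CC\I{J}$ is the total $\infty$-category of the presheaf $c\mapsto\I{J}(c)$. Recalling (as used already in the proof of Proposition~\ref{prop:characterisationWeaklyFiltered}) that a colimit $\colim_\I{J}d$ is the groupoidification of the left fibration classified by $d$, and that groupoidification and finite limits in $\PSh(\CC)$ are computed section-wise, the global colimit functor is identified with $F\mapsto\big(c\mapsto\colim_{j\in\I{J}(c)}F(c,j)\big)$. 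As every $\I{J}(c)$ is filtered, $\colim_{j\in\I{J}(c)}$ commutes with finite limits in $\SS$; since all of this is section-wise over $\CC$, the functor preserves finite limits.

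For a general admissible triple, let $f\colon\BB\to\PSh(\CC)$ be the geometric morphism with $f^\ast=L$ and $f_\ast$ the inclusion. An algebraic morphism preserves final and initial functors, hence left fibrations, commutes with groupoidification (being a left adjoint), and is compatible with the Grothendieck construction; it follows that $f^\ast(\colim_\I{J}d)\simeq\colim_{f^\ast\I{J}}(\psi\circ f^\ast d)$ for the base-change functor $\psi\colon f^\ast\Univ[\PSh(\CC)]\to\Univ[\BB]$. Writing $\Psi\colon\Fun_{\PSh(\CC)}(\I{J},\Univ[\PSh(\CC)])\to\Fun_{\BB}(f^\ast\I{J},\Univ[\BB])$ for $d\mapsto\psi\circ f^\ast d$, this says $\colim\circ\Psi\simeq f^\ast\circ\colim$, which preserves finite limits by the previous step together with the left exactness of $L=f^\ast$. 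Moreover, under the adjunction equivalence $\Fun_{\BB}(f^\ast\I{J},\Univ[\BB])\simeq\Fun_{\PSh(\CC)}(\I{J},f_\ast\Univ[\BB])$ the functor $\Psi$ corresponds to postcomposition along the functor $\iota\colon\Univ[\PSh(\CC)]\to f_\ast\Univ[\BB]$ adjoint to $\psi$, whose section-wise components are the left exact localisations $\PSh(\CC_{/c})\simeq\Over{\PSh(\CC)}{y(c)}\to\Over{\BB}{Ly(c)}$. Hence $\iota$, and therefore $\Psi$, is a left exact reflective localisation, so $\Psi$ admits a fully faithful, limit-preserving right adjoint $j$ with $\Psi j\simeq\id$; consequently $\colim\simeq(\colim\circ\Psi)\circ j$ preserves finite limits, as desired.

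The main obstacle is the last paragraph: one must carefully establish the base-change compatibilities (levelwise $L$ preserving left fibrations, commuting with groupoidification and the Grothendieck construction, and being compatible with étale base change of slice topoi), the adjunction equivalence $\Fun_{\BB}(f^\ast\I{J},\Univ[\BB])\simeq\Fun_{\PSh(\CC)}(\I{J},f_\ast\Univ[\BB])$, and the identity $\colim\circ\Psi\simeq f^\ast\circ\colim$; granting these, the conclusion is formal.
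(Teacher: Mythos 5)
Your proof is correct and follows essentially the same route as the paper's: you reduce to the presheaf case via the left exact reflective localisation of universes induced by $L$ (your identity $\colim_{L\I{J}}\simeq (L\circ\colim_{\I{J}})\circ j$ is precisely the paper's $\colim \simeq L\,\colim_{\I{J}}\,i_\ast$, with $j$ the limit-preserving fully faithful right adjoint), and you settle the presheaf case by a section-wise computation using that each $\I{J}(c)$ is filtered. The only differences are organisational: you externalise the internal finite-limit condition upfront by slicing over all $A\in\BB$ and use the straightening over the total category $\int_{\CC}\I{J}$, whereas the paper stays internal and instead identifies the global colimit functor with groupoidification of left fibrations, checked via the conservative family of evaluation functors $(\ev_c)_{c\in\CC}$.
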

\begin{proof}
	Let $i\colon\BB\into\PSh(\CC)$ be the inclusion. Since $L$ is left exact, it induces a functor of $ \PSh(\CC) $-categories $L\colon \Univ[\PSh(\CC)] \rightarrow i\Univ[\BB] $ that for every $ A \in \PSh(\CC) $ is given by 
	\[
	\Over{L}{A} \colon \Over{\PSh(\CC)}{A} \to \Over{\BB}{LA}.
	\]
	By Proposition~\ref{prop:critfinlim}, the functor $L$ thus preserves finite limits.
	Furthermore, it readily follows from \cite[Proposition 3.2.9]{MWColimits} that $L$ admits a right adjoint $i$ that is fully faithful. 
	Therefore, we have a commutative diagram
	\[\begin{tikzcd}
		\IFun(\I{J},\Univ[\PSh(\CC)]) & \Univ[\PSh(\CC)]\\
		\IFun(\I{J},i\Univ[\BB]) & i\Univ[\BB]
		\arrow["{\colim_{\I{J}}}", from=1-1, to=1-2]
		\arrow["L", from=1-2, to=2-2]
		\arrow["{\colim_{\I{J}}}"', from=2-1, to=2-2]
		\arrow["L_\ast", to=2-1, from=1-1].
	\end{tikzcd}\]
	Since there is an equivalence $i \IFun(\I{J},i\Univ[\BB])\simeq\IFun(L\I{J},\Univ[\BB])$ that is natural in $\I{J}$, the lower colimit functor in the above diagram can be identified with the functor $ i\colim_{L\I{J}} \colon i\IFun(L\I{J},\Univ[\BB])\to i\Univ[\BB]$. Using that $i$ is fully faithful, we get that this map is equivalent to the composition $ L \colim_{\I{J}} i_*$. Therefore, it suffices to show that the upper colimit functor in the above diagram preserves finite limits. To see this, since $\Over{\PSh(\CC)}{c}\simeq\PSh(\Over{\CC}{c})$ for every $c\in\CC$ and since $\CC\into\PSh(\CC)$ generates $\PSh(\CC)$ under small colimits, it suffices to show that the functor $(-)^\gp\colon\LFib(\I{J})\to\PSh(\CC)$ commutes with finite limits, cf.~Proposition~\ref{prop:critfinlim} and~\cite[Proposition~4.4.1]{MWColimits}.
	Since for every $c\in\CC$ the evaluation functor $\ev_c\colon\PSh(\CC)\to \SS$ commutes with small colimits, the lax square
	\[
	\begin{tikzcd}
		\LFib_{\PSh(\CC)}(\I{J})\arrow[r, "(-)^\gp"]\arrow[d, "\ev_c"] & \BB\arrow[d, "\ev_c"]\\
		\LFib_{\SS}(\I{J}(c))\arrow[r, "(-)^\gp"] & \SS
	\end{tikzcd}
	\]
	is commutative. By assumption and the fact that $\ev_c$ preserves limits, the functor $(-)^\gp\circ\ev_c$ commutes with finite limits, hence so does $\ev_c\circ (-)^\gp$. The claim now follows from the fact that $(\ev_c)_{c\in\CC}\colon\PSh(\CC)\to \prod_{c\in\CC}\SS$ is a conservative functor.
\end{proof}

This leads to the main result of this section:
\begin{proposition}
	\label{prop:FinBRegular}
	The doctrine $ \IFin_\BB $ is sound.
\end{proposition}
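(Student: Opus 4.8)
The plan is to deduce soundness of $\IFin_\BB$ from weak soundness by invoking Corollary~\ref{cor:regularClassSoundWeaklySound}, which applies since $\IFin_\BB$ is regular and in particular left regular. To establish weak soundness I would first appeal to the equivalence $\pi_A^\ast\IFin_\BB\simeq\IFin_{\Over{\BB}{A}}$ from Remark~\ref{rem:BCFinB} in order to discard the étale base change; the statement to prove then becomes that, for an arbitrary $\infty$-topos $\BB$, every $\op(\IFin_\BB)$-cocomplete $\BB$-category is $\IFin_\BB$-filtered, i.e.\ filtered. Since $\IFin_\BB=\ILConst_{\Fin_{\SS}}$ by Remark~\ref{rem:FinBDoctrine} and the class $\Fin_{\SS}$ of finite $\infty$-categories is closed under opposites, we have $\op(\IFin_\BB)=\IFin_\BB$, so it suffices to show that every $\BB$-category $\I{J}$ with finite colimits is filtered. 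Note that, in contrast to the proof of Proposition~\ref{prop:kappaSmallSound}, the machinery of $\BB$-regular cardinals cannot be used directly here, as these are by definition uncountable; the replacement will be Proposition~\ref{prop:sectionwisefilteredgivesfiltered}.

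To bring that proposition to bear I would present $\BB$ as a left exact accessible localisation $L\colon\PSh(\CC)\to\BB$ with $\CC$ a small $\infty$-category and fully faithful right adjoint $i$. As recalled in the proof of Proposition~\ref{prop:sectionwisefilteredgivesfiltered}, $i$ induces a fully faithful functor $\Cat(\BB)\into\Cat(\PSh(\CC))$ with left adjoint $L$, so that $L(i\I{J})\simeq\I{J}$. The essential observation is then that the sections of the $\PSh(\CC)$-category $i\I{J}$ over a representable $c\in\CC$ are computed by $(i\I{J})(c)\simeq\I{J}(Lc)$, where $Lc\in\BB$ denotes the sheafification of $c$; indeed $i$ corresponds on sheaves of $\infty$-categories to restriction along $L^{\op}$, whence $(i\I{J})(c)=\Fun_{\PSh(\CC)}(c,i\I{J})\simeq\Fun_{\BB}(Lc,\I{J})=\I{J}(Lc)$. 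Because $\I{J}$ is $\IFin_\BB$-cocomplete, the dual of Proposition~\ref{prop:critfinlim} shows that each $\I{J}(Lc)$ admits finite colimits, hence is a filtered $\infty$-category by~\cite[Proposition~5.3.3.3]{htt}.

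With the hypotheses verified I would apply Proposition~\ref{prop:sectionwisefilteredgivesfiltered} to the $\PSh(\CC)$-category $i\I{J}$ and conclude that $L(i\I{J})$ is a filtered $\BB$-category; since $L(i\I{J})\simeq\I{J}$, this gives that $\I{J}$ is filtered, which proves weak soundness and therefore soundness of $\IFin_\BB$.

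The step I expect to be the main obstacle is the reduction itself: realising that the cardinal-theoretic argument of Proposition~\ref{prop:kappaSmallSound} does not apply in the finite (``$\omega$'') case and that the right tool is instead the sheaf-theoretic Proposition~\ref{prop:sectionwisefilteredgivesfiltered}, together with the bookkeeping needed to identify $(i\I{J})(c)$ with $\I{J}(Lc)$ and to check that the latter has finite colimits. Once this is set up, the remaining steps are formal.
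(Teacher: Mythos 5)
Your proof is correct and follows essentially the same route as the paper: reduce to weak soundness via Corollary~\ref{cor:regularClassSoundWeaklySound} and Remark~\ref{rem:BCFinB}, observe that a $\BB$-category with finite colimits is section-wise filtered, and conclude with Proposition~\ref{prop:sectionwisefilteredgivesfiltered}. The paper leaves implicit the points you spell out (that $\op(\IFin_\BB)\simeq\IFin_\BB$ and the identification $(i\I{J})(c)\simeq\I{J}(Lc)$ needed to apply that proposition), but these are exactly the intended details.
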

\begin{proof}
	Since $\IFin_\BB$ is by definition regular, Corollary~\ref{cor:regularClassSoundWeaklySound} implies that suffices it to show that $\IFin_\BB$ is weakly sound.
	Using Remark~\ref{rem:BCFinB}, we only need to show that every $\BB$-category $ \I{J} $ that has finite colimits is already filtered. But since $\I{J}$ in particular admits finite \emph{constant} colimits, it is section-wise filtered, hence the result follows from Proposition~\ref{prop:sectionwisefilteredgivesfiltered}.
\end{proof}

As a result of Proposition~\ref{prop:FinBRegular}, we can now classify the compact objects of $\Univ$. To that end, Let us denote by $\ILConst_{\SS^{\compact}}$ the full subcategory of $\Univ$ that arises as the essential image of the map $\SS^\compact\to\Univ$ (which is defined as the transpose of $\const_{\BB}\colon\SS^\compact\to\BB$). We now obtain:
\begin{corollary}
	\label{cor:InternallyCompactUniverse}
	There is an equivalence
	\begin{equation*}
		\Univ[\BB]^{\compact}\simeq\ILConst_{\SS^{\compact}}
	\end{equation*} 
	of full subcategories in $\Univ$.
\end{corollary}
\begin{proof}
	Since $\IFin_{\BB}$ is a sound doctrine by Proposition~\ref{prop:FinBRegular}, it has the decomposition property. We may therefore apply Proposition~\ref{prop:characterisationCompactObjectsPSh} to deduce an equivalence
	\begin{equation*}
		\Univ[\BB]^{\compact}\simeq\IRet_{\Univ}(\ISml^{\IFin_{\BB}}(1)).
	\end{equation*} 
	Hence, if $\I{G}\colon A\to \Univ^\compact$ is an arbitrary object, there is a cover $(s_i)\colon\bigsqcup_i A_i\onto A$ in $\BB$ such that $s_i^\ast\I{G}$ is a retract of an object in $\ISml^{\IFin_{\BB}}(1)$ in context $A_i$, for every $i$. By further refining this cover, we can furthermore assume that for each $i$ there is a finite $\Over{\BB}{A_i}$-category $\I{J}_i$ such that $\pi_{A_i}^\ast\I{G}$ is a retract of $\I{J}_i^\gp$. Hence $s_i^\ast\I{G}$ is a retract of an object in $\ILConst_{\SS^{\compact}}$ in context $A_i$, so that Corollary~\ref{cor:LConstPO} implies that $s_i^\ast\I{G}$ is itself contained in $\ILConst_{\SS^{\compact}}$, which necessarily implies that $\I{G}$ is contained in $\ILConst_{\SS^{\compact}}$. 
	Conversely, if $\I{G}$ is an object of $\ILConst_{\SS^\compact}$ in context $A\in\BB$, we can find a cover $(s_i)\colon\bigsqcup_i A_i\onto A$ in $\BB$ such that $s_i^\ast\I{G}$ is a constant $\Over{\BB}{A_i}$-groupoid coming from a compact $\infty$-groupoid, which in turn implies that $s_i^\ast\I{G}$ is a retract of a constant $\Over{\BB}{A_i}$-groupoid coming from a  \emph{finite} $\infty$-groupoid. As this implies that $s_i^\ast\I{G}$ is a retract of an object in $\ISml^{\IFin_{\BB}}(1)$ in context $A_i$, we conclude that $s_i^\ast\I{G}$ must be contained in $\Univ^\compact$, so that $\I{G}$ is contained in $\Univ^\compact$ as well.
\end{proof}

The goal for the remainder of this section is to discuss a more explicit description of filtered $\BB$-categories in the case where $\BB$ is \emph{hypercomplete}. To that end, recall that the filtered $\infty$-categories can be characterised as those $\infty$-categories $ \CC $ for which every map $ \KK \rightarrow \CC $ from a finite $ \infty $-category $\KK$ can be extended to a map from the cone $ \KK^\triangleright \rightarrow \CC $.
In other words, the $\infty$-category $ \CC $ is filtered if and only if for any finite $ \infty $-category $ \KK $ the functor $j^\ast\colon \Fun( \KK^\triangleright, \CC) \to \Fun(\KK,\CC) $ induced by restricting along the inclusion $ j \colon \KK \into  \KK^\triangleright $ is essentially surjective.
This characterisation admits an immediate internal analogue:

\begin{definition}
	\label{def:preFiltered}
	A $ \BB $-category $ \I{J} $ is called \emph{quasi-filtered} if for every finite $ \infty $-category $ \KK $ the functor $j^* \colon \IFun(\KK^\triangleright,\I{J}) \to \IFun(\KK,\I{J})$ is essentially surjective.
\end{definition}
As the terminology suggests, every filtered $\BB$-category is quasi-filtered. To prove this, we require the following lemma, which gives a very explicit description of the notion of quasi-filteredness:
\begin{lemma}
	\label{lem:quasi-filtered=delignefiltered}
	Let $ \I{J} $ be a $ \BB $-category.
	Then $ \I{J} $ is quasi-filtered if and only if for any $ A \in \BB $ and any diagram $ \KK \rightarrow \I{J}(A) $ where $ \KK $ is a finite $ \infty $-category there exists a cover $ (s_i)_i \colon \bigsqcup_i A_i \onto A $ in $\BB$ such that for every $ i $ we can find a map $ \KK^\triangleright \rightarrow \I{J}(A_i) $ making the diagram
	\[\begin{tikzcd}
		\KK & {\I{J}(A)} \\
		{\KK^\triangleright} & {\I{J}(A_i)}
		\arrow[from=1-1, to=1-2]
		\arrow["{s_i^*}", from=1-2, to=2-2]
		\arrow[from=1-1, to=2-1,"j", hook]
		\arrow[from=2-1, to=2-2]
	\end{tikzcd}\]
	commute.
\end{lemma}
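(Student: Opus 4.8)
The plan is to reduce the statement to the standard characterisation of effective epimorphisms in an $\infty$-topos, after translating the functors $j^*\colon[\KK^\triangleright,\I{J}]\to[\KK,\I{J}]$ into maps of sheaves on $\BB$. First I would identify the sections of these internal functor $\BB$-categories: writing $\KK^\triangleright$ for the (finite) cone $\infty$-category, we have $[\KK^\triangleright,\I{J}]=\I{J}^{(\KK^\triangleright)}$ and $[\KK,\I{J}]=\I{J}^{(\KK)}$ in the notation for powering from \S~\ref{sec:recollection}. From the adjunction $\map{\Cat(\BB)}(\CC\otimes -,-)\simeq\map{\Cat(\BB)}(-,(-)^{(\CC)})$, evaluated on objects of the form $A\times\Delta^n$ with $A\in\BB$ and using $\const_\BB(\CC)\times\const_\BB(\Delta^n)\simeq\const_\BB(\CC\times\Delta^n)$, one extracts a natural equivalence of $\infty$-categories $\I{J}^{(\CC)}(A)\simeq\Fun(\CC,\I{J}(A))$ for every $\infty$-category $\CC$. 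Under these equivalences the functor $j^*$ corresponds on sections over $A\in\BB$ to restriction along the inclusion $j\colon\KK\into\KK^\triangleright$, i.e.\ to $j^*\colon\Fun(\KK^\triangleright,\I{J}(A))\to\Fun(\KK,\I{J}(A))$.

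Next I would apply the characterisation of essential surjectivity recalled in \S~\ref{sec:recollection}: a functor of $\BB$-categories is essentially surjective if and only if the induced map on objects is a cover (effective epimorphism) in $\BB$. Thus $\I{J}$ is quasi-filtered if and only if, for every finite $\infty$-category $\KK$, the map $(j^*)_0\colon[\KK^\triangleright,\I{J}]_0\to[\KK,\I{J}]_0$ is an effective epimorphism. Since the functor $(-)_0\colon\Cat(\BB)\to\BB$ agrees with the core $\BB$-groupoid functor, on the level of sheaves of spaces it sends a $\BB$-category $\I{E}$ to the object $A\mapsto\I{E}(A)^\simeq$; combined with the previous paragraph, $(j^*)_0$ is the morphism of sheaves given by $A\mapsto\big(j^*\colon\Fun(\KK^\triangleright,\I{J}(A))^\simeq\to\Fun(\KK,\I{J}(A))^\simeq\big)$.

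Finally I would invoke the standard local description of effective epimorphisms \cite[\S~6.2.3]{htt}: a map $f\colon X\to Y$ in $\BB$ is an effective epimorphism if and only if for every $A\in\BB$ and every section $y\colon A\to Y$ there is a cover $(s_i)_i\colon\bigsqcup_i A_i\onto A$ such that each $s_i^*y$ factors through $f$ up to equivalence. Applied to $f=(j^*)_0$: a section $y\colon A\to[\KK,\I{J}]_0$ is exactly a diagram $\KK\to\I{J}(A)$; a factorisation of $s_i^*y$ through $(j^*)_0$ over $A_i$ is exactly a functor $\KK^\triangleright\to\I{J}(A_i)$ whose restriction along $j$ is equivalent to $s_i^*$ of that diagram, which is precisely the commutativity of the square displayed in the statement. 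As the quantifier over finite $\infty$-categories $\KK$ is the same on both sides, this proves the equivalence in both directions at once. I expect no serious obstacle here; the only points demanding care are the identification $[\KK^\triangleright,\I{J}](A)\simeq\Fun(\KK^\triangleright,\I{J}(A))$ compatibly with the restriction functor $j^*$, and pinning down the correct ``locally surjective on $\pi_0$'' form of the effective-epimorphism condition, both of which are routine.
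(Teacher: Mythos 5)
Your proposal is correct and follows essentially the same route as the paper: both reduce quasi-filteredness to the statement that $(j^\ast)^\simeq\colon[\KK^\triangleright,\I{J}]^\simeq\to[\KK,\I{J}]^\simeq$ is a cover in $\BB$ (essential surjectivity being detected on cores) and then translate ``cover'' into the local lifting of sections. The only cosmetic difference is that you cite the standard local-sections characterisation of effective epimorphisms wholesale, whereas the paper proves the two directions by hand — pulling the cover back along the section $A\to[\KK,\I{J}]^\simeq$ for one implication, and applying the hypothesis to the tautological identity section of $[\KK,\I{J}]^\simeq$ for the other — which is exactly the content of the characterisation you invoke.
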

\begin{proof}
	Let us first assume that $ \I{J} $ is quasi-filtered.
	Choose a diagram $  \KK \rightarrow \I{J}(A) $ that corresponds to a map $ A \to \IFun(\KK,\I{J})$, and let us form the pullback square
	\[\begin{tikzcd}
		P & {\IFun(\KK^\triangleright,\I{J})^\simeq} \\
		A & {\IFun(\KK,\I{J})^\simeq}.
		\arrow["{(j^*)^\simeq}", from=1-2, to=2-2]
		\arrow[from=1-1, to=1-2]
		\arrow[from=1-1, to=2-1,"s"]
		\arrow[from=2-1, to=2-2]
	\end{tikzcd}\]
	Since $ j^* $ is essentially surjective, $ (j^*)^\simeq  $ is a cover~\cite[Corollary 3.8.12]{MYoneda}, hence so is the map $ s $. 
	Thus $ s \colon P \onto A $ gives the desired cover.
	For the converse, we may pick the diagram $ \KK \rightarrow \I{J}(\IFun(\KK,\I{J})^\core) $ that is determined by the identity $\id \colon \IFun(\KK,\I{J})^\simeq \rightarrow \IFun(\KK,\I{J})^\simeq $. 
	By assumption we may now find a cover $ (s_i)_i \colon \bigsqcup_i A_i \rightarrow  \IFun(\KK, \I{J})^\simeq $ such that the diagram
	\[\begin{tikzcd}
		{\bigsqcup_i A_i} & {\IFun(\KK^\triangleright,\I{J})^\simeq} \\
		{\IFun(\KK, \I{J})^\simeq} & {\IFun(\KK, \I{J})^\simeq}
		\arrow["\id", from=2-1, to=2-2]
		\arrow["{j^*}"', from=1-2, to=2-2]
		\arrow[from=1-1, to=1-2]
		\arrow[two heads, from=1-1, to=2-1]
	\end{tikzcd}\]
	commutes.
	Thus $ j^* $ is also a cover, as desired.
\end{proof}
\begin{proposition}
	\label{prop:FilteredIsPrefiltered}
	Every filtered $\BB$-category is quasi-filtered.
\end{proposition}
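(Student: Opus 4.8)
The plan is to deduce quasi-filteredness from weak filteredness by feeding the latter into Quillen's theorem~A and then quoting Lemma~\ref{lem:quasi-filtered=delignefiltered}. So let $\I{J}$ be a filtered $\BB$-category and let $\KK$ be a finite $\infty$-category; we must show that $j^*\colon[\KK^\triangleright,\I{J}]\to[\KK,\I{J}]$ is essentially surjective. Since $(-)^\op$ fixes $\varnothing$, $\Delta^0$ and $\Delta^1$ and preserves pushouts, the class $\Fin_\SS$ is stable under taking opposite categories, so $\const_\BB(\KK^\op)$ lies in $\IFin_\BB(1)$ by Remark~\ref{rem:FinBDoctrine}. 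By Corollary~\ref{cor:inclusionWFiltFilt} the $\BB$-category $\I{J}$ is weakly filtered, and applying Definition~\ref{def:weaklyFiltered} to the internal class $\IFin_\BB$ and the object $\const_\BB(\KK^\op)$ shows that the diagonal functor $\delta\colon\I{J}\to[\KK,\I{J}]$ is final (note $(\const_\BB(\KK^\op))^\op\simeq\const_\BB(\KK)$, so the target is indeed $[\KK,\I{J}]$ and not $[\KK^\op,\I{J}]$).

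Next I would apply Quillen's theorem~A for $\BB$-categories~\cite[Corollary~4.4.8]{Martini2021} to $\delta$. Since $\delta$ is final, for every $A\in\BB$ and every object $q\colon A\to[\KK,\I{J}]$ — equivalently, every diagram $q\colon\KK\to\I{J}(A)$ — the comma $\Over{\BB}{A}$-category $\I{Q}:=\pi_A^\ast\I{J}\times_{\pi_A^\ast[\KK,\I{J}]}\Under{(\pi_A^\ast[\KK,\I{J}])}{q}$ of cocones under $q$ satisfies $\I{Q}^\gp\simeq 1$ in $\Over{\BB}{A}$. Now the canonical functor $\I{Q}\to\I{Q}^\gp$ is essentially surjective, so on cores the structure map $\I{Q}^\simeq=\I{Q}_0\to A$ is a cover in $\BB$. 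On the other hand, unwinding the comma construction, an object of $\I{Q}$ in a context $s\colon B\to A$ consists of an object $j\in\I{J}(B)$ together with a natural transformation $s^\ast q\to\const_j$ in $\Fun(\KK,\I{J}(B))$ — that is, by the standard identification of the undercategory of a diagram with the fibre of restriction along the cone inclusion, a functor $\KK^\triangleright\to\I{J}(B)$ restricting to $s^\ast q$ along $j\colon\KK\into\KK^\triangleright$. In particular $\I{Q}_0$ is identified, over $A$, with the fibre of $(j^*)^\simeq\colon[\KK^\triangleright,\I{J}]^\simeq\to[\KK,\I{J}]^\simeq$ over $q$, and the tautological object of $\I{Q}$ over $A_0:=\I{Q}_0$ supplies an extension of $s^\ast q$ to $\KK^\triangleright$ defined over the cover $s\colon A_0\onto A$.

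This is precisely the criterion of Lemma~\ref{lem:quasi-filtered=delignefiltered}: for every $A\in\BB$ and every diagram $\KK\to\I{J}(A)$ with $\KK$ finite there is a cover of $A$ — here the single cover $A_0\onto A$ — along which the diagram extends to a cone. Hence $\I{J}$ is quasi-filtered, as claimed. The only point that is not purely formal is the identification in the middle paragraph of the comma $\BB$-category $\I{Q}$ with the internal $\BB$-category of extensions of $q$ along $\KK\into\KK^\triangleright$ (and of $\I{Q}_0$ with the corresponding fibre of $(j^*)^\simeq$); this is the internal incarnation of the pullback square used in the proof of Lemma~\ref{lem:quasi-filtered=delignefiltered}, now read through Quillen's theorem~A, and it is the step where I would spend the most care.
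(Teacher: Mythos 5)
Your proof is correct and follows essentially the same route as the paper's: the paper also reduces to the criterion of Lemma~\ref{lem:quasi-filtered=delignefiltered} and uses that filteredness forces $\Under{\I{J}}{d}^\gp\simeq 1$, so that a cover on which the slice has a section provides the local cone extension. Your argument simply makes explicit the steps the paper compresses, namely deriving $\Under{\I{J}}{q}^\gp\simeq 1$ from weak filteredness of $\I{J}$ via finality of the diagonal and Quillen's theorem~A, and unwinding an object of the comma $\BB$-category as an extension along $\KK\into\KK^\triangleright$.
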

\begin{proof}
	Suppose that $\I{J}$ is a filtered $\BB$-category. In light of Lemma~\ref{lem:quasi-filtered=delignefiltered}, it suffices to show that for every finite $\infty$-category $\KK$, every diagram $d\colon \KK\to \I{J}$ locally extends to a map $\KK^\triangleright\to \I{J}$. Note that $\I{J}$ being filtered implies that $\Under{\I{J}}{d}^\gp\simeq 1$. Therefore there is a cover $A\onto 1$ in $\BB$ such that $\Under{\I{J}}{d}(A)$ is non-empty. Unwinding the definitions, this exactly provides the desired local extension of $d$. 
\end{proof}

In~\cite[\'Expose V, Definition 8.11]{SGA4} Deligne chose (a $1$-categorical analogue of) Definition~\ref{def:preFiltered} to \emph{define} filtered 1-categories internal to a $1$-topos, so one might be inclined to surmise that the notions of filteredness and quasi-filteredness coincide. In light of Proposition~\ref{prop:FilteredIsPrefiltered}, the second is always implied by the first, and the converse is in fact true in the case where $\BB\simeq\SS$ (see~\cite[Proposition~5.4.1.22]{htt}). For general $\infty$-topoi, however, this is no longer the case, the obstruction being the presence of non-trivial $\infty$-connected objects:
\begin{proposition}
	\label{prop:inftyConnectiveIsPreFiltered}
	Let $ \I{G} \in \BB $ be an $ \infty $-connective object.
	Then $ \I{G} $ is a quasi-filtered $ \BB $-category.
\end{proposition}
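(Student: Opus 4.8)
The plan is to reduce quasi-filteredness of $\I{G}$ to the statement that a certain diagonal map is an effective epimorphism, and then prove that statement by induction on the cellular description of finite $\infty$-categories; the hypothesis of $\infty$-connectivity will enter through the cancellation properties of $\infty$-connective morphisms in an $\infty$-topos.

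\emph{Reformulation.} Since $\I{G}$ is a $\BB$-groupoid, $\I{G}(A)=\Fun_{\BB}(A,\I{G})\simeq\map{\BB}(A,\I{G})$ is an $\infty$-groupoid for every $A\in\BB$. Hence a diagram $\KK\to\I{G}(A)$ with $\KK$ a finite $\infty$-category factors through the groupoidification $\KK^{\gp}$, and via the cotensor adjunction $\map{\CatS}(\KK^{\gp},\Fun_{\BB}(A,\I{G}))\simeq\map{\Cat(\BB)}(A,\I{G}^{(\KK^{\gp})})$ of the excerpt it amounts to a map $\phi\colon A\to\I{G}^{(\KK^{\gp})}$, where $\I{G}^{(S)}=\iFun{\const_{\BB}(S)}{\I{G}}$ is again a $\BB$-groupoid, i.e.\ an object of $\BB$. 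Because the cone point of the right cone $\KK^{\triangleright}$ is a terminal object, $(\KK^{\triangleright})^{\gp}\simeq 1$, so extending the given diagram along $\KK\into\KK^{\triangleright}$ over a member $A_i$ of a cover is exactly the same as factoring $\phi|_{A_i}$ through the constant–diagram map $\delta_{\KK^{\gp}}\colon\I{G}\to\I{G}^{(\KK^{\gp})}$ induced by $\KK^{\gp}\to 1$. By Lemma~\ref{lem:quasi-filtered=delignefiltered}, it follows that $\I{G}$ is quasi-filtered if and only if $\delta_{S}$ is an effective epimorphism in $\BB$ for every $\infty$-groupoid $S$ of the form $\KK^{\gp}$ with $\KK$ a finite $\infty$-category. (One direction pulls $\delta_S$ back along $\phi$; the converse takes $A=\I{G}^{(S)}$, $\phi=\id$, and uses right cancellation for effective epimorphisms.)

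\emph{The induction.} Since $(-)^{\gp}\colon\CatS\to\SS$ preserves colimits and carries $\varnothing,\Delta^{0},\Delta^{1}$ to $\varnothing,1,1$, and finite $\infty$-categories are generated by $\varnothing,\Delta^{0},\Delta^{1}$ under pushouts, every such $S=\KK^{\gp}$ lies in the smallest full subcategory $\mathcal F\subseteq\SS$ containing $\varnothing$ and $1$ and closed under pushouts. I would therefore prove the stronger claim that $\delta_{S}\colon\I{G}\to\I{G}^{(S)}$ is $\infty$-connective for every $S\in\mathcal F$, by induction along the generation of $\mathcal F$. For $S=\varnothing$ we have $\I{G}^{(\varnothing)}\simeq 1$ and $\delta_{\varnothing}\colon\I{G}\to 1$ is $\infty$-connective by the assumption that $\I{G}$ is an $\infty$-connective object; for $S=1$ the map $\delta_{1}$ is the identity. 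For the inductive step, let $S=S_{1}\sqcup_{S_{0}}S_{2}$ with $\delta_{S_{0}},\delta_{S_{1}},\delta_{S_{2}}$ all $\infty$-connective. As $\const_{\BB}$ preserves pushouts and $\I{G}^{(-)}=\iFun{\const_{\BB}(-)}{\I{G}}$ carries colimits to limits, we get $\I{G}^{(S)}\simeq P:=\I{G}^{(S_{1})}\times_{\I{G}^{(S_{0})}}\I{G}^{(S_{2})}$, and under this equivalence $\delta_{S}$ is the canonical map $\I{G}\to P$. The leg $S_{0}\to S_{2}$ induces $c\colon\I{G}^{(S_{2})}\to\I{G}^{(S_{0})}$ with $c\circ\delta_{S_{2}}\simeq\delta_{S_{0}}$; since $\delta_{S_{2}}$ and $\delta_{S_{0}}$ are $\infty$-connective, the cancellation property of $\infty$-connective morphisms forces $c$ to be $\infty$-connective, hence so is its base change $\pr_{1}\colon P\to\I{G}^{(S_{1})}$. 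Finally $\pr_{1}\circ\delta_{S}\simeq\delta_{S_{1}}$ is $\infty$-connective, so applying the cancellation property once more gives that $\delta_{S}$ is $\infty$-connective. This proves the claim, and since every $\infty$-connective morphism is an effective epimorphism, the reformulation shows that $\I{G}$ is quasi-filtered.

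\emph{Main obstacle.} The crux is the inductive step, which rests on two standard facts about $\infty$-connective morphisms in an $\infty$-topos (see~\cite[\S~6.5.1]{htt}): stability under base change, and the cancellation property \emph{if $g$ and $g\circ f$ are $\infty$-connective then so is $f$}. It is exactly this cancellation, together with the base case $S=\varnothing$, that uses the full force of the hypothesis that $\I{G}\to 1$ is $\infty$-connective rather than merely an effective epimorphism — and this is genuinely needed, since $\BB$ is not assumed hypercomplete (for $\BB=\SS$ the only $\infty$-connective objects are the contractible ones). A secondary point to check carefully is the identification $\map{\SS}(\KK,\I{G}(A))\simeq\map{\Cat(\BB)}(A,\I{G}^{(\KK^{\gp})})$ in the reformulation, which follows from $\I{G}(A)$ being an $\infty$-groupoid together with the defining adjunctions of the tensoring and cotensoring.
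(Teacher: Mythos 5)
Your proof is correct and follows essentially the same route as the paper: for a $\BB$-groupoid, quasi-filteredness reduces to the diagonal maps $\I{G}\to\I{G}^{(\KK^{\gp})}\simeq[\KK,\I{G}]$ being covers for all finite $\KK$, and this is deduced from the $\infty$-connectivity of $\I{G}$. The only difference is that the paper disposes of the key step by citing the well-known characterisation of $\infty$-connective objects in terms of these diagonals, whereas you prove the needed implication yourself by induction over the pushout-cell structure of finite $\infty$-categories, using base-change stability and the cancellation properties of $\infty$-connective morphisms — a sound, self-contained substitute for the citation rather than a different strategy.
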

\begin{proof}
	It is well-known (see \cite{300682}) that $\I{G}$ is $ \infty$-connective if and only if for an arbitrary finite $\infty$-category $\KK$, the diagonal map $ \I{G} \to \IFun(\KK, \I{G}) $ is a cover. This clearly implies the claim.
\end{proof}
Since any filtered $ \BB $-groupoid is necessarily equivalent to the final object, Proposition~\ref{prop:inftyConnectiveIsPreFiltered} shows that any non-trivial $ \infty $-connective object gives rise to a $ \BB $-category that is quasi-filtered but not filtered.
In the remainder of this section we will show that is essentially the only obstruction.
More precisely we will show that if $ \BB $ is hypercomplete, then any quasi-filtered $ \BB $-category is filtered.

\begin{lemma}
	\label{lem:Pre-filtIsContrInHypercomp}
	Let $ \I{C} $ be a quasi-filtered $ \BB $-category and assume that $ \BB $ is hypercomplete.
	Then $ \I{C}^\gp \simeq 1 $. 
\end{lemma}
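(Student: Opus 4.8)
The plan is to show that the $\BB$-groupoid $\I{C}^\gp$ is $\infty$-connective; since $\BB$ is hypercomplete, every $\infty$-connective object is terminal, which yields $\I{C}^\gp\simeq 1$. To verify $\infty$-connectivity I would use the criterion recalled in the proof of Proposition~\ref{prop:inftyConnectiveIsPreFiltered} (cf.~\cite{300682}): an object $\I{G}\in\BB$ is $\infty$-connective if and only if for every finite $\infty$-category $\KK$ the diagonal $\I{G}\to[\KK,\I{G}]$ is an effective epimorphism. For $\I{G}=\I{C}^\gp$ this admits a convenient reformulation: since $\I{C}^\gp$ is a $\BB$-groupoid one has $[\KK^\triangleright,\I{C}^\gp]\simeq\I{C}^\gp$ (the realisation $|\KK^\triangleright|$ of the cone is contractible, so evaluation at the cone point is an equivalence), and under this identification the restriction functor $j^\ast\colon[\KK^\triangleright,\I{C}^\gp]\to[\KK,\I{C}^\gp]$ becomes the diagonal. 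Hence it suffices to prove that $j^\ast$ is an effective epimorphism — equivalently, as source and target are $\BB$-groupoids, that it is essentially surjective — for every finite $\infty$-category $\KK$.

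The first, formal, half of the argument produces an effective epimorphism $\I{C}^\gp\onto[\KK,\I{C}]^\gp$ through which the diagonal factors. Indeed, the diagonal $\const_\KK\colon\I{C}\to[\KK,\I{C}]$ equals $j^\ast\circ\const_{\KK^\triangleright}$, where $\const_{\KK^\triangleright}\colon\I{C}\to[\KK^\triangleright,\I{C}]$ is the constant-diagram functor. The latter is final, since it admits a left adjoint, namely evaluation at the cone point (a terminal object of $\KK^\triangleright$, so that $\colim_{\KK^\triangleright}\simeq\ev_\infty$ exists unconditionally); hence $(\const_{\KK^\triangleright})^\gp$ is an equivalence by Example~\ref{ex:UCocompleteWeaklyFiltered}. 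On the other hand $j^\ast\colon[\KK^\triangleright,\I{C}]\to[\KK,\I{C}]$ is essentially surjective by the very definition of quasi-filteredness, so it induces an effective epimorphism on groupoidifications. Composing, $(\const_\KK)^\gp\colon\I{C}^\gp\to[\KK,\I{C}]^\gp$ is an effective epimorphism, and since it is carried to the diagonal $\delta_\KK$ by the canonical comparison map $[\KK,\I{C}]^\gp\to[\KK,\I{C}^\gp]$, we are reduced to showing that this last map is an effective epimorphism.

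This is the crux. Unwinding definitions, the comparison map $[\KK,\I{C}]^\gp\to[\KK,\I{C}^\gp]$ is an effective epimorphism precisely if every $\KK$-indexed diagram in $\I{C}^\gp$ over some $A\in\BB$ becomes, after passing to a cover of $A$, equivalent to a constant diagram. Given such a diagram $D$, I would first lift it, locally on $A$, to an honest $\KK$-indexed diagram $d$ in $\I{C}$: the unit $\I{C}\onto\I{C}^\gp$ is essentially surjective, so objects lift after a cover, and the relevant compatibilities (which, since $D$ factors through the $\BB$-groupoid $\KK^\gp$, amount to lifting morphisms of $\I{C}^\gp$ to zigzags in $\I{C}$) can be straightened locally using that $[\KK,\I{C}]$ is again quasi-filtered — a point that follows from Lemma~\ref{lem:quasi-filtered=delignefiltered} together with the elementary observation that there is a functor $\KK^\triangleright\times\LL\to(\KK\times\LL)^\triangleright$ restricting to the identity on $\KK\times\LL$, so that a cocone over $\KK\times\LL$ restricts to an extension of the original $\KK^\triangleright\times\LL$-diagram. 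Once $D$ is lifted to $d\colon\KK\to\I{C}$ over a cover $A'\onto A$, Lemma~\ref{lem:quasi-filtered=delignefiltered} provides, after refining $A'$ further, a cocone $\bar d\colon\KK^\triangleright\to\I{C}$ extending $d$, i.e.\ a morphism $d\to\const_c$ in $[\KK,\I{C}]$ with $c=\bar d(\infty)$. Postcomposing with $\I{C}\to\I{C}^\gp$ and using that $[\KK,\I{C}^\gp]$ is a $\BB$-groupoid — so that the induced morphism $D\to\const_{uc}$ is automatically an equivalence — exhibits $D$ as locally equivalent to a constant diagram, as desired.

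The main obstacle is the local lifting step above: reconstructing a genuine $\KK$-shaped diagram in $\I{C}$ out of a diagram valued in the localisation $\I{C}^\gp$, which a priori only records a functor out of the $\infty$-groupoid $\KK^\gp$. This is the internal counterpart of the classical fact that in a filtered $\infty$-category every finite zigzag can be straightened, and making it precise requires careful bookkeeping of the covers of $A$ involved; the remainder of the proof is a formal manipulation of finality, essential surjectivity and effective epimorphisms.
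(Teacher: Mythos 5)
Your opening reduction is fine and matches the paper's: by hypercompleteness it suffices to show $\I{C}^\gp$ is $\infty$-connective, i.e.\ that for every finite $\infty$-category $\KK$ the diagonal $\I{C}^\gp\to[\KK,\I{C}^\gp]$ is an effective epimorphism, and your factorisation of the diagonal as the comparison map $[\KK,\I{C}]^\gp\to[\KK,\I{C}^\gp]$ precomposed with the effective epimorphism $(\const_\KK)^\gp$ (finality of $\const_{\KK^\triangleright}$ plus essential surjectivity of $j^\ast$, which is exactly quasi-filteredness) is correct. The problem is that everything you have done up to that point is formal; the entire content of the lemma is concentrated in the step you label the ``crux'', and there you do not give an argument. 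Lifting a $\KK$-indexed diagram valued in the groupoidification $\I{C}^\gp$ back to a $\KK$-indexed diagram in $\I{C}$, even locally, is not a matter of bookkeeping: $D$ factors through $\KK^\gp$, which can be an arbitrary finite homotopy type, and producing from a map $\KK^\gp\to\I{C}^\gp$ an honest $\KK$-shaped diagram in $\I{C}$ is the internal analogue of representing a map $|K|\to|\CC|$ by a functor $K\to\CC$ --- classically this requires subdividing $K$ and then straightening the resulting zigzags using filteredness, an induction over cells that your sketch does not carry out. The tools you invoke (essential surjectivity of $\I{C}_0\to\I{C}^\gp$ on objects, quasi-filteredness of $[\KK,\I{C}]$) let you extend diagrams \emph{in} $\I{C}$ to cocones, but they say nothing about the fibres of $[\KK,\I{C}]^\gp\to[\KK,\I{C}^\gp]$, which is what the lifting statement is about; as it stands, the claim that the comparison map is an effective epimorphism is essentially equivalent in difficulty to the lemma itself.

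The paper avoids lifting altogether, and this is the idea your proposal is missing. Since $\IFin_\BB$ is a sound and regular doctrine, the decomposition property lets one write $\I{C}$ as a filtered colimit of a diagram $d\colon\I{J}\to\IFin_\BB$ of finite $\BB$-categories. Because $(-)^\gp$ is cocontinuous and $\KK$ is a compact object of $\ICat_{\BB}$, a map $g\colon\KK\to\I{C}^\gp$ factors, after passing to a cover $\bigsqcup_k A_k\onto 1$, through $d(j_k)^\gp\to\pi_{A_k}^\ast\I{C}^\gp$ for suitable objects $j_k\colon A_k\to\I{J}$. After a further cover one may assume $d(j_k)$ is a constant finite $\infty$-category, and then quasi-filteredness of $\I{C}$ applied to the structure map $d(j_k)\to\pi_{A_k}^\ast\I{C}$ produces, locally, an extension to the cone $d(j_k)^\triangleright$; since $(d(j_k)^\triangleright)^\gp\simeq 1$, the map $d(j_k)^\gp\to\pi_{A_k}^\ast\I{C}^\gp$, and hence $g$, locally factors through the final object, which is exactly the effective-epimorphism statement you need. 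If you want to salvage your route, you would have to supply a proof of the local lifting statement, and the natural way to do so is precisely this compactness-plus-decomposition argument --- at which point the detour through the comparison map $[\KK,\I{C}]^\gp\to[\KK,\I{C}^\gp]$ becomes unnecessary.
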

\begin{proof}
	Since $ \BB $ is hypercomplete, it suffices to see that the diagonal map $ \I{C}^\gp \to \map{\Univ}(K, \I{C}^\gp) $ is a cover for any finite $ \infty$-groupoid $K$, as in this case $ \I{C}^\gp $ is $\infty $-connective (see \cite{300682} again).
	So it is enough to see that for every $A\in\BB$, every map $f \colon K \to \I{C}^\gp(A) $ from a finite $\infty$-groupoid $K$ locally factors through the point.
	Replacing $ \BB $ by $ \BB_{/A} $ we may assume that $ A =1 $, so that $ f $ corresponds to a map $ g \colon K \to \I{C}^\gp $.
	Now recall that since the doctrine of finite $ \BB $-categories is sound and regular, we can find a filtered $\BB$-category $\I{J}$ and a diagram $d\colon \I{J}\to\IFin_{\BB}$ with colimit $\I{C}$.
	Since $ (-)^\gp $ is cocontinuous and $K$ is a compact object of $\Univ$ by Corollary~\ref{cor:InternallyCompactUniverse},
	 we obtain an equivalence
	\begin{equation*}
		\map{\Univ}(K,\I{C}^\gp)\simeq (\Under{\I{C}}{\map{\Univ}(K, d(-)^\gp)})^\gp.
	\end{equation*}
	Therefore, we may find a cover $ \bigsqcup_k A_k \onto 1 $ and objects $j_k\colon A_k\to \I{J}$ for each $k$ such that $ \pi_{A_k}^* g $ factors through the canonical map $d(j_k)^\gp\to \pi_{A_k}^\ast\I{C}^\gp  $.
	Since $ d(j_k) $ is a finite $ \BB_{/A} $-category we may pass to a further cover and can therefore assume that $ d(j_k)$ is the constant $\Over{\BB}{A_k}$-category associated to a finite $\infty$-category. Therefore, the assumption that $ \I{C} $ is quasi-filtered implies that locally the map $d(j_k)^\gp \to \pi_{A_k}^* \I{C}^\gp  $ factors through the final object, hence the claim follows.
\end{proof}

\begin{proposition}
	\label{prop:Prefilteredimpliesfiltered}
	Suppose that $ \BB $ is hypercomplete.
	Then any quasi-filtered $ \BB $-category is filtered.
\end{proposition}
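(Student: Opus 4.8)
The plan is to deduce the statement from the soundness of $\IFin_\BB$ (Proposition~\ref{prop:FinBRegular}): it suffices to prove that a quasi-filtered $\BB$-category $\I{J}$ is \emph{weakly} filtered, i.e.\ (Definition~\ref{def:weaklyFiltered}) that for every $A\in\BB$ and every $\I{I}\in\IFin_\BB(A)$ the diagonal $\diag\colon\pi_A^\ast\I{J}\to\iFun{\I{I}^\op}{\pi_A^\ast\I{J}}$ is final. Since $\IFin_\BB=\ILConst_{\Fin_\SS}$ (Remark~\ref{rem:FinBDoctrine}), since finality is a local condition~\cite[Remark~4.4.9]{Martini2021}, and since quasi-filteredness is stable under \'etale base change (immediate from Lemma~\ref{lem:quasi-filtered=delignefiltered}) while the slices $\Over{\BB}{A}$ remain hypercomplete, we may reduce to the following: for every hypercomplete $\infty$-topos $\BB$, every quasi-filtered $\BB$-category $\I{J}$ and every finite $\infty$-category $\KK$ (viewed as a constant $\BB$-category), the functor $\diag\colon\I{J}\to\iFun{\KK}{\I{J}}$ is final. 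By Quillen's theorem A~\cite[Corollary~4.4.8]{Martini2021} this amounts to checking that for every context $B$ and every diagram $d\colon B\to\iFun{\KK}{\I{J}}$ the comma $\BB$-category $\Under{\I{J}}{d}$ of $\diag$ over $d$ satisfies $(\Under{\I{J}}{d})^\gp\simeq1$. Unwinding definitions, $\Under{\I{J}}{d}\simeq\pi_B^\ast\I{J}\times_{\iFun{\KK}{\pi_B^\ast\I{J}}}\iFun{\KK^\triangleright}{\pi_B^\ast\I{J}}$ is the $\Over{\BB}{B}$-category of \emph{cocones under} the $\KK$-indexed diagram $\bar d\colon\KK\to\pi_B^\ast\I{J}$ corresponding to $d$.

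The crucial point is that this cocone $\BB$-category is again quasi-filtered. Indeed, fix a finite $\infty$-category $\LL$; by Lemma~\ref{lem:quasi-filtered=delignefiltered} (applied in $\Over{\BB}{B}$) it suffices to show that every section $\LL\to\Under{\I{J}}{d}(C)$ over a context $C\to B$ extends, after passing to a suitable cover of $C$, along $\LL\into\LL^\triangleright$. Now the datum of such a section is equivalent to that of a functor $\Psi\colon\KK\star\LL\to\I{J}(C)$ with $\Psi\vert_\KK=\bar d\vert_C$: a functor out of the join $\KK\star\LL$ extending $\bar d$ on $\KK$ is precisely an $\LL$-indexed diagram of cocones under $\bar d$, the restriction to $\LL$ recording the cocone points, the transversal maps $\bar d(k)\to\Psi(l)$ recording the cocone structure maps, and functoriality of $\Psi$ recording their compatibility. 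Since $\KK\star\LL$ is again a finite $\infty$-category and $\I{J}$ is quasi-filtered, Lemma~\ref{lem:quasi-filtered=delignefiltered} provides, after passing to a cover $(s_i)_i\colon\bigsqcup_i C_i\onto C$, extensions $\Psi_i\colon(\KK\star\LL)^\triangleright\to\I{J}(C_i)$ of $\pi_{C_i}^\ast\Psi$. By associativity of the join, $(\KK\star\LL)^\triangleright\simeq\KK\star\LL^\triangleright$, and since $\Psi_i$ extends $\Psi$ its restriction to $\KK$ is still $\bar d\vert_{C_i}$; hence $\Psi_i$ corresponds to the sought extension $\LL^\triangleright\to\Under{\I{J}}{d}(C_i)$.

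It remains to invoke Lemma~\ref{lem:Pre-filtIsContrInHypercomp}: since $\Over{\BB}{B}$ is hypercomplete and $\Under{\I{J}}{d}$ is a quasi-filtered $\Over{\BB}{B}$-category, we obtain $(\Under{\I{J}}{d})^\gp\simeq1$. As $B$ and $d$ were arbitrary, Quillen's theorem A shows that $\diag\colon\I{J}\to\iFun{\KK}{\I{J}}$ is final for every finite $\infty$-category $\KK$, so $\I{J}$ is weakly $\IFin_\BB$-filtered, whence filtered by soundness of $\IFin_\BB$.

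The step I expect to be most delicate is the identification in the first paragraph of the comma $\BB$-category $\Under{\I{J}}{d}$ of the diagonal functor with the $\BB$-category of cocones \emph{under} $\bar d$: the variance matters, since it is precisely the ``$\KK$ on the left'' placement in the join $\KK\star\LL$ that turns the extension problem for $\Under{\I{J}}{d}$ into a \emph{cone}-extension problem for a diagram in $\I{J}$, which quasi-filteredness of $\I{J}$ can solve. Closely related is the need to make the translation between $\LL$-indexed diagrams of cocones under $\bar d$ and functors out of $\KK\star\LL$ work internally to $\BB$, rather than merely for the $\infty$-categories of sections.
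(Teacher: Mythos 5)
Your proposal is correct and follows essentially the same route as the paper's proof: reduce via soundness of $\IFin_\BB$ and locality of finality to constant finite diagrams, show that the $\BB$-category of cocones under such a diagram is again quasi-filtered by translating the extension problem through the join (using that finite $\infty$-categories are closed under joins), and then conclude with Lemma~\ref{lem:Pre-filtIsContrInHypercomp} and Quillen's theorem A. The only differences are cosmetic (your explicit identification of the comma category of the diagonal with the cocone category, and the join associativity step, are carried out in the paper in the same way using $\diamond$ for the join).
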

\begin{proof}
	Let $ \I{C} $ be quasi-filtered. Since $\IFin_{\BB}$ is sound, we only have to verify that for any finite $ \BB $-category $ \I{K} $ the diagonal functor $ \I{C} \to \IFun(\I{K},\I{C}) $ is final.
	Since being final is a local property, we may assume that $ \I{K} $ is the constant $\BB$-category attached to some finite $ \infty $-category $ \KK $ (see~ Proposition~\ref{prop:FinBDoctrine}).
	Now for any diagram $d \colon \KK \to \I{C} $, we will show that the slice $\BB$-category $ \I{C}_{d/} $ is again quasi-filtered.
	To see this, let $\KK^\prime$ be a finite $\infty$-category and consider an arbitrary map $f \colon \KK^\prime \to  \I{C}_{d/}(A)$ for some $ A \in \BB $.
	Passing from $ \BB $ to $ \BB_{/A} $ and using that $ \pi_A^* (\I{C}_{d/}) \simeq (\pi_A^* \I{C}) _{\pi_A^*d /}$ we may assume that $ A\simeq1 $.
	Since $ \KK $ is constant, the global sections of $ \I{C}_{/d} $ recover the slice $\infty$-category $ (\Gamma\I{C})_{d/} $.
	Therefore $ f $ is given by a map $f' \colon \KK \diamond \KK' \to \I{C}(1) $ out of the join such that the restriction along $ \KK \into \KK \diamond \KK' $ recovers $ d $.
	But since finite $ \infty $-categories are stable under the join construction, we may find a covering $ (s_i)_i \colon \bigsqcup A_i \onto 1 $ and for every $ i $ an extension $ (\KK \diamond \KK')^\triangleright \to \I{C}(A_i) $ of $ \pi_{A_i}^* f' $.
	But these precisely correspond to maps $ (\KK^\prime)^\triangleright \to \I{C}(A_i) $ extending $ s_i^* \circ f \colon \KK^\prime \to \I{C}(A_i) $, which shows that $ \I{C}_{d/} $ is quasi-filtered and that therefore $ \I{C}_{d/}^\gp \simeq 1 $ by Lemma~\ref{lem:Pre-filtIsContrInHypercomp}.
	Repeating the above argument with $ \BB_{/A} $ instead of $ \BB $ we get that the same holds for a diagram $ d $ in any context $ A $, so the claim follows from Quillen's Theorem A \cite[Corollary 4.4.8]{MYoneda}.
\end{proof}

\section{Accessible $\BB$-categories}
\label{chap:accessible}
In the classical $1$-categorical literature, a \emph{$\kappa$-accessible} $1$-category is one that can be obtained as the free cocompletion of a small $1$-category under $\kappa$-filtered colimits~\cite{Lair1981, Makkai1989}. In~\cite[\S~5.4]{htt}, Lurie generalises this concept to $\infty$-categories. In this section we will introduce and study an analogous notion for $\BB$-categories, that of a \emph{$\I{U}$-accessible} $\BB$-category for any sound doctrine $\I{U}$. As with our discussion of $\I{U}$-filteredness, we draw much of our inspiration from ideas in~\cite{Adamek2002} and~\cite{rezk2021}. Our exposition is tailored to the study of presentable $\BB$-categories in \S~\ref{chap:presentable}, so we will not provide an exhaustive treatment of accessibility for $\BB$-categories, but rather set up only the basic machinery that we will need for our discussion of presentability later on. We begin in \S~\ref{sec:accessibility} by giving the definition of a $\I{U}$-accessible $\BB$-category and proving some basic results that will be useful later. In~\S~\ref{sec:accessibleFunctors}, we discuss accessible functors. In~\S~\ref{sec:UCompactAccessible}, we give a characterisation of $\I{U}$-accessible $\BB$-categories as those that are generated by $\I{U}$-compact objects under $\I{U}$-filtered colimits. Finally, we discuss the notion of \emph{$\I{U}$-flatness} in~\S~\ref{sec:flatness}.

\subsection{Accessibility}
\label{sec:accessibility}
If $\I{U}$ is an arbitrary internal class of $\BB$-categories and if $\I{C}$ is a $\BB$-category, we will use the notation $\IInd^{\I{U}}(\I{C})=\IPSh^{\IFilt_{\I{U}}}(\I{C})$ to denote the free $\IFilt_{\I{U}}$-cocompletion of $\I{U}$. We write $\Ind_{\BB}^{\I{U}}(\I{C})$ for the underlying $\infty$-category of global sections. If $\I{U}=\IFin_\BB$, we will simply write $\IInd(\I{C})$ for the associated free $\IFilt_{\IFin_\BB}$-cocompletion and $\Ind_{\BB}(\I{C})$ for its underlying $\infty$-category of global sections. We may  now define:
\begin{definition}
	\label{def:accessibility}
	Let $\I{U}$ be a sound doctrine. A large $\BB$-category $\I{D}$ is \emph{$\I{U}$-accessible} if there is a $\BB$-category $\I{C}$ and an equivalence $\I{D}\simeq\IInd^{\I{U}}(\I{C})$. A large $\BB$-category is called \emph{accessible} if it is $\I{U}$-accessible for some sound doctrine $\I{U}$.
\end{definition}
\begin{remark}
	\label{rem:BCInd}
	By combining Remark~\ref{rem:BCFilt} with~\cite[Proposition~7.1.11]{MWColimits}, we find that for every $A\in\BB$ there is a canonical identification $\pi_A^\ast\IInd^{\I{U}}(\I{C})\simeq\IInd[\Over{\BB}{A}]^{\pi_A^\ast\I{U}}(\pi_A^\ast\I{C})$ for every $\BB$-category $\I{C}$ and every sound doctrine $\I{U}$.
\end{remark}
\begin{remark}
	\label{rem:accessibilityRegularisation}
	In light of Proposition~\ref{prop:regularisationFilt} one has $\IInd^{\I{U}}(\I{C})\simeq\IInd^{\I{U}^\reg_\leftarrow}(\I{C})$ for every $\BB$-category $\I{C}$ and every internal class $\I{U}$. In particular, a large $\BB$-category $\I{D}$ is $\I{U}$-accessible if and only if it is $\I{U}^\reg_\leftarrow$-accessible. When arguing about accessible $\BB$-categories, we can therefore always assume that $\I{U}$ is in addition \emph{left regular} (cf.\ Corollary~\ref{cor:regularisationSoundness}).
\end{remark}

Suppose that $\I{D}$ is a $\I{U}$-accessible $\BB$-category, i.e.\ that we have $\I{D}\simeq\IInd^{\I{U}}(\I{C})$ for some $\BB$-category $\I{C}$. Recall from~\cite{MWColimits}[\S~7.1] that there is an inclusion $\ISml^{\IFilt_{\I{U}}}(\I{C})\into\IInd^{\I{U}}(\I{C})$. The following proposition shows that this inclusion is in fact an equivalence.
\begin{proposition}
	\label{prop:IndFilteredFreeCocompletion}
	For any internal class $\I{U}$ of $\BB$-categories and any $\BB$-category $\I{C}$, the fully faithful functor $\ISml^{\IFilt_{\I{U}}}(\I{C})\into \IInd^{\I{U}}(\I{C})$ is an equivalence. In other words, the Yoneda embedding $\I{C}\into\ISml^{\IFilt_{\I{U}}}(\I{C})$ exhibits $\ISml^{\IFilt_{\I{U}}}(\I{C})$ as the free $\IFilt_{\I{U}}$-cocompletion of $\I{C}$.
\end{proposition}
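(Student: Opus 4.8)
The statement asserts that $\Sml_{\Univ}^{\IFilt_{\I{U}}}(\I{C}) \into \IInd_{\Univ}^{\I{U}}(\I{C}) = \IPSh_{\Univ}^{\IFilt_{\I{U}}}(\I{C})$ is an equivalence; since both sides sit inside $\IPSh_{\Univ}(\I{C})$ as full subcategories and the inclusion between them is already known, it suffices to show that $\IPSh_{\Univ}^{\IFilt_{\I{U}}}(\I{C})$ is contained in $\Sml_{\Univ}^{\IFilt_{\I{U}}}(\I{C})$. In other words, one must check that $\Sml_{\Univ}^{\IFilt_{\I{U}}}(\I{C})$ already contains $\I{C}$ (clear, since representables have a final object in their slice and hence are $\IFilt_{\I{U}}$-small) and is closed under $\IFilt_{\I{U}}$-colimits inside $\IPSh_{\Univ}(\I{C})$; minimality of $\IPSh_{\Univ}^{\IFilt_{\I{U}}}(\I{C})$ then forces the reverse inclusion.

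The first step is to observe, via the equivalence $\Over{\IPSh_{\Univ}(\I{C})}{F}\simeq\IPSh_{\Univ}(\Over{\I{C}}{F})$ of~\cite[Lemma~6.1.5]{Martini2021a}, that for an object $F\colon A\to\IPSh_{\Univ}(\I{C})$ the $\Over{\BB}{A}$-category $\Over{\I{C}}{F}$ is the Grothendieck construction of $F$, so that $F$ being $\IFilt_{\I{U}}$-small means precisely that $\Over{\I{C}}{F}$ is $\pi_A^\ast\I{U}$-filtered (after passing to the associated colimit class, which is harmless by Remark~\ref{rem:BCFilt} and the fact that $\IFilt_{\I{U}}$ is already a colimit class, Remark~\ref{rem:FiltUColimitClass}). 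The key step is then the following: given a $\pi_A^\ast\I{U}$-filtered $\Over{\BB}{A}$-category $\I{J}$ and a diagram $d\colon\I{J}\to\Sml_{\Univ}^{\IFilt_{\I{U}}}(\pi_A^\ast\I{C})$, one must show that its colimit $F=\colim d$ taken in $\IPSh_{\Univ}(\pi_A^\ast\I{C})$ again lies in $\Sml_{\Univ}^{\IFilt_{\I{U}}}(\pi_A^\ast\I{C})$, i.e.\ that $\Over{\I{C}}{F}$ is $\pi_A^\ast\I{U}$-filtered. Working in $\Over{\BB}{A}$, we may assume $A\simeq 1$. By a Fubini-type argument for colimits of left fibrations — using that the Grothendieck construction $\Over{\I{C}}{-}\colon\IPSh_{\Univ}(\I{C})\simeq\ILFib_{\I{C}}$ is an equivalence and hence colimit-preserving, together with~\cite[Proposition~4.4.1]{Martini2021a} identifying colimits of presheaves with groupoidifications of the associated left fibrations — the left fibration $\Over{\I{C}}{F}\to\I{C}$ is the colimit $\colim_{j}\Over{\I{C}}{d(j)}$ in $\ILFib_{\I{C}}$, hence (forgetting down to $\ICat_{\BB}$, and using that the total space of a colimit of left fibrations is the colimit of the total spaces) $\Over{\I{C}}{F}$ is the $\I{J}$-indexed colimit of the $\pi_A^\ast\I{U}$-filtered $\BB$-categories $\Over{\I{C}}{d(j)}$. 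Since $\I{J}$ is itself $\I{U}$-filtered, Proposition~\ref{prop:UFilteredUCocomplete} — closure of $\IFilt_{\I{U}}$ under $\IFilt_{\I{U}}$-colimits in $\ICat_{\BB}$ — shows this colimit is again $\I{U}$-filtered, which is exactly what we need.

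The last step is a routine localisation-to-all-contexts argument: since being $\IFilt_{\I{U}}$-small is detected section-wise by Remark~\ref{rem:BCFilt} (and~\cite[Lemma~6.2.4]{Martini2021a}), the closure property established over $1$ propagates to every $A\in\BB$, giving closure of $\Sml_{\Univ}^{\IFilt_{\I{U}}}(\I{C})$ under $\IFilt_{\I{U}}$-colimits in $\IPSh_{\Univ}(\I{C})$ in the internal sense. Together with the containment of $\I{C}$, minimality of $\IPSh_{\Univ}^{\IFilt_{\I{U}}}(\I{C})$ yields the desired inclusion, and hence the equivalence. The main obstacle is the key step above: carefully justifying that a $\I{J}$-indexed colimit of left fibrations has total space the colimit of the total spaces, and that this identification is compatible with viewing the slices $\Over{\I{C}}{d(j)}$ as objects of $\ICat_{\BB}$ on which Proposition~\ref{prop:UFilteredUCocomplete} can be applied — everything else is formal manipulation of the Grothendieck construction and section-wise reductions.
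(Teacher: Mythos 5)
Your reduction (it suffices that $\Sml_{\Univ}^{\IFilt_{\I{U}}}(\I{C})$ contains the representables and is closed under $\IFilt_{\I{U}}$-colimits in $\IPSh_{\Univ}(\I{C})$, checked over each context via Remark~\ref{rem:BCFilt} and \cite[Lemma~6.2.4]{Martini2021a}) is exactly the paper's, but the step you yourself flag as ``the main obstacle'' is a genuine gap, and it is precisely the point where the paper's proof does something different. You assert that the total space of the colimit of the right fibrations $\Over{\I{C}}{d(j)}$ (note: these are \emph{right} fibrations, classified via $\IPSh_{\Univ}(\I{C})\simeq\IRFib_{\I{C}}$, not $\ILFib_{\I{C}}$) is the colimit of the total spaces in $\ICat_{\BB}$, so that $\Over{\I{C}}{F}\simeq\colim_j\Over{\I{C}}{d(j)}$ on the nose. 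Nothing in the cited results gives this: by Lemma~\ref{lem:RFibReflectiveSubcategory} the inclusion $\IRFib_{\I{C}}\into\Over{(\ICat_{\BB})}{\I{C}}$ is \emph{reflective}, hence preserves limits, not colimits, and \cite[Proposition~4.4.1]{Martini2021a} identifies colimits of $\Univ$-valued diagrams with groupoidifications of left fibrations, which is a different statement. A priori the colimit $\I{K}$ of the total spaces (computed in $\Over{(\ICat_{\BB})}{\I{C}}$, equivalently in $\ICat_{\BB}$ by Proposition~\ref{prop:sliceFibrationColimits}) need not be a right fibration over $\I{C}$ at all; $\Over{\I{C}}{F}$ is only its image under the reflector. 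The identification you want is plausibly true (externally it can be extracted from the lax-colimit description of unstraightening), but it is strictly stronger than what is needed and is not available in the framework you are quoting, so as written the proof is incomplete.

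The repair is to weaken the claim to a finality statement, which is what the paper does: by \cite[Proposition~4.2.11]{Martini2021a} the reflector $L\colon\Over{(\ICat_{\BB})}{\I{C}}\to\IRFib_{\I{C}}$ is computed by the (final, right fibration) factorisation, so the comparison map $\I{K}\to\Over{\I{C}}{F}$ over $\I{C}$ is \emph{final} rather than an equivalence. Since $\IFilt_{\I{U}}$ is a colimit class (Remark~\ref{rem:FiltUColimitClass}, via Proposition~\ref{prop:characterisationFilteredCategoryCones}), filteredness passes along final functors, so it suffices to show that $\I{K}$ is $\I{U}$-filtered; and $\I{K}$ is an $\I{I}$-indexed (hence $\I{U}$-filtered) colimit in $\ICat_{\BB}$ of the $\I{U}$-filtered $\BB$-categories $\Over{\I{C}}{d(j)}$, so Proposition~\ref{prop:UFilteredUCocomplete} applies exactly as in your last step. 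With that substitution your argument goes through; without it, the pivotal identification remains unproved.
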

\begin{proof}
	It will be enough to show that $\ISml^{\IFilt_{\I{U}}}(\I{C})$ is closed under $\IFilt_{\I{U}}$-colimits in $\IPSh(\I{C})$. By combining Remark~\ref{rem:BCFilt} with~\cite[Remark~6.2.2]{MWColimits}, this follows once we prove that for any $\I{U}$-filtered $\BB$-category $\I{J}$, the colimit of any diagram $d\colon \I{J}\to \ISml^{\IFilt_{\I{U}}}(\I{C})$ in $\IPSh(\I{C})$ is contained in $\ISml^{\IFilt_{\I{U}}}(\I{C})$. Let us set $F=\colim d$ and let $p\colon\Over{\I{C}}{F}\to\I{C}$ be the associated right fibration. We need to show that $\Over{\I{C}}{F}$ is $\I{U}$-filtered. On account of the equivalence $\IPSh(\I{C})\simeq\IRFib_{\I{C}}$ and in light of Lemma~\ref{lem:RFibReflectiveSubcategory}, we may regard $d$ as a diagram $d\colon \I{J}\to\IRFib_{\I{C}}\into\Over{(\ICat_{\BB})}{\I{C}}$
	that takes values in the full subcategory $\Over{(\IFilt_{\I{U}})}{\I{C}}$ (as $\IFilt_{\I{U}}$ is a colimit class by Remark~\ref{rem:FiltUColimitClass}). Let $\I{K}\to\I{C}$ be the colimit of $d$ in $\Over{(\ICat_{\BB})}{\I{C}}$. As the right fibration $p\colon\Over{\I{C}}{F}\to\I{C}$ is the image of $\I{K}\to\I{C}$ along the localisation functor $L\colon \Over{(\ICat_{\BB})}{\I{C}}\to\IRFib_{\I{C}}$ (see~\cite[Proposition~4.2.11]{MWColimits}), there is a final map $\I{K}\to\Over{\I{C}}{F}$ over $\I{C}$. It therefore suffices to show that $\I{K}$ is $\I{U}$-filtered.
	Now Proposition~\ref{prop:sliceFibrationColimits} implies that $\I{K}$ is the colimit of the diagram $(\pi_{\I{C}})_!d\colon \I{J}\to\Over{(\ICat_{\BB})}{\I{C}}\to\ICat_{\BB}$. By construction, this diagram takes values in $\IFilt_{\I{U}}$. Therefore, the result follows from Proposition~\ref{prop:UFilteredUCocomplete}.
\end{proof}
\begin{remark}
	In light of Proposition~\ref{prop:IndFilteredFreeCocompletion}, if $\I{C}$ is a $\BB$-category and if $\I{U}$ is a sound doctrine, Remark~\ref{rem:FiltUColimitClass} implies that a presheaf $F\colon A\to\IPSh(\I{C})$ in context $A\in\BB$ is contained in $\IInd^{\I{U}}(\I{C})$ if and only if the $\Over{\BB}{A}$-category $\Over{\I{C}}{F}$ is $\pi_A^\ast\I{U}$-filtered.
\end{remark}

For later use, let us record that our notion of accessibility is stable under the formation of slice $\BB$-categories:
\begin{proposition}
	\label{prop:accessibilitySliceCategory}
	Let $\I{U}$ be a sound doctrine and let $\I{D}$ be a $\I{U}$-accessible $\BB$-category. Then $\Over{\I{D}}{d}$ is $\pi_A^\ast\I{U}$-accessible, for any choice of object $d\colon A\to\I{D}$.
\end{proposition}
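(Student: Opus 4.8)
The plan is to reduce to the case $A\simeq 1$ by \'etale base change and then to identify $\Over{\I{D}}{d}$ explicitly as an $\IInd$-completion of a small $\BB$-category. For the reduction, note that the \'etale base change $\pi_A^\ast\I{U}$ of a sound doctrine is again a sound doctrine (Remark~\ref{rem:BCSoundness}), and that combining Proposition~\ref{prop:IndFilteredFreeCocompletion} with the compatibility of $\Sml_{\Univ}^{(-)}(-)$ with \'etale base change and with Remark~\ref{rem:BCFilt} shows that if $\I{D}\simeq\IInd_{\Univ}^{\I{U}}(\I{C})$ then $\pi_A^\ast\I{D}\simeq\IInd_{\Univ[\Over{\BB}{A}]}^{\pi_A^\ast\I{U}}(\pi_A^\ast\I{C})$ is $\pi_A^\ast\I{U}$-accessible. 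Since furthermore the $\Over{\BB}{A}$-category underlying $\Over{\I{D}}{d}$ is, by the compatibility of slice $\BB$-categories with \'etale base change, the slice of $\pi_A^\ast\I{D}$ over the object $1\to\pi_A^\ast\I{D}$ adjoint to $d$, I may replace $\BB$, $\I{U}$ and $d$ by $\Over{\BB}{A}$, $\pi_A^\ast\I{U}$ and that adjoint object and assume from now on that $d\colon 1\to\I{D}$ is a global object.

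Next I would fix an equivalence $\I{D}\simeq\IInd_{\Univ}^{\I{U}}(\I{C})$ with $\I{C}$ a small $\BB$-category; by Proposition~\ref{prop:IndFilteredFreeCocompletion} and the remark following it, this identifies $\I{D}$ with the full subcategory of $\IPSh_{\Univ}(\I{C})$ spanned by those presheaves $G$ for which $\Over{\I{C}}{G}$ is $\I{U}$-filtered. Let $F\colon\I{C}^\op\to\Univ$ be the presheaf corresponding to $d$. The two ingredients to assemble are: (i) since $F$ is $\Univ$-valued, its associated right fibration $\Over{\I{C}}{F}\to\I{C}$ is small, so $\Over{\I{C}}{F}$ is a small $\BB$-category; and (ii) the equivalence $\Over{\IPSh_{\Univ}(\I{C})}{F}\simeq\IPSh_{\Univ}(\Over{\I{C}}{F})$ of~\cite[Lemma~6.1.5]{Martini2021a} carries an object $(G\to F)$ to a presheaf $H$ on $\Over{\I{C}}{F}$ with $\Over{(\Over{\I{C}}{F})}{H}\simeq\Over{\I{C}}{G}$. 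Granting (ii), the fully faithful functor $\Over{\I{D}}{d}\into\Over{\IPSh_{\Univ}(\I{C})}{F}$, whose essential image consists of the $(G\to F)$ with $\Over{\I{C}}{G}$ being $\I{U}$-filtered, is transported to the inclusion into $\IPSh_{\Univ}(\Over{\I{C}}{F})$ of the full subcategory on those $H$ with $\Over{(\Over{\I{C}}{F})}{H}$ being $\I{U}$-filtered; by Proposition~\ref{prop:IndFilteredFreeCocompletion} once more this subcategory is $\IInd_{\Univ}^{\I{U}}(\Over{\I{C}}{F})$. Combined with (i), this exhibits $\Over{\I{D}}{d}$ as $\I{U}$-accessible.

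I expect the main obstacle to be the verification of ingredient (ii): that the abstract equivalence $\Over{\IPSh_{\Univ}(\I{C})}{F}\simeq\IPSh_{\Univ}(\Over{\I{C}}{F})$ is compatible with passing to associated right fibrations. I would handle this by carrying out the whole argument through the Grothendieck construction $\IPSh_{\Univ}(-)\simeq\IRFib_{(-)}$, where the point reduces to the observation that a right fibration $\I{Q}\to\I{C}$ equipped with a map to the right fibration $\Over{\I{C}}{F}\to\I{C}$ is the same datum as a right fibration $\I{Q}\to\Over{\I{C}}{F}$ — which holds because right fibrations form the right class of an internal factorisation system and are therefore stable under composition and satisfy that $g$ and $gf$ being right fibrations forces $f$ to be one. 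The smallness statement (i), though indispensable for the conclusion, should be immediate from the fact that $\IPSh_{\Univ}$ is built from the sheaf of \emph{small} right fibrations.
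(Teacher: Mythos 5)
Your proposal is correct and follows essentially the same route as the paper's proof: after reducing to a global object $d\colon 1\to\I{D}$, one writes $\I{D}\simeq\IInd_{\Univ}^{\I{U}}(\I{C})$, invokes the equivalence $\Over{\IPSh_{\Univ}(\I{C})}{F}\simeq\IPSh_{\Univ}(\Over{\I{C}}{F})$ of~\cite[Lemma~6.1.5]{Martini2021a}, and transports the criterion of Proposition~\ref{prop:IndFilteredFreeCocompletion} (membership in $\IInd_{\Univ}^{\I{U}}$ is filteredness of the associated right fibration) to obtain $\Over{\I{D}}{d}\simeq\IInd_{\Univ}^{\I{U}}(\Over{\I{C}}{F})$. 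Your ingredient (ii), verified via right-cancellation for right fibrations under the Grothendieck construction, is exactly the compatibility the paper uses implicitly when it says a presheaf on $\Over{\I{C}}{F}$ lies in $\Over{\IInd_{\Univ}^{\I{U}}(\I{C})}{F}$ precisely if the domain of its associated right fibration is $\I{U}$-filtered.
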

\begin{proof}
	Using Remark~\ref{rem:BCInd}, we may assume that $A\simeq 1$.
	Choose a $\BB$-category $\I{C}$ such that $\I{D}\simeq\IInd^{\I{U}}(\I{C})$. Let $F\colon\I{C}^{\op}\to\Univ$ be the presheaf that corresponds to $d$ under this equivalence. We then obtain a commutative diagram
	\begin{equation*}
		\begin{tikzcd}
			\Over{\I{C}}{F}\arrow[d, "p"]\arrow[r, hookrightarrow] & \Over{\IInd(\I{C})}{F}\arrow[r, hookrightarrow]\arrow[d, "(\pi_F)_!"] &\Over{\IPSh(\I{C})}{F}\arrow[d, "(\pi_F)_!"]\\
			\I{C}\arrow[r, hookrightarrow, "h_{\I{C}}"] & \IInd(\I{C})\arrow[r, hookrightarrow] & \IPSh(\I{C}) 
		\end{tikzcd}
	\end{equation*}
	in which both squares are cartesian.
	By~\cite[Lemma~6.1.5]{MWColimits}, the vertical map on the right can be identified with $p_!\colon\IPSh(\Over{\I{C}}{F})\to\IPSh(\I{C})$ such that the upper row in the above diagram recovers the Yoneda embedding $h_{\Over{\I{C}}{F}}$. With respect to this identification, a presheaf on $\Over{\I{C}}{F}$ is contained in $\Over{\IInd(\I{C})}{F}$ precisely if the domain of the associated right fibration is $\I{U}$-filtered. We therefore obtain an equivalence $\Over{\IInd^{\I{U}}(\I{C})}{F}\simeq\IInd^{\I{U}}(\Over{\I{C}}{F})$, hence the result follows.
\end{proof}

\subsection{Accessible functors}
\label{sec:accessibleFunctors}
It will be convenient to also have a notion of accessibility for functors between accessible $\BB$-categories at our disposal:
\begin{definition}
	\label{def:AccFunctor}
	Let $ \I{U} $ be a sound doctrine.
	A functor $ f \colon \I{C} \rightarrow \I{D} $ of large $ \BB $-categories is $ \I{U} $\emph{-accessible} if $ \I{C} $ and $ \I{D} $ are $ \IFilt_{\I{U}} $-cocomplete and $ f $ is $ \IFilt_{\I{U}} $-cocontinuous.
	We will call $ f $ \emph{accessible} if it is $ \I{U} $-accessible for some sound doctrine $ \I{U} $.
	We denote by $ \IFun^{\acc}(\I{C},\I{D}) $ the full subcategory spanned by those objects $ A \rightarrow \IFun(\I{C},\I{D}) $ such that the corresponding $ \BB_{/A} $-functor $ \pi_A^* \I{C} \rightarrow \pi_A^* \I{D} $ is accessible. We will denote by $\Fun_{\BB}^\acc(\I{C},\I{D})$ the underlying $\infty$-category of global sections.
\end{definition}

\begin{remark}
	\label{rem:AccSufficesForKappa}
	Let $ f \colon \I{C} \to \I{D} $ be $ \I{U} $-accessible for some sound doctrine $ \I{U} $.
	By Remark~\ref{rem:regularClassesExhaustive} we may find a $ \BB $-regular cardinal $ \kappa $ such that $ \I{U} \subset \Cat_\BB^\kappa$.
	It follows that a functor is accessible if and only if it is $ \Cat_\BB^\kappa $-accessible for some $ \BB $-regular cardinal $ \kappa $.
\end{remark}

\begin{remark}
	\label{rem:AccOfFunctorsIsLocal}
	Let $f\colon A \rightarrow \IFun^{\acc} (\I{C},\I{D})$ be an arbitrary object.
	By definition, this means that there is a cover $(s_i) \colon  \bigsqcup_i A_i \onto A $ in $ \BB $ such that the functors $ s_i^* f\colon \pi_{A_i}^\ast\I{C}\to\pi_{A_i}^\ast\I{D} $ are accessible for all $i\in I$.
	By Remark~\ref{rem:AccSufficesForKappa}, we may find a $\Over{\BB}{A}$-regular cardinal $ \kappa $ such that all $ A_i $ are $ \kappa $-compact (in $\Over{\BB}{A}$) and $ s_i^* f $ is $ \Cat_{\BB/A_i}^\kappa $-accessible for every $ \kappa $.
	Hence Remarks~\ref{rem:BCFilt} and~\ref{rem:BCKappaSmallCategories} together with \cite[Remark~5.2.3]{MWColimits} imply that $ f $ is  $\IFilt_{\Cat_{\BB/A}^\kappa}$-cocontinuous, so in particular accessible.
	Thus, an object $f\colon A \to  \IFun(\I{C},\I{D}) $ is contained in $ \IFun^\acc(\I{C},\I{D})$ if and only if $f$ defines an accessible functor between $\Over{\BB}{A}$-categories. In particular, one obtains a canonical equivalence $\pi_A^\ast\IFun^\acc(\I{C},\I{D})\simeq\IFun[\Over{\BB}{A}]^\acc(\pi_A^\ast\I{C},\pi_A^\ast\I{D})$ for every $A\in\BB$.
\end{remark}

Somewhat surprisingly, provided that both domain and codomain have a sufficient amount of colimits, accessibility of a functor between $\BB$-categories is an entirely section-wise concept:
\begin{proposition}
	\label{prop:CocompleteKappaCocontinuousInternalExternal}
	Let $\kappa$ be a $\BB$-regular cardinal and let $f\colon\I{C}\to\I{D}$ be a functor between cocomplete $\BB$-categories that is section-wise $\kappa$-accessible. Then the functor $f$ is $\IFilt_{\ICat_{\BB}^\kappa}$-accessible.
\end{proposition}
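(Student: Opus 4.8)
The first step is to reduce to the case $A\simeq 1$, i.e.\ to prove that $f$ preserves $\I{J}$-indexed colimits for every $\ICat_{\BB}^\kappa$-filtered $\BB$-category $\I{J}$. Indeed, unwinding the definitions together with Remark~\ref{rem:BCFilt}, being $\IFilt_{\ICat_{\BB}^\kappa}$-cocontinuous amounts to $\pi_A^\ast f$ preserving $\pi_A^\ast\ICat_{\BB}^\kappa$-filtered colimits for every $A\in\BB$; since preservation of a fixed colimit is a local condition and $\BB$ is generated under colimits by its $\kappa$-compact objects (Remark~\ref{rem:BPresentationCompactObjects}), it suffices to treat $\kappa$-compact $A$. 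For such $A$ there is an equivalence $\pi_A^\ast\ICat_{\BB}^\kappa\simeq\ICat_{\Over{\BB}{A}}^\kappa$ (Remark~\ref{rem:BCKappaSmallCategories}), the cardinal $\kappa$ is $\Over{\BB}{A}$-regular (Remark~\ref{rem:BCkappaSmall}), the $\Over{\BB}{A}$-categories $\pi_A^\ast\I{C}$ and $\pi_A^\ast\I{D}$ remain cocomplete, and $\pi_A^\ast f$ remains section-wise $\kappa$-accessible, so one may replace $\BB$ by $\Over{\BB}{A}$. In this way the statement reduces to showing that $f$ preserves colimits indexed by the \emph{constant} $\BB$-category on a suitable $\kappa$-filtered $\infty$-category, for which the section-wise hypothesis applies essentially by definition.

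The second, and main, step is to trivialise the indexing category. Since $\kappa$ is $\BB$-regular, $\ICat_{\BB}^\kappa$ is a regular sound doctrine (Proposition~\ref{prop:kappaSmallSound}); in particular it has the decomposition property, and it is self-dual because its generating data $\ord{\CatS^\kappa,\cpt\kappa}$ is stable under $(-)^\op$. Set $\I{J}':=\IPSh_{\Univ}(\I{J})^{\cpt{\ICat_{\BB}^\kappa}}$: by Proposition~\ref{prop:characterisationCompactObjectsPSh} this is a small $\BB$-category, by Proposition~\ref{prop:UCompactUcocomplete} (together with self-duality) it is $\ICat_{\BB}^\kappa$-cocomplete, and by Corollary~\ref{cor:inclusionCompactFinal} the Yoneda embedding $j\colon\I{J}\into\I{J}'$ is final (as $\I{J}$, being $\ICat_{\BB}^\kappa$-filtered, is in particular weakly so). As $\I{J}'$ admits section-wise $\kappa$-small colimits, its $\infty$-category of global sections $\JJ:=\I{J}'(1)$ admits $\kappa$-small colimits and is therefore $\kappa$-filtered~\cite[Proposition~5.3.3.3]{htt}, and Lemma~\ref{lem:CocompleteImpliesFinallyConstant} shows that the counit $k\colon\const_{\BB}(\JJ)=\Gamma\I{J}'\to\I{J}'$ is also final. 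Given a diagram $d\colon\I{J}\to\I{C}$, I then take its left Kan extension $j_!d\colon\I{J}'\to\I{C}$ (which exists since $\I{J}'$ is small and $\I{C}$ is cocomplete), so that $j^\ast(j_!d)\simeq d$. By the functoriality of the mate construction, for any final functor $g\colon\I{K}_1\to\I{K}_2$ and diagram $\delta\colon\I{K}_2\to\I{C}$ the colimit-comparison map $\colim_{\I{K}_2}(f_\ast\delta)\to f\colim_{\I{K}_2}\delta$ is an equivalence if and only if $\colim_{\I{K}_1}(f_\ast g^\ast\delta)\to f\colim_{\I{K}_1}(g^\ast\delta)$ is, because finality (Quillen's theorem~A, \cite[Corollary~4.4.8]{Martini2021}) supplies compatible equivalences $\colim_{\I{K}_1}g^\ast\simeq\colim_{\I{K}_2}$. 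Applying this first to $j$ and then to $k$, with $\delta=j_!d$, reduces the problem to showing that $f$ preserves the colimit of $k^\ast(j_!d)\colon\const_{\BB}(\JJ)\to\I{C}$, hence to showing that $f$ preserves all $\const_{\BB}(\JJ)$-indexed colimits.

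Finally, this last statement is precisely where the section-wise hypothesis is used. By the dual of Proposition~\ref{prop:KSCocompleteness} applied with $\KK=\{\JJ^\op\}$ and $S=\Eq$, a functor $g\colon\I{E}\to\I{F}$ between $\op(\ICat_{\BB}^{\ord{\{\JJ^\op\},\Eq}})$-cocomplete $\BB$-categories is cocontinuous for this internal class if and only if $g(A)\colon\I{E}(A)\to\I{F}(A)$ preserves $\JJ$-indexed colimits for every $A\in\BB$ (the Beck--Chevalley condition for $\Eq$ being vacuous). Since $\I{C}$ and $\I{D}$ are cocomplete and each $f(A)$ preserves $\JJ$-indexed colimits ($\JJ$ being $\kappa$-filtered and $f(A)$ being $\kappa$-accessible), it follows that $f$ is $\op(\ICat_{\BB}^{\ord{\{\JJ^\op\},\Eq}})$-cocontinuous, and since $\const_{\BB}(\JJ)$ lies in this internal class, $f$ preserves $\const_{\BB}(\JJ)$-indexed colimits, as needed.

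The chief obstacle is the second step: producing the zigzag $\I{J}\into\I{J}'\hookleftarrow\const_{\BB}(\JJ)$ of final functors through the cocompletion $\IPSh_{\Univ}(\I{J})^{\cpt{\ICat_{\BB}^\kappa}}$, which is exactly where the soundness of $\ICat_{\BB}^\kappa$ and the results of \S~\ref{chap:filtered} enter. Once the indexing category is constant, cocontinuity is genuinely section-wise and the hypothesis applies directly, the only remaining loose end being the routine compatibility of finality with the colimit-comparison mates.
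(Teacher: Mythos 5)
Your proof is correct and follows essentially the same route as the paper's: reduce to the global context via Remark~\ref{rem:BCKappaSmallCategories}, pass through the final functors $\I{J}\into\IPSh_{\Univ}(\I{J})^{\cpt{\ICat_{\BB}^\kappa}}\hookleftarrow\Gamma\bigl(\IPSh_{\Univ}(\I{J})^{\cpt{\ICat_{\BB}^\kappa}}\bigr)$ supplied by Corollary~\ref{cor:inclusionCompactFinal} and Lemma~\ref{lem:CocompleteImpliesFinallyConstant}, and conclude section-wise because the resulting constant indexing $\infty$-category admits $\kappa$-small colimits and is therefore $\kappa$-filtered. The differences are purely expository: you spell out the locality/base-change reduction to $\kappa$-compact contexts, the mate compatibilities along final functors, the self-duality of $\ICat_{\BB}^\kappa$ needed to pass from $\op(\ICat_{\BB}^\kappa)$- to $\ICat_{\BB}^\kappa$-cocompleteness, and the section-wise criterion (via the dual of Proposition~\ref{prop:KSCocompleteness}) that the paper leaves implicit.
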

\begin{proof}
	As $\kappa$ is $\BB$-regular, Remarks~\ref{rem:BCKappaSmallCategories} and~\ref{rem:BCFilt} imply that it suffices to show that $f$ preserves the colimit of every diagram $d\colon\I{J}\to\I{C}$ with $\I{J}$ a $\ICat_{\BB}^\kappa$-filtered $\BB$-category. As $\I{C}$ is cocomplete, there exists an extension $d^\prime\colon \IPSh(\I{J})^{\cpt{\ICat_{\BB}^\kappa}}\to\I{C}$ of $d$. By Corollary~\ref{cor:inclusionCompactFinal}, the inclusion $\I{J}\into\IPSh(\I{J})^{\cpt{\ICat_{\BB}^\kappa}}$ is final, hence we may replace $\I{J}$ by $\IPSh(\I{J})^{\cpt{\ICat_{\BB}^\kappa}}$ and $d$ by $d^\prime$ and can thus assume that $\I{J}$ is $\ICat_{\BB}^{\kappa}$-cocomplete (see Proposition~\ref{prop:UCompactUcocomplete}). Using Lemma~\ref{lem:CocompleteImpliesFinallyConstant} and Remark~\ref{rem:FiltUColimitClass}, we can further reduce to the case where $\I{J}$ is the constant $\BB$-category that is associated with an $\infty$-category with $\kappa$-small colimits. As by~\cite[Proposition~5.3.3.3]{htt} every such $\infty$-category is $\kappa$-filtered, the result follows.
\end{proof}

\begin{corollary}
	\label{cor:AccIsSectWiseAccWhenCocomp}
	Let $ f \colon \I{C} \to \I{D} $ be a functor of cocomplete large $ \BB $-categories.
	Then $ f $ is accessible if and only if $f$ is section-wise accessible.
\end{corollary}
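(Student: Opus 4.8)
The plan is to deduce the corollary from the section-wise criterion of Proposition~\ref{prop:CocompleteKappaCocontinuousInternalExternal}, the point being that in a \emph{cocomplete} $\BB$-category the restriction functors automatically preserve all small colimits. So I would first record this observation: for a cocomplete $\BB$-category $\I{E}$ and any $\infty$-category $\JJ$, the colimit over the constant $\BB$-category $\const_{\BB}\JJ$ is computed section-wise — one has $\iFun{\const_{\BB}\JJ}{\I{E}}(A)\simeq\Fun(\JJ,\I{E}(A))$ naturally in $A$, and the internal colimit functor $\colim_{\const_{\BB}\JJ}\colon\iFun{\const_{\BB}\JJ}{\I{E}}\to\I{E}$ restricts on sections over $A$ to the ordinary colimit functor $\colim_{\JJ}\colon\Fun(\JJ,\I{E}(A))\to\I{E}(A)$. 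Since $\colim_{\const_{\BB}\JJ}$ is a functor of $\BB$-categories, i.e.\ a natural transformation of the underlying sheaves, its naturality at a map $s\colon B\to A$ says exactly that $s^{\ast}\colon\I{E}(A)\to\I{E}(B)$ commutes with $\JJ$-indexed colimits; as $\JJ$ is arbitrary, $s^{\ast}$ preserves all small colimits. This same identification of constant-diagram colimits with section-wise colimits will power both implications.

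For ``accessible $\Rightarrow$ section-wise accessible'' I would argue directly. By Remark~\ref{rem:AccSufficesForKappa} there is a $\BB$-regular cardinal $\kappa$ with $f$ being $\IFilt_{\ICat_{\BB}^{\kappa}}$-cocontinuous. Fix $A\in\BB$ and a $\kappa$-filtered $\infty$-category $\JJ$. By Lemma~\ref{lem:externallyFilteredImpliesInternallyFiltered} the constant $\BB$-category $\const_{\BB}\JJ$ is $\ICat_{\BB}^{\kappa}$-filtered, so by Remark~\ref{rem:BCFilt} the $\Over{\BB}{A}$-category $\pi_{A}^{\ast}\const_{\BB}\JJ\simeq\const_{\Over{\BB}{A}}\JJ$ lies in $\IFilt_{\ICat_{\BB}^{\kappa}}(A)$; hence $\pi_{A}^{\ast}f$ preserves $\const_{\Over{\BB}{A}}\JJ$-indexed colimits. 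Evaluating this equivalence of functors at the terminal context $\id_{A}\in\Over{\BB}{A}$, and using that the internal colimit over a constant diagram is section-wise, turns it into the statement that $f(A)$ preserves $\JJ$-indexed colimits. Letting $\JJ$ range over all $\kappa$-filtered $\infty$-categories shows $f(A)$ is $\kappa$-accessible, and $A$ was arbitrary.

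For the converse I would produce a single $\BB$-regular cardinal witnessing section-wise $\kappa$-accessibility of $f$ uniformly in $A$ and then invoke Proposition~\ref{prop:CocompleteKappaCocontinuousInternalExternal} (noting $\ICat_{\BB}^{\kappa}$ is a sound doctrine for $\kappa$ $\BB$-regular, so $\IFilt_{\ICat_{\BB}^{\kappa}}$-accessibility means accessibility). Fix any $\BB$-regular $\lambda$ (Remark~\ref{rem:enoughBRegularCardinals}), so that $\GG\coloneqq\BB^{\cpt{\lambda}}$ is a small dense subcategory of $\BB$ (Remark~\ref{rem:BPresentationCompactObjects}); each $f(G)$ with $G\in\GG$ preserves $\kappa_{G}$-filtered colimits for some regular $\kappa_{G}$, and by smallness of $\GG$ I can choose a $\BB$-regular $\kappa\geq\lambda$ dominating all the $\kappa_{G}$. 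For arbitrary $A$, write $A\simeq\colim_{(G\to A)\in\Over{\GG}{A}}G$, so that $\I{C}(A)\simeq\lim_{(\Over{\GG}{A})^{\op}}\I{C}(G)$ and likewise for $\I{D}$, with $f(A)$ the induced functor on limits; since the transition functors of $\I{C}$ and $\I{D}$ preserve all small colimits by the opening observation, $\kappa$-filtered colimits in these limits are computed section-wise, and hence $f(A)$ preserves them because each $f(G)$ does (a $\kappa$-filtered $\infty$-category being in particular $\kappa_{G}$-filtered). I expect this last step — the passage from section-wise accessibility on the small subcategory $\GG$ to section-wise accessibility at every object of $\BB$ — to be the main obstacle: it is precisely where the cocompleteness hypothesis enters essentially, through the fact that the transition functors of a cocomplete $\BB$-category are colimit-preserving, and it is the reason the section-wise characterisation cannot be expected for functors between $\BB$-categories that merely admit $\IFilt_{\I{U}}$-colimits.
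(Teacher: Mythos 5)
Your argument is correct and follows essentially the same route as the paper: the forward direction is the paper's argument verbatim (Remark~\ref{rem:AccSufficesForKappa}, Lemma~\ref{lem:externallyFilteredImpliesInternallyFiltered}, and the fact that colimits over constant diagrams are computed section-wise), and the converse likewise reduces to section-wise $\kappa$-accessibility on a small generating subcategory and then invokes Proposition~\ref{prop:CocompleteKappaCocontinuousInternalExternal}. The only divergence is the local-to-global step in the converse, where you pass from $\GG=\BB^{\cpt{\lambda}}$ to a general $A$ via the decomposition $\I{C}(A)\simeq\lim_{(\Over{\GG}{A})^{\op}}\I{C}(G)$ and the componentwise computation of filtered colimits in such a limit (legitimate, since cocompleteness makes the transition functors colimit-preserving), whereas the paper simply covers $A$ by objects of $\GG$ and appeals to the locality of colimit preservation; both are instances of the same descent property and yield the same conclusion.
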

\begin{proof}
	By Remark~\ref{rem:AccSufficesForKappa}, we can assume that $ f $ is $ \Cat_\BB^\kappa $-accessible for some $ \BB $-regular cardinal $ \kappa $.
	Then it follows from Lemma~\ref{lem:externallyFilteredImpliesInternallyFiltered} that $ f $ commutes with colimits indexed by constant $ \BB $-categories attached to $ \kappa $-filtered $ \infty $-categories.
	In other words, $ f(A) $ commutes with $ \kappa $-filtered colimits for every $ A\in\BB $ and is thus section-wise accessible.
	For the converse, we pick a small full subcategory $ \GG \into\BB$ that generates $ \BB $ under small colimits. 
	Then we may find a $ \BB $-regular cardinal $ \kappa $ such that $ f(G) $ is $ \kappa $-accessible for every $ G \in\GG$. Since the preservation of colimits is a local condition~\cite[Remark~4.2.1]{MWColimits} and since every object $A\in\BB$ admits a cover by objects in $\GG$, we conclude that $f(A)$ preserves $\kappa$-filtered colimits for all $A\in\BB$.
	Therefore $ f $ is accessible by Proposition~\ref{prop:CocompleteKappaCocontinuousInternalExternal}. 
\end{proof}

\subsection{$\I{U}$-compact objects in accessible $\BB$-categories}
\label{sec:UCompactAccessible}
In~\cite[Proposition~5.4.2.2]{htt}, Lurie characterises $\kappa$-accessible $\infty$-categories as those that are generated by a small collection of $\kappa$-compact objects under $\kappa$-filtered colimits. In this section, our goal is to obtain an analogue of this statement for accessible $\BB$-categories. We begin with the following characterisation of the $\I{U}$-compact objects in a $\I{U}$-accessible $\BB$-category:

\begin{proposition}
	\label{prop:characterisationCompactObjectAccessible}
	Let $\I{U}$ be an internal class of $\BB$-categories, let $\I{C}$ be a $\BB$-category and let $\I{D}=\IInd^{\I{U}}(\I{C})$. Then there is an equivalence $\I{D}^{\cpt{\I{U}}}\simeq\IRet_{\I{D}}(\I{C})$ of full subcategories in $\I{D}$. In particular, $\I{D}^{\cpt{\I{U}}}$ is small.
\end{proposition}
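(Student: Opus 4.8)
The plan is to establish the two inclusions separately and use the closure properties already developed in \S~\ref{sec:UCompactObjects}. For the inclusion $\IRet_{\I{D}}(\I{C})\into\I{D}^{\cpt{\I{U}}}$, I would argue as follows. By Yoneda's lemma, every representable presheaf $h_{\I{C}}(c)\colon A\to\I{D}$ is $\I{U}$-compact: the functor $\map{\I{D}}(h_{\I{C}}(c),-)$ is, via the Yoneda embedding and the universal property of $\IInd_{\Univ}^{\I{U}}(\I{C})$, the evaluation functor $\ev_c$, which is $\IFilt_{\I{U}}$-cocontinuous because $\IFilt_{\I{U}}$-indexed colimits in $\IInd_{\Univ}^{\I{U}}(\I{C})\simeq\Sml_{\Univ}^{\IFilt_{\I{U}}}(\I{C})$ are computed pointwise (using Proposition~\ref{prop:IndFilteredFreeCocompletion}). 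Since this holds after every base change $\pi_A^\ast$, we get $\I{C}\into\I{D}^{\cpt{\I{U}}}$. Now Proposition~\ref{prop:RetractsUCompact} says $\I{D}^{\cpt{\I{U}}}$ is closed under retracts in $\I{D}$, so $\IRet_{\I{D}}(\I{C})\into\IRet_{\I{D}}(\I{D}^{\cpt{\I{U}}})\simeq\I{D}^{\cpt{\I{U}}}$, giving the first inclusion. (Here I do not need the decomposition property, in contrast to Proposition~\ref{prop:characterisationCompactObjectsPSh}.)

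For the reverse inclusion $\I{D}^{\cpt{\I{U}}}\into\IRet_{\I{D}}(\I{C})$, let $F\colon A\to\I{D}$ be a $\I{U}$-compact object; by passing to $\Over{\BB}{A}$ via Remark~\ref{rem:BCUComp} we may take $A\simeq 1$. By Proposition~\ref{prop:IndFilteredFreeCocompletion}, $F$ lies in $\Sml_{\Univ}^{\IFilt_{\I{U}}}(\I{C})$, which by the description of that subcategory means the $\BB$-category $\Over{\I{C}}{F}$ is $\I{U}$-filtered; moreover, by Lemma~\ref{lem:coYonedaGeneralised}, $F$ is the colimit of the canonical diagram $\Over{\I{C}}{F}\to\I{C}\into\I{D}$ indexed by the $\I{U}$-filtered $\BB$-category $\I{J}=\Over{\I{C}}{F}$. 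Since $F$ is $\I{U}$-compact and $\I{J}$ is $\I{U}$-filtered, Lemma~\ref{lem:compactObjectRetract} applies and shows that $F$ is contained in $\IRet_{\I{D}}(\I{C})$. (Strictly, Lemma~\ref{lem:compactObjectRetract} concludes that $\pi_A^\ast F$ is locally a retract of an object of $\I{C}$, which is exactly the condition defining $\IRet_{\I{D}}(\I{C})$ per Remark~\ref{rem:retracts}.) Running the same argument after base change to $\Over{\BB}{A}$ for general $A$ handles arbitrary contexts, and combining the two inclusions gives the claimed equivalence $\I{D}^{\cpt{\I{U}}}\simeq\IRet_{\I{D}}(\I{C})$.

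Finally, smallness: $\I{C}$ is small by hypothesis and $\I{D}=\IInd_{\Univ}^{\I{U}}(\I{C})$ is locally small (it is a full subcategory of the presheaf $\BB$-category $\IPSh_{\Univ}(\I{C})$, which is locally small since $\I{C}$ is small), so Remark~\ref{rem:retracts} gives that $\IRet_{\I{D}}(\I{C})$ is small, hence so is $\I{D}^{\cpt{\I{U}}}$.

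I expect the main subtlety to be the application of Lemma~\ref{lem:compactObjectRetract} in the reverse inclusion: one must identify the colimit diagram $\I{J}\to\I{C}\into\I{D}$ correctly (that $F=\colim$ over $\Over{\I{C}}{F}$ of the inclusion, via Lemma~\ref{lem:coYonedaGeneralised}) and confirm that $\Over{\I{C}}{F}$ is genuinely $\I{U}$-filtered — this is precisely the content of Proposition~\ref{prop:IndFilteredFreeCocompletion} together with the remark following it, so no decomposition property is needed here, which is what makes this statement cleaner than Proposition~\ref{prop:characterisationCompactObjectsPSh}. The local-to-global bookkeeping for general contexts $A$ is routine given Remark~\ref{rem:BCUComp} and \cite[Lemma~6.2.4]{Martini2021a}.
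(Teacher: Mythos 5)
Your proposal is correct and follows essentially the same route as the paper: representables are $\I{U}$-compact by Yoneda's lemma plus the fact that $\IInd_{\Univ}^{\I{U}}(\I{C})$ is closed under $\IFilt_{\I{U}}$-colimits in $\IPSh_{\Univ}(\I{C})$ (Proposition~\ref{prop:IndFilteredFreeCocompletion}), closure under retracts (Proposition~\ref{prop:RetractsUCompact}) gives one inclusion, and the reverse inclusion reduces to context $1$ and applies Lemma~\ref{lem:compactObjectRetract} to the canonical $\I{U}$-filtered colimit presentation of a compact object, with smallness coming from Remark~\ref{rem:retracts}. Your write-up merely makes explicit the diagram $\Over{\I{C}}{F}\to\I{C}\into\I{D}$ to which the lemma is applied, which the paper leaves implicit.
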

\begin{proof}
	In light of Remark~\ref{rem:retracts}, the second claim follows immediately from the first. Now by Yoneda's lemma and the fact that the inclusion $\I{D}\into\IPSh(\I{C})$ is closed under $\IFilt_{\I{U}}$-colimits, every representable presheaf on $\I{C}$ defines a $\I{U}$-compact object in $\I{D}$. In other words, one obtains an inclusion $\I{C}\into\I{D}^{\cpt{\I{U}}}$. By combining this observation with Proposition~\ref{prop:RetractsUCompact}, one obtains an inclusion $\IRet_{\I{D}}(\I{C})\into\I{D}^{\cpt{\I{U}}}$. Conversely, let $F\colon A\to\I{D}^{\cpt{\I{U}}}$ be an arbitrary object. We need to show that $F$ is contained in $\IRet_{\I{D}}(\I{C})$. Upon replacing $\BB$ with $\Over{\BB}{A}$ (which is made possible by Remark~\ref{rem:BCUComp} and~\ref{rem:BCInd}), we can assume $A\simeq 1$. The desired result thus follows from Lemma~\ref{lem:compactObjectRetract}.
\end{proof}

We can now state and prove our characterisation of $\I{U}$-accessible $\BB$-categories. To that end, if $\I{D}$ is a $\IFilt_{\I{U}}$-cocomplete $\BB$-category and $\I{C}\into\I{D}$ is a full subcategory, we shall say that $\I{D}$ is \emph{generated} under $\IFilt_{\I{U}}$-colimits by $\I{C}$ if $\I{D}$ is the smallest full subcategory of itself that is closed under $\IFilt_{\I{U}}$-colimits and contains $\I{C}$. We now obtain:
\begin{proposition}
	\label{prop:characterisationAccessibility}
	Let $\I{U}$ be a sound doctrine and let $\I{D}$ be a large $\BB$-category. Then the following are equivalent:
	\begin{enumerate}
		\item $\I{D}$ is $\I{U}$-accessible;
		\item $\I{D}$ is locally small and $\IFilt_{\I{U}}$-cocomplete, the (a priori large) $\BB$-category $\I{D}^{\cpt{\I{U}}}$ is small and generates $\I{D}$ under $\IFilt_{\I{U}}$-colimits;
		\item $\I{D}$ is $\IFilt_{\I{U}}$-cocomplete, and there is a small full subcategory $\I{C}\into\I{D}$ such that $\I{C}\into\I{D}^{\cpt{\I{U}}}$ and such that $\I{C}$ generates $\I{D}$ under $\IFilt_{\I{U}}$-colimits.
	\end{enumerate}
\end{proposition}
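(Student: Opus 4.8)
The plan is to run the cycle $(1)\Rightarrow(2)\Rightarrow(3)\Rightarrow(1)$, with the last implication carrying all the weight.

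For $(1)\Rightarrow(2)$, suppose $\I{D}\simeq\IInd_{\Univ}^{\I{U}}(\I{C})$ for a small $\BB$-category $\I{C}$. Then $\I{D}$ is a full subcategory of $\IPSh_{\Univ}(\I{C})$, hence locally small, and it is $\IFilt_{\I{U}}$-cocomplete by Proposition~\ref{prop:IndFilteredFreeCocompletion}. Proposition~\ref{prop:characterisationCompactObjectAccessible} identifies $\I{D}^{\cpt{\I{U}}}\simeq\IRet_{\I{D}}(\I{C})$, which is small by Remark~\ref{rem:retracts}. Finally, every object $F\colon A\to\IInd_{\Univ}^{\I{U}}(\I{C})$ is the colimit of $\Over{\I{C}}{F}\to\I{C}\into\IInd_{\Univ}^{\I{U}}(\I{C})$ by Lemma~\ref{lem:coYonedaGeneralised}, and $\Over{\I{C}}{F}$ is $\pi_A^\ast\I{U}$-filtered since $\IFilt_{\I{U}}$ is a colimit class (Remark~\ref{rem:FiltUColimitClass}); as $\I{C}\subseteq\I{D}^{\cpt{\I{U}}}$, the full subcategory $\I{D}^{\cpt{\I{U}}}$ generates $\I{D}$ under $\IFilt_{\I{U}}$-colimits. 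The implication $(2)\Rightarrow(3)$ is immediate upon taking $\I{C}=\I{D}^{\cpt{\I{U}}}$.

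For $(3)\Rightarrow(1)$, let $\iota\colon\I{C}\into\I{D}$ be as in $(3)$. Since $\I{C}$ is small and $\I{D}$ is $\IFilt_{\I{U}}$-cocomplete, the universal property of the free $\IFilt_{\I{U}}$-cocompletion supplies a $\IFilt_{\I{U}}$-cocontinuous functor $f\colon\IInd_{\Univ}^{\I{U}}(\I{C})\to\I{D}$ with $f\circ h\simeq\iota$, where $h$ is the Yoneda embedding. I claim $f$ is an equivalence. It is essentially surjective because its essential image contains $\I{C}$ and is closed under $\IFilt_{\I{U}}$-colimits, while $\I{C}$ generates $\I{D}$ under such colimits. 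For full faithfulness, note first that $f$ is fully faithful on representables, since $f\circ h\simeq\iota$ and both $h$ and $\iota$ are fully faithful. Now fix a representable $c\colon A\to\I{C}$: the object $h(c)$ is $\I{U}$-compact in $\IInd_{\Univ}^{\I{U}}(\I{C})$ by Proposition~\ref{prop:characterisationCompactObjectAccessible}, so $\map{\IInd_{\Univ}^{\I{U}}(\I{C})}(h(c),-)$ is $\IFilt_{\I{U}}$-cocontinuous, and $\iota(c)=f(h(c))$ is $\I{U}$-compact in $\I{D}$ by hypothesis~$(3)$, so $\map{\I{D}}(\iota(c),f(-))$ is $\IFilt_{\I{U}}$-cocontinuous as well. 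These two functors $\IInd_{\Univ}^{\I{U}}(\I{C})\to\Univ$ agree after restriction along $h$, so by the universal property of presheaves the canonical natural transformation between them, induced by $f$, is an equivalence. Fixing instead an arbitrary object $y$, both $\map{\IInd_{\Univ}^{\I{U}}(\I{C})}(-,y)$ and $\map{\I{D}}(f(-),f(y))$ carry $\IFilt_{\I{U}}$-colimits to limits (the first by the definition of colimit, the second since $f$ preserves $\IFilt_{\I{U}}$-colimits), and by the previous step they agree on representables; since every object of $\IInd_{\Univ}^{\I{U}}(\I{C})$ is a $\I{U}$-filtered colimit of representables, the map $\map{\IInd_{\Univ}^{\I{U}}(\I{C})}(x,y)\to\map{\I{D}}(f(x),f(y))$ is an equivalence for all $x$. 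Since $\IInd_{\Univ}^{\I{U}}$, $\IFilt_{\I{U}}$-compactness and $\IFilt_{\I{U}}$ itself are all stable under étale base change (Remark~\ref{rem:BCFilt}, Remark~\ref{rem:BCUComp}), this holds in every context, so $f$ is fully faithful, hence an equivalence, and $\I{D}$ is $\I{U}$-accessible.

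The main obstacle is the full faithfulness step in $(3)\Rightarrow(1)$: the point where one genuinely uses the hypothesis $\I{C}\into\I{D}^{\cpt{\I{U}}}$ rather than merely $\I{C}\into\I{D}$, since it is precisely $\I{U}$-compactness of $\iota(c)$ in $\I{D}$ that makes $\map{\I{D}}(\iota(c),f(-))$ $\IFilt_{\I{U}}$-cocontinuous and thus comparable to $\map{\IInd_{\Univ}^{\I{U}}(\I{C})}(h(c),-)$ via the universal property. The rest — local smallness of presheaf $\BB$-categories, the co-Yoneda presentation, and the reduction of full faithfulness to a single context — is routine given the results recalled above.
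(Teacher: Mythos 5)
Your cycle $(1)\Rightarrow(2)\Rightarrow(3)\Rightarrow(1)$ and your treatment of the first two implications coincide with the paper's: the paper likewise deduces (2) from Proposition~\ref{prop:characterisationCompactObjectAccessible} together with the generation of $\IInd_{\Univ}^{\I{U}}(\I{C})$ by $\I{C}$ under $\IFilt_{\I{U}}$-colimits, and obtains (3) from (2) by taking $\I{C}=\I{D}^{\cpt{\I{U}}}$. The difference lies in $(3)\Rightarrow(1)$: the paper disposes of this in one line by citing \cite[Proposition~7.2.3]{Martini2021a}, which says precisely that a small full subcategory consisting of $\IFilt_{\I{U}}$-cocontinuous (i.e.\ $\I{U}$-compact) objects of a $\IFilt_{\I{U}}$-cocomplete $\BB$-category induces a fully faithful $\IFilt_{\I{U}}$-cocontinuous functor out of the free $\IFilt_{\I{U}}$-cocompletion. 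Your argument is in effect a self-contained proof of that cited result in the case at hand; the full-faithfulness step --- comparing $\map{\IInd_{\Univ}^{\I{U}}(\I{C})}(h(c),-)$ with $\map{\I{D}}(\iota(c),f(-))$ via the universal property of the free cocompletion, and then passing to a general source object by writing it as a $\I{U}$-filtered colimit of representables --- is sound, and is indeed exactly where the hypothesis $\I{C}\into\I{D}^{\cpt{\I{U}}}$ enters.

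One step is not justified as written: you conclude essential surjectivity of $f$ \emph{before} full faithfulness, from the claim that the essential image of $f$ is closed under $\IFilt_{\I{U}}$-colimits. For a merely $\IFilt_{\I{U}}$-cocontinuous functor this is not automatic: to see that the colimit in $\I{D}$ of a diagram landing in the essential image again lies in the essential image, one has to lift the diagram to $\IInd_{\Univ}^{\I{U}}(\I{C})$, and such a lift is only available once $f$ is known to be fully faithful (or one has some other lifting device). Since your full-faithfulness argument does not use essential surjectivity, the repair is simply to reorder: prove full faithfulness first, then note that a fully faithful $\IFilt_{\I{U}}$-cocontinuous functor has essential image closed under $\IFilt_{\I{U}}$-colimits, and conclude by the generation hypothesis. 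A smaller point worth a word: in (3) the $\BB$-category $\I{D}$ is not assumed to be locally small, so the functors $\map{\I{D}}(\iota(c),f(-))$ in your comparison a priori take values in $\Univ[\BBB]$, and the universal property of $\IInd_{\Univ}^{\I{U}}(\I{C})$ has to be invoked one universe up; this is harmless but should be acknowledged.
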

\begin{proof}
	If $\I{D}$ is $\I{U}$-accessible, there is a small $\BB$-category $\I{C}$ and an equivalence $\I{D}\simeq\IInd^{\I{U}}(\I{C})$. In particular, $\I{D}$ is locally small and $\IFilt_{\I{U}}$-cocomplete. Furthermore, Proposition~\ref{prop:characterisationCompactObjectAccessible} implies that $\I{D}^{\cpt{\I{U}}}$ is small. Since $\I{D}$ is generated by $\I{C}$ under $\IFilt_{\I{U}}$-colimits and therefore by $\I{D}^{\cpt{\I{U}}}$, we conclude that~(1) implies~(2). Moreover,~(2) trivially implies~(3), and the fact that~(3) implies~(1) immediately follows from~\cite[Proposition~7.2.4]{MWColimits}.
\end{proof}

Recall from~\cite[Definition~3.4.7]{MWColimits} that a localisation $L\colon\I{D}\to\I{E}$ is a \emph{Bousfield localisation} if $L$ admits a (necessarily fully faithful) right adjoint $i$. Proposition~\ref{prop:characterisationAccessibility} now implies:
\begin{corollary}
	\label{cor:AccessibleLocalisation}
	Let $\I{U}$ be a sound doctrine and let $\I{D}$ be a $\I{U}$-accessible $\BB$-category. Suppose that $\I{E}$ is a Bousfield localisation of $\I{D}$ such that the inclusion $i\colon\I{E}\into\I{D}$ is $\IFilt_{\I{U}}$-cocontinuous. Then $\I{E}$ is $\I{U}$-accessible as well.
\end{corollary}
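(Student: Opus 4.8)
The plan is to verify condition~(3) of Proposition~\ref{prop:characterisationAccessibility} for the $\BB$-category $\I{E}$. Write $L\colon\I{D}\to\I{E}$ for the localisation functor and $i\colon\I{E}\into\I{D}$ for its fully faithful right adjoint, so that $L$ preserves all colimits that $\I{D}$ admits (being a left adjoint), $Li\simeq\id_{\I{E}}$, and $L$ is essentially surjective. Since $\I{D}$ is $\I{U}$-accessible it is $\IFilt_{\I{U}}$-cocomplete, and consequently so is $\I{E}$: the colimit in $\I{E}$ of a diagram is obtained by applying $L$ to its colimit taken in $\I{D}$. It therefore remains to produce a small full subcategory $\I{C}\into\I{E}$ with $\I{C}\into\I{E}^{\cpt{\I{U}}}$ that generates $\I{E}$ under $\IFilt_{\I{U}}$-colimits.

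The candidate is the full subcategory $\I{C}\into\I{E}$ spanned by the essential image of the composite $\I{D}^{\cpt{\I{U}}}\into\I{D}\xrightarrow{L}\I{E}$. By Proposition~\ref{prop:characterisationAccessibility} applied to $\I{D}$ the $\BB$-category $\I{D}^{\cpt{\I{U}}}$ is small, and since $\I{C}_0$ receives a cover from $(\I{D}^{\cpt{\I{U}}})_0$ along $L_0$, the $\BB$-category $\I{C}$ is small too. To see that $\I{C}\into\I{E}^{\cpt{\I{U}}}$, I would show that $L$ carries $\I{U}$-compact objects of $\I{D}$ to $\I{U}$-compact objects of $\I{E}$. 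For $c\colon A\to\I{D}^{\cpt{\I{U}}}$, the adjunction $L\dashv i$ gives an equivalence $\map{\I{E}}(L(c),-)\simeq\map{\I{D}}(c,i(-))$, which exhibits $\map{\I{E}}(L(c),-)$ as the composite of $i\colon\I{E}\to\I{D}$ with $\map{\I{D}}(c,-)\colon\I{D}\to\Univ$; the former is $\IFilt_{\I{U}}$-cocontinuous by hypothesis and the latter is $\IFilt_{\I{U}}$-cocontinuous because $c$ is $\I{U}$-compact, so the composite — and hence $L(c)$ — is $\I{U}$-compact. As usual one first reduces to the case $A\simeq 1$, using that a Bousfield localisation and the $\IFilt_{\I{U}}$-cocontinuity of $i$ are stable under étale base change, together with Remark~\ref{rem:BCUComp} and Remark~\ref{rem:BCFilt}.

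For the generation clause, let $\I{E}'\into\I{E}$ denote the full subcategory generated by $\I{C}$ under $\IFilt_{\I{U}}$-colimits, and let $\I{D}'\into\I{D}$ be the full subcategory consisting of those objects $d$ for which $L(d)$ lies in $\I{E}'$. Since $L$ preserves $\IFilt_{\I{U}}$-colimits, $\I{D}'$ is closed under such colimits in $\I{D}$; moreover it contains $\I{D}^{\cpt{\I{U}}}$, because $L$ sends $\I{D}^{\cpt{\I{U}}}$ into $\I{C}\into\I{E}'$. As $\I{D}^{\cpt{\I{U}}}$ generates $\I{D}$ under $\IFilt_{\I{U}}$-colimits by Proposition~\ref{prop:characterisationAccessibility}, we conclude $\I{D}'=\I{D}$, so the essential image of $L$ is contained in $\I{E}'$; since $L$ is essentially surjective this forces $\I{E}'=\I{E}$. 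With all three conditions of Proposition~\ref{prop:characterisationAccessibility}(3) verified, $\I{E}$ is $\I{U}$-accessible.

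The main obstacle is the middle step — that $L$ preserves $\I{U}$-compactness — since this is precisely where the hypothesis that $i$ is $\IFilt_{\I{U}}$-cocontinuous enters, and it requires some care to ensure that the argument is carried out over every slice $\Over{\BB}{A}$ and not just over $\BB$. Once this is in place, the remaining points (smallness of $\I{C}$, $\IFilt_{\I{U}}$-cocompleteness of $\I{E}$, and the generation argument) are routine bookkeeping.
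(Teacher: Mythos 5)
Your proof is correct and follows essentially the same route as the paper: take $\I{C}$ to be the image of $\I{D}^{\cpt{\I{U}}}$ under $L$, use the adjunction $L\dashv i$ together with the $\IFilt_{\I{U}}$-cocontinuity of $i$ to see that $L$ preserves $\I{U}$-compact objects, and then run the same closure argument (your $\I{D}'$ is exactly the paper's pullback of $\I{E}'$ along $L$) to get generation, concluding via Proposition~\ref{prop:characterisationAccessibility}. The only difference is cosmetic: you spell out the compactness-preservation step and invoke essential surjectivity of $L$ where the paper invokes $i$ being a section of $L$.
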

\begin{proof}
	Let $\I{C}\into\I{E}$ be the image of $\I{D}^{\cpt{\I{U}}}$ along the localisation functor $L\colon\I{D}\to\I{E}$. As $\I{E}$ is locally small and $\I{D}^{\cpt{\I{U}}}$ is small by Proposition~\ref{prop:characterisationAccessibility}, the $\BB$-category $\I{C}$ is small as well~\cite[Lemma~4.7.5]{MYoneda}. In light of the adjunction $L\dashv i$, the assumption that $i$ is $\IFilt_{\I{U}}$-cocontinous implies that $L$ preserves $\I{U}$-compact objects. In other words, we have $\I{C}\into\I{D}^{\cpt{\I{U}}}$. By Proposition~\ref{prop:characterisationAccessibility}, the large $\BB$-category $\I{D}$ is generated by $\I{D}^{\cpt{\I{U}}}$ under $\IFilt_{\I{U}}$-colimits, i.e.\ $\I{D}$ is the smallest full subcategory of itself that contains $\I{D}^{\cpt{\I{U}}}$ and that is closed under $\IFilt_{\I{U}}$-colimits. Let $\I{E}^\prime\into\I{E}$ be the smallest full subcategory that contains $\I{C}$ and that is closed under $\IFilt_{\I{U}}$-colimits, and let us consider the commutative diagram
	\begin{equation*}
		\begin{tikzcd}
			\I{D}^{\cpt{\I{U}}}\arrow[d, twoheadrightarrow] \arrow[r, hook] & \I{D}^\prime\arrow[d]\arrow[r, hook] & \I{D}\arrow[d, twoheadrightarrow, "L"]\\
			\I{C}\arrow[r, hook] & \I{E}^\prime\arrow[r, hook] & \I{E} 
		\end{tikzcd}
	\end{equation*}
	in which the right square is a pullback. Since $L$ is cocontinuous, the inclusion $\I{D}^\prime\into\I{D}$ is closed under $\IFilt_{\I{U}}$-colimits (using Lemma~\ref{lem:pullbackCocompleteRightFibration}) and must therefore be an equivalence. As the inclusion $i$ is a section of $L$, this implies that the inclusion $\I{E}^\prime\into\I{E}$ is an equivalence as well. By using Proposition~\ref{prop:characterisationAccessibility}, we thus conclude that $\I{D}$ is $\I{U}$-accessible.
\end{proof}

\begin{definition}
	\label{def:accessibleLocalisation}
	A Bousfield localisation $L\colon\I{D}\to\I{E}$ of a $\I{U}$-accessible $\BB$-category $\I{D}$ is said to be \emph{$\I{U}$-accessible} if the inclusion $\I{E}\into\I{D}$ is $\I{U}$-accessible. More generally, a Bousfield localisation $L\colon \I{D}\to\I{E}$ is accessible if there is a sound doctrine $\I{U}$ such that $\I{D}$ is $\I{U}$-accessible and the inclusion $\I{E}\into\I{D}$ is $\IFilt_{\I{U}}$-cocontinuous.
\end{definition}

\begin{remark}
	\label{rem:AccessibilityLocal}
	Proposition~\ref{prop:characterisationAccessibility} also shows that accessibility is a \emph{local} condition: if $\bigsqcup_i A_i\onto 1$ is a cover in $\BB$ and if $\I{U}$ is a sound doctrine, then a large $\BB$-category $\I{D}$ being $\I{U}$-accessible is equivalent to each $\pi_{A_i}^\ast\I{D}$ being $\pi_{A_i}^\ast\I{U}$-accessible. In fact, Remark~\ref{rem:BCInd} shows that we have an equivalence $\pi_{A}^\ast\IInd^{\I{U}}(\I{C})\simeq\IInd[\Over{\BB}{A}]^{\pi_{A}^\ast\I{U}}(\pi_{A}^\ast\I{C})$ for every $\BB$-category $\I{C}$ and every $A\in\BB$, hence the condition is necessary. To show that it is sufficient, first recall that since $\IFilt_{\I{U}}$-cocompleteness is a local condition~\cite[Remark~5.2.3]{MWColimits}, we deduce that $\I{D}$ must be $\IFilt_{\I{U}}$-cocomplete. Moreover, if $\I{E}\into\I{D}$ is the smallest full subcategory that is closed under $\IFilt_{\I{U}}$-colimits in $\I{D}$ and that contains $\I{D}^{\cpt{\I{U}}}$, the fact that $\pi_{A_i}^\ast\I{E}$ is closed under $\IFilt_{\pi_{A_i}^\ast\I{U}}$-colimits and Remark~\ref{rem:BCUComp} imply that the inclusion $\I{E}\into\I{D}$ is locally an equivalence and therefore already an equivalence. To show that $\I{D}$ is $\I{U}$-accessible, Proposition~\ref{prop:characterisationAccessibility} thus implies that it suffices to verify that also the condition of a large $\BB$-category to be (locally) small is local in $\BB$, which is clear from the definitions.
\end{remark}

Proposition~\ref{prop:characterisationAccessibility} can furthermore be used to show that presheaf $\BB$-categories are $\I{U}$-accessible for \emph{every} choice of a sound doctrine $\I{U}$:
\begin{proposition}
	\label{prop:PShAccessible}
	For every $\BB$-category $\I{C}$ and every sound doctrine $\I{U}$, the $\BB$-category $\IPSh(\I{C})$ is $\I{U}$-accessible.
\end{proposition}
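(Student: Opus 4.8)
The plan is to verify criterion (3) of Proposition~\ref{prop:characterisationAccessibility} for $\I{D}=\IPSh_{\Univ}(\I{C})$ with respect to an arbitrary sound doctrine $\I{U}$. First I would note that $\IPSh_{\Univ}(\I{C})$ is cocomplete, hence in particular $\IFilt_{\I{U}}$-cocomplete, so the only thing to produce is a small full subcategory $\I{C}'\into\IPSh_{\Univ}(\I{C})$ which lands inside $\IPSh_{\Univ}(\I{C})^{\cpt{\I{U}}}$ and which generates $\IPSh_{\Univ}(\I{C})$ under $\IFilt_{\I{U}}$-colimits.

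The natural candidate is $\I{C}'=\IPSh_{\Univ}(\I{C})^{\cpt{\I{U}}}$ itself. By Remark~\ref{rem:regularClassesExhaustive} every sound doctrine is contained in some $\ICat_{\BB}^\kappa$ for a $\BB$-regular cardinal $\kappa$; since $\ICat_{\BB}^\kappa$ is a sound doctrine by Proposition~\ref{prop:kappaSmallSound}, it has the decomposition property by Proposition~\ref{prop:decompositionColimitsCatB} (a sound left-regular class is weakly sound and hence satisfies the decomposition property). Therefore $\I{U}$, being a subclass of a class with the decomposition property, still satisfies the hypotheses of Proposition~\ref{prop:characterisationCompactObjectsPSh}, which identifies $\IPSh_{\Univ}(\I{C})^{\cpt{\I{U}}}\simeq\IRet_{\IPSh_{\Univ}(\I{C})}(\Sml_{\Univ}^{\op(\I{U})}(\I{C}))$ and in particular shows this subcategory is \emph{small}. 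Actually, one should be slightly careful: the decomposition property for $\I{U}$ itself requires $\I{U}$ to be left regular and weakly sound, which by Corollary~\ref{cor:regularClassSoundWeaklySound} and Remark~\ref{rem:accessibilityRegularisation} we may arrange by passing to $\I{U}^{\reg}_\leftarrow$ without changing the notion of $\I{U}$-accessibility or of $\I{U}$-compactness (Remark~\ref{rem:accessibilityRegularisation} for the former; the class $\IFilt_{\I{U}}$ is unchanged by Proposition~\ref{prop:regularisationFilt}, so $\I{U}$-compactness is unchanged as well). So without loss of generality $\I{U}$ is left regular and sound, hence has the decomposition property, and Proposition~\ref{prop:characterisationCompactObjectsPSh} applies directly.

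It remains to check that $\IPSh_{\Univ}(\I{C})^{\cpt{\I{U}}}$ generates $\IPSh_{\Univ}(\I{C})$ under $\IFilt_{\I{U}}$-colimits. Here I would use the decomposition property together with Lemma~\ref{lem:coYonedaGeneralised}: given an arbitrary presheaf $F\colon A\to\IPSh_{\Univ}(\I{C})$, we may replace $\BB$ by $\Over{\BB}{A}$ and assume $A\simeq 1$; then by Remark~\ref{rem:decompositionPropertyVariant} there is a $\I{U}$-filtered $\BB$-category $\I{J}$ and a diagram $d\colon\I{J}\to\op(\I{U})$ with $\Over{\I{C}}{F}\simeq\colim d$ in $\ICat_{\BB}$, and Proposition~\ref{prop:decompositionColimitsPSh} then presents $F$ as a $\I{J}$-indexed (hence $\IFilt_{\I{U}}$-indexed) colimit of objects of $\Sml_{\Univ}^{\op(\I{U})}(\I{C})$, all of which are $\I{U}$-compact by Proposition~\ref{prop:characterisationCompactObjectsPSh}. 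Since this argument is carried out locally over arbitrary $A$, Remark~\ref{rem:BCUComp} and the locality of $\IFilt_{\I{U}}$-colimits (via~\cite[Lemma~6.2.4]{Martini2021a}) show that $\IPSh_{\Univ}(\I{C})^{\cpt{\I{U}}}$ generates $\IPSh_{\Univ}(\I{C})$ under $\IFilt_{\I{U}}$-colimits. Applying Proposition~\ref{prop:characterisationAccessibility}(3) then yields that $\IPSh_{\Univ}(\I{C})$ is $\I{U}$-accessible.

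The main obstacle I anticipate is purely bookkeeping: making sure that passing to $\I{U}^{\reg}_\leftarrow$ is harmless on all three counts (accessibility, $\I{U}$-compactness, and the class $\IFilt_{\I{U}}$) so that the decomposition property is genuinely available, and then carefully threading the "local over $A$" reductions through both Proposition~\ref{prop:characterisationCompactObjectsPSh} and the generation argument. No single step is hard, but the interplay of base change, regularisation, and soundness needs to be spelled out cleanly.
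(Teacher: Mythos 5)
Your proof is correct, and while it shares the paper's overall scaffolding — reduce to a left regular sound doctrine via Remark~\ref{rem:accessibilityRegularisation} (plus Corollary~\ref{cor:regularisationSoundness} and Proposition~\ref{prop:regularisationFilt}), get smallness of $\IPSh_{\Univ}(\I{C})^{\cpt{\I{U}}}$ from Proposition~\ref{prop:characterisationCompactObjectsPSh}, and then verify the generation criterion of Proposition~\ref{prop:characterisationAccessibility} — it handles the key generation step by a genuinely different decomposition. The paper writes an arbitrary presheaf $F$ as the colimit of the canonical diagram $\Over{\IPSh_{\Univ}(\I{C})^{\cpt{\I{U}}}}{F}\to\IPSh_{\Univ}(\I{C})^{\cpt{\I{U}}}\into\IPSh_{\Univ}(\I{C})$ via Lemma~\ref{lem:coYonedaGeneralised}, and then shows this indexing slice is $\op(\I{U})$-cocomplete (Lemma~\ref{lem:pullbackCocompleteRightFibration}, using Proposition~\ref{prop:UCompactUcocomplete}) and hence $\I{U}$-filtered by weak soundness; you instead apply the decomposition property directly to $\Over{\I{C}}{F}$ (Remark~\ref{rem:decompositionPropertyVariant}) and feed the resulting $\I{U}$-filtered presentation into Proposition~\ref{prop:decompositionColimitsPSh}, exhibiting $F$ as a $\I{U}$-filtered colimit of objects of $\Sml_{\Univ}^{\op(\I{U})}(\I{C})\into\IPSh_{\Univ}(\I{C})^{\cpt{\I{U}}}$ — exactly the mechanism the paper uses inside the proof of Proposition~\ref{prop:characterisationCompactObjectsPSh}, now applied to an arbitrary presheaf rather than a compact one. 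Your route avoids the slice-cocompleteness lemma at the cost of invoking the decomposition property twice (once inside Proposition~\ref{prop:characterisationCompactObjectsPSh} for smallness, once for generation); both arguments need soundness in the end, so neither is more general. One caveat: your first justification for the decomposition property — that $\I{U}$ inherits it from $\ICat_{\BB}^\kappa$ as a subclass — does not work (the decomposing diagram must land in $\I{U}$ itself), but you correctly replace it by passing to $\I{U}^\reg_\leftarrow$, noting that this changes neither $\IFilt_{\I{U}}$, nor the compact objects, nor the notion of accessibility; with that correction in place the argument is sound.
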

\begin{proof}
	In light of Remark~\ref{rem:accessibilityRegularisation}, we can assume that $\I{U}$ is a left regular doctrine.
	By Proposition~\ref{prop:characterisationCompactObjectsPSh}, the $\BB$-category $\IPSh(\I{C})^{\cpt{\I{U}}}$ is small. Using Proposition~\ref{prop:characterisationAccessibility}, it therefore suffices to show that every object in $\IPSh(\I{C})$ can be obtained as a $\I{U}$-filtered colimit of $\I{U}$-compact objects. If $F\colon\I{C}^{\op}\to\Univ$ is an arbitrary presheaf, Lemma~\ref{lem:coYonedaGeneralised} shows that $F$ is the colimit of the diagram $\Over{\IPSh(\I{C})^{\cpt{\I{U}}}}{F}\to\IPSh(\I{C})^{\cpt{\I{U}}}\into\IPSh(\I{C})$. By Lemma~\ref{lem:pullbackCocompleteRightFibration}, the $\BB$-category $\Over{\IPSh(\I{C})^{\cpt{\I{U}}}}{F}$ is $\op(\I{U})$-cocomplete and therefore in particular $\I{U}$-filtered (Example~\ref{ex:UCocompleteWeaklyFiltered}). Hence $F$ is contained in $\IInd^{\I{U}}(\IPSh(\I{C})^{\cpt{\I{U}}})$. Finally, upon replacing $\BB$ with $\Over{\BB}{A}$ (which is made possible by Remark~\ref{rem:BCUComp}), the same conclusion holds for objects in $\IPSh(\I{C})$ in context $A$, which finishes the proof.
\end{proof}

\subsection{Flatness}
\label{sec:flatness}
Recall from~\cite[\S~7.1]{MWColimits} that if $\I{C}$ is a $\BB$-category, the functor of left Kan extension along the Yoneda embedding $h_{\I{C}^{\op}}\colon\I{C}^{\op}\into\IFun(\I{C},\Univ)$ induces an equivalence
\begin{equation*}
	(h_{\I{C}^{\op}})_!\colon \IPSh(\I{C})\simeq\IFun^\cc(\IFun(\I{C},\Univ),\Univ)
\end{equation*}
where the right-hand side denotes the large $\BB$-category of cocontinuous functors between $\IFun(\I{C},\Univ)$ and $\Univ$.
\begin{definition}
	\label{def:UFlat}
	Let $\I{C}$ be a $\BB$-category and let $\I{U}$ be an arbitrary internal class of $\BB$-categories. A presheaf $F\colon A\to\IPSh(\I{C})$ is said to be \emph{$\I{U}$-flat} if the functor
	\begin{equation*}
		\IFun[\Over{\BB}{A}](\pi_A^\ast\I{C},\Univ[\Over{\BB}{A}])\to \Univ[\Over{\BB}{A}]
	\end{equation*}
	that is encoded by $(h_{\I{C}^{\op}})_!(F)$ is $\pi_A^\ast\I{U}$-continuous. We denote the full subcategory of $\IPSh(\I{C})$ that is spanned by the $\I{U}$-flat presheaves by $\IFlat^{\I{U}}(\I{C})$, and we denote its underlying $\infty$-category of global sections by $\Flat_{\BB}^{\I{U}}(\I{C})$.
\end{definition}

\begin{remark}
	\label{rem:UflatLocal}
	In the situation of Definition~\ref{def:UFlat}, the fact that $\I{U}$-continuity is a local condition~\cite[Remark~5.2.3]{MWColimits} together with~\cite[Remark~6.3.2]{MWColimits} a d~\cite[Lemma~4.7.14]{MYoneda} implies that the presheaf $F$ is $\I{U}$-flat if and only if for every cover $(s_i)\colon\bigsqcup A_i\onto A$ in $\BB$ the presheaf $s_i^\ast F$ is $\I{U}$-flat. In particular, \emph{every} object in $\IFlat^{\I{U}}(\I{C})$ is $\I{U}$-flat, and there is a canonical equivalence $\pi_A^\ast\IFlat^{\I{U}}(\I{C})\simeq\IFlat[\Over{\BB}{A}]^{\pi_A^\ast\I{U}}(\pi_A^\ast\I{U})$ for every $A\in\BB$.
\end{remark}

\begin{lemma}
	\label{lem:formulaYonedaExtension}
	Let $\I{C}$ be a $\BB$-category and let $F\colon\I{C}^{\op}\to\Univ$ be a presheaf. Then the Yoneda extension $(h_{\I{C}^{\op}})_!(F)$ is equivalent to the composition
	\begin{equation*}
		\begin{tikzcd}
			\IFun(\I{C},\Univ)\arrow[r, "p^\ast"] & \IFun(\Over{\I{C}}{F},\Univ)\arrow[r, "\colim"] & \Univ,
		\end{tikzcd}
	\end{equation*}
	where $p\colon\Over{\I{C}}{F}\to\I{C}$ is the right fibration that is classified by $F$.
\end{lemma}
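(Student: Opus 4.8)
The plan is to exploit the universal property of the presheaf $\BB$-category $\IPSh_{\Univ}(\I{C})$ twice. Recall that left Kan extension along the Yoneda embedding $h_{\I{C}^{\op}}\colon\I{C}^{\op}\into\iFun{\I{C}}{\Univ}$ induces an equivalence $(h_{\I{C}^{\op}})_!\colon\IPSh_{\Univ}(\I{C})\simeq\iFun{\iFun{\I{C}}{\Univ}}{\Univ}^{\cc}$ whose inverse is restriction along $h_{\I{C}^{\op}}$; in particular a cocontinuous functor $\iFun{\I{C}}{\Univ}\to\Univ$ is determined up to equivalence by its restriction along $h_{\I{C}^{\op}}$, and $(h_{\I{C}^{\op}})_!(G)$ restricts to $G$ for every presheaf $G$, since $h_{\I{C}^{\op}}$ is fully faithful. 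Note also that the target functor $\colim\circ p^\ast$ of the claimed equivalence is cocontinuous, being the composite of the precomposition functor $p^\ast$ (cocontinuous, as colimits of copresheaves are pointwise) with the cocontinuous functor $\colim\colon\iFun{\Over{\I{C}}{F}}{\Univ}\to\Univ$.

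The first step is to identify $(h_{\I{C}^{\op}})_!$ on representables. For an object $c$ of $\I{C}$ the evaluation functor $\ev_c\colon\iFun{\I{C}}{\Univ}\to\Univ$ is cocontinuous, and by Yoneda's lemma its restriction along $h_{\I{C}^{\op}}$ is the presheaf $\map{\I{C}}(-,c)=h_{\I{C}}(c)$, because $h_{\I{C}^{\op}}(d)$ is the copresheaf $\map{\I{C}}(d,-)$. Hence $(h_{\I{C}^{\op}})_!(h_{\I{C}}(c))\simeq\ev_c$; stated invariantly, the functor $e\colon\I{C}\to\iFun{\iFun{\I{C}}{\Univ}}{\Univ}^{\cc}$ adjoint to the evaluation bifunctor $\ev\colon\I{C}\times\iFun{\I{C}}{\Univ}\to\Univ$ agrees with $(h_{\I{C}^{\op}})_!\circ h_{\I{C}}$ (both become $h_{\I{C}}$ after applying the inverse of the equivalence above).

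The second step is the co-Yoneda lemma: by Lemma~\ref{lem:coYonedaGeneralised} applied with $\I{D}=\I{C}$, the presheaf $F$ is the colimit of the diagram $\Over{\I{C}}{F}\xrightarrow{p}\I{C}\xrightarrow{h_{\I{C}}}\IPSh_{\Univ}(\I{C})$, where $p$ is the right fibration classified by $F$. Since $(h_{\I{C}^{\op}})_!$ is an equivalence and hence cocontinuous, applying it and using the first step gives
\begin{equation*}
(h_{\I{C}^{\op}})_!(F)\;\simeq\;\colim\bigl(\Over{\I{C}}{F}\xrightarrow{\;p\;}\I{C}\xrightarrow{\;e\;}\iFun{\iFun{\I{C}}{\Univ}}{\Univ}^{\cc}\bigr).
\end{equation*}
As colimits in the functor $\BB$-category $\iFun{\iFun{\I{C}}{\Univ}}{\Univ}$ are computed pointwise and the full subcategory of cocontinuous functors is closed under them, evaluating this equivalence at an arbitrary copresheaf $G\colon A\to\iFun{\I{C}}{\Univ}$ yields $(h_{\I{C}^{\op}})_!(F)(G)\simeq\colim_{\Over{\I{C}}{F}}(G\circ p)=(\colim\circ p^\ast)(G)$, naturally in $G$, which is precisely the asserted formula.

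The only genuinely delicate point is bookkeeping: keeping the variances straight between $\I{C}$ and $\I{C}^{\op}$, presheaves and copresheaves, and — most importantly — upgrading the identification $(h_{\I{C}^{\op}})_!(h_{\I{C}}(c))\simeq\ev_c$ from a pointwise statement to a natural equivalence of functors (equivalently, of bifunctors after uncurrying), which is what makes the colimit over $\Over{\I{C}}{F}$ in the display well defined. This is handled cleanly by phrasing everything through the evaluation bifunctor and the Grothendieck construction, rather than through individual objects. As usual, the passage to an arbitrary context $A\in\BB$ is dealt with by replacing $\BB$ with $\Over{\BB}{A}$, so one may assume from the outset that $F$ lives in context $1$.
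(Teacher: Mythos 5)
Your argument is correct, and it reaches the formula by a genuinely different route than the paper. The paper also observes that $\colim\circ p^\ast$ is cocontinuous, but then runs the universal property in the opposite direction: it reduces to checking that the restriction $\colim p^\ast h_{\I{C}^{\op}}$ is equivalent to $F$, and it computes this restriction via a diagram whose right-hand square commutes because the right fibration $p$ is proper, finishing with Yoneda's lemma. You never touch properness: instead you expand $F$ itself as the colimit of $h_{\I{C}}p$ over $\Over{\I{C}}{F}$ via Lemma~\ref{lem:coYonedaGeneralised} (with $\I{D}=\I{C}$), push this colimit through the cocontinuous equivalence $(h_{\I{C}^{\op}})_!$, identify its values on representables with evaluation functors by Yoneda's lemma, and reassemble the colimit of evaluations into $\colim\circ p^\ast$ using that colimits in functor $\BB$-categories are computed pointwise. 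The trade-off is exactly the coherence point you flag: the identification $\colim(e\circ p)\simeq\colim\circ p^\ast$ must be natural rather than object-wise. It can be closed cleanly: under the transposition equivalence $\iFun{\Over{\I{C}}{F}}{\iFun{\iFun{\I{C}}{\Univ}}{\Univ}}\simeq\iFun{\iFun{\I{C}}{\Univ}}{\iFun{\Over{\I{C}}{F}}{\Univ}}$ the diagram $e\circ p$ corresponds to $p^\ast$ (currying the evaluation bifunctor gives the identity on $\iFun{\I{C}}{\Univ}$), and the $\Over{\I{C}}{F}$-indexed colimit functor corresponds to postcomposition with $\colim\colon\iFun{\Over{\I{C}}{F}}{\Univ}\to\Univ$ (the functorial form of pointwise colimits, obtained from the corresponding statement for diagonal functors by passing to left adjoints), which yields $\colim\circ p^\ast$ on the nose. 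In exchange for avoiding the base-change argument for right fibrations, your proof leans on Lemma~\ref{lem:coYonedaGeneralised}, which appears earlier in the paper and is proved independently of this lemma, so there is no circularity; both proofs ultimately rest on the universal property of $\IPSh_{\Univ}(\I{C})$ together with Yoneda's lemma.
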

\begin{proof}
	As both $p^\ast$ and $\colim$ are cocontinuous functors, the universal property of presheaf $\BB$-categories implies that it suffices to find an equivalence $\colim p^\ast h_{\I{C}^{\op}}\simeq F$. Let us denote by $\Over{h}{F}\colon\Over{\I{C}}{F}\into\Over{\IPSh(\I{C})}{F}$ the functor that is induced by the Yoneda embedding $h_{\I{C}}$ by taking slice $\BB$-categories.  Now there is a commutative diagram
	\begin{equation*}
		\begin{tikzcd}
			\I{C}^{\op}\arrow[r, "h_{\I{C}^{\op}}"]\arrow[d, "h_{\I{C}}^{\op}"] &\IFun(\I{C},\Univ)\arrow[r, "p^\ast"] \arrow[d, "(h_{\I{C}})_!"]&\IFun(\Over{\I{C}}{F},\Univ)\arrow[dr, "\colim", bend left=30]\arrow[d, "(\Over{h}{F})_!"'] &\\
			\IPSh(\I{C})^{\op}\arrow[r, "h_{\IPSh(\I{C})}"] &
			\IFun(\IPSh(\I{C}),\Univ)\arrow[r, "(\pi_F)_!^\ast"] &
			\IFun(\Over{\IPSh(\I{C})}{F},\Univ)\arrow[r, "\id_F^\ast"] & \Univ
		\end{tikzcd}
	\end{equation*}
	in which the commutativity of the right square follows from the straightening equivalence for right fibrations (Theorem~\ref{thm:straightening}) together with $p$ being a right fibration and therefore \emph{proper} in the sense of~\cite[\S~4.4]{MYoneda}, see~\cite[Proposition~4.4.7]{MYoneda}. In light of Yoneda's lemma, it is now immediate that the composition of the left vertical map with the lower row in the above diagram recovers $F$, as desired.
\end{proof}

Recall from~\cite[Example~4.1.11]{MWColimits} that if $\I{C}$ is a $\BB$-category that admits a final object $1_{\I{C}}\colon 1\to\I{C}$, then this object is the limit of the unique diagram $\varnothing\to\I{C}$. In other words, the map $1_{\I{C}}\colon 1\simeq\IFun(\varnothing,\I{C})\to\I{C}$ is right adjoint to the unique functor $\pi_{\I{C}}\colon\I{C}\to 1$. We will denote by $\pi\colon\id_{\I{C}}\to 1_{\I{C}}\pi_{\I{C}}$ the associated adjunction unit.
\begin{lemma}
	\label{lem:formulaMappingGroupoidsSlice}
	If $p\colon\I{P}\to\I{C}$ is a right fibration of $\BB$-categories, the commutative square
	\begin{equation*}
		\begin{tikzcd}[column sep=huge]
			\id_{\IPSh(\I{P})}\arrow[d, "\pi"]\arrow[r, "\eta"] & p^\ast p_!\arrow[d, "p^\ast p_!\pi"]\\
			1_{\IPSh(\I{P})}\pi_{\IPSh(\I{P})}\arrow[r, "\eta 1_{\IPSh(\I{P})}\pi_{\IPSh(\I{P})}"] & p^\ast p_! 1_{\IPSh(\I{P})}\pi_{\IPSh(\I{P})}
		\end{tikzcd}
	\end{equation*}
	is a pullback square in $\IFun(\IPSh(\I{P}),\IPSh(\I{P}))$.
\end{lemma}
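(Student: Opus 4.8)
The plan is to check the square pointwise and then transport the resulting finite‑limit assertion along the Grothendieck construction into a direct pullback computation among right fibrations. Since limits in a functor $\BB$‑category are computed pointwise and $\IPSh_{\Univ}(\I{P})$ is complete, the displayed square of endofunctors of $\IPSh_{\Univ}(\I{P})$ is a pullback if and only if it is so pointwise, i.e.\ if for every object $F$ of $\IPSh_{\Univ}(\I{P})$ the square obtained by evaluation at $F$ is cartesian in $\IPSh_{\Univ}(\I{P})$. As all the functors and natural transformations in sight are stable under étale base change, we may replace $\BB$ by a slice and assume $F$ lives in context $1$, so that the square to be shown cartesian reads
\begin{equation*}
\begin{tikzcd}
F\arrow[d, "\pi_F"']\arrow[r, "\eta_F"] & p^\ast p_! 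F\arrow[d, "p^\ast p_!\pi_F"]\\
1_{\IPSh_{\Univ}(\I{P})}\arrow[r] & p^\ast p_! 1_{\IPSh_{\Univ}(\I{P})}.
\end{tikzcd}
\end{equation*}

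Next I would pass along the Grothendieck equivalences $\IPSh_{\Univ}(\I{P})\simeq\IRFib_{\I{P}}$ and $\IPSh_{\Univ}(\I{C})\simeq\IRFib_{\I{C}}$, which are natural in the $\BB$‑category. Under these, $p^\ast$ becomes base change along $p$, and hence its left adjoint $p_!$ becomes postcomposition with $p$: this postcomposition preserves right fibrations because $p$ is one and right fibrations, being defined by an internal right‑orthogonality condition, are closed under composition, and it is left adjoint to base change by the universal property of pullbacks. The terminal presheaf $1_{\IPSh_{\Univ}(\I{P})}$ corresponds to $\id_{\I{P}}\in\IRFib_{\I{P}}$, and the canonical map $\pi_F$ corresponds to the structure map $q\colon\I{Q}\to\I{P}$ of the right fibration classifying $F$. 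Unwinding the units of the two adjunctions, the square above is identified with the square of right fibrations over $\I{P}$
\begin{equation*}
\begin{tikzcd}
\I{Q}\arrow[d, "q"']\arrow[r, "{(\id_{\I{Q}},\, q)}"] & \I{Q}\times_{\I{C}}\I{P}\arrow[d, "q\times_{\I{C}}\id_{\I{P}}"]\\
\I{P}\arrow[r, "\Delta"] & \I{P}\times_{\I{C}}\I{P},
\end{tikzcd}
\end{equation*}
where $\Delta$ is the relative diagonal of $p$ and the structure maps to $\I{P}$ are the second projections; the square commutes by naturality of the unit of $p_!\dashv p^\ast$.

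Finally I would verify that this square is cartesian. The right‑hand vertical map $q\times_{\I{C}}\id_{\I{P}}$ is the base change of $q\colon\I{Q}\to\I{P}$ along the first projection $\pr_1\colon\I{P}\times_{\I{C}}\I{P}\to\I{P}$; placing the corresponding cartesian square to the right of the one above produces an outer square whose horizontal edges are $\pr_1\circ(\id_{\I{Q}},q)\simeq\id_{\I{Q}}$ and $\pr_1\circ\Delta\simeq\id_{\I{P}}$ and whose vertical maps are both $q$, hence an outer square that is trivially cartesian; by the pasting law for pullbacks the square above is cartesian, which is what we wanted. The one step that is not purely formal is the identification of $p_!$ on right fibrations with postcomposition by $p$, together with the bookkeeping that rewrites the abstract unit $\eta_F$ and the canonical map $\pi_F$ as the concrete maps $(\id_{\I{Q}},q)$ and $q$; once this is in place the remainder is diagram chasing. (Alternatively one could argue objectwise over $\I{P}$ using the pointwise formula for the left Kan extension $p_!$ together with properness of the right fibration $p$, as in the proof of Lemma~\ref{lem:formulaYonedaExtension}, but the fibrational translation above seems cleaner.)
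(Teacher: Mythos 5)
Your proposal is correct and follows essentially the same route as the paper: reduce to a pointwise statement for an object $F$ in context $1$, translate via the Grothendieck construction into a square of right fibrations $\Over{\I{P}}{F}\to\Over{\I{P}}{F}\times_{\I{C}}\I{P}$ over $\I{P}\to\I{P}\times_{\I{C}}\I{P}$, and observe that this is a pullback. The only difference is that you spell out the identification of $p_!$ with postcomposition and verify the final cartesianness by the pasting law, where the paper simply calls the square "clearly a pullback."
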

\begin{proof}
	By~\cite[Proposition~4.3.2]{MWColimits}, it suffices to show that for every object $F\colon A\to\IPSh(\I{P})$ in context $A\in\BB$ the induced diagram
	\begin{equation*}
		\begin{tikzcd}[column sep=huge]
			F\arrow[d, "\pi"]\arrow[r, "\eta F"] & p^\ast p_!(F)\arrow[d, "p^\ast p_!\pi"]\\
			\pi_A^\ast(1_{\IPSh(\I{C})})\arrow[r, "\eta \pi_A^\ast(1_{\IPSh(\I{C})})"] & p^\ast p_!(\pi_A^\ast(1_{\IPSh(\I{C})}))
		\end{tikzcd}
	\end{equation*}
	is a pullback. Upon replacing $\BB$ with $\Over{\BB}{A}$, we can assume $A\simeq 1$. In light of the straightening equivalence for right fibrations, this diagram corresponds to the commutative square
	\begin{equation*}
		\begin{tikzcd}
			\Over{\I{P}}{F}\arrow[d] \arrow[r] & \Over{\I{P}}{F}\times_{\I{C}}\I{P}\arrow[d]\\
			\I{P}\arrow[r] & \I{P}\times_{\I{C}}\I{P}
		\end{tikzcd}
	\end{equation*}
	of right fibrations over $\I{P}$. As this square is clearly a pullback, the claim follows.
\end{proof}

\begin{lemma}
	\label{lem:descentUniverse}
	Let $\I{I}$ be a $\BB$-category and let
	\begin{equation*}
		\begin{tikzcd}
			d\arrow[d] \arrow[r, "\phi"]& h\arrow[d]\\
			\diag(\I{G})\arrow[r, "\diag(s)"] & \diag(\I{H})
		\end{tikzcd}
	\end{equation*}
	be a pullback square in $\IFun(\I{I},  \Univ)$, where $s\colon \I{G}\to\I{H}$ is an arbitrary map of $\BB$-groupoids. Then the commutative square
	\begin{equation*}
		\begin{tikzcd}
			\colim( d)\arrow[r, "\colim( \phi)"] \arrow[d]& \colim( h)\arrow[d]\\
			\I{G}\arrow[r, "s"] & \I{H}
		\end{tikzcd}
	\end{equation*}
	that is obtained by transposing the first square across the adjunction $\colim\dashv \diag$ is a pullback square as well.
\end{lemma}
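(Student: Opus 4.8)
The statement asserts that colimits in $\iFun{\I{I}}{\Univ}$ (or rather the transposition across $\colim\dashv\diag$) are stable under base change along maps $\diag(s)$ coming from $\BB$-groupoids. The natural strategy is to reduce to a purely fibrational statement via the Grothendieck construction $\iFun{\I{I}}{\Univ}\simeq\ILFib_{\I{I}}$ and the identification of the colimit functor with groupoidification of the associated left fibration (as recalled from \cite[Proposition~4.4.1]{Martini2021a}). Under this dictionary, the given pullback square in $\iFun{\I{I}}{\Univ}$ corresponds to a pullback square of left fibrations
\begin{equation*}
\begin{tikzcd}
\I{D}\arrow[d]\arrow[r] & \I{H}'\arrow[d]\\
\I{I}\times\I{G}\arrow[r] & \I{I}\times\I{H}
\end{tikzcd}
\end{equation*}
over $\I{I}$, where the bottom map is $\id_{\I{I}}\times s$ and $\I{H}'$ is the left fibration classifying $h$, and the claim becomes that applying groupoidification $(-)^\gp\simeq\colim_{\Delta^{\op}}$ to the square
\begin{equation*}
\begin{tikzcd}
\I{D}\arrow[d]\arrow[r] & \I{H}'\arrow[d]\\
\I{G}\arrow[r, "s"] & \I{H}
\end{tikzcd}
\end{equation*}
(obtained by composing with the projections to $\I{G},\I{H}$) still yields a pullback in $\BB$.

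**Key steps.** First I would make the standard reductions: since all functors and the pullback condition are local in $\BB$ (by \cite[Proposition~4.3.2]{Martini2021a} and the fact that $\colim$, $\diag$ and pullbacks commute with the base change functors $\pi_A^\ast$), and since everything in sight is natural in $\I{I}$, it suffices to check the pullback property after evaluating on objects $a\colon A\to\I{I}$, i.e.\ to verify that for every such $a$ the fibre of $\colim(d)\to\colim(h)$ over the relevant point agrees with the fibre of $\I{G}\to\I{H}$. Alternatively — and this is the cleaner route — since $s\colon\I{G}\to\I{H}$ is a map of $\BB$-\emph{groupoids}, hence a map in $\BB$ viewed inside $\Univ$, one can use descent in the $\infty$-topos $\BB$: write $\I{H}$ as a colimit of its "points" via $\I{H}\simeq\colim_{\I{H}}1$ (the tautological presentation in the groupoid $\I{H}$), observe that both $s$ and the transposed map $\colim(h)\to\I{H}$ are pulled back from this, and invoke that in $\BB$ a commutative square is a pullback if and only if it becomes one after base change along a cover, together with the fact that $\colim\colon\iFun{\I{I}}{\Univ}\to\Univ$ preserves whatever colimits are needed. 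Concretely: base-change the whole situation along each point $1\to\I{H}$, which replaces $\diag(\I{H})$ by the final object $1_{\Univ}$ and $\diag(s)$ by the map classifying the fibre $\I{G}_h\to 1$; in that case the transposed square has $\colim(h)\simeq 1_\Univ$ below-right effectively trivialised, and the claim reduces to the assertion that $\colim$ of the pullback $d_h = d\times_{h}(\text{point})$ computes $\colim(d)\times_{\colim(h)}1$, which is exactly the statement that $\colim$ preserves pullbacks along maps from $\BB$-groupoids — i.e.\ one reduces to the case $\I{H}\simeq 1$. When $\I{H}\simeq 1$ the square reads $\colim(d)\to\colim(h)$ over $\I{G}\to 1$, so we must show $\colim(d)\simeq\I{G}$; but $d$ is then the constant-fibre-over-$\I{G}$ pullback $h\times_{\diag(1)}\diag(\I{G})$, and using that $\diag$ preserves pullbacks and that $\colim\diag(\I{G})\simeq\I{G}$ (as $\I{G}$ is a groupoid, $\colim$ of a constant diagram on a groupoid recovers it, cf.\ the final-object computation of $\colim$ on corepresentables and \cite[Proposition~4.4.1]{Martini2021a}), the claim follows by once more applying that $\colim$ is cocontinuous and $\I{I}$-colimits commute with base change over $\BB$-groupoids.

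**Main obstacle.** The delicate point is justifying the interchange "$\colim$ commutes with the relevant pullbacks" without circularity — this is really a descent statement, and the honest proof goes through the Grothendieck construction: a pullback square of left fibrations over $\I{I}\times\I{H}$ whose bottom edge is $\id\times s$ with $s\colon\I{G}\to\I{H}$ in $\BB$ gives, after composing down to $\I{G}\to\I{H}$, a square of $\BB$-groupoids whose pullback property one verifies by descent in $\BB$ applied fibrewise over the cover of $\I{H}$ by its points, using that groupoidification $(-)^\gp=\colim_{\Delta^{\op}}$ is a colimit and that $\I{G}\to\I{H}$, being a map of $\BB$-groupoids, is itself a base change of the universal such map. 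I expect that the bulk of the write-up is organising this reduction-to-points cleanly and citing the right local-on-$\BB$ and Grothendieck-construction lemmas (in particular the properness of right fibrations, \cite[Proposition~4.4.7]{Martini2021}, and \cite[Proposition~4.4.1]{Martini2021a}), rather than any substantive new computation.
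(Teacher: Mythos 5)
Your opening and closing paragraphs do identify the right ingredients -- unstraighten via the Grothendieck construction, identify $\colim$ with the groupoidification of the associated left fibration, and observe that the decisive input concerns the map $s$ of $\BB$-groupoids -- and this is indeed the skeleton of the paper's argument. But the argument you actually carry out in between does not work. The reduction ``base-change along each point $1\to\I{H}$ to reduce to $\I{H}\simeq 1$'' is circular: knowing that the fibre of $\colim(h)\to\I{H}$ over a point agrees with the colimit of the fibre of $h\to\diag(\I{H})$ is itself an instance of the lemma (with $\I{G}$ a point), so it cannot be used to perform the reduction; you acknowledge this, but the resolution you offer is only a list of lemmas one might cite, not an argument. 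Moreover, two concrete steps in the reduced case are incorrect: when $\I{H}\simeq 1$ the pullback condition to be verified is $\colim(d)\simeq\colim(h)\times\I{G}$, not $\colim(d)\simeq\I{G}$; and the identification $\colim\diag(\I{G})\simeq\I{G}$ is false in general, since the constant diagram unstraightens to $\I{I}\times\I{G}\to\I{I}$ and hence $\colim\diag(\I{G})\simeq\I{I}^{\gp}\times\I{G}$.

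What is missing is precisely the step that makes the paper's proof short. After unstraightening, the given square becomes a pullback square in $\Cat(\BB)$ of the left fibrations associated with $d$ and $h$ (e.g.\ $\Under{\I{I}}{d}\to\Under{\I{I}}{h}$) lying over $s\colon\I{G}\to\I{H}$, and by \cite[Proposition~4.4.1]{Martini2021a} one has to show that the groupoidification functor carries this square to a pullback in $\BB$. The point is that $s$, being a functor between $\BB$-groupoids, is a right fibration and therefore \emph{proper} \cite[Proposition~4.4.7]{Martini2021}, and properness is exactly the statement that pullback along $s$ commutes with groupoidification -- that single base-change assertion is the whole content of the proof. (Equivalently, since the unstraightened square is a levelwise pullback of simplicial objects over the constant simplicial objects $\I{G}\to\I{H}$ and $(-)^{\gp}\simeq\colim_{\Delta^{\op}}$, one may instead invoke universality of colimits in the $\infty$-topos $\BB$.) You mention properness of right fibrations only in passing as something to cite; without actually deploying it (or the descent argument just indicated) as the decisive step, the proposal does not prove the lemma.
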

\begin{proof}
	In light of the Grothendieck construction, the above pullback square corresponds to a pullback square
	\begin{equation*}
		\begin{tikzcd}
			\Over{\I{I}}{d}\arrow[r]\arrow[d] & \Over{\I{I}}{h}\arrow[d]\\
			\I{G}\arrow[r, "s"] & \I{H}
		\end{tikzcd}
	\end{equation*}
	in $\Cat(\BB)$. 	By~\cite[Proposition~4.4.1]{MWColimits}, we need to show that the groupoidification functor carries this diagram to a pullback square in $\BB$. As $s$ is a right fibration and therefore proper~\cite[Proposition~4.4.7]{MYoneda}, this is immediate.
\end{proof}

\begin{proposition}
	\label{prop:FlatIsAccessible}
	Let $\I{U}$ be a sound internal class and let $\I{C}$ be a $\BB$-category. Then there is an equivalence $\IFlat^{\I{U}}(\I{C})\simeq\IInd^{\I{U}}(\I{C})$ of full subcategories in $\IPSh(\I{C})$.
\end{proposition}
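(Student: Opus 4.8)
The plan is to prove the equality of the two full subcategories of $\IPSh_{\Univ}(\I{C})$ by comparing their objects in context $1$ over an arbitrary $\infty$-topos, which suffices since both $\IFlat_{\Univ}^{\I{U}}(\I{C})$ (Remark~\ref{rem:UflatLocal}) and $\IInd_{\Univ}^{\I{U}}(\I{C})$ are stable under \'etale base change. The key observation is that, by Proposition~\ref{prop:IndFilteredFreeCocompletion} together with the fact that $\IFilt_{\I{U}}$ is a colimit class (Remark~\ref{rem:FiltUColimitClass}), a presheaf $F\colon 1\to\IPSh_{\Univ}(\I{C})$ lies in $\IInd_{\Univ}^{\I{U}}(\I{C})$ precisely when the $\BB$-category $\Over{\I{C}}{F}$ is $\I{U}$-filtered. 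Writing $p\colon\Over{\I{C}}{F}\to\I{C}$ for the right fibration classified by $F$, Lemma~\ref{lem:formulaYonedaExtension} identifies $(h_{\I{C}^{\op}})_!(F)$ with the composite $\iFun{\I{C}}{\Univ}\xrightarrow{p^\ast}\iFun{\Over{\I{C}}{F}}{\Univ}\xrightarrow{\colim}\Univ$. Since $p^\ast$ is a restriction functor and hence preserves all limits, $\I{U}$-filteredness of $\Over{\I{C}}{F}$ immediately gives that $(h_{\I{C}^{\op}})_!(F)$ is $\I{U}$-continuous, i.e.\ that $F$ is $\I{U}$-flat; this yields the inclusion $\IInd_{\Univ}^{\I{U}}(\I{C})\into\IFlat_{\Univ}^{\I{U}}(\I{C})$ and uses neither soundness nor Lemmas~\ref{lem:formulaMappingGroupoidsSlice} and~\ref{lem:descentUniverse}.

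For the reverse inclusion, suppose $F$ is $\I{U}$-flat; we must show $\Over{\I{C}}{F}$ is $\I{U}$-filtered. By soundness it is enough to show that $\Over{\I{C}}{F}$ is \emph{weakly} $\I{U}$-filtered, and by the proof of Proposition~\ref{prop:characterisationWeaklyFiltered} (via Quillen's theorem A) this amounts to proving, for every $\I{I}\in\I{U}(1)$ and every diagram $\delta\colon\I{I}^{\op}\to\Over{\I{C}}{F}$, that $(\Under{(\Over{\I{C}}{F})}{\delta})^{\gp}\simeq 1$ (and the same after any base change, which is treated identically). Set $\overline\delta=p\circ\delta\colon\I{I}^{\op}\to\I{C}$, let $\Xi=\lim_{i}F(\overline\delta(i))$, and let $\xi\colon 1\to\Xi$ be the compatible system assembled from the $F$-components of the objects $\delta(i)$.

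First I would feed the $\I{I}$-indexed diagram of corepresentables $i\mapsto\map{\I{C}}(\overline\delta(i),-)$ in $\iFun{\I{C}}{\Univ}$ into the $\I{U}$-continuity of $(h_{\I{C}^{\op}})_!(F)$. Using Lemma~\ref{lem:formulaYonedaExtension} to compute $(h_{\I{C}^{\op}})_!(F)=\colim_{\Over{\I{C}}{F}}\circ\,p^\ast$, together with the identity $\colim_{\Over{\I{C}}{F}}\map{\I{C}}(c,p(-))\simeq F(c)$ from the proof of that lemma, this produces an equivalence $(\Over{\I{C}}{F}\times_{\I{C}}\Under{\I{C}}{\overline\delta})^{\gp}\simeq\Xi$ realised by restricting cocones to their individual legs. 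Next, a direct computation of mapping $\BB$-groupoids in $\Over{\I{C}}{F}$, exploiting that $p$ is a right fibration (so that mapping $\BB$-groupoids out of a fixed object of $\Over{\I{C}}{F}$ are fibres of those in $\I{C}$ --- this is the role of Lemma~\ref{lem:formulaMappingGroupoidsSlice}) and that limits of copresheaves are computed objectwise, identifies the copresheaf $\lim_{i}\map{\Over{\I{C}}{F}}(\delta(i),-)$ with the fibre product $p^\ast\bigl(\lim_{i}\map{\I{C}}(\overline\delta(i),-)\bigr)\times_{\diag\Xi}\diag 1$ along $\diag\xi$. Applying $\colim_{\Over{\I{C}}{F}}$ and invoking Lemma~\ref{lem:descentUniverse} (colimits preserve such pullbacks), and using Lemma~\ref{lem:formulaYonedaExtension} and the flatness of $F$ to evaluate the colimit of the upper right corner, I obtain a pullback square in $\BB$
\begin{equation*}
\begin{tikzcd}
(\Under{(\Over{\I{C}}{F})}{\delta})^{\gp}\arrow[r]\arrow[d] & (\Over{\I{C}}{F}\times_{\I{C}}\Under{\I{C}}{\overline\delta})^{\gp}\arrow[d]\\
1\arrow[r, "\xi"] & \Xi
\end{tikzcd}
\end{equation*}
whose right-hand vertical arrow is precisely the flatness equivalence of the previous step. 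As the pullback of an equivalence is an equivalence, $(\Under{(\Over{\I{C}}{F})}{\delta})^{\gp}\simeq 1$, as required.

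The main obstacle is the penultimate step: carrying out the mapping-$\BB$-groupoid computation so as to get an honest pullback square of copresheaves on $\Over{\I{C}}{F}$, and --- most delicately --- verifying that upon passing to colimits the arrow $(\Over{\I{C}}{F}\times_{\I{C}}\Under{\I{C}}{\overline\delta})^{\gp}\to\Xi$ produced by Lemma~\ref{lem:descentUniverse} coincides with the flatness equivalence, rather than being merely some map between equivalent objects. This is exactly what Lemmas~\ref{lem:formulaMappingGroupoidsSlice} and~\ref{lem:descentUniverse} are designed to supply, and it requires an unwinding of the Grothendieck construction for the right fibration $p$ and of the co-Yoneda formula; the remaining steps are formal manipulations with base change and the already-established characterisations of (weak) $\I{U}$-filteredness.
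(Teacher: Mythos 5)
Your proposal is correct and takes essentially the same route as the paper: the inclusion $\IInd_{\Univ}^{\I{U}}(\I{C})\into\IFlat_{\Univ}^{\I{U}}(\I{C})$ via Lemma~\ref{lem:formulaYonedaExtension}, and the converse by reducing, through soundness and the characterisation of weak $\I{U}$-filteredness, to showing $(\Under{(\Over{\I{C}}{F})}{\delta})^{\gp}\simeq 1$ by means of the pullback square coming from Lemma~\ref{lem:formulaMappingGroupoidsSlice} together with Lemma~\ref{lem:descentUniverse} and the flatness of $F$. The delicate point you flag --- that after taking colimits the right-hand vertical map is indeed the flatness equivalence --- is exactly what the paper settles with the mate argument of Remark~\ref{rem:dualDefinitionFiltered}, by showing that the cocone $\lim_{\I{K}}d^\ast p^\ast p_!h_{\Over{\I{C}}{F}}\to\diag(\Xi)$ is a colimit cocone because $\colim_{\Over{\I{C}}{F}}p^\ast\simeq(h_{\I{C}^{\op}})_!(F)$ preserves the relevant limit of corepresentables.
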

\begin{proof}
	In light of Remarks~\ref{rem:BCInd} and~\ref{rem:UflatLocal}, it will be enough to show that a presheaf $F\colon \I{C}^\op\to\Univ$ defines an object of $\IInd^{\I{U}}(\I{C})$ if and only if it is $\I{U}$-flat. So suppose first that  $F$ is contained in $\IInd^{\I{U}}(\I{C})$. Then $\Over{\I{C}}{F}$ is $\I{U}$-filtered. Let $p\colon \Over{\I{C}}{F}\to\I{C}$ be the projection. By Lemma~\ref{lem:formulaYonedaExtension}, the Yoneda extension $(h_{\I{C}^{\op}})_! F$ can be computed as the composition
	\begin{equation*}
		\begin{tikzcd}
			\IFun(\I{C},\Univ)\arrow[r, "p^\ast"] & \IFun(\Over{\I{C}}{F},\Univ)\arrow[r, "\colim"] & \Univ,
		\end{tikzcd}
	\end{equation*}
	and since both $p^\ast$ and $\colim$ are $\I{U}$-continuous, we deduce that $F$ is $\I{U}$-flat.
	
	Conversely, suppose that $F$ is $\I{U}$-flat. By Lemma~\ref{lem:formulaMappingGroupoidsSlice}, the commutative square
	\begin{equation*}
		\begin{tikzcd}[column sep={17em,between origins}]
			h_{\Over{\I{C}}{F}}\arrow[r, "\eta h_{\Over{\I{C}}{F}}"] \arrow[d, "\pi h_{\Over{\I{C}}{F}}"]& p^\ast p_! h_{\Over{\I{C}}{F}}\arrow[d, "p^\ast p_!\pi h_{\Over{\I{C}}{F}}"]\\
			1_{\IPSh(\Over{\I{C}}{F})}\pi_{\Over{\I{C}}{F}}\arrow[r, "\eta 1_{\IPSh(\Over{\I{C}}{F})}\pi_{\Over{\I{C}}{F}}"] \arrow[d, "\simeq"]& p^\ast p_!1_{\IPSh(\Over{\I{C}}{F})}\pi_{\Over{\I{C}}{F}}\arrow[d, "\simeq"]\\
			\diag_{\Over{\I{C}}{F}}(1_{\IPSh(\Over{\I{C}}{F})})\arrow[r, "\diag_{\Over{\I{C}}{F}}(\eta 1_{\IPSh(\Over{\I{C}}{F})})"] & \diag_{\Over{\I{C}}{F}}(p^\ast F)
		\end{tikzcd}
	\end{equation*}
	is a pullback in $\IFun(\Over{\I{C}}{F},\IPSh(\Over{\I{C}}{F}))$. By~\cite[Proposition~6.1.1]{MWColimits} the composition of the two vertical maps on the left is a colimit cocone, hence so is the composition of the two vertical maps on the right, for $p^\ast p_!$ preserves all colimits. Let $d\colon \I{K}\to (\Over{\I{C}}{F})^\op$ be a diagram with $\I{K}\in \I{U}(1)$. By postcomposition with $\lim_{\I{K}}d^\ast\colon \IPSh(\Over{\I{C}}{F})\to\IFun(\I{K},\Univ)\to \Univ$, the above pullback square induces a cartesian square
	\begin{equation*}
		\begin{tikzcd}
			\lim_{\I{K}}d^\ast h_{\Over{\I{C}}{F}}\arrow[d] \arrow[r] & \lim_{\I{K}}d^\ast p^\ast p_! h_{\Over{\I{C}}{F}}\arrow[d]\\
			\diag_{\Over{\I{C}}{F}}(1_{\Univ})\arrow[r]& \diag_{\Over{\I{C}}{F}}(\lim_{\I{K}}d^\ast p^\ast F).
		\end{tikzcd}
	\end{equation*}
	We claim that the right vertical map in the this last diagram is still a colimit cocone. To see this, note that the equivalence $\IFun(\Over{\I{C}}{F},\IFun(\I{K},\Univ)\simeq\IFun(\I{K},\IFun(\Over{\I{C}}{F},\Univ))$ carries the diagram $d^\ast p^\ast p_! h_{\Over{\I{C}}{F}}$ to the composition
	\begin{equation*}
		\begin{tikzcd}
			\I{K}\arrow[r, "d"] & (\Over{\I{C}}{F})^\op\arrow[r, "p^\op"] & \I{C}^{\op}\arrow[r, "h_{\I{C}^{\op}}"] & \IFun(\I{C},\Univ)\arrow[r, "p^\ast"] & \IFun(\Over{\I{C}}{F},\Univ).
		\end{tikzcd}
	\end{equation*}
	Now the functor $\lim_{\I{K}}$ preserving the colimit of $d^\ast p^\ast p_! h_{\Over{\I{C}}{F}}$ is equivalent to the colimit functor $\colim_{\Over{\I{C}}{F}}$ preserving the limit of the diagram $p^\ast h_{\I{C}^{\op}}p^{\op}d$ (cf.\ the argument in Remark~\ref{rem:dualDefinitionFiltered}). As the functor $p^\ast$ commutes with all limits, this in turn follows once $\colim_{\Over{\I{C}}{F}}p^\ast$ preserves the limit of $h_{\I{C}^{\op}}p^\op d$, which follows from the equivalence $\colim_{\Over{\I{C}}{F}}p^\ast\simeq (h_{\I{C}^{\op}})_!(F)$ from Lemma~\ref{lem:formulaYonedaExtension} and the assumption that $F$ is $\I{U}$-flat. As a consequence, we now deduce from Lemma~\ref{lem:descentUniverse} that the map $\colim_{\Over{\I{C}}{F}}\lim_{\I{K}} d^\ast h_{\Over{\I{C}}{F}}\to 1_{\Univ}$ must be an equivalence. By Proposition~\ref{prop:characterisationWeaklyFiltered}, this precisely means that $\Under{(\Over{\I{C}}{F})}{d^{\op}}^\gp\simeq 1$. As $d$ was chosen arbitrarily and as replacing $\BB$ with $\Over{\BB}{A}$ allows us to derive the same conclusion for any diagram $d\colon A\to \IFun(\I{K},(\Over{\I{C}}{F})^\op)$ in context $A\in\BB$, this shows that $\Over{\I{C}}{F}$ is weakly $\I{U}$-filtered and therefore $\I{U}$-filtered by soundness of $\I{U}$. Hence $F$ is contained in $\IInd^{\I{U}}(\I{C})$.
\end{proof}

\section{Presentable $\BB$-categories}
\label{chap:presentable}

In this section we introduce and study \emph{presentable} $\BB$-categories. Classically, a (locally) presentable $1$-category is one that is locally small and is generated by a small collection of $\kappa$-compact objects under small colimits~\cite{Gabriel1971}. In~\cite[\S~5.5]{htt}, Lurie generalised this concept to $\infty$-categories. In particular, his treatment contains a multitude of equivalent characterisations of presentability~\cite[Theorem~5.5.1.1]{htt}. One of the main goals of this section is to obtain a comparable result for $\BB$-categories. As a starting point, we chose to \emph{define} a presentable $\BB$-category as a \emph{Bousfield} localisation of a presheaf $\BB$-category at a (small) subcategory. To make sense of this, we need to study the notion of \emph{local objects} in a $\BB$-category, which we do in \S~\ref{sec:localObjects}. In \S~\ref{sec:presentability}, we formally define presentable $\BB$-categories and prove our main result about various different characterisations of this condition (Theorem~\ref{thm:characterisationPresentableCategories}), building upon our work on accessible $\BB$-categories. In \S~\ref{sec:adjointFunctorTheorem}, we discuss adjoint functor theorems for presentable $\BB$-categories, and in \S~\ref{sec:PrLB} we construct large $\BB$-categories of presentable $\BB$-categories and show that these are complete and cocomplete. Finally, we discuss the notion of \emph{$\I{U}$-sheaves} in \S~\ref{sec:sheaves}: these are $\I{U}$-continuous functors $\I{C}^{\op}\to\I{D}$, where $\I{C}$ is an $\op(\I{U})$-cocomplete $\BB$-category and $\I{D}$ is a large complete $\BB$-category. We show that if $\I{D}$ is presentable, such $\I{U}$-sheaves form a presentable $\BB$-category as well, and that this provides yet another equivalent characterisation of the notion of presentability.

\subsection{Local objects}
\label{sec:localObjects}
Recall from~\cite[\S~C.1]{MWColimits} the definition of a localisation of a $\BB$-category.
If $j\colon\I{S}\to\I{D}$ is a functor of $\BB$-categories, we obtain a localisation functor $L\colon \I{D}\to\I{S}^{-1}\I{D}=\I{D}\sqcup_{\I{S}}\I{S}^{\gp}$. If $\I{E}$ is an arbitrary $\BB$-category, $L$ satisfies the universal property that $L^\ast\colon\IFun(\I{S}^{-1}\I{D},\I{E})\to\IFun(\I{D},\I{E})$ is fully faithful and identifies the domain with the full subcategory $\IFun(\I{D},\I{E})_{\I{S}}$ that is spanned by those functors $\pi_A^\ast\I{S}\to\pi_A^\ast\I{D}$ whose restriction along $\pi_A^\ast(j)$ factors through the inclusion $\pi_A^\ast\I{D}^\simeq\into\pi_A^\ast\I{D}$. We may now define:
\begin{definition}
	\label{def:localObjects}
	If $\I{S}\to\I{D}$ is a functor, we define the associated \emph{$\BB$-category $\ILoc_{\I{S}}(\I{D})$ of $\I{S}$-local objects in $\I{D}$} as the full subcategory of $\I{D}$ that is defined via the pullback
	\begin{equation*}
		\begin{tikzcd}
			\ILoc_{\I{S}}(\I{D})\arrow[d, hookrightarrow, "i"]\arrow[r, hookrightarrow] & \IPSh(\I{S}^{-1}\I{D})\arrow[d, hookrightarrow, "L^\ast"]\\
			\I{D}\arrow[r, "h", hookrightarrow] & \IPSh(\I{D})
		\end{tikzcd}
	\end{equation*}
	in $\Cat(\BBB)$. We refer to an object $d\colon A\to\I{D}$ as being \emph{$\I{S}$-local} if it is contained in $\ILoc_{\I{S}}(\I{D})$.
\end{definition}
\begin{remark}
	\label{rem:BCLocalObjects}
	If $A\in\BB$ is an arbitrary object, we deduce from~\cite[Lemma~4.2.3 and~Lemma~4.7.14]{MYoneda} that there is a canonical equivalence $\pi_A^\ast\ILoc_{\I{S}}(\I{D})\simeq\ILoc_{\pi_A^\ast\I{S}}(\pi_A^\ast\I{D})$ of full subcategories in $\pi_A^\ast\I{D}$. In particular, this implies that an object $d\colon A\to\I{D}$ is $\I{S}$-local if and only if its transpose $\overline{d}\colon 1_{\Over{\BB}{A}}\to \pi_A^\ast\I{D}$ defines a $\pi_A^\ast\I{S}$-local object.
\end{remark}

\begin{remark}
	\label{rem:localObjectsExplicitly}
	Explicitly, an object $d\colon 1\to \I{D}$ is contained in $\ILoc_{\I{S}}$ precisely if the restriction of the presheaf $h(d)$ along $j$ factors through the inclusion $\Univ^\simeq\into \Univ$, which is the case if and only if for every map $s\colon e\to e^\prime$ in $\I{S}$ in context $A\in\BB$ the morphism  $j(s)^\ast\colon\map{\I{D}}(j(e^\prime),\pi_A^\ast d)\to\map{\I{D}}(j(e),\pi_A^\ast d)$ is an equivalence of $\Over{\BB}{A}$-groupoids (cf.~Proposition~\ref{prop:characterisationMonomorphismMappingGroupoids}). By Remark~\ref{rem:BCLocalObjects}, an analogous description holds for $\I{S}$-local objects in arbitrary context.
\end{remark}
\begin{remark}
	\label{rem:localObjectsSubcategory}
	In the situation of Definition~\ref{def:localObjects}, we deduce from~\cite[Proposition~C.8]{MWColimits} that if $\I{T}\into\I{D}$ is the \emph{$1$-image} of the map $\I{S}\to\I{D}$ (in the sense of~\cite[Definition~B.2.6]{MWColimits}), the canonical map $\I{S}^{-1}\I{D}\to\I{T}^{-1}\I{D}$ is an equivalence. Consequently, the induced map $\ILoc_{\I{T}}(\I{D})\to\ILoc_{\I{S}}(\I{D})$ must be an equivalence as well. Therefore, we may always assume that $\I{S}$ is a \emph{subcategory} of $\I{D}$.
\end{remark}
\begin{remark}
	\label{rem:LocalObjectsGenerators}
	Suppose that $(f_i\colon c_i\to d_i)_{i\in I}$ be a (small) family of maps in $\I{D}$, with $A_i\in\BB$ being the context of $f_i$. By the discussion in~\cite[Appendix~B]{MWColimits}, the subcategory $\I{S}\into\I{D}$ that is \emph{generated} by this family is given by the $1$-image of the induced map $\bigsqcup_i \Delta^1\otimes A_i\to \I{D}$. By combining Remark~\ref{rem:localObjectsExplicitly} and~\ref{rem:localObjectsSubcategory}, an object $d\colon 1\to\I{D}$ is $\I{S}$-local if and for each $i\in I$ the map
	\begin{equation*}
		f_i^\ast\colon\map{\I{D}}(d_i, \pi_{A_i}^\ast d)\to\map{\I{D}}(c_i, \pi_{A_i}^\ast d)
	\end{equation*}
	is an equivalence in $\Over{\BB}{A_i}$.
\end{remark}

The theory of local objects is intimately connected to the notion of \emph{Bousfield localisations}, i.e.\ of reflective subcategories:
\begin{proposition}
	\label{prop:BousfieldLocalisationLocalObjects}
	Let $\I{D}$ be a $\BB$-category and let $L\colon \I{D}\to\I{C}$ be a Bousfield localisation. Let $\I{S}= L^{-1}(\I{C}^\core)\into \I{D}$. Then the inclusion $\colon \I{C}\into\I{D}$ of $L$ induces an equivalence $\I{C}\simeq \ILoc_{\I{S}}(\I{D})$
	of full subcategories in $\I{D}$. Furthermore, if $\I{D}$ is $\I{U}$-accessible and $L$ is a $\I{U}$-accessible Bousfield localisation, there is a \emph{small} subcategory $\I{T}\into\I{S}$ such that $\I{C}\simeq\ILoc_{\I{T}}(\I{D})$.
\end{proposition}
\begin{proof}
	We begin with the first statement. By~\cite[Proposition~3.4.6]{MWColimits}, the functor $L\colon \I{D}\to\I{C}$ identifies $\I{C}$ with the localisation $\I{S}^{-1}\I{D}$. In light of the very definition of $\ILoc_{\I{S}}(\I{D})$, the  claim thus follows once we show that the commutative square
	\begin{equation*}
		\begin{tikzcd}
			\I{C}\arrow[d, hookrightarrow]\arrow[r, "h_{\I{C}}", hookrightarrow] & \IPSh(\I{C})\arrow[d, "L^\ast", hookrightarrow]\\
			\I{D}\arrow[r, hookrightarrow, "h_{\I{D}}"] & \IPSh(\I{D})
		\end{tikzcd}
	\end{equation*}
	is a pullback. Using~\cite[Lemma~4.7.14]{MYoneda}, it will be enough to show that if $F\colon \I{C}^\op\to\Univ$ is a presheaf such that $L^\ast(F)\colon\I{D}^\op\to\Univ$ is representable by an object $d\colon 1\to\Univ$, then $F$ is representable as well. This immediately follows from the computation
	\begin{equation*}
		F\simeq L_! L^\ast F\simeq L_! h_{\I{D}}(d)\simeq h_{\I{C}} L(d),
	\end{equation*} 
	cf.~\cite[Corollary~3.3.3]{MWColimits}. Now if $\I{D}$ is $\I{U}$-accessible and $L$ is a $\I{U}$-accessible Bousfield localisation, let us set $\I{E}=\I{D}^{\cpt{\I{U}}}$ and $\I{T}=i^{-1}(\I{S})\into\I{E}$, where $i\colon\I{E}\into\I{D}$ is the inclusion. Since $\I{E}$ is small by Proposition~\ref{prop:characterisationCompactObjectAccessible}, so is $\I{T}$, and we obtain a full inclusion $\ILoc_{\I{S}}(\I{D})\into\ILoc_{\I{T}}(\I{D})$. We need to show that this is an equivalence. By Remark~\ref{rem:BCLocalObjects}, it will be enough to show that every $\I{T}$-local object $d\colon 1\to\I{D}$ is already $\I{S}$-local. Let $\eta\colon \id_{\I{D}}\to iL$ be the adjunction unit. We then obtain a map
	\begin{equation*}
		\eta^\ast\colon\map{\I{D}}(iL(-),d)\to\map{\I{D}}(-,d),
	\end{equation*}
	and since $d$ is $\I{T}$-local the restriction of $\eta^\ast$ to $\I{E}$ is an equivalence. But as both domain and codomain of this map are $\IFilt_{\I{U}}$-cocontinuous when viewed as functors $\I{D}\to\Univ^\op$, the fact that we have $\I{D}\simeq\IInd^{\I{U}}(\I{E})$ immediately implies that $\eta^\ast$ is already an equivalence, so that $d$ is $\I{S}$-local.
\end{proof}
In the situation of Proposition~\ref{prop:BousfieldLocalisationLocalObjects}, the question naturally arises whether the converse is true: namely, whether the inclusion $i\colon\ILoc_{\I{S}}(\I{D})\into\I{D}$ always defines a Bousfield localisation (i.e.\ admits a left adjoint) for every $\BB$-category $\I{D}$ and every functor $\I{S}\to\I{D}$. In general, this is false, but there is a large class of $\BB$-categories $\I{D}$ and functors $\I{S}\to\I{D}$ for which this is nonetheless the case:
\begin{proposition}
	\label{prop:LocalObjectsBousfieldLocalisation}
	Let $\I{D}$ be an $\Univ$-cocomplete large $\BB$-category that takes values in the $\infty$-category $\LPrS$ of presentable $\infty$-categories. Let furthermore $i\colon \I{S}\to\I{D}$ be a functor where $\I{S}$ is small. Then the inclusion $i\colon\ILoc_{\I{S}}(\I{D})\into\I{D}$ admits a left adjoint and therefore exhibits $\ILoc_{\I{S}}(\I{D})$ as a Bousfield localisation of $\I{D}$. Moreover, this Bousfield localisation is accessible.
\end{proposition}
\begin{proof}
	By~\cite[Remark~C.9]{MWColimits}, we may assume without loss of generality that $\I{S}$ is a subcategory of $\I{D}$, i.e.\ that $i$ is a monomorphism.
	Let us first show that $i(A)\colon \ILoc_{\I{S}}(\I{D})(A)\into \I{D}(A)$ admits a left adjoint for every object $A\in\BB$. Choose a small subcategory of generators $\GG\into \BB$. Then an object $d\colon A\to \I{D}$ is contained in $\ILoc_{\I{S}}(\I{D})(A)$ precisely if for every $g\colon G\to A$ with $G\in\GG$ and every map $s\colon p\to q$ in $\I{S}(G)$ the induced map
	\begin{equation*}
		s^\ast\colon \map{\I{D}(G)}(q,g^\ast d)\to \map{\I{D}(G)}(p,g^\ast d)
	\end{equation*}
	is an equivalence in $\SS$ (cf.~\cite[Corollary~4.6.8]{MYoneda}). As $g^\ast$ admits a left adjoint $g_!$, the object $d$ is thus contained in $\ILoc_{\I{S}}(\I{D})(A)$ if and only if $d$ is local with respect to the set of maps
	\begin{equation*}
		T_A= \bigcup_{G\to A}\{g_!(s)~|~s\in \I{S}(G)^{\Delta^1}\}
	\end{equation*}
	in $\I{D}(A)$. By construction, $T_A$ is a small set, and since $\I{D}(A)$ is by assumption a presentable $\infty$-category, we deduce from~\cite[Proposition~5.5.4.15]{htt} that $i(A)$ admits a left adjoint $L_A$ and that $i(A)$ is accessible.
	
	Next, we show that for every map $p\colon P\to A$ in $\BB$ the natural map $L_B p^\ast\to p^\ast L_A$ is an equivalence. By~\cite[Remark~3.2.10]{MWColimits}, we only need to show that $p^\ast L_G$ sends the adjunction unit of $L_A\dashv i(A)$ to an equivalence. Recall from~\cite[Section~5.5.4]{htt} that the set of maps in $\I{D}(A)$ that is inverted by $L_A$ coincides with the~\emph{strong saturation} of $T_A$, which is the smallest set of maps in $\I{D}(A)$ containing $T_A$ that is stable under pushouts, satisfies the two out of three property and is stable under small colimits in $\I{D}(A)^{\Delta^1}$. Therefore the adjunction unit $\eta$ is contained in the strong saturation of $T_A$, and since $p^\ast$ commutes with colimits (being a morphism in $\LPrS$) we conclude that it will suffice to show that $p^\ast$ sends maps in $T_A$ to maps in the strong saturation of $T_G$. Let us therefore fix a map $g\colon G\to A$ with $G\in\GG$ as well as a map $s\in\I{S}(G)^{\Delta^1}$. Since $\I{D}$ is $\Univ$-cocomplete, we find $p^\ast g_!(s)\simeq h_! q^\ast(s)$, where $h$ and $q$ are defined via the pullback square
	\begin{equation*}
		\begin{tikzcd}
			Q\arrow[r, "h"]\arrow[d, "q"] & P\arrow[d, "b"]\\
			G\arrow[r, "g"] & A.
		\end{tikzcd}
	\end{equation*}
	Now $q^\ast(s)$ is a map in $\I{S}(P)$ and therefore inverted by $L_P$, hence $h_! q^\ast(s)$ is inverted by $L_B$ whenever $h_!$ sends maps in $T_P$ to maps in the strong saturation of $T_B$, which is immediate by definition of $T_P$.
	
	Finally, we may employ Proposition~\ref{prop:existenceAdjointsBeckChevalley} to deduce that $i$ admits a left adjoint $L$. Furthermore, as $\I{D}$ is by assumption both $\Univ$- and $\ILConst$-cocomplete and therefore cocomplete~\cite[Proposition~5.4.1]{MWColimits}, and since every reflective subcategory of a cocomplete $\BB$-category is cocomplete as well~\cite[Proposition~5.2.6]{MWColimits}, the fact that $i$ is section-wise accessible already implies that $i$ is accessible (see Corollary~\ref{cor:AccIsSectWiseAccWhenCocomp}).
\end{proof}

\begin{corollary}
	\label{cor:presentableCategoryBousfieldLocalisation}
	Let $\I{C}$ and $\I{S}$ be (small) $\BB$-categories and let $j \colon \I{S}\to\IPSh(\I{C})$ be a functor. Then there is a sound doctrine $\I{U}$ such that $\ILoc_{\I{S}}(\IPSh(\I{C}))$ is a $\I{U}$-accessible Bousfield localisation of $\IPSh(\I{C})$. Conversely, any $\I{U}$-accessible Bousfield localisation of $\IPSh(\I{C})$ can be identified with the full subcategory $\ILoc_{\I{S}}(\IPSh(\I{C}))\into\IPSh(\I{C})$ for some small $\BB$-category $\I{S}$ and some functor $\I{S}\to\IPSh(\I{C})$.
\end{corollary}
\begin{proof}
	By the straightening equivalence for right fibrations, for any $A\in\BB$ there is a natural equivalence of $\infty$-categories $\IPSh(\I{C})(A)\simeq\RFib(\I{C}\times A)$, and since the right-hand side is a localisation of the presentable $\infty$-category $\Over{\Cat(\BB)}{A\times\I{C}}$ at a small set of objects, we find that $\IPSh(\I{C})$ is section-wise given by a presentable $\infty$-category. Moreover, if $s\colon B\to A$ is a map in $\BB$, the functor $s^\ast\colon \IPSh(\I{C})(A)\to\IPSh(\I{C})(B)$ admits a right adjoint $s_\ast$ by the theory of Kan extensions~\cite[\S~6.3]{MWColimits} and therefore in particular commutes with small colimits. As $\IPSh(\I{C})$ is cocomplete, we are therefore in the situation of Proposition~\ref{prop:LocalObjectsBousfieldLocalisation}, which implies the claim.
\end{proof}

\subsection{Presentability}
\label{sec:presentability}
In this section we define the concept of a presentable $\BB$-category and discuss various characterisations of this notion.
\begin{definition}
	\label{def:presentableCategory}
	A large $\BB$-category is said to be \emph{presentable} if there exist $\BB$-categories $\I{C}$ and $\I{S}$ as well as a functor $\I{S}\to \IPSh(\I{C})$ such that $\I{D}$ is equivalent to $\ILoc_{\I{S}}(\IPSh(\I{C}))$.
\end{definition}

\begin{remark}
	\label{rem:presentableCategoryRelationsSubcategory}
	In the situation of Definition~\ref{def:presentableCategory}, the fact that $\IPSh(\I{C})$ is locally small implies that the $1$-image $\I{S}^\prime$ of the functor $\I{S}\to\IPSh(\I{C})$ (i.e.\ the subcategory of $\IPSh(\I{C})$ that is obtained by factoring the functor $\I{S}\to\IPSh(\I{C})$ into a strong epimorphism and a monomorphism) is small as well. In fact, by combining~\cite[Proposition~C.2]{MWColimits} with~\cite[Proposition~4.7.2]{MYoneda} it is clear that $\I{S}^\prime$ is locally small, hence~\cite[Proposition~4.7.4]{MYoneda} implies that $\I{S}^\prime$ is small whenever $\I{S}^\prime_0$ is contained in $\BB$, which follows in turn from the observation that $\I{S}^\prime$ is a subcategory of the essential image of $\I{S}\to\IPSh(\I{C})$, which is small by~\cite[Lemma~4.7.5]{MYoneda}. As a consequence,~\cite[Remark~C.9]{MWColimits} shows that we may always assume that $\I{S}$ is a subcategory of $\IPSh(\I{C})$.
\end{remark}

\begin{definition}
	\label{def:sectwiseAccAndPres}
	We call a large $ \BB $-category $ \I{C} $ \emph{section-wise accessible} if the associated sheaf takes values in the subcategory $ \Acc \into \CatSS $ of accessible $\infty$-categories.
	Analogously, we call $ \I{C} $ \emph{section-wise presentable} if it factors through the inclusion $ \LPrS \into \CatSS $.
\end{definition}

We now come to the main characterisation of presentable $\BB$-categories. This will require the following lemma:
\begin{lemma}
	\label{lem:endFormulaFunB}
	Let $\CC$ be an $\infty$-category such that $ \BB $ is a left exact accessible localisation of $ \PSh(\CC) $. 
	For any two $\BB$-categories $\I{C}$ and $\I{D}$, there is an equivalence
	\[
	\Fun_\BB(\I{C},\I{D}) \simeq \int_c \Fun(\I{C}(Lc),\I{D}(Lc)).
	\]
	that is natural in $ \I{C},\I{D} \in \Cat(\BB) $.
\end{lemma}
\begin{proof}
	If $i\colon\BB\into\PSh(\CC)$ is the inclusion, the observation that we have a canonical equivalence $ \Fun_{\BB}(-,-)\simeq\Fun_{\PSh(\CC)}(i(-),i(-)) $ implies that we can assume that $ \BB = \PSh(\CC) $.
	In this case, we have an equivalence $ \Cat(\BB) \simeq \PSh_{\CatS}(\CC)$. By \cite[Proposition 2.3]{glasman2016} there is therefore an equivalence
	\[
	\map{\Cat(\BB)}( \I{C},\I{D}) \simeq \int_c \map{\CatS}(\I{C}(c), \I{D}(c))
	\]
	that is natural in $ \I{C} $ and $ \I{D} $.
	Thus, for any $ \infty $-category $ \KK $ we have a chain of natural equivalences
	\begin{align*}
		\map{\CatS}(\KK, \Fun_{\BB}(\I{C},\I{D})) & \simeq \map{\Cat(\BB)}(\KK \otimes \I{C},\I{D}) \\ &\simeq \int_c \map{\CatS}(\KK \times \I{C}(c), \I{D}(c) ) \\
		&\simeq \map{\CatS}(K, \int_c \Fun(\I{C}(c),\I{D}(c)))
	\end{align*}
	which implies that we also get an equivalence
	\[
	\Fun_\BB(\I{C},\I{D} )\simeq  \int_c \Fun(\I{C}(c),\I{D}(c))
	\]
	that is natural in $\I{C}$ and $\I{D}$.
\end{proof}

\begin{theorem}
	\label{thm:characterisationPresentableCategories}
	For a large $\BB$-category $\I{D}$, the following are equivalent:
	\begin{enumerate}
		\item $\I{D}$ is presentable;
		\item there is a $\BB$-category $\I{C}$ and an accessible Bousfield localisation $L
		\colon\IPSh(\I{C})\to\I{D}$;
		\item $\I{D}$ is accessible and cocomplete;
		\item $\I{D}$ is cocomplete, and there is a $\BB$-regular cardinal $\kappa$ such that $\I{D}$ is $\ICat_{\BB}^\kappa$-accessible;
		\item $\I{D}$ is cocomplete and section-wise accessible;
		\item $\I{D}$ is $\Univ$-cocomplete and section-wise presentable.
	\end{enumerate}
\end{theorem}
\begin{proof}
	The fact that~(1) and~(2) are equivalent is an immediate consequence of Corollary~\ref{cor:presentableCategoryBousfieldLocalisation}.
	Now if we assume that~(2) is satisfied, we may find a $\BB$-regular cardinal $\kappa$ such that the inclusion $\I{D}\into\IPSh(\I{C})$ is $\IFilt_{\ICat_{\BB}^\kappa}$-cocontinuous (see Remark~\ref{rem:AccSufficesForKappa}), which by Corollary~\ref{cor:AccessibleLocalisation} implies that $\I{D}$ is $\ICat_{\BB}^\kappa$-accessible. As any reflective subcategory of a cocomplete $\BB$-category is cocomplete as well~\cite[Proposition~5.2.6]{MWColimits}, we conclude that~(4) is satisfied. Trivially,~(4) implies~(3). Lastly, if
	$\I{D}\simeq\IInd^{\I{U}}(\I{C})$ for some sound doctrine $\I{U}$ and some $\BB$-category $\I{C}$ and if $\I{D}$ is furthermore cocomplete, we deduce from~\cite[Corollary~7.1.14]{MWColimits} that the inclusion $\IInd^{\I{U}}(\I{C})\into\IPSh(\I{C})$ admits a left adjoint, hence~(3) implies~(2).
	
	To show that (2) implies (5), since~\cite[Proposition~5.2.6]{MWColimits} already shows that $\I{D}$ is cocomplete, it remains to see that $ \I{D} $ is section-wise accessible. For every $A\in\BB$, the $\infty$-category $\I{D}(A)$ is a Bousfield localisation of the presentable $\infty$-category $\IPSh(\I{C})(A)\simeq \RFib(\I{C}\times A)$. Since Corollary~\ref{cor:AccIsSectWiseAccWhenCocomp} implies that this Bousfield localisation is \emph{accessible}, one concludes that $\I{D}(A)$ is an accessible $\infty$-category.
	Furthermore, since $ \I{D} $ is also complete, the functor $ s^* \colon \I{D}(A) \rightarrow \I{D}(B)$ preserves colimits for any map $ s \colon B \to A $ in $\BB$, so it is in particular accessible.
	Thus $ \I{D} $ is section-wise accessible.
	The fact that (5) implies (6) is an immediate consequence of Proposition~\ref{prop:CocompleteGroupoidsExternal} and \cite[Proposition 5.4.5]{MWColimits}.
	
	To complete the proof, we show that (6) implies (2).
	For this, let $\CC$ be a small $\infty$-category such that there is a left exact and accessible accessible localisation $L \colon \PSh(\CC) \rightarrow \BB$.
	Let $F\colon\CC^{\op}\to\CatSS$ be the composition
	\[
	F \colon \CC^\op \into \PSh(\CC)^{\op} \xrightarrow{L} \BB^\op \xrightarrow{\I{D}} \CatSS.
	\]
	Since $ \CC $ is small we may find a regular cardinal $\kappa$ and a functor $F_0 \colon \CC^\op \rightarrow \CatS$
	such that $ F $ is given by composing $ F_0 $ with the functor $ \Ind_\kappa(-) \colon \CatS \rightarrow \CatSS $. 
	We let $\I{C}$ denote the sheafification of $ F_0$, which is a small $ \BB $-category.
	Let $\I{U}$ be the internal class that is spanned by the constant $ \BB $-categories associated to $ \kappa $-filtered $ \infty $-categories.
	We claim that $\I{D}$ is the free $ \I{U} $-completion of $\I{C}$. Note that by assumption, $\I{D}$ is both $\Univ$ and $\ILConst$-cocomplete, hence $\I{D}$ must be cocomplete (Proposition~\ref{prop:CocompleteGroupoidsExternal}) and therefore a fortiori $\I{U}$-cocomplete. As a consequence, it suffices to verify the universal property.
	Let $\I{E}$ be an arbitrary $ \I{U} $-cocomplete (large) $ \BB $-category. By~\cite[Proposition~5.4.5]{MWColimits} and~\cite[Remark~5.4.9]{MWColimits}, a functor $f \colon \I{D} \rightarrow \I{E}$ is $ \I{U} $-cocontinuous if only if for every $c \in \CC$ the functor $f(c)$
	preserves $\kappa$-filtered colimits (where we slightly abuse notation and implicitly identify $c\in\CC$ with its image along $L\colon\PSh_{\SS}(\CC)\to\BB$).
	Let us write $\Fun_\BB^{\cocont{\I{U}}}(\I{D},\I{E})$ for the global sections of $\IFun^{\cocont{\I{U}}}(\I{D},\I{E})$, i.e. the full subcategory of $ \Fun_\BB(\I{D},\I{E}) $ spanned by the $ \I{U} $-cocontinuous functors.	
	By Lemma~\ref{lem:endFormulaFunB} it follows that we have a chain of equivalences
	\begin{align*}
		\Fun_{\BB}^{\I{U}}(\I{D},\I{E}) &\simeq \int_{c \in \CC} \Fun^\kappa(\I{D}(c),\I{E}(c)) \\
		&\simeq \int_{c \in \CC} \Fun(F_0(c),\I{E}(c)) \\
		& \simeq \Fun_{\PSh(\CC)}(F_0, \I{E}) \\
		& \simeq \Fun_\BB(\I{C},\I{E})
	\end{align*}
	that is natural in $\I{E}$. Using Yoneda's lemma, this already shows that $\I{D}$ is the free $\I{U}$-cocompletion of $\I{C}$.
	In particular, it follows from the explicit description of the free $\I{U}$-cocompletion that we have a commutative triangle of fully faithful functors
	\[
	\begin{tikzcd}
		\I{D} \arrow[r, hook,"j"]                                   & \IPSh(\I{C}) \\
		\I{C} \arrow[u, "i", hook] \arrow[ru, "h"', hook] &                     
	\end{tikzcd}
	\]
	Since $\I{D}$ is cocomplete, the inclusion $j$ admits a left adjoint~\cite[Corollary~7.1.14]{MWColimits}.
	In particular, the inclusion $ j(A) \colon \I{D}(A) \rightarrow \IPSh(\I{C})(A) $ is a right adjoint functor between presentable $\infty$-categories for each object $A\in\BB$, so it follows from \cite[Proposition 5.5.1.2]{htt} that $ j(A) $ is an accessible functor. Hence Corollary~\ref{cor:AccIsSectWiseAccWhenCocomp} implies that $j$ is accessible, which completes the proof.
\end{proof}

We end this section by recording a few consequences of Theorem~\ref{thm:characterisationPresentableCategories}. We begin by noting that as Theorem~\ref{thm:characterisationPresentableCategories} implies that every presentable $\BB$-category is a reflective subcategory of $\IPSh(\I{C})$ for some  $\BB$-category $\I{C}$, we deduce from~\cite[Proposition~5.2.6]{MWColimits}:
\begin{corollary}
	\label{cor:presentableCategoryLimitsColimits}
	Every presentable $\BB$-category is complete and cocomplete.\qed
\end{corollary}

\begin{corollary}
	\label{cor:presentabilityFunctorCategories}
	Let $\I{D}$ be a presentable $\BB$-category and let $\I{K}$ be a $\BB$-category. Then $\IFun(\I{K},\I{D})$ is presentable.
\end{corollary}
\begin{proof}
	By Theorem~\ref{thm:characterisationPresentableCategories}, we may choose a $\BB$-category $\I{C}$ and a sound doctrine $\I{U}$ such that $\I{D}$ is a $\I{U}$-accessible Bousfield localisation of $\IPSh(\I{C})$. In light of~\cite[Proposition~5.2.7]{MWColimits}, this implies that the large $\BB$-category $\IFun(\I{K},\I{D})$ is a $\I{U}$-accessible Bousfield localisation of $\IFun(\I{K},\IPSh(\I{C}))\simeq\IPSh(\I{K}^{\op}\times\I{C})$, hence the result follows.
\end{proof}

\begin{corollary}
	\label{cor:presentabilitySlice}
	Let $\I{D}$ be a presentable $\BB$-category and let $d\colon A\to\I{D}$ be an arbitrary object. Then $\Over{\I{D}}{d}$ is a presentable $\Over{\BB}{A}$-category.
\end{corollary}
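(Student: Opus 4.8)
The plan is to read off presentability of $\Over{\I{D}}{d}$ from the characterisation ``accessible $+$ cocomplete'' in Theorem~\ref{thm:characterisationPresentableCategories}, since the accessibility half has essentially already been dealt with in Proposition~\ref{prop:accessibilitySliceCategory}.

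First I would reduce to the case $A\simeq 1$. Using characterisation~(6) of Theorem~\ref{thm:characterisationPresentableCategories}, the $\Over{\BB}{A}$-category $\pi_A^\ast\I{D}$ is again $\Univ[\Over{\BB}{A}]$-cocomplete (cocompleteness being a local condition) and section-wise presentable (its associated sheaf on $\Over{\BB}{A}$ sends $s\colon B\to A$ to $\I{D}(B)$ with the transition functors of $\I{D}$), hence presentable; and under the identification $\Cat(\Over{\BB}{A})\simeq\Over{\Cat(\BB)}{A}$ the $\Over{\BB}{A}$-category $\Over{\I{D}}{d}$ corresponds to $\Over{(\pi_A^\ast\I{D})}{d}$ for $d$ regarded as a global object $1\to\pi_A^\ast\I{D}$. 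We may therefore replace $\BB$ by $\Over{\BB}{A}$ and assume that $d\colon 1\to\I{D}$ is a global object, so that $\Over{\I{D}}{d}$ is an honest $\BB$-category.

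Now by Theorem~\ref{thm:characterisationPresentableCategories} the $\BB$-category $\I{D}$ is both accessible and cocomplete. Proposition~\ref{prop:accessibilitySliceCategory} then immediately gives that $\Over{\I{D}}{d}$ is accessible (it is $\I{U}$-accessible for the sound doctrine $\I{U}$ witnessing accessibility of $\I{D}$). For cocompleteness I would invoke the machinery of colimits in slice $\BB$-categories: by Corollary~\ref{cor:sliceCategoryUCocomplete}, applied with the internal class $\ICat_{\BB}$, the slice $\Over{\I{D}}{d}$ of the cocomplete $\BB$-category $\I{D}$ is again cocomplete (with projection $(\pi_d)_!\colon\Over{\I{D}}{d}\to\I{D}$ cocontinuous). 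Having $\Over{\I{D}}{d}$ accessible and cocomplete, the implication $(3)\Rightarrow(1)$ of Theorem~\ref{thm:characterisationPresentableCategories} yields that $\Over{\I{D}}{d}$ is presentable, as claimed; combined with Corollary~\ref{cor:presentableCategoryLimitsColimits} it is also complete.

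The proof is essentially an assembly of earlier results, so there is no genuine obstacle. The one point deserving care is the cocompleteness of the slice: one must make sure that the results on colimits in slice $\BB$-categories (Proposition~\ref{prop:sliceFibrationColimits} and Corollary~\ref{cor:sliceCategoryUCocomplete}) are being applied to \emph{all} colimit shapes rather than to a bounded class, and that the reduction ``replace $\BB$ by $\Over{\BB}{A}$'' is compatible both with presentability (checked above via characterisation~(6)) and with the formation of the slice. Once these bookkeeping points are settled, the appeal to Proposition~\ref{prop:accessibilitySliceCategory} and Theorem~\ref{thm:characterisationPresentableCategories} finishes the argument.
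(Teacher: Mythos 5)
Your proposal is correct and follows essentially the same route as the paper: reduce to $A\simeq 1$, obtain cocompleteness of the slice from Corollary~\ref{cor:sliceCategoryUCocomplete}, accessibility from Proposition~\ref{prop:accessibilitySliceCategory}, and conclude via the characterisation of presentability as ``accessible and cocomplete'' in Theorem~\ref{thm:characterisationPresentableCategories}. The only cosmetic difference is that the paper justifies the reduction to $A\simeq 1$ by citing Remark~\ref{rem:PresentabilityLocalCondition} rather than rechecking condition~(6) by hand.
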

\begin{proof}
	We may assume that $A\simeq 1$ (cf.~Remark~\ref{rem:PresentabilityLocalCondition} below).
	Using Corollary~\ref{cor:sliceCategoryUCocomplete}, one finds that $\Over{\I{D}}{d}$ is cocomplete.
	By Theorem~\ref{thm:characterisationPresentableCategories}, it therefore suffices to show that $\Over{\I{D}}{d}$ is accessible, which is a consequence of Proposition~\ref{prop:accessibilitySliceCategory}.
\end{proof}

\begin{corollary}
	\label{cor:BousfieldLocalisationPresentable}
	Let $\I{D}$ be a presentable $\BB$-category and let $\I{S}\to\I{D}$ be a functor where $\I{S}$ is small. Then there is a sound doctrine $\I{U}$ such that $\ILoc_{\I{S}}(\I{D})$ is a $\I{U}$-accessible Bousfield localisation of $\I{D}$. In particular, $\ILoc_{\I{S}}(\I{D})$ is presentable.
\end{corollary}
\begin{proof}
	Since $\I{D}$ is cocomplete by Corollary~\ref{cor:presentableCategoryLimitsColimits} and section-wise presentable by Theorem~\ref{thm:characterisationPresentableCategories}, the claim follows from Proposition~\ref{prop:LocalObjectsBousfieldLocalisation}.
\end{proof}

\begin{remark}
	\label{rem:PresentabilityLocalCondition}
	As yet another consequence of Theorem~\ref{thm:characterisationPresentableCategories}, the condition of a large $\BB$-category to be presentable is a local condition: if $\bigsqcup_i A_i\onto 1$ is a cover in $\BB$, then a $\BB$-category $\I{D}$ is presentable if and only if each $\pi_{A_i}^\ast\I{D}$ is a presentable $\Over{\BB}{A_i}$-category. This follows from condition~(3) in Theorem~\ref{thm:characterisationPresentableCategories}, together with cocompleteness being a local condition (cf.~\cite[Remark~5.2.3]{MWColimits}) and Remark~\ref{rem:AccessibilityLocal}.
\end{remark}
\subsection{The adjoint functor theorem}
\label{sec:adjointFunctorTheorem}
Recall from~\cite[Proposition~5.2.5]{MWColimits} that any left adjoint functor between cocomplete large $\BB$-categories is cocontinuous. Therefore, if $\I{D}$ and $\I{E}$ are cocomplete large $\BB$-categories, there is a canonical inclusion
\begin{equation*}
	\IFun^{\Lad}(\I{D},\I{E})\into\IFun^\cc(\I{D},\I{E}).
\end{equation*}
If $\I{D}$ is presentable and $\I{E}$ is locally small, then this inclusion is in fact an equivalence:
\begin{proposition}[Adjoint functor theorem I]
	\label{prop:adjointFunctorTheorem}
	Let $\I{D}$ and $\I{E}$ be large $\BB$-categories such that $\I{D}$ is presentable and $\I{E}$ is cocomplete and locally small. Then every cocontinuous functor $f\colon \I{D}\to\I{E}$ admits a right adjoint. In particular, there is an equivalence
	\begin{equation*}
		\IFun^{\Lad}(\I{D},\I{E})\simeq\IFun^\cc(\I{D},\I{E})
	\end{equation*}
	of (large) $\BB$-categories.
\end{proposition}
\begin{proof}
	In light of Remark~\ref{rem:PresentabilityLocalCondition}, it is clear that the second statement immediately follows from the first. Now choose $\BB$-categories $\I{C}$ and $\I{S}$ as well as a functor $\I{S}\to\IPSh(\I{C})$ such that $\I{D}\simeq\ILoc_{\I{S}}(\IPSh(\I{C}))$. If $f\colon \I{D}\to\I{E}$ is a cocontinuous functor, then $fL\colon\IPSh(\I{C})\to \I{E}$ is cocontinuous as well and therefore a left adjoint by~\cite[Remark~7.1.4]{MWColimits}.
	To show that $f$ admits a right adjoint, we therefore only need to verify that the right adjoint $r$ of $fL$ factors through $\I{D}$. Since $\I{D}\simeq\ILoc_{\I{S}}(\IPSh(\I{C}))$ as full subcategories of $\IPSh(\I{C})$ by Theorem~\ref{thm:characterisationPresentableCategories}, this is in turn equivalent to $h_{\IPSh(\I{C})}r$ factoring through the functor
	\begin{equation*}
		L^\ast\colon \IPSh(\I{D})\into\IPSh(\IPSh(\I{C})),
	\end{equation*}
	which is clear on account of $r$ being right adjoint to $fL$.
\end{proof}
Recall from Corollary~\ref{cor:BousfieldLocalisationPresentable} that if $\I{D}$ is a presentable $\BB$-category and $\I{S}\to\I{D}$ is a functor where $\I{S}$ is small, the $\BB$-category $\ILoc_{\I{S}}(\I{D})$ is an accessible Bousfield localisation of $\I{D}$ and therefore in particular presentable. We may now use Proposition~\ref{prop:adjointFunctorTheorem} to derive a universal property of $\ILoc_{\I{S}}(\I{D})$ among presentable $\BB$-categories. To that end, recall from~\cite[Appendix~C]{MWColimits} that if $\I{E}$ is another presentable $\BB$-category, we denote by $\IFun(\I{D},\I{E})_{\I{S}}$ the full subcategory of $\IFun(\I{D},\I{E})$ that is spanned by those objects $A\to\IFun(\I{D},\I{E})$ for which the restriction of the associated functor $\pi_A^\ast\I{D}\to\pi_A^\ast\I{E}$ along $\pi_A^\ast\I{S}\to\pi_A^\ast\I{D}$ takes values in the subcategory $\pi_A^\ast\I{E}^\core$. We will denote by $\IFun^\cc(\I{D},\I{E})_{\I{S}}$ its intersection with the full subcategory $\IFun^\cc(\I{D},\I{E})$. We now obtain:
\begin{corollary}
	\label{cor:UniversalPropertyLocalObjects}
	Let $\I{S}\to\I{D}$ be a functor of $\BB$-categories where $\I{S}$ is small and $\I{D}$ is presentable, and let $\I{E}$ be another presentable $\BB$-category. Then precomposition with the left adjoint $L\colon \I{D}\to\ILoc_{\I{S}}(\I{D})$ induces an equivalence
	\begin{equation*}
		\IFun^\cc(\ILoc_{\I{S}}(\I{D}),\I{E})\simeq\IFun^\cc(\I{D},\I{E})_{\I{S}}.
	\end{equation*}
\end{corollary}
\begin{proof}
	To begin with, note that as $L$ is in particular a localisation functor (cf.~\cite[Proposition~3.4.6]{MWColimits}), the universal propery of localisations~\cite[Proposition~C.13]{MWColimits} implies that
	\begin{equation*}
		L^\ast\colon \IFun^\cc(\ILoc_{\I{S}}(\I{D}),\I{E})\to \IFun^\cc(\I{D},\I{E})
	\end{equation*}
	is fully faithful. Therefore, it suffices to identify its essential image with $\IFun^\cc(\I{D},\I{E})_{\I{S}}$. Since the restriction of $L$ along $\I{S}\to\I{D}$ takes values in $\ILoc_{\I{S}}(\I{D})^\core$, it is clear that $L^\ast$ takes values in  $\IFun^\cc(\I{D},\I{E})_{\I{S}}$, so that it suffices to show that every object $A\to \IFun^\cc(\I{D},\I{S})_{\I{S}}$ is contained in the essential image of $L^\ast$. By combining Remark~\ref{rem:BCLocalObjects} with~\cite[Remark~5.3.4 and~Remark~C.11]{MWColimits}, it will be enough to verify that any cocontinuous functor $f\colon\I{D}\to\I{E}$ whose restriction along $\I{S}\to\I{D}$ takes values in $\I{E}^\core$ factors through $L$. Note that the assumption on $f$ precisely means that $f$ factors through the localisation $l\colon \I{D}\to\I{S}^{-1}\I{D}$, so that $f^\ast\colon \IPSh(\I{E})\to\IPSh(\I{D})$ factors through $l^\ast\colon\IPSh(\I{S}^{-1}\I{D})\into\IPSh(\I{D})$. Since $f^\ast\simeq g_!$ where $g$ is the right adjoint of $f$ that is provided by Proposition~\ref{prop:adjointFunctorTheorem}, the very definition of $\ILoc_{\I{S}}(\I{D})$ implies that $g$ factors through the inclusion $i\colon\ILoc_{\I{S}}(\I{D})\into\I{D}$ via a functor $g^\prime\colon\I{E}\to\ILoc(\I{D})$. Since the composite $fi$ defines a left adjoint of $g^\prime$, the claim follows by passing to left adjoints.
\end{proof}

There is also a dual version to Proposition~\ref{prop:adjointFunctorTheorem} that classifies right adjoint functors between presentable $\BB$-categories.
\begin{proposition}[Adjoint functor theorem II]
	\label{prop:adjointfunctorII}
	Let $f \colon \I{D} \rightarrow \I{E}$ be a functor between presentable $\BB$-categories. Then the following are equivalent:
	\begin{enumerate}
		\item $f$ admits a left adjoint;
		\item $f$ is continuous and accessible;
		\item $f$ is continuous and section-wise accessible.
	\end{enumerate}
\end{proposition}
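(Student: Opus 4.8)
The plan is to prove the cycle $(1)\Rightarrow(2)\Rightarrow(3)\Rightarrow(1)$, reducing everything to the already-established section-wise characterisations. First I would handle $(1)\Rightarrow(2)$: if $f$ admits a left adjoint, then $f$ is continuous by the dual of~\cite[Proposition~5.1.5]{Martini2021a}. For accessibility, note that both $\I{D}$ and $\I{E}$ are cocomplete by Theorem~\ref{thm:characterisationPresentableCategories} (and complete by Corollary~\ref{cor:presentableCategoryLimitsColimits}), so by Corollary~\ref{cor:AccIsSectWiseAccWhenCocomp} it suffices to check that $f$ is section-wise accessible. For every $A\in\BB$ the functor $f(A)\colon\I{D}(A)\to\I{E}(A)$ is a right adjoint between presentable $\infty$-categories — presentability of the section $\infty$-categories is exactly condition~(6) of Theorem~\ref{thm:characterisationPresentableCategories}, and $f(A)$ has a left adjoint since passing to sections is a functor out of $\Cat(\BB)$ that preserves adjunctions — hence $f(A)$ is accessible by~\cite[Proposition~5.5.1.2]{htt}. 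Therefore $f$ is section-wise accessible, and then accessible by Corollary~\ref{cor:AccIsSectWiseAccWhenCocomp}. This simultaneously gives $(2)\Rightarrow(3)$ after the observation that accessible plus cocomplete implies section-wise accessible (again Corollary~\ref{cor:AccIsSectWiseAccWhenCocomp}), and conversely $(3)\Rightarrow(2)$ is immediate from the same corollary since $\I{D}$ and $\I{E}$ are cocomplete. So really the content is the equivalence of $(1)$ with $(3)$, with $(2)$ sitting in between for free.

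The substantive direction is $(3)\Rightarrow(1)$: given that $f$ is continuous and section-wise accessible, I want to produce a left adjoint. The strategy is to use the representability criterion for adjunctions from the preliminaries: $f$ admits a left adjoint if and only if for every object $e\colon A\to\I{E}$ the presheaf $\map{\I{E}}(e,f(-))\colon\pi_A^\ast\I{D}^{\op}\to\Univ[\Over{\BB}{A}]$ is representable. By the usual base-change reduction (replacing $\BB$ with $\Over{\BB}{A}$, using that $\I{D}$, $\I{E}$ and the property of being a right adjoint are all local by Remark~\ref{rem:PresentabilityLocalCondition}), we may assume $A\simeq 1$ and fix $e\colon 1\to\I{E}$. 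Now I would invoke the \emph{internal} version of the representability criterion from~\cite{Martini2021a}: since $\I{D}$ is presentable hence locally small, and the functor $G=\map{\I{E}}(e,f(-))\colon\I{D}^{\op}\to\Univ$ is continuous (as $f$ is continuous and mapping groupoids take colimits in the first variable to limits), representability will follow once we verify a solution-set type condition. The cleanest route is: present $\I{D}$ as a $\I{U}$-accessible Bousfield localisation $(L\dashv i)\colon\I{D}\leftrightarrows\IPSh_{\Univ}(\I{C})$ for a sound doctrine $\I{U}$ via Theorem~\ref{thm:characterisationPresentableCategories}, so that $\I{D}$ is generated under colimits by the (small) $\BB$-category $\I{C}$; then a continuous presheaf $G\colon\I{D}^{\op}\to\Univ$ is representable iff its restriction along $h\colon\I{C}\into\I{D}$ is a presheaf whose associated object is $\I{S}$-local for the localising subcategory, and continuity of $G$ guarantees exactly this. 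I expect this to be the main obstacle — making the ``solution set'' argument precise internally — and I would resolve it by the section-wise route instead, which is available because of hypothesis~(3): the composite $\I{D}(A)^{\op}\xrightarrow{G(A)}\SS$ is a continuous, accessible functor out of a presentable $\infty$-category, hence representable by the ordinary adjoint functor theorem~\cite[Corollary~5.5.2.9]{htt}, and these representing objects assemble (using that $f$ continuous forces compatibility with the transition functors $s^\ast$) into a section of a representing object for $G$. Care is needed because $s^\ast\colon\I{D}(A)\to\I{D}(B)$ need not preserve the relevant representing objects on the nose; here one uses that $\I{D}$ is also \emph{complete} (Corollary~\ref{cor:presentableCategoryLimitsColimits}) so the $s^\ast$ admit right adjoints, and the Beck--Chevalley-type comparison that $f$ being a section-wise continuous functor of complete $\BB$-categories is automatically continuous in the internal sense (the analogue for colimits is Corollary~\ref{cor:AccIsSectWiseAccWhenCocomp}; the dual statement for limits holds by the same argument).

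Assembling: once we know $G(A)$ is representable for all $A$, say by $g_A\in\I{D}(A)$, compatibility under $s^\ast$ makes $(g_A)$ into a local section of the sheaf $\I{D}$, i.e.\ an object $g\colon 1\to\I{D}$, and the equivalences $\map{\I{E}}(e,f(d))\simeq\map{\I{D}}(g,d)$ are section-wise and natural, hence promote to an equivalence of $\BB$-groupoid-valued functors by~\cite[Proposition~4.3.2]{Martini2021a}. Thus $G$ is representable, $e\mapsto g$ is the value of the putative left adjoint, and naturality in $e$ (which again can be checked section-wise) gives the left adjoint functor $\I{E}\to\I{D}$. This closes the cycle and completes the proof. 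The one genuinely delicate point, as flagged, is verifying that section-wise representability upgrades to internal representability without a gap — and the key input making this work is that \emph{both} $\I{D}$ and $\I{E}$ are complete as well as cocomplete, so all transition functors are bi-adjoint and the various mate transformations one needs are invertible by the section-wise hypotheses together with the conservativity of $(\ev_A)_{A\in\GG}$ for a small generating $\GG\subset\BB$.
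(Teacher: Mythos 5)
Your handling of the equivalences among (1), (2) and (3) is the same as the paper's: (2)$\Leftrightarrow$(3) is Corollary~\ref{cor:AccIsSectWiseAccWhenCocomp}, and (1)$\Rightarrow$(3) comes from the fact that, by Theorem~\ref{thm:characterisationPresentableCategories}, $f$ is section-wise a right adjoint between presentable $\infty$-categories, so the ordinary adjoint functor theorem applies. The divergence is in (3)$\Rightarrow$(1). The paper's argument is short: each $f(A)$ admits a left adjoint $l_A$ by the classical adjoint functor theorem, and by \cite[Proposition~3.2.8]{Martini2021a} it suffices that the mate $l_Bs^\ast\to s^\ast l_A$ be an equivalence for every $s\colon B\to A$; passing to right adjoints, this is the condition that $f(A)s_\ast\to s_\ast f(B)$ be an equivalence, which is exactly the $\Univ$-continuity contained in the hypothesis that $f$ is continuous. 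Your route via object-wise corepresentability of $\map{\I{E}}(e,f(-))$ is in essence a re-derivation of that criterion, and it can be made to work; but the justification you give at the decisive compatibility step is wrong. You claim that a section-wise continuous functor between complete $\BB$-categories is automatically continuous in the internal sense, ``by the same argument'' as Corollary~\ref{cor:AccIsSectWiseAccWhenCocomp}. That corollary concerns accessibility, and its proof hinges on reducing internal filtered diagrams to constant ones (Lemma~\ref{lem:CocompleteImpliesFinallyConstant}); there is no analogue for arbitrary limits. Internal continuity is strictly stronger than section-wise continuity: it additionally requires $f$ to commute with the right adjoints $s_\ast$ of the transition functors, i.e.\ the adjointability of the evaluation squares, exactly as in condition (2) of Theorem~\ref{thm:A} for colimits. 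This is an extra condition that cannot be detected section by section, so the principle you invoke is false.

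Fortunately your argument does not actually need it: hypothesis (3) already asserts internal continuity, and its $\Univ$-continuity component is precisely the Beck--Chevalley condition $f(A)s_\ast\simeq s_\ast f(B)$, whose total mate $l_Bs^\ast\simeq s^\ast l_A$ evaluated at $\pi_A^\ast e$ gives exactly the compatibility $g_B\simeq s^\ast g_A$ of your section-wise corepresenting objects. With that substitution, and with the assembly of the family $(g_A)$ into an internal left adjoint delegated to \cite[Proposition~3.2.8]{Martini2021a} (your Yoneda-style assembly is essentially a proof of that proposition, and doing it by hand is where the remaining informality sits), your proof closes and coincides in substance with the paper's. The abandoned solution-set/Bousfield-localisation detour and the slippage between presheaves on $\I{D}^{\op}$ and copresheaves on $\I{D}$ are only cosmetic.
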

\begin{proof}
	By Corollary~\ref{cor:AccIsSectWiseAccWhenCocomp},~(2) and~(3) are equivalent. Moreover, since Theorem~\ref{thm:characterisationPresentableCategories} implies that $f$ is section-wise given by a functor between presentable $\infty$-categories, the adjoint functor theorem for presentable $\infty$-categories~\cite[Corollary 5.5.2.9]{htt} shows that~(1) implies~(3).
	For the converse, note that the same result implies that $f(A)$ admits a left adjoint $l_A$ for every $A\in\BB$.
	By~Proposition~\ref{prop:existenceAdjointsBeckChevalley}, it now suffices to see that the natural map $l_B s^\ast\to s^\ast l_A$ is an equivalence for every map $s\colon B\to A$ in $\BB$.
	This is equivalent to seeing that the transpose map $f(A)s_\ast\to s_\ast f(B)$ that is given by passing to right adjoints is an equivalence.
	But this is just another way of saying that $f$ is $\Univ$-continuous.
\end{proof}

\subsection{The large $\BB$-category of presentable $\BB$-categories}
\label{sec:PrLB}
Recall from~\cite[\S~5.3]{MWColimits} that we defined the (very large) $\BB$-category $\ICat_{\BBB}^\cc$ of cocomplete large $\BB$-categories as the subcategory of $\ICat_{\BBB}$ which is determined by the subobject of $(\ICat_{\BBB})_1$ that is spanned by the cocontinuous functors between cocomplete large $\Over{\BB}{A}$-categories for every $A\in\BB$. By~\cite[Remark~5.3.2]{MWColimits} a functor of large $\Over{\BB}{A}$-categories is contained in $\ICat_{\BBB}^\cc$ precisely if it is a cocontinuous functor between cocomplete large $\BB$-categories. We may now define:
\begin{definition}
	\label{def:PrL}
	The large $\BB$-category $\ILPr_\BB$ of presentable $\BB$-categories is defined as the full subcategory of $\ICat_{\BBB}^\cc$ that is spanned by the presentable $\Over{\BB}{A}$-categories for every $A\in\BB$. We denote by $\LPr(\BB)$ the $\infty$-category of global sections of $\ILPr_{\BB}$.
\end{definition}

\begin{remark}
	\label{rem:BCPrL}
	As presentability is a local condition (Remark~\ref{rem:PresentabilityLocalCondition}) and by~\cite[Remark~5.3.2]{MWColimits}, a large $\Over{\BB}{A}$-category defines an object in $\ILPr_{\BB}$ if and only if it is presentable, and a functor between such large $\Over{\BB}{A}$-categories is contained in $\ILPr_{\BB}$ if and only if it is cocontinuous. Consequently, the inclusion $\ILPr_{\BB}\into\ICat_{\BBB}$ identifies $\ILPr_{\BB}$ with the sheaf $\LPr(\Over{\BB}{-})$ on $\BB$. In particular, one obtains a canonical equivalence $\pi_A^\ast\ILPr_{\BB}\simeq\ILPr_{\Over{\BB}{A}}$ for every $A\in\BB$.
\end{remark}

\begin{remark}
	\label{rem:SizePrL}
	A priori, $\ILPr_{\BB}$ is a very large $\BB$-category. However, note that  the set of equivalence classes of presentable $\BB$-categories is $\bV$-small as it admits a surjection from the $\bV$-small union
	\begin{equation*}
		\bigsqcup_{\I{C}\in\Cat(\BB)} \Sub_{\text{small}}(\IPSh(\I{C}))
	\end{equation*}
	where $\Sub_{\text{small}}(\IPSh(\I{C}))$ denotes the $\bV$-small poset of \emph{small} subcategories of $\IPSh(\I{C})$. As $\ICat_{\BBB}$ is furthermore locally $\bV$-small, this shows that $\ILPr_{\BB}$ is in fact only a large $\BB$-category.
\end{remark}

Recall from~\cite[\S~6.2]{MCocartesian} that we denote by $\ICat_{\BBB}^\Lad$ the subcategory of $\ICat_{\BBB}$ that is determined by the subobject $L\into(\ICat_{\BBB})_1$ of left adjoint functors. By Proposition~\ref{prop:adjointFunctorTheorem}, the inclusion $\ILPr_{\BB}\into\ICat_{\BBB}$ factors through the inclusion $\ICat_{\BBB}^\Lad\into\ICat_{\BBB}$. Suppose now that $\I{D}$ and $\I{E}$ are presentable $\BB$-categories. By combining Proposition~\ref{prop:morphismsCatB} with the fact that $L\into (\ICat_{\BBB})_1$ is closed under equivalences and composition in the sense of Proposition~\ref{prop:classificationSubcategories} and by furthermore making use of~\cite[Remark~5.3.4]{MWColimits}, we find that the induced inclusion $\map{\ILPr_{\BB}}(\I{D},\I{E})\into\map{\ICat_{\BBB}^\Lad}(\I{D},\I{E})$ is obtained by applying the core $\BB$-groupoid functor to the equivalence
\begin{equation*}
	\IFun^\cc(\I{D},\I{E})\simeq\IFun^\Lad(\I{D},\I{E})
\end{equation*}
from Proposition~\ref{prop:adjointFunctorTheorem}. Upon replacing $\BB$ with $\Over{\BB}{A}$ and using Remark~\ref{rem:BCPrL}, the same assertion holds for objects in $\ILPr_{\BB}$ in context $A\in\BB$, so that we conclude:
\begin{proposition}
	\label{prop:PrLMappingGroupoids}
	The inclusion $\ILPr_{\BB}\into\ICat_{\BBB}^\Lad$ is fully faithful.\qed
\end{proposition}

Dually, let us denote by $\ICat_{\BBB}^\Rad$ the subcategory of $\ICat_{\BBB}$ that is determined by the subobject $R\into(\ICat_{\BBB})_1$ of right adjoint functors.
\begin{definition}
	\label{def:PrR}
	The $\BB$-category $\IRPr_{\BB}$ of presentable $\BB$-categories is defined as the full subcategory of $\ICat_{\BBB}^\Rad$ that is spanned by the presentable $\Over{\BB}{A}$-categories for every $A\in\BB$. We denote by $\RPr(\BB)$ the underlying $\infty$-category of global sections.
\end{definition}

\begin{remark}
	\label{rem:BCPrR}
	As in Remark~\ref{rem:BCPrL}, a large $\Over{\BB}{A}$-category defines an object in $\IRPr_{\BB}$ if and only if it is presentable, and a functor between such large $\Over{\BB}{A}$-categories is contained in $\IRPr_{\BB}$ if and only if it is a right adjoint. As a consequence, the large $\BB$-category $\IRPr_{\BB}$ corresponds to the sheaf $\RPr(\Over{\BB}{-})$ on $\BB$ that is spanned by the presentable $\Over{\BB}{A}$-categories and right adjoint functors. In particular, one obtains a canonical equivalence $\pi_A^\ast\IRPr_{\BB}\simeq\IRPr_{\Over{\BB}{A}}$ for every $A\in\BB$.
\end{remark}

\begin{proposition}
	\label{prop:RPrOpposite}
	There is a canonical equivalence $(\IRPr_{\BB})^\op\simeq\ILPr_{\BB}$ that carries a right adjoint functor between presentable $\BB$-categories to its left adjoint.
\end{proposition}
\begin{proof}
	By~\cite[Proposition~6.2.1]{MCocartesian}, there is such an equivalence $(\ICat_{\BBB}^\Rad)^{\op}\simeq\ICat_{\BBB}^\Lad$, and since this functor necessarily acts as the identity on the underlying core $\BB$-groupoids, it restricts to the desired equivalence by virtue of Proposition~\ref{prop:PrLMappingGroupoids}.
\end{proof}

\begin{example}
	\label{ex:tensorConstructionPresentable}
	We are now in the position to provide a large class of examples of presentable $\BB$-categories:
	recall from~\cite[Construction~A.1]{MWColimits} that there is a functor $-\otimes\Univ\colon \RPrS\to\Cat(\BBB)$ that sends a presentable $\infty$-category $\EE$ to the large $\BB$-category $\EE\otimes\Univ=\EE\otimes\Over{\BB}{-}$ (where $-\otimes -$ is Lurie's tensor product of presentable $\infty$-categories). By~\cite[Example 5.4.8]{MWColimits}, the $\BB$-category $\EE\otimes\Univ$ is cocomplete, so that Theorem~\ref{thm:characterisationPresentableCategories} implies that it is presentable as it takes values in $\LPrS$. Moreover, we deduce from ~\cite[Examples 3.2.12]{MWColimits} that whenever $g\colon \EE\to\EE^\prime$ is a map in $\RPrS$, the induced functor $g\otimes\Univ$ is a right adjoint. Consequently, we conclude that the functor $-\otimes\Univ$ takes values in $\IRPr(\BB)$. In particular, by applying this observation to $\EE=\CatS$, we find that $\ICat_{\BB}$ is presentable.
\end{example}

Our next goal is to show that $\ILPr_{\BB}$ is complete and cocomplete. For completeness, we first need a lemma. To that end, recall from~\cite[\S~5.3]{MWColimits} that we denote by $\ICat_{\BBB}^{\cocont{\Univ}}$ the subcategory of $\ICat_{\BBB}$ that is spanned by the $\Univ[\Over{\BB}{A}]$-cocontinuous functors between $\Univ[\Over{\BB}{A}]$-cocomplete (large) $\BB$-categories. We now find:
\begin{lemma}
	\label{lem:CatUnivLimits}
	The $\BB$-category $\ICat_{\BBB}^{\cocont{\Univ}}$ is $\ILConst$-complete, and the inclusion $\ICat_{\BBB}^{\cocont{\Univ}}\into\ICat_{\BBB}$ is $\ILConst$-continuous.
\end{lemma}
\begin{proof}
	By~\cite[Remark~5.3.2]{MWColimits}, it is enough to show that for any  small $\infty$-category $\KK$ and any functor $d\colon \KK \to \ICat_{\BBB}^{\cocont{\Univ}}\into\ICat_{\BBB}$, the following two conditions are satisfied:
	\begin{enumerate}
		\item $\lim d$ is $\Univ$-cocomplete;
		\item for every $\Univ$-cocomplete large $\BB$-category $\I{C}$, a functor $f\colon \I{C}\to \lim d$ is $\Univ$-cocontinuous if and only if the compositions $\I{C}\to\lim d\to d(k)$ are $\Univ$-cocontinuous for all $k\in\KK$.
	\end{enumerate}
	Recall from~\cite[Corollary~4.7.4.18]{Lurie2017} that the subcategory $\Fun^{\LAdj}(\Delta^1,\CatSS)\into\Fun(\Delta^1,\CatSS)$ that is spanned by the right adjoint functors and the left adjointable squares (i.e.\ those commutative squares of $\infty$-categories whose associated mate transformation is an equivalence) admits small limits and that the inclusion preserves small limits. Let us fix a map $p\colon P\to A$ in $\BB$, and let us denote by $\Cat(\BBB)^{\cocont{\Univ}}$ the $\infty$-category of global sections of $\ICat_{\BBB}^{\cocont{\Univ}}$. Now evaluation at $p$ defines a functor $\Cat(\BBB)\to\Fun(\Delta^1,\CatSS)$ that restricts to a map $\Cat(\BBB)^{\cocont{\Univ}}\to\Fun^{\LAdj}(\Delta^1, \CatSS)$. Since limits in $\Cat(\BBB)$ are computed section-wise, this already shows that $p^\ast\colon\lim d(A)\to\lim d(P)$ admits a left adjoint. Similarly, if $s\colon B\to A$ is a map in $\BB$ and if $q\colon Q\to B$ denotes the pullback of $p$ along $s$,  evaluating large $\BB$-categories at this pullback square yields a morphism $\Delta^1\times\Cat(\BBB)^{\cocont{\Univ}}\to\Fun^{\LAdj}(\Delta^1,\CatSS)$. Consequently, applying $\lim d$ to the very same pullback square must yield a left-adjointable square of $\infty$-categories, which implies that condition~(1) is satisfied. By the same argument, if $\I{C}$ is $\Univ$-cocomplete and if $f\colon \I{C}\to \lim d$ is a functor, evaluating $f$ at $p$ yields a commutative square of $\infty$-categories that is left-adjointable if and only if the evaluation of the composition $\I{C}\to\lim d\to d(k)$ at $p$ is left-adjointable for all $k\in \KK$. Hence~(2) follows.
\end{proof}

\begin{proposition}
	\label{prop:limitsPrL}
	The $\BB$-category $\ILPr_{\BB}$ is complete, and the inclusion $\ILPr_{\BB}\into\ICat_{\BBB}$ is continuous.
\end{proposition}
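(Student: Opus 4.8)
The strategy is to reduce everything to the section-wise characterisation of presentability in Theorem~\ref{thm:A}, combined with the classical facts that $\LPrS$ is closed under small limits in $\CatSS$ (\cite[Proposition~5.5.3.13]{htt}) and that right adjoint functors of $\infty$-categories, together with the left-adjointable squares between them, are closed under small limits (\cite[Corollary~4.7.4.18]{Lurie2017}, as already used in the proof of Lemma~\ref{lem:CatUnivLimits}). Since $\ILPr_{\BB}$ corresponds to the sheaf $\LPr(\Over{\BB}{-})$ and satisfies $\pi_A^\ast\ILPr_{\BB}\simeq\ILPr_{\Over{\BB}{A}}$ (Remark~\ref{rem:BCPrL}), and since $\ICat_{\BBB}$ is itself complete, it suffices by the dual of \cite[Proposition~5.3.1]{Martini2021a} to show that $\ILPr_{\BB}$ admits $\Univ$-indexed and $\ILConst$-indexed limits and that the inclusion $\ILPr_{\BB}\into\ICat_{\BBB}$ preserves them: these two classes of limits generate all limits, and a functor between complete large $\BB$-categories preserving them is continuous. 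Note that the inclusion $\ILPr_{\BB}\into\ICat_{\BBB}$ is not full, so we must also verify the universal property inside $\ILPr_{\BB}$, not merely check closure under limits.

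Both classes of limits are computed in $\ICat_{\BBB}$ section-wise: for $\ILConst$-shapes this follows from Lemma~\ref{lem:CatUnivLimits} together with the identification of $\const_{\BB}(\KK)$-indexed limits with pointwise $\KK$-indexed limits of $\infty$-categories (using that transition functors and right adjoints of functors of $\BB$-categories are computed on local sections), and for $\Univ$-shapes one reduces to this case by descent in $\BB$. Thus, given a diagram $d$ of the relevant shape valued in $\ILPr_{\BB}$ — whose edges are cocontinuous, hence section-wise cocontinuous by the functor part of Proposition~\ref{prop:KSCocompleteness} — its limit $\I{L}$ computed in $\ICat_{\BBB}$ is, on local sections over $A\in\BB$, a small limit in $\CatSS$ of the diagram of presentable $\infty$-categories $A\mapsto\I{D}(A)$ (condition~(1) of Theorem~\ref{thm:A}) along colimit-preserving functors. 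By \cite[Proposition~5.5.3.13]{htt} each $\I{L}(A)$ is presentable and the limit is computed in $\CatSS$, and each transition functor of $\I{L}$ is the corresponding limit of cocontinuous functors, hence cocontinuous; this gives condition~(1) of Theorem~\ref{thm:A} for $\I{L}$. For condition~(2), the transition functors of each $\I{D}$ in the diagram carry left adjoints and each pullback square in $\BB$ induces a left-adjointable square; by \cite[Corollary~4.7.4.18]{Lurie2017} this data passes to the section-wise limit, so the transition functors of $\I{L}$ acquire left adjoints and the pullback squares for $\I{L}$ become left-adjointable. Hence $\I{L}$ is presentable, i.e.\ $\I{L}\in\ILPr_{\BB}$. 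Finally, since morphisms of $\ILPr_{\BB}$ are exactly the cocontinuous functors, one checks that each projection $\I{L}\to\I{D}$ and each comparison functor $\I{C}\to\I{L}$ arising from a cone of cocontinuous functors over $d$ is cocontinuous: section-wise, the legs of a small limit cone in $\LPrS$ are left adjoints and jointly detect small colimits, and by construction these functors commute with the relative left adjoints produced above, so the functor part of Proposition~\ref{prop:KSCocompleteness} applies.

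The main obstacle is exactly this passage from section-wise data to internal data: one must recognise the section-wise limit cone of presentable $\infty$-categories as a genuine limit cone of presentable $\BB$-categories. Theorem~\ref{thm:A} and Proposition~\ref{prop:KSCocompleteness} are what reduce this recognition to bookkeeping, the only genuinely external inputs being the stability of $\LPrS$ and of left-adjointable squares under small limits; the other subtle point is the reduction of $\Univ$-indexed limits to the constant-shape case, handled by descent. Once completeness of $\ILPr_{\BB}$ is established, the equivalence $\ILPr_{\BB}\simeq(\IRPr_{\BB})^{\op}$ of Proposition~\ref{prop:RPrOpposite} yields the dual statement that $\IRPr_{\BB}$ is cocomplete.
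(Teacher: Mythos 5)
Your reduction to $\Univ$- and $\ILConst$-indexed limits, and your treatment of the constant-shape case (section-wise limits in $\CatSS$, presentability of the limit via \cite[Proposition~5.5.3.13]{htt}, $\Univ$-cocompleteness via the closure of right adjoints and left-adjointable squares under limits as in Lemma~\ref{lem:CatUnivLimits}, then Theorem~\ref{thm:characterisationPresentableCategories} and a check of the universal property inside $\ILPr_{\BB}$), is essentially the paper's argument for the $\ILConst$ part, which the paper runs for the generating case $\I{K}=\Lambda^2_0$.

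The gap is in the $\Univ$-indexed case. A $\I{G}$-indexed limit in $\ICat_{\BBB}$, for $\I{G}$ a $\BB$-groupoid, is \emph{not} computed section-wise, and it does not reduce to constant-shape limits ``by descent'': the diagram is an object $\I{D}\in\Cat(\Over{\BBB}{\I{G}})$ and its limit is $(\pi_{\I{G}})_\ast\I{D}$, whose value at $A\in\BB$ is $\I{D}(\I{G}\times A)$ rather than a limit in $\CatSS$ of a fixed diagram of values of $d$. One can of course rewrite $\I{D}(\I{G}\times A)$ as a limit by choosing a presentation of $\I{G}\times A$ by generators, but then the indexing category varies with $A$, so your appeals to \cite[Corollary~4.7.4.18]{Lurie2017} for condition~(2) of Theorem~\ref{thm:A} and to ``the projections jointly detect colimits'' for the universal property do not apply verbatim; making them precise would require compatible presentations and cofinality arguments that the sketch does not supply. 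The detour is also unnecessary: since $((\pi_{\I{G}})_\ast\I{D})(A)\simeq\I{D}(\I{G}\times A)$ is itself a value of the presentable $\Over{\BB}{\I{G}}$-category $\I{D}$, and the transition functor along $s\colon B\to A$ is the transition functor of $\I{D}$ along $\I{G}\times s$, both conditions of Theorem~\ref{thm:A} for the limit follow directly from those for $\I{D}$. This is exactly how the paper proceeds: it shows that the adjunction $(\pi_{\I{G}})_\ast\dashv\pi_{\I{G}}^\ast$ between $\Cat(\Over{\BBB}{\I{G}})$ and $\Cat(\BBB)$ restricts to an adjunction $\LPr(\Over{\BB}{\I{G}})\leftrightarrows\LPr(\BB)$, using the characterisation of presentable $\BB$-categories as $\Univ$-cocomplete $\LPrS$-valued sheaves together with the section-wise description of left adjoints; the restricted adjunction then gives both the existence of $\I{G}$-indexed limits in $\ILPr_{\BB}$ and their preservation by the inclusion, with no limit decomposition needed. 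You should replace your descent reduction by this direct argument (or supply the missing compatibility details if you insist on the decomposition).
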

\begin{proof}
	By the dual of~Proposition~\ref{prop:CocompleteGroupoidsExternal}, it suffices to show that $\ILPr_{\BB}$ is both $\Univ$- and $\ILConst$-complete and that the inclusion $\ILPr_{\BB}$ is both $\Univ$- and $\ILConst$-continuous. Using Remark~\ref{rem:BCPrL}, this follows once we show that whenever $\I{K}$ is either given by the constant $\BB$-category $\Lambda^2_0$ or by a $\BB$-groupoid, the large $\BB$-category $\ILPr_{\BB}$ admits $\I{K}$-indexed limits and the inclusion $\ILPr_{\BB}\into\ICat_{\BBB}$ preserves $\I{K}$-indexed limits.
	
	Let us first assume that $\I{K}=\Lambda^2_0$, i.e.\ suppose that
	\begin{equation*}
		\begin{tikzcd}
			\I{Q}\arrow[d, "q"]\arrow[r, "g"] & \I{P}\arrow[d, "p"]\\
			\I{D}\arrow[r, "f"] & \I{C}
		\end{tikzcd}
	\end{equation*}
	is a pullback diagram in $\Cat(\BBB)$ in which $f$ and $p$ are cocontinuous functors between presentable $\BB$-categories.  By Theorem~\ref{thm:characterisationPresentableCategories}, the cospan determined by $f$ and $p$ takes values in $\LPrS$. Therefore,~\cite[Proposition~5.5.3.13]{htt} implies that $\I{Q}$ takes values in $\LPrS$ and that $g$ and $q$ are section-wise cocontinuous. Moreover, Lemma~\ref{lem:CatUnivLimits} shows that $\I{Q}$ is $\Univ$-cocomplete and that $g$ and $q$ are $\Univ$-cocomplete. By again making use of Theorem~\ref{thm:characterisationPresentableCategories}, we thus conclude that the $\I{Q}$ is presentable and that $g$ and $q$ are cocontinuous. Now if $\I{Z}$ is another presentable $\BB$-category, a similar argumentation shows that a functor $\I{Z}\to \I{Q}$ is cocontinuous if and only if its composition with both $g$ and $q$ are cocontinuous. In total, this shows that $\ILPr_{\BB}$ admits pullbacks and that the inclusion $\ILPr_{\BB}\into\ICat_{\BBB}$ preserves pullbacks.
	
	Let us now assume $\I{K}=\I{G}$ for some $\BB$-groupoid $\I{G}$. In order to show that $\ILPr_{\BB}$ has $\I{G}$-indexed limits and that the inclusion $\ILPr_{\BB}\into\ICat_{\BBB}$ preserves $\I{G}$-indexed limits, another application of Remark~\ref{rem:BCPrL} allows us to reduce to showing that the adjunction $(\pi_{\I{G}})_\ast\dashv\pi_{\I{G}}^\ast))\colon\Cat(\Over{\BBB}{\I{G}})\leftrightarrows\Cat(\BBB)$ restricts to an adjunction between  $\LPr(\Over{\BB}{\I{G}})$ and $\LPr(\BB)$. Recall that on the level of $\CatSS$-valued sheaves, the functor $(\pi_{\I{G}})_\ast$ is given by precomposition with $\pi_{\I{G}}^\ast$. By combining the characterisation of presentable $\BB$-categories as $\Univ$-cocomplete $\LPrS$-valued sheaves (Theorem~\ref{thm:characterisationPresentableCategories}) with the explicit description of $\Univ$-cocompleteness from Example~\ref{ex:UcolimitsGroupoids} and the section-wise characterisation of left adjoint functors (Proposition~\ref{prop:existenceAdjointsBeckChevalley}), it is therefore immediate that $(\pi_{\I{G}})_\ast$ restricts to a functor $\LPr(\Over{\BB}{\I{G}})\to\LPr(\BB)$. Moreover, the adjunction unit $\id_{\Cat(\BBB)}\to(\pi_{\I{G}})_\ast\pi_{\I{G}}^\ast$ is given by precomposition with the adjunction counit $(\pi_{\I{G}})_!\pi_{\I{G}}^\ast\to \id_{\BB}$, and the adjunction counit $\pi_{\I{G}}^\ast(\pi_{\I{G}})_\ast\to\id_{\Cat(\Over{\BBB}{A})}$ is given by precomposition with the adjunction unit $\id_{\Over{\BB}{A}}\to\pi_{\I{G}}^\ast(\pi_{\I{G}})_!$. Thus, by the section-wise characterisation of left adjoint functors and the fact that presentable $\BB$-categories are $\Univ$-cocomplete, these two maps must also restrict in the desired way, hence the result follows.
\end{proof}

\begin{proposition}
	\label{prop:limitsPrR}
	The large $\BB$-category $\IRPr_{\BB}$ is complete, and the inclusion $\IRPr_{\BB}\into\ICat_{\BBB}$ is continuous.
\end{proposition}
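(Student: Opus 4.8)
The plan is to transport the completeness of $\ILPr_{\BB}$ (Proposition~\ref{prop:limitsPrL}) through the equivalence $(\IRPr_{\BB})^{\op}\simeq\ILPr_{\BB}$ established in Proposition~\ref{prop:RPrOpposite}. Under this equivalence, limits in $\IRPr_{\BB}$ correspond to colimits in $\ILPr_{\BB}$, so the first task is to show that $\ILPr_{\BB}$ is \emph{cocomplete} and that the inclusion $\ILPr_{\BB}\into\ICat_{\BBB}$ \emph{preserves colimits} — or rather, since that inclusion need not be cocontinuous, to directly compute colimits in $\ILPr_{\BB}$ in terms of limits in $\ICat_{\BBB}$ of the associated diagrams of right adjoints. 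Concretely, given a small diagram $d\colon\I{K}\to\ILPr_{\BB}$, I would pass to the diagram $d^{\vee}\colon\I{K}^{\op}\to\IRPr_{\BB}\into\ICat_{\BBB}$ of right adjoint functors, form its limit $\I{L}=\lim d^{\vee}$ in $\ICat_{\BBB}$ (which exists since $\ICat_{\BBB}$ is complete), and argue that $\I{L}$ is again presentable and that the projections $\I{L}\to d^{\vee}(k)$ are right adjoints whose left adjoints exhibit $\I{L}$ as $\colim d$ in $\ILPr_{\BB}$.

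The key input is exactly the same pair of structural facts used in the proof of Proposition~\ref{prop:limitsPrL}: presentable $\BB$-categories are precisely the $\Univ$-cocomplete $\LPrS$-valued sheaves (Theorem~\ref{thm:characterisationPresentableCategories}), and a functor between them is a right adjoint iff it is section-wise a right adjoint and $\Univ$-continuous (Proposition~\ref{prop:adjointfunctorII}, via~\cite[Proposition~3.2.8]{Martini2021a}). For the section-wise part I would invoke the fact that $\LPrS$ is complete and that the inclusion $\LPrS\into\CatSS$ preserves limits of diagrams consisting of \emph{right} adjoint functors — equivalently, $\RPrS$ is complete and $\RPrS\into\CatSS$ preserves limits — which is~\cite[Proposition~5.5.3.13]{htt} together with~\cite[Theorem~5.5.3.18]{htt} (or its $\LPrS$-counterpart). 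For the $\Univ$-cocompleteness of $\I{L}$ and the $\Univ$-continuity of the projections, Lemma~\ref{lem:CatUnivLimits} applies verbatim: it shows $\ICat_{\BBB}^{\cocont{\Univ}}$ is $\ILConst$-complete with continuous inclusion, and a minor extension (limits of $\Univ$-cocontinuous-functor diagrams with left-adjointable transition squares) handles the general small limit. Since $\ICat_{\BBB}$ is complete in all shapes, it then suffices, exactly as in Proposition~\ref{prop:limitsPrL}, to treat $\I{K}=\Lambda^2_0$ and $\I{K}$ a $\BB$-groupoid; I would reduce to these two cases using the dual of~\cite[Proposition~5.3.1]{Martini2021a}.

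The main obstacle is bookkeeping around variance: one must be careful that "limit in $\IRPr_{\BB}$" really does match "colimit in $\ILPr_{\BB}$ computed as a limit of right adjoints in $\ICat_{\BBB}$", and that the universal property is verified in $\IRPr_{\BB}$ itself (i.e.\ mapping into the limit via right adjoints) rather than merely in $\ICat_{\BBB}$. This is handled by noting that $\map{\IRPr_{\BB}}(-,-)$ is computed, via Proposition~\ref{prop:RPrOpposite} and the fully faithful inclusion $\ILPr_{\BB}\into\ICat_{\BBB}^{L}$ (Proposition~\ref{prop:PrLMappingGroupoids}), by the core of the appropriate functor $\BB$-category, so the required adjunction identities transport cleanly. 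An alternative and arguably cleaner route, which I would also present as a remark, is to bypass the diagram-chasing entirely: since $(\IRPr_{\BB})^{\op}\simeq\ILPr_{\BB}$, it suffices to prove $\ILPr_{\BB}$ is \emph{cocomplete}; and $\ILPr_{\BB}$ is cocomplete because it is, by construction and Theorem~\ref{thm:characterisationPresentableCategories}, a reflective-type subcategory closed under the relevant colimits — but making "closed under colimits" precise still routes through Lemma~\ref{lem:CatUnivLimits} and the $\LPrS$-closure statements above, so the work is the same. After checking that limits in $\ICat_{\BBB}$ of presentable-valued, right-adjoint-transition diagrams stay inside $\IRPr_{\BB}$, continuity of the inclusion $\IRPr_{\BB}\into\ICat_{\BBB}$ is then immediate, completing the proof.
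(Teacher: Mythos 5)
Your overall architecture is essentially the paper's: once you concede that the inclusion $\ILPr_{\BB}\into\ICat_{\BBB}$ is not cocontinuous and decide to compute the relevant universal objects as limits in $\ICat_{\BBB}$ of diagrams of right adjoint functors, your detour through $(\IRPr_{\BB})^{\op}\simeq\ILPr_{\BB}$ collapses to exactly what the paper does directly for $\IRPr_{\BB}$: reduce, via the dual of \cite[Proposition~5.3.1]{Martini2021a}, to the cases $\I{K}=\Lambda^2_0$ and $\I{K}$ a $\BB$-groupoid, use \cite[Theorem~5.5.3.18]{htt} section-wise, and use Proposition~\ref{prop:adjointfunctorII} to recognise the projections and the comparison functors as right adjoints. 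Note, moreover, that the equivalence of Proposition~\ref{prop:RPrOpposite} does not intertwine the two inclusions into $\ICat_{\BBB}$, so the continuity statement for $\IRPr_{\BB}\into\ICat_{\BBB}$ cannot be transported through it and has to be verified directly -- which is what your "compute the limit of $d^{\vee}$ in $\ICat_{\BBB}$" step amounts to, so the duality buys you nothing beyond extra bookkeeping.

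The genuine gap is a variance error at the key internal step. You claim that Lemma~\ref{lem:CatUnivLimits} "applies verbatim" to the diagram $d^{\vee}$ of right adjoints and yields $\Univ$-cocompleteness of its limit $\I{L}$. But that lemma concerns diagrams of $\Univ$-\emph{cocontinuous} functors between $\Univ$-cocomplete $\BB$-categories, and a right adjoint between presentable $\BB$-categories is continuous, not cocontinuous: evaluating its transition squares at a map $p\colon P\to A$ in $\BB$ produces \emph{right}-adjointable squares (compatibility with $p_\ast$), not left-adjointable ones, so neither the lemma nor your proposed "minor extension" (left-adjointable squares) has its hypotheses satisfied. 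What the situation actually provides -- and what the paper's proof explicitly invokes -- is the \emph{dual} of Lemma~\ref{lem:CatUnivLimits}, which gives $\Univ$-\emph{completeness} of $\I{L}$, $\Univ$-continuity of the projections, and the detection statement for maps into $\I{L}$; this (together with \cite[Theorem~5.5.3.18]{htt} section-wise) is precisely the input needed for Proposition~\ref{prop:adjointfunctorII}, and it is also how one sees that the transition functors of $\I{L}$ admit right adjoints and are therefore cocontinuous. The $\Univ$-cocompleteness of $\I{L}$ that you need in order to quote criterion~(6) of Theorem~\ref{thm:characterisationPresentableCategories} cannot be read off from Lemma~\ref{lem:CatUnivLimits} applied to a right-adjoint diagram and must be obtained by a separate argument; as the paper puts it, at this step one must use the $\Univ$-completeness of presentable $\BB$-categories and not their $\Univ$-cocompleteness.
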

\begin{proof}
	As in the proof of Proposition~\ref{prop:limitsPrL}, it suffices to show that for either $\I{K}=\Lambda^2_0$ or $\I{K}=\I{G}$ for $\I{G}$ a $\BB$-groupoid, the large $\BB$-category $\IRPr_{\BB}$ admits $\I{K}$-indexed limits and the inclusion $\IRPr_{\BB}\into\ICat_{\BBB}$ preserves $\I{K}$-indexed limits. The first case follows as in the proof of Proposition~\ref{prop:limitsPrL}, by making use of the dual version of Lemma~\ref{lem:CatUnivLimits},~\cite[Theorem~5.5.3.18]{htt} and the fact that a continuous and section-wise accessible functor between presentable $\BB$-categories admits a left adjoint (Proposition~\ref{prop:adjointfunctorII}). The argument for the second case is carried out in a completely analogous way as the one in the proof of Proposition~\ref{prop:limitsPrL}, the only difference being that one must use the $\Univ$-completeness of presentable $\BB$-categories and not their $\Univ$-cocompleteness.
\end{proof}

\begin{remark}
	\label{rem:PrRGeneratedByPresheafCategories}
	As a consequence of Proposition~\ref{prop:limitsPrR}, we can furthermore deduce that $\IRPr_{\BB}$ is generated under pullbacks by presheaf $\BB$-categories. In fact, if $\I{D}$ is a presentable $\BB$-category, we may find small $\BB$-categories $\I{C}$ and $\I{S}$ and a functor $j\colon\I{S}\to\IPSh(\I{C})$ so that $\I{D}\simeq\ILoc_{\I{S}}(\IPSh(\I{C}))$. By definition of the right-hand side, we therefore obtain a pullback square
	\begin{equation*}
		\begin{tikzcd}
			\I{D}\arrow[d, hookrightarrow]\arrow[r] & \IPSh(\I{S}^\gp)\arrow[d, "\gamma^\ast"]\\
			\IPSh(\I{C})\arrow[r, "j^*h_{\IPSh(\I{C})}"] & \IPSh(\I{S})
		\end{tikzcd}
	\end{equation*}
	in $\Cat(\BBB)$, where $\gamma\colon \I{S}\to\I{S}^\gp$ is the natural map.
	By~\cite[Remark~7.1.4]{MWColimits}, the functor $j^*h_{\IPSh(\I{C})}$ is a right adjoint: its left adjoint is the left Kan extension $(h_{\I{S}})_!(j)$ of $j$ along the Yoneda embedding $h_{\I{S}}$. Since $\gamma^\ast$ is a right adjoint as well, Proposition~\ref{prop:limitsPrR} implies that this diagram is a pullback square in $\IRPr_{\BB}$.
\end{remark}

Finally, by combining Proposition~\ref{prop:limitsPrL} and Proposition~\ref{prop:limitsPrR} with Proposition~\ref{prop:RPrOpposite}, we conclude:
\begin{corollary}
	Both $\ILPr_{\BB}$ and $\IRPr_{\BB}$ are complete and cocomplete.\qed
\end{corollary}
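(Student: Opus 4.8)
The plan is to read the corollary off from the three preceding results with essentially no new work. By Proposition~\ref{prop:limitsPrL} the large $\BB$-category $\ILPr_{\BB}$ is complete, and by Proposition~\ref{prop:limitsPrR} the large $\BB$-category $\IRPr_{\BB}$ is complete as well, so only cocompleteness of each needs to be produced. For that I would first record the elementary duality: for any (large) $\BB$-category $\I{C}$ and any internal class $\I{U}$, a diagram $d\colon\I{I}\to\pi_A^\ast\I{C}$ admits a colimit precisely when $d^{\op}\colon\I{I}^{\op}\to\pi_A^\ast(\I{C}^{\op})$ admits a limit, these corresponding objects agreeing under the canonical identification $(\pi_A^\ast\I{C})^{\op}\simeq\pi_A^\ast(\I{C}^{\op})$. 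Hence $\I{C}$ is $\I{U}$-cocomplete if and only if $\I{C}^{\op}$ is $\op(\I{U})$-complete, and in particular $\I{C}$ is cocomplete if and only if $\I{C}^{\op}$ is complete.

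With this in hand, Proposition~\ref{prop:RPrOpposite} provides an equivalence $(\IRPr_{\BB})^{\op}\simeq\ILPr_{\BB}$: combined with the completeness of $\IRPr_{\BB}$ from Proposition~\ref{prop:limitsPrR}, it shows that $\ILPr_{\BB}$ is cocomplete. Taking opposites of that same equivalence yields $(\ILPr_{\BB})^{\op}\simeq\IRPr_{\BB}$, and then the completeness of $\ILPr_{\BB}$ from Proposition~\ref{prop:limitsPrL} shows that $\IRPr_{\BB}$ is cocomplete. Since an equivalence of $\BB$-categories transports all limits and colimits, these deductions are immediate. I therefore expect no genuine obstacle here: the substantive content is entirely contained in Propositions~\ref{prop:limitsPrL},~\ref{prop:limitsPrR} and~\ref{prop:RPrOpposite}, and the proof of the corollary is just the bookkeeping above.
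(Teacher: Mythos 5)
Your proof is correct and matches the paper's intended argument: the corollary is stated as following by combining Propositions~\ref{prop:limitsPrL} and~\ref{prop:limitsPrR} with Proposition~\ref{prop:RPrOpposite}, which is precisely your bookkeeping via the duality between (co)completeness of a $\BB$-category and completeness of its opposite.
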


\subsection{$\I{U}$-sheaves}
\label{sec:sheaves}
The main goal in this section is to derived yet another characterisation of presentable $\BB$-categories: that of $\BB$-categories of \emph{$\I{U}$-sheaves} on an $\op(\I{U})$-cocomplete $\BB$-category. These are defined as follows:
\begin{definition}
	\label{def:Usheaves}
	Let $\I{U}$ be an internal class and suppose that $\I{C}$ is an $\op(\I{U})$-cocomplete $\BB$-category. For any (not necessarily small) $\I{U}$-complete $\BB$-category $\I{E}$, we denote by $\IShv_{\I{E}}^{\I{U}}(\I{C})$ the full subcategory of $\IFun(\I{C}^\op, \I{E})$ that is spanned by those presheaves $F\colon A\to\IFun(\I{C}^\op, \I{E})$ (in arbitrary context $A\in\BB$) that are $\pi_A^\ast\I{U}$-continuous when viewed as functors $\pi_A^\ast\I{C}^{\op}\to\pi_A^\ast\I{E}$. We refer to such presheaves as \emph{$\I{U}$-sheaves}. For the case where $\I{U}=\ICat_{\BB}$, we will simply call them \emph{sheaves}, and we will write $\IShv_{\I{E}}(\I{C})=\IShv_{\I{E}}^{\ICat_{\BB}}(\I{C})$ for the associated $\BB$-category
\end{definition}
\begin{remark}
	\label{rem:BCSheaves}
	By~\cite[Remark~5.3.4]{MWColimits}, if $A\in\BB$ is an arbitrary object, we obtain a canonical equivalence $\pi_A^\ast\IShv_{\I{E}}^{\I{U}}(\I{C})\simeq\IShv_{\pi_A^\ast\I{E}}^{\pi_A^\ast\I{U}}(\pi_A^\ast\I{C})$.
\end{remark}

We first focus on $\Univ$-valued $\I{U}$-sheaves. 
\begin{proposition}
	\label{prop:UnivSheavesRepresentables}
	Let $\I{D}$ be a presentable $\BB$-category and let $F\colon \I{D}^{\op}\to\Univ$ be a presheaf on $\I{D}$. Then $F$ is representable if and only if $F$ is continuous. In particular, the Yoneda embedding induces an equivalence $\I{D}\simeq \IShv_{\Univ}(\I{D})$.
\end{proposition}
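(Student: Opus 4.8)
The plan is to reduce the statement to an application of the adjoint functor theorem (Proposition~\ref{prop:adjointfunctorII}) together with the Yoneda lemma. First I would observe that one direction is immediate from Yoneda's lemma for $\BB$-categories: if $F\simeq\map{\I{D}}(-,d)$ for some object $d\colon A\to\I{D}$, then $F$ is the image of $d$ along the functor classifying the right fibration $\Tw(\pi_A^\ast\I{D})\to\pi_A^\ast\I{D}^\op\times\pi_A^\ast\I{D}$, and this mapping $\BB$-groupoid bifunctor is continuous in the contravariant variable --- concretely, $\map{\I{D}}(\lim_{\I{I}}(-), d)\simeq\colim\ldots$ is the defining adjunction property, and more directly the fact that $h_{\I{D}^\op}\colon\I{D}^\op\to\iFun{\I{D}}{\Univ}$ is continuous is recorded in~\cite[Proposition~5.1.9]{Martini2021a} (the Yoneda embedding preserves limits). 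So representable presheaves are continuous.

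For the converse, suppose $F\colon\I{D}^\op\to\Univ$ is continuous. Since $\I{D}$ is presentable, $\I{D}^\op$ is complete (Corollary~\ref{cor:presentableCategoryLimitsColimits}), so the transpose viewpoint is: $F$ corresponds to a functor $\I{D}^\op\to\Univ$, and I want to corepresent it, i.e.\ find $d$ with $F\simeq\map{\I{D}}(-,d)$. Equivalently, via the equivalence $\iFun{\I{D}^\op}{\Univ}\simeq\ILFib_{\I{D}^\op}$ and the Grothendieck construction, I want the left fibration classified by $F$ (a right fibration over $\I{D}$) to admit a final section, which by Yoneda's lemma (as recalled in the \emph{Yoneda's lemma} bullet of~\S\ref{sec:recollection}) is exactly the condition for $F$ to be representable. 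The key move is to apply the adjoint functor theorem: the functor $F\colon\I{D}^\op\to\Univ$ being continuous and --- here is the point to check --- accessible (or section-wise accessible) means it admits a left adjoint $\I{D}^\op\leftrightarrows\Univ$; but a left adjoint out of... let me instead argue that $F$ viewed as a functor $\I{D}\to\Univ^\op$ being cocontinuous out of a presentable $\BB$-category into the locally small $\Univ^\op$ admits a right adjoint by Proposition~\ref{prop:adjointFunctorTheorem}, and evaluating this right adjoint at the final object $1_{\Univ}$ produces the corepresenting object $d$. Then one checks $\map{\I{D}}(-,d)\simeq F$ by comparing the two adjunctions and using Yoneda's lemma to identify the unit.

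Working in the internal language, all of this must be carried out after base change along $\pi_A$ for arbitrary $A\in\BB$, but since presentability, continuity, and the relevant adjoint functor theorems are all stable under étale base change (Remark~\ref{rem:PresentabilityLocalCondition} and the local nature of continuity, cf.~\cite[Remark~5.1.3]{Martini2021a}), this causes no extra difficulty; one just phrases the argument for $\pi_A^\ast\I{D}$ and $\pi_A^\ast F$ uniformly. The final clause --- that the Yoneda embedding $h_{\I{D}}\colon\I{D}\into\IShv_{\Univ}(\I{D})$ is an equivalence --- then follows formally: $h_{\I{D}}$ is fully faithful by Yoneda's lemma, its essential image consists precisely of the representable presheaves, and we have just identified that image with the continuous presheaves, which is the full subcategory $\IShv_{\Univ}(\I{D})\into\IPSh_{\Univ}(\I{D})$ by definition; essential surjectivity of $h_{\I{D}}$ onto $\IShv_{\Univ}(\I{D})$ is then exactly the characterisation just proved, applied in every context $A\in\BB$.

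The main obstacle I anticipate is verifying the accessibility hypothesis needed to invoke the adjoint functor theorem in the form of Proposition~\ref{prop:adjointfunctorII} (if one goes the ``$F$ is continuous $\Rightarrow$ $F$ has a left adjoint'' route), since $\Univ$ is not presentable as a $\BB$-category in the strict sense required there --- so the cleaner path is to use Proposition~\ref{prop:adjointFunctorTheorem} with $\I{E}=\Univ^\op$, which only requires $\I{E}$ to be cocomplete and locally small, both of which hold for $\Univ^\op$ (the universe is complete and locally small). One then has to be slightly careful that the right adjoint lands where expected and that evaluating at $1_{\Univ}$ genuinely corepresents $F$; this is a diagram chase with the triangle identities and Yoneda, not a serious difficulty, but it is the one place where care is needed.
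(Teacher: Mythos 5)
Your proposal is correct and follows essentially the same route as the paper: representables are continuous by the continuity of the Yoneda embedding, and conversely a continuous $F$ is a right adjoint by Proposition~\ref{prop:adjointFunctorTheorem} applied to $F^{\op}\colon\I{D}\to\Univ^{\op}$, with the representing object obtained as the value of the left adjoint at $1_{\Univ}$ and identified via the equivalence $F\simeq\map{\Univ}(1_{\Univ},F(-))\simeq\map{\I{D}}(-,l(1_{\Univ}))$. The "cleaner path" you settle on is exactly the paper's argument, and your worry about accessibility is moot precisely because that path avoids Proposition~\ref{prop:adjointfunctorII} altogether.
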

\begin{proof}
	By Remark~\ref{rem:BCSheaves}, the first claim implies the second, and by~\cite[Proposition~5.2.9]{MWColimits}, every representable functor is continuous, so that it suffices to prove that every continuous presheaf $F\colon\I{C}^\op\to\Univ$ is representable.
	Now $F$ being continuous is equivalent to $F^{\op}\colon \I{D}\to\Univ^{\op}$ being cocontinuous, which by Proposition~\ref{prop:adjointFunctorTheorem} is in turn equivalent to it being a left adjoint. Hence $F$ is continuous if and only if $F$ is a right adjoint. Let $l\colon \Univ\to\I{D}^{\op}$ be the left adjoint of $F$. Since the final $\BB$-groupoid $1_{\Univ}\colon 1\to \Univ$ corepresents the identity on $\Univ$, we find equivalences
	\begin{equation*}
		F\simeq \map{\Univ}(1_{\Univ}, F(-))\simeq \map{\I{D}^{\op}}(l(1_{\Univ}), -)\simeq\map{\I{D}}(-, l(1_{\Univ})),
	\end{equation*}
	hence $F$ is represented by $l(1_{\Univ})$.
\end{proof}

Next, we use Proposition~\ref{prop:UnivSheavesRepresentables} to deduce that whenever $\I{U}$ is a  \emph{doctrine}, the $\BB$-category of $\Univ$-valued $\I{U}$-sheaves on a small $\BB$-category is presentable, and that it satisfies a universal property:
\begin{proposition}
	\label{prop:USheavesPresentable}
	For any doctrine $\I{U}$, the large $\BB$-category $\IShv_{\Univ}^{\I{U}}(\I{C})$ is presentable. Moreover, for any complete large $\BB$-category $\I{E}$, restriction along the Yoneda embedding $h_{\I{C}}$ induces an equivalence
	\begin{equation*}
		h_{\I{C}}^\ast\colon\IShv_{\I{E}}(\IShv_{\Univ}^{\I{U}}(\I{C}))\simeq\IShv_{\I{E}}^{\I{U}}(\I{C})
	\end{equation*}
	of large $\BB$-categories.
\end{proposition}
\begin{proof}
	Fix a small full subcategory $\GG\into\BB$ of generators, and define the small set
	\begin{equation*}
		R=\bigsqcup_{G\in\GG}\left\{f\colon \colim h_{\I{C}}d\to h_{\I{C}}\colim d~\vert~d\colon \I{K}\to\pi_G^\ast\I{C},~\I{K}^{\op}\in\I{U}(G)\right\}
	\end{equation*}
	(where each $f$ is to be considered as a map in $\IPSh(\I{C})$ in context $G\in\GG$). We let $\I{S}_R\into\IPSh(\I{C})$ be the subcategory that is spanned by $R$. Note that since $R$ is a small set, the subcategory $\I{S}_R$ is small, so that $\I{D}=\ILoc_{\I{S}_R}(\IPSh(\I{C}))$ is a presentable $\BB$-category. Moreover, if $\I{E}$ is an arbitrary complete large $\BB$-category, the construction of $\I{S}_R$ (together with the fact that the preservation of limits can be checked locally, see~\cite[Remark~4.2.1]{MWColimits}) makes it evident that a cocontinuous functor $\IPSh(\I{C})\to\I{E}^\op$ carries the maps in $\I{S}_R$ to equivalences precisely if its restriction to $\I{C}$ is $\op(\I{U})$-cocontinuous. By replacing $\BB$ with $\Over{\BB}{A}$, the same assertion holds for any object $A\to\IFun^\cc(\IPSh(\I{C}),\I{E}^\op)$.
	As a consequence, the universal property of presheaf $\BB$-categories implies that restriction along the Yoneda embedding $h_{\I{C}}$ determines an equivalence of large $\BB$-categories $h_{\I{C}}^\ast\colon\IFun^\cc(\IPSh(\I{C}),\I{E}^\op)_{\I{S}_R}\simeq\IFun^{\cocont{\op(\I{U})}}(\I{C},\I{E}^\op)$. Upon taking opposite $\BB$-categories and using Corollary~\ref{cor:UniversalPropertyLocalObjects}, one thus obtains an equivalence $(Lh_{\I{C}})^\ast\colon\IShv_{\I{E}}(\I{D})\simeq \IShv_{\I{E}}^{\I{U}}(\I{C})$. By plugging in $\I{E}=\Univ$ into this equivalence and using proposition~\ref{prop:UnivSheavesRepresentables}, one ends up with an equivalence  $\I{D}\simeq \IShv_{\Univ}^{\I{U}}(\I{C})$ of full subcategories of $\IPSh(\I{C})$, which completes the proof.
\end{proof}

Whenever $\I{U}$ is a \emph{sound} doctrine, we can identify the $\BB$-category of $\I{U}$-sheaves on an $\op(\I{U})$-cocomplete $\BB$-category $\I{C}$ with the free $\IFilt_{\I{U}}$-cocompletion of $\I{C}$:
\begin{proposition}
	\label{prop:SheavesFlatness}
	Let $\I{U}$ be a sound internal class and let $\I{C}$ be an $\op(\I{U})$-cocomplete $\BB$-category. Then there is an equivalence $\IShv_{\Univ}^{\I{U}}(\I{C})\simeq\IInd^{\I{U}}(\I{C})$ of full subcategories of $\IPSh(\I{C})$.
\end{proposition}
\begin{proof}
	On account of Proposition~\ref{prop:FlatIsAccessible} as well as Remarks~\ref{rem:BCSheaves} and~\ref{rem:UflatLocal}, it suffices to show that a presheaf $F\colon\I{C}^{\op}\to\Univ$ is $\I{U}$-flat if and only if $F$ is $\I{U}$-continuous. As the inclusion $h_{\I{C}^{\op}}\colon \I{C}^{\op}\into\IFun(\I{C},\Univ)$ commutes with all limits that exist in $\I{C}$, the presheaf $F$ being $\I{U}$-flat immediately implies that $F$ is $\I{U}$-continuous. Conversely, suppose that $F$ is $\I{U}$-continuous. By Proposition~\ref{prop:FlatIsAccessible}, it suffices to show that $\Over{\I{C}}{F}$ is weakly $\I{U}$-filtered. By applying Lemma~\ref{lem:pullbackCocompleteRightFibration} to the pullback square
	\begin{equation*}
		\begin{tikzcd}
			\Over{\I{C}}{F}\arrow[d] \arrow[r] & \Over{\Univ^{\op}}{1_{\Univ}}\arrow[d]\\
			\I{C}\arrow[r, "F^{\op}"] & \Univ^{\op},
		\end{tikzcd}
	\end{equation*}
	(which satisfies the conditions of the lemma by Proposition~\ref{prop:sliceFibrationColimits}), we conclude that $\Over{\I{C}}{F}$ is $\op(\I{U})$-cocomplete, hence the claim follows from Example~\ref{ex:UCocompleteWeaklyFiltered}.
\end{proof}

\begin{corollary}
	\label{cor:UCocompleteIndUPresentable}
	Let $\I{U}$ be a sound doctrine and let $\I{C}$ be an $\op(\I{U})$-cocomplete $\BB$-category. Then $\IInd^{\I{U}}(\I{C})$ is presentable. Moreover, for any cocomplete large $\BB$-category $\I{E}$, restriction along the Yoneda embedding $h_{\I{C}}$ induces an equivalence
	\begin{equation*}
		h_{\I{C}}^\ast\colon\IFun^\cc(\IInd^{\I{U}}(\I{C}),\I{E})\simeq \IFun^{\cocont{\op(\I{U})}}(\I{C},\I{E})
	\end{equation*}
	of large $\BB$-categories.\qed
\end{corollary}

\begin{corollary}
	\label{cor:characterisationPresentableSheaves}
	Let $\I{D}$ be a large $\BB$-category. Then the following are equivalent:
	\begin{enumerate}
		\item $\I{D}$ is presentable;
		\item there is a sound doctrine $\I{U}$ such that $\I{D}$ is $\I{U}$-accessible and $\I{D}^{\cpt{\I{U}}}$ is $\op(\I{U})$-cocomplete;
		\item there is a doctrine $\I{U}$ and a small $\op(\I{U})$-cocomplete $\BB$-category $\I{C}$ such that one has an equivalence $\I{D}\simeq\IShv_{\Univ}^{\I{U}}(\I{C})$.
	\end{enumerate}
\end{corollary}
\begin{proof}
	By combining Theorem~\ref{thm:characterisationPresentableCategories} with Proposition~\ref{prop:UCompactUcocomplete}, it is clear that~(1) implies~(2). If~(2) is satisfied, Proposition~\ref{prop:characterisationAccessibility} implies that $\I{D}^{\cpt{\I{U}}}$ is small and that there is an equivalence $\I{D}\simeq\IInd^{\I{U}}(\I{D}^{\cpt{\I{U}}})$. In light of Proposition~\ref{prop:SheavesFlatness}, this shows that~(3) is satisfied. Finally, Proposition~\ref{prop:USheavesPresentable} shows that~(3) implies~(1).
\end{proof}
We complete this section by noting that as a consequence of the results that we have established so far, we may deduce that the $\BB$-category of sheaves between presentable $\BB$-categories is presentable as well:
\begin{corollary}
	\label{cor:SheavesPresentable}
	For every two presentable $\BB$-categories $\I{D}$ and $\I{E}$, the $\BB$-category $\IShv_{\I{E}}(\I{D})$ is presentable as well.
\end{corollary}
\begin{proof}
	By Corollary~\ref{cor:characterisationPresentableSheaves}, we may find a doctrine $\I{U}$ and a small $\op(\I{U})$-cocomplete $\BB$-category $\I{C}$ such that $\I{D}\simeq\IShv^{\I{U}}_{\Univ}(\I{C})$. Consequently, Proposition~\ref{prop:USheavesPresentable} gives rise to an equivalence $\IShv_{\I{E}}(\I{D})\simeq\IShv_{\I{E}}^{\I{U}}(\I{C})$. Therefore, it suffices to show that the right-hand side is presentable. Choose a small $\BB$-category $\I{C}^\prime$ such that $\I{E}\simeq\ILoc_{\I{S}^\prime}(\IPSh(\I{C}^\prime))$ for some $\I{S}^\prime\to\IPSh(\I{C^\prime})$ with $\I{S}^\prime$ small. We obtain a commutative square
	\begin{equation*}
		\begin{tikzcd}
			\IShv^{\I{U}}_{\I{E}}(\I{C})\arrow[d, hookrightarrow] \arrow[r, hookrightarrow] &\IFun(\I{C}^\op,\I{E})\arrow[d, hookrightarrow]\\
			\IShv^{\I{U}}_{\IPSh(\I{C}^\prime)}(\I{C})\arrow[r, hookrightarrow] & \IFun(\I{C}^\op, \IPSh(\I{C}^\prime)).
		\end{tikzcd}
	\end{equation*}
	We first claim that this square is a pullback. To see this, note that by Remarks~\ref{rem:BCSheaves} and~\ref{rem:BCLocalObjects}, it will be enough to verify that a functor $\I{C}^\op\to\I{E}$ is $\I{U}$-continuous if $\I{C}^\op\to\I{E}\into\IPSh(\I{C}^\prime)$ is $\I{U}$-continuous. This is a straightforward consequence of the fact that fully faithful functors are conservative. To proceed, note that by Corollary~\ref{cor:presentabilityFunctorCategories}, the vertical map on the right in the above diagram defines a map in $\IRPr_{\BB}$. Using Proposition~\ref{prop:limitsPrR}, the proof is thus complete once we verify that the lower horizontal map is a map in $\IRPr_{\BB}$ as well. To see this, observe that by Lemma~\ref{lem:limitpreservationswap} below, we may identify this map with the inclusion
	\begin{equation*}
		\IFun(\I{C}^\op, \IShv_{\Univ}^{\I{U}}(\I{C}))\into\IFun(\I{C}^\op,\IPSh(\I{C}^\prime))
	\end{equation*}
	that is induced by postcomposition with the inclusion $\IShv_{\Univ}^{\I{U}}(\I{C})\into\IPSh(\I{C})$. As the latter is a map in $\IRPr_{\BB}$ by Proposition~\ref{prop:USheavesPresentable}, the claim thus follows by again appealing to Corollary~\ref{cor:presentabilityFunctorCategories}.
\end{proof}

\section{Digression: Aspects of internal higher algebra}
\label{chap:higherAlgebra}
The goal of this section is to set up the basic framework of higher algebra (in the sense of~\cite{Lurie2017}) in the context of internal higher category theory. As our main goal is to use this framework to define tensor products of $\BB$-categories in~\S~\ref{chap:tensorProductBCategories}, we will restrict our attention to a few selected results instead of giving a comprehensive account of this machinery. We begin in \S~\ref{sec:BOperads} by defining symmetric monoidal $\BB$-categories (and more generally $\BB$-operads), and in~\S~\ref{sec:algebrasModules} we study algebras and modules in symmetric monoidal $\BB$-categories. As a first application of this framework, \S~\ref{sec:dualisable} contains a characterisation of dualisable objects in the $\BB$-category of modules over an $\mathbb{E}_\infty$-ring in $\BB$. The proof of this characterisation requires a few results about the notion of \emph{stability} in the world of $\BB$-categories, which we briefly discuss in \S~\ref{sec:stability}.

\subsection{$\BB$-operads, symmetric monoidal $\BB$-categories and commutative monoids}
\label{sec:BOperads}
Recall from~\cite[\S~2.3.2]{Lurie2017} the definition of the presentable $\infty$-category $\Op^\gen_\infty$ of generalised $\infty$-operads. By construction, this $\infty$-category is the (non-full) subcategory of $\Over{(\CatS)}{\Fin_*}$ (where $\Fin_*$ denotes the $1$-category of finite pointed sets) that is spanned by the generalised $\infty$-operads and the morphisms of generalised $\infty$-operads. The full subcategory of $\Op_\infty^\gen$ that is spanned by the $\infty$-operads is denoted by $\Op_\infty$. By~\cite[Corollary~2.3.2.6]{Lurie2017} the inclusion $\Op_\infty\into\Op_\infty^\gen$ admits a left adjoint, and by~\cite[\S~2.1.4]{Lurie2017} the $\infty$-category $\Op_\infty$ is presentable as well. Finally, recall from~\cite[Variant~2.1.4.13]{Lurie2017} that the (presentable) $\infty$-category $\CatS^\otimes$ is defined to be the subcategory of $\Op_\infty$ that is spanned by the symmetric monoidal $\infty$-categories and the symmetric monoidal functors. By~\cite[Proposition~2.2.4.9]{Lurie2017} the inclusion $\CatS^\otimes\into\Op_\infty$ also admits a left adjoint. In light of~\cite[Construction~A.1]{MWColimits} we may now define:
\begin{definition}
	\label{def:OperadsSMCategories}
	A \emph{generalised $\BB$-operad} is an $\Op_\infty^\gen$-valued sheaf on $\BB$, and the  $\BB$-category of generalised $\BB$-operads is defined as the large $\BB$-category $\IOp_\BB^\gen =\Op_\infty^\gen\otimes\Univ$. A generalised $\BB$-operad is said to be a \emph{$\BB$-operad} if it takes values in $\Op_\infty$, and the large $\BB$-category  $\IOp_\BB$ of $\BB$-operads is defined as the full subcategory of $\IOp_\BB^\gen$ that is spanned by the $\Over{\BB}{A}$-operads for each $A\in\BB$. Finally, a (generalised) $\BB$-operad is said to be a \emph{symmetric monoidal $\BB$-category} if it takes values in $\CatS^\otimes$, and similarly a morphism of symmetric monoidal $\BB$-categories is said to be a symmetric monoidal functor if it takes values in $\CatS^\otimes$. The large $\BB$-category $\ICat_{\BB}^\otimes$ of symmetric monoidal $\BB$-categories is defined as the subcategory of $\IOp_\BB$ that is spanned by the symmetric monoidal functors between symmetric monoidal $\BB$-categories, and the large $\BB$-category $\ICat_{\BB}^{\otimes,\lax}$ is defined as the essential image of the inclusion $\ICat_{\BB}^\otimes\into\IOp_\BB$. We refer to the maps in $\ICat_{\BB}^{\otimes,\lax}$ as \emph{lax symmetric monoidal functors}.
\end{definition}
\begin{remark}
	By construction, we have canonical equivalences $\IOp_\BB\simeq\Op_\infty\otimes\Univ$ and $\ICat_{\BB}^\otimes\simeq\CatS^\otimes\otimes\Univ$ with respect to which the inclusions $\IOp_\BB\into\IOp_\BB^\gen$ and $\ICat_{\BB}^\otimes\into\IOp_\BB$ correspond to the image of the inclusions $\Op_\infty\into\Op_\infty^\gen$ and $\CatS^\otimes\into\Op_\infty$ under the functor $-\otimes\Univ\colon\RPrS\to\RPr(\BB)$ from Example~\ref{ex:tensorConstructionPresentable}.  Consequently, the chain of inclusions
	\begin{equation*}
		\ICat_{\BB}^\otimes\into\IOp_\BB\into\IOp_\BB^\gen
	\end{equation*}
	defines morphisms in $\IRPr_{\BB}$.
\end{remark}
\begin{remark}
	\label{rem:OperadsForgetfulFunctor}
	Taking the fibre over $\ord{1}\in\Fin_*$ defines a forgetful functor $\Op_\infty\to\CatS$. By the discussion in~\cite[\S~2.1.4]{Lurie2017}, this functor defines a map in $\RPrS$. Consequently, applying $-\otimes\Univ$ to this map yields a well-defined morphism $\IOp_\BB\to\ICat_{\BB}$ in $\IRPr_{\BB}$ (cf.~\ref{ex:tensorConstructionPresentable}). Given a $\BB$-operad $\I{O}^\otimes$, we denote its image under this functor by $\I{O}$.
\end{remark}

\begin{remark}
	\label{rem:OperadsExplicitly}
	By making use of~\cite[Proposition~B.2.9]{Lurie2017}, the inclusion $\Op_\infty^\gen\into\Over{(\CatS)}{\Fin_*}$ admits a left adjoint and thus defines a morphism in $\RPrS$. Upon applying the functor $-\otimes\Univ$, we thus obtain a monomorphism $\IOp_\BB^\gen\into\Over{(\CatS)}{\Fin_*}\otimes\Univ\simeq\Over{(\ICat_{\BB})}{\Fin_*}$. Unwinding the definitions, one finds that a functor $p\colon \I{O}^\otimes\to \Fin_*$ is contained in $\IOp_\BB^\gen$ precisely if
	\begin{enumerate}
		\item for all $A\in\BB$ the pullback $(\eta^A)^\ast\I{O}^\otimes(A)\to\Fin_{\ast}$ of $p(A)$ along the adjunction unit $\eta^A\colon\Fin_*\to\Gamma_{\Over{\BB}{A}}\Fin_*$ defines a generalised $\infty$-operad, and
		\item for all maps $s\colon B\to A$ in $\BB$ the induced map $(\eta^A)^\ast \I{O}^\otimes(A)\to (\eta^B)^\ast\I{O}^\otimes(B)$ defines a morphism of generalised $\infty$-operads.
	\end{enumerate}
	Similarly, a morphism $f\colon \I{O}^\otimes\to {\I{O}^\prime}^\otimes$ in $\Over{(\ICat_{\BB})}{\Fin_*}$ between generalised $\BB$-operads is contained in $\IOp_\BB^\gen$ precisely if for all $A\in\BB$ the pullback $(\eta^A)^\ast f(A)$ defines a morphism of generalised $\infty$-operads. One can make analogous observations for the subcategories $\IOp_\BB$, $\ICat_{\BB}^{\otimes,\lax}$ and $\ICat_{\BB}^\otimes$.
\end{remark}

\begin{remark}
	\label{rem:MonoidalBCategoriesExplicitly}
	By combining Remark~\ref{rem:OperadsExplicitly} with~\cite[Proposition~3.1.7]{MCocartesian}, we may identify $\ICat_{\BB}^\otimes$ with the full subcategory of $\ICocart_{\Fin_*}$ that is spanned by those cocartesian fibrations $p\colon\I{C}^\otimes\to \Fin_*$ (in arbitrary context $A\in\BB$) for which the induced cocartesian fibration $(\eta^B)^\ast\I{C}^\otimes(B)\to\Fin_*$ defines a symmetric monoidal $\infty$-category for every $B\in\Over{\BB}{A}$.
\end{remark}

For later use, recall from~\cite[Proposition~2.3.2.9]{Lurie2017} that the functor $\CC\mapsto\Fin_{\ast}\times\CC$ determines a fully faithful map $\CatS\into\Op_\infty^\gen$ in $\RPrS$. By applying the functor $-\otimes\Univ$, we thus obtain:
\begin{proposition}
	The map $\I{C}\mapsto\Fin_{\ast}\times\I{C}$ determines a fully faithful functor $\ICat_{\BB}\into\IOp_\BB^\gen$.\qed
\end{proposition}

Recall that under the straightening equivalence, symmetric monoidal $\infty$-categories can be identified with commutative monoids in $\CatS$. Our next goal is to derive an analogous result internally. To that end, we let $\rho_i\colon \ord{n}\to \ord{1}$ be the pointed map that carries $i$ to $1$ and every other element to $0$. Given any functor $C_\bullet\colon \Fin_*\to\CC,~\ord{n}\mapsto C_n$ with values in an arbitrary $\infty$-category $\CC$, we will denote by $p_i\colon C_n\to C_1$ the image of $\rho_i$ under $C_\bullet$. We may now define:
\begin{definition}
	\label{def:CMon}
	A \emph{commutative monoid} in a $\BB$-category $\I{C}$ is a functor $M\colon\Fin_\ast\to\I{C}$ such that for all $n\geq 0$ the functors $(p_i\colon M_n\to M_1)_{1\leq i\leq n}$ exhibit $M_n$ as the product $\prod_{i=1}^n M_1$ in $\I{C}$. We define the $\BB$-category $\ICMon(\I{C})$ as the full subcategory of $\IFun(\Fin_*,\I{C})$ that is spanned by those functors $\Fin_*\to \pi_A^\ast\I{C}$ that define commutative monoids in $\pi_A^\ast\I{C}$ for all $A\in\BB$.
\end{definition}

\begin{remark}
	\label{rem:BCMonoids}
	Since the condition of a morphism in a $\BB$-category to be an equivalence is local in $\BB$, an object $A\to \IFun(\Fin_*,\I{C})$ is contained in $\ICMon(\ICat_{\BB})$ if and only if it defines a commutative monoid in $\pi_A^\ast\I{C}$. In particular, one obtains a canonical equivalence $\pi_A^\ast\ICMon(\I{C})\simeq\ICMon(\pi_A^\ast\I{C})$ for every $A\in\BB$.
\end{remark}

\begin{remark}
	Evaluation at $\ord{1}\colon 1\to\Fin_*$ defines a forgetful functor $\ICMon(\I{C})\to\I{C}$. We will usually abuse notation and identify a commutative monoid $M$ with its underlying object in $\I{C}$. By evaluating such a monoid $M$ at the unique {active} map $\ord{2}\to\ord{1}$ in $\Fin_*$, one obtains a multiplication map $\mu \colon M \times M \rightarrow M$
	that is associative and commutative up to infinite coherent homotopies. Moreover, evaluation at the unique pointed map $\ord{0}\to\ord{1}$ induces a unit $e\colon 1_{\I{C}}\to M$ such that the induced maps $\mu(e,-)$ and $\mu(-,e)$ are equivalences.
\end{remark}

By combining Remark~\ref{rem:MonoidalBCategoriesExplicitly} with the straightening equivalence for cocartesian fibrations~\cite[Theorem~6.3.1]{MCocartesian}, one now finds:
\begin{proposition}
	\label{prop:MonoidalCategoriesVsMonoids}
	The straightening equivalence restricts to an equivalence of large $\BB$-categories $\ICat_{\BB}^\otimes\simeq\ICMon(\ICat_{\BB})$.\qed
\end{proposition}

\begin{remark}
	Upon composing the equivalence $\ICat_{\BB}^\otimes\simeq\ICMon(\ICat_{\BB})$ from Proposition~\ref{prop:MonoidalCategoriesVsMonoids} with the forgetful functor $\ICMon(\ICat_{\BB})\to\ICat_{\BB}$, one recovers the functor $\ICat_{\BB}^\otimes\to\ICat_{\BB}$ from Remark~\ref{rem:OperadsForgetfulFunctor}.
	If $\I{C}^\otimes$ is a symmetric monoidal $\BB$-category, the multiplication map thus defines a bifunctor $-\otimes -\colon\I{C}\times\I{C}\to\I{C}$, and the unit is an object $1_\otimes\colon 1\to\I{C}$.
\end{remark}

A rich source for symmetric monoidal $\BB$-categories are those $\BB$-categories that admit finite products. We will denote by $\ICat_{\BB}^{\Pi}$ the subcategory of $\ICat_{\BB}$ that is spanned by the $\ILConst_{\Fin}$-continuous functors between $\ILConst_{\Fin}$-complete $\Over{\BB}{A}$-categories for all $A\in\BB$, where $\Fin$ is the $1$-category of finite sets. 
By the dual of~\cite[Corollary~5.4.6]{MWColimits} and the fact that $\CatS^\Pi$ is presentable~\cite[Lemma~4.8.4.2]{Lurie2017}, we may identify $\ICat_{\BB}^\Pi\simeq\CatS^
\Pi\otimes\Univ$.
Now recall from~\cite[Corollary~2.4.1.9]{Lurie2017} that there is a fully faithful functor $(-)^\times\colon\CatS^\Pi\into\CatS^\otimes$ that assigns to an $\infty$-category $\CC$ with finite products the associated \emph{cartesian} monoidal $\infty$-category $\CC^\times$ and that fits into a commutative diagram
\begin{equation*}
	\begin{tikzcd}[column sep={4em,between origins}]
		\CatS^\Pi\arrow[rr, hookrightarrow, "(-)^\times"]\arrow[dr, hookrightarrow] && \CatS^\otimes\arrow[dl]\\
		&\CatS &
	\end{tikzcd}
\end{equation*}
in which the right diagonal map is the forgetful functor. Note that this diagram takes values in $\RPrS$.  Upon applying the functor $-\otimes\Univ$, we thus obtain:
\begin{proposition}
	\label{prop:CartesianMonoidalBCategories}
	There is a fully faithful functor $(-)^\times\colon\ICat_{\BB}^\Pi\into\ICat_{\BB}^\otimes$ that carries a $\BB$-category $\I{C}$ to the \emph{cartesian} monoidal $\BB$-category $\I{C}^\times$ and that fits into a commutative diagram
	\begin{equation*}
		\begin{tikzcd}[column sep={4em,between origins}]
			\ICat_{\BB}^\Pi\arrow[rr, hookrightarrow, "(-)^\times"]\arrow[dr, hookrightarrow] && \ICat_{\BB}^\otimes\arrow[dl]\\
			&\ICat_{\BB}&
		\end{tikzcd}
	\end{equation*}
	of large $\BB$-categories.
\end{proposition}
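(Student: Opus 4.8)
The plan is to obtain both the functor $(-)^\times$ and the commutative diagram by transporting Lurie's cartesian-monoidal structure along the functor $-\otimes\Univ$, in exactly the same way that the inclusions $\IOp_\BB\into\IOp_\BB^\gen$, $\ICat_{\BB}^\otimes\into\IOp_\BB$ and the forgetful functor $\IOp_\BB\to\ICat_{\BB}$ were produced from their classical counterparts above. Concretely, I would begin from \cite[Corollary~2.4.1.9]{Lurie2017}: there is a fully faithful functor $(-)^\times\colon\CatS^\Pi\into\CatS^\otimes$ sending an $\infty$-category with finite products to its cartesian symmetric monoidal structure, and it fits into a commutative triangle whose other two legs are the forgetful functors $\CatS^\Pi\into\CatS$ and $\CatS^\otimes\to\CatS$. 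All three functors in this triangle are right adjoints between presentable $\infty$-categories---here one uses that $\CatS^\Pi$ is presentable, \cite[Lemma~4.8.4.2]{Lurie2017}---so the triangle lives in $\RPrS$ and $(-)^\times$ is a fully faithful morphism there.

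I would then apply $-\otimes\Univ$ to this triangle. By the discussion following Definition~\ref{def:OperadsSMCategories} this functor factors through $\RPr(\BB)\into\Cat(\BBB)$, and it carries fully faithful functors to fully faithful functors---this is the same mechanism that turns the fully faithful map $\CC\mapsto\Fin_*\times\CC$ into the fully faithful functor $\ICat_{\BB}\into\IOp_\BB^\gen$, and it can be seen section-wise, where $-\otimes\Univ$ sends a fully faithful right adjoint to a section-wise (hence globally) fully faithful functor. The outcome is a fully faithful functor $(-)^\times\otimes\Univ$ sitting inside a commutative triangle of large $\BB$-categories over $\CatS\otimes\Univ$.

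It remains to identify the vertices and legs. Using $\CatS^\Pi\otimes\Univ\simeq\ICat_{\BB}^\Pi$ (recalled just before the statement), $\CatS^\otimes\otimes\Univ\simeq\ICat_{\BB}^\otimes$ and $\CatS\otimes\Univ\simeq\ICat_{\BB}$ (both from the discussion around Definition~\ref{def:OperadsSMCategories} and Remark~\ref{rem:OperadsForgetfulFunctor}), the image of $\CatS^\Pi\into\CatS$ becomes the inclusion $\ICat_{\BB}^\Pi\into\ICat_{\BB}$ and the image of $\CatS^\otimes\to\CatS$ becomes the forgetful functor $\ICat_{\BB}^\otimes\to\ICat_{\BB}$; transporting the triangle along these identifications yields the asserted functor and diagram. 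Finally, unwinding the section-wise description shows that $\I{C}^\times$ has underlying $\BB$-category $\I{C}$ and that its tensor bifunctor is section-wise the cartesian product, so it agrees with the product in $\I{C}$ by Proposition~\ref{prop:critfinlim}; this justifies the name ``cartesian monoidal $\BB$-category''. The only point needing a little care---and which I regard as the main (mild) obstacle---is the naturality needed to see that, under the size-sensitive identifications $\CatS^{(\ast)}\otimes\Univ\simeq\ICat_{\BB}^{(\ast)}$, the transported legs coincide with the standard inclusion and forgetful functors rather than merely being equivalent to them; but this is exactly the compatibility already exploited in the earlier remarks, where the same identifications are used to transport $\Op_\infty\hookrightarrow\Op_\infty^\gen$, $\CatS^\otimes\hookrightarrow\Op_\infty$ and $\Op_\infty\to\CatS$.
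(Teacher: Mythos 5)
Your proposal is correct and follows exactly the paper's own route: the paper obtains the proposition by observing that Lurie's triangle $(-)^\times\colon\CatS^\Pi\into\CatS^\otimes$ over $\CatS$ lives in $\RPrS$ and then applying $-\otimes\Univ$, together with the identifications $\CatS^\Pi\otimes\Univ\simeq\ICat_{\BB}^\Pi$, $\CatS^\otimes\otimes\Univ\simeq\ICat_{\BB}^\otimes$ and $\CatS\otimes\Univ\simeq\ICat_{\BB}$. Your additional remarks on preservation of fully faithfulness and on the section-wise identification of the tensor bifunctor with the product are consistent with the mechanism the paper uses throughout \S~\ref{sec:BOperads}.
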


\begin{remark}
	\label{rem:SymMonStronSubcats}
	Suppose that $ \I{C}^\otimes $ is a symmetric monoidal $ \BB $-category and that $ \I{D} $ is a full subcategory of $ \I{C} $ such that the tensor functor $- \otimes - \colon \I{C} \times \I{C} \to \I{C} $ restricts to a functor $ \I{D} \times \I{D}\to \I{D} $ and the unit object $1_\otimes\colon 1\to\I{C}$ is contained in $\I{D}$.
	Then we can canonically equip $ \I{D} $ with the structure of a symmetric monoidal $ \BB $-category:
	indeed, $\I{C}^\otimes $ is determined by a functor $ \I{C}^\otimes(-,-) \colon \BB^\op \times \Fin_* \to \CatS $ and we may consider the full subfunctor that consists for $ A \in \BB^\op $ and $ \left< n \right> \in \Fin_*$ of the full subcategory of $ \I{C}^\otimes(A,\left< n \right>) $ that corresponds under the equivalence
	\[
	\I{C}^\otimes(A,\left< n \right>) \simeq 	\prod_{i=1}^n \I{C}^\otimes(A,\left< 1 \right>) \simeq \prod_{i=1}^n \I{C}(A)
	\]
	to the subcategory $ \prod_{i=1}^n \I{D}(A) \subseteq  \prod_{i=1}^n \I{C}(A)$.
	By our assumption on $ - \otimes - $ this yields a well-defined functor $ \I{D}^\otimes (-,-) \colon \BB^\op \times \Fin_* \rightarrow \CatS $, and by \cite[Remark 2.2.1.12]{Lurie2017} the functor $ \I{D}^\otimes (A,-) $ defines a symmetric monoidal $\infty$-category for every $ A\in \BB $.
	So we get a functor $\I{D}^\otimes  \colon \BB^\op \rightarrow \CatS^\otimes$
	whose composition with the forgetful functor $ \CatS^\otimes \rightarrow \CatS $ recovers $ \I{D} $ and which therefore defines the desired monoidal structure on $ \I{D} $.
\end{remark}

\subsection{Algebras and modules in symmetric monoidal $\BB$-categories}
\label{sec:algebrasModules}
Recall from~\cite[\S~2.1.3]{Lurie2017} that  a \emph{commutative algebra} in a symmetric monoidal $\infty$-category $\CC^\otimes$ is a map of $\infty$-operads $\Fin_*\to \CC^\otimes$, and an \emph{associative algebra} in $\CC^\otimes$ is a map of $\infty$-operads $\Assoc\to \CC^\otimes$, where $\Assoc$ denotes the associative operad. One obtains functors $\Alg\colon \CatS^\otimes\to \CatS$ and $\CAlg\colon \CatS^\otimes\to\CatS$ that assign to a symmetric monoidal $\infty$-category $\CC^\otimes$ the $\infty$-category $\Alg(\CC)$ of associative algebras  in $\CC^\otimes$ and the $\infty$-category $\CAlg(\CC)$ of commutative algebras in $\CC^\otimes$, respectively. Note that either of these functors defines a map in $\RPrS$. In light of Example~\ref{ex:tensorConstructionPresentable}, we may thus define:
\begin{definition}
	\label{def:algebras}
	We define the maps $\IAlg\colon\ICat_{\BB}^\otimes\to\ICat_{\BB}$
	and $\ICAlg\colon\ICat_{\BB}^\otimes\to\ICat_{\BB}$ in $\RPr(\BB)$ as the maps that arise from applying $-\otimes\Univ$ to the functors $\Alg\colon\CatS^\otimes\to\CatS$ and $\CAlg\colon\CatS^\otimes\to\CatS$, respectively. For a symmetric monoidal $\BB$-category, we refer to the $\BB$-category $\IAlg(\I{C})$ ($\ICAlg(\I{C})$) as the $\BB$-category of \emph{associative (commutative) algebras} in $\I{C}$.
\end{definition}

By construction, an associative (commutative) algebra in $\I{C}^\otimes$ is by definition simply an associative (commutative) algebra in the symmetric monoidal $\infty$-category $\Gamma(\I{C}^\otimes)$.

Recall from~\cite[Proposition~2.4.2.5]{Lurie2017} that there is a commutative triangle
\begin{equation*}
	\begin{tikzcd}[column sep=small]
		\CatS^\Pi\arrow[dr, "\CMon"'] \arrow[rr, hook, "(-)^\times"] && \CatS^\otimes\arrow[dl, "\CAlg"] \\
		& \CatS &
	\end{tikzcd}
\end{equation*}
in $\RPrS$. By applying $-\otimes\Univ$ to this diagram, we therefore obtain:
\begin{proposition}
	Let $\I{C}$ be a $\BB$-category with finite products. Then there is a canonical equivalence
	\begin{equation*}
		\ICMon(\I{C})\simeq \ICAlg(\I{C}^\times)
	\end{equation*}
	of $\BB$-categories that is natural in $\I{C}$.\qed
\end{proposition}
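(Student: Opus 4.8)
The plan is to deduce the statement from its external counterpart by applying the functor $-\otimes\Univ\colon\RPrS\to\Cat(\BBB)$ to the commutative triangle in $\RPrS$ already recalled in the text, namely the one of~\cite[Proposition~2.4.2.5]{Lurie2017} whose legs are $\CMon\colon\CatS^\Pi\to\CatS$, the cartesian monoidal structure $(-)^\times\colon\CatS^\Pi\into\CatS^\otimes$, and $\CAlg\colon\CatS^\otimes\to\CatS$, with $\CAlg\circ(-)^\times\simeq\CMon$. Since $-\otimes\Univ$ is a functor, this transports to a commutative triangle of large $\BB$-categories with legs $\CMon\otimes\Univ\colon\ICat_{\BB}^\Pi\to\ICat_{\BB}$, $(-)^\times\otimes\Univ\colon\ICat_{\BB}^\Pi\to\ICat_{\BB}^\otimes$ and $\CAlg\otimes\Univ\colon\ICat_{\BB}^\otimes\to\ICat_{\BB}$, where I use the identifications $\CatS^\Pi\otimes\Univ\simeq\ICat_{\BB}^\Pi$, $\CatS^\otimes\otimes\Univ\simeq\ICat_{\BB}^\otimes$ and $\CatS\otimes\Univ\simeq\ICat_{\BB}$ recorded earlier in this section. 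It then remains to identify these three legs with $(-)^\times$, $\ICAlg$ and the functor $\I{C}\mapsto\ICMon(\I{C})$ respectively; the commutativity of the transported triangle evaluated at $\I{C}$ yields the desired equivalence $\ICMon(\I{C})\simeq\ICAlg(\I{C}^\times)$, and naturality in $\I{C}$ is automatic since this equivalence is a natural transformation of functors $\ICat_{\BB}^\Pi\to\ICat_{\BB}$.

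The middle leg $(-)^\times\otimes\Univ$ is by construction the fully faithful functor $(-)^\times$ of Proposition~\ref{prop:CartesianMonoidalBCategories}, which was itself obtained by applying $-\otimes\Univ$ to Lurie's $(-)^\times$, and the right leg $\CAlg\otimes\Univ$ is the functor $\ICAlg$ by definition. Hence the only nontrivial identification is that $\CMon\otimes\Univ$ carries a $\BB$-category $\I{C}$ with finite products to the full subcategory $\ICMon(\I{C})\into\iFun{\Fin_*}{\I{C}}$ of Definition~\ref{def:CMon}. To obtain this I would first describe $\CMon$ externally as the reflective subfunctor $\CMon(-)\into\Fun(\Fin_*,-)$ cut out by the Segal condition — that the maps $M_n\to\prod_{i=1}^{n}M_1$ be equivalences — so that on presentable inputs $\CMon(-)$ is the Bousfield localisation of $\Fun(\Fin_*,-)$ at the small set $\Sigma$ of Segal maps, and $\CMon\into\Fun(\Fin_*,-)$ is a natural transformation of functors in $\RPrS$. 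Applying $-\otimes\Univ$ identifies $\Fun(\Fin_*,-)\otimes\Univ$ with the functor $\I{C}\mapsto\iFun{\Fin_*}{\I{C}}$ (where $\Fin_*$ is the constant $\BB$-category), and — by the standard base-change behaviour of $-\otimes\Univ$, reflecting the compatibility of the Lurie tensor product with Bousfield localisations — carries the reflective inclusion at $\Sigma$ to the internal reflective inclusion $\ILoc_{\I{\Sigma}}(\iFun{\Fin_*}{-})\into\iFun{\Fin_*}{-}$ at the $\BB$-subcategory $\I{\Sigma}$ of base-changed Segal maps. Finally, by Remark~\ref{rem:BCLocalObjects} an object of $\iFun{\Fin_*}{\I{C}}$ is $\I{\Sigma}$-local precisely when it is section-wise $\Sigma$-local, i.e.\ when its Segal maps are equivalences in every $\pi_B^\ast\I{C}$; since finite products in a $\BB$-category are detected section-wise (Proposition~\ref{prop:critfinlim}), this is exactly the condition defining $\ICMon(\I{C})$.

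The main obstacle is precisely this last identification: the rest is formal bookkeeping with $-\otimes\Univ$, but matching $\CMon\otimes\Univ$ with the hand-defined $\ICMon$ rests on two inputs — that $-\otimes\Univ$ sends a Bousfield localisation to an internal one (a property of the Lurie tensor product) and that internal local objects are detected section-wise (Remark~\ref{rem:BCLocalObjects}). With these in place, commutativity of the transported triangle gives the equivalence $\ICMon(\I{C})\simeq\ICAlg(\I{C}^\times)$, naturally in $\I{C}$, completing the argument. (Alternatively, one could internalise Lurie's proof of~\cite[Proposition~2.4.2.5]{Lurie2017} directly using $\BB$-straightening, much as in Proposition~\ref{prop:MonoidalCategoriesVsMonoids}, but the transport argument above is strictly lighter.)
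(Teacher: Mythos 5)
Your proposal is correct and takes essentially the same route as the paper: the paper's entire proof consists of applying $-\otimes\Univ$ to Lurie's commutative triangle relating $\CMon$, $(-)^\times$ and $\CAlg$, exactly as you do, leaving the identification of $\CMon\otimes\Univ$ with the hand-defined $\ICMon$ implicit. Your additional verification of that identification (via Bousfield localisation at the Segal maps together with section-wise detection of local objects and of finite products) simply fills in the step the paper takes for granted, so nothing is missing.
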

Our next goal is to define $\BB$-categories of \emph{modules} in a symmetric monoidal $\BB$-category. 
To that end, let us briefly recall the setup from~\cite[\S~3.3.3]{Lurie2017}: we let $\KK$ be the full subcategory of $\Fun(\Delta^1,\Fin_*)$ that is spanned by the semi-inert maps, and we denote by $\KK^0\into\KK$ the full subcategory that is spanned by the null maps. We say that a morphism $f$ in $\KK$ or $\KK^0$ is inert if both $d_0(f)$ and $d_1(f)$ are inert. By making use of the projections $d_1\colon\KK\to\Fin_*$ and $d_1\colon \KK^0\to\Fin_*$, we may regard both $\KK$ and $\KK^0$ as preoperads (with the marked edges given by the inert maps). Now the two spans $\Fin_*\xleftarrow{d_1}\KK\xrightarrow{d_0}\Fin_*$ and $\Fin_*\xleftarrow{d_1}\KK^0\xrightarrow{d_0}\Fin_*$ (viewed as spans of marked simplicial sets) satisfy the conditions of~\cite[Theorem~B.4.2]{Lurie2017} and therefore determine left Quillen endofunctors $-\times_{\Fin_{\ast}}\KK$ and $-\times_{\Fin_*}\KK^0$ on the $1$-category $\POp_\infty$ of preoperads with respect to the model structure for generalised $\infty$-operads (see in particular~\cite[Proposition~3.3.3.18]{Lurie2017} for a proof of the first case, the second case follows by analogous arguments). Passing to right adjoints thus gives rise to right Quillen endofunctors $\overline{\Mod}(-)^\otimes$ and $\prescript{\mathsf{p}}{}{\CAlg(-)}$ on $\POp_\infty$, and the inclusion $\KK^0\into\KK$ determines a morphism $\overline{\Mod}(-)^\otimes\to \prescript{\mathsf{p}}{}{\CAlg(-)}$. Similarly, the span $\Fin_*\leftarrow\Fin_*\times\Fin_*\to\Fin_*$ of marked simplicial sets satisfies the conditions of~\cite[Theorem~B.4.2]{Lurie2017} as well and therefore gives rise to a left Quillen endofunctor whose right adjoint recovers the functor $\Fin_*\times\CAlg(-)\colon\POp_\infty\to\POp_\infty$. By~\cite[Remark~3.3.3.7]{Lurie2017}  the map $\KK^0\to\Fin_{\ast}\times\Fin_{\ast}$ induces a categorical equivalence $\Fin_{\ast}\times\CAlg(-)\to\prescript{\mathsf{p}}{}{\CAlg(-)}$ and therefore in particular a weak equivalence in the model structure for generalised $\infty$-operads. 

In total, these observations imply that upon passing to the underlying $\infty$-categories, one ends up with a map $\Mod(-)^\otimes\colon\Op_\infty^\gen\to\Op_\infty^\gen$ in $\RPrS$ together with a morphism $\Mod(-)^\otimes\to\Fin_*\times\CAlg(-)$, or equivalently a map $\Op_\infty^\gen\to\Fun(\Delta^1,\Op_\infty^\gen)$ in $\RPrS$.  
By making use of the evident equivalence of large $\BB$-categories $\Fun(\Delta^1,\Op_\infty^\gen)\otimes\Univ\simeq (\IOp_\BB^\gen)^{\Delta^1}$, applying $-\otimes\Univ$ yields a functor $\IMod(-)^\otimes\colon\IOp_\BB^\gen\to\IOp_\BB^\gen$ in $\IRPr_{\BB}$ together with a morphism $p\colon\IMod(-)^\otimes\to\Fin_*\times\ICAlg(-)$ of generalised $\BB$-operads. Given a symmetric monoidal $\BB$-category $\I{C}^\otimes$ and a commutative algebra $R\colon 1\to\ICAlg(\I{C})$, we will denote by $\IMod_R(\I{C})^\otimes\to\Fin_{\ast}$ the pullback of $p$ along the map $(\id,R) \colon \Fin_*\to \Fin_*\times\ICAlg(\I{C})$. By~\cite[Theorem~3.3.3.9]{Lurie2017} and Remark~\ref{rem:OperadsExplicitly}, the map $\IMod_R(\I{C})^\otimes\to\Fin_*$ defines a $\BB$-operad.
\begin{definition}
	Let $\I{C}$ be a symmetric monoidal $\BB$-category and let $R\colon 1\to\ICAlg(\I{C})$ be a commutative algebra in $\I{C}$. We define the $\BB$-category $\IMod_R(\I{C})$ of \emph{modules} over $R$ as the underlying $\BB$-category of the $\BB$-operad $\IMod_R(\I{C})^\otimes$.
\end{definition}

Our next goal is to investigate the functoriality of $\IMod_R(\I{C})^\otimes$ in $R$. To that end, note that the diagonal embedding $\Fin_{\ast}\into \KK$ induces a forgetful functor $\overline{\Mod}(-)^\otimes\to \id_{\POp_\infty}$ and therefore, by the same procedure as above, a morphism $\IMod(-)^\otimes\to\id_{\IOp_\BB^\gen}$. By combining~\cite[Corollary~3.4.3.4]{Lurie2017} with the section-wise characterisation of cartesian fibrations in $\Cat(\BB)$ (see~\cite[Proposition~3.1.7]{MCocartesian}) as well as~\cite[Remark~3.2.6]{MCocartesian}, we now find:
\begin{proposition}
	\label{prop:ModCAlgCartesianFibration}
	For any symmetric monoidal $\BB$-category $\I{C}$, the projection $\IMod(\I{C})^\otimes\to\ICAlg(\I{C})$ is a cartesian fibration, and a map in $\IMod(\I{C})^\otimes$ is cartesian if and only if its image along the forgetful functor $\IMod(\I{C})^\otimes\to\I{C}^\otimes$ is an equivalence.\qed
\end{proposition}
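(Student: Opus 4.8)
The plan is to reduce the statement to its $\infty$-categorical counterpart, Lurie's~\cite[Corollary~3.4.3.4]{Lurie2017}, by checking both assertions section-wise. Recall the section-wise characterisation of cartesian fibrations in $\Cat(\BB)$ from~\cite[Proposition~3.1.7]{Martini2022}: a functor $q\colon\I{P}\to\I{Q}$ of $\BB$-categories is a cartesian fibration if and only if $q(A)\colon\I{P}(A)\to\I{Q}(A)$ is a cartesian fibration of $\infty$-categories for every $A\in\BB$ and the transition functor $s^\ast\colon\I{P}(A)\to\I{P}(B)$ carries $q(A)$-cartesian edges to $q(B)$-cartesian edges for every map $s\colon B\to A$ in $\BB$; moreover, by~\cite[Remark~3.2.6]{Martini2022} a morphism in $\I{P}$ is $q$-cartesian exactly when its image in the relevant section is cartesian. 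Since the morphism $p\colon\IMod(-)^\otimes\to\Fin_*\times\ICAlg(-)$ and the forgetful functor $\IMod(\I{C})^\otimes\to\I{C}^\otimes$ are both obtained by applying $-\otimes\Univ$ to morphisms in $\RPrS$, and since $-\otimes\Univ$ is natural in $\BB$, Remark~\ref{rem:OperadsExplicitly} identifies, for every $A\in\BB$, the functor $\IMod(\I{C})^\otimes(A)\to\ICAlg(\I{C})(A)$ together with the forgetful functor to $\I{C}^\otimes(A)$ with the output of Lurie's construction applied to the symmetric monoidal $\infty$-category $(\eta^A)^\ast\I{C}^\otimes(A)$.

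With this identification in hand, \cite[Corollary~3.4.3.4]{Lurie2017} tells us that for every $A\in\BB$ the functor $\IMod(\I{C})^\otimes(A)\to\ICAlg(\I{C})(A)$ is a cartesian fibration whose cartesian edges are precisely the morphisms sent to equivalences by the forgetful functor $\IMod(\I{C})^\otimes(A)\to\I{C}^\otimes(A)$. Since any functor preserves equivalences, in particular every transition functor $s^\ast$ as well as the forgetful functor $\IMod(\I{C})^\otimes\to\I{C}^\otimes$ itself, this section-wise description of cartesian edges makes it immediate that the transition functors carry cartesian edges to cartesian edges. Applying~\cite[Proposition~3.1.7]{Martini2022} we conclude that $\IMod(\I{C})^\otimes\to\ICAlg(\I{C})$ is a cartesian fibration, and the characterisation of its cartesian morphisms follows by combining the section-wise description above with~\cite[Remark~3.2.6]{Martini2022}.

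The only point that demands genuine care is the identification carried out in the first paragraph: one must verify that the abstractly defined functor $\IMod(-)^\otimes$, built from the $\RPrS$-valued functor $\Mod(-)^\otimes$ via $-\otimes\Univ$, restricts, after evaluation at $A$ and base change along $\eta^A$, to Lurie's relative operad $\Mod(-)^\otimes$ on $(\eta^A)^\ast\I{C}^\otimes(A)$. This is nonetheless formal, given the explicit description of $\IOp_\BB^\gen$ in Remark~\ref{rem:OperadsExplicitly} and the compatibility of $-\otimes\Univ$ with evaluation at objects of $\BB$ and with \'etale base change; consequently no essential obstacle arises beyond bookkeeping.
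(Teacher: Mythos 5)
Your proof is correct and follows essentially the same route as the paper, which derives the statement by combining Lurie's~\cite[Corollary~3.4.3.4]{Lurie2017} with the section-wise characterisation of cartesian fibrations from~\cite[Proposition~3.1.7]{Martini2022} and~\cite[Remark~3.2.6]{Martini2022}. You merely spell out the section-wise verification (including the compatibility of transition functors with the forgetful functors) that the paper leaves implicit.
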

\begin{corollary}
	\label{cor:ModCAlgCartesianFibrationUnderlyingCategories}
	For any symmetric monoidal $\BB$-category $\I{C}$, the projection $\IMod(\I{C})\to\ICAlg(\I{C})$ is a cartesian fibration.\qed
\end{corollary}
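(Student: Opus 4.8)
The plan is to deduce the corollary from Proposition~\ref{prop:ModCAlgCartesianFibration} by restricting the cartesian fibration $P\colon\IMod(\I{C})^\otimes\to\ICAlg(\I{C})$ along the inclusion of the fibre over $\ord{1}\in\Fin_*$. First I would recall that, by construction (cf.\ Remark~\ref{rem:OperadsForgetfulFunctor}), the underlying $\BB$-category $\IMod(\I{C})$ of the $\BB$-operad $\IMod(\I{C})^\otimes$ is the fibre of the structure map $\IMod(\I{C})^\otimes\to\Fin_*$ over the object $\ord{1}$, i.e.\ $\IMod(\I{C})\simeq\IMod(\I{C})^\otimes\times_{\Fin_*}1$; this exhibits $\IMod(\I{C})$ as a full subcategory of $\IMod(\I{C})^\otimes$ (the inclusion is section-wise the inclusion of a fibre over an object, hence section-wise fully faithful and therefore fully faithful), and the composite $\IMod(\I{C})\into\IMod(\I{C})^\otimes\to\ICAlg(\I{C})$ is precisely the functor in question.

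The key step is then to observe that this full subcategory is stable under the $P$-cartesian morphisms. Indeed, by Proposition~\ref{prop:ModCAlgCartesianFibration} a morphism of $\IMod(\I{C})^\otimes$ is $P$-cartesian if and only if its image along the forgetful functor $\IMod(\I{C})^\otimes\to\I{C}^\otimes$ is an equivalence; since this forgetful functor is a map of $\BB$-operads it commutes with the structure maps to $\Fin_*$, so any $P$-cartesian morphism lies over an equivalence in $\Fin_*$. In particular, whenever the target of a $P$-cartesian morphism lies in the fibre over $\ord{1}$, so does its source, so $\IMod(\I{C})\into\IMod(\I{C})^\otimes$ is closed under $P$-cartesian morphisms. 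Combined with the previous paragraph, the corollary follows from the general fact that the restriction of a cartesian fibration of $\BB$-categories to a full subcategory closed under cartesian morphisms is again a cartesian fibration.

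If that last general fact is not available off the shelf, I would instead argue section-wise via the characterisation of cartesian fibrations in $\Cat(\BB)$ of~\cite[Proposition~3.1.7]{Martini2022}: for every $A\in\BB$, Proposition~\ref{prop:ModCAlgCartesianFibration} gives that $\IMod(\I{C})^\otimes(A)\to\ICAlg(\I{C})(A)$ is a cartesian fibration of $\infty$-categories whose cartesian edges lie over equivalences in $\Fin_*$, so its restriction to the fibre over $\ord{1}$ is a cartesian fibration by the corresponding statement in ordinary $\infty$-category theory; and each transition functor $s^\ast$ preserves cartesian edges on $\IMod(\I{C})^\otimes$ and restricts to the fibres over $\ord{1}$, hence preserves cartesian edges on $\IMod(\I{C})$. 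I do not expect any genuine obstacle here; the only points requiring care are the compatibility of the forgetful functor $\IMod(\I{C})^\otimes\to\I{C}^\otimes$ with the projections to $\Fin_*$ (so that "$P$-cartesian $\Rightarrow$ equivalence in $\I{C}^\otimes$ $\Rightarrow$ over an equivalence in $\Fin_*$" is legitimate) and, if the section-wise route is taken, checking that the needed input from~\cite[Proposition~3.1.7]{Martini2022} applies in exactly this form.
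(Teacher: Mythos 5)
Your proposal is correct and matches the paper's (implicit) argument: the paper marks the corollary as immediate from Proposition~\ref{prop:ModCAlgCartesianFibration}, precisely because $\IMod(\I{C})$ is the fibre of $\IMod(\I{C})^\otimes\to\Fin_*$ over $\ord{1}$ and the cartesian morphisms, being detected as equivalences in $\I{C}^\otimes$, lie over equivalences in $\Fin_*$, so the fibre is closed under cartesian lifts. Your spelled-out restriction argument (or its section-wise variant) is exactly the intended deduction.
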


Next we would like to establish functoriality in the opposite direction, i.e.\ to construct base change functors $\IMod_R(\I{C})^\otimes\to\IMod_S(\I{C})^\otimes$ along any algebra map $R\to S$. The existence of these functors requires the existence of geometric realisations. For the remainder of this section, we shall therefore fix an internal class $\I{U}$ of $\BB$-categories that contains $\Delta^{\op}$ and assume that $\I{C}^\otimes$ is a symmetric monoidal $\BB$-category such that $\I{C}$ is $\I{U}$-cocomplete and the tensor functor $-\otimes-\colon \I{C}\times\I{C}\to\I{C}$ is $\I{U}$-bilinear in the sense of \S~\ref{sec:bilinear} below. We now obtain:

\begin{proposition}
	\label{prop:ContravariantModuleFunctoriality}
	The projection $p\colon \IMod(\I{C})^\otimes\to\Fin_{\ast}\times\ICAlg(\I{C})$ is a cocartesian fibration.
\end{proposition}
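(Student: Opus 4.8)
The statement we want is that $p\colon \IMod(\I{C})^\otimes\to\Fin_\ast\times\ICAlg(\I{C})$ is a cocartesian fibration, given that $\I{C}$ is $\I{U}$-cocomplete for an internal class $\I{U}\supseteq\{\Delta^\op\}$ and that the tensor functor is $\I{U}$-bilinear. Since all of the structure on the $\BB$-side has been obtained by applying $-\otimes\Univ$ to the corresponding construction in $\RPrS$ over $\SS$, and since cocartesian fibrations in $\Cat(\BB)$ can be detected section-wise (this is~\cite[Proposition~3.1.7]{Martini2022}, which was already used for the cartesian statement in Proposition~\ref{prop:ModCAlgCartesianFibration}), the plan is to reduce to the corresponding statement for symmetric monoidal $\infty$-categories and invoke~\cite[Corollary~3.4.3.2]{Lurie2017} (or the relevant statement in \S~3.4.3 of~\cite{Lurie2017} asserting that $\Mod(\CC)^\otimes\to\Fin_\ast\times\CAlg(\CC)$ is a cocartesian fibration when $\CC$ admits geometric realisations of simplicial objects and the tensor product preserves them separately in each variable).

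First I would unwind the section-wise description of all the players, exactly as in Remark~\ref{rem:OperadsExplicitly} and Remark~\ref{rem:MonoidalBCategoriesExplicitly}: for each $A\in\BB$, pulling back along the adjunction unit $\eta^A\colon\Fin_\ast\to\Gamma_{\Over{\BB}{A}}\Fin_\ast$ (and along the active maps to identify the $n$-fold tensor data) recovers from $\I{C}^\otimes$ a symmetric monoidal $\infty$-category $\I{C}^\otimes(A)$, from $\ICAlg(\I{C})$ the $\infty$-category $\CAlg(\I{C}(A))$, and from $\IMod(\I{C})^\otimes$ the $\infty$-category $\Mod(\I{C}(A))^\otimes$, with the functor $p(A)$ identified with the classical projection $\Mod(\I{C}(A))^\otimes\to\Fin_\ast\times\CAlg(\I{C}(A))$. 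Here I would cite~\cite[Theorem~3.3.3.9]{Lurie2017} for the module operad being compatible with these identifications, just as the paper does immediately before the definition of $\IMod_R(\I{C})$. The key input I need here is that $\I{U}$-cocompleteness of $\I{C}$ together with $\I{U}$-bilinearity of $-\otimes-$ forces, for every $A\in\BB$, that $\I{C}(A)$ admits geometric realisations (because $\Delta^\op$, viewed as a constant $\BB$-category, lies in $\I{U}$, and a $\BB$-category has constant $\Delta^\op$-colimits section-wise — cf.\ the section-wise criterion of Proposition~\ref{prop:KSCocompleteness} / Proposition~\ref{prop:critfinlim}) and that the tensor product on $\I{C}(A)$ preserves geometric realisations in each variable separately (this is what $\I{U}$-bilinearity of $-\otimes-$ gives section-wise, modulo the definition of $\I{U}$-bilinear that will be pinned down in \S~\ref{sec:bilinear}).

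With these section-wise identifications in hand, the classical result~\cite[Corollary~3.4.3.2]{Lurie2017} tells us that $p(A)$ is a cocartesian fibration for every $A\in\BB$; moreover that same result identifies the cocartesian edges over morphisms of $\CAlg(\I{C}(A))$ explicitly as the relative tensor product construction, which is built from a bar construction, hence from a geometric realisation of a $\Delta^\op$-indexed diagram. I would then check compatibility with base change: for $s\colon B\to A$ in $\BB$, the transition functor $s^\ast$ on all three sheaves is (section-wise) induced by the corresponding restriction of $\infty$-operads, so it carries $p(A)$-cocartesian edges to $p(B)$-cocartesian edges precisely because restriction along a morphism of sites commutes with the relevant geometric realisations — again this is exactly the content one extracts from~\cite[\S~3.4.3]{Lurie2017}, using that $s^\ast$ on $\I{C}$ preserves $\Delta^\op$-colimits (which is part of $\I{C}$ being $\I{U}$-cocomplete as a $\BB$-category, by the section-wise criterion). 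Having verified that $p$ is section-wise a cocartesian fibration with base-change-compatible cocartesian edges, Proposition~3.1.7 of~\cite{Martini2022} (equivalently, the section-wise characterisation of cocartesian fibrations in $\Cat(\BB)$) lets us conclude that $p$ itself is a cocartesian fibration in $\Cat(\BB)$.

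\textbf{Main obstacle.} The routine part is the diagram-chasing reduction to the section-wise picture; the delicate point is verifying that $\I{U}$-bilinearity of $-\otimes-$ (a notion defined internally, only fully available once \S~\ref{sec:bilinear} is read) really does translate into the classical hypothesis that each $\I{C}(A)$ has geometric realisations \emph{and} that $\otimes$ on $\I{C}(A)$ preserves them in each variable separately, \emph{uniformly enough} that the cocartesian edges (which are relative tensor products, i.e.\ bar resolutions) are preserved by all transition functors $s^\ast$. In other words, the heart of the proof is the bookkeeping that matches "$\I{U}$-bilinear, $\I{U}\ni\Delta^\op$" against the exact input that~\cite[\S~3.4.3]{Lurie2017} demands, and then confirming that this input is stable under \'etale base change so that the section-wise criterion applies. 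I expect this to be where essentially all the real work sits; everything else is formal transport along $-\otimes\Univ$ and the already-established section-wise recognition principles for (co)cartesian fibrations in $\Cat(\BB)$.
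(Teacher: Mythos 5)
Your proposal is correct and follows essentially the same route as the paper: reduce section-wise to Lurie's result that $\Mod(\I{C}(A))^\otimes\to\Fin_\ast\times\CAlg(\I{C}(A))$ is a cocartesian fibration (the paper cites \cite[Theorem~4.5.3.1]{Lurie2017}), then verify that the cocartesian lifts are stable under the transition functors $s^\ast$ because they are built from relative tensor products, i.e.\ geometric realisations that $s^\ast$ and $-\otimes-$ preserve thanks to $\Delta^\op\in\I{U}$, $\I{U}$-cocompleteness and $\I{U}$-bilinearity. The only difference is organisational: the paper reduces via \cite[Remark~3.2.7]{Martini2022} to lifting pairs $(\pi_A^\ast\alpha,f)$ and makes explicit the factorisation of the chosen lift into a fibrewise-cocartesian piece and relative-tensor-product pieces (using \cite[Theorem~3.4.4.3]{Lurie2017}), which is exactly the bookkeeping you correctly identify as the real work.
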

\begin{proof}
	By using~\cite[Remark~3.2.7]{MCocartesian}, it suffices to show that for any map $\alpha\colon \ord{m}\to\ord{n}$ of finite pointed sets and any map $f\colon \Delta^1\otimes A\to \ICAlg(\I{C})$, the pair $(\pi_A^\ast(\alpha), f)$ admits a cocartesian lift. By~\cite[Theorem~4.5.3.1]{Lurie2017}, the map $\Mod(\I{C}(A))^\otimes\to \Fin_{\ast}\times\CAlg(\I{C}(A))$ is a cocartesian fibration of $\infty$-categories, so we may choose a cocartesian lift $h$ of $(\alpha, f)$ in $\Mod(\I{C}(A))$. By construction, this map defines a lift of $(\pi_A^\ast\alpha, f)$ in $\IMod(\I{C})^\otimes$ in context $A$. To show that $h$ is cocartesian in the internal sense, it now suffices to show that for any map $s\colon B\to A$ in $\BB$ the map $s^\ast(h)$ defines a cocartesian morphism with respect to $\Mod(\I{C}(B))^\otimes\to\Fin_{\ast}\times\CAlg(\I{C}(B))$. According to the proof of~\cite[Theorem~4.5.3.1]{Lurie2017}, we may find a factorisation $h\simeq h^{\prime\prime}h^\prime$, where $h^{\prime}$ defines a cocartesian morphism of the fibre $\Mod_R(\I{C}(A))\to \Fin_{\ast}$ over some $R\in\CAlg(\I{C}(A))$ and where $h^{\prime\prime}$ is contained in the fibre $\Mod(\I{C}(A))^\otimes\vert_{\ord{n}}$ and satisfies the following property:  for any $1\leq i \leq n$, there is a commutative diagram
	\begin{equation*}
		\begin{tikzcd}
			y\arrow[r, "h^{\prime\prime}"]\arrow[d] & z\arrow[d]\\
			y_i\arrow[r, "h^{\prime\prime}_i"] & z_i
		\end{tikzcd}
	\end{equation*}
	where the vertical maps are inert morphisms lying over $\rho_i\colon\ord{n}\to\ord{1}$ and $h^{\prime\prime}_i$ is a cocartesian morphism of $\Mod(\I{C}(A))\to\CAlg(\I{C}(A))$. Since every map that admits such a factorisation must be cocartesian, it is enough to show that $s^\ast(h^\prime)$ is a cocartesian morphism of $\Mod_{s^\ast R}(\I{C}(B))^\otimes\to\Fin_*$ and that $s^\ast(h_i^{\prime\prime})$ is a cocartesian morphism of $\Mod(\I{C}(B))\to\CAlg(\I{C}(B))$.
	To see the first case we equivalently have to see that the functor $ \Mod_R(\I{C}(A))^{\otimes} \to  \Mod_{s*R}(\I{C}(B))^{\otimes}$ induced by $ s^* $ is symmetric monoidal.
	This in turn follows from the description of the symmetric monoidal structure on $ \Mod_R(\I{C}(A))^{\otimes} $ via the bar-construction \cite[Theorem 4.5.2.1,Propositions 4.4.3.12 and 4.4.2.8]{Lurie2017} because $ s^* $ commutes with $ \Delta^{\op} $-indexed colimits.
	Similarly, the second case follows from the fact that the cocartesian maps in $\Mod(\I{C}(A))\to\CAlg(\I{C}(A))$ are given by the relative tensor product (see e.g. \cite[Lemma 4.5.3.5, Proposition 4.6.2.17 and Corollary 4.5.1.6]{Lurie2017}) and the latter is again described via the Bar-construction.
\end{proof}

\begin{corollary}
	For every commutative algebra $A$ in $\I{C}$, the $\BB$-operad $\IMod_A(\I{C})^\otimes$ is a symmetric monoidal $\BB$-category, and the cocartesian fibration $\IMod(\I{C})^\otimes\to\Fin_{\ast}\times\ICAlg(\I{C})$ straightens to a functor
	\begin{equation*}
		\ICAlg(\I{C})\to\ICMon(\ICat_{\BB})\simeq\ICat_{\BB}^\otimes
	\end{equation*}
	that maps $A$ to $\IMod_A(\I{C})^\otimes$.\qed
\end{corollary}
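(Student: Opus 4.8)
The plan is to obtain this as a formal consequence of Proposition~\ref{prop:ContravariantModuleFunctoriality}, of straightening for $\BB$-categories \cite[Theorem~6.3.1]{Martini2022}, and of the identification $\ICat_{\BB}^\otimes\simeq\ICMon(\ICat_{\BB})$ from Proposition~\ref{prop:MonoidalCategoriesVsMonoids}. The only genuinely new input needed beyond Proposition~\ref{prop:ContravariantModuleFunctoriality} is a section-wise comparison with Lurie's module categories.

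First I would verify that $\IMod_A(\I{C})^\otimes$ is a symmetric monoidal $\BB$-category for every commutative algebra $A\colon B\to\ICAlg(\I{C})$ in an arbitrary context $B$. By construction $\IMod_A(\I{C})^\otimes$ is the pullback of $p\colon\IMod(\I{C})^\otimes\to\Fin_*\times\ICAlg(\I{C})$ along $(\id,A)$, and since $p$ is a cocartesian fibration by Proposition~\ref{prop:ContravariantModuleFunctoriality} and cocartesian fibrations in $\Cat(\BB)$ are stable under base change, $\IMod_A(\I{C})^\otimes\to\Fin_*$ is a cocartesian fibration between $\Over{\BB}{B}$-categories. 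By Remark~\ref{rem:MonoidalBCategoriesExplicitly} it then suffices to check that the fibre $(\eta^{B'})^\ast\IMod_A(\I{C})^\otimes(B')\to\Fin_*$ is a symmetric monoidal $\infty$-category for every $B'\in\Over{\BB}{B}$; but this fibre is $\Mod_{A(B')}(\I{C}(B'))^\otimes$, which is a symmetric monoidal $\infty$-category by \cite[Theorem~4.5.2.1]{Lurie2017}, the relevant relative tensor products existing because of the standing hypotheses on $\I{C}$ (namely $\I{U}$-cocompleteness with $\Delta^{\op}\in\I{U}$ and $\I{U}$-bilinearity of the tensor), which pass to each $\I{C}(B')$.

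Next I would straighten $p$ itself. By \cite[Theorem~6.3.1]{Martini2022} it corresponds to a functor $\Fin_*\times\ICAlg(\I{C})\to\ICat_{\BB}$, and this equivalence is natural in the base, hence compatible with the base change defining $\IMod_A(\I{C})^\otimes$. Transposing across the equivalence $\iFun{\Fin_*\times\ICAlg(\I{C})}{\ICat_{\BB}}\simeq\iFun{\ICAlg(\I{C})}{\iFun{\Fin_*}{\ICat_{\BB}}}$ (cartesian closedness of $\Cat(\BB)$) produces a functor $\Phi\colon\ICAlg(\I{C})\to\iFun{\Fin_*}{\ICat_{\BB}}$ whose value at an object $A\colon B\to\ICAlg(\I{C})$ is, by this naturality, the straightening of $\IMod_A(\I{C})^\otimes\to\pi_B^\ast\Fin_*$. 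By the first step together with Proposition~\ref{prop:MonoidalCategoriesVsMonoids}, this straightening is a commutative monoid in $\ICat_{\Over{\BB}{B}}$, so Remark~\ref{rem:BCMonoids} shows that $\Phi(A)$ is contained in $\ICMon(\ICat_{\BB})$. As the latter is a full subcategory of $\iFun{\Fin_*}{\ICat_{\BB}}$, the functor $\Phi$ factors through it; postcomposing with the equivalence $\ICMon(\ICat_{\BB})\simeq\ICat_{\BB}^\otimes$ gives the asserted functor, which by construction sends $A$ to $\IMod_A(\I{C})^\otimes$.

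The main obstacle I expect is bookkeeping rather than anything conceptual: one has to be careful that the base-change compatibility of the straightening equivalence really identifies the pullback-defined object $\IMod_A(\I{C})^\otimes$ with the value of the straightened functor at $A$, and one has to mediate between the ``internal'' description of $\BB$-operads via the units $\eta^A$ (Remark~\ref{rem:OperadsExplicitly}) and plain cocartesian straightening over $\Fin_*$. Size issues (the target $\ICat_{\BB}$ being large, i.e.\ a $\BBB$-category) are harmless here but should be acknowledged, exactly as in the constructions of $\IMod(\I{C})^\otimes$ preceding the statement.
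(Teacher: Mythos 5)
Your proposal is correct and follows essentially the route the paper intends for this \qed-corollary: base change of the cocartesian fibration from Proposition~\ref{prop:ContravariantModuleFunctoriality}, the section-wise criterion of Remark~\ref{rem:MonoidalBCategoriesExplicitly} (with Lurie's module-category results supplying the fibrewise symmetric monoidal structure under the standing hypotheses on $\I{C}$), and then straightening plus transposition and the identification $\ICat_{\BB}^\otimes\simeq\ICMon(\ICat_{\BB})$ via Proposition~\ref{prop:MonoidalCategoriesVsMonoids} and Remark~\ref{rem:BCMonoids}. No gaps.
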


For later use we note the following observation:
\begin{proposition}
	\label{prop:ModForgetfulPreColimits}
	Suppose that $ \I{C} $ is a complete and cocomplete symmetric monoidal $ \BB $-category and that the tensor product $ - \otimes- \colon \I{C} \times \I{C} \to \I{C} $ is bilinear. 
	Let $ R \colon 1 \to \ICAlg(\I{C}) $ be a commutative algebra.
	Then 
	\begin{enumerate}
		\item The $ \BB $-category	$ \IMod_R(\I{C}) $ is complete and cocomplete.
		\item The canonical functor $ F \colon \IMod_R(\I{C}) \to \I{C} $ is continuous and cocontinuous.
		\item The tensor product  $ - \otimes_R - \colon \IMod_R(\I{C}) \times  \IMod_R(\I{C}) \to  \IMod_R(\I{C}) $ is bilinear.
	\end{enumerate}
	Furthermore if $ \I{C}$ is presentably symmetric monoidal (in the sense of Definition~\ref{def:presentablyMonoidal}) then so is $  \IMod_R(\I{C}) $.
\end{proposition}
\begin{proof}
	Since by Corollary~\ref{cor:ModCAlgCartesianFibrationUnderlyingCategories} and Proposition~\ref{prop:ContravariantModuleFunctoriality} the functor $ \IMod(\I{C}) \to \ICAlg(\I{C}) $ is both cartesian and cocartesian, the essentially unique map $ 1_{\otimes} \to R $ of algebras induces an adjunction
	\[
	(-\otimes R\dashv F)\colon \I{C}\simeq  \IMod_{1_{\otimes}}\leftrightarrow \IMod_R(\I{C}) .
	\]
	Therefore, the functor $F$ preserves all limits that exist in $\IMod_R(\I{C})$. 
	For fixed $ A \in \BB $ the functor $ F(A) $ is equivalent to the forgetful functor $ \Mod_{\pi_A^* R}(\I{C}(A)) \to \I{C}(A)$, which creates colimits by \cite[Corollary 3.4.4.6]{Lurie2017}.
	Furthemore $ F(A) $ has a left adjoint given by $ - \otimes \pi_A^* R $.
	Since the diagram
	\[\begin{tikzcd}
		{\Mod_{\pi_A^*R}(\I{C}(A))} & {\Mod_{\pi_B^*R}(\I{C}(B))} \\
		{\I{C}(A)} & {\I{C}(B)}
		\arrow["{s^*}", from=1-1, to=1-2]
		\arrow["{F(B)}", from=1-2, to=2-2]
		\arrow["{F(A)}"', from=1-1, to=2-1]
		\arrow["{s^*}"', from=2-1, to=2-2]
	\end{tikzcd}\]
	commutes for any map $ s \colon B \rightarrow A $ in $\BB$, this shows that $  \IMod_R(\I{C}) $ is $ \ILConst $-cocomplete and $ F $ is $ \ILConst $-cocontinuous.
	Next we show that $ s^* \colon \Mod_{\pi_A^*R}(\I{C}(A)) \to \Mod_{\pi_B^*R}(\I{C}(B)) $ admits a left adjoint $ s_! $ that is compatible with $ F $ in the obvious way.
	By \cite[Remark 4.7.3.15]{Lurie2017} we may find a functor $ G \colon \Mod_{\pi_B^*R}(\I{C}(B)) \to \Fun(\Delta^\op,\I{C}(B)) $ such that the composite
	\[
	\Mod_{\pi_B^*R}(\I{C}(B)) \xrightarrow{G} \Fun(\Delta^\op,\I{C}(B)) \xrightarrow{- \otimes \pi_B^*R} \Fun(\Delta^\op,\Mod_{\pi_B^*R}(\I{C}(B))) \xrightarrow{\colim} 	\Mod_{\pi_B^*R}(\I{C}(B))
	\]
	is equivalent to the identity.
	Note that since $ \I{C} $ is cocomplete, the functor $ s^* \colon \I{C}(A) \to \I{C}(B) $ admits a left adjoint $s_!$. Therefore, the composition
	\[
	\Mod_{\pi_B^*R}(\I{C}(B)) \xrightarrow{G} \Fun(\Delta^\op,\I{C}(B)) \xrightarrow{(( - \otimes \pi_A^*R)\circ s_!)_*} \Fun(\Delta^\op,\Mod_{\pi_A^*R}(\I{C}(A))) \xrightarrow{\colim } \Mod_{\pi_A^*R}(\I{C}(A))
	\]
	defines a left adjoint of the transition functor $ s^*$. Next, we claim that the canonical natural transformation $ \alpha \colon s_! F(B)  \to F(A) s_! $ is an equivalence. 
	Since all functors commute with colimits it suffices to check this for objects of the form $ M \otimes \pi_B^*R $ for some $ M \in \I{C}(B) $.
	In this case $ \alpha $ identifies with the projection formula transformation
	\[
	s_!(M \otimes \pi_B^* R) \to s_! M \otimes \pi_A^* R
	\]
	which is an equivalence since the tensor products commutes with $ \Univ $-indexed colimits in the first variable.
	It follows that for any pullback square 
	\[\begin{tikzcd}
		Q & P \\
		B & A
		\arrow["t", from=1-1, to=1-2]
		\arrow["p", from=1-2, to=2-2]
		\arrow["q"', from=1-1, to=2-1]
		\arrow["s"', from=2-1, to=2-2]
	\end{tikzcd}\]
	in $ \BB $ the canonical transformation $ q_! t^* \to s^* p_! $ is an equivalence since we may check this after composing with the forgetful functor $ F(B)$ and since $ \I{C} $ is cocomplete.
	Thus we have shown that $ \IMod_R(\I{C}) $ is cocomplete and that the forgetful functor $F$ is cocontinuous. 
	Furthermore, we note that the functor $ s^* \colon \Mod_{\pi_A^*R}(\I{C}(A)) \to \Mod_{\pi_B^*R}(\I{C}(B)) $ admits a right adjoint given by the composite 
	\[
	\Mod_{\pi_B^*R}(\I{C}(B)) \xrightarrow{s_*} \Mod_{s_* \pi_B^*R}(\I{C}(A)) \to \Mod_{\pi_A^*R}(\I{C}(A))
	\]
	where the second functor is given restriction of scalars along $ \pi_A^*R \to s_* s^* \pi_A^* R = s_* \pi_B^*R $.
	Since $ s_* $ is compatible with the forgetful functor $ F $, the same argument as above shows that $ \IMod_R(\I{C}) $ is $ \Univ $-complete and therefore complete. 
	Thus we have shown (1) and (2).
	
	We will now show (3).
	It follows from \cite[Corollary 3.4.4.6]{Lurie2017} that for some fixed $ M \in \IMod_R(\I{C}) $ in context $ A $, the functor $ - \otimes_R M $ is $ \ILConst $-cocontinuous, so it suffices to show that it is also $ \Univ $-cocontinuous.
	For this we have to see that for any $ s \colon C \to B $ in $ \BB_{/A} $ the canonical map
	\[
	s_!(N) \otimes_{\pi_B^* R}  \pi_B^* M \to  s_!(N \otimes_{\pi_C^* R} \pi_C^* M) 
	\]
	is an equivalence, by \cite[Proposition~5.4.2]{MWColimits}.
	Again using that the relative tensor product is the colimit of the bar construction, it follows that we only have to see that the canonical map
	\[
		s_!(N) \otimes (\pi_B^* R)^{\otimes n} \pi_B^* M \to  s_!(N \otimes (\pi_C^* R)^{\otimes n} \pi_C^* M) 
	\]
	is an equivalence for all $ n $.
	This follows because $ \pi_B^{*} $ is symmetric monoidal and the projection formula holds for $ \I{C} $.
	
	Finally, if $ \I{C} $ is presentably symmetric monoidal then $\IMod_R(\I{C}) $ is section-wise presentable by \cite[Theorem 3.4.4.2]{Lurie2017} and therefore presentably symmetric monoidal by what we have seen above.
\end{proof}
\subsection{Stable $\BB$-categories}
\label{sec:stability}
In this section we will define and study a few basic properties of \emph{stable} $\BB$-categories. All definitions and results are straightforward adaptions of \cite[\S 1.4.2]{Lurie2017}.

\begin{definition}
	\label{def:pointedBCat}
	A $ \BB $-category $ \I{C} $ is called \emph{pointed} if there is an object $0_\I{C}\colon 1\to\I{C}$ which is both initial and final in $\I{C}$.
\end{definition}

\begin{remark}
	\label{rem:Pointed=SheafInPointed}
	It follows from \cite[Example 4.1.14]{MWColimits} that $ \I{C} $ is pointed if and only if the associated functor $ \I{C} \colon \BB^\op \to \Cat_\infty$ factors through the subcategory $ \Cat_{\infty,*} $ of pointed $ \infty $-categories.
\end{remark}

\begin{definition}
	\label{def:StableBCat}
	A pointed $ \BB $-category $ \I{C} $ is called \emph{stable} if it is finitely complete and cocomplete and a square $ \Delta^1 \times \Delta^1 \to \pi_A^* \I{C} $ in any context $ A \in \BB$ is a pushout square if and only if it is a pullback square. A functor between stable $\BB$-categories is said to be \emph{exact} if it preserves finite colimits.
	We define the large $\BB$-category $ \ICat_\BB^{\ex} $ of stable $\BB$-categories as the subcategory of $ \ICat_\BB $ that is spanned  by the exact functors between stable $ \BB_{/A} $-categories, for every $A\in\BB$.
\end{definition}

\begin{remark}
	\label{rem:StableIsLocal}
	Since existence and preservation of (co)limits are both local properties (see~\cite[Remark~5.2.3]{MWColimits}) and since the collection of exact functors of stable $\Over{\BB}{A}$-categories is closed under composition and equivalences in the sense of~\cite[Proposition~B.2.9]{MWColimits}, it follows as in~\cite[Remark~5.3.2]{MWColimits} that a functor $ f \colon \I{C} \to \I{D} $ of $ \BB_{/A} $-categories is contained in $  \ICat_\BB^{\ex}(A) $ if and only if it is an exact functor betweeen stable $ \BB_{/A} $-categories.	 In particular, one obtains a canonical equivalence $\pi_A^\ast\ICat_{\BB}^{\ex}\simeq\ICat_{\Over{\BB}{A}}^{\ex}$ for every $A\in\BB$.
\end{remark}

\begin{remark}
	\label{rem:StableIfSheafOfStable}
	It follows from Proposition~\ref{prop:criterionforfinitelimits} and \cite[Example 4.1.14]{MWColimits} that a $ \BB $-category $ \I{C} $ is stable if and only of the associated sheaf $ \I{C} \colon \BB^\op \to  \CatS$ factors through the subcategory $ \Cat_\infty^{\ex} \hookrightarrow \CatS$ spanned by the stable $ \infty $-categories and exact functors. Consequently, we obtain a canonical identification $\ICat_{\BB}^{\ex}\simeq\CatS^{\ex}\otimes\Univ$.
\end{remark}

Following the terminology of \cite{Lurie2017}, we write $ \SS^{\fin}_\ast $ for the $\infty$-category of pointed finite $\infty$-groupoids.

\begin{definition}
	\label{def:Stabilisation}
	Let $ \I{C} $ be a $ \BB $-category with finite limits.
	We let $ \ISp(\I{C}) $ be the full subcategory of $ \IFun(\SS^{\fin}_\ast,\I{C}) $ spanned by those functors $ \SS^{\fin}_\ast \to \pi_A^* \I{C}$ in context $ A \in\BB$ that preserve the final object and send pushout squares to pullbacks.
	We denote by  $\Omega^\infty\colon \ISp(\I{C}) \to \I{C} $ the functor that is obtained by evaluation at $ S^0 \in \SS^{\fin}_\ast $.
\end{definition}

\begin{remark}
	\label{rem:ExcisivenessIsLocal}
	Let $d \colon \Delta^1 \times \Delta^1 \to \pi_A^* \SS^{\fin}_\ast $ be a square in the constant $ \BB $-category $ \SS^{\fin}_\ast $ in context $ A \in \BB $.
	Since $ \Delta^1 \times \Delta^1 $ is a finite $ \BB $-category we may find a cover $s \colon B \twoheadrightarrow A $ such that $ s^* f $ is given by a square in the $\infty$-category $\SS_*^{\fin}$, cf.~Appendix~\ref{app:locallyConstantSheaves}.
	Since being a pullback square is a local condition, this implies that a functor $ f \colon \SS^{\fin}_\ast \to \I{C} $ is contained in $ \ISp(\I{C}) $ if and only if for every $ A \in \BB $, the functor $ \SS^{\fin} \to \I{C}(A) $ that corresponds to $\pi_A^\ast f$ preserves finite limits.
\end{remark}

\begin{proposition}
	\label{prop:StablisationAdjoint}
	The inclusion functor $ \ICat_\BB^{\ex} \to \ICat_\BB^{\lex} $ from stable $ \BB $-categories to $ \BB $-categories with finite limits admits a right adjoint $ \ISp\colon \ICat_\BB^{\lex} \to \ICat_\BB^{\ex} $, that sends a $ \BB $-category $ \I{C} $ to $ \ISp(\I{C}) $.
\end{proposition}
\begin{proof}
	By \cite[Corollary 1.4.2.23]{Lurie2017} we have an adjunction
	\[
	(i\dashv\Sp)\colon \CatS^{\ex}\leftrightarrows \CatS^{\lex},
	\]
	hence $\Sp$ defines a morphism in $\RPrS$. By applying $-\otimes\Univ$, we thus obtain the desired adjunction of $\BB$-categories $\ICat_{\BB}^{\ex}\leftrightarrows\ICat_{\BB}^{\lex}$.
	Now Remark~\ref{rem:ExcisivenessIsLocal} implies that if $\I{C}$ is a stable $\BB $-category, the composition $ \BB^\op \xrightarrow{\I{C}} \Cat_\infty^{\lex} \xrightarrow{\Sp} \Cat_\infty^{\ex} $ is precisely $ \ISp(\I{C}) $.
\end{proof}

\begin{definition}
	\label{def:BSpectra}
	We call $ \ISp(\Univ) $ the $ \BB $-category of \emph{$ \BB $-spectra}.
\end{definition}

\begin{lemma}
	\label{lem:LoopsHasLeftAdjoint}
	Suppose that $ \I{C} $ is a presentable $ \BB $-category.
	Then $ \ISp(\I{C}) $ is also presentable and the functor $ \Omega^\infty \colon \ISp(\I{C}) \to \I{C} $ admits a left adjoint.
\end{lemma}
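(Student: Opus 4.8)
The statement has two parts: that $\ISp(\I{C})$ is presentable, and that $\Omega^\infty$ admits a left adjoint $\Sigma^\infty$. The plan is to deduce both from results already at our disposal, reducing the first claim to the section-wise characterisation of presentability (Theorem~\ref{thm:characterisationPresentableCategories}) together with the classical fact that stabilisation of a presentable $\infty$-category is presentable (\cite[Proposition~1.4.4.4]{Lurie2017}), and the second claim to the adjoint functor theorem (Proposition~\ref{prop:adjointFunctorTheorem}).

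First I would establish presentability of $\ISp(\I{C})$. By Remark~\ref{rem:ExcisivenessIsLocal}, the associated sheaf of $\ISp(\I{C})$ is computed section-wise: for every $A\in\BB$ one has $\ISp(\I{C})(A)\simeq\Sp(\I{C}(A))$, and for a map $s\colon B\to A$ the transition functor is $\Sp(s^\ast)\colon\Sp(\I{C}(A))\to\Sp(\I{C}(B))$. Since $\I{C}$ is presentable, Theorem~\ref{thm:characterisationPresentableCategories} gives that $\I{C}(A)$ is a presentable $\infty$-category and that $s^\ast$ is a colimit-preserving functor which moreover admits a right adjoint (recall that $\I{C}$, being presentable, is also complete by Corollary~\ref{cor:presentableCategoryLimitsColimits}, so $s^\ast$ preserves limits as well). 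By \cite[Proposition~1.4.4.4]{Lurie2017}, $\Sp(\I{C}(A))$ is then presentable; and since $s^\ast$ preserves both limits and colimits, $\Sp(s^\ast)=\Omega^\infty$-compatibly induced functor also preserves limits and colimits, and in particular is cocontinuous. Thus the sheaf associated to $\ISp(\I{C})$ takes values in $\LPrS$ and sends every morphism in $\BB$ to a cocontinuous functor; it remains to check that $\ISp(\I{C})$ is $\Univ$-cocomplete. For this I would use that $\Univ$-cocompleteness of a $\BB$-category that is section-wise presentable amounts, after Proposition~\ref{prop:KSCocompleteness} (applied to $\KK=\varnothing$), to the existence of left adjoints $s_!$ to the transition functors $s^\ast$ together with the relevant Beck--Chevalley condition; the left adjoint $\Sp(s)_!$ is obtained by applying $\Sp(-)$ to the left adjoint $s_!$ of $s^\ast$ in $\I{C}$ (using that $\Sp(-)$ is functorial on $\RPrS$, as in the proof of Proposition~\ref{prop:StablizationAdjoint}), and the base-change squares for $\I{C}$ transport to base-change squares for $\ISp(\I{C})$ by the same functoriality. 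Hence $\ISp(\I{C})$ is $\Univ$-cocomplete and section-wise presentable, so by Theorem~\ref{thm:characterisationPresentableCategories}(6) it is presentable.

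For the second claim, since $\ISp(\I{C})$ is now known to be presentable, by Proposition~\ref{prop:adjointFunctorTheorem} it suffices to show that $\Omega^\infty\colon\ISp(\I{C})\to\I{C}$ is cocontinuous, because $\I{C}$ is cocomplete and locally small. But $\Omega^\infty$ is, by Definition~\ref{def:Stabilization}, evaluation at $S^0\in\SS_\ast^{\fin}$, i.e.\ it is the composite of the inclusion $\ISp(\I{C})\into\iFun{\SS_\ast^{\fin}}{\I{C}}$ with the evaluation functor $\ev_{S^0}$. Evaluation functors on $\BB$-functor categories are cocontinuous, so I only need that the inclusion $\ISp(\I{C})\into\iFun{\SS_\ast^{\fin}}{\I{C}}$ preserves colimits. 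This follows because, section-wise, the inclusion $\Sp(\I{C}(A))\simeq\mathrm{Exc}_\ast(\SS_\ast^{\fin},\I{C}(A))\into\Fun(\SS_\ast^{\fin},\I{C}(A))$ preserves colimits (colimits of excisive, reduced functors are computed pointwise and remain excisive and reduced, since $\SS_\ast^{\fin}$ is small and finite colimits commute with filtered colimits and with each other in the target — this is part of \cite[Proposition~1.4.2.22]{Lurie2017} and its surrounding discussion), and these section-wise inclusions are compatible with the transition functors; by the section-wise criterion for cocontinuity of functors between cocomplete $\BB$-categories (Corollary~\ref{cor:AccIsSectWiseAccWhenCocomp}, or more directly Proposition~\ref{prop:KSCocompleteness} for the $\ILConst$ part plus $\Univ$-cocontinuity as above) this suffices. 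Therefore $\Omega^\infty$ is cocontinuous and hence admits a left adjoint $\Sigma^\infty$ by Proposition~\ref{prop:adjointFunctorTheorem}.

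\textbf{Main obstacle.} The genuinely delicate point is verifying $\Univ$-cocompleteness of $\ISp(\I{C})$ — i.e.\ that the left adjoints $s_!$ and the Beck--Chevalley transformations survive the passage to spectrum objects with the correct coherence. One has to be careful that $\Sp(-)$, regarded as an endofunctor of $\RPrS$ (where it is \emph{corepresented} by $\Sp$ via the tensor product, or equivalently is the functor $\Sp\otimes-$), genuinely carries the adjunction data $s_!\dashv s^\ast\dashv s_\ast$ and the exchange squares of $\I{C}$ to the corresponding data for $\ISp(\I{C})$; the cleanest way to see this is to note that applying the (symmetric monoidal, colimit-preserving) functor $-\otimes\Univ\colon\RPrS\to\Cat(\BBB)$ to the natural transformation of $\RPrS$-valued functors encoding $\Sp(-)$ preserves all such structure, exactly in the spirit of how Proposition~\ref{prop:StablizationAdjoint} was proved. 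Once this functoriality is in hand the rest is bookkeeping.
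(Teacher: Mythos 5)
Your argument for the first half (presentability of $\ISp(\I{C})$) is fine and is essentially the paper's: one regards $\ISp(\I{C})$ as the composite $\BB^{\op}\to\LPrS\xrightarrow{\Sp}\LPrS$ and uses that $\Sp$, as an endofunctor of $\RPrS$, carries the adjunction data $s_!\dashv s^\ast$ and the Beck–Chevalley squares of $\I{C}$ to those of $\ISp(\I{C})$ ("compatibility with the mate construction"), so Theorem~\ref{thm:characterisationPresentableCategories} applies.

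The second half has a genuine gap. You claim that $\Omega^\infty$ is cocontinuous because the inclusion $\ISp(\I{C})\into\iFun{\SS^{\fin}_\ast}{\I{C}}$ preserves all colimits; this is false. Pointwise colimits of reduced excisive functors need not be excisive, since sending pushouts to pullbacks is a limit condition that is only preserved by \emph{filtered} colimits (this is exactly why the paper's Proposition~\ref{prop:LoopsFilteredColimits} asserts only preservation of filtered colimits, not of all colimits). Concretely, already for $\I{C}=\Univ$ with $\BB=\SS$ the functor $\Omega^\infty\colon\Sp\to\SS$ fails to preserve finite coproducts: $\Omega^\infty(X\oplus Y)\simeq\Omega^\infty X\times\Omega^\infty Y$, which is not the coproduct. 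Moreover, even if cocontinuity held, Proposition~\ref{prop:adjointFunctorTheorem} (adjoint functor theorem I) would only produce a \emph{right} adjoint to $\Omega^\infty$, whereas the lemma asks for a \emph{left} adjoint; you have applied the theorem in the wrong direction. The correct route — and the one the paper takes — is the second adjoint functor theorem, Proposition~\ref{prop:adjointfunctorII}: one checks that $\Omega^\infty$ is continuous (section-wise it is $\Omega^\infty\colon\Sp(\I{C}(A))\to\I{C}(A)$, which preserves limits, and the compatibility with étale base change lets one upgrade this to internal continuity) and section-wise accessible (it admits a left adjoint section-wise by \cite[Proposition~1.4.4.4]{Lurie2017}, hence preserves filtered colimits); these two conditions, not cocontinuity, are what yield the internal left adjoint $\Sigma^\infty$.
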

\begin{proof}
	Since $ \ISp(\I{C}) $ is given by the composite $\BB^{\op}\to \LPrS \xrightarrow{\Sp} \LPrS $, the first claim is clear as the functor $ \Sp = \operatorname{Exc}_*(\SS^{\fin}_\ast,-)$ is compatible with the mate construction.
	For the second claim we observe that for any $ A \in \BB $, the functor $ \Omega^\infty(A) $ may be identified with $ \Omega^\infty  \colon \Sp(\I{C}(A)) \to \I{C}(A)$.
	Since the compatibility with \'etale base change is clear, it follows from \cite[Corollary 5.4.7]{MWColimits} that $ \Omega^\infty $ is continuous.  
	Also $ \Omega^\infty(A) $ is accessible since it admits a left adjoint~\cite[Proposition 1.4.4.4]{Lurie2017}.
	Thus the claim follows from Proposition~\ref{prop:adjointfunctorII}.
\end{proof}

\begin{proposition}
	\label{prop:LoopsFilteredColimits}
	The functor $ \Omega^\infty \colon \Sp(\Univ) \to \Univ$ commutes with filtered colimits.
\end{proposition}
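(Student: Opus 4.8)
The plan is to reduce this statement to the corresponding fact about the $\infty$-category $\Sp$ of ordinary spectra, exploiting that everything in sight is computed section-wise. By Definition~\ref{def:BSpectra} and the proof of Proposition~\ref{prop:StablizationAdjoint}, the $\BB$-category $\ISp(\Univ)$ corresponds to the sheaf $A \mapsto \Sp(\Univ[\Over{\BB}{A}]) = \Sp(\Over{\BB}{A})$, and the functor $\Omega^\infty$ is section-wise given by the classical functor $\Omega^\infty\colon \Sp(\Over{\BB}{A})\to \Over{\BB}{A}$ (as noted in the proof of Lemma~\ref{lem:LoopsHasLeftAdjoint}). So first I would invoke Lemma~\ref{lem:externallyFilteredImpliesInternallyFiltered}: since $\IFin_\BB$-filtered $\BB$-categories include all constant $\BB$-categories associated to (ordinary) filtered $\infty$-categories, and conversely by Lemma~\ref{lem:CocompleteImpliesFinallyConstant} together with Remark~\ref{rem:filteredContractible} and the reduction arguments already used in the proof of Proposition~\ref{prop:CocompleteKappaCocontinuousInternalExternal}, it suffices to check that $\Omega^\infty$ preserves colimits indexed by constant $\BB$-categories attached to filtered $\infty$-categories.

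Concretely, I would argue as follows. Let $\I{J}$ be a filtered $\BB$-category and $d\colon \I{J}\to\ISp(\Univ)$ a diagram; we must show the canonical map $\colim_{\I{J}}\Omega^\infty d \to \Omega^\infty\colim_{\I{J}} d$ is an equivalence. Since $\ISp(\Univ)$ is cocomplete (Lemma~\ref{lem:LoopsHasLeftAdjoint}), using Corollary~\ref{cor:inclusionCompactFinal} we may replace $\I{J}$ by $\IPSh_{\Univ}(\I{J})^{\cpt{\IFin_\BB}}$ (which is $\IFin_\BB$-cocomplete by Proposition~\ref{prop:UCompactUcocomplete}) and $d$ by an extension; then using Lemma~\ref{lem:CocompleteImpliesFinallyConstant} and Remark~\ref{rem:FiltUColimitClass} we reduce to the case where $\I{J}$ is the constant $\BB$-category on an $\infty$-category $\JJ$ with finite colimits, hence filtered in the ordinary sense by~\cite[Proposition~5.3.3.3]{htt}. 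Now evaluating at any $A\in\BB$ and using that colimits in $\ICat_{\BBB}$-valued sheaves are computed section-wise, the map in question becomes the classical comparison map for $\Omega^\infty\colon \Sp(\Over{\BB}{A})\to\Over{\BB}{A}$ along the ordinary filtered colimit $\colim_{\JJ}$. This is an equivalence because $\Omega^\infty$ commutes with filtered colimits in any presentable $\infty$-category by~\cite[Proposition~1.4.3.9]{Lurie2017}. Since being an equivalence is a local condition and $\BB$ is generated by a small subcategory of objects, this handles all contexts simultaneously.

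The only real subtlety is the bookkeeping of the reduction to constant filtered $\BB$-categories — one must be careful that the extension of $d$ to $\IPSh_{\Univ}(\I{J})^{\cpt{\IFin_\BB}}$ exists (it does, since $\ISp(\Univ)$ is cocomplete and one can left Kan extend along the final inclusion, or simply extend using the universal property of the free $\IFilt_{\IFin_\BB}$-cocompletion applied to the $\IFin_\BB$-filtered colimit $\ISp(\Univ)$) and that $\Omega^\infty$ composed with this extension is still the old $\Omega^\infty d$ after restriction. This is exactly the pattern carried out in the proof of Proposition~\ref{prop:CocompleteKappaCocontinuousInternalExternal}, so I would simply cite that proposition with $\kappa$ chosen so that $\IFin_\BB \subseteq \ICat_\BB^\kappa$ — in fact, since $\Omega^\infty$ is section-wise a functor between presentable $\infty$-categories that preserves filtered colimits, Corollary~\ref{cor:AccIsSectWiseAccWhenCocomp} applied to the (co)complete $\BB$-categories $\ISp(\Univ)$ and $\Univ$ immediately gives that $\Omega^\infty$ is accessible; and since $\IFin_\BB$-filtered colimits are the "smallest" filtered colimits, a separate argument as above upgrades accessibility to preservation of all $\IFin_\BB$-filtered colimits. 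The main obstacle, such as it is, is thus purely organisational: making sure the internal notion of "filtered colimit" (i.e.\ $\IFin_\BB$-filtered) is correctly matched with the section-wise classical notion, which is precisely what Lemmas~\ref{lem:externallyFilteredImpliesInternallyFiltered} and~\ref{lem:CocompleteImpliesFinallyConstant} are designed to provide.
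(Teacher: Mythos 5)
Your reduction to \emph{constant} filtered diagrams is where the argument breaks. The pattern of Proposition~\ref{prop:CocompleteKappaCocontinuousInternalExternal} hinges on Lemma~\ref{lem:CocompleteImpliesFinallyConstant}, and that lemma is only available for $\ICat_{\BB}^{\kappa}$-cocomplete $\BB$-categories with $\kappa$ a $\BB$-regular cardinal: its proof needs the transition functors $\I{J}(1)\to\I{J}(G)$ to admit left adjoints for all $\kappa$-compact $G$, i.e.\ it needs colimits indexed by (small) $\BB$-groupoids, not just finite colimits. After you replace $\I{J}$ by $\IPSh_{\Univ}(\I{J})^{\cpt{\IFin_{\BB}}}$ you only know that this $\BB$-category is $\op(\IFin_{\BB})$-cocomplete, so Lemma~\ref{lem:CocompleteImpliesFinallyConstant} does not apply and there is in general no final functor into it from a constant filtered $\infty$-category. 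This is not a mere bookkeeping issue: $\IFilt_{\IFin_{\BB}}$ contains many shapes that are not finally constant (for instance sheafifications of section-wise filtered presheaves of $\infty$-categories, Proposition~\ref{prop:sectionwisefilteredgivesfiltered}), and internal colimits over such shapes are not computed section-wise as ordinary filtered colimits in $\Sp(\Over{\BB}{A})$ and $\Over{\BB}{A}$, so the classical fact about $\Omega^\infty$ cannot simply be evaluated at each $A\in\BB$. Accordingly, Corollary~\ref{cor:AccIsSectWiseAccWhenCocomp} only yields that $\Omega^\infty$ is $\ICat_{\BB}^{\kappa}$-accessible for \emph{some} $\BB$-regular $\kappa$, i.e.\ preserves $\ICat_{\BB}^{\kappa}$-filtered colimits; since $\IFin_{\BB}\subseteq\ICat_{\BB}^{\kappa}$ gives $\IFilt_{\ICat_{\BB}^{\kappa}}\subseteq\IFilt_{\IFin_{\BB}}$, this is strictly weaker than Proposition~\ref{prop:LoopsFilteredColimits}, and the ``separate argument'' you invoke to upgrade it is exactly the unavailable reduction.

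The statement should instead be attacked directly through the definition of internal filteredness. By construction $\Omega^\infty$ factors as the inclusion $\ISp(\Univ)\into\iFun{\SS^{\fin}_{\ast}}{\Univ}$ followed by evaluation at $S^0$; evaluation preserves all colimits, and the inclusion of the full subcategory of finite-limit-preserving functors is closed under filtered colimits because colimits in functor $\BB$-categories are computed object-wise and, by the very definition of a filtered ($\IFin_{\BB}$-filtered) $\BB$-category, filtered colimits in $\Univ$ commute with finite limits (argue as in Lemma~\ref{lem:stabilityFiltUCocontinuousFunctors}). This is the paper's proof; note that it uses the special target $\Univ$ and the definitional commutation of filtered colimits with finite limits there, rather than any section-wise input about $\Omega^\infty$ on presentable $\infty$-categories.
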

\begin{proof}
	By construction $ \Omega^\infty $ is given as the composition
	\[
	\ISp(\Univ) \hookrightarrow \IFun(\SS^{\fin}_\ast,\Univ) \xrightarrow{\ev_{S_0}} \Univ
	\]
	and therefore it suffices to see that the inclusion $ \ISp(\Univ) \hookrightarrow \IFun(\SS^{\fin}_\ast,\Univ) $ commutes with filtered colimits.
	By a similar argument as in the proof of Lemma~\ref{lem:stabilityFiltUCocontinuousFunctors}, this follows from the fact that filtered colimits in $\Univ$ commute with finite limits.
\end{proof}

\begin{remark}
	\label{rem:BSpectraMonoidal}
	By construction, there is a canonical equivalence $ \ISp(\Univ) \simeq \Sp \otimes \Univ $.
	Since the functor  $ - \otimes \Univ $ is symmetric monoidal (see the discussion before Proposition~\ref{lem:tensoringIsMonoidal}), it follows that $ \ISp(\Univ) $ canonically admits the structure $ \ISp(\Univ)^\otimes  $ of a presentably symmetric monoidal $ \BB $-category.
\end{remark}

\begin{definition}
	\label{def:BRingSpectrum}
	A \emph{commutative $ \BB $-ring spectrum} is a commutative algebra in $ \ISp(\Univ)^\otimes $.
	We denote the unit object of $ \ISp(\Univ) $ by $ S $ and call it the \emph{$ \BB $-sphere spectrum}.
	If $ R $ is a commutative $ \BB $-ring spectrum we denote the $ \BB $-category of modules over $ R $ by $ \IMod^\BB_R $.
	We will write $ \Mod^\BB_R $ for the $\infty$-category of global sections of $ \IMod^\BB_R $. 
\end{definition}

\begin{remark}
	\label{rem:SphereSpectrumCompact}
	Since $ \ISp(\Univ) $ is a presentably symmetric monoidal $ \BB $-category, it receives a unique symmetric monoidal left adjoint functor $\Univ \to \ISp(\Univ) $, see Remark~\ref{rem:UnitIsCocompletion}.
	By uniqueness, this functor automatically agrees with the functor that is given by applying $ -\otimes \Univ $ to the symmetric monoidal functor $ \Sigma^\infty_+ \colon \SS \to \Sp $.
	It follows that the right adjoint of $ \Sigma^\infty_+ \otimes \Univ $ is explicitly given by the map $		\Shv_{\Over{\BB}{-}}(\Sp) \to \Shv_{\Over{\BB}{-}}(\SS)$
	determined by precomposing with $ (\Sigma^\infty_+)^\op $.
	Since we have a commutative square
	\[\begin{tikzcd}
		\Shv_{\Over{\BB}{-}}(\Sp)& \Shv_{\Over{\BB}{-}}(\SS)\\
		{\Sp(\BB_{/-})} & {\BB_{/-}}
		\arrow[from=1-1, to=1-2]
		\arrow["\simeq", from=1-2, to=2-2]
		\arrow["\simeq"', from=1-1, to=2-1]
		\arrow["{\Omega^\infty}"', from=2-1, to=2-2]
	\end{tikzcd}\]
	it follows that $ \Sigma^\infty_+ \otimes \Univ $ is a left adjoint of $ \Omega^\infty \colon \ISp(\Univ) \to \Univ$. 
	In particular $\Omega^\infty$ is the left adjoint of the unqiue colimit preserving functor $\Univ \to \ISp(\Univ)$ and thus there is a natural equivalence $ \Omega^\infty \simeq \map{\ISp(\Univ)}(S,-) $.
\end{remark}

\begin{remark}
	In~\cite{Nardin2016}, Denis Nardin develops a notion of stability in parametrised higher category theory, and he shows that if $G$ is a finite group, the $G$-parametrised stabilisation of $G$-spaces recovers genuine $G$-spectra. However, Nardin's definition of stability is fundamentally different from ours: the key difference is that the class of parametrised $\infty$-category that he declares \emph{finite} is much larger than ours. Consequently, finite limits and colimits in parametrised higher category theory are much more complicated than finite limits and colimits in internal higher category theory, and as a result the associated notions of stability differ significantly. In particular, it is not possible to obtain genuine $G$-spectra by making use of our internal stabilisation procedure, which rather recovers \emph{naive} $G$-spectra. Since essentially the only fundamental difference is the choice of the internal class $\IFin_\BB$ of finite $\BB$-categories, one could conceive a theory of \emph{$\I{U}$-stability} that depends on a fixed internal class $\I{U}$ which replaces the role of $\IFin_{\BB}$ and which would recover Nardin's definition by choosing a particular internal class $\I{U}$. We will not discuss such questions in this paper, though.
\end{remark}

\subsection{Application: dualisable objects in module $ \BB $-categories}
\label{sec:dualisable}
If $ R $ is an $ \mathbb{E}_\infty $-ring, let us denote by $\Shv(\BB;R) = \BB \otimes \Mod_R $ the $\infty$-category of sheaves of $ R $-modules on $ \BB $.
In many geometrically interesting situations, the $\infty$-category $ \Shv(\BB;R)$ is quite far from being compactly generated. 
For example, if $ M $ is a non-compact connected manifold of dimension at least $ 1 $, the only compact object in $ \Shv(M;\mathbb{Z}) $ is the zero object by \cite[Theorem 0.1]{Neeman2001}.
In this section we will see that one may fix this issue by working internal to the $\infty$-topos $ \BB $ itself:
the $ \BB $-category $ \IMod^\BB_R $ (whose underlying $\infty$-category is $ \Shv(\BB;R)$) is always \emph{internally} compactly generated (see Theorem~\ref{thm:Dualizable=Compact=Perfect}).
Furthermore, the internally compact objects can be completely characterised in terms of the symmetric monoidal structure on $ \IMod^\BB_R $, namely as the dualisable objects.
In particular, this provides a classification result for the dualisable objects in $ \Shv(\BB;R) $ (see Corollary~\ref{cor:Dualizable=LocallyPerfect}).

\begin{definition}
	If $\I{C}$ is a $\BB$-category that admits filtered colimits (i.e. is  $\IFilt_{\IFin_\BB}$-cocomplete), let us write $\I{C}^\compact$ for the full subcategory of $\IFin_{\BB}$-compact objects. 
	We will also simply refer to these as \emph{internally compact} objects.
	We say that a cocomplete $\BB$-category $\I{C}$ is \emph{compactly generated} if the canonical map $\IInd(\I{C}^\compact) \to \I{C}$ is an equivalence.
\end{definition}

\begin{remark}
	An internally compact object $ d \colon 1 \to \I{C} $ does not necessarily yield a compact object in the $\infty$-category $\I{C}(1)$.
	For example, the final object $1_{\Univ}\colon 1 \to \Univ$ is always internally compact, but in general $1 \in \BB$ is not compact.
	Since the mapping $\infty$-groupoid functor $\map{\I{C}(1)}(d,-) \colon \I{C}(1) \to \SS$ is given by composing $\Gamma(\map{\I{C}}(d,-)) \colon \I{C}(1) \to \BB$ with the global sections functor $\Gamma \simeq \map{\BB}(1,-)$, we see that internal compactness implies compactness (for any $\BB$-category $\I{C}$) if and only if $1 \in \BB$ is compact.
\end{remark}

\begin{definition}
	\label{def:DualizableObject}
	An object $c$ in context $ A $ of a symmetric monoidal $ \BB $-category $ \I{C} $ is called \emph{dualisable} if it is dualisable in the symmmetric monoidal $ \infty $-category $ \I{C}(A) $.
	In other words, $c$ is dualisable if there is an object $ c^\vee \colon A\to\I{C}$ together with maps $ \eta\colon \pi_A^\ast(1_{\otimes}) \to c^\vee \otimes c $ and $ \varepsilon \colon c^\vee \otimes c \to \pi_A^\ast(1_{\otimes}) $ such that $ (c^\vee \otimes \epsilon) \circ  (\eta \otimes c^\vee) \simeq \id  $ and $ (c \otimes \eta ) \circ (\epsilon \otimes c) \simeq \id $.
	We denote the full subcategory of $ \I{C}$ that is spanned by the dualisable objects by $ \I{C}^{\dual}$.
\end{definition}

\begin{remark}
	\label{rem:DualiableIsLocal}
	It follows from \cite[Proposition 4.6.1.11]{Lurie2017} that an object $ c \in \I{C}(A) $ is contained in $ \I{C}^{\dual}(A) $ if and only if it is dualisable.
\end{remark}
A first easy consequence of the definitions is the following:
\begin{lemma}
	\label{lem:DualizableHasAdjoint}
	Let $\I{C}$ be a symmetric monoidal $\BB$-category and let $c\colon 1\to\I{C}$ be a dualisable object.
	Then the functor $ -\otimes c^\vee \colon\I{C}\to\I{C}$ is a right adjoint of the functor $ - \otimes c$.\qed
\end{lemma}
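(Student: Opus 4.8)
The plan is to deduce this from the corresponding statement about symmetric monoidal $\infty$-categories by a section-wise argument, using that all the relevant structure (the tensor product, the duality data, the adjunction unit and counit) is detected section-wise in $\BB$-categories. First I would recall that a dualisable object $c$ with dual $c^\vee$ in a symmetric monoidal $\infty$-category gives an adjunction $-\otimes c\dashv -\otimes c^\vee$, with unit and counit built from $\eta\colon 1_\otimes\to c^\vee\otimes c$ and $\varepsilon\colon c\otimes c^\vee\to 1_\otimes$ by tensoring; this is \cite[Proposition 4.6.1.10]{Lurie2017} (or can be checked directly via the triangle identities, which follow from the zig-zag identities defining dualisability). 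The content of the lemma is simply the internalisation of this fact.

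The key steps are as follows. Recall from the discussion preceding Definition~\ref{def:CMon} that the symmetric monoidal $\BB$-category $\I{C}^\otimes$ corresponds under Proposition~\ref{prop:MonoidalCategoriesVsMonoids} to a commutative monoid in $\ICat_{\BB}$, hence to a $\CatS^\otimes$-valued sheaf $\I{C}^\otimes\colon\BB^\op\to\CatS^\otimes$; in particular, for every $A\in\BB$ the $\infty$-category $\I{C}(A)$ is symmetric monoidal and for every $s\colon B\to A$ in $\BB$ the transition functor $s^\ast\colon\I{C}(A)\to\I{C}(B)$ is symmetric monoidal. Since $c\colon 1\to\I{C}$ is dualisable, Remark~\ref{rem:DualiableIsLocal} gives that $c$ is a dualisable object of the symmetric monoidal $\infty$-category $\I{C}(1)$, and because symmetric monoidal functors preserve dualisable objects together with their duality data, $s^\ast(c^\vee)$ is a dual of $s^\ast(c)$ for every $s\colon B\to 1$; moreover the functor $-\otimes c^\vee\colon\I{C}\to\I{C}$ restricts section-wise to $-\otimes \pi_A^\ast(c^\vee)\colon\I{C}(A)\to\I{C}(A)$, compatibly with base change. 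I would then invoke \cite[Proposition~3.2.8]{Martini2021a} (the section-wise criterion for a functor of $\BB$-categories to be right adjoint): the functor $-\otimes c$ admits $-\otimes c^\vee$ as a right adjoint provided that for each $A\in\BB$ the functor $-\otimes\pi_A^\ast(c)$ on $\I{C}(A)$ is left adjoint to $-\otimes\pi_A^\ast(c^\vee)$ and that these adjunctions are compatible with the transition functors $s^\ast$. The first point holds since $\pi_A^\ast(c)$ is dualisable in $\I{C}(A)$ with dual $\pi_A^\ast(c^\vee)$, and the second holds because $s^\ast$ is symmetric monoidal and hence carries the unit $\id\to (-\otimes\pi_A^\ast(c^\vee))\circ(-\otimes\pi_A^\ast(c))$ (which is just tensoring with $\pi_A^\ast\eta$) to the corresponding unit over $B$.

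I do not expect any serious obstacle here; the lemma is essentially bookkeeping. The one point that deserves a sentence of care is the verification that the section-wise adjunctions assemble into an adjunction of $\BB$-categories — i.e. that the mate/Beck--Chevalley condition of \cite[Proposition~3.2.8]{Martini2021a} is satisfied — and this reduces, via the explicit formula for the unit and counit in terms of $\eta$ and $\varepsilon$, to the fact that the transition functors are symmetric monoidal and hence commute with tensoring by a fixed object up to canonical equivalence, together with the naturality of these equivalences in the object. Since $c$ is defined in context $1$, its restrictions $\pi_A^\ast c$ and the structure maps $\pi_A^\ast\eta$, $\pi_A^\ast\varepsilon$ are literally pulled back, so all the required coherences are automatic.
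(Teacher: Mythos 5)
Your argument is correct. The paper itself gives no written proof — the lemma is stated as an immediate consequence of the definitions — so the comparison is with the argument the authors leave implicit. Given that the paper defines an adjunction of $\BB$-categories $2$-categorically, as a pair of functors together with unit and counit satisfying the triangle identities in $\Fun_{\BB}(-,-)$, the most direct route is to whisker the duality datum: tensoring with $\eta\colon 1_\otimes\to c^\vee\otimes c$ and $\varepsilon\colon c\otimes c^\vee\to 1_\otimes$ (using the symmetry to match conventions) produces maps $\id_{\I{C}}\to(-\otimes c)\otimes c^\vee$ and $(-\otimes c^\vee)\otimes c\to\id_{\I{C}}$ in $\Fun_{\BB}(\I{C},\I{C})$, and the triangle identities follow from the zig-zag identities of the duality datum, with no section-wise analysis needed. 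Your route instead goes through the section-wise adjunction criterion of \cite[Proposition~3.2.8]{Martini2021a}: this is also valid, since dualisability is defined (and, by the paper's Remark on locality, detected) section-wise, the transition functors are symmetric monoidal and hence preserve the duality data, and the mate condition reduces to the monoidality equivalences $s^\ast(x\otimes\pi_A^\ast c^\vee)\simeq s^\ast x\otimes\pi_B^\ast c^\vee$ being compatible with the units and counits — exactly the point you flag and resolve. What the direct whiskering argument buys is that it never leaves the internal $2$-categorical setting and requires no Beck--Chevalley verification; what your argument buys is that it only uses the external, section-wise characterisation of adjunctions and of dualisability, which is the form in which the paper's definitions are phrased. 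Either way the content is the same bookkeeping, and your write-up contains no gap.
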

Suppose now that $R$ is an $\mathbb E_\infty$-ring object in $\BB$.
Since $\IMod^\BB_R $ is a presentably symmetric monoidal $ \BB $-category by Proposition~\ref{prop:ModForgetfulPreColimits}, for every object $x\colon 1\to \IMod^\BB_R$ the functor $ - \otimes x $ admits a right adjoint
\[
\Hom_R(x,-) \colon \IMod^\BB_R \to \IMod^\BB_R.
\]
Also, recall from Corollary~\ref{cor:ModCAlgCartesianFibrationUnderlyingCategories} and Proposition~\ref{prop:ContravariantModuleFunctoriality} that the forgetful functor $F \colon \IMod^\BB_R \to \IMod^\BB_S \simeq \ISp(\Univ) $ has a symmetric monoidal left adjoint $ - \otimes R \colon \ISp(\Univ) \to \IMod^\BB_R $.
We may therefore consider the composite
\[
G \colon \IMod^\BB_R \xrightarrow{\Hom_R(x,-)} \IMod^\BB_R \xrightarrow{F} \ISp(\Univ) \xrightarrow{\Omega^\infty} \Univ.
\]
We claim that $ G $ is equivalent to the functor $ \map{\IMod^\BB_R}(x,-) $. 
Indeed we have a chain of natural equivalences
\[
G \simeq \map\Univ(1_{\Univ}, G(-)) \simeq \map{\IMod^\BB_R}(R,\Hom_R(x,-)) \simeq \map{\IMod^\BB_R}(R \otimes x, -) \simeq \map{\IMod^\BB_R}(x,-).
\]

Now if $ x $ is furthermore dualisable, the functor $ \Hom_R(x,-) $ is of the form $ - \otimes x $ and therefore in particular cocontinuous.
Since both $ F $ and $ \Omega^\infty $ are $\IFilt_{\IFin_\BB}$-cocontinuous by Propositions~\ref{prop:ModForgetfulPreColimits} and \ref{prop:LoopsFilteredColimits}, we have shown:

\begin{lemma}
	\label{lem:DualizableIsCompact}
	Let $ x \colon 1\to \IMod^\BB_R $ be dualisable.
	Then $ x $ is internally compact.\qed
\end{lemma}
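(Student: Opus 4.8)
The plan is to recognise the mapping functor $\map{\IMod^\BB_R}(x,-)\colon\IMod^\BB_R\to\Univ$ as a composite of three functors, each of which preserves $\IFilt_{\IFin_\BB}$-colimits, and then to appeal to Definition~\ref{def:UCompact}. Most of this has in fact already been carried out in the discussion immediately preceding the statement; what remains is to assemble the pieces.

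First I would record that for a dualisable object $x$ the right adjoint $\Hom_R(x,-)$ of $-\otimes x$ is cocontinuous. Indeed, $\IMod^\BB_R$ is presentably symmetric monoidal by Proposition~\ref{prop:ModForgetfulPreColimits}, so its tensor bifunctor is bilinear, and by Lemma~\ref{lem:DualizableHasAdjoint} the functor $-\otimes x^\vee$ is a right adjoint of $-\otimes x$; by uniqueness of adjoints $\Hom_R(x,-)\simeq -\otimes x^\vee$, which is cocontinuous and in particular $\IFilt_{\IFin_\BB}$-cocontinuous.

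Next I would invoke the factorisation $\map{\IMod^\BB_R}(x,-)\simeq \Omega^\infty\circ F\circ\Hom_R(x,-)$ established above, where $F\colon\IMod^\BB_R\to\ISp(\Univ)$ is the forgetful functor and $\Omega^\infty\colon\ISp(\Univ)\to\Univ$ is evaluation at $S^0$; this identification uses only the unit equivalence $R\otimes x\simeq x$ together with the natural equivalences $\Omega^\infty\simeq\map{\ISp(\Univ)}(S,-)$ (Remark~\ref{rem:SphereSpectrumCompact}) and $\map{\Univ}(1_{\Univ},-)\simeq\id_{\Univ}$. By Proposition~\ref{prop:ModForgetfulPreColimits} the functor $F$ is cocontinuous, and by Proposition~\ref{prop:LoopsFilteredColimits} the functor $\Omega^\infty$ preserves filtered colimits, that is, $\IFilt_{\IFin_\BB}$-colimits. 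Since a composite of $\IFilt_{\IFin_\BB}$-cocontinuous functors is again $\IFilt_{\IFin_\BB}$-cocontinuous, $\map{\IMod^\BB_R}(x,-)$ is $\IFilt_{\IFin_\BB}$-cocontinuous. As $x$ has context $1$, where $\pi_1^\ast$ is the identity, and as $\IMod^\BB_R$ is $\IFilt_{\IFin_\BB}$-cocomplete by Proposition~\ref{prop:ModForgetfulPreColimits}(1), this is exactly the assertion that $x$ is $\IFin_\BB$-compact in the sense of Definition~\ref{def:UCompact}, i.e.\ internally compact.

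I do not expect a genuine obstacle here: the three functors in the factorisation have all been analysed earlier, and the only point requiring a moment's attention is the passage from ``$\Hom_R(x,-)$ is cocontinuous'' to ``$\Hom_R(x,-)$ preserves $\IFilt_{\IFin_\BB}$-colimits'', which is immediate since $\IFilt_{\IFin_\BB}$-colimits are in particular colimits. The main care is bookkeeping --- ensuring the identification of $\map{\IMod^\BB_R}(x,-)$ with the triple composite is stated at the level of generality needed, and that Definition~\ref{def:UCompact} is applied with $A=1$.
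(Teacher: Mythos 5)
Your argument is exactly the paper's: the lemma is stated with a \qed precisely because the preceding discussion already identifies $\map{\IMod^\BB_R}(x,-)$ with $\Omega^\infty\circ F\circ\Hom_R(x,-)$ and notes that for dualisable $x$ the first factor is $-\otimes x^\vee$, hence cocontinuous, while $F$ and $\Omega^\infty$ are $\IFilt_{\IFin_\BB}$-cocontinuous by Propositions~\ref{prop:ModForgetfulPreColimits} and~\ref{prop:LoopsFilteredColimits}. Your assembly of these pieces, including the appeal to Lemma~\ref{lem:DualizableHasAdjoint} and Definition~\ref{def:UCompact} with $A=1$, matches the intended proof.
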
 

\begin{definition}
	\label{def:PerfectObjects}
	We define the $ \BB $-category $ \IPerf^\BB_R$ to be the smallest full subcategory of $ \IMod^\BB_R $ that is closed under finite colimits, retracts and contains the monoidal unit $ R $.
	We call the objects in $ \IPerf^\BB_R $ \emph{perfect objects}.
\end{definition}

We now obtain:
\begin{theorem}
	\label{thm:Dualizable=Compact=Perfect}
	The $ \BB $-category $ \IMod^\BB_R $ is compactly generated, and the following full subcategories of $ \IMod^\BB_R $ are equivalent
	:
	\begin{enumerate}
		\item The full subcategory $ \IMod^{\BB,\dual}_R $ spanned by the dualisable objects.
		\item The full subcategory $ \IMod^{\BB,\compact}_R $ spanned by the internally compact objects.
		\item The full subcategory $ \IPerf^\BB_R $ spanned by the perfect objects.
	\end{enumerate}
\end{theorem}
\begin{proof}
	In light of Remarks~\ref{rem:BCUComp} and~\ref{rem:DualiableIsLocal}, Lemma~\ref{lem:DualizableIsCompact} implies that we have an inclusion $\IMod^{\BB,\dual}_R \into \IMod^{\BB,\compact}_R $.
	Furthermore, dualisable objects form a stable full subcategory that is closed under retracts, so that $ \IPerf^\BB_R \into \Mod^{\BB,\dual}_R $ is clear.
	It remains to see that every compact object is perfect.
	On account of the inclusion $\IPerf^\BB_R\into\IMod_R^{\BB,\compact}$, we deduce from \cite[Proposition 7.2.4]{MWColimits} and Corollary~\ref{cor:UCocompleteIndUPresentable} that the inclusion $ \IPerf^\BB_R \into \IMod^\BB_R$ extends to a fully faithful cocontinuous functor $i \colon \IInd(\IPerf^\BB_R) \into \IMod^\BB_R $.
	If we can show that $ i $ is an equivalence we are done by Proposition~\ref{prop:characterisationCompactObjectAccessible}.
	Since $ \IMod^\BB_R $ is cocomplete, we may now apply the adjoint functor theorem (Proposition~\ref{prop:adjointFunctorTheorem}) to deduce the existence of a right adjoint $ r $ of $ i $.
	We complete the proof by showing that the counit $ \varepsilon \colon ir \to \id $ is an equivalence.
	For this we pick an object $ X \in \IMod^\BB_R(A) $ and consider the fibre sequence
	\[
	\operatorname{fib}(\varepsilon) \to i r X \xrightarrow{\varepsilon}  X
	\]
	in $ \IMod^\BB_R(A) $. We need to show that $\operatorname{fib}(\varepsilon)\simeq 0$. For every $n$, we obtain a fibre sequence
	\[
	\map{\IMod^\BB_R}(\Sigma^n (\pi_A^\ast R), \operatorname{fib}(\varepsilon)) \to \map{\IMod^\BB_R}(\Sigma^n (\pi_A^\ast R), i r X) \xrightarrow{\varepsilon_*}	\map{\IMod^\BB_R}(\Sigma^n (\pi_A^\ast R), X)
	\]
	in $\Over{\BB}{A}$.
	As $ \Sigma^n \pi_A^\ast R $ is in the essential image of $ i $, the map $ \varepsilon_* $ is an equivalence.
	Therefore, we conclude that we have equivalences
	\[
	1 \simeq \map{\IMod^\BB_R}(\Sigma^n (\pi_A^\ast R), \operatorname{fib}(\varepsilon)) \simeq \map{\IMod^\BB_R}( \pi_A^\ast R, \Omega^n \operatorname{fib}(\varepsilon)) \simeq \Omega^\infty \Omega^n \operatorname{fib}(\varepsilon)
	\]
	for any $ n $. Thus $ \operatorname{fib}(\varepsilon) \simeq 0 $, which shows that $ \varepsilon $ is an equivalence, as desired.
\end{proof}

\begin{remark}
	\label{rem:ComparisonWithStacksProject}
	Let $ \BB_0 $ be a 1-topos and $ R $ a ring object in $ \BB_0 $.
	Let us denote the hypercomplete $ \infty $-topos associated to $ \BB_0 $ by $ \BB $.
	Then by \cite[Theorem 2.1.2.2]{lurie2018} there is an equivalence of $ \infty $-categories
	\[
	\mathbf{D}(\Shv(\BB_0;R))) \simeq \Mod^\BB_R
	\]
	where we consider $ R $ as a discrete ring object in $ \BB $.
	Furthermore one can check that the induced symmetric monoidal structure on the homotopy category of $ \mathbf{D}(\Shv(\BB_0;R)) $ is indeed the usual one.
	With these identifications in mind, the result that an object in $ \Mod^\BB_R $ is dualisable if and only if it is perfect appears as \cite[\href{https://stacks.math.columbia.edu/tag/0FPV}{Tag 0FPV}]{stacks-project}.
\end{remark}

\begin{example}
	\label{ex:perfectcomplinternallycomp}
	Let $ X $ be a (spectral/dervied) scheme and consider the Zariski-topos $ X_{\Zar} $ of $ X $. 
	The structure sheaf $ \mathcal{O}_X $ of $ X $ is a sheaf of ($ \mathbb{E}_\infty $-)rings on $ X $ and thus gives rise to an object in $ \CAlg(\Shv(X_{\Zar},\Sp)) $. 
	Thus we may consider the $ X_{\Zar} $-category $ \IMod^{X_{\Zar}}_{\mathcal{O}_X} $.
	By Theorem~\ref{thm:Dualizable=Compact=Perfect} an object $\mathcal{F} \in  \IMod^{X_{\Zar}}_{\mathcal{O}_X}(\ast) = \Mod(\mathcal{O}_X)$ is internally compact if and only if it is perfect, i.e. if and only if there is an open covering $ X = \bigcup_i U_i $ by affines such that $ \mathcal{F}_{\mid U_i} $ is contained in the smallest stable idempotent complete subcategory containing $ (\mathcal{O}_X)_{\mid U_i} = \mathcal{O}_{ U_i} $.
	Thus the full subcategory of $ \Mod_{\mathcal{O}_{X}}$ spanned by the internally compact objects is equivalent to the ususal category of perfect complexes on $ X $.
\end{example}

\begin{remark}
	\label{rem:ComparisonWithSheavesOfModules}
	Let $ R $ be an $ \mathbb{E}_\infty $-ring.
	The stable constant sheaf functor $\const\colon  \Sp \to \Shv_{\Sp}(\BB) $ is symmetric monoidal, therefore $ \underline{R} \coloneqq \const R $ defines a commutative $ \BB $-ring spectrum.
	Then there is a canonical equivalence $ \Shv(\BB;R) \simeq \Mod^\BB_{\underline{R}} $.
	Indeed, applying $ - \otimes \Shv_{\Sp}(\BB) $ to the symmetric monoidal functor $ - \otimes R \colon \Sp \to \Mod_R $ yields a symmetric monoidal functor $  \Shv_{\Sp}(\BB) \to  \Shv(\BB;R) $, whose right adjoint $ \Phi \colon \Shv(\BB;R) \to \Shv_{\Sp}(\BB) $ is lax symmetric monoidal and conservative.
	Since the forgetful functor $ \Mod_R \to \Sp $ admits a further right adjoint, so does $ \Phi $.
	By \cite[Corollary 2.5.5.3]{lurie2018} the map $ \Phi $ induces a symmetric monoidal functor
	\[
	\Shv(\BB;R) \to \Mod_{\Phi(1)}(\Shv_{\Sp}(\BB)) \simeq \Mod^\BB_{\underline{R}}
	\]
	By using~\cite[Corollary 4.7.3.16]{Lurie2017} it now easily follows that this functor is an equivalence.
\end{remark}

Combining Theorem~\ref{thm:Dualizable=Compact=Perfect} with Corollary~\ref{cor:LConstPO} we arrive at the following classification result for dualisable objects:
\begin{corollary}
	\label{cor:Dualizable=LocallyPerfect}
	Let $ R $ be an $ \mathbb{E}_\infty $-ring.
	Then an object in $ \Shv(\BB;R) $ is dualisable if and only if it is locally constant with perfect values.\qed
\end{corollary}

\begin{remark}
	\label{rem:previouscases}
	In the case of \'etale hypersheaves on a scheme $ X $ and where $ R $ is a discrete ring, the above corollary already appears in \cite[Remark 6.3.27]{cisinski2016} and for pro-\'etale sheaves on $ X $ this is shown in \cite[Corollary 3.4.3]{hemo2020}.
	The proofs in both referecnes rely on features of the specific geometric situation, more specifically on the existence of enough points and $ w $-contractible objects, respectively.
	Having a sufficient amount of machinery from internal higher category theory at our disposal, we can recover these observations with a now completely formal proof.
\end{remark}

\section{The tensor product of presentable $\BB$-categories}
\label{chap:tensorProductBCategories}
In~\cite{Lurie2017}, Lurie establishes a symmetric monoidal structure on the $\infty$-category of $\KK$-cocomplete $\infty$-categories with $\KK$-cocontinuous functors, for any class $\KK$ of $\infty$-categories. In particular, his construction gives rise to a symmetric monoidal structure on the $\infty$-category $\LPrS$ of presentable $\infty$-categories. In this section, our goal is to obtain an internal analogue of these results, i.e.\ to construct a symmetric monoidal structure on the large $\BB$-category $\ICat_{\BB}^{\cocont{\I{U}}}$ of $\I{U}$-cocomplete $\BB$-categories and $\I{U}$-cocontinuous functors, for any choice of internal class $\I{U}$. Our construction will be entirely analogous to the one in~\cite{Lurie2017}: we will define the desired symmetric monoidal $\BB$-category $\ICat_{\BB}^{\cocont{\I{U}},\otimes}\to\Fin_{\ast}$ as the subcategory of the cartesian monoidal $\BB$-category $\ICat_{\BB}^\times\to\Fin_{\ast}$ that is spanned by what we call \emph{$\I{U}$-multilinear} functors. We define and study this concept in \S~\ref{sec:bilinear}, before we go on and discuss the symmetric monoidal structure on $\ICat_{\BB}^{\cocont{\I{U}}}$ in \S~\ref{sec:tensorProductCatU}. In particular, our construction will yield a symmetric monoidal structure on the large $\BB$-category $\ILPr_{\BB}$. In \S~\ref{sec:BModules}, we make use of this structure to identify \emph{$\BB$-modules} as a full subcategory of the $\infty$-category $\LPr(\BB)$ of presentable $\BB$-categories.

\subsection{Bilinear functors}
\label{sec:bilinear}
Recall that a bilinear functor of cocomplete $\infty$-categories $\CC \times \DD \rightarrow \EE$ is a functor that preserves small colimits separately in each variable.
We will now introduce this notion in the internal setting.
It will be useful to consider functors that only preserve certain (internal) classes of colimits in each variable, so that we arrive at the following general definition:
\begin{definition}
	\label{def:bilinearfunctor}
	Let $\I{U}$ and $\I{V}$ be two internal classes of $\BB$-categories.
	Suppose that $\I{C},\I{D}$ and $\I{E}$ are $\BB$-categories such that $\I{C}$ is $\I{U}$-cocomplete, $\I{D}$ is $\I{V}$-cocomplete and $\I{E}$ is $\I{U}\cup\I{V}$-cocomplete.
	We will say that a functor $f \colon \I{C} \times \I{D} \rightarrow \I{E}$ is \emph{$(\I{U},\I{V})$-bilinear} if for any $A \in \BB$ and any two objects $c\colon A \to \I{C}$ and $d\colon A\to\I{D}$ the functor 
	\[
	f(c,-) \colon \pi_A^* \I{D} \xrightarrow{c\times\id} \pi_A^* \I{C} \times \pi_A^* \I{D} \xrightarrow{\pi_A^* f}  \pi_A^* \I{E}
	\]
	is $\pi_A^\ast\I{V}$-cocontinuous and the functor
	\[
	f(-,d) \colon \pi_A^* \I{C} \xrightarrow{\id\times d} \pi_A^* \I{C} \times \pi_A^* \I{D} \xrightarrow{\pi_A^*f} \pi_A^* \I{E}
	\]
	is $\pi_A^\ast{\I{U}}$-cocontinuous.
	We write $\IFun^{({\I{U}},\I{V})}(\I{C}\times\I{D},\I{E})$ for the full subcategory spanned by the $(\pi_A^\ast\I{U},\pi_A^\ast\I{V})$-bilinear functors for every $A\in\BB$.
	If $\I{U} = \I{V} = \ICat_\BB$ (and $\I{C}$,$\I{D}$ and $\I{E}$ are large), we will simply say that $f$ is \emph{bilinear} and write $\IFun^{\bil}(\I{C}\times\I{D},\I{E})$ for the associated $\BB$-category of bilinear functors.
\end{definition}

\begin{remark}
	\label{rem:BCBilinear}
	In the situation of Definition~\ref{def:bilinearfunctor}, the fact that $\I{U}$- and $\I{V}$-cocontinuity are local conditions~\cite[Remark~5.2.3]{MWColimits} implies that for any cover $\bigsqcup_i A_i\onto 1$ in $\BB$, a functor $f$ is $(\I{U},\I{V})$-bilinear if and only if for each $i$ the functor $\pi_{A_i}^\ast f$ is $(\pi_{A_i}^\ast\I{U},\pi_{A_i}^\ast\I{V})$-bilinear. In particular, an object $A\to \IFun(\I{C}\times\I{D},\I{E})$ in context $A\in\BB$ is contained in $\IFun(\I{C}\times\I{D},\I{E})^{(\I{U},\I{V})}$ if and only if it defines a $(\pi_A^\ast\I{U},\pi_A^\ast\I{V})$-bilinear functor, and there consequently is a canonical equivalence 
	\begin{equation*}
		\pi_A^\ast\IFun^{(\I{U},\I{V})}(\I{C}\times\I{D},\I{E})\simeq\IFun[\Over{\BB}{A}]^{(\pi_A^\ast\I{U},\pi_A^\ast\I{V})}(\pi_A^\ast\I{C}\times\pi_A^\ast\I{D},\pi_A^\ast\I{E})
	\end{equation*}
	of $\Over{\BB}{A}$-categories.
\end{remark}

\begin{lemma}
	\label{lem:limitpreservationswap}
	Let ${\I{U}}$ and ${\I{V}}$ be two internal classes and let $\I{C},\I{D}$ and $\I{E}$ be $\BB$-categories such that $\I{C}$ is ${\I{U}}$-cocomplete, $\I{D}$ is ${\I{V}}$-cocomplete and $\I{E}$ is ${\I{U}}\cup\I{V}$-cocomplete.
	Then $\IFun^{\cocont{\I{V}}}(\I{D},\I{E}) \subseteq \IFun(\I{D},\I{E})$ is closed under $\I{U}$-colimits, $\IFun^{\cocont{\I{U}}}(\I{C},\I{E})$ is closed under $\I{V}$-colimits, and there are natural equivalences
	\[
	\IFun^{(\I{U},\I{V})}(\I{C} \times \I{D},\I{E})\simeq \IFun^{\cocont{\I{U}}}(\I{C},\IFun^{\cocont{\I{V}}}(\I{D},\I{E})) \simeq \IFun^{\cocont{\I{V}}}(\I{D},\IFun^{\cocont{\I{U}}}(\I{C},\I{E})).
	\]
\end{lemma}
\begin{proof}
	By symmetry, it is enough to show that $\IFun^{\cocont{\I{V}}}(\I{D},\I{E})$ is closed under $\I{U}$-colimits and to construct the first of the two equivalences. 
	To begin with, we claim that a functor $f \colon \I{C}\times\I{D}\to\I{E}$ is $({\I{U}},{\I{V}})$-bilinear if and only if its transpose $f^\prime\colon \I{C}\to\IFun(\I{D},\I{E})$ is $\I{U}$-cocontinuous and takes values in $\IFun^{\cocont{\I{V}}}(\I{D},\I{E})$.
	To see this, note that for any $A \in \BB$ and any object $c\colon A\to\I{C}$, the functor $f^\prime(c)\colon \pi_A^\ast\I{D}\to\pi_A^\ast\I{E}$
	is by definition given by $f(c,-)$, which in turn implies that $f(c,-)$ is $\I{V}$-cocontinuous if and only if $f^\prime$ factors through the full subcategory $\IFun^{\cocont{\I{V}}}(\I{C},\I{E})$.
	Moreover, given any object $d \colon A \rightarrow \I{D}$ in context $A\in\BB$, note that the functor $f(-,d)$ is given by the composite
	\[
	\pi_A^* \I{C} \xrightarrow{\pi_A^\ast f^\prime} \pi_A^* \IFun(\I{D},\I{E}) \simeq \IFun[\Over{\BB}{A}](\pi_A^\ast \I{D}, \pi_A^* \I{E}) \xrightarrow{d^*} \pi_A^* \I{E}.
	\]
	Therefore,~\cite[Proposition~4.3.2]{MWColimits} implies that $f^\prime$ is $\I{U}$-cocontinuous if and only if $f(-,d)$ is $\I{U}$-cocontinuous for all $d\colon A\to\I{D}$ and all $A\in\BB$. Hence the claim follows. In light of Remark~\ref{rem:BCBilinear} and~\cite[Remark~5.3.2]{MWColimits}, this already implies that the equivalence $\IFun(\I{C}\times\I{D},\I{E})\simeq\IFun(\I{C},\IFun(\I{D},\I{E}))$ induces a pullback square
	\begin{equation*}
		\begin{tikzcd}
			\IFun^{(\I{U},\I{V})}(\I{C}\times\I{D},\I{E})\arrow[d, hookrightarrow]\arrow[r, hookrightarrow] & \IFun^{\cocont{\I{U}}}(\I{C},\IFun(\I{D},\I{E}))\arrow[d, hookrightarrow]\\
			\IFun(\I{C},\IFun^{\cocont{\I{V}}}(\I{D},\I{E}))\arrow[r, hookrightarrow] & \IFun(\I{C}\times\I{D},\I{E}).
		\end{tikzcd}
	\end{equation*}
	To complete the proof, it is now enough to show that $\IFun^{\cocont{\I{V}}}(\I{D},\I{E})$ is closed under $\I{U}$-colimits. In light of~\cite[Remark~5.3.2]{MWColimits}, this follows once we show that for any $\I{I}\in\I{U}(1)$ the colimit functor $\colim_{\I{I}}\colon\IFun(\I{I},\IFun(\I{D},\I{E}))\to\IFun(\I{D},\I{E})$ restricts to a functor $\IFun(\I{I},\IFun^{\cocont{\I{V}}}(\I{D},\I{E}))\to\IFun^{\cocont{\I{V}}}(\I{D},\I{E})$. As $\colim_{\I{I}}$ is cocontinuous, we get a commutative diagram
	\[
	\begin{tikzcd}
		{\IFun^{\cocont{\I{V}}}(\I{D},\IFun(\I{I},\I{E}))} \arrow[d,"(\colim_{\I{I}})_*"] \arrow[r,hook] & {\IFun(\I{D},\IFun(\I{I},\I{E}))} \arrow[d, "(\colim_{\I{I}})_*"] \\
		{\IFun^{\cocont{\I{V}}}(\I{D},\I{E})} \arrow[r,hook]               & {\IFun(\I{D},\I{E})}.                              
	\end{tikzcd}
	\]
	By what we have already shown above, the equivalence $\IFun(\I{I},\IFun(\I{D},\I{E})) \simeq \IFun(\I{D},\IFun(\I{I},\I{E}))$  restricts to an equivalence $\IFun(\I{I},\IFun^{\cocont{\I{V}}}(\I{D},\I{E})) \simeq \IFun^{\cocont{\I{V}}}(\I{D},\IFun(\I{I},\I{E}))$. Hence the previous diagram shows that the colimit functor $\colim_{\I{I}}$ restricts as desired.
\end{proof}

We will now generalise the above situation to so-called \emph{multilinear} functors. For the sake of simplicity, we will only do this in the case of one fixed internal class.
\begin{definition}
	\label{def:MultilinearFunctor}
	Let $ \I{U} $ be an internal class of $\BB$-categories and suppose that $ \I{C}_1,\dots, \I{C}_n, \I{E} $ are $ \I{U} $-cocomplete $ \BB $-categories.
	A functor $ f \colon \I{C}_1 \times .... \times \I{C}_n \rightarrow \I{E} $ is said to be $ \I{U} $\emph{-multilinear} if for every $i=1,\dots,n$ and all objects $ c_j \colon A_j\rightarrow \I{C}_j $ in context $A\in\BB$ for $ i \neq j $ the functor
	\[
	\pi_A^* \I{C}_i \xrightarrow{(c_1,\dots, \id,\dots, c_n) } \prod_{k=1}^n \pi_A^\ast\I{C}_k \xrightarrow{f} \pi_A^\ast\I{D}
	\]
	is $ \pi_A^*\I{U} $-cocontinuous.
	We will write $ \IFun^{\mult{\I{U}}}(\prod_{k=1}^n \I{C}_k, \I{E} )$ for the full subcategory spanned by the $ \pi_A^\ast\I{U} $-multilinear functors for all $A\in\BB$.
\end{definition}

\begin{remark}
	\label{rem:BCMultilinear}
	By a similar argument as in Remark~\ref{rem:BCBilinear}, the condition of a functor as in Definition~\ref{def:MultilinearFunctor} to be $\I{U}$-multilinear is local in $\BB$, which implies that there is a canonical equivalence $ \pi_A^\ast\IFun^{\mult{\I{U}}} (\prod_{k=1}^n \I{C}_k, \I{E} )\simeq \IFun[\Over{\BB}{A}]^{\mult{\pi_A^\ast\I{U}}}(\prod_{k=1}^n \pi_A^\ast\I{C}_k,\pi_A^\ast \I{E} ) $ for each $A\in\BB$.
\end{remark}

\begin{remark}
	\label{rem:UniversalMultilinFunctor}
	In the situation of Definition~\ref{def:MultilinearFunctor} we can construct the universal $ \I{U} $-multilinear functor using Proposition~\ref{prop:CocompletionRelations}.
	Namely, we may consider the collection of cocones $ \square_{k=1}^n R_k $ from Construction~\ref{constr:BoxtimesOfRi} with respect to the internal class $ \I{U} $.
	Then by construction a functor $ \prod_{k=1}^n \I{C}_k \to \I{E} $ is $ \I{U} $-multilinear if and only if it is contained in the full subcategory $ \IFun( \prod_{k=1}^n \I{C}_i, \I{E} )_{\square_{k=1}^n R_k} $.
	By Proposition~\ref{prop:CocompletionRelations}, we thus have a canonical functor $ j \colon \prod_{k=1}^n \I{C}_i \to \IPSh^{(\I{U}, \square_{k=1}^n R_k)}(\prod_{k=1}^n \I{C}_i) $ that induces an equivalence
	\[
	j^* \colon \IFun^{\cocont{\I{U}}}(\IPSh^{(\I{U}, \square_{k=1}^n R_k)}(\prod_{i=1}^n \I{C}_i), \I{E}) \xrightarrow{\simeq} \IFun^{\mult{\I{U}}}(\prod_{k=1}^n \I{C}_k, \I{E}).
	\]
\end{remark}

\subsection{The tensor product of $\I{U}$-cocomplete $\BB$-categories}
\label{sec:tensorProductCatU}
The goal of this section is to extend the results from \cite[\S~4.8.1]{Lurie2017} to the setting of $\BB$-categories.
Namely, we will construct a symmetric monoidal structure $\ICat_{\BB}^{\cocont{\I{U}},\otimes} $ on the large $\BB$-category $\ICat_{\BB}^{\cocont{\I{U}}}$ of $ \I{U} $-cocomplete $ \BB $-categories.
For this we will roughly follow the arguments in~\cite{Lurie2017}.

Recall that the (large) $ \BB $-category $ \ICat_\BB $ is complete. By Proposition~\ref{prop:CartesianMonoidalBCategories}, we therefore obtain a symmetric monoidal structure $p\colon\ICat_{\BB}^\times\to\Fin_{\ast}$ on $\ICat_{\BB}$. By construction, the pullback of $p(A)$ along the adjunction unit $\Fin_*\to \Gamma_{\Over{\BB}{A}}\Fin_*$ yields the cocartesian fibration classifying the symmetric monoidal $ \infty $-category $ \Cat(\BB_{/A})^\times $.

\begin{remark}
	Let $\alpha\colon\ord{n}\to\ord{m}$ be a map of pointed finite sets and let $f\colon x\to y$ be a map in $\ICat_{\BB}^\times$ in context $A\in\BB$ that is sent to $\pi_A^\ast \alpha$ by the cocartesian fibration $p\colon\ICat_{\BB}^\times\to \Fin_*$. Note that we may regard $f$ as a lift of $\alpha$ in the symmetric monoidal $\infty$-category $\Cat(\Over{\BB}{A})^\times$ since $\alpha$ is in the image of the adjunction unit $\Fin_*\to\Gamma_{\Over{\BB}{A}}\Fin_*$. By~\cite[Remark~3.2.6]{MCocartesian}, the map $f$ is cocartesian with respect to $p$ if and only if it is cocartesian with respect to $p(A)$ when viewed as a map in the $\infty$-category $\ICat_{\BB}^\times(A)$, which is in turn equivalent to the condition that $f$ defines a cocartesian edge in $\Cat(\Over{\BB}{A})^\times$.
\end{remark}

We define a subcategory $\ICat_{\BB}^{\cocont{\I{U}}, \otimes}$ of $\ICat_{\BB}^\times$ as follows:
Let $ f \colon x \rightarrow y  $ be a morphism in $\ICat_{\BB}^\times$ in context $A\in\BB$, and assume that $p(f)$ is contained in the image of the functor $\const_{\Over{\BB}{A}}$ and thus given by a map $\alpha\colon\ord{n}\to\ord{m}$ in the $1$-category $\Fin_*$. We now obtain equivalences $x\simeq (\I{C}_1,\dots,\I{C}_n)$ and $y\simeq(\I{D}_1,\dots,\I{D}_m)$ where the $\I{C}_i$ and $\I{D}_j$ are $\Over{\BB}{A}$-categories, and the map $f$ is determined by a collection of maps $f_j\colon\prod_{i\in\alpha^{-1}(j)}\I{C}_i\to\I{D}_j$ for $j=1,\dots,m$. We shall say that $f$ is \emph{$\I{U}$-multilinear} if the $\I{C}_i$ and $\I{D}_j$ are $\pi_A^\ast\I{U}$-cocomplete and the functors $f_j$ are $\pi_A^\ast\I{U}$-multilinear. Finally, we say that an arbitrary map $g\colon x\to y$ in $\ICat_{\BB}^\times$ in context $A\in \BB$ is \emph{locally} $\I{U}$-multilinear if there is a cover $(s_i)\colon\bigsqcup_i A_i\onto A$ such that $s_i^\ast(g)$ is $\I{U}$-multilinear for each $i$.
We let $\ICat_{\BB}^{\cocont{\I{U}}, \otimes}$ be the subcategory of $\ICat_{\BB}^\times$ that is spanned by the locally $\I{U}$-multilinear maps. 
\begin{remark}
	\label{rem:locallyMultilinearMaps}
	Since every map in the constant $\BB$-category $\Fin_*$ is \emph{locally} of the form $\alpha\colon\ord{n}\to\ord{m}$ (i.e.\ is locally contained in the image of the constant sheaf functor), for every map $g\colon x\to y$ in $\ICat_{\BB}^\times$ in context $A\in\BB$ there is a cover $(s_i)\colon\bigsqcup A_i\onto A$ in $\BB$ such that $p(s_i^\ast g)$ is given by a map $\alpha\colon\ord{n}\to\ord{m}$ of pointed finite sets. As moreover the condition that a functor is $\pi_A^\ast\I{U}$-multilinear is local in $\BB$ (see Remark~\ref{rem:BCMultilinear}), one finds that $g$ is locally $\I{U}$-multilinear if and only if $s_i^\ast(g)$ is $\I{U}$-multilinear.
\end{remark}
\begin{lemma}
	\label{lem:MultilinEdgesCompose}
	The inclusion $(\ICat_{\BB}^{\cocont{\I{U}}, \otimes})_1\into(\ICat_{\BB}^\times)_1$ identifies $(\ICat_{\BB}^{\cocont{\I{U}}, \otimes})_1$ with the subobject of $(\ICat_{\BB}^\times)_1$ that is spanned by the locally $\I{U}$-multilinear maps.
\end{lemma}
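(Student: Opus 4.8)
The plan is to verify that the locally $\I{U}$-multilinear maps span a subobject $M$ of $(\ICat_{\BB}^\times)_1$ which, together with the subobject $W\into(\ICat_{\BB}^\times)_0$ consisting of the tuples of $\I{U}$-cocomplete $\BB$-categories, satisfies the hypotheses of \cite[Proposition~2.9.8]{Martini2021a}: namely $s_0(W)\subseteq M$, both face maps carry $M$ into $W$, and $M$ is closed under equivalences and composition. That proposition then produces a subcategory of $\ICat_{\BB}^\times$ whose object of morphisms is exactly $M$, and this subcategory is by definition $\ICat_{\BB}^{\cocont{\I{U}},\otimes}$, which is the assertion of the lemma; so the proof reduces to checking these closure properties. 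That the locally $\I{U}$-multilinear maps do span a subobject is seen as follows: by Remark~\ref{rem:BCMultilinear} the property of a functor being $\I{U}$-multilinear is stable under étale base change, and being \emph{locally} $\I{U}$-multilinear is by construction a local condition on $\BB$; hence the sections $A\to(\ICat_{\BB}^\times)_1$ defining locally $\I{U}$-multilinear maps are stable under precomposition with maps of $\BB$ and satisfy descent, so they cut out a subobject $M\into(\ICat_{\BB}^\times)_1$. By Remark~\ref{rem:locallyMultilinearMaps} one may moreover, when testing membership in $M$, always pass to a cover of the context over which the image of the map in $\Fin_\ast$ is a genuine map $\alpha\colon\ord{n}\to\ord{m}$ of pointed finite sets. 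Finally, that $M$ lies over $W$ is immediate from the definition of $\I{U}$-multilinearity, which already demands the source and target tuples to be levelwise $\I{U}$-cocomplete.

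Next I would treat the degenerate edges and equivalences. For an object $x$ in context $A$ which is locally a tuple $(\I{C}_1,\dots,\I{C}_n)$, the edge $s_0(x)$ lies over $\id_{\ord{n}}$ with components the identity functors $\id_{\I{C}_i}$; these are $\pi_A^\ast\I{U}$-cocontinuous precisely when the $\I{C}_i$ are $\pi_A^\ast\I{U}$-cocomplete, so $s_0(W)\subseteq M$ and, conversely, a degenerate edge lies in $M$ only over $W$. For closure under equivalences, an equivalence in $\ICat_{\BB}^\times$ projects locally to a bijection of pointed finite sets and has invertible component functors, which are $\I{U}$-cocontinuous; combined with the locality of $\I{U}$-cocompleteness (\cite[Remark~5.1.3]{Martini2021a}) this shows $M$ is closed under equivalences in the sense of \cite[Proposition~2.9.8]{Martini2021a}.

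The main step, and the only one that is not purely formal, is closure under composition. Given a composable pair, i.e.\ a section $A\to(\ICat_{\BB}^\times)_2$ whose outer faces $d_2$ and $d_0$ lie in $M$, I would pass to a cover of $A$ over which it becomes an honest composite of a map $f$ over $\alpha\colon\ord{n}\to\ord{m}$ with a map $g$ over $\beta\colon\ord{m}\to\ord{l}$, between tuples of $\pi_A^\ast\I{U}$-cocomplete $\BB$-categories, with $\pi_A^\ast\I{U}$-multilinear components $f_j\colon\prod_{i\in\alpha^{-1}(j)}\I{C}_i\to\I{D}_j$ and $g_k\colon\prod_{j\in\beta^{-1}(k)}\I{D}_j\to\I{E}_k$. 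Under the canonical identification $\prod_{i\in(\beta\alpha)^{-1}(k)}\I{C}_i\simeq\prod_{j\in\beta^{-1}(k)}\prod_{i\in\alpha^{-1}(j)}\I{C}_i$, the $k$-th component of $g\circ f$ is $g_k\circ\prod_{j\in\beta^{-1}(k)}f_j$. To see this is $\pi_A^\ast\I{U}$-multilinear, fix a slot $i_0\in(\beta\alpha)^{-1}(k)$, say $i_0\in\alpha^{-1}(j_0)$, and fix objects in all remaining slots in an arbitrary context $B\to A$; then, as a functor of the $\I{C}_{i_0}$-variable, $\prod_j f_j$ is $f_{j_0}(\dots,-,\dots)$ in the $\I{D}_{j_0}$-factor and constant in the others, hence $\pi_B^\ast\I{U}$-cocontinuous by multilinearity of $f$, while $g_k$ with all but its $\I{D}_{j_0}$-slot held fixed is $\pi_B^\ast\I{U}$-cocontinuous by multilinearity of $g$; their composite is therefore $\pi_B^\ast\I{U}$-cocontinuous. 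Since $i_0$, $k$ and $B$ were arbitrary, $g\circ f$ is $\pi_A^\ast\I{U}$-multilinear, so $d_1$ lies in $M$. (Components over indices $j$ with $\alpha^{-1}(j)=\varnothing$ carry no cocontinuity condition and contribute only fixed objects, causing no difficulty.) This verifies the last hypothesis of \cite[Proposition~2.9.8]{Martini2021a} and finishes the argument. The only real subtlety throughout is the bookkeeping with fibres of maps of pointed finite sets and the care needed to run the "fix every variable but one" argument in an arbitrary context $B\to A$; everything else rests on the single fact that composites of $\I{U}$-cocontinuous functors are $\I{U}$-cocontinuous together with the étale-locality of $\I{U}$-multilinearity.
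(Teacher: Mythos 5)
Your proposal is correct and follows essentially the same route as the paper: one checks that (locally) $\I{U}$-multilinear maps are closed under composition and that equivalences between $\I{U}$-cocomplete $\BB$-categories are automatically $\I{U}$-cocontinuous, and then invokes \cite[Proposition~2.9.8]{Martini2021a}. The only difference is one of detail: where the paper factors the composite through cocartesian lifts and fibrewise maps and simply asserts that multilinear functors "compose in the expected way", you spell out that componentwise computation (the $k$-th component being $g_k\circ\prod_{j\in\beta^{-1}(k)}f_j$ and the fix-all-but-one-variable argument), which is exactly the content being left implicit there.
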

\begin{proof}
	We first show that $\I{U}$-multilinear maps are closed under composition. To that end, suppose that $f\colon x\to y$ and $f^\prime\colon y\to z$ are $\I{U}$-multilinear maps in context $A\in\BB$, and consider the commutative diagram
	\[\begin{tikzcd}
		x && y && z \\
		& {x'} && {z'} \\
		&& {y'}
		\arrow["f", from=1-1, to=1-3]
		\arrow["{f'}", from=1-3, to=1-5]
		\arrow["{h_y}"', from=1-1, to=2-2]
		\arrow["{g_y}"', from=2-2, to=1-3]
		\arrow["{h_y'}"', from=2-2, to=3-3]
		\arrow["{g_y'}"', from=3-3, to=2-4]
		\arrow["{h_z}"', from=1-3, to=2-4]
		\arrow["{g_z}"', from=2-4, to=1-5]
	\end{tikzcd}\]
	in which $h_y$, $h_y^\prime$ and $h_z$ are cocartesian and the maps $g_y$, $g_y^\prime$ and $g_z$ are sent to identity maps in $\Fin_*$. Then $f^\prime f$ being $\I{U}$-multilinear precisely means that $g_z g_y^\prime$ is determined by a tuple of $\pi_A^\ast\I{U}$-multilinear functors between $\pi_A^\ast\I{U}$-cocomplete $\Over{\BB}{A}$-categories. Unwinding the definitions, this follows immediately from the observation that $ \pi_A^*\I{U}$-multilinear functors compose in the expected way. Together with the fact that equivalences between $\pi_A^\ast\I{U}$-cocomplete $\BB$-categories are automatically $\pi_A^\ast\I{U}$-cocontinuous, this already implies that the subobject of $(\ICat_{\BB}^\times)_1$ that is spanned by the locally $\I{U}$-multilinear maps is closed under composition and equivalences in the sense of Proposition~\ref{prop:classificationSubcategories}, hence the very same proposition proves the claim.
\end{proof}
To proceed, recall that the three maps $\id_0$, $0<1$ and $\id_1$ of the poset $\Delta^1$ give rise to a decomposition $\Delta^1_1\simeq 1\sqcup 1\sqcup 1$, both when viewing $\Delta^1$ as an $\infty$-category and as a constant $\BB$-category. Therefore, if $\I{C}$ is an arbitrary $\BB$-category, we obtain an induced decomposition $(\Delta^1\otimes\I{C})_1\simeq \I{C}_1\sqcup\I{C}_1\sqcup\I{C}_1$.
By applying this observation to the case where $\I{C}=\ICat_{\BB}^\times$, we may thus define a subcategory $\I{M}^\otimes_{\I{U}}\into \Delta^1\otimes\ICat_{\BB}^\times$ via the subobject of morphisms
\begin{align*}
	(\ICat_{\BB}^\times)_1\sqcup(\ICat_{\BB}^\times)_1\vert_{(\ICat_{\BB}^{\cocont{\I{U}}, \otimes})_0}\sqcup(\ICat_{\BB}^{\cocont{\I{U}}, \otimes})_1&\into \Delta^1\otimes(\ICat_{\BB}^\times)_1
\end{align*}
where the middle summand denotes the pullback of $d_0\colon (\ICat_{\BB}^\times)_1\to(\ICat_{\BB}^\times)_0$ along the inclusion $(\ICat_{\BB}^{\cocont{\I{U}}, \otimes})_0\into (\ICat_{\BB}^\times)_0$.
Evidently, this subobject is closed under composition and equivalences in the sense of Proposition~\ref{prop:classificationSubcategories}, hence the inclusion $(\I{M}^\otimes_{\I{U}})_1\into \Delta^1\otimes(\ICat_{\BB}^\times)_1$ gives rise to an equivalence
\begin{equation*}
	(\I{M}^\otimes_{\I{U}})_1\simeq(\ICat_{\BB}^\times)_1\sqcup(\ICat_{\BB}^\times)_1\vert_{(\ICat_{\BB}^{\cocont{\I{U}}, \otimes})_0}\sqcup(\ICat_{\BB}^{\cocont{\I{U}}, \otimes})_1.
\end{equation*}
By construction, the pullback of the composition $q\colon \I{M}^\otimes_{\I{U}}\into \Delta^1\otimes\ICat_{\BB}^\times\to\Delta^1\times\Fin_*$ along the inclusion $d^1\colon\Fin_*\into\Delta^1\times\Fin_*$ recovers the cocartesian fibration $p\colon \ICat_{\BB}^\times\to\Fin_*$, and the pullback of $q$ along $d^0\colon\Fin_*\into\Delta^1\times\Fin_*$ recovers the restriction of $p$ to the subcategory $\ICat_{\BB}^{\cocont{\I{U}},\otimes}$. We now obtain:
\begin{proposition}
	\label{prop:UConstrGivesCocartFib}
	The composition $ q\colon \I{M}^\otimes_{\I{U}}\into \Delta^1\otimes\ICat_{\BB}^\times\to\Delta^1\times\Fin_*$ is a cocartesian fibration.
\end{proposition}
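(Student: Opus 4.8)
The plan is to verify the cocartesian fibration condition section-wise over $\BB$, reducing to a statement about $\infty$-categories that can be deduced from Lurie's construction in~\cite[\S~4.8.1]{Lurie2017}. First I would use~\cite[Proposition~3.1.7]{Martini2022} (the section-wise characterisation of cocartesian fibrations in $\Cat(\BB)$): it suffices to check that for every $A\in\BB$ the functor $q(A)$ is a cocartesian fibration of $\infty$-categories and that the cocartesian edges are stable under pullback along maps $s\colon B\to A$ in $\BB$. Since $\I{M}^\otimes_{\I{U}}$ was built from $\Delta^1\otimes\ICat_{\BB}^\times$ and the subcategories $\ICat_{\BB}^\times$ and $\ICat_{\BB}^{\cocont{\I{U}},\otimes}$, and since $\Delta^1\otimes(-)$, subcategories and the formation of sections over $A$ all commute appropriately, the $\infty$-category $\I{M}^\otimes_{\I{U}}(A)$ is identified with the analogous $\infty$-categorical construction applied to $\Cat(\Over{\BB}{A})^\times$, namely Lurie's $\Fun(\Delta^1,\Cat_{\infty})$-type gluing of the cartesian symmetric monoidal $\infty$-category of $\Over{\BB}{A}$-categories along the subcategory spanned by its $\pi_A^\ast\I{U}$-cocomplete objects and $\pi_A^\ast\I{U}$-multilinear functors.

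Next I would invoke the $\infty$-categorical input. Lurie shows in~\cite[Proposition~4.8.1.3]{Lurie2017} (and the surrounding construction) that for any suitable class of colimits, the analogous functor $q(A)$ to $\Delta^1\times\Fin_*$ is a cocartesian fibration; the key points being that the objects lying over $\{0\}\times\Fin_*$ are $\pi_A^\ast\I{U}$-cocomplete $\Over{\BB}{A}$-categories, that a $\pi_A^\ast\I{U}$-multilinear functor out of a product $\prod_i\I{C}_i$ factors uniquely through the tensor product (the universal $\pi_A^\ast\I{U}$-multilinear functor, which exists by Remark~\ref{rem:UniversalMultilinFunctor}), and that these tensor products are again $\pi_A^\ast\I{U}$-cocomplete. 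Concretely, a cocartesian lift of a map in $\Delta^1\times\Fin_*$ is obtained by first taking a cocartesian lift in $\ICat_{\BB}^\times(A)$ (which exists since $p$ is cocartesian) and, if the target lies over $\{0\}$, postcomposing with the localisation onto the appropriate tensor product. I would note that the formation of $\pi_A^\ast\I{U}$-multilinear tensor products, being a free-cocompletion type construction (cf.\ Remark~\ref{rem:UniversalMultilinFunctor}, using the machinery of~\S\ref{sec:bilinear} and Proposition~\ref{prop:CocompletionRelations}), is stable under étale base change $s\colon B\to A$; together with the fact that the subobject cutting out $\I{M}^\otimes_{\I{U}}$ is a subobject of a presheaf and hence automatically compatible with base change, this gives the required stability of cocartesian edges under pullback in $\BB$. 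Lemma~\ref{lem:MultilinEdgesCompose} guarantees that $\I{M}^\otimes_{\I{U}}$ is genuinely a subcategory, so these constructions stay inside it.

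The main obstacle I expect is the bookkeeping needed to identify $\I{M}^\otimes_{\I{U}}(A)$ precisely with Lurie's construction relative to $\Over{\BB}{A}$, and in particular to match the cocartesian edges: one must check that the edges of $\Delta^1\times\Fin_*$ that are \emph{locally} of the form $\{0\}<\{1\}$ times $\alpha\colon\ord{n}\to\ord{m}$ admit cocartesian lifts characterised by the tensor-product property, and that this characterisation is local in $\BB$ (Remark~\ref{rem:locallyMultilinearMaps} handles the locality of the multilinearity condition, but one still needs locality of "being a cocartesian lift", which follows from~\cite[Remark~3.2.6]{Martini2022}). A secondary point requiring care is the degenerate case where the map in $\Fin_*$ is inert, where the cocartesian lift is determined by the product decompositions and no tensor product intervenes; this is exactly where the middle summand in the subobject defining $(\I{M}^\otimes_{\I{U}})_1$ comes in, and one should check that the three summands assemble correctly so that $q$-cocartesian lifts exist over all of $\Delta^1\times\Fin_*$, not just over $\{0\}\times\Fin_*$ and $\{1\}\times\Fin_*$ separately.
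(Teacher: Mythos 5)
Your overall strategy is the one the paper follows in spirit (candidate cocartesian lifts built from the universal $\I{U}$-multilinear functors of Remark~\ref{rem:UniversalMultilinFunctor}, then a section-wise/base-change check via~\cite[Proposition~3.1.7]{Martini2022}), but as written there are two genuine gaps. First, you cannot simply quote Lurie's~\cite[Proposition~4.8.1.3]{Lurie2017} for the section-wise statement: $\pi_A^\ast\I{U}$-cocompleteness of an $\Over{\BB}{A}$-category is an \emph{internal} condition (it involves colimits indexed by $\Over{\BB}{A'}$-categories for all $A'\to A$, e.g.\ groupoidal colimits encoded by left adjoints $s_!$), and is not of the form ``admits $\KK$-indexed colimits'' for any class $\KK$ of simplicial sets; likewise the section-wise base $(\Delta^1\times\Fin_*)(A)=\Gamma_{\Over{\BB}{A}}(\Delta^1\times\Fin_*)$ is in general larger than $\Delta^1\times\Fin_*$, so the fibre of $q$ over $A$ is not an instance of Lurie's construction and his proposition does not apply off the shelf. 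One has to rerun the argument with the internal machinery of \S\ref{sec:cocompletionRelations}, which is what the paper does: it produces lifts only over maps of $\Delta^1\times\Fin_*$ that are (locally) constant, using Construction~\ref{constr:BoxtimesOfRi}, and then uses the locality of cocartesianness (\cite[Remark~3.1.2]{Martini2022} and~\cite[Lemma~6.5.2]{Martini2022}) to handle arbitrary maps in the constant $\BB$-category. You flag this identification as ``bookkeeping'', but it is precisely the point where a citation of Lurie is not available.

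Second, and more importantly, your construction of lifts (take a $p$-cocartesian lift in $\ICat_{\BB}^\times$ and postcompose with the map into the relevant cocompletion-with-relations) only exhibits \emph{locally} cocartesian edges: the universal property from Remark~\ref{rem:UniversalMultilinFunctor} characterises the lift over a single edge of $\Delta^1\times\Fin_*$, and nothing in your sketch explains why these edges are closed under composition, which is what is needed to promote the locally cocartesian fibration $q(A)$ to a cocartesian fibration. In the paper this is exactly where Proposition~\ref{prop:cocompletionRelationsTransitivity} enters (the transitivity statement $\IPSh_{\Univ}^{(\I{V},\square_k R_k)}(\prod_k\I{C}^k)\simeq\IPSh_{\Univ}^{(\I{V},\square_k S_k)}(\prod_k\IPSh_{\Univ}^{(\I{U},R_k)}(\I{C}^k))$); without it, or an equivalent associativity argument for iterated $\I{U}$-multilinear cocompletions, the proof is incomplete. (As a minor point, your orientation of $\Delta^1$ is flipped: the $\I{U}$-cocomplete objects and multilinear maps sit over $\{1\}$, not $\{0\}$, in the construction of $\I{M}^\otimes_{\I{U}}$.) Your remarks on base-change compatibility via Remark~\ref{rem:BCCocompletionRelations} are correct and match the paper's final step.
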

\begin{proof}
	Let us begin by fixing maps $\alpha\colon\ord{n}\to\ord{m}$ in the $1$-category $\Fin_*$ and $\epsilon\leq \delta$ in the poset $\Delta^1$, and let $x\colon A\to \I{M}^\otimes_{\I{U}}\vert_{(\epsilon,\ord{n})}$ be an arbitrary object in context $A\in\BB$. Let us write $ \I{V}_0 = \empty$ and $ \I{V}_1 = \I{U} $. By construction of  $\I{M}^\otimes_{\I{U}}$, the object $x$ corresponds to a tuple $(\pi_A^\ast\epsilon,\I{C}_1,\dots,\I{C}_n)$ where $\I{C}_1,\dots,\I{C}_n$ are $\pi_A^\ast\I{V}_{\epsilon}$-cocomplete $\Over{\BB}{A}$-categories. Let $f\colon x\to y$ be a cocartesian lift of $\alpha$ in $\ICat_{\BB}^\times$. For each $j=1,\dots,m$, the construction in \S~\ref{sec:cocompletionRelations} now allows us to define a map
	\[
	g_j\colon \prod_{i\in \alpha^{-1}(j)} \I{C}_i \rightarrow \I{D}_j= \IPSh^{(\I{V}_\delta,\square_{i} R_i)}(\prod_{i\in \alpha^{-1}(j)} \I{C}_i),
	\]
	and by setting $z=(\delta,\I{D}_1,\dots,\I{D}_m)$, precomposing the tuple $g=(\epsilon\leq \delta,g_1,\dots,g_m)$ with $(\id_\epsilon, f)$ defines a lift of $(\epsilon\leq \delta, \alpha)$ in $\I{M}^\otimes_{\I{U}}$.
	By Remark~\ref{rem:UniversalMultilinFunctor}, precomposition with each $g_j$ induces an equivalence
	\[
	g_j^\ast\colon \IFun[\Over{\BB}{A}]^{\mult{\pi_A^\ast\I{V}_\epsilon}} (\prod_{i} \I{C}_i, \I{E} ) \simeq\IFun[\Over{\BB}{A}]^{\cocont{\pi_A^\ast\I{V}_\delta}}(\I{D}_k,\I{E})
	\]
	for every $ \pi_A^* \I{V}_\delta$-cocomplete $\BB_{/A} $-category $ \I{E} $. By construction of $\I{M}^\otimes_{\I{U}}$ and Lemma~\ref{lem:MultilinEdgesCompose}, the underlying core $\Over{\BB}{A}$-groupoids of both domain and codomain of $g_j^\ast$ recover the mapping $\Over{\BB}{A}$-groupoids in the pullback of $q$ along the map $\id\times \ord{1}\colon\Delta^1\to \Delta^1\times\Fin_*$. As $A\in\BB$ was chosen arbitrarily and in light of Remark~\ref{rem:BCCocompletionRelations}, this shows that the functor $\I{M}^\otimes_{\I{U}}\times_{\Delta^1\times\Fin_*}\Delta^1\to\Delta^1$ that is obtained as the pullback of $q$ along $(\epsilon\leq\delta,\alpha)\colon\Delta^1\to \Delta^1\times\Fin_*$ must be a cocartesian fibration (see~\cite[Lemma~6.5.2]{MCocartesian}). As every map in the constant $\BB$-category $\Delta^1\times\Fin_*$ is \emph{locally} contained in the image of the constant sheaf functor, the pullback of $q$ along \emph{any} map $\Delta^1\to \Delta^1\times\Fin_*$ in $\Cat(\BB)$ is a cocartesian fibration after passing to a suitable cover $\bigsqcup_i A\onto 1$ in $\BB$ and must therefore be a cocartesian fibration itself, using~\cite[Remark~3.1.2]{MCocartesian}.
	In particular, $q(A)$ is a locally cocartesian fibration of $\infty$-categories for every $A\in\BB$, and since it follows from proposition~\ref{prop:cocompletionRelationsTransitivity} that the locally cocartesian maps are closed under composition, we conclude that $q(A)$ is a cocartesian fibration.
	Since cocompletions with relations are compatible with \'etale base change (Remark~\ref{rem:BCCocompletionRelations}), the transition functors $s^\ast\colon\I{M}^\otimes_{\I{U}}(A)\to\I{M}^\otimes_{\I{U}}(B)$ preserve cocartesian morphisms, which by~\cite[Proposition~3.1.7]{MCocartesian} implies that $q$ is a cocartesian fibration, as claimed.
\end{proof}

\begin{corollary}
	\label{cor:CatUMonoidal}
	The functor $\ICat_{\BB}^{\cocont{\I{U}},\otimes}\to\Fin_*$ is a cocartesian fibration that gives rise to a symmetric monoidal structure on the $\BB$-category $\ICat_{\BB}^{\cocont{\I{U}}}$.
\end{corollary}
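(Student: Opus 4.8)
The plan is to deduce this immediately from Proposition~\ref{prop:UConstrGivesCocartFib} together with the $\BB$-categorical Grothendieck construction, mirroring the classical argument of~\cite[\S~4.8.1]{Lurie2017}. First I would observe that, by construction, the pullback of the cocartesian fibration $q\colon\I{M}^\otimes_{\I{U}}\to\Delta^1\times\Fin_*$ along the inclusion $d^0\colon\Fin_*\into\Delta^1\times\Fin_*$ recovers the restriction of $p\colon\ICat_{\BB}^\times\to\Fin_*$ to the subcategory $\ICat_{\BB}^{\cocont{\I{U}},\otimes}$. Since pullbacks of cocartesian fibrations along arbitrary functors are again cocartesian fibrations (by~\cite[Remark~3.1.2]{Martini2022}, and using that the pullback of cocartesian morphisms along a base change functor are again cocartesian), it follows at once from Proposition~\ref{prop:UConstrGivesCocartFib} that $\ICat_{\BB}^{\cocont{\I{U}},\otimes}\to\Fin_*$ is a cocartesian fibration.

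It then remains to check that this cocartesian fibration exhibits $\ICat_{\BB}^{\cocont{\I{U}}}$ as a symmetric monoidal $\BB$-category, i.e.\ that it defines an object of $\ICat_{\BB}^\otimes$ in the sense of Definition~\ref{def:OperadsSMCategories}. By Remark~\ref{rem:MonoidalBCategoriesExplicitly}, this amounts to verifying two things: that the fibre over $\ord{1}\in\Fin_*$ is the $\BB$-category $\ICat_{\BB}^{\cocont{\I{U}}}$, and that for every $A\in\BB$ the induced cocartesian fibration $(\eta^A)^\ast(\ICat_{\BB}^{\cocont{\I{U}},\otimes})(A)\to\Fin_*$ defines a symmetric monoidal $\infty$-category, equivalently that the Segal maps are equivalences of $\infty$-categories. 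The first point is immediate from the definition of $\ICat_{\BB}^{\cocont{\I{U}},\otimes}$ as a subcategory of $\ICat_{\BB}^\times$: its objects in context $A$ are tuples of $\pi_A^\ast\I{U}$-cocomplete $\Over{\BB}{A}$-categories, and over $\ord{1}$ these are exactly the $\pi_A^\ast\I{U}$-cocomplete $\Over{\BB}{A}$-categories, so the fibre is $\ICat_{\BB}^{\cocont{\I{U}}}$ (using that a functor between $\pi_A^\ast\I{U}$-cocomplete $\BB$-categories is automatically $\I{U}$-cocontinuous when it is an equivalence, and that the $1$-ary multilinear maps are precisely the $\I{U}$-cocontinuous ones). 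For the second point, I would unwind the explicit description of the cocartesian morphisms produced in the proof of Proposition~\ref{prop:UConstrGivesCocartFib}: over an inert map $\rho^j\colon\ord{n}\to\ord{1}$, the cocompletion-with-relations $\IPSh_{\Univ}^{(\I{U},\square R)}$ of a single $\BB$-category is just that $\BB$-category back again (there are no relations to impose), so the cocartesian pushforward along $\rho^j$ is the $j$-th projection; hence the Segal map $\ICat_{\BB}^{\cocont{\I{U}},\otimes}_{\ord{n}}\to\prod_{i=1}^n\ICat_{\BB}^{\cocont{\I{U}},\otimes}_{\ord{1}}$ is the canonical equivalence $\prod_{i=1}^n\ICat_{\BB}^{\cocont{\I{U}}}$, which holds section-wise and therefore in $\Cat(\BB)$.

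Finally I would note that the locality statements recorded along the way — Remark~\ref{rem:locallyMultilinearMaps} and Remark~\ref{rem:BCCocompletionRelations} on the compatibility of cocompletions with relations with \'etale base change — guarantee that everything is stable under restriction along $\pi_A$, so that the construction is indeed internal rather than merely section-wise. The main (and only real) obstacle is the bookkeeping in identifying the cocartesian lifts over inert maps with the projection functors, which is why I would lean on the concrete formula for $g_j$ from the proof of Proposition~\ref{prop:UConstrGivesCocartFib}; once that identification is in place the Segal condition is formal, and the remainder is a routine application of Remark~\ref{rem:MonoidalBCategoriesExplicitly}.
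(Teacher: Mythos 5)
Your route is the paper's: the first claim is exactly the observation that $\ICat_{\BB}^{\cocont{\I{U}},\otimes}\to\Fin_*$ is the pullback of $q$ from Proposition~\ref{prop:UConstrGivesCocartFib} along $d^0$, and the second claim is the paper's (terse) remark that the fibrewise decomposition of $\ICat_{\BB}^\times$ over $\ord{n}$ into an $n$-fold product of the fibre over $\ord{1}$ restricts to the multilinear subcategory; your unwinding via Remark~\ref{rem:MonoidalBCategoriesExplicitly} is just a more explicit version of the same Segal-condition check.

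One justification you give is wrong, even though the statement it supports is true. Over an inert map $\rho^j$ the cocartesian pushforward constructed in the proof of Proposition~\ref{prop:UConstrGivesCocartFib} is $\IPSh_{\Univ}^{(\I{U},R_j)}(\I{C}_j)$, and this is \emph{not} a cocompletion ``with no relations to impose'': the set $R_j$ from Construction~\ref{constr:BoxtimesOfRi} consists of $\I{U}$-indexed cocones of $\I{C}_j$ itself, and it is precisely these relations that make the construction collapse. Indeed, by Remark~\ref{rem:UniversalMultilinFunctor} in the case $n=1$, $\iFun{\I{C}_j}{\I{E}}_{R_j}$ is the $\BB$-category of $\I{U}$-cocontinuous functors, so Proposition~\ref{prop:CocompletionRelations} shows that precomposition with $j_{\I{C}_j}$ identifies $\I{U}$-cocontinuous functors out of $\IPSh_{\Univ}^{(\I{U},R_j)}(\I{C}_j)$ with those out of the (already $\I{U}$-cocomplete) $\I{C}_j$; hence $j_{\I{C}_j}$ is an equivalence and the inert pushforward is, up to this equivalence, the projection. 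Had there genuinely been no relations, the pushforward would instead be the free $\I{U}$-cocompletion $\IPSh_{\Univ}^{\I{U}}(\I{C}_j)$, which is strictly larger than $\I{C}_j$, and your identification of the Segal maps with the canonical product equivalences would break down. Alternatively you can sidestep this computation, as the paper implicitly does: the cocartesian lifts of inert maps in $\ICat_{\BB}^\times$ have a single nontrivial component which is an identity, hence are locally $\I{U}$-multilinear and remain cocartesian in the subcategory, so the Segal condition reduces to the statement that a tuple, respectively a fibrewise map of tuples, lies in $\ICat_{\BB}^{\cocont{\I{U}},\otimes}$ if and only if each component is $\I{U}$-cocomplete, respectively $\I{U}$-cocontinuous.
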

\begin{proof}
	Since the map $\ICat_{\BB}^{\cocont{\I{U}},\otimes}\to\Fin_*$ is a pullback of the functor $q$ from Proposition~\ref{prop:UConstrGivesCocartFib}, the same proposition immediately implies the first claim. Moreover, the straightforward observation that for every $n\geq 0$ the equivalence
	\begin{equation*}
		(\ICat_{\BB}^\times)_n\simeq \prod_{i=1}^n (\ICat_{\BB}^\times)_1
	\end{equation*}
	restricts to an equivalence
	\begin{equation*}
		(\ICat_{\BB}^{\cocont{\I{U}},\otimes})_n\simeq \prod_{i=1}^n (\ICat_{\BB}^{\cocont{\I{U}},\otimes})_1
	\end{equation*}
	shows the second claim.
\end{proof}

\begin{remark}
	\label{rem:UnitIsCocompletion}
	By unstraightening the cocartesian fibration $ q $ from Proposition~\ref{prop:UConstrGivesCocartFib} we get a functor $ \Delta^1 \rightarrow \ICMon(\ICat_{\BB}) $ and therefore a morphism of symmetric monoidal $ \BB $-categories $L \colon \ICat_\BB^\times \rightarrow \ICat_\BB^{\cocont{\I{U}},\otimes} $. Note that the pullback $\Delta^1\times_{\Delta^1\times\Fin_*}\I{M}^\otimes_{\I{U}}\to \Delta^1$ of $q$ along $\id\times\ord{1}\colon\Delta^1\to\Delta^1\times\Fin_*$ is also a \emph{cartesian} fibration: in fact, by making use of~\cite[Proposition~6.5.1]{MCocartesian} this follows from the straightforward observation that the adjunction $(d^1\dashv s^0)\colon \Delta^1\otimes\ICat_{\BB}\leftrightarrows \ICat_{\BB}$ restricts to an adjunction $\Delta^1\times_{\Delta^1\times\Fin_*}\I{M}^\otimes_{\I{U}}\leftrightarrows \ICat_{\BB}$. By~\cite[Corollary~6.5.5]{MCocartesian}, this means that $L$ is the left adjoint of the inclusion $\ICat_{\BB}^{\cocont{\I{U}}}\into\ICat_{\BB}$ as provided by~\cite[Corollary~7.1.15]{MWColimits}.
	In particular we see that the $ \BB $-category underlying the tensor unit of $ \ICat_{\BB}^{\cocont{\I{U}},\otimes}$ is equivalent to the free $\I{U}$-cocompletion of the point $ \IPSh^{\I{U}}(1)$.
\end{remark}

\begin{remark}
	\label{rem:UCocompSymMonCat}
	By a similar argument as in Remark~\ref{rem:UnitIsCocompletion}, the projection $\I{M}_{\I{U}}^\otimes\to\Delta^1$ is both cartesian and cocartesian. Therefore, one also obtains an adjunction $(L\dashv i)\colon \ICat_{\BB}^{\cocont{\I{U},\otimes}}\leftrightarrows\ICat_{\BB}^\times$ in which $i$ is simply the inclusion. Since the projection $\I{M}_{\I{U}}^\otimes\to\Fin_*$ carries every map in $\I{M}_{\I{U}}^\otimes$ that is cartesian over $\Delta^1$ to an equivalence, taking global sections and pulling back along the map $\Fin_*\to\Gamma\Fin_*$ yields a relative adjunction $\Cat(\BB)^{\cocont{\I{U}},\otimes}\leftrightarrows\Cat(\BB)^\times$ over $\Fin_*$. As both maps are morphisms of $\infty$-operads, we obtain an induced adjunction
	\[
	(L\dashv i)\colon \CAlg(\Cat(\BB)^{\cocont{\I{U}}})\leftrightarrows\CAlg(\Cat(\BB))\simeq\Cat(\BB)^\otimes
	\]
	of $\infty$-categories. By unwinding the definitions, we see that a symmetric monoidal $ \BB $-category $ \I{C}^\otimes $ lies in $  \CAlg(\Cat(\BB)^{\cocont{\I{U}}}) $ if and only if its underlying $\BB$-category is $ \I{U} $-cocomplete and the functor $	-\otimes- \colon \I{C} \times \I{C} \rightarrow \I{C}$ is $ \I{U} $-bilinear.
	In particular it follows from Remark~\ref{rem:UnitIsCocompletion} that $ \IPSh^{\I{U}}(1) $ can be canonically equipped with the structure of a symmetric monoidal  $ \BB $-category $ \IPSh^{\I{U}}(1)^\otimes  $ such that $ - \otimes - \colon  \IPSh^{\I{U}}(1) \times \IPSh^{\I{U}}(1) \rightarrow \IPSh^{\I{U}}(1) $ is $ \I{U} $-bilinear and that the canonical functor $ 1 \rightarrow \IPSh^{\I{U}}(1)^\otimes $ induced by the adjunction unit is symmetric monoidal.
	So in particular we have a commuative diagram
	\[\begin{tikzcd}
		{1 \times 1} & {1} \\
		{\IPSh^{\I{U}}(1) \times \IPSh^{\I{U}}(1)} & {\IPSh^{\I{U}}(1)}.
		\arrow[from=1-1, to=1-2]
		\arrow[from=1-2, to=2-2]
		\arrow[from=1-1, to=2-1]
		\arrow[from=2-1, "-\otimes- ", to=2-2]
	\end{tikzcd}\]
	By the universal property of $ \IPSh^{\I{U}}(1) $ and Lemma~\ref{lem:limitpreservationswap} there is a unique such functor $-\otimes -$, which must therefore coincide with the product functor $ \IPSh^{\I{U}}(1) \times \IPSh^{\I{U}}(1) \to \IPSh^{\I{U}}(1)$, see \cite[Proposition 7.3.8]{MWColimits}.
\end{remark}

\begin{example}
	\label{rem:FreeCocompIsInitialPBFromalism}
	Let $ \BB = \PSh(\CC) $ for some small $ \infty $-category $ \CC $ and let $ P \subseteq \CC $ be a subcategory that is closed under pullbacks.
	Then $ P $ generates a local class $ W $ in $ \PSh(\CC) $ and therefore a full subcategory $ \Univ_W \hookrightarrow \Univ $.
	Then \cite[Propositions 5.4.2 and 5.4.5]{MWColimits} imply together with Remark~\ref{rem:UCocompSymMonCat} that the $ \infty $-category $ \CAlg(\Cat(\BB)^{\cocont{\Univ_W}}) $ is equivalent to the $ \infty $-category of \emph{pullback formalisms} in the sense of~\cite[\S 2.2]{drew2020universal}.
	By Remarks~\ref{rem:UnitIsCocompletion} and \ref{rem:UCocompSymMonCat} the initial object of $ \CAlg(\Cat(\BB)^{\cocont{{\Univ_W}}}) $ is equivalent to the free $\Univ_W $-cocompletion of the point $ \IPSh^{\Univ_W}(1) $ equipped with the cartesian monoidal structure.
	Since \cite[Example 7.3.5]{MWColimits} shows that $ \IPSh^{\Univ_W}(1) $ agrees with the geometric pullback formalism constructed in \cite[\S 4]{drew2020universal}, this gives a new proof of \cite[Theorem 3.25]{drew2020universal}.
	Furthermore our proof yields a slightly more general result because in loc. cit. the assumption that $ \CC $ is a $ 1$-category was made.
\end{example}

We will now move one universe up and consider the case where $ \I{U} = \Cat_{\BB} $ is the internal class of small $ \BB $-categories in $ \Cat_{\BBB} $.
By the above we obtain a symmetric monoidal structure $ \ICat_{\BBB}^{\cc,\otimes}$ on the very large $ \BB $-category $ \ICat_{\BBB}^{\cc} $ of cocomplete large $ \BB $-categories and cocontinuous functors.

\begin{proposition}
	\label{prop:tensorOfPresIsPres}
	The tensor product $ - \otimes - \colon \ICat_{\BB}^{\cc} \times \ICat_{\BB}^{\cc} \rightarrow \ICat_{\BB}^{\cc} $ of cocomplete $ \BB $-categories restricts to a functor $-\otimes - \colon \ILPr_\BB \times \ILPr_\BB \rightarrow \ILPr_\BB $.
	Therefore $  \ILPr_\BB $ inherits the structure of a symmetric monoidal $ \BB $-category from  $ \ICat(\BB)^{\cc,\otimes} $.
\end{proposition}
\begin{proof}
	In light of the observation that the tensor unit in $\ICat_{\BB}^\cc$ is given by the presentable $\BB$-category $\Univ$, the second claim follows from  Remark~\ref{rem:SymMonStronSubcats}, so it suffices to show the first one.
	It will be enough to see that if $ \I{D} $ and $ \I{E} $ are presentable then so is their tensor product $ \I{D} \otimes \I{E} $. By Corollary~\ref{cor:characterisationPresentableSheaves}, we may find a sound doctrine $\I{U}$ and $\I{U}$-cocomplete (small) $\BB$-categories $\I{C}$ and $\I{C}^\prime$ such that $\I{D}\simeq\IShv_{\Univ}^{\op(\I{U})}(\I{C})$ and $\I{E}\simeq\IShv_{\Univ}^{\op(\I{U})}(\I{C}^\prime)$. If $\I{X}$ is an arbitrary cocomplete large $\BB$-category, we compute
	\begin{align*}
		\IFun^\cc(\I{D}\otimes\I{E},\I{X}) &\simeq \IFun^\cc(\I{D},\IFun^\cc(\I{E},\I{X}))\\
		&\simeq \IFun^{\cocont{\I{U}}}(\I{C},\IFun^{\cocont{\I{U}}}(\I{C}^\prime,\I{X}))\\
		&\simeq \IFun^{\mult{\I{U}}}(\I{C}\times\I{C}^\prime,\I{X})\\
		&\simeq \IFun^{\cocont{\I{U}}}(\I{C}\otimes^{\I{U}}\I{C}^\prime,\I{X}),
	\end{align*}
	where the first and third equivalence are consequences of Lemma~\ref{lem:limitpreservationswap}, the second equivalence follows from Corollary~\ref{cor:UCocompleteIndUPresentable} and where $-\otimes^{\I{U}}-$ denotes the tensor product in $\ICat_{\BB}^{\cocont{\I{U}}}$. Now $\I{U}$ being a doctrine implies that the tensor product $\I{C}\otimes^{\I{U}}\I{C}^\prime$ is small (see Remark~\ref{rem:CocompletionRelationsSmall}), hence another application of Corollary~\ref{cor:UCocompleteIndUPresentable} gives rise to an equivalence $\IFun^{\cocont{\I{U}}}(\I{C}\otimes^{\I{U}}\I{C}^\prime,\I{X})\simeq \IFun^\cc(\IShv_{\Univ}^{\op(\I{U})}(\I{C}\otimes^{\I{U}}\I{C}^\prime),\I{X})$. As the same corollary shows that $\IShv_{\Univ}^{\op(\I{U})}(\I{C}\otimes^{\I{U}}\I{C}^\prime)$ is presentable and as all of the above equivalences are natural in $\I{X}$, the result follows.
\end{proof}

\begin{definition}
	\label{def:presentablyMonoidal}
	A symmetric monoidal $\BB$-category $\I{D}$ is called \emph{presentably} symmetric monoidal if $\I{D}$ is contained in the image of the inclusion $\CAlg(\LPr(\BB))\into\CAlg(\Cat(\BBB)\simeq \Cat(\BBB)^\otimes$. In other words, $\I{D}$ is presentably symmetric monoidal if $\I{D}$ is a presentable $\BB$-category and the tensor functor $-\otimes -$ is bilinear.
\end{definition}

\begin{proposition}
	\label{prop:sheaveshasunivprop}
	Let $\I{D}$ and $\I{E}$ be presentable $ \BB $-categories.
	Then there is an equivalence of $\BB$-categories
	\[
	\IFun^\Lad( \IShv_{\I{E}}(\I{D}), \I{X} ) \simeq \IFun^{\bil}(\I{D} \times \I{E},\I{X})
	\]
	that is natural in $ \I{X}\in\LPr(\BB) $ and therefore in particular an equivalence $  \IShv_{\I{E}}(\I{D}) \simeq \I{D} \otimes \I{E} $.
\end{proposition}
\begin{proof}
	Let us denote by $\IFun^\c(-,-)\into\IFun(-,-)$ the full subcategory spanned by the continuous functors. 
	We claim that we have a chain of equivalences
	\begin{align*}
		\IFun^{\bil}(\I{D} \times \I{E},\I{X}) &\simeq \IFun^\Lad(\I{D},\IFun^\Lad(\I{E},\I{X})) \\
		&\simeq \IFun^\c(\I{D}^\op, \IFun^\Lad(\I{E},\I{X})^\op)^\op \\
		&\simeq \IFun^{\c}(\I{D}^\op,\IFun^\Rad(\I{X},\I{E}))^\op \\
		& \simeq \IFun^\Rad(\I{X},\IFun^{\c}(\I{D}^\op,\I{E}))^\op \\
		& \simeq \IFun^\Lad(\IFun^{\c}(\I{D}^\op,\I{E}), \I{X}) \\
		& \simeq \IFun^\Lad(\IShv_{\I{E}}(\I{D}),\I{X})
	\end{align*}
	that are natural in $ \I{E} $.
	The first equivalence follows from Lemma~\ref{lem:limitpreservationswap}, the second and the last equivalences are obvious and the third and fifth equivalences follow from Proposition~\ref{prop:adjointfunctorII}, so it remains to argue that the fourth equivalence holds.
	
	We may choose a sound doctrine $\I{U}$ such that $\I{D} \simeq \IShv_{\Univ}^{\I{U}}(\I{C})$ for some small $\I{U}$-cocomplete $\BB$-category $\I{C}$ (cf.\ Corollary~\ref{cor:characterisationPresentableSheaves}). Using Corollary~\ref{cor:UCocompleteIndUPresentable}, we only need to see that the equivalence $ \IFun(\I{C}^\op,\IFun(\I{X},\I{E})) \simeq \IFun(\I{X},\IFun(\I{C}^\op,\I{E}))$ restricts to an equivalence
	\[
	\IFun^{\cont{\I{U}}} (\I{C}^\op,\IFun^\Rad(\I{X},\I{E}))\simeq \IFun^\Rad(\I{X},\IFun^{\cont{\I{U}}}(\I{C}^\op,\I{E}))
	\]
	(where $\IFun^{\cont{\I{U}}}(-,-)$ denotes the full subcategory of $\IFun(-,-)$ that is spanned by the $\pi_A^\ast\I{U}$-continuous functors of $\Over{\BB}{A}$-categories, for all $A\in\BB$).
	We already know from (the dual version of) Lemma~\ref{lem:limitpreservationswap} that we have an equivalence
	\[
	\IFun^{\cont{\I{U}}}(\I{C}^\op,\IFun^{\c}(\I{X},\I{E})) \simeq \IFun^{\c}( \I{X},\IFun^{\cont{\I{U}}}(\I{C}^\op , \I{E})),
	\]
	hence Proposition~\ref{prop:adjointfunctorII} together with~\cite[Remark~5.3.2]{MWColimits} and Remark~\ref{rem:AccOfFunctorsIsLocal} implies that the proof is finished once we verify that a functor $f \colon \I{X} \rightarrow \IFun( \I{C}^\op,\I{E})$ is accessible if only if its transpose $f^\prime \colon \I{C}^\op \rightarrow \IFun( \I{X}, \I{E})$ takes values in the full subcategory $\IFun^{\acc}(\I{X},\I{E})$ spanned by the accessible functors.
	If $ f $ is accessible there is some sound doctrine $ \I{U} $ such that $ f $ is $ \IFilt_{\I{U}} $-cocontinuous.
	But then it follows from Lemma~\ref{lem:limitpreservationswap} that $ f^\prime$ takes values in the $\BB$-category $ \IFun^{\cocont{\IFilt_{\I{U}}}}(\I{X}, \I{E} ) \into \IFun^{\acc}(\I{X}, \I{E} )$, as desired.
	For the converse, suppose that $ f^\prime $ takes values in $  \IFun^{\acc} (\I{X}, \I{E} )$. Let $z\colon \I{C}_0\to\I{C}$ be the tautological object. Then $f^\prime(z)\colon \pi_{\I{C}_0}^\ast\I{X}\to\pi_{\I{C}_0}^\ast\I{E}$ is $\pi_{\I{C}_0}^\ast\I{U}$-accessible for some sound doctrine $\I{U}$. Since \emph{every} object in $\I{C}$ is a pullback of $z$, this already shows that $f^\prime$ takes values in $\IFun^{\cocont{\IFilt_{\I{U}}}}(\I{X},\I{E})$, hence Lemma~\ref{lem:limitpreservationswap} shows that $f$ is accessible.
\end{proof}

For later use we also record the following explicit description of the universal bilinear functor if one of the factors is presheaf category:

\begin{lemma}
	\label{lem:explicit_descr_of_univ_bilin}
	Let $ \I{C} $ be small $ \BB $-category and $ \I{D} $ a presentable $ \BB $-category.
	Then under the identification $ \IPSh(\I{C}) \otimes \I{D} \simeq \iFun(\I{C}^{\op},\I{D}) $ the universal bilinear functor $ \tau \colon \IPSh(\I{C}) \times \I{D} \to \iFun(\I{C}^{\op},\I{D})  $ is given by the transpose of the composite
	\[
	\tau' \colon \I{C}^{\op} \times \IPSh(\I{C}) \times \I{D} \xrightarrow{\ev} \Univ[\BB] \times \I{D} \xrightarrow{- \otimes -} \I{D}.
	\]
\end{lemma}
\begin{proof}
	We at first prove the claim when $ \I{D} = \IPSh(\I{D}_0) $ as well.
	In this case it is an easy consequence of \cite[Theorem~7.1.1]{MWColimits} that the universal bilinear functor $ \tau $ is the unique bilinear functor $ \IPSh(\I{C}) \times \IPSh(\I{D}_0) \to \iFun(\I{C}^{\op},\IPSh(\I{D}_0)) \simeq \IPSh(\I{C} \times \I{D}_0) $ such that the composite
	\[
	\I{C}_0 \times \I{D}_0 \xrightarrow{h_{\I{C}} \times h_{\I{D}_0}} \IPSh(\I{C}) \times \IPSh(\I{D}_0) \to \IPSh(\I{C} \times \I{D}_0)
	\]
	is given by the Yoneda-embedding $ h_{\I{C} \times \I{D}_0} $ (see also the proof of Proposition~\ref{prop:tensorOfPresIsPres}).
	Recall that the functor $ - \otimes - $ is the unique bilinear functor that corresponds to the identity under the equivalence
	\[
	\Fun_{\BB}^{\bil}(\Univ[\BB] \times \IPSh(\I{D}_0), \IPSh(\I{D}_0)) \simeq \Fun_{\BB}^L(\IPSh(\I{D}_0), \IPSh(\I{D}_0)	).
	\]
	From this it follows that the composite
	\[
	\I{C}^{\op} \times \IPSh(\I{C}) \times \IPSh(\I{D}_0) \xrightarrow{\ev} \Univ[\BB] \times \IPSh(\I{D}_0) \xrightarrow{- \otimes -} \IPSh(\I{D}_0) 
	\]
	transposes to the functor
	\[
	\I{C}^{\op} \times\IPSh(\I{C}) \times \I{D}_0^{\op}\times\IPSh(\I{D}_0) \xrightarrow{\ev \times \ev} \Univ \times \Univ \xrightarrow{- \times - } \Univ[\BB].
	\]
	But after composing with $ \I{C}^{\op}  \times \I{C} \times \I{D_0}^{\op}  \times \I{D_0} \xrightarrow{\id \times h_{\I{C}} \times \id \times h_{\I{D}}} \I{C}^{\op} \times\IPSh(\I{C}) \times \I{D}_0^{\op}\times\IPSh(\I{D}_0) $ this functor yields $ \map{\I{C} \times \I{D}_0}(-,-) $ and thus transposes to the Yoneda-embedding $ h_{\I{C} \times \I{D}_0} $, as desired.
	Now for the case of a general presentable $ \BB $-category $ \I{D} $ we pick a Bousfield localization $ L \colon \IPSh(\I{D}_0) \to \I{D} $.
	Note that we have two commutative squares
	\[\begin{tikzcd}
		{\I{C}^{\op} \times \IPSh(\I{C}) \times \IPSh(\I{D}_0)} & {\IPSh(\I{D}_0)} \\
		{\I{C}^{\op} \times \IPSh(\I{C}) \times \I{D}} & {\I{D}}
		\arrow["{\tau'}"', shift right, from=1-1, to=1-2]
		\arrow["\tau", shift left, from=1-1, to=1-2]
		\arrow["{\id\times L}"', from=1-1, to=2-1]
		\arrow["L"', from=1-2, to=2-2]
		\arrow["{\tau'}"', shift right, from=2-1, to=2-2]
		\arrow["\tau", shift left, from=2-1, to=2-2]
	\end{tikzcd}\]
	with and without the prime.
	But by the first part of  the proof the upper two functors agree and thus so do the lower two because $ \id \times L $ has a section.
\end{proof}

\subsection{$\BB$-modules as presentable $\BB$-categories}
\label{sec:BModules}
By the discussion in the previous section, there is a symmetric monoidal functor $L\colon \ICat_{\BBB}^{\times}\to\ICat_{\BBB}^{\cc,\otimes}$ whose underlying functor of very large $\BB$-categories is the left adjoint of the inclusion $\ICat_{\BBB}^\cc\into\ICat_{\BBB}$. Upon taking global sections, we thus deduce from~\cite[Corollary~7.3.2.7]{Lurie2017} that the inclusion determines a lax symmetric monoidal functor $\Cat(\BBB)^{\cc,\otimes}\into\Cat(\BBB)^\times$ (of symmetric monoidal $\infty$-categories, i.e.\ a map of $\infty$-operads) and therefore a fortiori a lax symmetric monoidal functor $\LPr(\BB)^\otimes\into\Cat(\BBB)^\times$. Moreover, as the global sections functor $\Gamma$ preserves limits, it defines a symmetric monoidal functor $\Cat(\BBB)^\times\to\CatSS^\times$.
Since a multilinear functor in $\Cat(\BB)$ induces a multilinear functor on the underlying $\infty$-categories of global sections, it is evident that the induced map $\Cat(\BBB)^{\cc,\otimes}\to \CatSS^\times$ takes values in $\CatSS^{\cc,\otimes}\into\CatSS^\times$ and therefore defines a lax symmetric monoidal functor $\Gamma^{\cc,\otimes}\colon\Cat(\BBB)^{\cc,\otimes}\to\CatSS^{\cc,\otimes}$. Upon restricting this functor to presentable $\BB$-categories, we now end up with a lax symmetric monoidal functor $\Gamma^{\cc,\otimes}\colon \LPr(\BB)^\otimes\to (\LPrS)^\otimes$
that in turn induces a map
\begin{equation*}
	\Gamma^{\lin}\colon\LPr(\BB)\simeq\Mod_{\Univ}(\LPr(\BB))\to\Mod_{\BB}(\LPrS)
\end{equation*}
(where $\BB$ is regarded as the algebra in $\LPrS$ that is given by image of the trivial algebra $\Univ$ in $\LPr(\BB)$ along $\Gamma^{\cc,\otimes}$). Note that this is precisely the \emph{cartesian} monoidal structure on $\BB$ as the product bifunctor $\Univ\times\Univ\to\Univ$ is bilinear, cf.~\cite[Lemma~6.2.7]{MCocartesian}.

The main goal of this section is to show that $\Gamma^{\lin}$ admits a fully faithful left adjoint that embeds $\Mod_{\BB}(\LPrS)$ into $\LPr(\BB)$ and to give an explicit description of this embedding.
As a preliminary step, we need to show that the global sections functor $\Gamma\colon\LPr(\BB)\to\LPrS$ admits a left adjoint. To that end, recall from Example~\ref{ex:tensorConstructionPresentable} that there is a functor $ - \otimes \Univ \colon \RPrS \rightarrow \RPr(\BB)$ that assigns to a presentable $\infty$-category $ \DD $ the presentable $ \BB $-category that is given by the sheaf $\DD\otimes\Over{\BB}{-}$. Using Proposition~\ref{prop:RPrOpposite}, we may equivalently regard this map as a functor $\LPrS\to\LPr(\BB)$. We now obtain:
\begin{proposition}
	\label{prop:TensorIsLeftAdjointToPresGlobalSections}
	The functor $- \otimes \Univ$ is left adjoint to the global sections functor $\Gamma \colon \LPr(\BB) \rightarrow \LPrS$.
\end{proposition}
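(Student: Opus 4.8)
The plan is to establish the adjunction by exhibiting a compatible family of equivalences on mapping spaces. Concretely, for a presentable $\infty$-category $\DD$ and a presentable $\BB$-category $\I{E}$, I would like to produce a natural equivalence
\begin{equation*}
\map{\LPr(\BB)}(\DD\otimes\Univ,\I{E})\simeq\map{\LPrS}(\DD,\Gamma\I{E}).
\end{equation*}
The right-hand side is by definition the core of the $\infty$-category $\Fun^{\cc}(\DD,\Gamma\I{E})=\Fun^{\cc}(\DD,\Fun_{\BB}(1,\I{E}))$ of cocontinuous functors. The first key step is to understand the left-hand side. Recall that $\DD\otimes\Univ$ corresponds to the $\CatSS$-valued sheaf $A\mapsto \DD\otimes\Over{\BB}{A}$, and by the universal property of the tensor product of presentable $\infty$-categories combined with~\cite[Example~3.2.11]{Martini2021a}, a cocontinuous functor $\DD\otimes\Univ\to\I{E}$ of $\BB$-categories is the same datum as a cocontinuous functor $\DD\otimes\Univ\to\I{E}$ that section-wise is a cocontinuous functor $\DD\otimes\Over{\BB}{A}\to\I{E}(A)$ compatible with base change. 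Here I would use Theorem~\ref{thm:characterisationPresentableCategories} to pass freely between the internal and section-wise descriptions of cocontinuous functors between presentable $\BB$-categories.

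The second step is to exploit the universal property of $\DD\otimes\Univ$ as a tensor product \emph{internal} to $\LPr(\BB)$. Since $\Univ$ is the monoidal unit of $\ICat_{\BBB}^{\cc,\otimes}$ (Remark~\ref{rem:UnitIsCocompletion}) and more relevantly the unit of the symmetric monoidal structure on $\ILPr_{\BB}$, I would argue that $-\otimes\Univ$ is characterised by the property that cocontinuous $\BB$-functors out of $\DD\otimes\Univ$ correspond to cocontinuous ordinary functors out of $\DD$ into the underlying $\infty$-category of global sections. The cleanest route is via Proposition~\ref{prop:sheaveshasunivprop}: writing $\I{E}\simeq\IShv_{\I{E}}(\Univ)$ (by Proposition~\ref{prop:UnivSheavesRepresentables}) and more generally using the identification $\IShv_{\I{E}}(\I{D})\simeq\I{D}\otimes\I{E}$, one can compute $\map{\LPr(\BB)}(\DD\otimes\Univ,\I{E})$ in terms of bilinear functors and reduce to the external statement. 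Alternatively, and perhaps more directly, I would note that the functor $-\otimes\Univ\colon\LPrS\to\LPr(\BB)$ is by construction obtained by applying $-\otimes_{\LPrS}(-)$ to $\Univ$, and a general categorical fact (the unit of a symmetric monoidal structure is initial among algebras, and tensoring with a fixed presentable category is left adjoint to the internal hom) gives the adjunction once we identify the internal hom $\iFun{\Univ}{\I{E}}^{\cc}$ with $\I{E}$ and its global sections with $\Gamma\I{E}$.

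The third step is naturality: the equivalences must be compatible as $\DD$ and $\I{E}$ vary, which amounts to checking that the unit and counit transformations assemble correctly. For the unit $\DD\to\Gamma(\DD\otimes\Univ)=\DD\otimes\Gamma(\Univ)=\DD\otimes\SS\simeq\DD$ I would verify it is (equivalent to) the identity, using that $\SS$ is the unit of $\LPrS$ and that $\Gamma\Univ\simeq\SS$. For the counit $\Gamma\I{E}\otimes\Univ\to\I{E}$ I would use the section-wise description: over $A\in\BB$ it is the canonical map $\Fun_{\BB}(1,\I{E})\otimes\Over{\BB}{A}\to\I{E}(A)$ coming from the action of $\Over{\BB}{A}$ on $\I{E}(A)$ and the base-change structure, and I would check the triangle identities section-wise, reducing to the corresponding statement for the trivial topos via \'etale base change (Remark~\ref{rem:BCPrL}). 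The main obstacle I anticipate is the bookkeeping required to make the section-wise identifications genuinely natural in $\BB$ (equivalently, to package them into a map of sheaves on $\BB$ rather than a levelwise equivalence); this is where invoking Proposition~\ref{prop:sheaveshasunivprop} and the already-established symmetric monoidal structure on $\ILPr_{\BB}$ pays off, since those results encode exactly the needed coherence, so that what remains is genuinely routine.
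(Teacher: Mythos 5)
There is a genuine gap at the heart of your second step. The statement you must prove is an adjunction \emph{between} the external $\infty$-category $\LPrS$ and the internal one $\LPr(\BB)$, and none of the tools you invoke crosses that bridge: the identification of $\Univ$ as the monoidal unit, the internal hom formula $\iFun{\Univ}{\I{E}}^{\cc}\simeq\I{E}$, and Proposition~\ref{prop:sheaveshasunivprop} ($\IShv_{\I{E}}(\I{D})\simeq\I{D}\otimes\I{E}$) are all statements about presentable $\BB$-categories and the tensor product \emph{internal} to $\ILPr_{\BB}$, whereas $\DD\otimes\Univ$ is the external tensoring of an object $\DD\in\LPrS$ (the sheaf $A\mapsto\DD\otimes\Over{\BB}{A}$), and $\DD$ is not itself a $\BB$-category. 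At best these facts yield equivalences of the form $\map{\LPr(\BB)}(\I{D}\otimes\Univ,\I{E})\simeq\map{\LPr(\BB)}(\I{D},\I{E})$ for \emph{internal} $\I{D}$, which is not the assertion. The actual content — that cocontinuous $\BB$-functors out of $\DD\otimes\Univ$ are determined by cocontinuous functors $\DD\to\Gamma\I{E}$ of $\infty$-categories — is established in the paper by choosing a presentation $\DD\simeq\Shv^{\kappa}_{\SS}(\CC)$ with $\CC$ small and $\kappa$-cocomplete, using Proposition~\ref{prop:USheavesPresentable} to identify $\DD\otimes\Univ\simeq\IShv_{\Univ}^{\ILConst_{\kappa}}(\CC)$ and to invoke its universal property, and then passing from internal functors out of the constant $\BB$-category $\CC$ to external functors out of $\CC$ via the adjunction $\const_{\BB}\dashv\Gamma$, finally checking via a square of Yoneda embeddings that the resulting equivalence really is the candidate counit/unit map. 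Nothing in your outline substitutes for this reduction to a small generating category, and the "coherence" you defer to Proposition~\ref{prop:sheaveshasunivprop} and the monoidal structure on $\ILPr_{\BB}$ is precisely the part those results do not supply.

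Two further concrete problems. First, your proposed unit is computed incorrectly: $\Gamma(\DD\otimes\Univ)\simeq\DD\otimes\Over{\BB}{1}\simeq\DD\otimes\BB$, not $\DD\otimes\Gamma(\Univ)\simeq\DD$; global sections do not commute with the tensoring in this way. The unit of the adjunction is the map $\DD\to\DD\otimes\BB$ induced by $\const_{\BB}\colon\SS\to\BB$, which is essentially never an equivalence, so the triangle-identity check you sketch starts from a false identification. Second, the suggestion to "reduce to the trivial topos via étale base change" is not available: étale base change replaces $\BB$ by $\Over{\BB}{A}$, never by $\SS$, so it cannot reduce the counit verification for a general $\BB$ to the case of spaces.
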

\begin{proof}
	The composite $\Gamma \circ (- \otimes \Univ)$ is by definition given by the endofunctor $- \otimes \BB \colon \LPrS \rightarrow \LPrS$
	so that the functor $\Gamma_\ast\colon \Shv_{\BB}(-)\to \id_{\RPrS}$ defines a natural transformation $\eta\colon \id_{\LPrS}\to -\otimes \BB$ upon passing to opposite $\infty$-categories.
	We need to show that the composition
	\begin{equation*}
		\tag{$\ast$}\label{eq:adjunction}
		\map{\LPr(\BB)}(\DD\otimes\Univ, \I{E}) \rightarrow \map{\LPrS}(\DD \otimes \BB, \Gamma\I{E}) \xrightarrow{\eta_{\DD}^*} \map {\LPrS}(\DD, \Gamma \I{E})
	\end{equation*}
	is an equivalence.
	Choose a regular cardinal $\kappa$ such that $\DD\simeq\Shv^{\kappa}_{\SS}(\CC)$ for some small $\infty$-category $\CC$ that admits $\kappa$-small colimits. Using Proposition~\ref{prop:USheavesPresentable}, we obtain an equivalence $\DD\otimes\Univ\simeq\IShv_{\Univ}^{\ILConst_{\kappa}}(\CC)$ with respect to which the map $\eta_{\DD}$ corresponds to the left adjoint of $\Gamma_\ast\colon\Shv_{\BB}^\kappa(\CC)\to\Shv_{\SS}^\kappa(\CC)$. Again using Proposition~\ref{prop:USheavesPresentable}, we have equivalences
	\begin{equation*}
		\map{\LPr(\BB)}(\DD\otimes\Univ,\I{E})\xrightarrow{(h_{\CC}^\BB)^\ast} \map{\Cat(\BBB)^{\cocont{\ILConst_\kappa}}}(\CC,\I{E})\simeq \map{\CatSS^{\cocont{\kappa}}}(\CC,\Gamma\I{E})\xrightarrow{(h_{\CC}^\SS)^\ast}\map{\LPrS}(\DD,\Gamma\I{E})
	\end{equation*}
	where $h_{\CC}^\BB$ is the Yoneda embedding in $\Cat(\BBB)$ and $h_{\CC}^\SS$ is the Yoneda embedding in $\CatSS$. On account of the commutative square
	\begin{equation*}
		\begin{tikzcd}
			\CC\arrow[d]\arrow[r, hookrightarrow, "h_{\CC}^\SS"] & \Shv_{\SS}^\kappa(\CC)\arrow[d, "\eta_{\DD}"]\\
			\Gamma\CC\arrow[r, hookrightarrow, "\Gamma(h_{\CC}^\BB)"] & \Shv_{\BB}^\kappa(\CC)
		\end{tikzcd}
	\end{equation*}
	in which the vertical map on the left is the unit of the adjunction $\const_{\BB}\dashv\Gamma$ (see~\cite[Lemma~6.4.5]{MCocartesian}), the composition of the above chain of equivalences recovers the map in~(\ref{eq:adjunction}), hence the claim follows.
\end{proof}

\begin{proposition}
	\label{prop:ModulesAreaReflectiveSubcat}
	The functor $\Gamma^{\lin} \colon \LPr(\BB) \to \Mod_\BB(\LPrS)$ admits a fully faithful left adjoint.
\end{proposition}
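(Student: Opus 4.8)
The plan is to construct the left adjoint explicitly as a "linearisation" functor and then verify fully faithfulness via a unit computation. By Proposition~\ref{prop:TensorIsLeftAdjointToPresGlobalSections}, the functor $-\otimes\Univ\colon\LPrS\to\LPr(\BB)$ is left adjoint to $\Gamma$. The first step is to upgrade this to a symmetric monoidal adjunction: the functor $-\otimes\Univ$ should be made symmetric monoidal for the tensor products on $\LPrS$ and $\LPr(\BB)$, so that $\Gamma^{\cc,\otimes}$ becomes a lax symmetric monoidal right adjoint. This follows from the fact that $-\otimes\Univ$ is built by applying $-\otimes\Univ\colon\RPrS\to\Cat(\BBB)$ which is itself compatible with the monoidal structures (as recorded in the discussion preceding Proposition~\ref{prop:sheaveshasunivprop}, and using that $\Univ$ is the unit of $\ICat_{\BB}^{\cc,\otimes}$), together with the universal property of the tensor product via multilinear functors (Lemma~\ref{lem:limitpreservationswap}). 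Concretely, for presentable $\infty$-categories $\DD,\DD'$ the canonical map $(\DD\otimes\Univ)\otimes_{\BB}(\DD'\otimes\Univ)\to(\DD\otimes\DD')\otimes\Univ$ is an equivalence, which one checks by comparing universal properties of both sides as corepresenting bilinear functors out of $\DD\times\DD'$ into a cocomplete large $\BB$-category.

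**Construction of the left adjoint.** Given the symmetric monoidal structure on $-\otimes\Univ$, it sends the unit $\SS$ of $\LPrS$ to $\Univ$ and sends the algebra object $\BB\in\CAlg(\LPrS)$ (with its cartesian monoidal structure) to an algebra object in $\LPr(\BB)$. One identifies this algebra with $\Univ$ itself: indeed $\BB\otimes\Univ\simeq\Univ$ since $\Univ$ is the unit, and the algebra structure is the trivial one, matching the normalisation in the definition of $\Gamma^{\lin}$. Therefore $-\otimes\Univ$ induces a functor on module categories
\begin{equation*}
\theta\colon\Mod_{\BB}(\LPrS)\to\Mod_{\Univ}(\LPr(\BB))\simeq\LPr(\BB),
\end{equation*}
where the last equivalence holds because $\Univ$ is the monoidal unit. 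This $\theta$ is the candidate left adjoint. That $\theta$ is indeed left adjoint to $\Gamma^{\lin}$ follows formally: the symmetric monoidal adjunction $(-\otimes\Univ)\dashv\Gamma^{\cc,\otimes}$ passes to an adjunction on module $\infty$-categories over corresponding algebras (using \cite[Corollary~7.3.2.12]{Lurie2017} or the analogous statement), and unwinding the identifications of the module categories with $\LPr(\BB)$ and $\Mod_{\BB}(\LPrS)$ yields exactly the pair $(\theta,\Gamma^{\lin})$.

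**Fully faithfulness.** It then suffices to show the unit $\id\to\Gamma^{\lin}\theta$ is an equivalence. For an object $\I{M}\in\Mod_{\BB}(\LPrS)$, the underlying presentable $\infty$-category is a $\BB$-module, so $\I{M}\simeq\BB\otimes\I{M}$ via the action, and $\theta(\I{M})$ has underlying $\BB$-category $\I{M}\otimes\Univ$ computed relative to this module structure. Applying $\Gamma$ gives back $\I{M}\otimes\BB\simeq\I{M}$ as a $\BB$-module, using that $\I{M}$ is already a module, i.e.\ the unit map $\I{M}\to\I{M}\otimes\BB$ of the adjunction from Proposition~\ref{prop:TensorIsLeftAdjointToPresGlobalSections} becomes an equivalence precisely on objects in the image of $-\otimes_{\BB}\BB$, which is all of $\Mod_{\BB}(\LPrS)$. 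One must check this identification is compatible with the module structures, which is where the symmetric monoidal enhancement of the adjunction is essential. I expect the main obstacle to be precisely this bookkeeping: verifying that the symmetric monoidal structure on $-\otimes\Univ$ is genuinely coherent (not just at the level of homotopy categories) and that the induced adjunction on module categories is correctly identified with $(\theta,\Gamma^{\lin})$ under the equivalences $\Mod_{\Univ}(\LPr(\BB))\simeq\LPr(\BB)$ and $\Mod_{\BB}(\LPrS)$. The actual unit computation, once the monoidal framework is in place, should be a short diagram chase reducing to the fact that $\BB\otimes_{\BB}\I{M}\simeq\I{M}$.
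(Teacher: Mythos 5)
Your route (enhance $(-\otimes\Univ)\dashv\Gamma$ to a symmetric monoidal adjunction and pass to module $\infty$-categories) is genuinely different from the paper's, which simply observes that $\Gamma$ preserves colimits (via $\LPr(\BB)\simeq\RPr(\BB)^\op$ and Proposition~\ref{prop:limitsPrR}), admits the left adjoint $-\otimes\Univ$ (Proposition~\ref{prop:TensorIsLeftAdjointToPresGlobalSections}), and that the forgetful functor $U$ is monadic, and then applies the comparison criterion of \cite[Corollary~4.7.3.16]{Lurie2017} to the triangle $\Gamma\simeq U\circ\Gamma^{\lin}$ — no monoidal enhancement is needed. But your construction of the left adjoint contains a concrete error: the identification $\BB\otimes\Univ\simeq\Univ$ is false. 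The image of the algebra $\BB$ under $-\otimes\Univ$ is the sheaf $A\mapsto\BB\otimes\Over{\BB}{A}$, whose global sections are $\BB\otimes\BB$, which is not $\BB$ in general (e.g.\ $\BB=\Shv(X)$ gives $\Shv(X\times X)$); the phrase ``since $\Univ$ is the unit'' does not apply, as $\BB$ is not the unit of $\LPrS$. Consequently the functor induced on modules by the strong monoidal left adjoint lands in $\Mod_{\BB\otimes\Univ}(\LPr(\BB))$, not in $\Mod_{\Univ}(\LPr(\BB))\simeq\LPr(\BB)$. To repair this one must base change along the counit algebra map $\BB\otimes\Univ\to\Univ$, i.e.\ form $\Univ\otimes_{\BB\otimes\Univ}(-\otimes\Univ)$; this requires relative tensor products (geometric realisations of bar constructions) in $\LPr(\BB)$ and, for the unit computation, the fact that $\Gamma$ preserves such geometric realisations. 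These are exactly the ingredients your proposal never supplies — they come from Propositions~\ref{prop:limitsPrL} and~\ref{prop:limitsPrR}, and they are precisely what the citation of \cite[Corollary~4.7.3.16]{Lurie2017} packages in the paper's one-line proof.

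Relatedly, the full-faithfulness step conflates two units: the unit $\I{M}\to\I{M}\otimes\BB$ of the absolute adjunction from Proposition~\ref{prop:TensorIsLeftAdjointToPresGlobalSections} is essentially never an equivalence, not even for $\BB$-modules (take $\I{M}=\BB$), so the sentence claiming it becomes an equivalence on all of $\Mod_{\BB}(\LPrS)$ is wrong. The relevant unit is that of the module-level adjunction, $\I{M}\to\Gamma^{\lin}(\I{M}\otimes_{\BB}\Univ)$, which does compute to $\I{M}\otimes_{\BB}\BB\simeq\I{M}$ — but only after one knows that $\Gamma$ commutes with the bar construction computing the relative tensor product. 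Note also that strong monoidality of $-\otimes\Univ$ is itself not free; the paper establishes it only at the end of \S~\ref{sec:BModules}, after this proposition. Correctly executed, your approach essentially reproduces the paper's later explicit identification of the left adjoint with $-\otimes_{\BB}\Univ$ (Lemmas~\ref{lem:TensorConstrLandsInPrL} and~\ref{lem:TensorConstrPreColimits}), at considerably greater cost than the monadicity argument actually used to prove the statement.
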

\begin{proof}
	Note that since $ \LPr(\BB) \simeq \RPr(\BB)^\op$ it follows from Proposition~\ref{prop:limitsPrR} that the global sections functor $ \Gamma \colon \LPr(\BB) \rightarrow \LPrS $ preserves colimits.
	So in light of Proposition~\ref{prop:TensorIsLeftAdjointToPresGlobalSections}  we may apply~\cite[Corollary 4.7.3.16]{Lurie2017} to the commutative triangle
	\[
	\begin{tikzcd}[column sep=small]
		\LPr(\BB) \arrow[rd, "\Gamma"'] \arrow[rr, "\Gamma^{\lin}"] &       & \Mod_{\BB}(\LPrS) \arrow[ld, "U"] \\
		& \LPrS &                                                
	\end{tikzcd}	
	\]
	(where $U$ denotes the forgetful functor), which yields the claim.
\end{proof}

We will now give a more explicit description of the left adjoint from Proposition~\ref{prop:ModulesAreaReflectiveSubcat}. To that end, observe that 
the functor $\IFun(-,\Univ)\colon \Univ^\op\to\ICat_{\BBB}$ takes values in $\IRPr_{\BB}$ and therefore determines a limit-preserving map $\BB^\op\to \RPr(\BB)\simeq(\LPr(\BB))^\op$ which by postcomposition with $\Gamma^{\lin}$ results in a limit-preserving functor $\Over{\BB}{-}\colon \BB^\op\to \Mod_{\BB}(\LPrS)^\op$.
We now get a map
\[
\Mod_{\BB}(\LPrS)^\op\times\BB^\op \xrightarrow{(-\otimes_{\BB}\Over{\BB}{-})^\op} \Mod_\BB(\LPrS)^\op \to (\LPrS)^\op\simeq \RPrS\into \CatSS
\] 
and hence by adjunction a functor $\Mod_\BB(\LPrS)^\op \rightarrow \PSh_{\CatSS}(\BB)$.

\begin{lemma}
	\label{lem:TensorConstrLandsInPrL}
	The functor $\Mod_\BB(\LPrS)^\op \rightarrow \PSh_{\CatSS}(\BB)$ factors through $\RPr(\BB)$ and thus defines a functor
	\[
	- \otimes_\BB \Univ \colon \Mod_\BB(\LPrS) \rightarrow \LPr(\BB).
	\]
\end{lemma}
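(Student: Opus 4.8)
The plan is to verify, for each $M\in\Mod_\BB(\LPrS)$ and each $A\in\BB$, that the $\infty$-category $M\otimes_\BB\Over{\BB}{A}$ together with the restriction functors assembles into a $\CatSS$-valued sheaf on $\BB$ which, moreover, lies in the image of the inclusion $\IRPr_\BB\into\ICat_{\BBB}$, i.e.\ defines a presentable $\BB$-category with continuous transition functors. There are really two things to check: (1) the presheaf $A\mapsto M\otimes_\BB\Over{\BB}{A}$ satisfies descent, and (2) it takes values in $\LPrS$ with the transition maps being right adjoints (equivalently, by Theorem~\ref{thm:characterisationPresentableCategories}, that the associated $\BB$-category is $\Univ$-cocomplete and section-wise presentable, or—using $\LPr(\BB)\simeq\RPr(\BB)^\op$—that it satisfies the dual of the Beck--Chevalley conditions).

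First I would unwind the construction: the functor $\Over{\BB}{-}\colon\BB^\op\to\Mod_\BB(\LPrS)^\op$ is the composite of the limit-preserving functor $\iFun{-}{\Univ}\colon\Univ^\op\to\IRPr_\BB$, the equivalence $\IRPr_\BB\simeq\RPr(\Over{\BB}{-})$, passage to global sections $\RPr(\BB)\simeq\LPr(\BB)^\op$, and $\Gamma^{\lin}$. Since each of these preserves limits, $\Over{\BB}{-}$ sends colimits in $\BB$ to limits in $\Mod_\BB(\LPrS)^\op$, i.e.\ colimits in $\BB$ to limits in $\Mod_\BB(\LPrS)$ computed by the forgetful functor to $\LPrS$, where they agree with limits in $\RPrS$ (as the forgetful functors $\Mod_\BB(\LPrS)\to\LPrS$ and $\LPrS\simeq(\RPrS)^\op$ preserve limits). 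Now $-\otimes_\BB-\colon\Mod_\BB(\LPrS)\times\Mod_\BB(\LPrS)\to\Mod_\BB(\LPrS)$ preserves colimits separately in each variable—in particular, for fixed $M$, the functor $M\otimes_\BB -$ preserves colimits. Dualising (and using that the relevant (co)limits in $\Mod_\BB(\LPrS)$ are reflected by the forgetful functor to $\RPrS$, where $-\otimes-$ again preserves colimits in each variable), I conclude that $A\mapsto M\otimes_\BB\Over{\BB}{A}$, viewed as a functor $\BB^\op\to\RPrS\subseteq\CatSS$, carries colimits in $\BB$ to limits in $\CatSS$; this is exactly the sheaf condition. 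Hence the presheaf descends, giving a well-defined object of $\Shv_{\CatSS}(\BB)\simeq\Cat(\BBB)$.

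Next I would check that this $\BB$-category is presentable. Since the underlying functor $\BB^\op\to\CatSS$ factors through $\RPrS$ by construction, every section-category $M\otimes_\BB\Over{\BB}{A}$ is presentable and every transition functor $s^\ast$ is a right adjoint (being a morphism in $\RPrS$). By Theorem~\ref{thm:characterisationPresentableCategories}, condition~(6)—$\Univ$-cocompleteness plus section-wise presentability—would finish this. Section-wise presentability is immediate; for $\Univ$-cocompleteness I would use that $\Univ$-cocompleteness of a sheaf of presentable $\infty$-categories is governed by the left adjointability of pullback squares of transition functors, and this is precisely the statement that for a pullback square in $\BB$ the induced square in $\RPrS$ is right adjointable. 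But that square is obtained by applying the colimit-in-each-variable-preserving functor $M\otimes_\BB-$ to the square of $\RPrS$-morphisms coming from $\Over{\BB}{-}$; since $\iFun{-}{\Univ}\colon\Univ^\op\to\IRPr_\BB$ sends pullbacks in $\BB$ to right adjointable squares (this is part of what it means for the universe $\Univ$ to be $\Univ$-cocomplete together with the adjoint functor formalism, cf.\ the Kan extension machinery recalled in the preliminaries), and tensoring with $M$ preserves this adjointability, we get the required left adjointability. Therefore $M\otimes_\BB\Univ$ is presentable, and the functoriality in $M$ is clear from the construction, so we obtain $-\otimes_\BB\Univ\colon\Mod_\BB(\LPrS)\to\LPr(\BB)$ as claimed.

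The main obstacle I anticipate is the bookkeeping in the second part: making precise the claim that ``$M\otimes_\BB-$ applied to a right adjointable square of $\RPrS$-morphisms is again right adjointable.'' This requires carefully tracking that the relevant squares live in $\Mod_\BB(\LPrS)$ (not just $\LPrS$), that the forgetful functor to $\RPrS$ reflects the adjointability in question, and that $-\otimes_\BB M$ as an endofunctor of $\Mod_\BB(\LPrS)$ sends a morphism whose underlying $\RPrS$-morphism has a left adjoint in $\LPrS$ to one with the same property—this last point follows because $-\otimes_\BB M$ is a colimit-preserving endofunctor of a presentable $\infty$-category and hence commutes with the formation of the relevant mates, but spelling out the coherences cleanly (ideally by reducing to a universal-property statement about $\Over{\BB}{-}$ and $\iFun{-}{\Univ}$ rather than manipulating mates by hand) is where the real work lies.
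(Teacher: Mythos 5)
Your first step (the sheaf condition) is exactly the paper's: $\Over{\BB}{-}$ preserves limits and $\DD\otimes_{\BB}-$ preserves colimits in $\Mod_{\BB}(\LPrS)$, so the presheaf $A\mapsto\DD\otimes_{\BB}\Over{\BB}{A}$ is continuous; and your reduction of presentability to Theorem~\ref{thm:characterisationPresentableCategories} (section-wise presentable plus $\Univ$-cocomplete) is also the paper's route. The genuine gap is at precisely the point you flag and then dismiss: the assertion that ``$-\otimes_{\BB}\DD$ is a colimit-preserving endofunctor of a presentable $\infty$-category and hence commutes with the formation of the relevant mates'' is not a proof. The adjunction $s_!\dashv s^\ast$, its unit and counit, and the mate transformations attached to a pullback square are $2$-categorical data; they are not visible to an $(\infty,1)$-functor on $\Mod_{\BB}(\LPrS)$, however many colimits it preserves, and a colimit-preserving functor does not in general carry a morphism admitting a left adjoint to another such, nor a (left/right) adjointable square to an adjointable one. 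What makes the transport work here is (i) that the adjunction $s_!\dashv s^\ast$ is an adjunction \emph{internal to $\BB$-modules}, i.e.\ $s_!$ is $\BB$-linear and the unit/counit are $\BB$-linear transformations (this is~\cite[Corollary~7.3.2.7]{Lurie2017}), and (ii) that $\DD\otimes_{\BB}-$ can be upgraded to an $(\infty,2)$-functor on the $(\infty,2)$-category of $\BB$-modules, so that it preserves internal adjunctions and their mates; this is the step the paper supplies by citing the $(\infty,2)$-functorial enhancement of the relative tensor product from~\cite[\S 4.4]{hoyois2017higher}. Without some such $2$-categorical input your argument for $\Univ$-cocompleteness and for the left adjointability of the squares does not go through.

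A second, smaller omission: the lemma asserts factorisation through $\RPr(\BB)$, which is a non-full subcategory of $\Cat(\BBB)$, so besides the objects you must also check that a map of $\BB$-modules $\DD\to\EE$ induces a functor $\EE\otimes_{\BB}\Univ\to\DD\otimes_{\BB}\Univ$ that is a right adjoint \emph{of $\BB$-categories}, not merely section-wise. You wave this off as ``clear from the construction,'' but it requires verifying that the section-wise left adjoints assemble, i.e.\ that the lax squares
comparing $l_B s^\ast$ with $s^\ast l_A$ commute (cf.~\cite[Proposition~3.2.8]{Martini2021a}); the paper devotes the last third of its proof to exactly this, again via the $(\infty,2)$-functoriality of $-\otimes_{\BB}-$. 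So the architecture of your proposal is right, but both adjointability verifications hinge on the $2$-categorical enhancement that your justification replaces with an inadequate $(\infty,1)$-categorical one.
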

\begin{proof}
	First we prove that the functor factors through $\Cat(\BBB)$.
	This amounts to showing that the functor $\DD\otimes_{\BB}\Over{\BB}{-}\colon\BB^\op\to\CatSS$
	is continuous for every $\DD \in \Mod_{\BB}(\LPrS)$. As the functor $\Over{\BB}{-}\colon \BB^\op\to \Mod_{\BB}(\LPrS)^\op$ preserves limits, this follows from the fact that $\DD\otimes_{\BB}-$, viewed as an endofunctor on $\Mod_{\BB}(\LPrS)^\op$, preserve limits as well~\cite[Corollary~4.4.2.15]{Lurie2017}.
	Next, we show that the resulting $ \BB  $-category $\DD\otimes_{\BB}\Univ$ is presentable.
	As it by construction takes values in $\RPrS$, Theorem~\ref{thm:characterisationPresentableCategories} implies that it suffices to show that $ \DD\otimes_{\BB}\Univ$ is $ \Univ $-cocomplete and that the transition functors are cocontinuous. Both statements follow from the observation that the functor $\DD\otimes_{\BB}-\colon \Mod_{\BB}(\LPrS)\to\Mod_{\BB}(\LPrS)$ can be upgraded to an $(\infty,2)$-functor (see \cite[\S 4.4]{hoyois2017higher} for details) and that for any $s \colon B \rightarrow A$ in $\BB$ the adjunction $s_!\dashv s^\ast$ is $\BB$-linear, see~\cite[Corollary~7.3.2.7]{Lurie2017}.
	To finish the proof, it remains to see that for any map of $\BB$-modules $\DD \rightarrow \EE$ the induced map $\EE \otimes_\BB \Univ \rightarrow \DD \otimes_\BB\Univ$ admits a left adjoint.
	By construction, it has one section-wise, so it suffices to check that for any map $s \colon B \rightarrow A$ in $\BB$ the induced lax square
	\[
	\begin{tikzcd}
		\DD\otimes_\BB \mathcal{\BB}_{/B} \arrow[r]           & \EE \otimes_\BB \mathcal{\BB}_{/B}           \\
		\DD \otimes_\BB \mathcal{\BB}_{/A} \arrow[r] \arrow[u] & \EE \otimes_\BB \mathcal{\BB}_{/A} \arrow[u]
	\end{tikzcd}
	\]
	commutes. Using again $(\infty,2)$-functoriality of the relative tensor product, this follows by essentially the same argument as in the proof of~\cite[Lemma~4.2.10]{MWColimits}.
\end{proof}

\begin{remark}
	It also seems natural to consider the functor
	\[
	\Mod_\BB(\LPrS) \times \BB^\op \xrightarrow{\id \times \BB_{/-}} \Mod_\BB(\LPrS) \times \Mod_{\BB}(\LPrS) \xrightarrow{- \otimes_\BB -}  \Mod_\BB(\LPrS) \to \Cat_\infty
	\]
	which by transposition also gives rise to a functor $ \Mod_\BB(\LPrS) \to \Fun(\BB^\op,\Cat_\infty) $.
	We expect that this functor takes values in $ \Cat(\BB) $ and is equivalent to $ -\otimes_\BB \Univ $.
	It is easy to see that for fixed $ \CC \in  \Mod_\BB(\LPrS) $ the two resulting presheaves of categories on $ \BB $ have the same value on objects and morphisms.
	However, a proof that they agree as functors seems two require $ (\infty,2) $-categorical techniques that are not quite available yet.
\end{remark}

\begin{lemma}
	\label{lem:TensorConstrPreColimits}
	The functor $- \otimes_\BB \Univ \colon \Mod_\BB(\LPrS) \rightarrow \LPr(\BB)$ preserves colimits.
\end{lemma}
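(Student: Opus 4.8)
The plan is to reduce the statement to a colimit-preservation result in $\LPr(\BB)$ that we already have at our disposal, namely the cocompleteness of $\ILPr_{\BB}$ established in Proposition~\ref{prop:limitsPrL} together with the fact that $\LPr(\BB)\simeq\RPr(\BB)^{\op}$ via Proposition~\ref{prop:RPrOpposite}. The functor $-\otimes_{\BB}\Univ$ was constructed in Lemma~\ref{lem:TensorConstrLandsInPrL} as the restriction to presentable $\BB$-categories of the functor $\Mod_{\BB}(\LPrS)^{\op}\to\PSh_{\CatSS}(\BB)$ adjoint to the bifunctor $(\DD,A)\mapsto \DD\otimes_{\BB}\Over{\BB}{A}$, so colimits of $-\otimes_{\BB}\Univ$ are computed section-wise in $\PSh_{\CatSS}(\BB)$. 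Concretely, for $A\in\BB$ the value $(-\otimes_{\BB}\Univ)(-)(A)$ is the functor $\DD\mapsto \DD\otimes_{\BB}\Over{\BB}{A}$, so it suffices to show that for each fixed $A$ the endofunctor $-\otimes_{\BB}\Over{\BB}{A}$ on $\Mod_{\BB}(\LPrS)$ preserves colimits, and then check that the resulting section-wise colimit is again a presentable $\BB$-category (which is automatic since $\ILPr_{\BB}$ is closed under colimits in $\ICat_{\BBB}$ by the cocompleteness result).

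\textbf{Key steps.} First I would recall that $\Over{\BB}{A}$ is a dualisable object of $\Mod_{\BB}(\LPrS)$: it is an invertible $\BB$-module, with inverse given by the étale pushforward, since the adjunction $(\pi_A)_!\dashv \pi_A^{\ast}$ is $\BB$-linear (this is exactly the input invoked in the proof of Lemma~\ref{lem:TensorConstrLandsInPrL}, citing~\cite[Corollary~7.3.2.7]{Lurie2017}). Consequently $-\otimes_{\BB}\Over{\BB}{A}$ is an equivalence of $\infty$-categories $\Mod_{\BB}(\LPrS)\simeq\Mod_{\BB}(\LPrS)$ and hence preserves all colimits; even without invertibility one could instead invoke~\cite[Corollary~4.4.2.15]{Lurie2017}, which says that relative tensor product over a fixed algebra preserves all colimits that exist in each variable. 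Second, I would observe that colimits in $\LPr(\BB)$ are detected on the underlying sheaves of $\infty$-categories: since $\ILPr_{\BB}\into\ICat_{\BBB}$ preserves colimits (dualising Proposition~\ref{prop:limitsPrL} via Proposition~\ref{prop:RPrOpposite}), and colimits in $\ICat_{\BBB}$ of sheaves valued in $\LPrS$ with cocontinuous transition functors are computed section-wise, it follows that a diagram $d\colon\I{K}\to\LPr(\BB)$ has colimit the presentable $\BB$-category whose value at $A$ is $\colim_{\I{K}} d(-)(A)$. Third, combining the two observations, the canonical comparison map $\colim_{\I{K}}\bigl(d(-)\otimes_{\BB}\Univ\bigr)\to\bigl(\colim_{\I{K}}d\bigr)\otimes_{\BB}\Univ$ is an equivalence because it is one section-wise: at each $A\in\BB$ it becomes the comparison map $\colim_{\I{K}}\bigl(d(-)\otimes_{\BB}\Over{\BB}{A}\bigr)\to\bigl(\colim_{\I{K}}d\bigr)\otimes_{\BB}\Over{\BB}{A}$, which is an equivalence by the colimit-preservation of $-\otimes_{\BB}\Over{\BB}{A}$ and the fact that colimits in $\Mod_{\BB}(\LPrS)$ along the forgetful functor to $\LPrS$ (and then evaluation) are compatible with taking colimits in the source.

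\textbf{Main obstacle.} The one point requiring genuine care is the identification of colimits in $\Mod_{\BB}(\LPrS)$ with section-wise colimits in $\PSh_{\CatSS}(\BB)$ after applying $-\otimes_{\BB}\Over{\BB}{A}$: a priori the colimit in $\Mod_{\BB}(\LPrS)$ is computed as a colimit of presentable $\infty$-categories, and one must confirm that the functor $\DD\mapsto(\DD\otimes_{\BB}\Over{\BB}{A})$ (for varying $A$, assembled into a sheaf) commutes with this colimit \emph{as a diagram in $\PSh_{\CatSS}(\BB)$}, not merely value-by-value. This is handled by the $(\infty,2)$-functoriality of the relative tensor product --- the same tool used in Lemma~\ref{lem:TensorConstrLandsInPrL} to build $-\otimes_{\BB}\Univ$ in the first place --- which upgrades $-\otimes_{\BB}-$ to a functor preserving colimits coherently, so that the lax naturality squares governing the transition maps $s^{\ast}$ automatically remain compatible with the colimit. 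Given that machinery, the argument is then a routine assembly of the three steps above, and I would write it as: $-\otimes_{\BB}\Univ$ is a composite of colimit-preserving functors (the adjoint-transpose construction, which preserves colimits by general nonsense about functor categories) followed by the inclusion of $\LPr(\BB)$, which reflects and preserves colimits.
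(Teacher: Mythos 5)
Your core computational input is the same as the paper's: the statement is checked section-wise, and the section-wise statement reduces to the fact that $-\otimes_{\BB}\Over{\BB}{A}$ preserves colimits in $\Mod_{\BB}(\LPrS)$ (for which \cite[Corollary~4.4.2.15]{Lurie2017} suffices; the invertibility of $\Over{\BB}{A}$ is true but not needed). However, the step where you justify the section-wise reduction is broken. You assert that $\ILPr_{\BB}\into\ICat_{\BBB}$ preserves colimits ``by dualising Proposition~\ref{prop:limitsPrL} via Proposition~\ref{prop:RPrOpposite}'', and that colimits in $\ICat_{\BBB}$ of $\LPrS$-valued sheaves with cocontinuous transition functors are computed section-wise. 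Neither claim is correct: colimits in $\ICat_{\BBB}\simeq\Shv_{\CatSS}(\BB)$ are not computed section-wise (only limits are; colimits involve sheafification), and the inclusion $\ILPr_{\BB}\into\ICat_{\BBB}$ does not preserve colimits --- already for $\BB\simeq\SS$ the coproduct of two copies of $\SS$ in $\LPrS$ is $\SS\times\SS$, not the coproduct of the underlying $\infty$-categories. What dualises correctly is Proposition~\ref{prop:limitsPrR}: by Proposition~\ref{prop:RPrOpposite} a colimit in $\ILPr_{\BB}$ is a limit of the associated right-adjoint diagram in $\IRPr_{\BB}$, that limit is preserved by $\IRPr_{\BB}\into\ICat_{\BBB}$ and hence computed section-wise, and the section-wise limit of right adjoints in $\CatSS$ identifies with the section-wise colimit of the values in $\LPrS$. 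Your final conclusion (``the value at $A$ of the colimit is $\colim_{\I{K}}d(-)(A)$'') is true, but the route you give to it is not.

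Once the reduction is phrased this way, the rest is exactly the paper's (very short) proof, and your ``main obstacle'' disappears: it suffices to show that the functor $\Mod_{\BB}(\LPrS)^{\op}\to\RPr(\BB)\into\Cat(\BBB)$ preserves limits, and since limits of sheaves of $\infty$-categories are computed section-wise and jointly detected by the evaluation functors, one only has to observe that for each $A\in\BB$ the composite $\Mod_{\BB}(\LPrS)^{\op}\xrightarrow{(-\otimes_{\BB}\Over{\BB}{A})^{\op}}\Mod_{\BB}(\LPrS)^{\op}\to(\LPrS)^{\op}\simeq\RPrS\to\CatSS$ preserves limits. No $(\infty,2)$-categorical coherence is needed at this point (that machinery was only used to construct the functor and show it lands in $\RPr(\BB)$ in the previous lemma). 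So the gap is repairable using the paper's own Propositions~\ref{prop:limitsPrR} and~\ref{prop:RPrOpposite}, but as written your justification of the section-wise computation of colimits in $\LPr(\BB)$ is incorrect.
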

\begin{proof}
	As limits in $\Cat(\BBB)$ are computed section-wise, it suffices to show that for every $A\in\BB$ the functor
	\[
	\Mod_\BB(\LPrS)^\op \xrightarrow{ (-\otimes_\BB \BB_{/A})^\op} \Mod_\BB(\LPrS)^\op \rightarrow (\LPrS)^\op \simeq \RPrS \rightarrow \CatSS
	\]
	preserves limits, which is obvious.
\end{proof}

\begin{proposition}
    \label{prop:otimes_Univ_is_left_adjoint}
	The functor $- \otimes_\BB \Univ$ defines a left adjoint of $\Gamma^{\lin}$.
\end{proposition}
\begin{proof}
	We show that $-\otimes_{\BB}\Univ$ is equivalent to the left adjoint $L$ of $\Gamma^{\lin}$ from Proposition~\ref{prop:ModulesAreaReflectiveSubcat}.
	Let us denote by $ - \otimes \BB \colon \LPrS \rightarrow \Mod_\BB(\LPrS) $ the left adjoint to the forgetful functor. 
	Then by the associativity of the relative tensor product (\cite[Proposition 4.4.3.14]{Lurie2017} we have equivalences
	\begin{equation}
		\label{eqn:DifferentTensorsAgree}
		(- \otimes_\BB \Univ) \circ (- \otimes \BB)   \simeq - \otimes \Univ \simeq L\circ (- \otimes \BB)\tag{$ \ast $}
	\end{equation}
	of functors from $\LPrS$ to $\LPr(\BB)$.
	By \cite[Remark 4.7.3.15]{Lurie2017} we may find a functor
	\[
	F \colon \Mod_\BB(\LPrS) \rightarrow \Fun(\Delta^\op, \LPrS)
	\]
	such that the composite 
	\[
	\Mod_\BB(\LPrS) \xrightarrow{F} \Fun(\Delta^\op,\LPrS) \xrightarrow{(- \otimes \BB)_*} \Fun(\Delta^\op,\Mod_\BB(\LPrS)) \xrightarrow{\colim_{\Delta^\op}} \Mod_\BB(\LPrS)
	\]
	is equivalent to the identity.
	From (\ref{eqn:DifferentTensorsAgree}) and Lemma~\ref{lem:TensorConstrPreColimits} it follows that the diagram
	\[
	\begin{tikzcd}
		{\Fun(\Delta^\op,\LPrS)} \arrow[r, "(-\otimes \BB)_*"] \arrow[d, "(-\otimes \BB)_*"] & {\Fun(\Delta^\op, \Mod_\BB(\LPrS))} \arrow[r, "\colim_{\Delta^\op}"] \arrow[d, "(-\otimes_\BB \Univ)_*"] & \Mod_\BB(\LPrS) \arrow[d, "- \otimes_\BB \Univ"] \\
		{\Fun(\Delta^\op, \Mod_\BB(\LPrS))} \arrow[r, "L_*"]                                      & {\Fun(\Delta^\op,\LPr(\BB))} \arrow[r, "\colim_{\Delta^\op}"]                                         & \LPr(\BB)                                             
	\end{tikzcd}
	\]
	commutes.
	Since $L$ commutes with colimits as well, we get an equivalence $L \simeq (- \otimes_\BB \Univ)$, as desired.
\end{proof}

The functor $-\otimes_{\BB}\Univ$ can be naturally extended to a strong monoidal functor. To see this, observe that since the global sections functor $\Gamma\colon\LPr(\BB)\to\LPrS$ admits an extension to a lax monoidal functor $\Gamma^{\cc,\otimes}\colon\LPr(\BB)^\otimes\to(\LPr)^\otimes$, the commutative diagram
\[
\begin{tikzcd}[column sep=small]
	\LPr(\BB) \arrow[rd, "\Gamma"'] \arrow[rr, "\Gamma^{\lin}"] &       & \Mod_{\BB}(\LPrS) \arrow[ld, "U"] \\
	& \LPrS &                                                
\end{tikzcd}	
\]
can be naturally extended to a diagram of lax monoidal functors. By passing to left adjoints, we thus obtain a commutative triangle
\[
\begin{tikzcd}[column sep=small]
	\LPr(\BB)^\otimes \arrow[from=rd, "-\otimes\Univ"] \arrow[from=rr, "-\otimes_{\BB}\Univ"'] &       & \Mod_{\BB}(\LPrS)^\otimes \arrow[from=ld, "-\otimes\BB"'] \\
	& (\LPr)^\otimes &                                                
\end{tikzcd}	
\]
of \emph{oplax} monoidal functors, see~\cite{Haugseng2020}. In order to show that the functor $-\otimes_{\BB}\Univ$ is strong monoidal, it thus suffices to show that the natural map
\begin{equation*}
	(-\otimes_{\BB} -)\otimes_{\BB}\Univ\to (-\otimes_{\BB}\Univ)\otimes (-\otimes_{\BB}\Univ)
\end{equation*}
is an equivalence. As both sides of this map preserve colimits in both variables and since every $\BB$-module can be written as a colimit of objects that are contained in the image of $-\otimes\BB$, it suffices to show that the natural map
\begin{equation*}
	(-\otimes -)\otimes\Univ\to (-\otimes\Univ)\otimes (-\otimes\Univ)
\end{equation*}
is an equivalence, i.e.\ that $-\otimes\Univ$ is strong monoidal. Recall (e.g.~from Remark~\ref{rem:PrRGeneratedByPresheafCategories}) that every presentable $\infty$-category can be obtained as a pushout (in $\LPrS$) of presheaf $\infty$-categories. The claim therefore follows from the observation that $-\otimes\Univ$ fits into a commutative square
\begin{equation*}
	\begin{tikzcd}
		\Cat(\BBB)^\times\arrow[d, "L"]\arrow[from=r, "\const_{\BB}"'] & \CatSS^\times\arrow[d, "L"]\\
		\LPr(\BB)^\otimes\arrow[from=r, "-\otimes\Univ"'] & (\LPr)^\otimes
	\end{tikzcd}
\end{equation*}
of oplax monoidal functors (which is again constructed from the associated commutative square of lax monoidal functors by passing to left adjoints) in which both vertical maps as well as $\const_{\BB}$ are strong monoidal. We conclude:
\begin{proposition}
	\label{lem:tensoringIsMonoidal}
	The functor $-\otimes_\BB\Univ\colon \Mod_{\BB}(\LPrS)\into \LPr(\BB)$ admits a natural enhancement to a strong monoidal functor.\qed
\end{proposition}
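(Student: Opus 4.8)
The plan is to promote every ingredient that went into the construction of $-\otimes_{\BB}\Univ$ to the symmetric monoidal level. First I would recall that the global sections functor $\Gamma\colon\LPr(\BB)\to\LPrS$ carries a natural lax symmetric monoidal enhancement $\Gamma^{\cc,\otimes}$ (obtained by restricting the lax monoidal functor $\Cat(\BBB)^{\cc,\otimes}\to\CatSS^{\cc,\otimes}$ induced by $\Gamma$), and that the forgetful functor $U\colon\Mod_{\BB}(\LPrS)\to\LPrS$ is strong symmetric monoidal. Since $\Gamma^{\lin}$ is by construction the comparison functor of $\Gamma$ relative to the lax monoidal $U$, the commutative triangle relating $\Gamma^{\lin}$, $\Gamma$ and $U$ is one of lax symmetric monoidal functors. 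Applying the formalism of monoidal adjunctions (in the form of Haugseng's results on passing between lax and oplax structures under adjunction) to the three adjunctions $-\otimes\Univ\dashv\Gamma$, $-\otimes\BB\dashv U$ and $-\otimes_{\BB}\Univ\dashv\Gamma^{\lin}$, which exist by Proposition~\ref{prop:TensorIsLeftAdjointToPresGlobalSections}, Proposition~\ref{prop:ModulesAreaReflectiveSubcat} and Lemma~\ref{lem:TensorConstrLandsInPrL}, then produces a commutative triangle of \emph{oplax} symmetric monoidal functors with $-\otimes_{\BB}\Univ$ at the apex, relating it to the oplax monoidal functors $-\otimes\Univ$ and $-\otimes\BB$.

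It then remains to show that this oplax structure on $-\otimes_{\BB}\Univ$ is in fact strong, i.e.\ that the comparison maps
\[
(\I{C}\otimes_{\BB}\I{D})\otimes_{\BB}\Univ\longrightarrow (\I{C}\otimes_{\BB}\Univ)\otimes(\I{D}\otimes_{\BB}\Univ),
\qquad
\BB\otimes_{\BB}\Univ\longrightarrow\Univ
\]
are equivalences. The unit map is handled directly, since $\BB$ is the tensor unit of $\Mod_{\BB}(\LPrS)$ and $\Univ$ is the tensor unit of $\LPr(\BB)$. For the binary comparison I would first reduce to free $\BB$-modules: both sides preserve colimits separately in each variable — for $-\otimes_{\BB}\Univ$ this uses Lemma~\ref{lem:TensorConstrPreColimits} together with the $(\infty,2)$-functoriality of the relative tensor product, which makes each $\I{C}\otimes_{\BB}-$ colimit-preserving — and every object of $\Mod_{\BB}(\LPrS)$ is a colimit of objects in the image of $-\otimes\BB\colon\LPrS\to\Mod_{\BB}(\LPrS)$. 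Combining this with the associativity equivalences of~\eqref{eqn:DifferentTensorsAgree} (in their two-variable form) reduces the claim to showing that $-\otimes\Univ\colon\LPrS\to\LPr(\BB)$ is strong symmetric monoidal, i.e.\ that $(\DD\otimes\EE)\otimes\Univ\to(\DD\otimes\Univ)\otimes(\EE\otimes\Univ)$ is an equivalence.

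For this last step I would use that every presentable $\infty$-category is a pushout in $\LPrS$ of presheaf $\infty$-categories (cf.~\S\ref{sec:sheaves}), that $-\otimes\Univ$ preserves colimits, and that it fits into a commutative square of oplax symmetric monoidal functors
\[
\begin{tikzcd}
\Cat(\BBB)^\times\arrow[d, "L"]\arrow[from=r, "\const_{\BB}"'] & \CatSS^\times\arrow[d, "L"]\\
\LPr(\BB)^\otimes\arrow[from=r, "-\otimes\Univ"'] & (\LPr)^\otimes
\end{tikzcd}
\]
(obtained from the corresponding square of lax monoidal functors by passing to left adjoints) in which the two vertical localisations and the constant-sheaf functor $\const_{\BB}$ are all \emph{strong} monoidal. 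Since a presheaf $\BB$-category is the image under $L$ of a constant $\BB$-category, the comparison map for $-\otimes\Univ$ is an equivalence on presheaf $\infty$-categories, hence on all of $\LPrS$ by colimit-preservation. I expect the main obstacle to be organisational rather than conceptual: keeping track of the coherence data when converting the lax monoidal adjunctions into oplax monoidal ones, and checking that the various comparison transformations genuinely assemble into maps of oplax symmetric monoidal functors — which is precisely the point at which the $(\infty,2)$-functoriality of the relative tensor product, already invoked in the proof of Lemma~\ref{lem:TensorConstrLandsInPrL}, is needed.
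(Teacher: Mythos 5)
Your proposal is correct and follows essentially the same route as the paper: extend the triangle relating $\Gamma^{\lin}$, $\Gamma$ and the forgetful functor to lax symmetric monoidal functors, pass to left adjoints to obtain an oplax structure on $-\otimes_{\BB}\Univ$, reduce via colimit-preservation and free $\BB$-modules to the strong monoidality of $-\otimes\Univ$, and settle that case using the pushout decomposition of presentable $\infty$-categories together with the oplax-monoidal square involving $\const_{\BB}$ and the two localisations. The only additions (explicit treatment of the unit map, the remark on coherence bookkeeping) are harmless elaborations of the same argument.
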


The functor $-\otimes_{\BB}\Univ$ being fully faithful raises the question what can be said about its essential image. First, we observe that there is an explicit criterion when a presentable $\BB$-category arises from a $\BB$-module:
\begin{remark}
	\label{rem:EssentialImageofModB}
	Let $ \I{C} $ be a presentable $\BB$-category. 
	Then the unit of the adjunction $ -\otimes_\BB \Univ \dashv \Gamma^{\lin}$ gives a canonical map  $ \Gamma^{\lin}(\I{C}) \otimes_\BB \Univ \to \I{C} $.
	For $A \in \BB$ the induced map $\varepsilon(A) \colon \BB_{/A} \otimes_\BB \Gamma^{\lin}(\I{C}) \to \I{C}(A)$ is the map underlying the essentially unique map of $\BB_{/A}$-modules that makes the diagram
	\[\begin{tikzcd}
		{\I{C}(1) \otimes_\BB \BB_{/A}} & {\I{C}(A))} \\
		{\I{C}(1)} & {\I{C}(1)}
		\arrow["\id", from=2-1, to=2-2]
		\arrow[from=2-2, to=1-2 ,"\pi_A^*"]
		\arrow[from=2-1, to=1-1]
		\arrow[from=1-1, to=1-2, "\varepsilon(A)"]
	\end{tikzcd}\]
	commute.
	It follows that a presentable $\BB$-category is in the essential image of $ - \otimes_\BB \Univ $ if and only if $\varepsilon(A)$ is an equivalence for all $A \in \BB$.
\end{remark}

Using the criterion from Remark~\ref{rem:EssentialImageofModB},  we are now able to write down an example of a presentable $\BB$-category that is \emph{not} in the essential image of $ - \otimes_\BB \Univ$. We learned about this example from David Gepner and Rune Haugseng.
\begin{example}
	Let us write $\Fin$ for the category of finite sets and let $\BB = \PSh(\Fin)$.
	Let $ X $ be a set with more than one element that we consider as an object in $ \BB $ via the Yoneda embedding.
	Then $ \IFun(X,\Univ_\BB) $ is a presentable $ \BB $-category that is not in the essential image of $-\otimes_{\BB}\Univ$. In fact, by Remark~\ref{rem:EssentialImageofModB} this would imply that the canonical map $ \varepsilon(X) $ being an equivalence.
	In our specific situation $ \varepsilon(X) $ is the canonical left adjoint functor $		\PSh(\Fin_{/X}) \otimes_{\PSh(\Fin)} \PSh(\Fin_{/X}) \to \PSh(\Fin_{/X \times X})$.
	Explicitly this functor is constructed by applying $\PSh(-)$ to the augmented cosimplicial diagram
	\[
	\Fin_{/X \times X} \to \Fin_{/X} \times \Fin_{/X} \rightrightarrows \Fin_{/X} \times \Fin \times \Fin_{/X} \cdots
	\]
	and then taking the induced map  $ \colim_{n \in \Delta^\op} \PSh(\Fin_{/X} \times \Fin^n \times \Fin_{/X}) \to \PSh(\Fin_{/X\times X})$ in $ \LPrS$.
	Thus, upon passing to right adjoints, we conclude that if the $\BB$-category $ \IFun(X, \Univ[\BB]) $ is contained in the essential image of $-\otimes_{\BB}\Univ$, the cosimplical diagram
	\[
	\PSh(\Fin_{/X \times X}) \to \PSh(\Fin_{/X} \times \Fin_{/X} ) \rightrightarrows \PSh(\Fin_{/X} \times \Fin \times \Fin_{/X} )\cdots 
	\]
	in $ \RPrS $ must be a limit diagram. We show that this cannot be true.
	Let us denote the map $ \Fin_{/X \times X} \to \Fin_{/X} \times \Fin^n \times \Fin_{/X} $ by $ f_n $. 
	It is given explicitly by the assignment 
	\[
	(A \to X \times X )\mapsto (A \to X \times X \xrightarrow{\pr_0} X, A, \dots, A , A \to X \times X \xrightarrow{\pr_1} X).
	\]
	Now for any $n\geq 1$ the map $ \PSh(\Fin_{/X \times X}) \to \PSh(\Fin_{/X} \times \Fin^n \times \Fin_{/X}) $ is the functor of right Kan extension $ (f_{n}^\op)_* $ along $ f_n^\op $.
	Hence, if the above cosimplicial diagram is a limit cone, the counit of the adjunctions $  (f_{n}^\op)^* \dashv (f_{n}^\op)_* $ yields an equivalence $ \colim_{n \in \Delta^{op}} (f_{n}^\op)^* (f_{n}^\op)_* F \to F $ for any $ F \in \PSh(\Fin_{/X \times X}) $. 
	For any object $ a = (A \to X, B_1,..., B_n, C \to X) \in \Fin_{/X} \times \Fin^n \times \Fin_{/X} $ we can compute $(f_{n}^\op)_* F (a) $ via the point-wise formula for right Kan extensions as a limit indexed by $ (\Fin_{/X \times X})^\op_{a/} $.
	But $ A \times B_1 \times ... \times B_n \times X \to X \times X$ defines an initial object of this category, hence we find
	\[
	F(A \to  X \times X) \simeq  \colim_{n \in \Delta^\op} F(A \times A^n \times A \to  X \times X).
	\]
	In particular, this shows that the map $ F(A \times A \to X \times X) \to F(A \to X \times X)$ induced by $ A \to A \times A $ is a cover in $\SS$.
	By taking $ F $ to be the presheaf represented by the diagonal $ X \to X \times X $, it in turn follows that the map
	\[
	\map{\Fin_{/X \times X}}(X \times X, X) \to \map{\Fin_{/X \times X}}(X , X )
	\]
	is surjective. In particular, there is a preimage of the identity $X\to X$.
	But since $ X $ has at least two elements there is no map $ \alpha $ making the diagram 
	\[\begin{tikzcd}
		{X \times X} && X \\
		& {X \times X}
		\arrow["\alpha", from=1-1, to=1-3]
		\arrow["\Delta", from=1-3, to=2-2]
		\arrow["\id"', from=1-1, to=2-2]
	\end{tikzcd}\]
	commute, which yields the desired contradiction.
\end{example}
There is, however, a class of $\infty$-topoi $\BB$ for which the functor $-\otimes_{\BB}$ turns out to be essentially surjective: those that are generated by $(-1)$-truncated objects:
\begin{proposition}
	\label{prop:Mod_B=PrL(B)_for_0-localic}
	Assume that $ \BB $ is generated under colimits by $ (-1) $-truncated objects.
	Then  $ - \otimes_\BB \Univ$ is an equivalence.
\end{proposition}
\begin{proof}
	By Propositions~\ref{prop:ModulesAreaReflectiveSubcat} and \ref{prop:otimes_Univ_is_left_adjoint} it remains to show essential surjectivity.
	Since $  - \otimes_\BB \Univ $ preserves colimits and every presentable $ \BB $-category is a pushout of presheaf $ \BB $-categories (see Remark~\ref{rem:PrRGeneratedByPresheafCategories}) it suffices to see that $ \IPSh(\I{C}) $ is in the essential image for any small $ \BB $-category $ \I{C} $.
	Furthemore, we can write $ \I{C} $ as a colimit of $ \BB $-categories of the form $ \Delta^n \otimes U $, where $ U \in \BB $ is $ (-1) $-truncated.
	Since the functor $ \IPSh(-) \colon \Cat(\BB) \to \LPr(\BB) $ that is determined by the universal property of presheaf $\BB$-categories is a (partial) left adjoint (see~\cite[Corollary~7.1.15]{MWColimits}) and therefore preserves colimits, it suffices to see that the $\IPSh(\Delta^n\otimes U)$ is in the essential image.
	Since $ \IPSh(-) $ is also symmetric monoidal by Remark~\ref{rem:UnitIsCocompletion}, we have a canonical equivalence
	\[
	\IPSh(\Delta^n \otimes U) \simeq \IPSh(\Delta^n) \otimes \IPSh(U).
	\]
	Furthermore, we have $ \IPSh(\Delta^n) \simeq \PSh(\Delta^n) \otimes \Univ \simeq (\PSh(\Delta^n) \otimes \Univ) \otimes_\BB \Univ$, and since $ -\otimes_\BB \Univ $ is symmetric monoidal by Proposition~\ref{lem:tensoringIsMonoidal}, it thus suffices to see that $ \IPSh(U) $ is in the essential image.
	By Remark~\ref{rem:EssentialImageofModB}, it follows that we need to check that for any $ A \in \BB $ the canonical map
	\[
	\BB_{/A} \otimes_{\BB} \IPSh(U)(1) \to \IPSh(U)(A)
	\]
	of $ \BB_{/A} $-modules is an equivalence.
	Since $ \BB $ is generated under colimits by $ (-1) $-truncated objects, we may assume that $ A =V $ is also $ (-1) $-truncated.
	Thus, we have to show that the canonical map
	\[
	\BB_{/V} \otimes_{\BB} \BB_{/U} \to \BB_{/U \times V}
	\]
	is an equivalence.
	For this, note that because $ U $ is $ (-1) $-truncated, we have a canonical commutative sqaure
	\[\begin{tikzcd}
		{\Delta^1} & \BB \\
		\BB & {\BB_{/U}}
		\arrow["{U \to 1}", from=1-1, to=1-2]
		\arrow["{-\times U}", from=1-2, to=2-2]
		\arrow["{\id_1}"', from=1-1, to=2-1]
		\arrow["{- \times U}"', from=2-1, to=2-2]
	\end{tikzcd}\]
	By adjunction and the universal property of presheaf $\infty$-categories, this induces a commutative square
	\[\begin{tikzcd}
		{\PSh(\Delta^1)\otimes\BB} & \BB \\
		\BB & {\BB_{/U}}
		\arrow["{(U \to 1)\otimes\BB}", from=1-1, to=1-2]
		\arrow["{-\times U}", from=1-2, to=2-2]
		\arrow["{(\id_1) \otimes\BB}"', from=1-1, to=2-1]
		\arrow["{- \times U}"', from=2-1, to=2-2]
	\end{tikzcd}\]
	in $ \Mod_{\BB}(\LPrS) $.
	We claim that this square is a pushout.
	For this it suffices to see that the underlying square in $ \LPrS $ is a pushout, i.e.\  it is a pullback after passing to right adjoints.
	The right adjoint of $ \id_1 \otimes \BB $ is simply the diagonal map $ \BB \to \BB^{\Delta^1}$, and the right adjoint of $ (U \to 1)\otimes\BB $ sends an object $ A \in \BB $ to the arrow
	\[
	A \to \Hom_{\BB}(U,A).
	\]
	Thus, we may identify the pullback, with the full subcategory of $ \BB $ spanned by those objects for which the canonical map $ 	A \to \Hom_{\BB}(U,A) $ is an equivalence.
	But because $ U $ is $ (-1) $-truncated, this subcategory is canonically equivalent to $ \BB_{/U} $, so that the above square is indeed a pushout.
	Repeating the same argument with $ \Over{\BB}{V} $ in place of $ \BB$ and $ U \times V $ in place of $ U $, we get a similiar pushout in $ \Mod_{\BB_{/U}}(\LPrS) $ with $ \BB_{/U \times V} $ in the lower right corner.
	But applying $ - \otimes_{\BB} \BB_{/V} $ to the above square, we also get a pushout
	\[\begin{tikzcd}[column sep= large]
		{\PSh(\Delta^1)\otimes{\BB_{/V}}} & {\BB_{/V}} \\
		{\BB_{/V}} & {\BB_{/U} \otimes_{\BB} \BB_{/V}}
		\arrow["{(U\times V \to V)\otimes\BB_{/V}}", from=1-1, to=1-2]
		\arrow[from=1-2, to=2-2]
		\arrow["{(\id_1) \otimes\BB_{/V}}"', from=1-1, to=2-1]
		\arrow[from=2-1, to=2-2]
	\end{tikzcd}\]
	and thus an equivalence of  $ \BB_{/V} $-modules $ \BB_{/U\times V} \simeq \BB_{/U} \otimes_\BB \BB_{/V} $.
	Furthermore this equivalence is by construction compatible with the canonical map from $ \BB $.
	Thus it is indeed the map of Remark~\ref{rem:EssentialImageofModB}, and the claim follows.
\end{proof}

Our next goal will be to show that if $ R $ is a commutative $ \BB $-ring spectrum, the presentable $ \BB $-category $ \IMod_{R}^{\BB} $ constructed in \S~\ref{sec:algebrasModules} is in the essential image of $ - \otimes_{\BB} \Univ $.
For this we need the following observation.
Let $ f \colon \CC \to \DD $ be a map in $ \CAlg(\LPr) $ and let $ A \in \CAlg(\CC) $.
The the commutative square
\[\begin{tikzcd}
	{\Mod_{A}(\CC)} & {\Mod_{f(A)}(\DD)} \\
	\CC & \DD
	\arrow[from=1-1, to=1-2]
	\arrow[from=2-1, to=1-1]
	\arrow[from=2-1, to=2-2]
	\arrow[from=2-2, to=1-2]
\end{tikzcd}\]
shows that there is a unique map of $ \DD $-modules $ \psi \colon \Mod_{A}(\CC) \otimes_{\CC} \DD \to \Mod_{f(A)}(\DD) $ making the triangle of $ \CC $-module maps
commute
\[\begin{tikzcd}
	{\Mod_{A}(\CC) \otimes_{\CC} \DD} && {\Mod_{f(A)}(\DD)} \\
	& \DD
	\arrow[from=1-1, to=1-3]
	\arrow[from=2-2, to=1-1]
	\arrow[from=2-2, to=1-3]
\end{tikzcd}\]

\begin{lemma}
	\label{lemma:tensoring_with_a_module_category}
	The map $ \psi \colon \Mod_{A}(\CC) \otimes_{\CC} \DD \to \Mod_{f(A)}(\DD) $ is an equivalence.
\end{lemma}
\begin{proof}
	This can be extracted from the discussion in \cite[\S 4.8.4 and 4.8.5]{Lurie2017}.
	More specifically, we observe that the $ \DD $-module $ \Mod_{A}(\CC) \otimes_{\CC} \DD $ together with the object $ A \otimes 1_{\DD} $ satisfies the conditions (1)-(6) of \cite[4.8.5.8]{Lurie2017}.
	Indeed, we have a $ \CC $-linear adjunction in $ \LPr $
	\[
	-\otimes A \colon \CC \leftrightarrows \Mod_{A} : G
	\]
	so that after applying $ - \otimes_{\CC} \DD $ we get a $ \DD $-linear adjunction and thus all conditions except (5) are obvious.
	To see that (5) also holds, one can argue as in the proof of \cite[Theorem 4.8.4.6]{Lurie2017}.
	By \cite[Proposition 4.8.5.8]{Lurie2017} there is thus an equivalence of $ \DD $-modules sending
	\[
	\varphi \colon \Mod_{f(A)}(\DD) \to \Mod_{A}(\CC) \otimes_{\CC} \DD
	\]
	$ f(A) $ to the unit of $ \Mod_{A}(\CC) \otimes_{\CC} \DD $.
	Furthermore the construction of the equivalence in \cite[Theorem 4.8.5.8]{Lurie2017} shows that $ \varphi $ makes the triangle
	\[\begin{tikzcd}
		{\Mod_{f(A)}(\DD)} && {\Mod_{A}(\CC) \otimes_{\CC} \DD} \\
		& \DD
		\arrow["\varphi", from=1-1, to=1-3]
		\arrow[from=2-2, to=1-1]
		\arrow[from=2-2, to=1-3]
	\end{tikzcd}\]
	of $ \DD $-modules commute and thus the inverse of $ \varphi $ has to agree with our map $ \psi $ from above.
\end{proof}

\begin{proposition}
	\label{prop:Modules_come_from_a_module}
	Let $ R $ be a commutative $ \BB $-ring spectrum.
	Then $ \IMod_{R}^{\BB} $ is in the essential image of $ - \otimes_{\BB} \Univ[\BB] $.
\end{proposition}
\begin{proof}
	Unwinding the definitions, we need to see that for any $ A \in \BB $ the canonical map of $ \BB_{/A} $-modules
	\[
	\Mod_{R}(\Sp(\BB)) \otimes_{\BB} \BB_{/A} \to \Mod_{\pi_A^* R}(\Sp(\BB_{/A}))
	\]
	is an equivalence.
	Since $ \Sp $ is an idempotent algebra in $ \LPr $ we may identify the above map with the canonical map
	\[
	\Mod_{R}(\Sp(\BB)) \otimes_{\Sp(\BB)} \Sp(\BB_{/A}) \to \Mod_{\pi_A^* R}(\Sp(\BB_{/A}))
	\]
	which is an equivalence by Lemma~\ref{lemma:tensoring_with_a_module_category}.
\end{proof}

We conclude this chapter with the following result:

\begin{proposition}
	\label{prop:GammaLinIsModuleMap}
	The functor $ \Gamma^{\lin} \colon \LPr(\BB) \to \Mod_{\BB}(\LPr) $ is $ \Mod_{\BB}(\LPr) $-linear.
	In other words for $ \I{C} \in \LPr(\BB) $ and $ \DD \in \Mod_{\BB}(\LPr) $, the canonical map
	\[
	\gamma_{\DD,\I{C}} \colon \DD \otimes_{\BB} \Gamma^{\lin}(\I{C}) \to \Gamma^{\lin}((\DD \otimes_{\BB} \Univ) \otimes^{\BB} \I{C})
	\]
	given by composing the map $ \Gamma^{\lin}(-)\otimes \Gamma^{\lin}(-) \to \Gamma^{\lin}(- \otimes -) $ with the unit $ \id \to \Gamma^{\lin}(-\otimes_{\BB} \Univ) $, is an equivalence.
\end{proposition}
\begin{proof}
	Note that by the proof of Proposition~\ref{prop:ModulesAreaReflectiveSubcat} the functor $ \Gamma^{\lin} $ commutes with all colimits.
	Since $ - \otimes_{\BB} - $ and $ - \otimes^{\BB} -$ are both bilinear and because $ \Gamma $ is conservative, we may reduce to the case where $ \DD $ is a free $ \BB $-module on some $ \EE \in \LPr $ and only have to show that the canonical map
	\[
	\EE \otimes \Gamma(\I{C}) \to \Gamma(\EE \otimes \Univ) \otimes \Gamma(\I{C}) \to \Gamma((\EE \otimes \Univ) \otimes^{\BB} \I{C})
	\]
	is an equivalence.
	Since any presentable $ \infty $-category is a pushout of presheaf categories, we may further reduce to the case where $ \EE = \PSh(\CC_0) $.
	Recall from the proof of Proposition~\ref{prop:TensorIsLeftAdjointToPresGlobalSections} that we have a canonical equivalence $ \PSh(\CC_0) \otimes \Univ \simeq \IFun(\CC_0,\Univ) $.
	Therefore it follows that we have an equivalence $ ( \PSh(\CC_0) \otimes \Univ) \otimes^{\BB} \I{C} \simeq \IFun(\CC_0^{\op},\I{C}) $ and we claim that the composite
	\[
	\PSh(\CC_0) \otimes \Gamma(\I{C}) \xrightarrow{\gamma_{\PSh(\CC_0),\I{C}}} \Gamma(( \PSh(\CC_0) \otimes \Univ) \otimes^{\BB} \I{C}) \simeq \Fun_{\BB}(\CC_0^{\op},\I{C})
	\]
	is an equivalence.
	For this observe that by construction, the canonical map
	\[
	\Gamma( \PSh(\CC_0) \otimes \Univ) \otimes \Gamma(\I{C}) \to \Gamma(( \PSh(\CC_0) \otimes \Univ) \otimes^{\BB} \I{C})
	\]
	is the unique colimit preserving functor corresponding to the bilinear functor
	\[
	\Gamma( \PSh(\CC_0) \otimes \Univ) \times \Gamma(\I{C}) \xrightarrow{\simeq} \Gamma( (\PSh(\CC_0) \otimes \Univ) \times \I{C} ) \xrightarrow{\Gamma(\tau)} \Gamma(( \PSh(\CC_0) \otimes \Univ) \otimes^{\BB} \I{C}).
	\]
	Here $ \tau $ is the universal bilinear functor of $ \BB $-categories.
	We now consider the commutative square
	\[\begin{tikzcd}[sep=large]
		{\CC_0^{\op} \times \PSh(\CC_0) \times \Gamma(\I{D})} && {\CC_0^{\op} \times \PSh(\CC_0) \times \Gamma(\I{D})} \\
		{\SS \times \Gamma(\I{D})} && {\BB \times \Gamma(\I{D})} \\
		{\Gamma(\I{D})} && {\Gamma(\I{D})}
		\arrow["{\id \times (\const_{\BB})_* \times\id}", from=1-1, to=1-3]
		\arrow["{\ev \times \id}"', from=1-1, to=2-1]
		\arrow["{\ev \times \id}", from=1-3, to=2-3]
		\arrow["{\const_{\BB} \times \id}", from=2-1, to=2-3]
		\arrow["{-\otimes-}"', from=2-1, to=3-1]
		\arrow["{- \otimes-}", from=2-3, to=3-3]
		\arrow["\id"', from=3-1, to=3-3]
	\end{tikzcd}\]
	By Lemma~\ref{lem:explicit_descr_of_univ_bilin}, the composite of the left vertical maps transposes to the universal bilinear functor $ \PSh(\CC_0) \times \Gamma(\I{C}) \to \Fun(\CC_0^{\op},\Gamma(\I{C}))$.
	But combining Lemma~\ref{lem:explicit_descr_of_univ_bilin} with the equivalence $ \Fun_\BB(\CC_0^{\op},\I{D}) \simeq \Fun(\CC_{0}^{\op},\Gamma(\I{D}))  $, the composite of the right vertical maps transposes to $ \Gamma(\tau) $, where $ \tau \colon \IPSh(\CC_0) \times \I{D} \to \IFun(\CC_0^{\op},\I{D})$ is the universal bilinear functor of $ \BB $-categories.
	Since the unit map $ \PSh(\CC_{0}) \to \Gamma(\PSh(\CC_{0})\otimes \Univ) $ can be identified with $ (\const_{\BB})_* \colon \PSh(\CC_0) \to \Fun(\CC_{0}^{\op},\BB) $ (see the proof of Proposition~\ref{prop:TensorIsLeftAdjointToPresGlobalSections}) it follows that the composite
	\[
	\PSh(\CC_0) \times \Gamma(\I{C}) \to\PSh(\CC_0) \otimes \Gamma(\I{C}) \xrightarrow{\gamma_{\PSh(\CC_0),\I{C}}} \Gamma(( \PSh(\CC_0) \otimes \Univ) \otimes^{\BB} \I{C}) \simeq \Fun_{\BB}(\CC_0^{\op},\I{C}) \simeq \Fun(\CC_0^{\op},\Gamma(\I{C}))
	\]
	can be identified with the universal bilinear functor and therefore $ \gamma_{\PSh(C_0),\I{C}} $ is an equivalence, as desired.
\end{proof}

\chapter{$\BB$-topoi}
Our definition of a $\BB$-topos will be that of a presentable $\BB$-category that satisfies the descent property.
Therefore, we will begin by setting up the theory of descent for $\BB$-categories in Section~\ref{chap:descent}.
In Section~\ref{chap:foundations}, we then proceed by developing the main concepts of $\BB$-topos theory.
Using the results from Section~\ref{chap:descent}, we characterize $ \BB $-topoi in terms of an internal version of the Giraud axioms, see Theorem~\ref{thm:characterisationBTopoi}.
Then we prove that any $ \BB $-topos is a left exact accessible localisation of a presheaf $ \BB $-category in Theorem~\ref{thm:presentationTopoi}.
As a conseqeuence, we show the equivalence of $\BB$-topoi and relative topoi over $ \BB $, see Theorem~\ref{thm:BTopoiRelativeTopoi}.
We also apply this equivalence to deduce a formula for the fiber product of $\infty$-topoi, see Proposition~\ref{prop:coproductsBTopoi}.
We conclude the section by proving Diaconescu's Theorem~\ref{thm:internalDiaconescu} for $ \BB $-topoi and using it to study \'etale geometric morphisms in section~\ref{sec:EtaleTopoi}.
Finally, Section~\ref{chap:locallyContractible} is dedicated to the discussion of locally contractible $\BB$-topoi and their relation to smooth geometric morphisms.
Finally, Section~\ref{sec:localicBTopoi} is dedicated to the discussion of localic $ \BB $-topoi.
The main result of this section is that if $ \BB $ is itself a localic $ \infty $-topos, then there is an equivalence between localic $ \BB $-topoi and locales over the locale of subobjects of the terminal object $ \Sub_{\BB} $.
We also study a number of compactness conditions for $ \BB $-locales, which we will be useful for future applications.

\section{Descent}
\label{chap:descent}
Recall that if $\CC$ is an $\infty$-category with finite limits, the codomain fibration $d_0\colon\Fun(\Delta^1,\CC)\to\CC$ is a cartesian fibration and therefore classified by a functor $\Over{\CC}{-}\colon\CC^{\op}\to\CatS$. If $\CC$ furthermore has all colimits, one says that $\CC$ satisfies \emph{descent} if $\Over{\CC}{-}$ preserves limits~\cite[\S~6.1.3]{htt}. The goal of this section is to discuss an analogus concept for $\BB$-categories. We begin in \S~\ref{sec:descent} with defining the descent property. In \S~\ref{sec:cartesianTransformations}, we bring this condition into a more explicit form using the notion of cartesian transformations. As we later want to compare descent with a $\BB$-categorical version of the Giraud axioms, we use the remainder of this section to relate the descent property with the notions of universality (\S~\ref{sec:universalColimits}) and disjointness (\S~\ref{sec:disjointColimits}) of colimits as well as effectivity of groupoid objects (\S~\ref{sec:effectiveGroupoids}). 

\subsection{The definition of descent}
\label{sec:descent}
In order to define the descent property of a $\BB$-category $\I{C}$, we first need to construct the functor $\Over{\I{C}}{-}\colon\I{C}^\op\to\ICat_{\BB}$. As we have the straightening equivalence for cartesian fibrations at our disposal~\cite[Theorem~6.3.1]{MCocartesian}, we may proceed in the same fashion as in~\cite{htt}. We begin with the following lemma:

\begin{lemma}
	\label{lem:codomainFibrationCocartesian}
	For any $\BB$-category $\I{C}$, the codomain fibration $d_0\colon\I{C}^{\Delta^1}\to\I{C}$ is a cocartesian fibration.
\end{lemma}
\begin{proof}
	In light of the identification $\Delta^1\times\Delta^1\simeq\Delta^2\sqcup_{\Delta^1}\Delta^2$,  the restriction functor $\res_{d_0}\colon\I{C}^{\Delta^1\times\Delta^1}\to\Comma{\I{C}^{\Delta^1}}{\I{C}}{\I{C}}$ can be identified with the map
	\begin{equation*}
	\I{C}^{\Delta^2\sqcup_{\Delta^1}\Delta^2}\to\I{C}^{\Delta^2} 
	\end{equation*}
	that is given by precomposition with the inclusion $\Delta^2\into\Delta^2\sqcup_{\Delta^1}\Delta^2$ of the first summand. The latter admits a retraction
	\begin{equation*}
	(\id, s_2)\colon\Delta^2\sqcup_{\Delta^1}\Delta^2\to\Delta^2
	\end{equation*}
	which is right adjoint to the inclusion. Hence precomposition with this retraction defines the desired fully faithful left adjoint $\lift_{d_1}$ of $\res_{d_1}$.
\end{proof}
By the $\BB$-categorical straightening equivalence, the cocartesian fibration $d_0\colon\I{C}^{\Delta^1}\to\I{C}$ gives rise to a functor $\Over{\I{C}}{-}\colon\I{C}\to\ICat_{\BB}$.
Note that the map $(d_1,d_0)\colon\I{C}^{\Delta^1}\to\I{C}\times\I{C}$ can be regarded as a morphism of cocartesian fibrations over $\I{C}$, where we regard the codomain as a cocartesian fibration over $\I{C}$ by virtue of the projection onto the second factor. Therefore, one obtains an induced map $\Over{\I{C}}{-}\to\diag_{\I{C}}$ in $\IFun(\I{C}, \ICat_{\BB})$, where $\diag_{\I{C}}$ is the constant functor with value $\I{C}$. Alternatively, we may regard $\Over{\I{C}}{-}$ as a functor $\I{C}\to\Over{(\ICat_{\BB})}{\I{C}}$. By construction, if $c\colon A\to\I{C}$ is an arbitrary object in context $A\in\BB$, the induced map $\Over{\I{C}}{c}\to \pi_A^\ast\I{C}$ is precisely given by the projection $(\pi_c)_!$ and therefore in particular a right fibration. Thus, the functor $\Over{\I{C}}{-}$ takes values in $\IRFib_{\I{C}}$. In particular, this implies that for any map $f\colon c\to d$ in $\I{C}$ (in arbitrary context), the induced functor $\Over{\I{C}}{c}\to\Over{\I{C}}{d}$ is a right fibration. On account of the orthogonality between right fibrations and final functors, this map is uniquely determined by the image of the final object $\id_c$. As it is moreover evident from the construction of $\Over{\I{C}}{-}$ that the image of $\id_c$ is given by $f$, we thus conclude that $\Over{\I{C}}{-}$ acts on maps by carrying $f$ to the functor $f_!\colon\Over{\I{C}}{c}\to\Over{\I{C}}{d}$ that is obtained as the image of $f$ under the Yoneda embedding $\I{C}\into\IRFib_{\I{C}}$.
\begin{proposition}
	\label{prop:codomainFibrationCartesian}
	Let $\I{C}$ be a $\BB$-category. Then the following are equivalent:
	\begin{enumerate}
	\item The codomain fibration $d_0\colon\I{C}^{\Delta^1}\to\I{C}$ is a cartesian fibration;
	\item for every map $f\colon c\to d$ in $\I{C}$, the functor $f_!\colon\Over{\I{C}}{c}\to\Over{\I{C}}{d}$ admits a right adjoint $f^\ast$;
	\item $\I{C}$ admits pullbacks.
	\end{enumerate}
\end{proposition}
\begin{proof}
	By combining Lemma~\ref{lem:codomainFibrationCocartesian} and~\cite[Lemma~7.2.2]{MCocartesian} with~\cite[Corollary~6.5.5]{MCocartesian}, the functor $d_0$ is a cartesian fibration if and only if for every map $f\colon c\to d$ in $\I{C}$ in context $A\in\BB$ the functor $f_!\colon\Over{\I{C}}{c}\to\Over{\I{C}}{d}$ admits a right adjoint $f^\ast$. Hence~(1) and~(2) are equivalent. The fact that~(1) and~(3) are equivalent follows from the section-wise characterisation of cartesian fibrations~\cite[Proposition~3.1.7]{MCocartesian}, the section-wise characterisation of the existence of pullbacks~(Example~\ref{ex:externalLimitsColimits}) and~\cite[Lemma~6.1.1.1]{htt}.
\end{proof}

\begin{remark}
	\label{rem:cartesianMorphismsCodomainFibration}
	In the situation of Proposition~\ref{prop:codomainFibrationCartesian}, note that if $d_0\colon\I{C}^{\Delta^1}\to\I{C}$ is a cartesian fibration, then a morphism in $\I{C}^{\Delta^1}$ in context $A\in\BB$ is cartesian if and only if it defines a cartesian morphism in $\I{C}(A)^{\Delta^1}$, see~\cite[Remark~3.2.6]{MCocartesian}. As this is the case precisely if the associated commutative square in $\I{C}(A)$ is a pullback square, we conclude that a map $\Delta^1\otimes A\to \I{C}^{\Delta^1}$ is a cartesian morphism if and only if it corresponds to a pullback square in $\I{C}$.
\end{remark}

If $\I{C}$ is a $\BB$-category with pullbacks, straightening the cartesian fibration $d_0\colon\I{C}^{\Delta^1}\to\I{C}$ yields a functor $\Over{\I{C}}{-}\colon\I{C}^{\op}\to\ICat_{\BB}$. By the discussion in~\cite[\S~7.2]{MCocartesian}, this functor is equivalently obtained by observing that the straightening of the \emph{cocartesian} fibration $\I{C}^{\Delta^1}\to\I{C}$ takes values in $\ICat_{\BB}^\Lad$ and by applying the equivalence $\ICat_{\BB}^\Lad\simeq(\ICat_{\BB}^\Rad)^\op$ from~\cite[Proposition~7.2.1]{MCocartesian}. We may now define:
\begin{definition}
	\label{def:descent}
	Let $\I{U}$ be an internal class of $\BB$-categories and let $\I{C}$ be a $\I{U}$-cocomplete $\BB$-category with finite limits. We say that $\I{C}$ \emph{satisfies $\I{U}$-descent} if the functor $\Over{\I{C}}{-}\colon\I{C}^{\op}\to\ICat_{\BBB}$ is $\op(\I{U})$-continuous. If $\I{X}$ is a cocomplete large $\BB$-category, we simply say that $\I{C}$ satisfies descent if $\I{C}$ satisfies $\ICat_{\BB}$-descent.
\end{definition}

\begin{remark}
	\label{rem:BCUDescent}
	The property of a $\I{U}$-cocomplete $\BB$-category $\I{C}$ with pullbacks to satisfy $\I{U}$-descent is local in $\BB$: if $\bigsqcup_i A_i\onto 1$ is a cover in $\BB$, then $\I{C}$ satisfies $\I{U}$-descent if and only if $\pi_{A_i}^\ast\I{C}$ satisfies $\pi_{A_i}^\ast\I{U}$-descent for all $i$. This follows immediately from the locality of $\I{U}$-continuity~\cite[Remark~5.2.3]{MWColimits} and from~\cite[Remark~A.3]{MWColimits}.
\end{remark}

\begin{example}
	\label{ex:LConstKDescent}
	Let $\KK$ be a class of $\infty$-categories and let $\I{C}$ be an $\ILConst_{\KK}$-cocomplete $\BB$-category with pullbacks (where $\ILConst_{\KK}$ is the essential image of the functor $\const_{\BB}\colon\KK\to \ICat_{\BB}$). Then $\I{C}$ satisfies $\ILConst_{\KK}$-descent if and only if for all $A\in\BB$ the $\infty$-category $\I{C}(A)$ satisfies $\KK$-descent. In fact, by~\cite[Corollary~6.4.10]{MCocartesian} the composition $\Gamma_{\Over{\BB}{A}}\circ\Over{\I{C}}{-}(A)$ recovers the functor $\Over{\I{C}(A)}{-}\colon \I{C}(A)^\op\to\CatSS$. Consequently, if $s\colon B\to A$ is an arbitrary map in $\BB$, postcomposing $\Over{\I{C}}{-}(A)$ with the evaluation functor $\ev_B\colon\Cat(\Over{\BB}{A})\to\CatSS$ recovers the composition $\Over{\I{C}}{-}(B)\circ s^\ast$. Using that $\I{C}$ is $\ILConst_{\KK}$-cocomplete, the functor $s^\ast\colon \I{C}(A)^\op\to\I{C}(B)^\op$ is $\op(\KK)$-continuous, and since limits in $\Cat(\Over{\BB}{A})$ are detected section-wise, the claim follows.
\end{example}

\subsection{Cartesian transformations}
\label{sec:cartesianTransformations}
The main goal of this section is to obtain a more explicit description of the descent property which will rely on the notion of \emph{cartesian} morphisms of functors:
\begin{definition}
	\label{def:cartesianNaturalTransformation}
	Let $\I{I}$ and $\I{C}$ be $\BB$-categories such that $\I{C}$ admits pullbacks. We say that a map $\phi\colon d\to d^\prime$ in $\IFun(\I{I},\I{C})$ in context $1\in\BB$ is \emph{cartesian} if for every map $i\to i^\prime$ in $\I{I}$ in context $A\in \BB$ the induced commutative square
	\begin{equation*}
		\begin{tikzcd}
			d(i)\arrow[r]\arrow[d] &d^\prime(i)\arrow[d]\\
			d(i^\prime)\arrow[r] & d^\prime(i^\prime)
		\end{tikzcd}
	\end{equation*}
	is a pullback in $\I{C}(A)$. A map $d\to d^\prime$ in context $A\in\BB$ is called cartesian if it is cartesian when viewed as a map in $\IFun[\Over{\BB}{A}](\pi_A^\ast\I{I},\pi_A^\ast\I{C})$ in context $1\in\Over{\BB}{A}$. We denote by $\Over{\IFun(\I{I},\I{C})}{d}^\cart$ the full subcategory of $\Over{\IFun(\I{I},\I{C})}{d}^\cart$ that is spanned by the cartesian maps in arbitrary context $A\in\BB$.
\end{definition} 

\begin{remark}
	\label{rem:BCCartesianMaps}
	In the situation of Definition~\ref{def:cartesianNaturalTransformation}, the property of a map $\phi d\to d^\prime$ in context $A\in\BB$ being cartesian is local in $\BB$: if $(s_i)\colon\bigsqcup_i A_i\onto A$ is a cover in $\BB$, then $\phi$ is cartesian if and only if each $s_i^\ast(\phi)$ is. In fact, by unwinding the definition, this follows from the fact that the property of a commutative square being a pullback is local in that sense. As a consequence, every object of $\Over{\IFun(\I{I},\I{C})}{d}^\cart$ in context $A$ encodes a cartesian map $d\to d^\prime$, and there is a canonical equivalence $\pi_A^\ast \Over{\IFun(\I{I},\I{C})}{d}^\cart\simeq \Over{\IFun[\Over{\BB}{A}](\pi_A^\ast\I{I},\pi_A^\ast\I{C})}{\pi_A^\ast d}^\cart$ of $\Over{\BB}{A}$-categories.
\end{remark}

\begin{lemma}
	\label{lem:cartesianIsCocartesian}
	Let $\I{I}$ and $\I{C}$ be $\BB$-categories, and suppose that $\I{C}$ admits pullbacks. Then a map $d\to d^\prime$ in $\IFun(\I{I},\I{C})$ (in arbitrary context) is cartesian if and only if the associated object in $\IFun(\I{I},\I{C}^{\Delta^1})$ is contained in $\IFun(\I{I},(\I{C}^{\Delta^1})_\sharp)$, where $(\I{C}^{\Delta^1})_\sharp\into\I{C}^{\Delta^1}$ is the subcategory that is spanned by the cartesian morphisms over $d_0\colon\I{C}^{\Delta^1}\to\I{C}$.
\end{lemma}
\begin{proof}
	This follows immediately from the description of the cartesian morphisms in $\I{C}^{\Delta^1}$ as pullback squares in $\I{C}$, see Remark~\ref{rem:cartesianMorphismsCodomainFibration}.
\end{proof}
The main goal of this section is to prove the following description of the descent property:
\begin{proposition}
	\label{prop:LimitsOfCategoriesExplicit}
	Let $\I{C}$ be a cocomplete $\BB$-category with pullbacks and let $d\colon\I{I}\to\I{C}$ be a diagram that admits a colimit in $\I{C}$. Let $\overline{d}\colon \I{I}^\triangleright\to\I{C}$ be the corresponding colimit cocone. Then the functor $\Over{\I{C}}{-}\colon\I{C}^{\op}\to\ICat_{\BB}$ carries $\overline{d}$ to a limit cone in $\ICat_{\BB}$ if and only if the restriction map $\Over{\IFun(\I{I}^\triangleright,\I{C})}{\overline{d}}\to\Over{\IFun(\I{I},\I{C})}{d}$ restricts to an equivalence
	\begin{equation*}
		\Over{\IFun(\I{I}^\triangleright,\I{C})}{\overline{d}}^{\cart}\simeq\Over{\IFun(\I{I},\I{C})}{d}^{\cart}
	\end{equation*}
	of $\BB$-categories.
\end{proposition}
The main idea for the proof of Proposition~\ref{prop:LimitsOfCategoriesExplicit} is to identify the left-hand side of the equivalence with $\Over{\I{C}}{\colim d}$ and the right-hand side with $\lim \Over{\I{C}}{d(-)}$. In order to do so, we will need the formula for limits in $\ICat_{\BB}$ that we derived in~\cite[Proposition~7.1.2]{MCocartesian}. For the convenience of the reader, we will briefly recall the main setup from~\cite{MCocartesian}.
The $\infty$-topos of \emph{marked simplicial objects} in $\BB$ is defined as $\mSimp{\BB}=\Fun(\Delta^{\op}_+,\BB)$, where $\Delta_+$ denotes the marked simplex $1$-category. Precomposition with the inclusion $\Delta\into\Delta_+$ induces a forgetful functor $(-)\vert_{\Delta}\colon\mSimp{\BB}\to\Simp\BB$ which admits a left adjoint $(-)^\flat$ and a right adjoint $(-)^\sharp$. Every cartesian fibration $p\colon\I{P}\to\I{C}$ can be equivalently encoded by a \emph{marked cartesian fibration} $p^\natural\colon\I{P}^\natural\to\I{C}^\sharp$, where $\I{P}^\natural$ is the marked simplicial object that is obtained from $\I{P}$ by marking the cartesian arrows and where a marked cartesian fibration is by definition a map that is internally right orthogonal to the collection of \emph{marked right anodyne maps} (see \cite[Definition 4.2.1]{MCocartesian}). Now if $d\colon \I{I}^\op\to\ICat_{\BB}$ is a functor and if $p\colon\I{P}\to\I{I}$ is the associated cartesian fibration, one obtains a canonical equivalence
\begin{equation*}
	\lim d\simeq( \Over{\Hom_{\mSimp\BB}(\I{I}^\sharp,\I{P}^\natural)}{\I{I}^\sharp})\vert_{\Delta},
\end{equation*}
where the right-hand side is defined via the pullback diagram
\begin{equation*}
	\begin{tikzcd}
		(	\Over{\Hom_{\mSimp\BB}(\I{I}^\sharp,\I{P}^\natural)}{\I{I}^\sharp})\vert_{\Delta}\arrow[r]\arrow[d] & \Hom_{\mSimp\BB}(\I{I}^\sharp,\I{P}^\natural)\vert_{\Delta}\arrow[d]\\
		1\arrow[r, "\id_{\I{I}^\sharp}"] & \Hom_{\mSimp\BB}(\I{I}^\sharp,\I{I}^\sharp)\vert_{\Delta}
	\end{tikzcd}
\end{equation*}
(in which $\Hom_{\mSimp\BB}(-,-)$ denotes the internal hom in $\mSimp{\BB}$).

\begin{lemma}
	\label{lem:precompositionFlatSharpFF}
	Let $K$ be a simplicial object in $\BB$ and let $p\colon\I{P}\to\I{C}$ be a cartesian fibration. Then the canonical map $K^\flat\to K^\sharp$ of marked simplicial objects in $\BB$ induces a fully faithful functor $\Hom_{\mSimp\BB}(K^\sharp,\I{P}^\natural)\vert_{\Delta}\into\Hom_{\mSimp\BB}(K^\flat,\I{P}^\natural)\vert_{\Delta}$ of $\BB$-categories.
\end{lemma}
\begin{proof}
	Let $M$ be the marked simplicial object in $\BB$ that fits into the pushout square
	\begin{equation*}
		\begin{tikzcd}
			(\Delta^0\sqcup\Delta^0)^\flat \otimes K^\flat\arrow[r]\arrow[d] & (\Delta^0\sqcup\Delta^0)^\flat \otimes K^\sharp\arrow[d]\\
			(\Delta^1)^\flat\otimes K^\flat\arrow[r] & M.
		\end{tikzcd}
	\end{equation*}
	Unwinding the definitions, we need to show that $\I{P}^\natural$ is internally local with respect to the induced map $\phi\colon M\to (\Delta^1)^\flat\otimes K^\sharp$. Since $\I{C}^\sharp$ is easily seen to be internally local with respect to $\phi$, this follows once we show that $p^\natural$ is internally right orthogonal to this map. We therefore need to verify that $\phi$ is marked right anodyne. Writing $K$ as a colimit of objects of the form $\Delta^n\otimes A$, we may assume that $K=\Delta^n\otimes A$. Moreover, since marked right anodyne morphisms are closed under products, we can assume that $A\simeq 1$. Using that the two maps $(I^n)^\flat\into (\Delta^n)^\flat$ and $(I^n)^\sharp\into (\Delta^n)^\sharp$ that are induced by the spine inclusions are marked right anodyne, we may further reduce this to $K=\Delta^1$. 
	In this case, one can apply~\cite[Lemma~4.2.3]{MCocartesian} to deduce that $\phi$ is an equivalence. Hence the claim follows.
\end{proof}

\begin{lemma}
	\label{lem:MateIsPrecomposition}
	Let $s\colon B\to A$ be a map in $\BB$, and let $P\to A$ be an arbitrary map. Let $\eta_s\colon \id_{\Over{\BB}{A}}\to s_\ast s^\ast$ be the adjunction unit. Then the value of the natural transformation $(\pi_A)_\ast\xrightarrow{(\pi_A)_\ast \eta_s}(\pi_A)_\ast s_\ast s^\ast\simeq (\pi_B)_\ast s^\ast$ at an object $p\colon P\to A$ in $\Over{\BB}{A}$ can be identified with the map
	\begin{equation*}
		\Over{\Hom_{\BB}(A,P)}{A}\to\Over{\Hom_{\BB}(B,s^\ast P)}{B}
	\end{equation*}
	that is induced by precomposition with $s$. Here $\Hom_{\BB}(-,-)$ denotes the internal hom in $\BB$, $\Over{\Hom_{\BB}(A,P)}{A}$ is the fibre of the map $\Hom_{\BB}(A,P)\to\Hom_{\BB}(A,A)$ over $\id_A$, and $\Over{\Hom_{\BB}(B,s^\ast P)}{B}$ is the fibre of the map $\Hom_{\BB}(B,s^\ast P)\to \Hom_{\BB}(B,B)$ over $\id_B$.
\end{lemma}
\begin{proof}
	Since the morphism $\id\times s\colon -\times B\to -\times A$ can be identified with the composition
	\begin{equation*}
		(\pi_B)_!\pi_B^\ast\xrightarrow{\simeq }(\pi_A)_! s_! s^\ast \pi_A^\ast \xrightarrow{(\pi_A)_!\epsilon_s\pi_A^\ast} (\pi_A)_!\pi_A^\ast
	\end{equation*}
	(in which $\epsilon_s$ is the counit of the adjunction $s_!\dashv s^\ast$), it follows by adjunction that the map $(\pi_A)_\ast\eta_s \pi_A^\ast$ can be identified with $s^\ast\colon \Hom_{\BB}(A,-)\to\Hom_{\BB}(B,-)$. Now if $p\colon P\to A$ is any map, the unique morphism $p\to \id_{\Over{\BB}{A}}$ in $\Over{\BB}{A}$ is the pullback of $\pi_A^\ast (\pi_A)_! p\to \pi_A^\ast(\pi_A)_! 1_{\Over{\BB}{A}}$.  Together with naturality of $\eta_s$, this implies that the map $(\pi_A)_\ast\eta_s(p)$ fits into a commutative diagram
	\begin{equation*}
		\begin{tikzcd}[column sep=small, row sep=small]
			& s_\ast s^\ast (\pi_A)_\ast(p) \arrow[rr]\arrow[dd] && \Hom_{\BB}(B,P)\arrow[dd, "p_\ast"]\\
			(\pi_A)_\ast(p)\arrow[rr, crossing over]\arrow[dd]\arrow[ur, "(\pi_A)_\ast \eta_s(p)"] && \Hom_{\BB}(A,P) \arrow[ur, "s^\ast"]& \\
			& 1\arrow[rr, "s"] && \Hom_{\BB}(B,A)\\
			1\arrow[rr, "\id_A"]\arrow[ur] && \Hom_{\BB}(A,A)\arrow[from=uu, crossing over, "p_\ast", near start]\arrow[ur, "s^\ast"] &
		\end{tikzcd}
	\end{equation*}
	in which the front and the back square are pullbacks. As the fibre of 
	\begin{equation*}
		p_\ast\colon \Hom_{\BB}(B,P)\to\Hom_{\BB}(B,A)
	\end{equation*} 
	over $s$ can be identified with $\Over{\Hom_{\BB}(B,s^\ast P)}{B}$, the claim follows.
\end{proof}

\begin{proof}[{Proof of Proposition~\ref{prop:LimitsOfCategoriesExplicit}}]
	 Let $\iota\colon \I{I}\into\I{I}^\triangleleft$ be the inclusion. Since $\ICat_{\BB}$ is complete, the theory of Kan extensions gives rise to an adjunction
	\begin{equation*}
		(\iota^\ast\dashv\iota_\ast)\colon\IFun(\I{I}^\triangleleft,\ICat_{\BB})\leftrightarrows\IFun(\I{I},\ICat_{\BB}),
	\end{equation*}
	see~\cite[\S~6.3]{MWColimits}.
	Given any diagram $\overline{h}\colon \I{I}^\triangleleft\to \ICat_{\BB}$, we will let $h=\iota^\ast \overline{h}$, and we denote by $\eta_{\overline{h}}\colon \overline{h}\to \iota_\ast h$ the adjunction unit.
	Now let us set $\overline h=\Over{\I{C}}{\overline{d}(-)}$, so that we get $h=\Over{\I{C}}{{d}(-)}$. Furthermore, let $p\colon \I{P}\to \I{I}^\triangleright$ be the pullback of $d_0\colon\I{C}^{\Delta^1}\to\I{C}$ along $\overline{d}$, and let $q\colon \I{Q}\to\I{I}$ be the pullback of $d_0$ along $d$. 
	According to~\cite[Proposition~7.1.2]{MCocartesian} and Lemma~\ref{lem:MateIsPrecomposition} (applied to the $\infty$-topos $\mSimp{\BB}$ and the map $\iota^\sharp$), the canonical map $\lim \eta_{\overline{h}}\colon \lim \overline{h}\to \lim\iota_\ast h$ can be identified with the functor 
	\begin{equation*}
		\label{eq:restrictionCartesianSections}
		\tag{$\ast$}
		\Over{\Hom_{\mSimp\BB}((\I{I}^\triangleright)^\sharp,\I{P}^\natural)}{(\I{I}^\triangleright)^\sharp}\vert_{\Delta}\to \Over{\Hom_{\mSimp\BB}(\I{I}^\sharp,\I{Q}^\natural)}{\I{I}^\sharp}\vert_{\Delta}
	\end{equation*}
	that is induced by precomposition with the inclusion $\iota \colon\I{I}\into\I{I}^\triangleright$. As $\Over{\I{C}}{-}$ preserving the limit of $d$ is therefore equivalent to~(\ref{eq:restrictionCartesianSections}) being an equivalence, we only need to identify this map with the functor  $		\Over{\IFun(\I{I}^\triangleright,\I{C})}{\overline{d}}^{\cart}\to\Over{\IFun(\I{I},\I{C})}{d}^{\cart}$.
	
	To see this, first note that there is an equivalence $\Hom_{\mSimp\BB}((-)^\flat,-)\vert_{\Delta}\simeq\IFun(-,(-)\vert_{\Delta})$. Therefore, we obtain a commutative diagram
	\begin{equation*}
		\begin{tikzcd}
			\Over{\Hom_{\mSimp\BB}((\I{I}^\triangleright)^\sharp,\I{P}^\natural)}{(\I{I}^\triangleright)^\sharp}\vert_{\Delta}\arrow[r]\arrow[d, hookrightarrow] & \Hom_{\mSimp\BB}((\I{I}^\triangleright)^\sharp,\I{P}^\natural)\vert_{\Delta}\arrow[r]\arrow[d, hookrightarrow] & \Hom_{\mSimp\BB}((\I{I}^\triangleright)^\sharp,(\I{C}^{\Delta^1})^\natural)\vert_{\Delta}\arrow[d, hookrightarrow]\\
			\Over{\IFun(\I{I}^\triangleright,\I{C})}{\overline{d}}\arrow[r]\arrow[d] & \IFun(\I{I}^\triangleright,\I{P})\arrow[r]\arrow[d] & \IFun(\I{I}^\triangleright,\I{C}^{\Delta^1})\arrow[d]\\
			1\arrow[r, "\id_{\I{I}^\triangleright}"] & \IFun(\I{I}^\triangleright,\I{I}^\triangleright)\arrow[r, "\overline{d}_\ast"] & \IFun(\I{I}^\triangleright,\I{C})
		\end{tikzcd}
	\end{equation*}
	in which the upper three vertical maps are induced by precomposition with the canonical map $(\I{I}^\triangleright)^\flat\to(\I{I}^\triangleright)^\sharp$. By Lemma~\ref{lem:precompositionFlatSharpFF}, they are fully faithful. Furthermore, all but the upper right square are pullbacks. But since the map $(I^\triangleright)^\flat\to (I^\triangleright)^\sharp$ is internally right orthogonal to every map that is contained in the image of $(-)^\sharp\colon \Simp\BB\into\mSimp\BB$, it must also be internally right orthogonal to $\I{P}^\natural\to (\I{C}^{\Delta^1})^\natural$ as the latter is the pullback of $\overline{d}^\sharp$. Therefore, the upper right square must also be a pullback. Note, furthermore, that since the map $(-)^\flat\to(-)^\sharp$ is an equivalence when restricted along the inclusion $\BB\into\Simp\BB$, the upper right inclusion in the above diagram identifies the domain with the essential image of the map $\Hom_{\mSimp\BB}(\I{I}^\triangleright,(\I{C}^{\Delta^1})_\sharp)\into\Hom_{\mSimp\BB}(\I{I}^\triangleright,\I{C}^{\Delta^1})$. Therefore, Lemma~\ref{lem:cartesianIsCocartesian} implies that there is an equivalence $\Over{\Hom_{\mSimp\BB}((\I{I}^\triangleright)^\sharp,\I{P}^\natural)}{(\I{I}^\triangleright)^\sharp}\vert_{\Delta}\simeq \Over{\IFun(\I{I}^\triangleright,\I{C})}{\overline{d}}^{\cart}$. By an analogous argument, one obtains an equivalence $\Over{\Hom_{\mSimp\BB}(\I{I}^\sharp,\I{Q}^\natural)}{\I{I}^\sharp}\vert_{\Delta}\simeq \Over{\IFun(\I{I},\I{C})}{{d}}^{\cart}$, hence the claim follows.
\end{proof}

\begin{remark}
	\label{rem:cartesianTransformationsCoconesExplicitly}
	In the situation of Proposition~\ref{prop:LimitsOfCategoriesExplicit}, let $\infty\colon 1\to\I{I}^\triangleleft$ be the cone point, and let
	\begin{equation*}
		(\infty^\ast \dashv \infty_\ast)\colon \IFun(\I{I}^\triangleleft,\ICat_{\BB})\leftrightarrows\ICat
	\end{equation*}
	be the induced adjunction. Let $\eta\colon \id_{\IFun(\I{I}^\triangleleft, \ICat_{\BB})}\to \infty_\ast\infty^\ast$ be the adjunction unit. By the same argument as in the proof of Proposition~\ref{prop:LimitsOfCategoriesExplicit}, evaluating $\lim_{\I{I}^\triangleleft}\eta$ at the cone $\Over{\I{C}}{\overline{d}}$ recovers the restriction map
	\begin{equation*}
		\Over{\IFun(\I{I}^\triangleright,\I{C})}{\overline{d}}^{\cart}\to \Over{\I{C}}{c}.
	\end{equation*}
	Since $\lim_{\I{I}^\triangleleft}$ can be identified with $\infty^\ast$ (owing to $\infty\colon 1\to\I{I}^\triangleleft$ being initial), the triangle identities imply that this map must be an equivalence.
	Furthermore, note that by Corollary~\ref{cor:adjunctionsSliceCategoryPullback} the restriction functor $\Over{\IFun(\I{I}^\triangleright,\I{C})}{\overline{d}}\to\Over{\I{C}}{c}$ admits a right adjoint that is is given by the composition
		\begin{equation*}
		\Over{\I{C}}{c}\xrightarrow{\Over{\diag}{c}}\Over{\IFun(\I{I}^\triangleright,\I{C})}{\diag(c)}\xrightarrow{\eta^\ast} \Over{\IFun(\I{I}^\triangleright,\I{C})}{\overline{d}}
		\end{equation*}
	(where $\eta\colon \overline{d}\to\diag(c)$ denotes the adjunction unit). Now if $c^\prime\to c$ is a map in $\I{C}$, Corollary~\ref{cor:characterisationCategoriesWithPullbacks} implies that the counit $\eta_!\eta^\ast\diag(c^\prime)\to \diag(c^\prime) $ of the adjunction $\eta_!\dashv\eta^\ast$ is given by the pullback of $\eta$ along $\diag(c^\prime)\to\diag(c)$. Since evaluation at $\infty$ preserves pullbacks and since $\Over{\diag}{c}$ is fully faithful, we conclude that evaluating the counit of the adjunction $\Over{\I{C}}{c}\leftrightarrows\Over{\IFun(\I{I}^\triangleright,\I{C})}{\overline{d}}$ at $c^\prime\to c$ must result in an equivalence. Upon replacing $\BB$ with $\Over{\BB}{A}$ and repeating the same argument, we conclude that the entire counit must be an equivalence, so that the functor $\Over{\I{C}}{c}\to\Over{\IFun(\I{I}^\triangleright,\I{C})}{\overline{d}}$ is fully faithful. Now combining the evident observation that this map takes values in $\Over{\IFun(\I{I}^\triangleright,\I{C})}{\overline{d}}^{\cart}$ with the fact that the restriction functor $\Over{\IFun(\I{I}^\triangleright,\I{C})}{\overline{d}}^{\cart}\to \Over{\I{C}}{c}$ is an equivalence, one concludes that the inclusion $\Over{\I{C}}{c}\into\Over{\IFun(\I{I}^\triangleright,\I{C})}{\overline{d}}$ identifies $\Over{\I{C}}{c}$ with $\Over{\IFun(\I{I}^\triangleright,\I{C})}{\overline{d}}^{\cart}$. In particular, the inclusion $\Over{\IFun(\I{I}^\triangleright,\I{C})}{\overline{d}}^{\cart}\into \Over{\IFun(\I{I}^\triangleright,\I{C})}{\overline{d}}$ admits a left adjoint.
\end{remark}
\begin{remark}
	\label{rem:criterionCartesianMorphismCocones}
	In the situation of Remark~\ref{rem:cartesianTransformationsCoconesExplicitly}, let $\overline{d}^\prime\to \overline{d}$ be a map in $\IFun(\I{I}^\triangleright,\I{C})$, and let us set $c^\prime=\infty^\ast(\overline{d}^\prime)$. Then the unit of the adjunction $\Over{\I{C}}{c}\leftrightarrows\Over{\IFun(\I{I}^\triangleright,\I{C})}{\overline{d}}$ evaluates at $\overline{d}^\prime$ to the natural map $\overline{d}^\prime\to\eta^\ast \diag(c^\prime)$. Therefore, the map $\overline{d}^\prime\to\overline{d}$ is cartesian precisely if the square
	\begin{equation*}
		\begin{tikzcd}
			\overline{d}^\prime\arrow[d]\arrow[r] & \diag(c^\prime)\arrow[d]\\
			\overline{d}\arrow[r] & \diag(c)
		\end{tikzcd}
	\end{equation*}
	is a pullback. As the functor $(\iota^\ast,\infty^\ast)\colon\IFun(\I{I}^\triangleright,\I{C})\to\IFun(\I{I},\I{C})\times\I{C}$ is conservative (on account of the map $(\iota,\infty)\colon \I{I}\sqcup 1\onto \I{I}^\triangleright$ being essentially surjective) and as the image of the above square along $\infty^\ast$ is always a pullback, the map $\overline{d}^\prime\to\overline{d}$ is cartesian precisely if the square
	\begin{equation*}
		\begin{tikzcd}
			{d}^\prime\arrow[d]\arrow[r] & \diag(c^\prime)\arrow[d]\\
			{d}\arrow[r] & \diag(c)
		\end{tikzcd}
	\end{equation*}
	in $\IFun(\I{I},\I{C})$ is a pullback.
\end{remark}

Suppose that $\I{U}$ is an internal class of $\BB$-categories and let $\I{C}$ be a $\I{U}$-cocomplete $\BB$-category with pullbacks. Given any $\I{I}\in\I{U}(1)$ and any diagram $d\colon \I{I}\to\I{C}$ with colimit cocone $\overline{d}\colon\I{I}^\triangleright\to \I{C}$, Proposition~\ref{prop:adjunctionsSliceCategories} implies that the functor
\begin{equation*}
	\Over{\iota^\ast}{\overline{d}}\colon\Over{\IFun(\I{I}^\triangleright,\I{C})}{\overline{d}}\to\Over{\IFun(\I{I},\I{C})}{d}
\end{equation*}
has a left adjoint that is given by $\Over{(\iota_!)}{d}$. Combining this observation with Remark~\ref{rem:cartesianTransformationsCoconesExplicitly}, we thus end up with a left adjoint $\Over{\IFun(\I{I},\I{C})}{d}^{\cart}\to\Over{\IFun(\I{I}^\triangleright,\I{C})}{\overline{d}}^{\cart}$ to the restriction functor $\Over{\IFun(\I{I}^\triangleright,\I{C})}{\overline{d}}^{\cart}\to\Over{\IFun(\I{I},\I{C})}{d}^{\cart}$ that we will refer to as the \emph{glueing functor}. In light of Proposition~\ref{prop:LimitsOfCategoriesExplicit}, the functor $\Over{\I{C}}{-}$ preserves the limit of $d$ precisely if both unit and counit of this adjunction are equivalences, i.e.\ if both the restriction functor and the glueing functor are fully faithful. We may therefore split up the notion of $\I{U}$-descent into two separate conditions:
\begin{definition}
	Let $\I{U}$ be an internal class and let $\I{C}$ be a $\I{U}$-cocomplete $\BB$-category with pullbacks. We say that $\I{C}$ has \emph{faithful $\I{U}$-descent} if for every $A\in\BB$, every $\I{I}\in \I{U}(A)$ and every diagram $d\colon\I{I}\to\pi_A^\ast\I{C}$ with colimit cocone $\overline{d}\colon\I{I}^\triangleright\to \pi_A^\ast\I{C}$, the restriction functor $\Over{\IFun[\Over{\BB}{A}](\I{I}^\triangleright,\pi_A^\ast\I{C})}{\overline{d}}^{\cart}\to\Over{\IFun[\Over{\BB}{A}](\I{I},\pi_A^\ast\I{C})}{d}^{\cart}$ is fully faithful. We say that $\I{C}$ has \emph{effective $\I{U}$-descent} if for every $A\in\BB$, every $\I{I}\in \I{U}(A)$ and every diagram $d\colon\I{I}\to\pi_A^\ast\I{C}$ with colimit cocone $\overline{d}\colon\I{I}^\triangleright\to \pi_A^\ast\I{C}$, the glueing functor $\Over{\IFun[\Over{\BB}{A}](\I{I},\pi_A^\ast\I{C})}{d}^{\cart}\to\Over{\IFun[\Over{\BB}{A}](\I{I}^\triangleright,\pi_A^\ast\I{C})}{\overline{d}}^{\cart}$ is fully faithful. If $\I{C}$ is a cocomplete large $\BB$-category, we simply say that $\I{C}$ has faithful/effective descent if it has faithful/effective $\ICat_{\BB}$-descent.
\end{definition}
\begin{remark}
	\label{rem:BCFaithfulEffectiveDescent}
	As a consequence of Remark~\ref{rem:BCCartesianMaps}, the property of $\I{C}$ having faithful/effective $\I{U}$-descent is local in $\BB$, in the sense that whenever $\bigsqcup_i A_i\onto 1$ is a cover in $\BB$, the $\BB$-category $\I{C}$ satisfies faithful/effective $\I{U}$-descent precisely if for each $i$ the $\Over{\BB}{{A_i}}$-category $\pi_{A_i}^\ast\I{C}$ has faithful/effective $\pi_{A_i}^\ast\I{U}$-descent.
\end{remark}

By unwinding how the unit and counit of the adjunction  $\Over{\IFun(\I{I}^\triangleright,\I{C})}{\overline{d}}^{\cart}\leftrightarrows\Over{\IFun(\I{I},\I{C})}{d}^{\cart}$ are computed (cf.\ Remark~\ref{rem:unitCounitSliceAdjunctions}) and by using Remark~\ref{rem:criterionCartesianMorphismCocones}, we may characterise the notion of faithful and effective $\I{U}$-descent as follows:
\begin{proposition}
	\label{prop:characterisationFaithfulDescent}
	Let $\I{U}$ be an internal class and let $\I{C}$ be a $\I{U}$-cocomplete $\BB$-category with pullbacks. Then the following are equivalent:
	\begin{enumerate}
		\item $\I{C}$ has faithful $\I{U}$-descent;
		\item for every $A\in\BB$, every $\I{I}\in \I{U}(A)$ and every cartesian map $\overline{d}^\prime\to \overline{d}$ in $\IFun[\Over{\BB}{A}](\I{I}^\triangleright,\pi_A^\ast\I{C})$ in which $\overline{d}$ is a colimit cocone, $\overline{d}^\prime$ is a colimit cocone as well;
		\item for every $A\in \BB$, every $\I{I}\in\I{U}(A)$ and every pullback diagram
		\begin{equation*}
			\begin{tikzcd}
				{d}^\prime\arrow[d]\arrow[r] & \diag(c^\prime)\arrow[d, "\diag(g)"]\\
				{d}\arrow[r, "\eta"] & \diag\colim d
			\end{tikzcd}
		\end{equation*}
		in $\IFun[\Over{\BB}{A}](\I{I},\pi_A^\ast\I{C})$ in which $\eta$ is the unit of the adjunction $\colim\dashv \diag$, the transpose map $\colim d^\prime\to c^\prime$ is an equivalence.\qed
	\end{enumerate}
\end{proposition}

\begin{proposition}
	\label{prop:characterisationEffectiveDescent}
	Let $\I{U}$ be an internal class and let $\I{C}$ be a $\I{U}$-cocomplete $\BB$-category with pullbacks. Then the following are equivalent:
	\begin{enumerate}
		\item $\I{C}$ has effective $\I{U}$-descent;
		\item for every $A\in\BB$, every $\I{I}\in \I{U}(A)$ and every cartesian map $d^\prime\to d$ in $\IFun[\Over{\BB}{A}](\I{I},\pi_A^\ast\I{C})$, the induced map between colimit cocones $\overline{d}^\prime\to\overline{d}$ is cartesian as well;
		\item for every $A\in \BB$, every $\I{I}\in\I{U}(A)$ and every cartesian map $d^\prime\to d$ in $\IFun[\Over{\BB}{A}](\I{I},\pi_A^\ast\I{C})$, the naturality square
		\begin{equation*}
			\begin{tikzcd}
				{d}^\prime\arrow[d]\arrow[r, "\eta"] & \diag(\colim d^\prime)\arrow[d]\\
				{d}\arrow[r, "\eta"] & \diag\colim d
			\end{tikzcd}
		\end{equation*}
		is a pullback.\qed
	\end{enumerate}
\end{proposition}

\begin{corollary}
	\label{cor:groupoidalDescentEtaleTransitionMaps}
	Let $S$ be a local class of maps in $\BB$ and let $\I{C}$ be an $\Univ[S]$-cocomplete $\BB$-category with pullbacks. Then the following are equivalent:
	\begin{enumerate}
		\item $\I{C}$ has $\Univ[S]$-descent;
		\item for every map $p\colon P\to A$ in $S$ the functor $p_!\colon\I{C}(P)\to\I{C}(A)$ is a right fibration;
		\item for every map $p\colon P\to A$ in $S$ the functor $\Over{(p_!)}{1_{\I{C}(P)}}\colon\I{C}(P)\to\Over{\I{C}(A)}{p_!(1_{\I{C}(P)})}$ is an equivalence.
	\end{enumerate}
\end{corollary}
\begin{proof}
	Since $\Over{(p_!)}{1_{\I{C}(P)}}$ is always final, this functor is an equivalence if and only if it is a right fibration, which is in turn equivalent to $p_!$ being a right fibration. Hence~(2) and~(3) are equivalent conditions. Now in light of the adjunction $p_!\dashv p^\ast$, a map $f\colon c^\prime\to c$ in $\I{C}(P)$ is cartesian with respect to $p_!$ precisely if the naturality square
	\begin{equation*}
		\begin{tikzcd}
			c^\prime\arrow[d, "f"]\arrow[r] & p^\ast p_! c^\prime\arrow[d, "p^\ast p_! f"]\\
			c\arrow[r] & p^\ast p_! c
		\end{tikzcd}
	\end{equation*}
	is a pullback. Therefore, Proposition~\ref{prop:characterisationEffectiveDescent} and the fact that \emph{every} map of diagrams indexed by a $\BB$-groupoid is cartesian imply that $\I{C}$ has effective $\Univ[S]$-descent if and only if for every map $p\colon P\to A$ in $S$, every morphism in $\I{C}(P)$ is cartesian with respect to $p_!$. By the same observation, Proposition~\ref{prop:characterisationFaithfulDescent} shows that $\I{C}$ has faithful $\Univ[S]$-descent if and only if for every map $p\colon P\to A$ in $S$, every object $c\in\I{C}(P)$ and every morphism $g\colon c^{\prime\prime}\to p_!(c)$ in $\I{C}(A)$, the pullback of $p^\ast(g)$ along the adjunction unit $c\to p^\ast p_! c$ defines a cartesian lift of $g$. In other words, $\I{C}$ has faithful $\Univ[S]$-descent if and only if $p_!$ is a cartesian fibration. Hence~(1) and~(2) are equivalent.
\end{proof}

For the next corollary, recall from \S~\ref{sec:bifiltration} that if $\KK$ is a class of $\infty$-categories and $S$ is a local class of morphisms in $\BB$, we denote by $\ICat_{\BB}^{\ord{\KK,S}}$ the \emph{left regularisation} of the internal class $\ILConst_{\KK}\cup \Univ[S]$, i.e.\ the smallest internal class that contains $\Delta\cup\ILConst_{\KK}\cup \Univ[S]$ and that is closed under $\ILConst_{\op(\KK)}\cup \Univ[S]$-colimits in $\ICat_{\BB}$ (where $\op(\KK)$ is the image of $\KK$ under the equivalence $(-)^\op\colon \CatS\simeq\CatS$). We now obtain:
\begin{corollary}
	\label{cor:descentForKSClasses}
	Let $\KK$ be a class of $\infty$-categories and let $S$ be a local class of maps in $\BB$. Let $\I{C}$ be a $\ICat_{\BB}^{\ord{\KK,S}}$-cocomplete $\BB$-category with pullbacks. Then $\ICat_{\BB}$ satisfies $\ICat_{\BB}^{\ord{\KK,S}}$-descent if and only if
	\begin{enumerate}
		\item for all $A\in\BB$ the $\infty$-category $\I{C}(A)$ satisfies $\KK$-descent, and
		\item for every map $p\colon P\to A$ in $S$ the functor $p_!$ is a right fibration.
	\end{enumerate}
\end{corollary}
\begin{proof}
	By Proposition~\ref{prop:invarianceCocompletenessRegularisation}, the $\BB$-category $\I{C}$ has $\ICat_{\BB}^{\ord{\KK,S}}$-descent precisely if it satisfies both $\ILConst_{\KK}$- and $\Univ[S]$-descent. By Example~\ref{ex:LConstKDescent} the first condition is equivalent to~(1), and by Corollary~\ref{cor:groupoidalDescentEtaleTransitionMaps} the second one is equivalent to~(2).
\end{proof}

\begin{example}
	\label{ex:modalityDescent}
	Let $S$ be a local class of morphisms in $\BB$ that is closed under pullbacks in $\Fun(\Delta^1,\BB)$ and that is \emph{left cancellable}, i.e.\ satisfies the condition that whenever there is a composable pair of morphisms $f$ and $g$ in $\BB$ for which $g$ is contained in $S$, then $gf$ is contained in $S$ if and only if $f$ is. Then the associated subuniverse $\Univ[S]\into\Univ$ is closed under pullbacks and under $\Univ[S]$-colimits, and for every map $s\colon B\to A$ in $S$ the functor $s_!\colon\Univ[S](B)\to\Univ[S](A)$ is a right fibration. Hence $\Univ[S]$ has $\Univ[S]$-descent. These conditions are for example satisfied if $S$ is the right complement of a factorisation system.
\end{example}

\subsection{Universality of colimits}
\label{sec:universalColimits}
The goal of this section is to establish that the notion of faithful $\I{U}$-descent is equivalent to \emph{universality} of $\I{U}$-colimits:
\begin{definition}
	\label{def:universalColimits}
	Let $\I{U}$ be an internal class of $\BB$-categories and let $\I{C}$ be a $\I{U}$-cocomplete $\BB$-category with pullbacks. We say that $\I{U}$-colimits are \emph{universal} in $\I{C}$ if for every map $f\colon c\to d$ in $\I{C}$ in context $A\in\BB$ the functor $f^\ast\colon\Over{\I{C}}{d}\to\Over{\I{C}}{c}$ is $\pi_A^\ast\I{U}$-cocontinuous. If $\I{C}$ is a cocomplete large $\BB$-category, we simply say that colimits are universal in $\I{C}$ if $\ICat_{\BB}$-colimits are universal in $\I{C}$.
\end{definition}
\begin{remark}
	\label{rem:universalColimitsWelldefined}
	In the situation of Definition~\ref{def:universalColimits}, note that by Corollary~\ref{cor:sliceCategoryUCocomplete} the $\Over{\BB}{A}$-category $\Over{\I{C}}{c}\simeq \Over{(\pi_A^\ast\I{C})}{\bar c}$ (where $\bar c\colon 1\to \pi_A^\ast\I{C}$ is the transpose of $c$) is $\pi_A^\ast\I{U}$-cocomplete for every $c\colon A\to\I{C}$. Therefore, asking for $f^\ast$ to be $\pi_A^\ast\I{U}$-cocontinuous makes sense.
\end{remark}

\begin{remark}
	\label{rem:BCUniversalColimits}
	The condition that $\I{U}$-colimits are universal in $\I{C}$ is local in $\BB$: if $\bigsqcup_i A_i\onto 1$ is a cover in $\BB$, then $\I{U}$-colimits are universal in $\I{C}$ if and only if $\pi_{A_i}^\ast\I{U}$-colimits are universal in $\pi_{A_i}^\ast\I{C}$ for each $i$. This is easily seen using the fact that $\I{U}$-cocontinuity is a local condition~\cite[Remark~5.2.3]{MWColimits}.
\end{remark}

\begin{example}
	\label{ex:universalColimitsFibrewise}
	Let $\KK$ be a class of $\infty$-categories and let $\I{C}$ be an $\ILConst_{\KK}$-cocomplete $\BB$-category with pullbacks. Then $\ILConst_{\KK}$-colimits are universal in $\I{C}$ if and only if $\KK$-colimits are universal in $\I{C}(A)$ for all $A\in\BB$. In fact, this follows immediately from the observation that for every map $f\colon c\to d$ in $\I{C}$ in context $A\in\BB$ and for every map $s\colon B\to A$ in $\BB$ the functor $f^\ast(B)$ can be identified with $(s^\ast f)^\ast\colon \Over{\I{C}(B)}{d}\to\Over{\I{C}(B)}{c}$.
\end{example}

\begin{proposition}
	\label{prop:universalColimitsFaithfulDescent}
	Let $\I{U}$ be an internal class of $\BB$-categories and let $\I{C}$ be a $\I{U}$-cocomplete $\BB$-category with pullbacks. Then $\I{U}$-colimits are universal in $\I{C}$ if and only if $\I{C}$ has faithful $\I{U}$-descent.
\end{proposition}
\begin{proof}
	Let $\I{I}$ be an object in $\I{U}(1)$, and let $f\colon c^\prime\to c$ be an arbitrary map in $\I{C}$ in context $1\in \BB$. Suppose that $d\colon \I{I}\to\Over{\I{C}}{c}$ is a diagram with colimit cocone $\overline{d}\colon d\to\diag\colim d$, which we may equivalently regard as a diagram $\overline{d}\colon\I{I}\to\Over{\I{C}}{\colim d}$. Let $g\colon \colim d\to c$ be the induced map. On account of the fact that the (vertical) mate of the commutative square
	\begin{equation*}
		\begin{tikzcd}
			\Over{\I{C}}{f^\ast(\colim d)}\arrow[r, "(f^\ast(g))_!"]\arrow[d, "(g^\ast f)_!"] & \Over{\I{C}}{c^\prime}\arrow[d, "f_!"]\\
			\Over{\I{C}}{\colim d}\arrow[r, "g_!"] & \Over{\I{C}}{c}
		\end{tikzcd}
	\end{equation*}
	commutes and since the horizontal maps in this diagram are conservative and $\I{U}$-cocontinuous, the functor $f^\ast$ preserves the colimit of $d$ if and only if the functor $(g^\ast f)^\ast$ preserves the colimit of $\overline{d}\colon\I{I}\to\Over{\I{C}}{\colim d}$. By (the proof of) Proposition~\ref{prop:sliceFibrationColimits}, the colimit of the latter is the final object in $\Over{\I{C}}{\colim d}$. Therefore, in order to show that $\I{C}$ has $\I{U}$-universal colimits, it suffices to consider those diagrams in $\Over{\I{C}}{c}$ whose colimit is the final object.
	
	Now on account of the commutative square
	\begin{equation*}
		\begin{tikzcd}
			\Over{\IFun(\I{I},\I{C})}{\diag(c^\prime)}\arrow[d, "\diag(f)_!"]\arrow[r, "\simeq"] &\IFun(\I{I},\Over{\I{C}}{c^\prime})\arrow[d, "(f_!)_\ast"]\\
			\Over{\IFun(\I{I},\I{C})}{\diag(c)}\arrow[r, "\simeq"] &\IFun(\I{I},\Over{\I{C}}{c})
		\end{tikzcd}
	\end{equation*}
	we may identify $\diag(f)^\ast$ with $(f^\ast)_\ast$. Therefore, if $d\to \diag(c)$ is a colimit cocone, the upper horizontal equivalence in the above diagram identifies its pullback $d^\prime\to\diag(c^\prime)$ along $\diag(f)$ with the composition $\I{I}\xrightarrow{\overline{d}}\Over{\I{C}}{c}\xrightarrow{f^\ast}\Over{\I{C}}{c^\prime}$. Thus, by again using Proposition~\ref{prop:sliceFibrationColimits}, the map $d^\prime\to \diag(c^\prime)$ is a colimit cocone if and only if the colimit of $f^\ast \overline{d}$ is the final object in $\Over{\I{C}}{c^\prime}$, which is in turn equivalent to $f^\ast$ preserving the colimit of $\overline{d}$. As replacing $\BB$ with $\Over{\BB}{A}$ allows us to arrive at the same conclusion for any $\I{I}\in\I{U}(A)$, Proposition~\ref{prop:characterisationFaithfulDescent} yields the claim.
\end{proof}

\begin{example}
	\label{ex:universalityGroupoidalColimitsCartesianFibration}
	Let $S$ be a local class of morphisms in $\BB$ and let $\I{C}$ be a $\Univ[S]$-cocomplete $\BB$-category with pullbacks. Then $\Univ[S]$-colimits are universal in $\I{C}$ if and only if for every map $p\colon P\to A$ in $S$ the functor $p_!\colon\I{C}({P})\to\I{C}(A)$ is a cartesian fibration. In fact, in light of Proposition~\ref{prop:universalColimitsFaithfulDescent} this follows from the argument in the proof of Corollary~\ref{cor:groupoidalDescentEtaleTransitionMaps}.
\end{example}

We end this section by relating universality of colimits with the property of being \emph{locally cartesian closed}:
\begin{proposition}
	\label{prop:universalityOfColimitsCartesianClosed}
	Let $\I{X}$ be a presentable $\BB$-category. Then $\I{X}$ has $\Univ$-universal colimits if and only if for every object $x\colon A\to \I{X}$, the $\Over{\BB}{A}$-category $\Over{(\pi_A^\ast\I{X})}{x}$ is cartesian closed, which is to say that there exists a bifunctor $\Hom_{\pi_A^\ast\I{X}}(-,-)\colon \Over{(\pi_A^\ast\I{X})}{x}^\op\times \Over{(\pi_A^\ast\I{X})}{x}\to \Over{(\pi_A^\ast\I{X})}{x}$ that fits into an equivalence
	\begin{equation*}
		\map{\Over{(\pi_A^\ast\I{X})}{x}}(-\times -, -)\simeq \map{\Over{(\pi_A^\ast\I{X})}{x}}(-, \Hom_{\pi_A^\ast\I{X}}(-,-)).
	\end{equation*}
\end{proposition}
\begin{proof}
	It will be enough to show that $\I{X}$ is cartesian closed if and only if for every $y\colon A\to \I{X}$ the functor $(\pi_y)^\ast\colon \pi_A^\ast\I{X}\to \Over{(\pi_A^\ast\I{X})}{y}$ is $\Univ[\Over{\BB}{A}]$-cocontinuous. Using Remark~\ref{rem:BCUniversalColimits}, we can assume that $A\simeq 1$. Recall that the forgetful functor $(\pi_y)_!\colon\Over{\I{X}}{y}\to\I{Y}$ is $\Univ$-cocontinuous (Corollary~\ref{cor:sliceCategoryUCocomplete}). As this functor is moreover a right fibration and therefore in particular conservative, we find that $\pi_y^\ast$ is $\Univ$-cocontinuous if and only if the composition $(\pi_y)_!\pi_y^\ast$ is. Together with Proposition~\ref{prop:characterisationCategoriesWithProducts}, this shows that $\pi_y^\ast$ being $\Univ$-cocontinuous is equivalent to $y\times -$ being $\Univ$-cocontinuous. As $\I{X}$ is presentable, this is in turn equivalent to $y\times -$ having a right adjoint $\Hom_{\I{X}}(y,-)$ (see Proposition~\ref{prop:adjointFunctorTheorem}). Clearly, this holds if $\I{X}$ is cartesian closed. Conversely, if $y\times -$ admits a right adjoint for all $y\colon A\to\I{X}$, then $\map{\I{X}}(-\times -, -)$, viewed as a functor $\I{X}^\op\times\I{X}\to\IPSh(\I{X})$, takes values in $\I{X}\into\IPSh(\I{X})$ and therefore gives rise to the desired internal hom.
\end{proof}

\subsection{Disjoint colimits}
\label{sec:disjointColimits}
If $\CC$ is an $\infty$-category with pullbacks and finite coproducts, one says that a coproduct $c_0\sqcup c_1$ in $\CC$ is \emph{disjoint} if the fibre product $c_i\times_{c_0\sqcup c_1}c_j$ is equivalent to $c_i$ if $i=j$ and the initial object otherwise. In this section, our goal is to study an internal analogue of this concept. In fact, we will define what it means for arbitrary \emph{$\BB$-groupoidal} colimits to be disjoint. To that end, recall that if $S$ is an arbitrary local class of morphisms in $\BB$, the associated subuniverse $\Univ[S]$ is contained in the free $\Univ[S]$-cocompletion $\IPSh^{\Univ[S]}(1)$, cf.~\cite[Example~7.3.5]{MWColimits}. Therefore, if $\I{C}$ is an arbitrary $\Univ[S]$-cocomplete $\BB$-category, the tensoring bifunctor
\begin{equation*}
	-\otimes -\colon\Univ[S]\times\I{C}\to\I{C}
\end{equation*}
(see~\cite[Proposition~7.3.8]{MWColimits}) is well-defined. Furthermore, note that $S$ is closed under diagonals if and only if for every $\I{G}\in\Univ[S](A)$ and every pair of objects $g,g^\prime\colon A\rightrightarrows \I{G}$ the mapping $\Over{\BB}{A}$-groupoid $\map{\I{G}}(g,g^\prime)$ is contained in $\Univ[S](A)$ as well. Therefore, we may define:
\begin{definition}
	\label{def:disjointColimits}
	Let $S$ be a local class of morphisms in $\BB$ that is closed under diagonals, and let $\I{C}$ be an $\Univ[S]$-cocomplete $\BB$-category with pullbacks. If $\I{G}\in \Univ[S](1)$ is an arbitrary object, we say that \emph{$\I{G}$-indexed colimits are disjoint in $\I{C}$} if for all diagrams $d\colon \I{G}\to\I{C}$ and for every pair of objects $g,g^\prime$ in $\I{G}$ in context $1\in\BB$ the diagram
	\begin{equation*}
		\begin{tikzcd}
			\map{\I{G}}(g,g^\prime)\otimes d(g)\arrow[d]\arrow[r] & d(g^\prime)\arrow[d]\\
			d(g)\arrow[r] & \colim d
		\end{tikzcd}
	\end{equation*}
	is a pullback. We say that \emph{$\Univ[S]$-colimits are disjoint in $\I{C}$} if for all $A\in \BB$ and all $\I{G}\in\I{U}(A)$ all $\I{G}$-indexed colimits are disjoint in $\pi_A^\ast\I{C}$.
\end{definition}

\begin{remark}
	In the situation of Definition~\ref{def:disjointColimits}, let $\overline{d}\colon \I{G}^\triangleright\to \I{C}$ be the colimit cocone associated with $d$. Then the commutative square in the definition is obtained by transposing the commutative diagram
	\begin{equation*}
		\begin{tikzcd}
			\map{\I{G}^\triangleright}(g,g^\prime)\arrow[d]\arrow[r, "\overline{d}"] & \map{\I{C}}(d(g), d(g^\prime))\arrow[d]\\
			\map{\I{G}^\triangleright}(g, \infty)\arrow[r, "\overline{d}"] & \map{\I{C}}(d(g), \colim d)
		\end{tikzcd}
	\end{equation*}
	across the equivalence $\map{\I{C}}(-\otimes -, -)\simeq\map{\Univ}(-,\map{\I{C}}(-,-))$, noting that since $\iota\colon\I{G}\into\I{G}^\triangleright$ is fully faithful the upper left corner can be identified with $\map{\I{G}}(g,g^\prime)$ and since $\infty\colon 1\to\I{G}^\triangleright$ is final the lower left corner is equivalent to the final object.
\end{remark}

\begin{example}
	\label{ex:disjointSums}
	Let us unwind Definition~\ref{def:disjointColimits} in the case where $\BB=\SS$ and where $\I{G}=\{0,1\}$. Then a diagram $d\colon \{0,1\}\to \CC$ in an $\infty$-category $\CC$ is simply given by a pair $(c_0,c_1)$ of objects in $\CC$, and its colimit is the coproduct $c_0\sqcup c_1$. Furthermore, the square in Definition~\ref{def:disjointColimits} is explicitly given by
	\begin{equation*}
		\begin{tikzcd}
			\map{\{0,1\}}(i,j)\times c_i\arrow[r]\arrow[d] & c_j\arrow[d]\\
			c_i\arrow[r] & c_0\sqcup c_1
		\end{tikzcd}
	\end{equation*}
	(for $i,j\in\{0,1\}$) and is therefore a pullback for all pairs $(i,j)$ precisely if coproducts are disjoint in $\CC$ in the usual sense.
\end{example}

\begin{remark}
	\label{rem:disjointColimitsLocalCondition}
	The property of $\Univ[S]$-colimits to be disjoint in $\I{C}$ is a local condition. More precisely, if $\I{G}\in\I{U}(1)$ is an arbitrary object and if $\bigsqcup_i A_i\onto 1$ is a cover in $\BB$, then $\I{G}$-indexed colimits are disjoint in $\I{C}$ if and only if $\pi_{A_i}^\ast\I{G}$-indexed colimits are disjoint in $\pi_{A_i}^\ast\I{C}$ for all $i$. This follows immediately from the fact that both limits and colimits are determined locally, cf.~\cite[Remark~4.1.8]{MWColimits}. As a consequence, if $\Univ[S]$ is generated by a family of objects $(\I{G}_i\colon A_i\to\Univ[S])_i$, then $\Univ[S]$-colimits are disjoint in $\I{C}$ precisely if $\I{G}_i$-indexed colimits are disjoint in $\pi_{A_i}^\ast\I{C}$ for all $i$.
\end{remark}

\begin{example}
	\label{ex:disjointColimitsFibrewise}
	Let $S$ be the local class of morphisms in $\BB$ that is generated by $\varnothing, 1$ and $2=1\sqcup 1$. Then $S$ is closed under diagonals. By using Remark~\ref{rem:disjointColimitsLocalCondition} and Example~\ref{ex:disjointSums}, one finds that $\Univ[S]$-colimits are disjoint in $\I{C}$ if and only if coproducts are disjoint in $\I{C}(A)$ for all $A\in\BB$.
\end{example}

The main goal of this section is to show:
\begin{proposition}
	\label{prop:disjointColimitsEffectiveDescent}
	Let $S$ be a local class of morphisms in $\BB$ that is closed under diagonals, and let $\I{C}$ be an $\Univ[S]$-cocomplete $\BB$-category with pullbacks in which $\Univ[S]$-colimits are universal. Then $\I{C}$ has effective $\Univ[S]$-descent if and only if $\Univ[S]$-colimits are disjoint.
\end{proposition}
In order to prove Proposition~\ref{prop:disjointColimitsEffectiveDescent}, we will need a  more explicit description of the notion of disjoint $\Univ[S]$-colimits.
The key input is the following construction:
\begin{construction}
	\label{constr:disjointColimits}
	Let $S$ be a local class of morphisms in $\BB$ that is closed under diagonals, and let $\I{C}$ be an $\Univ[S]$-cocomplete $\BB$-category with pullbacks. Suppose that $p\colon P\to A$ is a map in $S$, and let $c\colon P\to \I{C}$ be an arbitrary object. Let $\eta\colon c\to p^\ast p_! c$ be the adjunction unit, and consider the pullback square
	\begin{equation*}
		\begin{tikzcd}
			z\arrow[r]\arrow[d] & \pr_1^\ast(c)\arrow[d,"\pr_1^\ast(\eta)"]\\
			\pr_0^\ast(c)\arrow[r, "\pr_0^\ast(\eta)"] & \pr_0^\ast p^\ast p_!(c)
		\end{tikzcd}
	\end{equation*}
	in $\I{C}(P\times_A P)$ (where we implicitly identify $\pr_0^\ast p^\ast\simeq \pr_1^\ast p^\ast$). Note that if $\Delta_p\colon P\to P\times_A P$ is the diagonal map, the pullback of the above square along $\Delta_p$ yields the pullback square
	\begin{equation*}
		\begin{tikzcd}
			c\times_{p^\ast p_!(c)} c\arrow[r]\arrow[d] & c\arrow[d]\\
			c\arrow[r] & p^\ast p_!(c)
		\end{tikzcd}
	\end{equation*}
	in $\I{C}(P)$. Therefore, the diagonal map $c\to c\times_{p^\ast p_!(c)} c$ transposes to a map $\delta_p(c)\colon(\Delta_p)_!(c)\to z$.
\end{construction}

\begin{proposition}
	\label{prop:characterisationDisjointColimits}
	Let $S$ be a local class of morphisms in $\BB$ that is closed under diagonals, and let $\I{C}$ be an $\Univ[S]$-cocomplete $\BB$-category with pullbacks. Then $\Univ[S]$-colimits are disjoint in $\I{C}$ if and only if for all maps $p\colon P\to A$ and all objects $c\colon P\to\I{C}$ the map $\delta_p(c)$ from Construction~\ref{constr:disjointColimits} is an equivalence.
\end{proposition}
\begin{proof}
	By identifying $p\colon P\to A$ with a $\Over{\BB}{A}$-groupoid $\I{G}$, the object $c\colon P\to \I{C}$ corresponds to a diagram $d\colon \I{G}\to\pi_A^\ast\I{C}$. Also, the two maps $\pr_0,\pr_1\colon P\times_A P\rightrightarrows P$ correspond to objects $g$ and $g^\prime$ in $\I{G}$ in context $P\times_A P$.
	In light of these identifications, the two cospans
	\begin{equation*}
		\begin{tikzcd}
			& \pr_1^\ast(c)\arrow[d,"\pr_1^\ast(\eta_P)"] &&&& & d(g^\prime)\arrow[d]\\
			\pr_0^\ast(c)\arrow[r, "\pr_0^\ast(\eta_P)"] & \pr_0^\ast p^\ast p_!(c) &&&& d(g)\arrow[r] & \pr_0^\ast p^\ast(\colim d)
		\end{tikzcd}
	\end{equation*}
	(in context $P\times_A P$) are translated into each other. Next, we note that pulling back $g$ and $g^\prime$ along the diagonal $\Delta\colon P\to P\times_A P$ recovers the tautological object $\tau$ of $\I{G}$ (i.e.\ the one corresponding to $\id_P$). As there is a section $\id_\tau\colon P\to \map{\I{G}}(\tau,\tau)$, we thus obtain a commutative diagram
	\begin{equation*}
		\begin{tikzcd}
			d(\tau)\arrow[dr]\arrow[drr, bend left=15, "\id"]\arrow[ddr, bend right=15, "\id"'] && \\
			& \map{\I{G}}(\tau,\tau)\otimes d(\tau)\arrow[r]\arrow[d] & d(\tau)\arrow[d]\\
			& d(\tau)\arrow[r] & p^\ast(\colim d)
		\end{tikzcd}
	\end{equation*}
	in context $P$. Observe that value of the unit of the adjunction $(\Delta_!\dashv\Delta^\ast)\colon \Over{\BB}{P}\leftrightarrows\Over{\BB}{P\times_A P}$ at the final object precisely recovers the map $\id_\tau\colon P\to \map{\I{G}}(\tau,\tau)$. As the functor 
	\begin{equation*}
		-\otimes d(\tau)\colon \pi_P^\ast\Univ\to\pi_P^\ast\I{C}
	\end{equation*}
	is by construction $\pi_P^\ast\I{U}$-cocontinuous, one thus finds that the map $d(\tau)\to\map{\I{G}}(\tau,\tau)\otimes d(\tau)$ can be identified with the unit of the adjunction $(\Delta_!\dashv \Delta^\ast)\colon \I{C}(P)\leftrightarrows\I{C}(P\times_A P)$. But this precisely means that the transpose map $\Delta_! d(\tau)\to \map{\I{G}}(g,g^\prime)\otimes d(g)$.  must be an equivalence.
	As $d(\tau)$ is simply $c$, we therefore find that the two diagrams
	\begin{equation*}
		\begin{tikzcd}
			\Delta_!(c)\arrow[r]\arrow[d] & \pr_1^\ast(c)\arrow[d,"\pr_1^\ast(\eta)"] &&& \map{\I{G}}(g,g^\prime)\otimes d(g)\arrow[r]\arrow[d]& d(g^\prime)\arrow[d]\\
			\pr_0^\ast(c)\arrow[r, "\pr_0^\ast(\eta)"] & \pr_0^\ast p^\ast p_!(c) &&& d(g)\arrow[r] & \pr_0^\ast p^\ast(\colim d)
		\end{tikzcd}
	\end{equation*}
	are equivalent. To complete the proof, we still need to show that the right square being cartesian is equivalent to $\Univ[S]$-colimits being disjoint in $\I{C}$. Certainly, this is a necessary condition since this square is precisely of the form as in Definition~\ref{def:disjointColimits} (after identifying $\pr_0^\ast p^\ast(\colim d)$ with $\colim \pr_0^\ast p^\ast d$ and regarding $g$ and $g^\prime$ as objects of $\pr_0^\ast p^\ast \I{G}$ in context $1\in\Over{\BB}{P\times_A P}$). The converse follows from the observation that \emph{every} pair of objects $h,h^\prime\colon A\rightrightarrows\I{G}$ must be a pullback of $g$ and $g^\prime$, i.e.\ that the above diagram is the \emph{universal} one.
\end{proof}

\begin{proof}[{Proof of Proposition~\ref{prop:disjointColimitsEffectiveDescent}}]
	We will freely make use of the setup from Proposition~\ref{prop:characterisationDisjointColimits}. Therefore, let us fix a map $p\colon P\to A$ in $S$, and let us denote the unit and counit of the associated adjunction $p_!\dashv p^\ast$ by $\eta_p$ and $\epsilon_p$, respectively.
	We first assume that $\I{C}$ has effective $\Univ[S]$-descent. Choose an arbitrary object $c\colon P\to\I{C}$ and consider the pullback
	\begin{equation*}
		\begin{tikzcd}
			z\arrow[r, "g"]\arrow[d] & \pr_1^\ast(c)\arrow[d,"\pr_1^\ast(\eta_p)"]\\
			\pr_0^\ast(c)\arrow[r, "\pr_0^\ast(\eta_p)"] & \pr_0^\ast p^\ast p_!(c)
		\end{tikzcd}
	\end{equation*}
	in $\I{C}(P\times_A P)$. 
	By making use of the commutative diagram
	\begin{equation*}
		\begin{tikzcd}
			\pr_0^\ast(c)\arrow[r, "\pr_0^\ast\eta_p c"] \arrow[dr, "\eta_{\pr_1}\pr_0^\ast(c)"']& \pr_1^\ast p^\ast p_!(c)\arrow[d, "\pr_1^\ast(\alpha)"]\\
			&\pr_1^\ast(\pr_1)_!\pr_0^\ast(c),
		\end{tikzcd}
	\end{equation*}
	(where $\alpha$ is an equivalence owing to $\I{C}$ having $\Univ[S]$-colimits),
	we may identify the above square with the pullback square
	\begin{equation*}
		\begin{tikzcd}[column sep=large]
			z\arrow[r, "g"]\arrow[d] & \pr_1^\ast(c)\arrow[d,"\pr_1^\ast(\alpha\eta_p)"]\\
			\pr_0^\ast(c)\arrow[r, "\eta_{\pr_1}\pr_0^\ast(c)"] & \pr_1^\ast (\pr_1)_! \pr_0^\ast(c).
		\end{tikzcd}
	\end{equation*}
	Since by assumption $\Univ[S]$-colimits are universal in $\I{C}$, Proposition~\ref{prop:universalColimitsFaithfulDescent} implies that $\I{C}$ has faithful $\Univ[S]$-descent. Hence Proposition~\ref{prop:characterisationFaithfulDescent} implies that the transpose $(\pr_1)_!(z)\to c$ of $g$ must be an equivalence. Together with the commutative diagram
	\begin{equation*}
		\begin{tikzcd}[column sep=large]
			(\pr_1)_!\Delta_!(c)\arrow[d, "\simeq"]\arrow[r]\arrow[rr, bend left=25, "(\pr_1)_!(\delta_{p}(c))"] & (\pr_1)_!\Delta_!\Delta^\ast(z)\arrow[d, "\simeq"]\arrow[r, "(\pr_1)_!\epsilon_{\Delta}z"] &(\pr_1)_!(z)\arrow[d]\\
			c\arrow[r]\arrow[rr, bend right=25, "\id"] & \Delta^\ast(z)\arrow[r, "\Delta^\ast(g)"] & c,
		\end{tikzcd}
	\end{equation*}
	this observation implies that $(\pr_1)_!(\delta_{p}(c))$ is an equivalence. But since $\I{C}$ has effective and faithful $\Univ[S]$-descent, Proposition~\ref{cor:groupoidalDescentEtaleTransitionMaps} implies that $(\pr_1)_!$ is a right fibration and therefore in particular conservative. Hence $\delta_{p}(c)$ is already an equivalence.
	
	Conversely, suppose that $\Univ[S]$-colimits in $\I{C}$ are disjoint and let $f\colon c\to d$ be an arbitrary map in $\I{C}$ in context $P$.
	Consider the diagram
	\begin{equation*}
		\begin{tikzcd}
			c\arrow[dr, "\phi"]\arrow[drr, bend left, "\eta_P c"]\arrow[ddr, bend right, "f"]& &\\
			&e\arrow[d]\arrow[r] & p^\ast p_!(c)\arrow[d, "p^\ast p_!(f)"]\\
			&d\arrow[r, "\eta d"] & p^\ast p_!(d)
		\end{tikzcd}
	\end{equation*}
	in $\I{C}(P)$ in which the square is a pullback. By Proposition~\ref{prop:characterisationEffectiveDescent}, the result follows once we show that $\phi$ is an equivalence. We now obtain a pullback diagram
	\begin{equation*}
		\label{eq:stupidCube}
		\tag{$\ast$}
		\begin{tikzcd}[column sep={4em,between origins}, row sep={3em,between origins}]
			& x \arrow[rr]\arrow[dd]\arrow[dl] && \pr_1^\ast(c)\arrow[dd]\arrow[dl]\\
			\pr_0^\ast(e)\arrow[rr, crossing over] \arrow[dd]&& \pr_0^\ast p^\ast p_!(c) & \\
			& y\arrow[rr]\arrow[dl] && \pr_1^\ast(d)\arrow[dl]\\
			\pr_0^\ast(d)\arrow[rr] && \pr_0^\ast p^\ast p_!(d)\arrow[from=uu, crossing over]
		\end{tikzcd}
	\end{equation*}
	in $\Fun(\Delta^1,\I{C}(P\times_A P))$ in which the front square is obtained by applying $\pr_0^\ast$ to the pullback square in the previous diagram and the right square is given by applying $\pr_1^\ast$ to the outer square in the previous diagram. Note that by applying the functor $\Delta^\ast$ to this cube, we obtain a commutative diagram
	\begin{equation*}
		\begin{tikzcd}[column sep={3em,between origins}, row sep={1.7em,between origins}]
			& c\arrow[dddd] \arrow[dr]\arrow[drrrr, bend left=15, "\id", near end]\arrow[dddl, bend right=15, "\phi"', near start]  &&&&\\
			&& \Delta^\ast(x) \arrow[rrr]\arrow[ddll, crossing over] &&& c\arrow[dddd]\arrow[ddll]\\
			&&&&&\\
			|[alias=e]|e&&& |[alias=pc]|p^\ast p_!(c) && \\
			& d \arrow[dr]\arrow[drrrr, bend left=15, "\id", near end]\arrow[dddl, bend right=15, "\id"', near start]&&&&\\
			&& \Delta^\ast(y)\arrow[rrr]\arrow[ddll]\arrow[from=uuuu, crossing over] &&& d\arrow[ddll]\\
			&&&&&\\
			d\arrow[rrr]\arrow[from=uuuu, crossing over] &&& p^\ast p_!(d)\arrow[from=uuuu, crossing over]&&\arrow[from=e, to=pc, crossing over] 
		\end{tikzcd}
	\end{equation*}
	in which the cube defines a pullback in $\Fun(\Delta^1, \I{C}(P))$. 
	Now by disjointness of $\Univ[S]$-colimits, the map $\Delta_!(d)\to y$ must be an equivalence. Since this map fits into a commutative diagram
	\begin{equation*}
		\begin{tikzcd}
			d\arrow[r, "\eta^\prime d"] \arrow[dr] & \Delta^\ast\Delta_!(c)\arrow[d, "\simeq"]\\
			&\Delta^\ast(y)
		\end{tikzcd}
	\end{equation*}
	(in which $\eta^\prime$ denotes the unit of the adjunction $\Delta_!\dashv \Delta^\ast$) and since we have a pullback square
	\begin{equation*}
		\begin{tikzcd}
			c\arrow[d]\arrow[r] & \Delta^\ast(x)\arrow[d]\\
			d\arrow[r] & \Delta^\ast(y),
		\end{tikzcd}
	\end{equation*}
	the assumption that $\Univ[S]$-colimits are universal in $\I{C}$ and Proposition~\ref{prop:characterisationFaithfulDescent} imply that the transpose map $\Delta_!(c)\to x$ is an equivalence as well. By the argument in the beginning of the proof, applied to the top square in~(\ref{eq:stupidCube}), the commutative diagram	
	\begin{equation*}
		\begin{tikzcd}
			\pr_1^\ast(c)\arrow[r, "\pr_1^\ast\eta_p c"] \arrow[dr, "\eta_{\pr_0}\pr_1^\ast(c)"']& \pr_0^\ast p^\ast p_!(c)\arrow[d, "\simeq"]\\
			&\pr_0^\ast(\pr_0)_!\pr_1^\ast(c)
		\end{tikzcd}
	\end{equation*}
	implies that the map $(\pr_0)_!(x)\to e$ is an equivalence too. Taken together, we thus conclude that the composition
	\begin{equation*}
		c\xrightarrow{\simeq} (\pr_0)_!\Delta_!(c)\to (\pr_0)_!(x)\to e
	\end{equation*}
	is an equivalence. By its very construction, this map can be identified with $\phi$, hence the claim follows.
\end{proof}

\subsection{Effective groupoid objects}
\label{sec:effectiveGroupoids}
In this section we briefly review the notion of \emph{groupoid objects} and their relation to descent (as discussed in~\cite[\S~6.1]{htt}) in the context of $\BB$-category theory.
\begin{definition}
	\label{def:equivalenceRelation}
	Let $\I{C}$ be a $\BB$-category with pullbacks. A \emph{groupoid object} in $\I{C}$ is a functor $G_\bullet\colon\Delta^{\op}\to \I{C}$ such that for all $n\geq 0$ and every decomposition $\ord{n}\simeq \ord{k}\sqcup_{\ord{0}}\ord{l}$ the map $G_n\to G_k\times_{G_0}G_l$ is an equivalence. We denote by $\SGrpd(\I{C})$ the full subcategory of $\IFun(\Delta^{\op},\I{C})$ spanned by the groupoid objects in $\pi_A^\ast\I{C}$ for every $A\in \BB$.
\end{definition}

\begin{definition}
	\label{def:effectiveEquivalenceRelation} 
	Let $\I{C}$ be a $\BB$-category that admits $\Delta^{\op}$-indexed colimits and pullbacks. We say that a groupoid object $G_\bullet$ in $\I{C}$ is \emph{effective} if the map $G_1\to G_0\times_{\colim G_\bullet}G_0$ is an equivalence in $\I{C}$. We denote by $\SGrpdEff(\I{C})$ the full subcategory of $\SGrpd(\I{C})$ that is spanned by the effective groupoid objects in $\pi_A^\ast\I{C}$ for every $A\in\BB$. We say that \emph{groupoid objects are effective} in $\I{C}$ if the inclusion $\SGrpdEff(\I{C})\into\SGrpd(\I{C})$ is an equivalence.
\end{definition}

\begin{remark}
	\label{rem:BCGroupoidObjects}
	Since the property of a map being an equivalence is local in $\BB$, it follows immediately from the definition that an object $A\to\IFun(\Delta^\op,\I{C})$ is contained in $\SGrpd(\I{C})$ if and only if it encodes a groupoid object in $\pi_A^\ast\I{C}$, which is in turn equivalent to its transpose $\Delta^\op\to\I{C}(A)$ being a groupoid object in the conventional sense. An analogous remark can be made for effective groupoid objects. In particular, groupoid objects are effective in $\I{C}$ if and only if they are effective in $\I{C}(A)$ for each $A\in\BB$.
\end{remark}

Let $\Pos$ be the $1$-category of posets, which we always identify with $0$-categories.
Observe that the functor $(-)^\triangleright\colon\Pos\to\Pos$ that freely adjoins a final object to a partially ordered set  restricts to a functor $(-)^\triangleright\colon\Delta^{\triangleleft}\to\Delta$, and the map $\id_{\Pos}\into (-)^\triangleright$ restricts to a map $\id_{\Delta}\to (-)^\triangleright\iota$, where $\iota\colon\Delta\into\Delta^{\triangleleft}$ is the inclusion. By precomposition, we thus obtain a functor $(-)_{+1}\colon\IFun(\Delta^{\op},\I{C})\to\IFun((\Delta^{\op})^\triangleright,\I{C})$ together with a morphism $\iota^\ast(-)_{+1}\to \id_{\IFun(\Delta^{\op},\I{C})}$. Now using Remark~\ref{rem:BCGroupoidObjects}, one finds:
\begin{proposition}[{\cite[Lemma~6.1.3.7 and Remark~6.1.3.18]{htt}}]
	\label{prop:groupoidObjectCartesianTransformation}
	Let $\I{C}$ be a $\BB$-category that admits $\Delta^{\op}$-indexed colimits and pullbacks, and let $G_{\bullet}\colon\Delta^{\op}\to\I{C}$ be a simplicial object. Then $G_{\bullet +1}$ is a colimit cocone, and $G_{\bullet}$ is a groupoid object if and only if the morphism of functors $\iota^\ast G_{\bullet +1}\to G_\bullet$ is cartesian.\qed
\end{proposition}
By combining Proposition~\ref{prop:groupoidObjectCartesianTransformation} with Proposition~\ref{prop:characterisationEffectiveDescent}, we conclude:
\begin{corollary}
	\label{cor:effectiveDescentEffectiveGroupoidObjects}
	Let $\I{U}$ be the internal class that is spanned by $\Delta^{\op}\colon 1\to \ICat_{\BB}$ and let $\I{C}$ be a $\I{U}$-cocomplete $\BB$-category with pullbacks that has effective $\I{U}$-descent. Then groupoid objects are effective in $\I{C} $.\qed
\end{corollary}

\section{Foundations of $\BB$-topos theory}
\label{chap:foundations}
In this section we develop the basic theory of $\BB$-topoi. We begin in \S~\ref{sec:definitionBTopoi} by giving an axiomatic definition of this concept using the notion of descent that has been established in the previous section. By unwinding the descent condition, we furthermore establish an explicit characterisation of $\BB$-topoi in terms of the underlying $\CatSS$-valued sheaves on $\BB$. In \S~\ref{sec:freeTopoi}, we construct the \emph{free} $\BB$-topos on an arbitrary $\BB$-category, which we use in  \S~\ref{sec:presentationTopoi} to establish a characterisation of $\BB$-topoi as left exact and accessible Bousfield localisations of presheaf $\BB$-categories. In \S~\ref{sec:CatEnrichementTopoi}, we make use of this characterisation to show that the $\BB$-category of $\BB$-topoi is tensored and powered over $\ICat_{\BB}$. In \S~\ref{sec:relativeInternalTopoi}, we prove that $\BB$-topoi are entirely determined by their global sections, in the sense that the $\infty$-category of $\BB$-topoi is equivalent to that of geometric morphisms of $\infty$-topoi with codomain $\BB$. Having this simple description of $\BB$-topoi at our disposal, it is straightforward to construct limits and colimits of $\BB$-topoi, which is the topic of \S~\ref{sec:limitsColimitsTopoi}. Also, we provide an explicit formula for the coproduct of $\BB$-topoi in \S~\ref{sec:coproductTopoi}, which in particular yields a formula for the pushout in $\LTopS$. In \S~\ref{sec:Diaconescu}, we discuss a $\BB$-categorical version of Diaconescu's theorem for $\BB$-topoi, from which we deduce a universal property of \'etale $\BB$-topoi in \S~\ref{sec:EtaleTopoi}. Lastly, we discuss \emph{subterminal} $\BB$-topoi in \S~\ref{sec:sheafification}, where we derive a general formula for left exact localisations in terms of internal colimits.

\subsection{Definition and characterisation of $\BB$-topoi}
\label{sec:definitionBTopoi}
In this section we introduce the notion of a \emph{$\BB$-topos} and prove several equivalent characterisations of this concept.

Recall from Proposition~\ref{prop:critfinlim} that a $\BB$-category $\I{C}$ admits finite limits if and only if for all $A\in\BB$ the $\infty$-category $\I{C}(A)$ admits finite limits and for each map $s\colon B\to A$ in $\BB$ the functor $s^\ast\colon\I{C}(A)\to\I{C}(B)$ preserves finite limits. Similarly, a functor $f\colon \I{C}\to\I{D}$ between such $\BB$-categories preserves finite limits precisely if it does so section-wise. We may now define:
\begin{definition}
	\label{def:BTopos}
	A large $\BB$-category $\I{X}$ is a \emph{$\BB$-topos} if it is presentable and satisfies descent. A functor $f^\ast\colon \I{X}\to\I{Y}$ between $\BB$-topoi is called an \emph{algebraic morphism} if $f$ is cocontinuous and preserves finite limits. A functor $f_\ast\colon \I{Y}\to\I{X}$ between $\BB$-topoi is called a \emph{geometric morphism} if $f_\ast$ admits a left adjoint $f^\ast$ that defines an algebraic morphism. 
	
	The large $\BB$-category $\ILTop_{\BB}$ of $\BB$-topoi is defined as the subcategory of $\ICat_{\BBB}$ that is spanned by the algebraic morphisms between $\Over{\BB}{A}$-topoi, for all $A\in\BB$. Dually, the large $\BB$-category $\IRTop_\BB$ of $\BB$-topoi is defined as the subcategory of $\ICat_{\BBB}$ that is spanned by the geometric morphisms between $\Over{\BB}{A}$-topoi, for all $A\in\BB$. We denote by $\LTop(\BB)$ and $\RTop(\BB)$, respectively, the underlying $\infty$-categories of global sections.
	
	If $\I{X}$ and $\I{Y}$ are $\BB$-topoi, we will denote by $\IFun^\alg(\I{X},\I{Y})$ the full subcategory of $\IFun(\I{X},\I{Y})$ that is spanned by the algebraic morphisms $\pi_A^\ast\I{X}\to\pi_A^\ast\I{Y}$ for each $A\in\BB$. We define the $\BB$-category $\IFun^\geom(\I{Y},\I{X})$ of geometric morphisms in the evident dual way.
\end{definition}

\begin{remark}
	The fact that both $\ILTop_{\BB}$ and $\IRTop_\BB$ are large and not very large follows from Remark~\ref{rem:SizePrL}.
\end{remark}

\begin{remark}
	\label{rem:BCTopoi}
	The subobject of $(\ICat_{\BBB})_1$ that is spanned by the algebraic morphisms between $\Over{\BB}{A}$-topoi (for each $A\in\BB$) is stable under composition and equivalences in the sense of Proposition~\ref{prop:classificationSubcategories}. Since moreover cocontinuity and the property that a functor preserves finite limits are local conditions~\cite[Remark~5.2.3]{MWColimits} and on account of Remark~\ref{rem:BToposLocal} below, we conclude that a map $A\to (\ICat_{\BBB})_1$ is contained in $(\ILTop_{\BB})_1$ if and only if it defines an algebraic morphism between $\Over{\BB}{A}$-topoi. In particular, if $\I{X}$ and $\I{Y}$ are $\Over{\BB}{A}$-topoi, the image of the monomorphism
	\begin{equation*}
		\label{eq:inclusionMappingLTop}\tag{$ \ast $}
		\map{\ILTop_{\BB}}(\I{X},\I{Y})\into\map{\ICat_{\BBB}}(\I{X},\I{Y})
	\end{equation*}
	is spanned by the algebraic morphisms. Moreover, the sheaf associated to $\ILTop_{\BB}$ is given by sending $A\in\BB$ to the subcategory $\LTop(\Over{\BB}{A})\into\Cat(\Over{\BBB}{A})$, and there is consequently a canonical equivalence $\pi_A^\ast\ILTop_{\BB}\simeq\ILTop_{\Over{\BB}{A}}$. Analogous observations can be made for the $\BB$-category $\IRTop_\BB$.
	
	By the same argument, we have a canonical equivalence $\pi_A^\ast\IFun^\alg(\I{X},\I{Y})\simeq\IFun[\Over{\BB}{A}]^\alg(\pi_A^\ast\I{X},\pi_A^\ast\I{Y})$ for all $\BB$-topoi $\I{X}$ and $\I{Y}$ and all $A\in\BB$. Furthermore, by using~\cite[Corollary~4.5.5]{MWColimits}, we deduce that the inclusion in (\ref{eq:inclusionMappingLTop}) is obtained by applying the core $\BB$-groupoid functor to the inclusion of $\IFun^\alg(\I{X},\I{Y})$ into $\IFun(\I{X},\I{Y})$. Again, analogous observations can be made for geometric morphisms.
\end{remark}

By restricting the equivalence $\ICat_{\BBB}^\Rad\simeq(\ICat_{\BBB}^\Lad)^\op$ from~\cite[Proposition~7.2.1]{MCocartesian}, one finds:
\begin{proposition}
	\label{prop:DualityLTopRTop}
	There is an equivalence $(\ILTop_{\BB})^\op\simeq\IRTop_\BB$ that acts as the identity on objects and that carries an algebraic morphism to its right adjoint.\qed
\end{proposition}

Let us denote by $\LTopEtS$ the subcategory of $\LTopS$ that is spanned by the \'etale algebraic morphisms (i.e.\ those that are of the form $\pi_U^\ast\colon \XX\to\Over{\XX}{U}$ for some $\infty$-topos $\XX$ and some $U\in\XX$). By~\cite[Theorem~6.3.5.13]{htt}, this $\infty$-category admits small limits, and the inclusion $\LTopEtS\into\LTopS$ preserves small limits. The main goal of this section is to prove the following characterisation of $\BB$-topoi:
\begin{theorem}
	\label{thm:characterisationBTopoi}
	For a large $\BB$-category $\I{X}$, the following are equivalent:
	\begin{enumerate}
		\item $\I{X}$ is a $\BB$-topos;
		\item $\I{X}$ satisfies the \emph{internal Giraud axioms}:
		\begin{enumerate}
			\item $\I{X}$ is presentable;
			\item $\I{X}$ has universal colimits;
			\item groupoid objects in $\I{X}$ are effective;
			\item $\Univ$-colimits in $\I{X}$ are disjoint.
		\end{enumerate}
		\item $\I{X}$ is $\Univ$-cocomplete and takes values in $\LTopEtS$;
		\item $\I{X}$ is a $\LTopEtS$-valued sheaf that preserves pushouts.
	\end{enumerate}
\end{theorem}

\begin{remark}
	It is crucial to include the condition that all $\Univ$-groupoidal colimits are disjoint into the internal Giraud axioms, instead of just all coproducts. As a concrete example, let $\kappa$ be an uncountable regular cardinal and let $\CC\into\CatS$ be the subcategory spanned by the $\kappa$-small $\infty$-categories and \emph{cocartesian} fibrations between them. Let us set $\BB=\PSh(\CC)$ and let $\I{X}\in\Cat(\BBB)$ be the large $\BB$-category that is determined by the presheaf $\PSh(-)\colon \CC^{\op}\to\CatSS$. Since $\I{X}$ takes values in $\LTopS$ and since cocartesian fibrations are \emph{smooth}~\cite[Proposition~4.1.2.15]{htt}, we deduce from Theorem~\ref{thm:characterisationPresentableCategories}, Remark~\ref{rem:BCGroupoidObjects} and Example~\ref{ex:disjointColimitsFibrewise} that $\I{X}$ is presentable, has effective groupoid objects and that coproducts in $\I{X}$ are disjoint. Moreover, again by using that cocartesian fibrations are smooth, one easily finds that $\I{X}$ has universal colimits. Yet, the $\BB$-category $\I{X}$ cannot be a $\BB$-topos since the transition functors are in general not \'etale.
\end{remark}

Before we prove Theorem~\ref{thm:characterisationBTopoi}, let us us first record the following immediate consequence:
\begin{corollary}
	\label{cor:universeTopos}
	The universe $\Univ$ is a $\BB$-topos.\qed
\end{corollary}
\begin{remark}
	\label{rem:BToposLocal}
	As another consequence of Theorem~\ref{thm:characterisationBTopoi}, a large $\BB$-category $\I{X}$ is a $\BB$-topos if and only if there is a cover $\bigsqcup_i A_i\onto 1$ in $\BB$ such that for all $i$ the large $\Over{\BB}{i}$-category $\pi_{A_i}^\ast\I{X}$ is a $\Over{\BB}{A_i}$-topos. In fact, this most easily follows from part~(3) of the theorem, together with the fact that $\Univ$-cocompleteness can be checked locally~\cite[Remark~5.2.3]{MWColimits}.
\end{remark}

The proof of Theorem~\ref{thm:characterisationBTopoi} requires the following lemma:
\begin{lemma}
	\label{lem:criterionLTopEtalePushoutBC}
	Let
	\begin{equation*}
		\begin{tikzcd}
			\XX\arrow[r, "h^\ast"] \arrow[d, "f^\ast"] & \ZZ\arrow[d, "g^\ast"]\\
			\YY\arrow[r, "k^\ast"] & \WW
		\end{tikzcd}
	\end{equation*}
	be a commutative square in $\LTopS$, and suppose that $h^\ast$ and $k^\ast$ are \'etale. Then the square is a pushout in $\LTopS$ if and only if the mate transformation $k_! g^\ast\to f^\ast h_!$ is an equivalence.
\end{lemma}
\begin{proof}
	As $h^\ast$ is \'etale, we may replace $\ZZ$ with $\Over{\XX}{U}$ and $h^\ast$ with $\pi_{U}^\ast$, where $U=h_!(1_{\ZZ})$. By using~\cite[Remark~6.3.5.8]{htt}, the pushout of $\pi_U^\ast$ along $f^\ast$ is given by the commutative diagram
	\begin{equation*}
		\begin{tikzcd}
			\XX\arrow[r, "\pi_U^\ast"] \arrow[d, "f^\ast"] & \Over{\XX}{U}\arrow[d, "\Over{f^\ast}{U}"]\\
			\YY\arrow[r, "\pi_{f^\ast(U)}^\ast"] & \Over{\YY}{f^\ast(U)}.
		\end{tikzcd}
	\end{equation*}
	It is immediate that the mate of this square is an equivalence, so it suffices to prove the converse. Since $k^\ast$ is \'etale, we may replace $k^\ast$ with $\pi_V^\ast\colon \YY\to\Over{\YY}{V}$, where $V=k_!(1_{\WW})$. By~\cite[Remark~6.3.5.7]{htt}, the induced map $\Over{\YY}{f^\ast(U)}\to\Over{\YY}{V}$ is uniquely determined by a morphism $V\to f^\ast(U)$ in $\YY$. Unwinding the definitions, this map is precisely the value of the mate transformation $(\pi_V)_!g^\ast\to f^\ast (\pi_U)_!$ at $1_{\Over{\XX}{U}}$ and therefore an equivalence. Hence the functor $\Over{\YY}{f^\ast(U)}\to\Over{\YY}{V}$ must be an equivalence as well, which finishes the proof.
\end{proof}
\begin{proof}[{Proof of Theorem~\ref{thm:characterisationBTopoi}}]
	Let $\I{X}$ be a $\BB$-topos. By combining Propositions~\ref{prop:universalColimitsFaithfulDescent} and~\ref{prop:disjointColimitsEffectiveDescent} with Corollary~\ref{cor:effectiveDescentEffectiveGroupoidObjects}, we find that $\I{X}$ satisfies the internal Giraud axioms, so that~(1) implies~(2). If $\I{X}$ satisfies the internal Giraud axioms, then $\I{X}$ being presentable implies that it is $\Univ$-cocomplete. Moreover, Examples~\ref{ex:universalColimitsFibrewise} and~\ref{ex:disjointColimitsFibrewise} together with Remark~\ref{rem:BCGroupoidObjects} imply that $\I{X}(A)$ satisfies the $\infty$-categorical Giraud axioms in the sense of~\cite{htt} for all $A\in\BB$, so that each $\I{X}(A)$ is an $\infty$-topos. Now by Propositions~\ref{prop:disjointColimitsEffectiveDescent} and~\ref{prop:universalColimitsFaithfulDescent}, the $\BB$-category $\I{X}$ has $\Univ$-descent, hence Corollary~\ref{cor:groupoidalDescentEtaleTransitionMaps} implies that for every map $s\colon B\to A$ in $\BB$ the functor $s_!$ is a right fibration. This implies that $s^\ast$ is an \'etale geometric morphism, hence~(3) holds. The equivalence between~(3) and~(4), on the other hand, is an immediate consequence of Lemma~\ref{lem:criterionLTopEtalePushoutBC}. Finally, if $\I{X}$ satisfies condition~(3), then $\I{X}$ is presentable (see Theorem~\ref{thm:characterisationPresentableCategories}), hence the claim follows from Corollary~\ref{cor:descentForKSClasses}.
\end{proof}

\subsection{Free $\BB$-topoi}
\label{sec:freeTopoi}
The goal of this section is to construct a partial left adjoint to the forgetful functor $\ILTop_{\BB}\into\ICat_{\BBB}$ that is defined on the full subcategory $\ICat_{\BB}\into\ICat_{\BBB}$ and that carries a $\BB$-category $\I{C}$ to the associated \emph{free} $\BB$-topos $\Free{\I{C}}$. To that end, first note that if
$\ICat_{\BB}^\lex\into\ICat_{\BB}$ denotes the subcategory spanned by the left exact (i.e. $\IFin_{\Over{\BB}{A}}$-continuous) functors between $\Over{\BB}{A}$-categories with finite limits for all $A\in\BB$, then the dual of~\cite[Corollary~7.1.15]{MWColimits} implies that the inclusion admits a left adjoint $(-)^\lex\colon\ICat_{\BB}\to\ICat_{\BB}^\lex$ that carries a $\BB$-category $\I{C}$ to its free $\IFin_{\BB}$-completion $\I{C}^\lex$. Moreover, the same result implies that we have a functor $\IPSh(-)\colon \ICat_{\BB}\to\ILPr_{\BB}$ that is obtained by restricting the free cocompletion functor $\ICat_{\BBB}\to\ICat_{\BBB}^{\cc}$ in the appropriate way. By combining these two constructions, we thus end up with a well-defined functor $\Free{-}=\IPSh((-)^\lex)\colon\ICat_{\BB}\to\ILPr_{\BB}$. Our goal is to show:

\begin{proposition}
	\label{prop:UMPFreeTopoi}
	For any $\BB$-category $ \I{C} $, the large $\BB$-category $\Free{\I{C}}$ is a $\BB$-topos. Moreover, if $ \I{X} $ is another $ \BB $-topos, precomposition with the canonical map $ \I{C} \to \Free{\I{C}} $ induces an equivalence
	\[
	\IFun^\alg(\Free{\I{C}},\I{X}) \simeq \IFun(\I{C},\I{X})
	\]
	of $\BB$-categories.
\end{proposition}
The proof of Proposition~\ref{prop:UMPFreeTopoi} requires a few preparations and will be given at the end of this section. For now, let us record a few consequences of this result.
\begin{corollary}
	\label{cor:UMPFreeTopoi}
	The functor $\Free{-}$ takes values in $\ILTop_{\BB}$ and fits into an equivalence
	\begin{equation*}
		\map{\ILTop_{\BB}}(\Free{-},-)\simeq\map{\ICat_{\BBB}}(-,-)
	\end{equation*}
	of bifunctors $\ICat_{\BB}^\op\times\ILTop_{\BB}\to\Univ[\BBB]$.
\end{corollary}
\begin{proof}
	Note that if $A\in\BB$ is an arbitrary object, we deduce from~\cite[Proposition~7.1.11]{MWColimits} that the base change of the canonical map $\I{C}\to\Free{\I{C}}$ along $\pi_A^\ast$ can be identified with the canonical map $\pi_A^\ast\I{C}\to\Free[\Over{\BB}{A}]{\pi_A^\ast\I{C}}$. Thus, in light of Remark~\ref{rem:BCTopoi}, the result is an immediate consequence of Proposition~\ref{prop:UMPFreeTopoi}.
\end{proof}

Corollary~\ref{cor:UMPFreeTopoi} already implies the existence of certain colimits in $\ILTop_{\BB}$:
\begin{corollary}
	\label{cor:ColimitsFreeTopoi}
	For any diagram $d\colon \I{I}\to\ICat_{\BB}$, the induced cocone $\Free{d(-)}\to\Free{\colim d}$ is a colimit cocone in $\ILTop_{\BB}$. 
\end{corollary}
\begin{proof}
	Combine Proposition~\ref{prop:UMPFreeTopoi} with~\cite[Proposition~4.4.8]{MWColimits}.
\end{proof}
By combining Corollary~\ref{cor:ColimitsFreeTopoi} with the evident equivalence $1\simeq\varnothing^\lex$, we in particular obtain:
\begin{corollary}
	\label{cor:UniverseInitial}
	The universe $\Univ$ defines an initial object in $\ILTop_{\BB}$.\qed
\end{corollary}
In light of Corollary~\ref{cor:UniverseInitial}, we may now define:
\begin{definition}
	\label{def:globalSectionsConstantSheaf}
	Let $\I{X}$ be a $\BB$-topos. The unique algebraic morphism $\const_{\I{X}}\colon\Univ\to\I{X}$ is referred to as the \emph{constant sheaf functor}, and its right adjoint $\Gamma_{\I{X}}\colon\I{X}\to\Univ$ is called the \emph{global sections functor}.
\end{definition}
\begin{remark}
	\label{rem:GlobalSectionsExplicitly}
	If $\I{X}$ is a $\BB$-topos, then the global sections functor $\Gamma_{\I{X}}$ is equivalent to $\map{\I{X}}(1_{\I{X}},-)$, where $1_{\I{X}}$ denotes the final object in $\I{X}$. In fact, since the unique algebraic morphism $\const_{\I{X}}\colon\Univ[\BB]\to\I{X}$ is left exact and since $\map{\Univ[\BB]}(1_{\Univ},-)\simeq \id_{\Univ}$ by~\cite[Proposition~4.6.3]{MYoneda}, this follows immediately from the adjunction $\const_{\BB}\dashv \Gamma_{\BB}$.
\end{remark}

We now turn to the proof of Proposition~\ref{prop:UMPFreeTopoi}. As a first step, we need to establish that presheaf $\BB$-categories are $\BB$-topoi:
\begin{proposition}
	\label{prop:PresheafCategoriesTopoi}
	For every $\BB$-category $\I{C}$, the large $\BB$-category $\IPSh(\I{C})$ is a $\BB$-topos.
\end{proposition}
\begin{proof}
	Since $\IPSh(\I{C})$ is presentable, we only need to show that it satisfies descent. Let us first show that $\IPSh(\I{C})$ has universal colimits. Let therefore $f\colon F\to G$ be an arbitrary map of presheaves on $\I{C}$ in context $A\in\BB$. By~\cite[Lemma~4.7.14]{MYoneda}, we may replace $\BB$ with $\Over{\BB}{A}$, so that we can assume that $A\simeq 1$. By~\cite[Lemma~6.1.5]{MWColimits} there are equivalences $\Over{\IPSh(\I{C})}{F}\simeq\IPSh(\Over{\I{C}}{F})$ and $\Over{\IPSh(\I{C})}{G}\simeq\IPSh(\Over{\I{C}}{G})$ with respect to which the functor $\IPSh(\Over{\I{C}}{F})\to\IPSh(\Over{\I{C}}{G})$ that corresponds to $f_!$ carries the final presheaf on $\Over{\I{C}}{F}$ to the presheaf that classifies the right fibration $f_!\colon\Over{\I{C}}{F}\to\Over{\I{C}}{G}$. As the functor $\IPSh(\Over{\I{C}}{F})\to\IPSh(\Over{\I{C}}{G})$ is a morphism of right fibrations over $\IPSh(\I{C})$, this map is uniquely specified by the image of the final object. We thus conclude that this functor must be equivalent to the functor of left Kan extension along $f_!\colon\Over{\I{C}}{F}\to\Over{\I{C}}{G}$. Its right adjoint is simply given by precomposition with $f_!$, which defines a cocontinuous functor. Hence $f^\ast\colon \Over{\IPSh(\I{C})}{G}\to\Over{\IPSh(\I{C})}{F}$ must be cocontinuous as well, and we conclude that $\IPSh(\I{C})$ has universal colimits.
	
	To conclude the proof, we need to show that $\IPSh(\I{C})$ has effective descent. By Proposition~\ref{prop:characterisationEffectiveDescent}, this is equivalent to the condition that for every $A\in \BB$, every small $\Over{\BB}{A}$-category $\I{I}$ and every cartesian map $d^\prime\to d$ in $\IFun[\Over{\BB}{A}](\I{I},\pi_A^\ast\IPSh(\I{C}))$, the naturality square
	\begin{equation*}
		\begin{tikzcd}
			{d}^\prime\arrow[d]\arrow[r, "\eta"] & \diag(\colim d^\prime)\arrow[d]\\
			{d}\arrow[r, "\eta"] & \diag(\colim d)
		\end{tikzcd}
	\end{equation*}
	is a pullback. Upon replacing $\BB$ by $\Over{\BB}{A}$, we may assume without loss of generality $A\simeq 1$. Moreover, since limits and colimits in functor $\BB$-categories are detected object-wise~\cite[Proposition~4.3.2]{MWColimits}, we can reduce to $\I{C}\simeq 1$. In this case, the result follows from Corollary~\ref{cor:universeTopos}.
\end{proof}
Next, we need to establish an internal analogue of the well-known statement that left exact functors with values in an $\infty$-topos are equivalently \emph{flat} functors~\cite[Proposition~6.1.5.2]{htt}. The key ingredient to this result is the following lemma:
\begin{lemma}
	\label{lem:YonedaExtensionPullbackPreservation}
	Let $\I{C}$ be a $\BB$-category, let $\I{X}$ be a $\BB$-topos and let $f\colon \I{C}\to\I{X}$ be a functor. Suppose that the Yoneda extension $h_!(f)\colon \IPSh(\I{C})\to\I{X}$ of this functor preserves the limit of every cospan in $\IPSh(\I{C})$ (in arbitrary context $A\in\BB$) that is contained in the essential image of the Yoneda embedding $h\colon\I{C}\into\IPSh(\I{C})$. Then $h_!(f)$ preserves pullbacks.
\end{lemma}
\begin{proof}
	Suppose that
	\begin{equation*}
		\begin{tikzcd}
			Q\arrow[r]\arrow[d] & P\arrow[d]\\
			G\arrow[r] & F  
		\end{tikzcd}
	\end{equation*}
	is a pullback square in $\IPSh(\I{C})$. We need to show that the image of this square along $(h_{\I{C}})_!(f)$ is a pullback in $\I{X}$. 	By combining~\cite[Lemma~4.7.14]{MYoneda} with~\cite[Remark~6.3.2]{MWColimits}, we may assume without loss of generality that the above square is in context $1\in\BB$.
	
	Let us first show that the claim is true whenever $F$ is representable by an object $c\colon 1\to\I{C}$. In this case,~\cite[Lemma~6.1.5]{MWColimits} implies that there is an equivalence $\Over{\IPSh(\I{C})}{h(c)}\simeq\IPSh(\Over{\I{C}}{c})$ with respect to which the composition $(h_! f)(\pi_{h(c)})_!$ can be identified with the left Kan extension of $f(\pi_c)_!$ along the Yoneda embeddding $\Over{\I{C}}{c}\into\IPSh(\Over{\I{C}}{c})$. Therefore, by replacing $\I{C}$ with $\Over{\I{C}}{c}$, one can assume that $F\simeq 1_{\IPSh(\I{C})}$. Now the product functor $G\times -\colon \IPSh(\I{C})\to\IPSh(\I{C})$ being cocontinuous (by Proposition~\ref{prop:PresheafCategoriesTopoi})  implies that the canonical map $h_! f(G\times -)\to h_! f (G)\times h_!f(-)$ is a morphism between cocontinuous functors. On account of the universal property of presheaf $\BB$-categories, this means that we may further reduce to the case where $H$ is representable. By the same argument, the presheaf $G$ can also be assumed to be representable. In this case, the claim follows from the assumption on $h_!(f)$.
	
	We now turn to the general case. By~\cite[Proposition~6.1.1]{MWColimits}, there is a diagram $d\colon \I{I}\to \IPSh(\I{C})$ such that $F\simeq \colim d$ and such that $d$ takes values in $\I{C}\into\IPSh(\I{C})$. Let us write $\overline{d}$ for the associated colimit cocone. In light of the equivalence $\Over{\IPSh(\I{C})}{F}\simeq \Over{\IFun(\I{I},\IPSh(\I{C}))}{\overline{d}}^{\cart}$ from Remark~\ref{rem:cartesianTransformationsCoconesExplicitly} and by identifying the above pullback square with a diagram in $\Over{\IPSh(\I{C})}{F}$, we obtain a pullback diagram
	\begin{equation*}
		\begin{tikzcd}
			\overline{q}\arrow[r]\arrow[d] & \overline{p}\arrow[d]\\
			\overline{g}\arrow[r] & \overline{d}
		\end{tikzcd}
	\end{equation*}
	in $\Over{\IFun(\I{I}^\triangleright,\IPSh(\I{C}))}{\overline{d}}^{\cart}$. By the above and the fact that limits in functor $\BB$-categories can be computed object-wise~\cite[Proposition~4.3.2]{MWColimits}, the composition
	\begin{equation*}
		\Over{\IFun(\I{I}^\triangleright,\IPSh(\I{C}))}{\overline{d}}^{\cart}\to\Over{\IFun(\I{I},\IPSh(\I{C}))}{{d}}^{\cart}\xrightarrow{(h_!f)_\ast}\Over{\IFun(\I{I},\I{X})}{(h_!f)_\ast d}
	\end{equation*}
	carries the above pullback diagram of cocones to a pullback and therefore in particular to a diagram in $\Over{\IFun(\I{I},\I{X})}{(h_!f)_\ast d}^{\cart}$. By using descent in $\I{X}$ and in $\IPSh(\I{C})$ (cf.~Proposition~\ref{prop:PresheafCategoriesTopoi}) together with the fact that $h_!(f)$ is cocontinuous, this implies that the functor $(h_!f)_\ast\colon \Over{\IFun(\I{I}^\triangleright,\IPSh(\I{C}))}{\overline{d}}^{\cart}\to \Over{\IFun(\I{I}^\triangleright,\I{X})}{\overline{d}}$ preserves the above pullback. Upon evaluating the latter at the cone point $\infty\colon 1\to\I{I}^\triangleright$, we recover the image of the original pullback square along $h_!(f)$, hence the claim follows.
\end{proof}

\begin{proposition}
	\label{prop:generalFlatness}
	Let $\I{C}$ be a $\BB$-category with finite limits, and let $\I{X}$ be a $\BB$-topos. Then a functor $f\colon \I{C}\to\I{X}$ preserves finite limits if and only if its left Kan extension $h_!(f)\colon\IPSh(\I{C})\to\I{X}$ preserves finite limits.
\end{proposition}
\begin{proof}
	Since the Yoneda embedding $h\colon\I{C}\into\IPSh(\I{C})$ preserves finite limits, it is clear that the condition is sufficient. Conversely, suppose that $f$ preserves finite limits. Since the final object in $\IPSh(\I{C})$ is contained in $\I{C}$, it is clear that $h_!(f)$ preserves final objects. We therefore only need to show that this functor also preserves pullbacks, which is an immediate consequence of Lemma~\ref{lem:YonedaExtensionPullbackPreservation}.
\end{proof}
By combining Proposition~\ref{prop:generalFlatness} with the universal property of presheaf $\BB$-categories, Remark~\ref{rem:BCTopoi} and~\cite[Remark~5.3.4]{MWColimits}, we now conclude:
\begin{corollary}
	\label{cor:internalDiaconescu}
	For any $\BB$-category $\I{C}$ with finite limits and any $\BB$-topos $\I{X}$, the functor of left Kan extension along the Yoneda embedding $\I{C}\into\IPSh(\I{C})$ gives rise to an equivalence
	\begin{equation*}
		\IFun^\lex(\I{C},\I{X})\simeq\IFun^\alg(\IPSh(\I{C}),\I{X}),
	\end{equation*}
	where $\IFun^\lex(\I{C},\I{X})$ is the full subcategory of $\IFun(\I{C},\I{X})$ that is spanned by the left exact functors in arbitrary context.\qed
\end{corollary}

\begin{proof}[{Proof of Proposition~\ref{prop:UMPFreeTopoi}}]
	Combine Proposition~\ref{prop:PresheafCategoriesTopoi} with Corollary~\ref{cor:internalDiaconescu} and the universal property of free $\IFin_{\BB}$-completion, cf.~\cite[Theorem~7.1.13]{MWColimits}.
\end{proof}

\subsection{Presentations of $\BB$-topoi}
\label{sec:presentationTopoi}
Recall from Definition~\ref{def:accessibleLocalisation} that if $\I{C}$ is a $\BB$-category, a Bousfield localisation $L\colon\IPSh(\I{C})\to\I{D}$ is said to be \emph{accessible} if the inclusion $\I{D}\into\IPSh(\I{C})$ is $\IFilt_{\I{U}}$-cocontinuous for some choice of \emph{sound doctrine} $\I{U}$ (see Definitions~\ref{def:doctrine} and~\ref{def:soundness}). We will say that the localisation is \emph{left exact} if $L$ preserves finite limits. The main goal of this section is to prove the following characterisation of $\BB$-topoi:
\begin{theorem}
	\label{thm:presentationTopoi}
	A large $\BB$-category $\I{X}$ is a $\BB$-topos if and only if there is a $\BB$-category $\I{C}$ such that $\I{X}$ arises as a left exact and accessible localisation of $\IPSh(\I{C})$.
\end{theorem}

The proof of Theorem~\ref{thm:presentationTopoi} relies on the following two lemmas:
\begin{lemma}
	\label{lem:lexLocalisationPushoutLTop}
	Suppose that
	\begin{equation*}
		\begin{tikzcd}
			\XX\arrow[r, "\pi_U^\ast"] \arrow[d, "L"] & \Over{\XX}{U}\arrow[d, "L^\prime"]\\
			\YY\arrow[r, "h^\ast"] & \ZZ
		\end{tikzcd}
	\end{equation*}
	is a commutative square in $\LTopS$ in which $L$ and $L^\prime$ are Bousfield localisations. Suppose furthermore that $h^\ast$ admits a left adjoint $h_!$ and that the mate transformation $\phi\colon h_! L^\prime\to L(\pi_U)_!$ is an equivalence. Then $h^\ast$ is \'etale.
\end{lemma}
\begin{proof}
	We would like to apply~\cite[Proposition~6.3.5.11]{htt}, which says that the functor $h^\ast$ is \'etale precisely if $h_!$ is conservative and if for every map $f\colon W\to V$ in $\YY$ and every object $P\in\Over{\ZZ}{h^\ast(V)}$, the canonical map $\alpha\colon h_!(h^\ast(W)\times_{h^\ast (V)} P)\to W\times_V h_!(P)$ is an equivalence.
	
	Let us begin by showing that $h_!$ is conservative. To that end, note that if $f\colon V\to W$ is a map in $\Over{\XX}{U}$ such that $L(\pi_U)_!(f)$ is an equivalence, then $L^\prime(f)$ is an equivalence. In fact, since the adjunction unit of $(\pi_U)_!\dashv\pi_U^\ast$ exhibits $f$ as a pullback of $\pi_U^\ast(\pi_U)_!(f)$, the localisation functor $L^\prime$ being left exact implies that $L^\prime(f)$ is a pullback of $L^\prime\pi_U^\ast(\pi_U)_!(f)\simeq h^\ast L(\pi_U)_!(f)$. Since the latter is an equivalence, the claim follows. Applying this observation to a map $f$ that is contained in $\ZZ$ and using the assumption that the mate transformation $\phi\colon h_! L^\prime\to L(\pi_U)_!$ is an equivalence, we deduce that $h_!$ is indeed conservative.

	To conclude the proof, we show that the map $\alpha$ is an equivalence. From the map $f\colon W\to V$ in $\YY$ we obtain a commutative diagram
	\begin{equation*}
		\begin{tikzcd}[column sep={between origins,6em}]
			& \Over{\XX}{V}\arrow[rr, "\Over{(\pi_U^\ast)}{V}"]\arrow[dd, "f^\ast", near end]\arrow[dl, "\Over{L}{V}"] && \Over{\XX}{U\times V}\arrow[dd, "(\pi_U^\ast f)^\ast", near end]\arrow[dl, "\Over{L^\prime}{\pi_U^\ast(V)}"]\\
			\Over{\YY}{V}\arrow[rr, "\Over{h^\ast}{V}", crossing over, near end] \arrow[dd, "f^\ast", near end]&& \Over{\ZZ}{h^\ast(V)} & \\
			& \Over{\XX}{W}\arrow[rr, "\Over{(\pi_U^\ast)}{W}", near start]\arrow[dl, "\Over{L}{W}"] && \Over{\XX}{U\times W}\arrow[dl, "\Over{L^\prime}{\pi_U^\ast(W)}"]\\
			\Over{\YY}{W}\arrow[rr, "\Over{h^\ast}{W}"] && \Over{\ZZ}{h^\ast(W)}\arrow[from=uu, "(h^\ast f)^\ast", crossing over, near end] &
		\end{tikzcd}
	\end{equation*}
	in $\LTopS$ in which all of the four maps pointing to the right admit a left adjoint. Note that $\alpha$ being an equivalence for all $P\in\Over{\ZZ}{h^\ast(V)}$ precisely means that the front square is left adjointable (i.e.\ has an invertible mate transformation). Now since the mate $\phi\colon h_! i^\prime\to i(\pi_U)_!$ is by assumption an equivalence, it follows that both the top and the bottom square in the above diagram are left adjointable. Since $\pi_U^\ast$ is an \'etale algebraic morphism, the back square is left adjointable as well. Therefore, by combining the functoriality of the mate construction with the fact that the four maps in the above diagram pointing to the front are localisation functors and thus in particular essentially surjective, we conclude that the front square must be left adjointable as well, as desired.
\end{proof}

\begin{lemma}
	\label{lem:presentableCategoryUCompactLex}
	Let $\I{D}$ be a presentable $\BB$-category. Then there exists a sound doctrine $\I{U}$ such that $\I{D}$ is $\I{U}$-accessible and $\I{D}^{\cpt{\I{U}}}$ is closed under finite limits in $\I{D}$.
\end{lemma}
\begin{proof}
	Since $\I{D}$ is presentable, there exists a $\BB$-category $\I{C}$ and a sound doctrine $\I{U}$ such that $\I{D}$ arises as a $\I{U}$-accessible Bousfield localisation of $\IPSh(\I{C})$ (cf. Theorem~\ref{thm:characterisationPresentableCategories}). In particular, for every sound doctrine $\I{V}$ that contains $\I{U}$, the $\BB$-category $\I{D}$ is $\I{V}$-accessible (Corollary~\ref{cor:AccessibleLocalisation}). Therefore, for any cardinal $\kappa$ we can always find a $\BB$-regular cardinal $\tau\geq \kappa$ such that $\I{D}$ is $\ICat_{\BB}^\tau$-accessible. By Remark~\ref{rem:enoughBRegularCardinals}, we can always choose $\tau$ such that $\tau\gg\kappa$. Now $\I{D}$ being presentable implies that $\I{D}$ is section-wise accessible (Theorem~\ref{thm:characterisationPresentableCategories}). Therefore, if $\GG\into\BB$ is a small generating subcategory, we may find a regular cardinal $\kappa$ such that $\I{D}(G)$ is $\kappa$-accessible for all $G\in\GG$. Let us choose a $\BB$-regular cardinal $\tau\gg \kappa$ such that 
	\begin{enumerate}
		\item $\GG$ is contained in $\BB^{\cpt{\tau}}$;
		\item $\I{D}$ is $\ICat_{\BB}^\tau$-accessible;
		\item $\I{D}(G)^{\cpt{\kappa}}$ is $\tau$-small for all $G\in\GG$.
	\end{enumerate}
	Then~\cite[Proposition~5.4.7.4]{htt} implies that the inclusion $\I{D}(G)^{\cpt{\tau}}\into\I{D}(G)$ is closed under finite limits for all $G\in\GG$. Recall from~\cite[Corollary~4.6.8]{MYoneda} that for every object $d\colon A\to\I{D}$ the mapping functor $\map{\I{D}(A)}(d,-)$ can be identified with the composition
	\begin{equation*}
		\I{D}(A)\xrightarrow{\map{\I{D}}(d,-)(A)}\Over{\BB}{A}\xrightarrow{\Gamma_{\Over{\BB}{A}}}\SS.
	\end{equation*}
	By combining this observation with Proposition~\ref{prop:CocompleteKappaCocontinuousInternalExternal} and the fact that $\BB$ is generated by $\GG$, we find that for any $G\in\GG$ an object $d\colon G\to \I{D}$ is contained in $\I{D}^{\cpt{\ICat_{\BB}^\tau}}$ if and only if for every $H\in\GG$ and every map $s\colon H\to G$ the object $s^\ast(d)\in\I{D}(H)$ is contained in $\I{D}(H)^{\cpt{\tau}}$. Since $s^\ast$ commutes with limits, this implies that the inclusion $\I{D}^{\cpt{\ICat_{\BB}^\tau}}\into\I{D}$ is closed under finite limits.
\end{proof}

\begin{proof}[{Proof of Theorem~\ref{thm:presentationTopoi}}]
	Suppose that $\I{X}$ is a left exact and $\I{U}$-accessible localisation of $\IPSh(\I{C})$, and let us show that $\I{X}$ is a $\BB$-topos. We would like to apply Theorem~\ref{thm:characterisationBTopoi}. First, note that by choosing a $\BB$-regular cardinal $\kappa$ such that $\I{U}\into\ICat_{\BB}^\kappa$, we may assume that $\I{X}$ is a $\ICat_{\BB}^\kappa$-accessible Bousfield localisation of $\IPSh(\I{C})$. Therefore, for every $A\in\BB$ the $\infty$-category $\I{X}(A)$ is a $\kappa$-accessible and left exact Bousfield localisation of $\IPSh(\I{C})(A)$, and since the latter is an $\infty$-topos by Proposition~\ref{prop:PresheafCategoriesTopoi}, it follows that $\I{X}(A)$ is an $\infty$-topos as well. Moreover, if $s\colon B\to A$ is a map in $\BB$, the fact that $\I{X}$ is a presentable $\BB$-category (see Theorem~\ref{thm:characterisationPresentableCategories}) implies that $s^\ast\colon \I{X}(A)\to\I{X}(B)$ is continuous and cocontinuous and therefore in particular an algebraic morphism that admits a left adjoint $s_!\colon \I{X}(B)\to\I{X}(A)$. We are therefore in the situation of Lemma~\ref{lem:lexLocalisationPushoutLTop} and may thus conclude that $s^\ast$ is an \'etale algebraic morphism. Theorem~\ref{thm:characterisationBTopoi} thus implies that $\I{X}$ is a $\BB$-topos.
	
	Conversely, suppose that $\I{X}$ is a $\BB$-topos. Then $\I{X}$ is presentable, hence Lemma~\ref{lem:presentableCategoryUCompactLex} implies that there exists a sound doctrine $\I{U}$ such that $\I{X}$ is $\I{U}$-accessible and $\I{X}^{\cpt{\I{U}}}$ is closed under finite limits in $\I{X}$. Then we may identify $\I{X}\simeq\IInd^{\I{U}}(\I{X}^{\cpt{\I{U}}})$, and since $\I{X}$ is cocomplete the induced inclusion $\I{X}\into\IPSh(\I{X}^{\cpt{\I{U}}})$ admits a left adjoint $L\colon\IPSh(\I{X}^{\cpt{\I{U}}}) \to \I{X}$ which is obtained as the left Kan extension of the inclusion $\I{X}^{\cpt{\I{U}}}\into\I{X}$ (see~\cite[Corollary~7.1.14]{MWColimits}). By Proposition~\ref{prop:generalFlatness}, the functor $L$ is left exact, hence the claim follows.
\end{proof}

\begin{corollary}
	\label{cor:tensoringTopoi}
	For any $\BB$-topos $\I{X}$ and any $\BB$-category $\I{D}$, the functor $\BB$-category $\IFun(\I{D},\I{X})$ is again a $\BB$-topos.
\end{corollary}
\begin{proof}
	Choose a left exact and accessible Bousfield localisation $L\colon \IPSh(\I{C})\to\I{X}$. Then the postcomposition functor $L_\ast\colon \IPSh(\I{C}\times\I{D}^{\op})\to\IFun(\I{D},\I{X})$ is again an accessible and left exact Bousfield localisation, hence the claim follows.
\end{proof}

\begin{corollary}
	\label{cor:freeGenerationTopoi}
	A large $\BB$-category $\I{X}$ is a $\BB$-topos if and only if there is a $\BB$-category $\I{C}$ and a left exact and accessible Bousfield localisation $L\colon\Free{\I{C}}\to\I{X}$.
\end{corollary}
\begin{proof}
	By Theorem~\ref{thm:presentationTopoi}, it suffices to show that every presheaf $\BB$-topos arises as a left exact and accessible Bousfield localisation of a free $\BB$-topos. But if $\I{C}$ is a $\BB$-category, the fact that $i\colon \I{C}\into\I{C}^\lex$ is fully faithful implies that $i_\ast\colon \IPSh(\I{C})\into\Free{\I{C}}$ is fully faithful as well (see the dual of~\cite[Theorem~6.3.5]{MWColimits}), hence $i^\ast$ is a left exact and accessible Bousfield localisation.
\end{proof}

\begin{corollary}
	\label{cor:freePresentationTopoi}
	Any $ \BB $-topos $\I{X}$ is a pushout of free $\BB$-topoi.
\end{corollary}
\begin{proof}
	By Corollary~\ref{cor:freeGenerationTopoi}, we may choose a small $ \BB $-category $ \I{C} $ and a left exact and accessible Bousfield localisation $L \colon \Free{\I{C}} \to \I{X} $.
	We can therefore find a small subcategory $ \I{W} \into\Free{\I{C}} $ such that $ L $ induces an equivalence $\ILoc_{\I{W}}(\Free{\I{C}})\simeq \I{X}$ (see Theorem~\ref{thm:characterisationPresentableCategories}).
	Note that a functor $\I{X}\to\I{Y}$ between $\BB$-topoi is an algebraic morphism if and only if its precomposition with $L$ is one (this is easily deduced from Remark~\ref{rem:BCTopoi} and the explicit computation of colimits in a Bousfield localisation, cf.~\cite[Proposition~4.2.12]{MWColimits}). Therefore, by combining Corollary~\ref{cor:UniversalPropertyLocalObjects} with Proposition~\ref{prop:UMPFreeTopoi}, we deduce that the induced square
	\[\begin{tikzcd}
		\Free{\I{W}} & {\Free{\I{C}}} \\
		{\Free{\I{W}^\gp}} & {\I{X}}
		\arrow[from=1-1, to=2-1]
		\arrow[from=1-1, to=1-2]
		\arrow[from=2-1, to=2-2]
		\arrow[from=1-2, to=2-2]
	\end{tikzcd}\]
	is a pushout in $ \LTop(\BB) $.
\end{proof}

\subsection{The $\ICat_{\BBB}$-enrichment of $\ILTop_{\BB}$}
\label{sec:CatEnrichementTopoi}
Recall from~\cite[Proposition~4.5.3]{MWColimits} that $\ICat_{\BBB}$ is \emph{cartesian closed}, i.e.\ that forming functor $\BB$-categories defines a bifunctor $\IFun(-,-)\colon \ICat_{\BBB}^{\op}\times\ICat_{\BBB}\to\ICat_{\BBB}$ and therefore in particular a bifunctor $(\ILTop_{\BB})^\op\times\ILTop_{\BB}\to\ICat_{\BBB}$. Let $p\colon \I{P}\to(\ILTop_{\BB})^\op\times\ILTop_{\BB}$ be the unstraightening of the latter (in the sense of~\cite{MCocartesian}). Explicitly, an object $A\to \I{P}$ is given by a functor $\pi_A^\ast\I{X}\to\pi_A^\ast\I{Y}$ between $\Over{\BB}{A}$-topoi. 
 Let $\I{Q}\into\I{P}$ be the full subcategory that is spanned by those objects that correspond to algebraic morphisms. By Lemma~\ref{lem:subCocartesianFibration} below, the induced functor $q\colon \I{Q}\to(\ILTop_{\BB})^\op\times\ILTop_{\BB}$ is a cocartesian fibration as well and therefore classified by a bifunctor $\IFun^\alg(-,-)\colon (\ILTop_{\BB})^\op\times\ILTop_{\BB}\to\ICat_{\BBB}$.
 
 \begin{lemma}
 	\label{lem:subCocartesianFibration}
 	Let $p\colon\I{P}\to\I{C}$ be a cocartesian fibration of $\BB$-categories. Let $\I{Q}\into\I{P}$ be a full subcategory such that for each map $f\colon c\to d$ in $\I{C}$ in context $A\in\BB$ the induced functor $f_!\colon\I{P}\vert_c\to\I{P}\vert_d$ restricts to a functor $\I{Q}\vert_c\to\I{Q}\vert_d$. Then the induced functor $q\colon \I{Q}\to\I{C}$ is a cocartesian fibration as well, and the inclusion $\I{Q}\into\I{P}$ is a cocartesian functor.
 \end{lemma}
 \begin{proof}
 	Using~\cite[Proposition~3.2.5]{MCocartesian}, it will be enough to show that for any cocartesian lift $\phi\colon x\to y$ of $f$ in $\I{P}$ in which $x$ is contained in $\I{Q}\vert_c$, the object $y$ is contained in $\I{Q}\vert_d$. But this immediately follows from the assumptions, using~\cite[Remark~6.5.4]{MCocartesian}.
 \end{proof}
 
 \begin{definition}
 	We define the \emph{functor of $\BB$-points}  as the functor $\IPt_{\BB}=\IFun^\alg(-,\Univ)\colon \IRTop_{\BB}\to\ICat_{\BBB}$.
 \end{definition}

Recall that by Corollary~\ref{cor:tensoringTopoi}, if $\I{C}$ is a $\BB$-category and $\I{X}$ is a $\BB$-topos, then $\I{X}^{\I{C}}=\IFun(\I{C},\I{X})$ is a $\BB$-topos as well. Moreover, as precomposition and postcomposition preserves all limits and colimits, the bifunctor $\IFun(-,-)$ restricts to a bifunctor $(-)^{(-)}\colon \ICat_{\BB}^\op\times\ILTop_{\BB}\to\ILTop_{\BB}$ which we refer to as the \emph{powering} of $\ILTop_{\BB}$ over $\ICat_{\BB}$. This terminology is justified by the following proposition:
\begin{proposition}
	\label{prop:cotensoringLTop}
	The powering bifunctor $(-)^{(-)}$ fits into an equivalence
	\begin{equation*}
		\map{\ILTop_{\BB}}(-, (-)^{(-)})\simeq\map{\ICat_{\BBB}}^\alg(-, \IFun(-,-)).
	\end{equation*}
\end{proposition}
\begin{proof}
	If $\I{C}$ is a $\BB$-category and $\I{X}$ and $\I{Y}$ are $\BB$-topoi, then Lemma~\ref{lem:limitpreservationswap} and its dual imply that a functor $\I{X}\to\I{Y}^{\I{C}}$ defines an algebraic morphism if and only if the transpose functor $\I{C}\to\IFun(\I{X},\I{Y})$ takes values in $\IFun^\alg(\I{X},\I{Y})$.  By replacing $\BB$ by $\Over{\BB}{A}$ (which is made possible by Remark~\ref{rem:BCTopoi} and~\cite[Lemma~4.2.3]{MYoneda}) , one obtains that the same is true for any object $A\to \ICat_{\BB}^{\op}\times(\ILTop_{\BB})^\op\times\ILTop_{\BB}$. Hence, the equivalence
	\begin{equation*}
		\map{\ICat_{\BBB}}(i(-),\IFun(-,-))\simeq \map{\ICat_{\BBB}}(-,\IFun(i(-),-))
	\end{equation*}
	of functors $\ICat_{\BB}^{\op}\times\ICat_{\BBB}^{\op}\times\ICat_{\BBB}\to\Univ[\BBB]$ (where $i\colon \ICat_{\BB}\into\ICat_{\BBB}$ is the inclusion) restricts in the desired way.
\end{proof}

\begin{corollary}
	\label{cor:UMPPoints}
	The functor of $\BB$-points $\IPt$ is a partial right adjoint of the functor 
	\begin{equation*}
		\Univ^{(-)}\colon \ICat_{\BB}\to \IRTop_{\BB},
	\end{equation*}
	in the sense that there is an equivalence
	\begin{equation*}
		\map{\IRTop_{\BB}}(\Univ^{(-)}, -)\simeq\map{\ICat_{\BBB}}(-, \IPt(-))
	\end{equation*}
	of functors $\ICat_{\BB}^{\op}\times \IRTop_{\BB}\to\Univ[\BBB]$.\qed
\end{corollary}

In light of Corollary~\ref{cor:UMPPoints}, it is reasonable to define:
\begin{definition}
	\label{def:discreteBTopoi}
	If $\I{C}$ is a $\BB$-category, we refer to the $\BB$-topos $\I{C}^{\disc}=\Univ^{\I{C}}$ as the \emph{discrete} $\BB$-topos associated with $\I{C}$.
\end{definition}

Lastly, we note that the large $\BB$-category $\ILTop_{\BB}$ is also \emph{tensored} over $\ICat_{\BB}$:

\begin{proposition}
	\label{prop:tensoringLTop}
	There is a bifunctor $-\otimes -\colon\ICat_{\BB}\times\ILTop_{\BB}\to\ILTop_{\BB}$ that fits into an equivalence
	\begin{equation*}
		\map{\ILTop_{\BB}}(-\otimes -, -)\simeq\map{\ILTop_{\BB}}(-, (-)^{(-)})
	\end{equation*}
	of functors $\ICat_{\BB}^{\op}\times(\ILTop_{\BB})^\op\times\ILTop_{\BB}\to\Univ[\BBB]$.
\end{proposition}
\begin{proof}
	As an immediate consequence of the constructions, if $\I{C}$ and $\I{D}$ are $\BB$-categories, we obtain a chain of equivalences
	\begin{equation*}
	\map{\ILTop_{\BB}}(\Free{\I{C}}, 	(-)^{\I{D}})\simeq\map{\ICat_{\BBB}}(\I{C}, \IFun(\I{D},-))\simeq\map{\ICat_{\BBB}}(\I{C}\times\I{D}, -)\simeq \map{\ILTop_{\BB}}(\Free{\I{C}\times\I{D}},-),
	\end{equation*}
	which implies that the functor $\map{\ILTop_{\BB}}(\I{X}, (-)^{\I{D}})$ is representable whenever $\I{X}$ is in the image of $\Free{-}$. But since every $\BB$-topos is a pushout of such $\BB$-topoi (see Corollary~\ref{cor:freePresentationTopoi}), this functor must be representable for \emph{any} $\BB$-topos $\I{X}$. As by Remark~\ref{rem:BCTopoi} the same argument shows that this is the case for every object $\I{X}\colon A\to \ILTop_{\BB}$ and every $\Over{\BB}{A}$-category $\I{D}$, the result follows.
\end{proof}

\subsection{Relative $\infty$-topoi as $\BB$-topoi}
\label{sec:relativeInternalTopoi}
By Theorem~\ref{thm:characterisationBTopoi} and the evident fact that an algebraic morphism between $\BB$-topoi induces an algebraic morphism of $\infty$-topoi upon taking global sections, we obtain a functor $\Gamma\colon\LTop(\BB)\to\LTopS$. By making use of the fact that the universe $\Univ$ is an initial object in $\LTop(\BB)$ (Corollary~\ref{cor:UniverseInitial}), this functor factors through the projection $\Under{(\LTopS)}{\BB}\to\LTop$, so that we end up with a functor
\begin{equation*}
	\Gamma\colon \LTop(\BB)\to\Under{(\LTopS)}{\BB}.
\end{equation*}
The main goal in this section is to prove:
\begin{theorem}
	\label{thm:BTopoiRelativeTopoi}
	The global sections functor $\Gamma\colon \LTop(\BB)\to\Under{(\LTopS)}{\BB}$ is an equivalence of $\infty$-categories.
\end{theorem}
\begin{remark}
	Theorem~\ref{thm:BTopoiRelativeTopoi} implies that the datum of a $\BB$-topos $\I{X}$ is equivalent to that of a geometric morphism $f_\ast\colon \XX\to\BB$. We will refer to the latter as the geometric morphism that is \emph{associated} with $\I{X}$.
\end{remark}

\begin{remark}
	\label{rem:BToposFromRelativeToposExplicitly}
	One can describe the inverse to the equivalence $\Gamma\colon \LTop(\BB)\simeq\Under{(\LTopS)}{\BB}$ from Theorem~\ref{thm:BTopoiRelativeTopoi} explicitly as follows: Given any algebraic morphism $f^\ast\colon \BB\to \XX$, recall that we get an induced functor $f_* \colon \Cat(\widehat{\XX}) \to \Cat(\BBB)$. 
    Then Theorem~\ref{thm:characterisationBTopoi} easily implies that the large $\BB$-category $\I{X}=f_\ast\Univ[\XX]$ is a $\BB$-topos (since the associated sheaf on $\BB$ is simply given by $\Over{\XX}{f^\ast(-)}$). Moreover, the functor $f^\ast$ induces a map $\Over{\BB}{-}\to\Over{\XX}{f^\ast(-)}$ of sheaves on $\BB$ that recovers the unique algebraic morphism $\const_{\I{X}}\colon \Univ[\BB]\to\I{X}$. This implies that $\I{X}$ is the image of $f^\ast\colon \BB\to \XX$ under the equivalence from Theorem~\ref{thm:BTopoiRelativeTopoi}.
\end{remark}

The proof of Theorem~\ref{thm:BTopoiRelativeTopoi} requires a few preparations. We begin with the following lemma:
\begin{lemma}
	\label{lem:globalSectionsCocartesianFibration}
	Let $\CC$ be an $\infty$-category with an initial object $\varnothing_{\CC}$, and let $\DD$ be an $\infty$-category that admits pushouts. Then the evaluation functor $\ev_{\varnothing_{\CC}}\colon\Fun(\CC,\DD)\to\DD$ is a cocartesian fibration. Moreover, a morphism $\phi\colon F\to G$ in $\Fun(\CC,\DD)$ is cocartesian if and only if for every map $f\colon c\to c^\prime$ in $\CC$ the induced commutative square
	\begin{equation*}
		\begin{tikzcd}
			F(c)\arrow[r, "\alpha(c)"]\arrow[d, "F(f)"] & G(c)\arrow[d, "G(f)"]\\
			F(c^\prime)\arrow[r, "\alpha(c^\prime)"] & G(c^\prime)
		\end{tikzcd}
	\end{equation*}
	is a pushout in $\DD$.
\end{lemma}
\begin{proof}
	Note that the diagonal functor $\diag\colon \DD\to\Fun(\CC,\DD)$ defines a left adjoint to $\ev_{\varnothing_{\CC}}$. Therefore, we deduce from~\cite[Proposition~4.51]{Haugseng2017} that a map $\alpha\colon F\to G$ in $\Fun(\CC,\DD)$ is cocartesian if and only if for every $c\in\CC$ the square
	\begin{equation*}
		\begin{tikzcd}
			F(\varnothing_{\CC})\arrow[r, "\alpha(\varnothing_{\CC})"] \arrow[d] & G(\varnothing_{\CC})\arrow[d]\\
			F(c)\arrow[r, "\alpha(c)"] & G(c)
		\end{tikzcd}
	\end{equation*}
	is a pushout in $\DD$. The assumption that $\DD$ admits pushouts guarantees that there are enough such cocartesian maps, see~\cite[Corollary~4.52]{Haugseng2017}.
\end{proof}
By Lemma~\ref{lem:globalSectionsCocartesianFibration}, the global sections functor $\Gamma\colon\PSh_{\LTopS}(\BB)\to\LTopS$ is a cocartesian fibration and therefore induces a left fibration $\Gamma\colon \PSh_{\LTopS}(\BB)^{\cocart}\to\LTopS$, where $\PSh_{\LTopS}(\BB)^{\cocart}\into \PSh_{\LTopS}(\BB)$ is the subcategory that is spanned by the cocartesian morphisms. Moreover, observe that by Theorem~\ref{thm:characterisationBTopoi} we may regard the $\infty$-category $\LTop(\BB)$ as a (non-full) subcategory of $\PSh_{\LTopS}(\BB)$. Now the key step towards the proof of Theorem~\ref{thm:BTopoiRelativeTopoi} consists of the following proposition:
\begin{proposition}
	\label{prop:TopBFullSubcategoryCocartesianArrows}
	The (non-full) inclusion $\LTop(\BB)\into\PSh_{\LTopS}(\BB)$ fits into a commutative diagram
	\begin{equation*}
		\begin{tikzcd}
			\LTop(\BB)\arrow[dr, hookrightarrow] \arrow[r, hookrightarrow] & \PSh_{\LTopS}(\BB)^{\cocart}\arrow[d, hookrightarrow]\\
			& \PSh_{\LTopS}(\BB) 
		\end{tikzcd}
	\end{equation*}
	in which the horizontal map is fully faithful. Moreover, if $\I{X}$ is a $\BB$-topos and if $\I{X}\to F$ is a map in $\PSh_{\LTopS}(\BB)^{\cocart}$, then $F$ is contained in $\LTop(\BB)$.
\end{proposition}
\begin{proof}
	If $f^\ast\colon\I{X}\to\I{Y}$ is an algebraic morphism between $\BB$-topoi, Lemma~\ref{lem:criterionLTopEtalePushoutBC} and the fact that $f^\ast$ is cocontinuous imply that for every map $s\colon B\to A$ in $\BB$ the induced commutative square
	\begin{equation*}
		\begin{tikzcd}
			\I{X}(A)\arrow[r, "f^\ast(A)"]\arrow[d, "s^\ast"] & \I{Y}(A)\arrow[d, "s^\ast"]\\
			\I{X}(B)\arrow[r, "f^\ast(B)"] & \I{Y}(B)
		\end{tikzcd}
	\end{equation*}
	is a pushout in $\LTopS$. By Lemma~\ref{lem:globalSectionsCocartesianFibration}, this means that the underlying map of $\LTopS$-valued presheaves on $\BB$ defines a cocartesian morphism over $\LTopS$. Hence the inclusion $\LTop(\BB)\into\PSh_{\LTopS}(\BB)$ factors through the inclusion $\PSh_{\LTopS}(\BB)^{\cocart}\into \PSh_{\LTopS}(\BB)$. To finish the proof, it now suffices to show that for any cocartesian morphism $f\colon \I{X}\to F$ of $\LTopS$-valued presheaves on $\BB$, the presheaf $F$ is contained in $\LTop(\BB)$ and the map $f$ defines an algebraic morphism of $\BB$-topoi. 
	Since $f$ is a cocartesian morphism and since \'etale algebraic morphisms are closed under pushouts in $\LTopS$, we find that for every $s\colon B\to A$ in $\BB$ the induced functor $s^\ast\colon F(A)\to F(B)$ is an \'etale algebraic morphism of $\infty$-topoi.  Moreover, the pasting lemma for pushouts (and the fact that $\I{X}$ is a $\BB$-topos) imply that $F\colon \BB^{\op}\to\LTopEtS$ preserves pushouts. Hence Theorem~\ref{thm:characterisationBTopoi} implies that $F$ must be contained in $\LTop(\BB)$ whenever $F$ is a sheaf. But if $d\colon I\to\BB$ is an arbitrary diagram, then we deduce from~\cite[Corollary~4.7.4.18]{Lurie2017} that the commutative square
	\begin{equation*}
		\begin{tikzcd}
			\I{X}(\colim d)\arrow[r]\arrow[d] & F(\colim d)\arrow[d]\\
			\lim \I{X}\circ d\arrow[r] & \lim F\circ d
		\end{tikzcd}
	\end{equation*}
	is left adjointable and therefore a pushout in $\LTopS$, using Lemma~\ref{lem:criterionLTopEtalePushoutBC}. Hence, since the left vertical map is an equivalence, so is the right one, which means that $F$ is a sheaf. Finally, since $f$ is already section-wise given by an algebraic morphism of $\infty$-topoi, the map defines an algebraic morphism of $\BB$-topoi precisely if it is $\Univ$-cocontinuous, which again follows from Lemma~\ref{lem:criterionLTopEtalePushoutBC}.
\end{proof}
\begin{corollary}
	\label{cor:globalSectionsTopoiLeftFibration}
	The global sections functor $\Gamma\colon\LTop(\BB)\to\LTopS$ is a left fibration.
\end{corollary}
\begin{proof}
	By the first part of Proposition~\ref{prop:TopBFullSubcategoryCocartesianArrows}, every map in $\LTop(\BB)$ is cocartesian. By its second part, if $\I{X}$ is a $\BB$-topos and $f^\ast\colon\Gamma(\I{X})\to\ZZ$ is an arbitrary algebraic morphism, the codomain of the cocartesian lift $\I{X}\to F$ of $f^\ast$ in $\PSh_{\LTopS}(\BB)$ is again a $\BB$-topos. Hence the claim follows.
\end{proof}

\begin{proof}[{Proof of Theorem~\ref{thm:BTopoiRelativeTopoi}}]
	By Corollary~\ref{cor:globalSectionsTopoiLeftFibration}, the global sections functor $\Gamma\colon\LTop(\BB)\to\LTopS$ is a left fibration, hence so is the functor $\Gamma\colon\LTop(\BB)\to\Under{(\LTopS)}{\BB}$. Since this functor carries the initial object $\Univ$ to the initial object $\id_{\BB}$, it must be an initial functor as well. Hence $\Gamma$ is an equivalence.
\end{proof}

\subsection{Limits and colimits of $\BB$-topoi}
\label{sec:limitsColimitsTopoi}
In this section, we discuss how one can construct limits and colimits in the $\BB$-category $\ILTop_{\BB}$ of $\BB$-topoi.
The construction of \emph{limits} in $\ILTop_{\BB}$ is rather easy: they are simply computed in $\ICat_{\BBB}$. This is analogous to how limits are computed in the $\BB$-category $\ILPr_{\BB}$ of presentable $\BB$-categories, cf.~Proposition~\ref{prop:limitsPrL}. The proof of this statement follows along similar lines as well.

\begin{proposition}
	\label{prop:LTopLimits}
	The large $\BB$-category $\ILTop_{\BB}$ is complete, and the inclusion $\ILTop_{\BB}\into\ICat_{\BBB}$ is continuous.
\end{proposition}
\begin{proof}
	As in the proof of~Proposition~\ref{prop:limitsPrL}, it will be enough to show that whenever $\I{K}$ is either given by the constant $\BB$-category $\Lambda^2_0$ or by a $\BB$-groupoid, the large $\BB$-category $\ILTop_{\BB}$ admits $\I{K}$-indexed limits and the inclusion $\ILTop_{\BB}\into\ICat_{\BBB}$ preserves $\I{K}$-indexed limits.
	
	We begin with the case were $\I{K}$ is a $\BB$-groupoid. Let us set $A=\I{K}_0$. Since $(\pi_A)_\ast\colon\Cat(\Over{\BBB}{A})\to\Cat(\BBB)$ is given by precomposition with $\pi_A^\ast$, Theorem~\ref{thm:characterisationBTopoi} implies that $ (\pi_A)_\ast $ takes objects in $ \LTop(\BB_{/A}) $ to objects in $ \LTop(\BB) $. 
	Furthermore it easily follows from \cite[Proposition 5.4.2 and Proposition 5.4.5]{MWColimits} that $(\pi_A)_\ast $ therefore defines a functor $\LTop(\BB_{/A}) \to \LTop(\BB) $. Moreover, since the adjunction unit $\id_{\Cat(\BBB)}\to(\pi_A)_\ast\pi_A^\ast$ is given by precomposition with the adjunction counit $(\pi_A)_!\pi_A^\ast\to\id_{\BB}$ and vice versa for the adjunction counit, the same argument shows together with the fact that $\BB$-topoi are $\Univ$-cocomplete that these two maps must also restrict in the desired way. Hence $(\pi_A)_\ast\colon \LTop(\Over{\BB}{A})\to\LTop(\BB)$ defines a right adjoint of $\pi_A^\ast$.
	
	Now let us assume that $\I{K}=\Lambda^2_0$, i.e.\ let
	\begin{equation*}
		\begin{tikzcd}
			\I{X}\times_{\I{Z}}\I{Y}\arrow[r, "\pr_1"]\arrow[d, "\pr_0"] & \I{Y}\arrow[d, "g"]\\
			\I{X}\arrow[r, "f"] & \I{Z}
		\end{tikzcd}
	\end{equation*}
	be a pullback square in $\Cat(\BBB)$ in which the cospan in the lower right corner is contained in $\LTop(\BB)$. By~Proposition~\ref{prop:limitsPrL} this square defines a pullback in $\LPr(\BB)$, and~\cite[Proposition 6.3.2.3]{htt} implies that both $\pr_0$ and $\pr_1$ preserve finite limits. Hence the above pullback square is contained in $\LTop(\BB)$ whenever $\I{X}\times_{\I{Z}}\I{Y}$ satisfies descent. But the codomain fibration $(\I{X}\times_\I{Z}\I{Y})^{\Delta^1}\to\I{X}\times_\I{Z}\I{Y}$ can be identified with the pullback of the cospan $\pr_0^\ast(\I{X}^{\Delta^1)}\to \pr_0^\ast f^\ast(\I{Z}^{\Delta^1})\leftarrow\pr_1^\ast(\I{Y}^{\Delta^1})$ of cartesian fibrations over $\I{X}\times_{\I{Z}}\I{Y}$, which implies that we may identify $\Over{(\I{X}\times_{\I{Z}}\I{y})}{-}$ with the pullback $\Over{\I{X}}{{\pr_0^\ast(-)}}\times_{\Over{\I{Z}}{{\pr_0^\ast f^\ast(-)}}}\Over{\I{Y}}{{\pr_1^\ast(-)}}$ in $\IFun((\I{X}\times_{\I{Z}}\I{Y})^\op,\ICat_{\BBB})$. Since all four functors in the initial pullback square are continuous, we conclude that $\I{X}\times_{\I{Z}}\I{Y}$ satisfies descent provided that continuous functors are closed under pullbacks in $\IFun((\I{X}\times_{\I{Z}}\I{Y})^\op,\ICat_{\BBB})$, which follows immediately from the fact that limit functors are themselves continuous (see the proof of~Lemma~\ref{lem:stabilityFiltUCocontinuousFunctors} for more details). 
	We complete the proof by showing that if we are given another $ \BB $-topos $ \I{E} $ and algebraic morphisms $ h \colon \I{E} \to \I{X}$ and $ k \colon \I{E} \to \I{Z} $ together with an equivalence $ f \circ h \simeq g \circ k $, the induced map $ \I{E} \to \I{X} \times_{\I{Y}} \I{Z} $ is an algebraic morphism as well.
	That this map is cocontinuous follows from~Proposition~\ref{prop:limitsPrL}, and that it preserves finite limits is a consequence of the fact that this property can be checked section-wise.
\end{proof}

As a consequence of Proposition~\ref{prop:LTopLimits}, we can now upgrade the equivalence from Theorem~\ref{thm:BTopoiRelativeTopoi} to a \emph{functorial} one:
\begin{corollary}
	\label{cor:BTopRelTopFunctorial}
	Let $\Under{(\LTopS)}{(\Over{\BB}{-})}$ be the $\CatSS$-valued presheaf on $\BB$ whose associated cocartesian fibration on $\BB^{\op}$ is given by the pullback of $d_1\colon\Fun(\Delta^1,\LTopS)\to\LTopS$ along $\Over{\BB}{-}\colon\BB^{\op}\to\LTopS$. Then this presheaf is a sheaf whose associated large $\BB$-category is equivalent to $\ILTop_{\BB}$.
\end{corollary}
\begin{proof}
	To begin with, note that the functor $\Over{\Univ}{-}\colon\Univ^{\op}\to\ICat_{\BBB}$ takes values in $\ILTop_{\BB}$ (see the discussion before Definition~\ref{def:etaleBTopos} below). Thus, by combining descent with Proposition~\ref{prop:LTopLimits}, we obtain an $\Univ$-continuous functor $\Univ^{\op}\to\ILTop_{\BB}$. Hence, the underlying map of $\CatSS$-valued presheaves on $\BB$ can be regarded as a morphism in $\Fun^{\LAdj}(\BB^{\op},\CatSS)$ (in the sense of~\cite[\S~4.7.4]{Lurie2017}).
 On account of the equivalence $\Fun^{\LAdj}(\BB^{\op},\CatSS)\simeq\Fun^{\RAdj}(\BB,\CatSS)$ from~\cite[Corollary~4.7.4.18]{Lurie2017} that is furnished by passing to right adjoints, we thus obtain a morphism of functors $\varphi\colon(\Over{\BB}{-})^\op\to\LTop(\Over{\BB}{-})$ in which the functoriality on both sides is given by the right adjoints of the transition functors. 
	Let $\eta\colon \phi\to \diag_{\BB}(\phi(1))$ be the commutative square in $\Fun(\BB,\CatSS)$ that is obtained from the unit of the adjunction $\ev_{1}\dashv\diag_{\BB}\colon \Fun(\BB,\CatSS)\leftrightarrows\CatSS$. We may regard $\eta$ as a morphism in $\Fun(\BB, \CatSS^{\Delta^1})$.
	Note that for every map $s\colon B\to A$ in $\BB$ one has a commutative triangle
	\begin{equation*}
		\begin{tikzcd}[column sep=small]
			\LTop(\Over{\BB}{B})\arrow[rr, "s_\ast"]\arrow[dr, "\Gamma_{\Over{\BB}{B}}"'] && \LTop(\Over{\BB}{A})\arrow[dl, "\Gamma_{\Over{\BB}{A}}"]\\
			& \LTopS, &
		\end{tikzcd} 
	\end{equation*}
	hence Corollary~\ref{cor:globalSectionsTopoiLeftFibration} implies that $s_\ast$ is a left fibration. As the functor $s_!^\op\colon \Over{\BB}{B}^\op\to\Over{\BB}{A}^\op$ is a left fibration too, the map $\eta$ thus defines a morphism in $\Fun(\BB, \LFib)$ (where $\LFib$ is the full subcategory of $\Fun(\Delta^1,\CatSS)$ that is spanned by the left fibrations). Explicitly, this morphism carries $A\in\BB$ to the commutative square $\eta(A)\colon \phi(A)\to \phi(1)$. Now observe that the domain of $\eta$ is contained in the fibre $\LFib(\BB^\op)\into\LFib$ and the codomain is contained in the fibre $\LFib(\LTop(\BB))\into\LFib$. Moreover, for each $A\in\BB$ the functor $\Phi(A)\colon(\Over{\BB}{A})^\op\to\LTop(\Over{\BB}{A})$ carries the final object in $\Over{\BB}{A}$ to the initial object $\Over{\BB}{A}\in\LTop(\Over{\BB}{A})$ (see Corollary~\ref{cor:UniverseInitial}), hence $\Phi$ is section-wise initial. Altogether, these observations imply that 
	\begin{equation*}
		\LTop(\Over{\BB}{-})\colon \BB\to \LFib(\LTop(\BB))
	\end{equation*}
	is equivalent to the composition of $(\Over{\BB}{-})^\op\colon \BB\to \LFib(\BB^\op)$ (which is just the Yoneda embedding) with the functor of left Kan extension $\Phi(1)_!\colon\LFib(\BB^\op)\to\LFib(\LTop(\BB))$ along  $\Phi(1)\colon\BB^\op\to \LTop(\BB)$. But the latter composition is equivalent to the composition of $\Phi(1)^\op\colon\BB\to\LTop(\BB)^\op$ with the Yoneda embedding $\LTop(\BB)^\op\into\LFib(\LTop(\BB))$. By making use of the commutative diagram
	\begin{equation*}
		\begin{tikzcd}
			\BB\arrow[r, "\Phi(1)"] \arrow[rr, bend left, "\Over{\BB}{-}"]& \LTop(\BB)^\op\arrow[r, "\Gamma"]\arrow[d, hookrightarrow] & (\LTopS)^\op\arrow[d, hookrightarrow]\\
			& \LFib(\LTop(\BB))\arrow[r, "\Gamma_!"] & \LFib(\LTopS),
		\end{tikzcd}
	\end{equation*}
	the claim now follows.
\end{proof}

\begin{remark}
	\label{rem:CompareWithRelTopFunctorial}
	Corollary~\ref{cor:BTopRelTopFunctorial} in particular implies that for any map $s\colon B\to A$ in $\BB$ the transition functor $s^\ast\colon \LTop(\Over{\BB}{A})\to\LTop(\Over{\BB}{B})$ can be identified with the pushout functor
	\begin{equation*}
	  - \sqcup_{\Over{\BB}{A}} \BB_{/B}\colon \Under{(\LTopS)}{\Over{\BB}{A}}\to\Under{(\LTopS)}{\Over{\BB}{B}}.
	\end{equation*}
\end{remark}

As opposed to limits in $\ILTop_{\BB}$, general colimits of $\BB$-topoi can \emph{not} be computed on the underlying $\BB$-categories, not even after passing to the opposite $\BB$-category $\IRTop_{\BB}$. 
The existence of constant colimits follows easily from Theorem~\ref{thm:BTopoiRelativeTopoi}:
\begin{lemma}
	\label{lem:LTopConstColim}
	The large $\BB$-category $\ILTop_{\BB}$ is $ \ILConst $-cocomplete.
\end{lemma}
\begin{proof}
	In light of Remark~\ref{rem:CompareWithRelTopFunctorial}, this follows from the fact that for any map $s\colon B\to A$ in $\BB$ the $\infty$-categories $ (\LTopS)_{\BB_{/A}/} $ and $ (\LTopS)_{\BB_{/B}/} $ have colimits by \cite[Proposition~6.3.4.6]{htt} and $ -\sqcup_{\BB_{/A}} \BB_{/B} $ preserves all colimits.
\end{proof}

\begin{lemma}
	\label{lem:BTopoiGroupoidalColimits}
	The $ \BB $-category $ \ILTop_{\BB} $ is $\Univ$-cocomplete.
\end{lemma}
\begin{proof}
	By Remark~\ref{rem:BCTopoi}, it suffices to show that whenever $\I{G}$ is a $\BB$-groupoid and $d\colon \I{G}\to\ILTop_{\BB}$ is a diagram, the functor $ \map{\IFun(\I{G},\ILTop_{\BB})}(d, \diag(-)) $ is corepresentable. Note that we have an equivalence $\IFun(\I{G},-)\simeq(\pi_{\I{G}})_\ast\pi_{\I{G}}^\ast$. Therefore, Corollary~\ref{cor:freePresentationTopoi} implies that we can assume that $d$ is in the image of $\Free{-}_\ast\colon\IFun(\I{G},\ICat_{\BBB})\to\IFun(\I{G},\ILTop_{\BB})$. In this case, the claim follows from Corollary~\ref{cor:ColimitsFreeTopoi}.
\end{proof}

\begin{proposition}
	\label{prop:LTopCocomplete}
	The $ \BB $-category $ \ILTop_\BB $ is cocomplete.
\end{proposition}
\begin{proof}
	By Proposition~\ref{prop:CocompleteGroupoidsExternal}, this follows from, Lemmas~\ref{lem:LTopConstColim} and~\ref{lem:BTopoiGroupoidalColimits}.
\end{proof}

\subsection{A formula for the coproduct of $\BB$-topoi}
\label{sec:coproductTopoi}
The goal of this section is to give an explicit description of the coproduct in $\ILTop_{\BB}$. To that end, recall that by the discussion in \S~\ref{sec:tensorProductCatU} the large $\BB$-category $\ILPr_{\BB}$ of presentable $\BB$-categories is symmetric monoidal. Explicitly, if $\I{D}$ and $\I{E}$ are presentable $\BB$-categories, their tensor product $\I{D}\otimes\I{E}$ is equivalent to the $\BB$-category $\IShv_{\I{E}}(\I{D})$ of $\I{E}$-valued sheaves on $\I{D}$ (i.e.\ the full subcategory of $\IFun(\I{D}^\op,\I{E})$ spanned by the continuous functors $\pi_A^\ast\I{D}^{\op}\to\pi_A^\ast\I{E}$ for each $A\in\BB$). In light of this identification, the proof of~Proposition~\ref{prop:sheaveshasunivprop} shows that if $f^\ast\colon \I{D}\to\I{D}^\prime$ and $g^\ast\colon \I{E}\to\I{E}^\prime$ are maps in $\ILPr_{\BB}$ with right adjoints $f_\ast$ and $g_\ast$, then the functor $\id\otimes f^\ast\colon\I{D}\otimes\I{E}\to \I{D}\otimes\I{E}^\prime$ can be identified with the left adjoint of $(f_\ast)_\ast\colon \IShv_{\I{E}^\prime}(\I{D})\to\IShv_{\I{E}}(\I{D})$, and the functor $g^\ast\otimes\id\colon\I{D}\otimes\I{E}\to\I{D}^\prime\otimes\I{E}$ can be identified with the left adjoint of $(g^\ast)^\ast\colon\IShv_{\I{E}}(\I{D}^\prime)\to \IShv_{\I{E}}(\I{D})$.
\begin{proposition}
	\label{prop:coproductsBTopoi}
	If $\I{X}$ and $\I{Y}$ are $\BB$-topoi, then their tensor product $\I{X}\otimes \I{Y}$ is a $\BB$-topos as well, and the functors $\id\otimes\const_{\I{Y}}\colon\I{X}\simeq \I{X}\otimes\Univ\to\I{X}\otimes \I{Y}$ and $\const_{\I{X}}\otimes \id\colon\I{Y}\simeq\Univ\otimes\I{Y}\to\I{X}\otimes\I{Y}$ exhibit $\I{X}\otimes\I{Y}$ as the coproduct of $\I{X}$ and $\I{Y}$ in $\ILTop_{\BB}$.
\end{proposition}

Combining the above result with Proposition~\ref{prop:Mod_B=PrL(B)_for_0-localic}, we obtain the following generalisation of \cite[Corollary 1.10]{aoki2023sheavesspectrum}:

\begin{corollary}
    \label{cor:O-trunc_Tensor=Product}
	Assume that $ \BB $ is generated under colimits by $ (-1) $-truncated objects.
	Then for $ \XX,\YY \in \LTop_{\BB/} $ the canonical map
	\[
	\XX \otimes_{\BB} \YY \to \XX \sqcup_{\BB} \YY
	\]
	is an equivalence.
\end{corollary}

The proof of Proposition~\ref{prop:coproductsBTopoi} requires a few preparations and will be given at the end of this section. First, let us observe that this result provides an explicit formula for the pushout of $\infty$-topoi:

\begin{corollary}
	\label{cor:formulaPushoutTopoi}
	Given a cospan $ \XX \xleftarrow{f^*} \ZZ \xrightarrow{g^*} \YY $ in $ \LTopS $, there is a canonical equivalence
	\[
	\XX \sqcup_{\ZZ} \YY \simeq \Fun_\ZZ^{\c}(f_\ast(\Univ[\XX])^\op,g_\ast\Univ[\YY])
	\]
	in which the right-hand side denotes the full subcategory of $\Fun_\ZZ(f_\ast(\Univ[\XX])^\op,g_\ast\Univ[\YY])$ that is spanned by the continuous functors.\qed
\end{corollary}

\begin{remark}
	\label{rem:TensorProdExplicitly}
	In light of Corollary~\ref{cor:formulaPushoutTopoi}, the $\infty$-topos $ \XX \sqcup_\ZZ \YY $ admits the following explicit description:
	It is the full subcategory of the $\infty$-category of natural transformations between the two $\CatSS$-valued sheaves $\Over{\XX}{f^\ast(-)}$ and $\Over{\YY}{g^\ast(-)}$ on $\ZZ$ that is spanned spanned by those maps $ \phi \colon (\XX_{/f^*(-)})^{\op} \to \YY_{/g^*(-)} $ which satisfy that
	\begin{enumerate}
		\item the functor $ \phi(A)$ preserves limits for all $ A \in \ZZ $, and
		\item for any map $ s \colon B \to A $ in $\ZZ$ the canonical lax square
		\[\begin{tikzcd}
			{(\XX_{/f^*(B)})^\op} & {\YY_{/g^*(B)}} \\
			{(\XX_{/f^*(A)})^\op} & {\YY_{/g^*(A)}}
			\arrow["{\phi(B)}", from=1-1, to=1-2]
			\arrow["{g^*(s)_*}", from=1-2, to=2-2]
			\arrow["{f^*(s)_!}"', from=1-1, to=2-1]
			\arrow["{\phi(A)}"', from=2-1, to=2-2]
			\arrow[Rightarrow, from=2-1, to=1-2, shorten=4mm]
		\end{tikzcd}\]
		commutes.\qed
	\end{enumerate}
\end{remark}

Admittedly, the description of the pushout of $\infty$-topoi in Remark~\ref{rem:TensorProdExplicitly} is rather unwieldy in general. However, we can paint a more concrete picture in the following case:
\begin{example}
	\label{ex:PushoutPresheafCategory}
	Let $\I{X}$ be a $\BB$-topos and let $\I{C}$ be an arbitrary $\BB$-category. Then Proposition~\ref{prop:coproductsBTopoi} implies that the commutative square
	\begin{equation*}
		\begin{tikzcd}
			\Univ\arrow[d, "\const_{\I{X}}"]\arrow[r, "\diag"] & \IPSh(\I{C})\arrow[d, "(\const_{\I{X}})_\ast"]\\
			\I{X}\arrow[r, "\diag"] & \IFun(\I{C}^\op, \I{X})
		\end{tikzcd}
	\end{equation*}
	is a pushout in $\ILTop_{\BB}$. Furthermore, if $f\colon \XX\to\BB$ is the geometric morphism associated to $\I{X}$, then the lower horizontal map can be identified with the image of $\diag\colon\Univ[\XX]\to\IPSh[\XX](f^\ast\I{C})$ along $f_\ast$.
\end{example}

We now turn to the proof of Proposition~\ref{prop:coproductsBTopoi}. 
It is a straightforward adaption of the proof presented in \cite[\S 2.3]{Anel2018a} to the setting of $\BB$-categories. 
We begin with the following lemma:
\begin{lemma}
	\label{lem:ProductOfLexFunctors}
	Let $ \I{C} $ and $ \I{D} $ be $ \BB $-categories with finite limits. Then precomposition with the canonical maps $ \id_{\I{C}}\times 1_{\I{D}} \colon \I{C} \to\I{C} \times \I{D} $ and $1_{\I{C}}\times\id_{\I{D}} \colon \I{D} \to \I{C} \times \I{D}  $ induces an equivalence
	\[
	\IFun^{\lex}(\I{C}\times \I{D},\I{E}) \simeq \IFun^{\lex}(\I{C},\I{E}) \times \IFun^{\lex}(\I{D},\I{E})
	\]
	for any $\BB$-category $\I{E}$ with finite limits. In other words, these two maps exhibit $\I{C}\times\I{D}$ as the coproduct of $\I{C}$ and $\I{D}$ in $\ICat_{\BB}^\lex$.
\end{lemma}
\begin{proof}
	The composition
	\begin{align*}
		\IFun^{\lex}(\I{C},\I{E}) \times \IFun^{\lex}(\I{D},\I{E}) &\xrightarrow{\pr_0^* \times \pr_1^*} \IFun^{\lex}(\I{C}\times \I{D},\I{E}) \times \IFun^{\lex}(\I{C}\times \I{D},\I{E})\\
		&\xrightarrow{\simeq}  \IFun^{\lex}(\I{C}\times \I{D}, \I{E} \times \I{E}) \\
		&\xrightarrow{(-\times-)_*} \IFun^{\lex}(\I{C}\times \I{D},\I{E})
	\end{align*}
	defines an inverse.
\end{proof}

The rough strategy of the proof of Proposition~\ref{prop:coproductsBTopoi} is to first prove the claim for free $\BB$-topoi, which will follow from Lemma~\ref{lem:ProductOfLexFunctors}.
In order to reduce the general case to this setting we need to understand the compatibility of tensor products with localisations:

\begin{lemma}
	\label{lem:LocalisationOfTensorProds}
	Suppose that $ \I{C} $ and $ \I{D} $ are presentable $ \BB $-categories  and that $ \I{W} \into \I{C} $ and $ \I{S} \into \I{D}  $ are small subcategories.
	Let $\I{C}^\prime\into\I{C}$ be a small full subcategory that exhibits $\I{C}$ as the free $\IFilt_{\I{U}}$-cocompletion of $\I{C}^\prime$ for some sound doctrine $\I{U}$. Let $\I{D}^\prime\into\I{D}$ be chosen similarly.
	We write $ \tau \colon \I{C} \times \I{D} \to \I{C} \otimes \I{D} $ for the universal bilinear functor.
	Let us set $\I{W} \boxtimes \I{S}=(\I{W} \times (\I{D}^\prime)^\simeq) \sqcup ((\I{C}^\prime)^\simeq \times \I{S})$.
	Then the canonical map $ \I{C} \otimes \I{D} \to \ILoc_{\I{W}}(\I{C}) \otimes \ILoc_{\I{S}}(\I{D})$ induces an equivalence
	\[
	\ILoc_{\I{W} \boxtimes \I{S}}(\I{C} \otimes \I{D})  \xrightarrow{\simeq} \ILoc_{\I{W}}(\I{C}) \otimes \ILoc_{\I{S}}(\I{D}),
	\]
	where the left-hand side is the $\BB$-category of local objects with respect to $(\tau,\tau)\colon \I{W}\boxtimes\I{S}\to\I{C}\otimes\I{D}$.
\end{lemma}
\begin{proof}
	Let $ \I{E} $ be any other presentable $ \BB $-category and let us denote by $\IFun^{\bil}(\I{C}\times\I{D},\I{E})_{\I{W}\boxtimes\I{S}}$ the full subcategory of $\IFun^{\bil}(\I{C}\times\I{D},\I{E})$ that is spanned by those bilinear functors $\pi_A^\ast\I{C}\times\pi_A^\ast\I{D}\to\pi_A^\ast\I{E}$ (in arbitrary context $A\in\BB$) whose precomposition with $\pi_A^\ast(\I{W}\boxtimes\I{S})\to \pi_A^\ast\I{C}\times\pi_A^\ast\I{D}$ factors through $\pi_A^\ast\I{E}^\core$. By combining the universal property of the tensor product with Corollary~\ref{cor:UniversalPropertyLocalObjects}, we now obtain a chain of equivalences
	\begin{equation*}
		\IFun^\cc(\ILoc_{\I{W}\boxtimes\I{S}}(\I{C}\otimes\I{D}),\I{E})\simeq\IFun^\cc(\I{C}\otimes\I{D},\I{E})_{\I{W}\boxtimes\I{S}}\simeq\IFun^{\bil}(\I{C}\times\I{D},\I{E})_{\I{W}\boxtimes\I{S}}.
	\end{equation*}
	Note that a bilinear functor $f\colon \I{C}\times\I{D}\to\I{E}$ is contained in $\IFun^{\bil}(\I{C}\times\I{D},\I{E})_{\I{W}\boxtimes\I{D}}$ if and only if 
	\begin{enumerate}
		\item for any $ c \colon A \to \I{C}^{\prime}$ in context $ A \in \BB $ the functor  $\pi_A^* \I{S} \into \pi_A^* \I{D} \xrightarrow{f(c,-)} \pi_A^* \I{E}$
		factors through $ \pi_A^* \I{E}^\simeq$, and
		\item for any $ d \colon A \to \I{D}^\prime$ in context $ A \in \BB $ the functor $\pi_A^* \I{W} \into \pi_A^* \I{C} \xrightarrow{f(-,d)} \pi_A^* \I{E} $
		factors through $ \pi_A^* \I{E}^\simeq$.
	\end{enumerate}
	Let $f^\prime\colon \I{C}\to \IFun^\cc(\I{D},\I{E})$ be the image of $f$ under the equivalence $\IFun^{\bil}(\I{C}\times\I{D},\I{E})\simeq\IFun^\cc(\I{C},\IFun^\cc(\I{D},\I{E}))$ from Lemma~\ref{lem:limitpreservationswap}. Now the first condition is equivalent to the composition $ \I{C}' \into \I{C} \xrightarrow{f^\prime} \IFun^\cc(\I{D},\I{E}) $ taking values in $ \IFun^\cc(\ILoc_{\I{S}}(\I{D}), \I{E}) $.
	Note that the inclusion $ \IFun^\cc(\ILoc_{\I{S}}(\I{D}), \I{E}) \into \IFun^\cc(\I{D},\I{E}) $ is given by precomposition with $ \I{D} \to \ILoc_{\I{S}}(\I{D}) $ and is therefore cocontinuous.
	Since $ \I{C}' $ generates $ \I{C} $ under $\IFilt_{\I{U}}$-colimits, it thus follows that (1) is equivalent to $ f^\prime$ being contained in $\IFun^\cc(\I{C},\IFun^\cc(\ILoc_{\I{S}}(\I{D}),\I{E}))$. Similarly, if $ f^{\prime\prime} \colon \I{D} \to \IFun^\cc(\I{C},\I{E})$ is the other transpose of $ f $, condition~(2) is equivalent to $f^{\prime\prime}$ taking values in $\IFun^\cc(\ILoc_{\I{W}}(\I{C}),\I{E})$.
	Thus the naturality of the equivalence in Lemma~\ref{lem:limitpreservationswap} implies that $f $ satisfies (1) and (2) if and only if $f$ is contained in $\IFun^{\bil}(\ILoc_{\I{W}}(\I{C})\times\ILoc_{\I{S}}(\I{D}),\I{E})$. As the same argument can be carried out for bilinear functors in arbitrary context, this shows that the equivalence $\IFun^\cc(\I{C}\otimes\I{D},\I{E})\simeq\IFun^{\bil}(\I{C}\times\I{D},\I{E})$ restricts to an equivalence $	\IFun^\cc(\ILoc_{\I{W}\boxtimes\I{S}}(\I{C}\otimes\I{D}),\I{E})\simeq\IFun^{\bil}(\ILoc_{\I{W}}(\I{C})\times\ILoc_{\I{S}}(\I{D}),\I{E})$, which proves the claim.
\end{proof}

A similar argument as above shows the following:
\begin{lemma}
	\label{lem:LocalisationsGivePushout}
	Let $ \I{C} $ and $ \I{D} $ be presentable $ \BB $-categories and let $ \I{W} \into \I{C} $ and $ \I{S} \into \I{D}  $ be small subcategories.
	Then the commutative square
	\[\begin{tikzcd}
		{\I{C} \otimes \I{D}} & { \I{C} \otimes \ILoc_{\I{S}}(\I{D}}) \\
		{\ILoc_{\I{W}}(\I{C}) \otimes  \I{D}} & {\ILoc_{\I{W}}(\I{C}) \otimes \ILoc_{\I{S}}(\I{D})}
		\arrow[from=1-1, to=1-2]
		\arrow[from=1-2, to=2-2]
		\arrow[from=1-1, to=2-1]
		\arrow[from=2-1, to=2-2]
	\end{tikzcd}\]
	is a pushout in $\ILPr_{\BB}$.\qed
\end{lemma}

\begin{proof}[Proof of Proposition~\ref{prop:coproductsBTopoi}]
    To simplify notation, we shall write $ i_0 = \id\otimes\const_{\I{Y}} $ as well as $ i_1 = \const_{\I{X}}\otimes \id $.
	First, let us show the claim in the special case where $ \I{X} = \Free{\I{C}}$ and $ \I{Y} = \Free{\I{D}}$.
	In this situation, we have an equivalence $ \I{X} \otimes \I{Y} \simeq \IPSh(\I{C}^\lex\times \I{D}^\lex) $ with respect to which the functors $ i_0 $ and $ i_1 $ are given by left Kan extension along $ \id\times1_{\I{D}^\lex}\colon\I{C}^\lex \to\I{C}^\lex \times \I{D}^\lex  $ and $ 1_{\I{C}^\lex}\times\id\colon\I{D}^\lex \to \I{C}^\lex \times \I{D}^\lex $, respectively. By Lemma~\ref{lem:ProductOfLexFunctors}, the latter two functors exhibit $\I{C}^\lex\times\I{D}^\lex$ as the coproduct $\I{C}^\lex\sqcup\I{D}^\lex$ in $\ICat_{\BB}^\lex$. As the functor $(-)^\lex$ is a left adjoint and thus preserves coproducts, we end up with an equivalence $\I{X}\otimes\I{Y}\simeq \Free{\I{C}\sqcup\I{D}}$ with respect to which $i_0$ and $i_1$ correspond to the image of the inclusions $\I{C}\into\I{C}\sqcup\I{D}$ and $\I{D}\into\I{C}\sqcup\I{D}$ along the functor $\Free{-}$. The claim thus follows from Corollary~\ref{cor:ColimitsFreeTopoi}. 
	
	In the general case, we may choose left exact and accessible Bousfield localisations $L\colon  \Free{\I{C}}\to\I{X} $  and $L^\prime\colon \Free{\I{D}}\to\I{Y}$, cf.~Corollary~\ref{cor:freeGenerationTopoi}.
	By Lemma~\ref{lem:LocalisationsGivePushout} we have a pushout square
	\[\begin{tikzcd}
		{\Free{\I{C}} \otimes \Free{\I{D}}} & {\I{X} \otimes \Free{\I{D}}} \\
		{\Free{\I{C}} \otimes \I{Y}} & {\I{X} \otimes \I{Y}}
		\arrow[from=1-1, to=1-2]
		\arrow[from=1-2, to=2-2]
		\arrow[from=1-1, to=2-1]
		\arrow[from=2-1, to=2-2]
	\end{tikzcd}\]
	in $ \LPr(\BB) $.
	The upper horizontal functor is equivalent to the functor 
	\begin{equation*}
		L_* \colon \IFun((\I{D}^\lex)^\op, \Free{\I{C}})\to\IFun((\I{D}^\lex)^\op, \I{X})
	\end{equation*}
	Thus $ \I{X} \otimes \I{Y} $ is equivalent to the intersection of two accessible and left exact Bousfield localisations of $ \Free{\I{C}}\otimes\Free{\I{D}}$ and therefore by~\cite[Lemma 6.3.3.4]{htt} in particular a $ \BB $-topos.
	and therefore a left exact and accessible Bousfield localisation. By symmetry, the same holds for the left vertical functor.
	Since the square
	\[\begin{tikzcd}
		{\Free{\I{C}}} & {\I{X}} \\
		{\Free{\I{C}} \otimes \Free{\I{D}}} & {\I{X} \otimes \Free{\I{D}}}
		\arrow["i_0", from=1-1, to=2-1,swap]
		\arrow[from=1-1, to=1-2]
		\arrow["i_0"', from=1-2, to=2-2]
		\arrow[from=2-1, to=2-2]
	\end{tikzcd}\]
	commutes, it follows that $ i_0 \colon \I{X} \to \I{X} \otimes \Free{\I{D}}$ is left exact.
	Since $ i_0 \colon \I{X} \to \I{X} \otimes \I{Y} $ factors as the composite $ \I{X} \xrightarrow{i_0} \I{X} \otimes \Free{\I{D}} \to \I{X} \otimes \I{Y} $ it is therefore also left exact.
	The same argument shows that $ i_1 \colon \I{Y} \to \I{X} \otimes \I{Y} $ is left exact.
	Finally, note that $ L $ and $ L' $ induce a commutative square
	\[\begin{tikzcd}
		{\IFun^\alg(\I{X} \otimes \I{Y},\I{Z})} & {\IFun^\alg(\I{X},\I{Z}) \times \IFun^\alg(\I{Y},\I{Z})} \\
		{\IFun^\alg(\Free{\I{C}} \otimes \Free{\I{D}},\I{Z})} & {\IFun^\alg(\Free{\I{C}},\I{Z}) \times \IFun^\alg(\Free{\I{D}},\I{Z})}
		\arrow["{(i_0^*,i_1^*)}", from=1-1, to=1-2]
		\arrow[from=2-1, to=2-2, "\simeq"]
		\arrow[hook', from=1-2, to=2-2]
		\arrow[hook', from=1-1, to=2-1]
	\end{tikzcd}\]
	for any $ \BB $-topos $ \I{Z} $.
	As the lower horizontal map being an equivalence implies that $(i_0^\ast, i_1^\ast)$ is fully faithful, it thus suffices to see that this functor is also essentially surjective. Using Remark~\ref{rem:BCTopoi}, it will be enough to show that for any two algebraic morphisms $ f \colon\I{X} \to \I{Z} $ and $ g \colon  \I{Y} \to  \I{Z}  $ the induced map $ \Free{\I{C}}\otimes\Free{\I{D}} \to  \I{Z} $ factors through $ \Free{\I{C}} \otimes \Free{\I{D}}\to \I{X} \otimes \I{Y} $.
	This is a direct consequence of Lemma~\ref{lem:LocalisationOfTensorProds}.
\end{proof}

\subsection{Diaconescu's theorem}
\label{sec:Diaconescu}
In classical category theory, Diaconescu's theorem states that for any $1$-category $\CC$ and any $1$-topos $\XX$, a functor $f\colon \CC\to \XX$ is \emph{internally flat} if and only if its left Kan extension $h_! f\colon\PSh_{\Set}(\CC)\to\XX$ preserves finite limits, see for example~\cite[Theorem~B.3.2.7]{johnstone2002}. Here $f$ being internally flat precisely means that its internal unstraightening results in a \emph{filtered} internal category in $\XX$. For $\infty$-categories, a comparable result has been proved by Lurie~\cite[Proposition~6.1.5.2]{htt} in the special case where the $\infty$-category $\CC$ already admits finite limits. In the general case, Raptis and Schäppi proved Diaconescu's theorem under the assumption that the codomain $\XX$ is a hypercomplete $\infty$-topos~\cite{Raptis2022}.

The main goal of this section is to establish a general version of Diaconescu's theorem for $\BB$-topoi and therefore also a general version of Diaconescu's theorem for $\infty$-topoi, without any hypercompleteness assumptions.
To that end, let us say that a presheaf $F\colon\I{C}^{\op}\to\Univ$ on an arbitrary $\BB$-category $\I{C}$ is \emph{flat} if it is $\IFin_\BB$-flat in the sense of Definition~\ref{def:UFlat}. 
We will denote by $\IFlat(\I{C})$ the associated $\BB$-category of flat functors.
Recall from Proposition~\ref{prop:FinBRegular} that the doctrine $\IFin_\BB$ is sound. Therefore, Proposition~\ref{prop:FlatIsAccessible} implies:
\begin{proposition}
	\label{prop:Flatifiltered}
	For any $\BB$-category $\I{C}$, a functor $F\colon \I{C}^{\op}\to \Univ$ is flat if and only if the $\BB$-category $\Over{\I{C}}{F}$ is filtered.\qed
\end{proposition}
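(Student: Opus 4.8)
The plan is to deduce the statement directly from the general identification of flat presheaves with Ind-objects, once we observe that $\IFin_\BB$ satisfies the relevant hypotheses. Indeed, by Remark~\ref{rem:FinBDoctrine} the internal class $\IFin_\BB$ is a doctrine, and by Proposition~\ref{prop:FinBRegular} it is sound; so $\IFin_\BB$ is a sound doctrine, and both Proposition~\ref{prop:FlatIsAccessible} and the discussion following Proposition~\ref{prop:IndFilteredFreeCocompletion} are available for it.

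First I would apply Proposition~\ref{prop:FlatIsAccessible} with $\I{U}=\IFin_\BB$. By definition $\IFlat_{\Univ}(\I{C})=\IFlat_{\Univ}^{\IFin_\BB}(\I{C})$ and $\IInd_{\Univ}(\I{C})=\IPSh_{\Univ}^{\IFilt_{\IFin_\BB}}(\I{C})=\IInd_{\Univ}^{\IFin_\BB}(\I{C})$, so the proposition yields an equivalence $\IFlat_{\Univ}(\I{C})\simeq\IInd_{\Univ}(\I{C})$ of full subcategories of $\IPSh_{\Univ}(\I{C})$. In particular, a presheaf $F\colon\I{C}^{\op}\to\Univ$ is flat if and only if it is contained in $\IInd_{\Univ}(\I{C})$.

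Next I would unwind what membership in $\IInd_{\Univ}(\I{C})$ means. By the remark following Proposition~\ref{prop:IndFilteredFreeCocompletion} (which uses that $\IFilt_{\I{U}}$ is a colimit class, Remark~\ref{rem:FiltUColimitClass}), an object $F\colon A\to\IPSh_{\Univ}(\I{C})$ lies in $\IInd_{\Univ}^{\IFin_\BB}(\I{C})$ precisely when the slice $\Over{\BB}{A}$-category $\Over{\I{C}}{F}$ is $\pi_A^\ast\IFin_\BB$-filtered. Since $\pi_A^\ast\IFin_\BB\simeq\IFin_{\Over{\BB}{A}}$ by Remark~\ref{rem:BCFinB}, this is exactly the condition that $\Over{\I{C}}{F}$ be filtered in the sense of Definition~\ref{def:finiteBCategories}. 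Specialising to a global presheaf $F\colon\I{C}^{\op}\to\Univ$, i.e.\ taking $A\simeq 1$, gives the claim. There is no real obstacle here: the entire content sits in the cited results, and the only things to verify are that $\IFin_\BB$ is a sound doctrine (Remark~\ref{rem:FinBDoctrine} and Proposition~\ref{prop:FinBRegular}) and that it is compatible with \'etale base change (Remark~\ref{rem:BCFinB}), so that the notion of $\pi_A^\ast\IFin_\BB$-filteredness coincides with genuine filteredness of the slice $\BB$-category.
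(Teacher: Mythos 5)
Your proof is correct and follows essentially the same route as the paper, which derives this proposition directly from Proposition~\ref{prop:FlatIsAccessible} together with the soundness of $\IFin_\BB$ (Proposition~\ref{prop:FinBRegular}), with the identification of membership in $\IInd_{\Univ}(\I{C})$ with filteredness of $\Over{\I{C}}{F}$ supplied by the remark following Proposition~\ref{prop:IndFilteredFreeCocompletion}. Your spelling out of the base-change compatibility via Remark~\ref{rem:BCFinB} is a harmless elaboration of what the paper leaves implicit.
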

Diaconescu's theorem for $\BB$-topoi can now be stated as follows:
\begin{theorem}
	\label{thm:internalDiaconescu}
	Let $\I{X}$ be a $\BB$-topos with associated geometric morphism $f_\ast\colon \XX\to\BB$ and let $\I{C}$ be an arbitrary $\BB$-category. The precomposition with the Yoneda embedding induces an equivalence
	\begin{equation*}
		\IFun^\alg(\IPSh(\I{C}),\I{X})\simeq f_\ast\IFlat[\XX](f^\ast\I{C}^\op).
	\end{equation*}
\end{theorem}
Specialising to the case where $\BB\simeq \SS$, Theorem~\ref{thm:internalDiaconescu} implies:
\begin{corollary}
	\label{cor:Diaconescu}
	For any small $\infty$-category $\CC$, a functor $f\colon \CC\to \BB$ is flat if and only if its Yoneda extension $h_!f\colon\PSh_{\SS}(\CC)\to\BB$ preserves finite limits. In particular, the functor of left Kan extension along $h_{\CC}$ induces an equivalence
	\begin{equation*}
		h_!\colon\Flat_{\BB}(\CC^\op)\simeq \Fun^\alg(\PSh_{\SS}(\CC),\BB)
	\end{equation*}
	of $\infty$-categories.\qed
\end{corollary}
\begin{remark}
	Corollary~\ref{cor:Diaconescu} can be used to define morphisms of general $\infty$-sites: if $\CC$ and $\DD$ are $\infty$-sites, a functor $f\colon \CC\to \DD$ is a morphism of $\infty$-sites if the associated functor $f^\prime\colon\CC\to \Shv(\DD)$ (which is obtained by composing $f$ with the sheafified Yoneda embedding $Lh\colon \DD\to\Shv(\DD)$) is flat and if for every covering $(c_i\to c)_{i\in I}$ in $\CC$ the induced functor $\bigsqcup_i f^\prime(c_i)\to f^\prime(c)$ is a cover in $\Shv(\DD)$. Using this definition, Corollary~\ref{cor:Diaconescu} and~\cite[Lemma~6.2.3.19]{htt} imply that every morphism of $\infty$-sites $f\colon \CC\to\DD$ induces an algebraic morphism $F\colon\Shv(\CC)\to\Shv(\DD)$.
\end{remark}

The proof of Theorem~\ref{thm:internalDiaconescu} relies on the following two elementary lemmas:
\begin{lemma}
	\label{lem:GrothendieckConstructionBaseFunctoriality}
	Let $\I{X}$ be a $\BB$-topos and let $f\colon\XX\to\BB$ be the corresponding geometric morphism. Suppose that $p\colon\I{P}\to\I{C}$ is a left fibration of $\XX$-categories that is classified by a functor $g\colon \I{C}\to\Univ[\XX]$. Then the left fibration $f_\ast(p)$ of $\BB$-categories is classified by the composition $f_\ast\I{C}\xrightarrow{f_\ast(g)}\I{X}\xrightarrow{\Gamma_{\I{X}}}\Univ[\BB]$.
\end{lemma}
\begin{proof}
	Since the functor $f_\ast$ commutes with pullbacks and with powering by $\infty$-categories, the image of the universal left fibration $\Under{(\Univ[\XX])}{1}\to\Univ[\XX]$ along $f_\ast$ can be identified with $(\pi_{1_{\I{X}}})_!\colon \Under{\I{X}}{1_{\I{X}}}\to \I{X}$ and is therefore classified by $\map{\I{X}}(1_{\I{X}},-)\simeq\Gamma_{\I{X}}$. Hence the claim follows.
\end{proof}

\begin{lemma}
	\label{lem:BaseFunctorialityYonedaLemma}
	Let $\I{X}$ be a $\BB$-topos and let $f\colon \XX\to\BB$ be the corresponding geometric morphism. Then for any $\BB$-category $\I{C}$, there is a commutative square
	\begin{equation*}
		\begin{tikzcd}
			\I{C}\arrow[r, "h_{\I{C}}"] \arrow[d, "\eta"] & \IPSh(\I{C})\arrow[d, "(\const_{\I{X}})_\ast"]\\
			f_\ast f^\ast\I{C}\arrow[r, "f_\ast (h_{f^\ast\I{C}})"] & \IFun(\I{C}^\op, \I{X}).
		\end{tikzcd}
	\end{equation*}
\end{lemma}
\begin{proof}
	Transposing the Yoneda embedding $h_{f^\ast\I{C}}\colon f^\ast\I{C}\into\IPSh[\XX](f^\ast\I{C})$ across the adjunction $f^\ast\dashv f_\ast$ yields the composition
	\begin{equation*}
		\I{C}\xrightarrow{\eta} f^\ast f_\ast\I{C}\xhookrightarrow{f_\ast(h_{f^\ast\I{C}})} \IFun(\I{C}^\op, \I{X})
	\end{equation*}
	in which $\eta$ is the adjunction unit. By transposing the above map across the adjunction $\I{C}^{\op}\times -\dashv \IFun(\I{C}^\op,-)$, one ends up with the functor
	\begin{equation*}
		\I{C}^{\op}\times\I{C}\xrightarrow{\eta} f_\ast f^\ast(\I{C}^\op\times\I{C})\xrightarrow{f_\ast(\map{f^\ast\I{C}})}\I{X}.
	\end{equation*}
	On the other hand, the transpose of the composition $\I{C}\into\IPSh(\I{C})\to\IFun(\I{C}^\op, \I{X})$ yields
	\begin{equation*}
		\I{C}^{\op}\times\I{C}\xrightarrow{\map{\I{C}}} \Univ[\BB]\xrightarrow{\const_{\I{X}}}\I{X},
	\end{equation*}
	so it suffices to show that these two functors are equivalent. By Lemma~\ref{lem:GrothendieckConstructionBaseFunctoriality} the functor $\map{f_\ast f^\ast\I{C}}$ is equivalent to the composition $\Gamma_{\I{X}}\circ f_\ast(\map{f^\ast\I{C}})\colon f_\ast\I{C}^{\op}\times f_\ast\I{C}\to \I{X}\to \Univ[\BB]$. As a consequence, the morphism of functors $\map{\I{C}}\to \map{f_\ast f^\ast\I{C}}\circ\eta$ that is induced by the action of $\eta$ on mapping $\BB$-groupoids determines a morphism
	$\map{\I{C}}\to \Gamma_{\I{X}}\circ f_\ast(\map{f^\ast\I{C}})\circ\eta$
	which in turn transposes to a map
	\begin{equation*}
		\const_{\I{X}}\circ\map{\I{C}}\to f_\ast(\map{f^\ast\I{C}})\circ\eta.
	\end{equation*}
	To show that this is an equivalence, it will be enough to show that it induces an equivalence when evaluated at $(\tau,\tau)$, where $\tau$ is the tautological object in $\I{C}$, i.e.\ the one given by the identity of $\I{C}_0$. But by construction, the resulting map is simply the transpose of $\eta\colon \I{C}_1\to f_\ast f^\ast\I{C}_1$ across the adjunction $f^\ast\dashv f_\ast$ and therefore an equivalence, as desired.
\end{proof}

\begin{proof}[{Proof of Theorem~\ref{thm:internalDiaconescu}}]
	To begin with, we note that the universal property of presheaf $\BB$-categories together with Remarks~\ref{rem:BCTopoi} and~\ref{rem:UflatLocal} implies that it suffices to show that a functor $g\colon \I{C}\to\I{X}$ transposes to a flat functor $g^\prime\colon f^\ast\I{C}\to \Univ[\XX]$ if and only if its Yoneda extension $(h_{\I{C}})_!(g)\colon \IPSh(\I{C})\to\I{X}$ preserves finite limits. Note that by Lemma~\ref{lem:BaseFunctorialityYonedaLemma} (and the fact that base change along $f_\ast$ preserves cocontinuity), we have a commutative diagram
	\begin{equation*}
		\begin{tikzcd}[column sep={10em,between origins}]
			\IPSh(\I{C})\arrow[d, "(\const_{\I{X}})_\ast"]\arrow[dr,bend left, "(h_{\I{C}})_!(g)"] & \\
			\IFun(\I{C}^\op, \I{X})\arrow[r, "f_\ast(h_{f^\ast\I{C}})_!(g^\prime)"] & \I{X}.
		\end{tikzcd}
	\end{equation*}
	Therefore, $g^\prime$ being flat immediately implies that $(h_{\I{C}})_!(g)$ is an algebraic morphism, so it suffices to consider the converse implication. Suppose therefore that the left Kan extension $(h_{\I{C}})_!(g)$ preserves finite limits. We wish to show that the functor $(h_{f^\ast\I{C}})_!(g^\prime)$ preserves finite limits as well. In light of the previous commutative diagram and the fact that $(\const_{\I{X}})_\ast$ preserves finite limits, it is clear that it preserves the final object, so we only need to consider the case of pullbacks. By Lemma~\ref{lem:YonedaExtensionPullbackPreservation}, we may reduce to pullbacks of cospans in $\IPSh[\XX](f^\ast\I{C})$ (in arbitrary context $U\in \XX$) which are contained in the essential image of the Yoneda embedding $h_{f^\ast\I{C}}$. Since any such cospan is determined by a map $U\to (f^\ast\I{C})^{\Lambda^2_0}$, it factors through the core inclusion $\tau_{f^\ast\I{C}}\colon f^\ast(\I{C}_1\times_{\I{C}_0}\I{C}_1)\to (f^\ast\I{C})^{\Lambda^2_0}$, which we may regard as the  \emph{tautological} cospan. Therefore, it is enough to show that the pullback of $\tau_{f^\ast\I{C}}$ is preserved by $(h_{f^\ast\I{C}})_!(g^\prime)$. As this diagram is in context $f^\ast(\I{C}_1\times_{\I{C}_0}\I{C}_1)$, we may make use of the adjunction $f^\ast\dashv f_\ast$ to regard $\tau_{f^\ast\I{C}}$ as a cospan in $f_\ast f^\ast\I{C}$ in context $\I{C}_1\times_{\I{C}_0}\I{C}_1$. As such, it is precisely the cospan that arises as the image of the tautological cospan $\tau_{\I{C}}$ in $\I{C}$ (i.e.\ the one given by the core inclusion $\tau_{\I{C}}\colon\I{C}_1\times_{\I{C}_0}\I{C}_1\into \I{C}^{\Lambda^2_0}$) along $\eta\colon \I{C}\to f_\ast f^\ast\I{C}$.
	By again making use of Lemma~\ref{lem:BaseFunctorialityYonedaLemma}, we thus conclude that the image of $\tau_{f^\ast\I{C}}$ along $f_\ast (h_{f^\ast\I{C}})\colon f_\ast f^\ast \I{C}\into\IFun(\I{C}^\op, \I{X})$ can be identified with the image of $\tau_{\I{C}}$ along the composition $(\const_{\I{X}})_\ast\circ h_{\I{C}}$. In particular, the cospan $f_\ast (h_{f^\ast\I{C}})(\tau_{f^\ast\I{C}})$ is contained in the image of $(\const_{\I{X}})_\ast$, hence the above commutative diagram yields the claim.
\end{proof}

In the remainder of this section we will explain how our version of Diaconescu's theorem for $\infty$-topoi (Corollary~\ref{cor:Diaconescu}) relates to that of Raptis and Sch\"appi~\cite{Raptis2022} when $\BB$ is \emph{hypercomplete}. More precisely, in~\cite[Theorem 1.1 (3)]{Raptis2022} Raptis and Sch\"appi give an explicit characterisation of flat functors $\CC\to\XX$ valued in a hypercomplete $\infty$-topos $\XX$, and a prioi it is not clear how to relate this description to our substantially less explicit characterisation of flat functors in terms of internal filteredness (Proposition~\ref{prop:Flatifiltered}). Therefore, our goal is to recover the description in~\cite[Theorem 1.1 (3)]{Raptis2022} from Proposition~\ref{prop:Flatifiltered}.
To that end, suppose that there is a left exact and accessible localisation $ L \colon \PSh(\DD) \to \BB $ for some small $ \infty $-category $ \DD $, and let $i\colon\BB\into\PSh(\DD)$ be its right adjoint.
We denote by $\Under{\CC}{f}\to\CC$ the left fibration (in $\Cat(\BB)$) that is classified by $f\colon \CC\to\Univ$. By definition, it sits inside a pullback square
\[\begin{tikzcd}
	{\Under{\CC}{f}} & {\Under{\Univ}{1}} \\
	\CC & \Univ
	\arrow[from=2-1, "f" to=2-2]
	\arrow[from=1-2, to=2-2]
	\arrow[from=1-1, to=2-1]
	\arrow[from=1-1, to=1-2]
\end{tikzcd}\]
in $ \Cat(\BB) $. If $\BB$ is hypercomplete, we deduce from Propositions~\ref{prop:Flatifiltered} and~\ref{prop:Prefilteredimpliesfiltered} that $f$ being flat is equivalent to $(\Under{\CC}{f})^\op$ being quasi-filtered.
In order to obtain a more explicit understanding of the latter condition, let us first consider the constant presheaf $ \underline{\CC}\colon \DD^\op\to\CatS $ with value $\CC$ and compute the pullback
\[\begin{tikzcd}
	{\underline{\CC}_{f'/}} & {i(\Under{\Univ}{1})} \\
	{\underline{\CC}} & i(\Univ)
	\arrow[from=2-1, "f^\prime" to=2-2]
	\arrow[from=1-2, to=2-2]
	\arrow[from=1-1, to=2-1]
	\arrow[from=1-1, to=1-2]
\end{tikzcd}\]
in $ \Cat({\PSh(\DD)}) \simeq \Fun(\DD^\op,\Cat_\infty) $.
Here $ f^{\prime}$ is the transpose of $f\colon \CC\to\Univ$ across the adjunction $L\dashv i$. Note that $L(\Under{\underline{\CC}}{f^\prime})\simeq \Under{\CC}{f}$ since $L$ is left exact.
Upon evaluating the previous pullback square at any $ d \in \DD$, we obtain a commutative rectangle
\[\begin{tikzcd}
	{\underline{\CC}_{f^\prime/}}(d) & {\BB_{Ld/}} & \BB_{L(d)\sslash L(d)} \\
	\CC & \BB & {\BB_{/Ld}}
	\arrow["f", from=2-1, to=2-2]
	\arrow["\pi_{L(d)}^\ast", from=2-2, to=2-3]
	\arrow[from=1-3, to=2-3]
	\arrow[from=1-2, to=1-3]
	\arrow[from=1-1, to=2-1]
	\arrow[from=1-1, to=1-2]
	\arrow[from=1-2, to=2-2]
\end{tikzcd}\]
where the lower composite is is equivalent to $f^\prime(d) $ and all squares are pullback squares. Here ${\BB}_{L(d)\sslash L(d)}$ denotes the $\infty$-category of pointed objects in $\Over{\BB}{L(d)}$.
It follows that we can explicitly describe $ \underline{\CC}_{f^\prime/}(d) $ as the pullback in the left square such that for any map $s\colon d \to e $ in $ \DD $ the functor $s^\ast\colon\underline{\CC}_{f^\prime/}(e) \to \underline{\CC}_{f^\prime/}(d) $ is induced by pulling back the canonical functor $ s^\ast\colon\BB_{Le/} \to \BB_{Ld/} $ along $f$. To proceed, we now need the following lemma that characterises those $\CatS$-valued presheaves on $\DD$ which yield quasi-filtered $\BB$-categories upon sheafification:
\begin{lemma}
	\label{lem:PreFiltisCheckedinPsh}
	Let $ \I{C} \in \Cat(\PSh(\DD)) $. Then $ L \I{C} $ is a quasi-filtered $ \BB $-category if and only if for any  finite $ \infty $-category $ \KK $, any $ d \in \DD $ and any map $ \beta \colon \KK \to \I{C}(d)$ there exist morphisms $(s_i\colon d_i\to d)_i$ such that $ (L s_i) \colon \bigsqcup_i L(d_i)\onto L(d)$ is a cover in $ \BB $, and there are maps $ \alpha _i \colon \KK^\triangleright \to \I{C}(d_i)  $ for every $ i $ that fit into commutative diagrams
	\[\begin{tikzcd}
		{\KK^\triangleright} & {\I{C}(d_i)} \\
		\KK & {\I{C}(d)}
		\arrow["\beta", from=2-1, to=2-2]
		\arrow["{s_i^*}", from=2-2, to=1-2]
		\arrow[from=2-1, to=1-1]
		\arrow["{\alpha_i}"', from=1-1, to=1-2,swap]
	\end{tikzcd}\]
	of $\infty$-categories.
\end{lemma}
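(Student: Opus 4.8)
The plan is to strip off the definition of quasi‑filteredness, convert it into a statement about effective epimorphisms in $\BB$ using that the sheafification $L\colon\Cat(\PSh(\DD))\to\Cat(\BB)$ is a levelwise left exact localisation, and then establish that statement by a general criterion for when $L$ of a map of presheaves is an effective epimorphism. By Definition~\ref{def:preFiltered}, $L\I{C}$ is quasi‑filtered iff for every finite $\infty$-category $\KK$ the functor $j^\ast\colon[\KK^\triangleright,L\I{C}]\to[\KK,L\I{C}]$ is essentially surjective, where $j\colon\KK\into\KK^\triangleright$ is the cone inclusion (and $\KK^\triangleright$ is again finite). Since $L\colon\PSh(\DD)\to\BB$ is a left exact localisation, the induced functor $L\colon\Cat(\PSh(\DD))\to\Cat(\BB)$ is one as well and is computed levelwise \cite[Proposition~3.2.8]{Martini2021a}; in particular it preserves finite limits, hence also powers by finite $\infty$-categories (which are, levelwise, finite limits of levels) and the core construction $(-)^\simeq$. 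One therefore obtains natural equivalences $[\JJ,L\I{C}]\simeq L[\JJ,\I{C}]$ and $(L\I{E})^\simeq\simeq L(\I{E}^\simeq)$ under which $j^\ast$ corresponds to $L$ applied to the analogous functor in $\Cat(\PSh(\DD))$. Combining this with the fact that a functor of $\BB$-categories is essentially surjective iff its core is an effective epimorphism \cite[Corollary~3.8.12]{Martini2021}, the assertion to be proved becomes: $L\I{C}$ is quasi‑filtered iff for every finite $\KK$ the map $L(p_\KK)\colon L\bigl([\KK^\triangleright,\I{C}]^\simeq\bigr)\to L\bigl([\KK,\I{C}]^\simeq\bigr)$ is an effective epimorphism in $\BB$, where $p_\KK=(j^\ast)^\simeq$.

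The heart of the argument is then the following general claim, which I would prove separately: for any morphism $p\colon P\to Q$ in $\PSh(\DD)$, the sheafification $Lp$ is an effective epimorphism in $\BB$ if and only if for every $d\in\DD$ and every $\beta\in Q(d)$ there is a family $(s_i\colon d_i\to d)_i$ in $\DD$ such that $\bigsqcup_iL(d_i)\onto L(d)$ is an effective epimorphism and $s_i^\ast\beta$ lies in the image of $P(d_i)\to Q(d_i)$ for each $i$. To prove this, factor $p=m\circ e$ in $\PSh(\DD)$ with $e$ an effective epimorphism onto $I$ and $m\colon I\into Q$ a monomorphism; as $L$ is a left exact left adjoint it preserves this factorisation, so $Lp$ is an effective epimorphism iff $Lm$ is an equivalence. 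For $\beta\colon y(d)\to Q$, left exactness of $L$ gives $L(\beta^\ast I)\simeq LI\times_{LQ}L(d)$, and $\beta^\ast I\into y(d)$ is the sieve consisting of those $s\colon d'\to d$ with $s^\ast\beta\in I(d')=\operatorname{im}(P(d')\to Q(d'))$; writing this sieve as the colimit of the $y(d')$ over its elements, the canonical map $\bigsqcup_{s}L(d')\to L(d)$ factors as an effective epimorphism onto $L(\beta^\ast I)$ followed by the map $L(\beta^\ast I)\to L(d)$, so $L(\beta^\ast I)\xrightarrow{\ \simeq\ }L(d)$ holds iff that canonical map is an effective epimorphism, which is exactly the local condition for this $\beta$. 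Conversely, by universality of colimits in $\PSh(\DD)$ one has $Q\simeq\colim_\beta y(d_\beta)$ (colimit over the category of elements of $Q$), $I\simeq\colim_\beta\beta^\ast I$ and $m\simeq\colim_\beta(\beta^\ast I\into y(d_\beta))$, so if $L(\beta^\ast I)\xrightarrow{\ \simeq\ }L(d_\beta)$ for all $\beta$ then $Lm$ is a colimit of equivalences, hence an equivalence.

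Applying the claim to $p=p_\KK$ with $P=[\KK^\triangleright,\I{C}]^\simeq$, $Q=[\KK,\I{C}]^\simeq$ and using that $[\JJ,\I{C}]^\simeq(d)\simeq\Fun(\JJ,\I{C}(d))^\simeq$ for a representable $y(d)$ and any $\infty$-category $\JJ$, an element $\beta\in Q(d)$ is a functor $\KK\to\I{C}(d)$, and $s^\ast\beta\in\operatorname{im}(P(d')\to Q(d'))$ says precisely that $s^\ast\beta$ extends along $j$ to a functor $\alpha\colon\KK^\triangleright\to\I{C}(d')$, i.e.\ that the displayed square commutes; together with the reduction of the first paragraph this is exactly the statement of the lemma. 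The step requiring the most care is the passage through the image factorisation: the ``local lifts'' must be lifts of honest presheaf sections rather than merely of their images under $L$, which is why the factorisation of $p$ has to be performed inside $\PSh(\DD)$ and not inside $\BB$. No properties of $L$ beyond being a left exact accessible localisation enter; in particular a possible cotopological component of $L$ is harmless.
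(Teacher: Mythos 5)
Your argument is correct, and it overlaps with the paper's proof in one half while genuinely repackaging the other. Both you and the paper hinge on the same key identification $L[\KK,\I{C}]^\simeq\simeq[\KK,L\I{C}]^\simeq$ coming from left exactness of the levelwise localisation, and your "only if" direction is essentially the paper's: the paper pulls back $[\KK^\triangleright,\I{C}]^\simeq\to[\KK,\I{C}]^\simeq$ along $\beta\colon d\to[\KK,\I{C}]^\simeq$ and covers the resulting presheaf by representables, extracting the $s_i$ and $\alpha_i$ by Yoneda, whereas you do the same with the image $\beta^\ast I$ in place of the full pullback — a cosmetic difference. The real divergence is in the "if" direction: the paper deduces it directly from Proposition~\ref{prop:Localsectionsofsheafification}, i.e.\ the appendix result that local sections of a sheafification lift, locally on a cover by sheafified representables, to honest presheaf sections; you instead fold both directions into a single criterion for when $Lp$ of a presheaf map $p\colon P\to Q$ is an effective epimorphism, proved by performing the (effective epi, mono) factorisation in $\PSh(\DD)$, noting $L$ preserves it, and then showing $Lm$ is invertible by writing $Q$ as the colimit of representables over its category of elements and using universality of colimits to express $m$ as a colimit of the sieve inclusions $\beta^\ast I\into y(d)$, each of which becomes an equivalence by hypothesis. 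This buys you a self-contained, reusable local-surjectivity criterion that bypasses the appendix entirely (your correct insistence that the image factorisation be taken in $\PSh(\DD)$, not $\BB$, is exactly the point where a sloppier version would fail); what the paper's route buys is brevity, since Proposition~\ref{prop:Localsectionsofsheafification} is developed anyway for the locally constant sheaf results, and a slightly lighter reduction (it never needs the descent/colimit-of-equivalences step). One small caveat: your identification of $[\KK,L\I{C}]$ with $L[\KK,\I{C}]$ for finite $\KK$ is asserted at about the same level of detail as in the paper (finite $\infty$-categories are built from $\varnothing$, $\Delta^0$, $\Delta^1$ by pushouts, so the relevant cores are finite limits of the levels $\I{C}_0$, $\I{C}_1$), and the citation you attach to "levelwise localisation" is not the right one, but neither affects correctness.
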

\begin{proof}
	The if part of the statement is a direct consequence of Proposition~\ref{prop:Localsectionsofsheafification}.
	For the converse we note that for any finite $ \infty $-category $ \KK $ the canonical map
	$ L\IFun[\PSh(\DD)](\KK, \I{C})^\simeq \to \IFun(\KK, L\I{C})^\simeq $ is an equivalence. 
	Now for some $ \beta \colon \KK \to \I{C} (d)$ corresponding via Yoneda's lemma to a morphism $ d \to \IFun(\KK, \I{C})^\simeq $ this shows that the projection map $ \pr_1 \colon d \times_{\IFun[\PSh(\DD)](\KK, \I{C})^\simeq} \IFun[\PSh(\DD)](\KK^\triangleright, \I{C})^\simeq \to d $ becomes a cover after applying $ L $. 
	We now pick a cover $ (t_i) \colon \bigsqcup_i d_i \to d \times_{\IFun[\PSh(\DD)](\KK, \I{C})^\simeq} \IFun[\PSh(\DD)](\KK^\triangleright, \I{C})^\simeq  $ in $ \PSh(\DD) $ by representables.
	Then the $ s_i = \pr_1 \circ t_i $ yield a cover after applying $ L $, and by Yoneda's lemma every $ s_i $ gives a commutative square as in the claim.
\end{proof}
By combining Lemma~\ref{lem:PreFiltisCheckedinPsh} with the discussion preceding it, we recover the following characterisation of flat functors in the hypercomplete case:

\begin{proposition}[{\cite[Definition 3.1 and Theorem 3.5]{Raptis2022}}]
	\label{prop:flatnessHypercompleteExplicitly}
	Suppose that $\BB$ is hypercomplete, and let $f\colon \CC\to\BB$ be a functor. Then $f$ is flat if and only if for every $d\in \DD$, every functor $\alpha\colon \KK\to\CC$ (where $\KK$ is a finite $\infty$-category) and every map $\overline{\beta}\colon\KK^\triangleleft\to \BB$ with cone point $L(d)$ such that $f\alpha\simeq\overline{\beta}\vert_{\KK}$, there are maps $(s_i\colon d_i\to d)_i$ in $\DD$ such that
	\begin{enumerate}
		\item $L(s_i)\colon\bigsqcup_i L(d_i)\onto d$ is a cover in $\BB$;
		\item for each $i$ there is a cocone $\overline{\alpha}_i\colon \KK^\triangleleft\to \CC$ extending $\alpha$, together with a morphism of cones $h\colon \Delta^1\diamond \KK\to \BB$  from the cocone $ \overline{\beta} \circ s_i$ (which is given by composing the cone point of $\beta$ with $s_i$) to $f \circ \overline{\alpha}_i$.
		\qed
	\end{enumerate}
\end{proposition}

\subsection{\'Etale $\BB$-topoi}
\label{sec:EtaleTopoi}
To prepare our discussion, note that Corollary~\ref{cor:UMPPoints} implies that the functor $(-)^{\disc}=\Univ^{(-)}\colon \Univ\into\ICat_{\BB}\to \IRTop_{\BB}$ from Definition~\ref{def:discreteBTopoi} is cocontinuous. Moreover, as this functor carries the final object $1_{\Univ}$ to $\Univ$ itself, the universal property of $\Univ$ implies that we have a functorial equivalence $(-)^{\disc}\simeq \Over{\Univ}{-}$. In particular, the functor $\Over{\Univ}{-}$ takes values in $\IRTop_{\BB}$ too. We may therefore define:
By Theorem~\ref{thm:BTopoiRelativeTopoi}, geometric morphisms $f_\ast\colon \XX\to\BB$ are in correspondence with $\BB$-topoi $f_\ast(\Univ[\XX])$. In this section, we study those $\BB$-topoi that correspond to \emph{\'etale} geometric morphisms.
\begin{definition}
	\label{def:etaleBTopos}
	A $\BB$-topos $\I{X}$ is \emph{\'etale} if there is an equivalence $\I{X}\simeq\Over{\Univ}{\I{G}}$ for some $\BB$-groupoid $\I{G}$.
\end{definition}

In~\cite[Proposition~6.3.5.5]{htt}, Lurie proved a universal property for \'etale geometric morphisms of $\infty$-topoi. In light of Theorem~\ref{thm:BTopoiRelativeTopoi}, such \'etale geometric morphisms precisely correspond to \'etale $\BB$-topoi. The main goal of ths section is to discuss how Lurie's result can also be deduced from Diaconescu's theorem. To that end,
note that if $\I{X}$ is a $\BB$-topos with associated geometric morphism $f_\ast\colon \XX\to\BB$ and if $\I{G}$ is a $\BB$-groupoid, the fact that we may identify $\I{X}\simeq f_\ast\Univ[\XX]$ implies that precomposition with the Yoneda embedding $h_{\I{G}}\colon \I{G}\into \IFun(\I{G}, \Univ)\simeq\Over{\Univ}{\I{G}}$ induces a map $\IFun(\Over{\Univ}{\I{G}},\I{X})\to f_\ast\IFun[\XX](f^\ast\I{G},\Univ[\XX])\simeq \Over{\I{X}}{\const_{\I{X}}\I{G}}$.
The universal property of \'etale $\BB$-topoi can now be formulated as follows:
\begin{proposition}
	\label{prop:UMPEtale}
	Let $\I{G}$ be a $\BB$-groupoid and let $\I{X}$ be a $\BB$-topos. Precomposition with the Yoneda embedding $h_{\I{G}}$ induces a fully faithful functor $h_{\I{G}}^\ast\colon\IFun^\alg(\Over{\Univ}{\I{G}},\I{X})\into\Over{\I{X}}{\const_{\I{X}}\I{G}}$ that fits into a pullback square
	\begin{equation*}
		\begin{tikzcd}
			\IFun^\alg(\Over{\Univ}{\I{G}},\I{X})\arrow[r, hookrightarrow, "h_{\I{G}}^\ast"] \arrow[d] & \Over{\I{X}}{\const_{\I{X}}\I{G}}\arrow[d, "(\pi_{\const_{\I{X}}\I{G}})_!"]\\
			1\arrow[r, hookrightarrow, "1_{\I{X}}"] & \I{X}.
		\end{tikzcd}
	\end{equation*}
	In particular, there is a canonical equivalence $\IFun^\alg(\Over{\Univ}{\I{G}},\I{X})\simeq\map{\I{X}}(1_{\I{X}}, \const_{\I{X}}\I{G})$.
\end{proposition}
The proof of Proposition~\ref{prop:UMPEtale} requires the following lemma:
\begin{lemma}
	\label{lem:YonedaEmbeddingGroupoid}
	For any $\BB$-groupoid $\I{G}$, the full embedding $\I{G}\into\Over{\Univ}{\I{G}}$ that is obtained by combining the Yoneda embedding $h_{\I{G}}$ the equivalence $\IFun(\I{G},\Univ)\simeq\Over{\Univ}{G}$ fits into a pullback square
	\begin{equation*}
		\begin{tikzcd}
			\I{G}\arrow[r, hookrightarrow] \arrow[d] & \Over{\Univ}{\I{G}}\arrow[d, "(\pi_{\I{G}})_!"]\\
			1\arrow[r, hookrightarrow, "1_{\Univ}"] & \Univ.
		\end{tikzcd}
	\end{equation*}
\end{lemma}
\begin{proof}
	Since we have a commutative diagram
	\begin{equation*}
		\begin{tikzcd}
			\Over{\Univ}{\I{G}}\arrow[r, "\simeq"]\arrow[dr, "(\pi_{\I{G}})_!"'] & \IFun(\I{G}, \Univ)\arrow[d, "\colim_{\I{G}}"]\\
			&\Univ,
		\end{tikzcd}
	\end{equation*}
	the claim follows once we show that a copresheaf $F\colon \I{G}\to\Univ$ is representable if and only if $\colim_{\I{G}} F\simeq 1_{\Univ}$. But $F$ is representable if and only if $\Over{\I{G}}{F}$ admits an initial object, and since the latter is a $\BB$-groupoid, this is in turn equivalent to $\Over{\I{G}}{F}\simeq 1$.  Since by~\cite[Proposition~4.4.1]{MWColimits} we have $\Over{\I{G}}{F}\simeq\colim_{\I{G}} F$, the claim follows.
\end{proof}

\begin{proof}[{Proof of Proposition~\ref{prop:UMPEtale}}]
	Let $f_\ast\colon\XX\to\BB$ be the geometric morphism that corresponds to the $\BB$-topos $\XX$.
	Since for every $U\in \XX$ an $\Over{\XX}{U}$-groupoid is filtered if and only if it is final (see Remark~\ref{rem:filteredContractible}), the Yoneda embedding $h_{f^\ast\I{G}}\colon f^\ast\I{G}\into\IFun[\XX](f^\ast\I{G},\Univ[\XX])$ induces an equivalence $f^\ast\I{G}\simeq\IFlat[\XX](f^\ast\I{G})$.
	By combining this observation with Theorem~\ref{thm:internalDiaconescu}, we thus find that precomposition with the Yoneda embedding $\I{G}\into\IFun(\I{G}, \Univ)$ yields an equivalence
	\begin{equation*}
		\IFun^\alg(\Over{\Univ}{\I{G}},\I{X})\simeq f_\ast(f^\ast\I{G}).
	\end{equation*}
	Hence the claim follows from Lemma~\ref{lem:YonedaEmbeddingGroupoid}.
\end{proof}

\begin{corollary}
	\label{cor:embeddingUniverseBTopoi}
	The functor $\Over{\Univ}{-}\colon \Univ\to \RPr_{\BB}$ factors through a cocontinuous and fully faithful embedding $\Over{\Univ}{-}\colon \Univ\into \IRTop_\BB$ whose essential image is spanned by the \'etale $\BB$-topoi.
\end{corollary}
\begin{proof}
	It is clear that this functor takes values in $\IRTop_{\BB}$, and by combining the descent property of $\Univ$ with Proposition~\ref{prop:LTopLimits}, this functor must be cocontinuous.
    It therefore suffices to show that it is fully faithful.
    As we have seen above, we may identify $\Over{\Univ}{-}$ with the restriction of the functor $(-)^{\disc}\colon \ICat_{\BB}\to \IRTop_{\BB}$ from \S~\ref{sec:CatEnrichementTopoi} along the inclusion $\Univ\into\ICat_{\BB}$. Using Corollary~\ref{cor:UMPPoints}, the claim thus follows once we show that for every $\BB$-groupoid $\I{G}$ the (partial) adjunction unit $\I{G}\to\IPt_{\BB}(\I{G}^{\disc})$ is an equivalence. By construction, this map is obtained by the transpose of the evaluation map $\ev\colon \I{G}\times\IFun(\I{G}, \Univ)\to\Univ$, which by Yoneda's lemma is precisely the inverse of the equivalence from Proposition~\ref{prop:UMPEtale}. This finishes the proof.
\end{proof}

\begin{remark}
	\label{rem:selfIndexingLex}
	The functor $\Over{\Univ}{-}\colon\Univ\into \IRTop_{\BB}$ also preserves finite limits. In fact, this is clear for the final object, and the case of binary products is an immediate consequence of the formula from Example~\ref{ex:PushoutPresheafCategory} (together with the fact that the \'etale base change of this functor along $\pi_A^\ast$ recovers the functor $\Over{(\Univ[\Over{\BB}{A}])}{-}$). This is already enough to deduce that $\Over{\Univ}{-}$ preserves pullbacks: in fact, since Corollary~\ref{cor:BTopRelTopFunctorial} and Corollary~\ref{cor:groupoidalDescentEtaleTransitionMaps} imply that $\IRTop_{\BB}$ has $\Univ$-descent, this follows from the argument in the second part of the proof of Lemma~\ref{lem:YonedaExtensionPullbackPreservation}.
\end{remark}

\subsection{Subterminal \texorpdfstring{$\BB$}{B}-topoi}
\label{sec:sheafification}
The goal of this section is to study \emph{subterminal} $\BB$-topoi. To begin with, observe that if $f_\ast\colon \XX\to\BB$ and $g_\ast\colon \YY\to\BB$ are geometric morphism where $f_\ast$ is fully faithful, then the formula that we derived in \S~\ref{sec:coproductTopoi} immediately implies that the geometric morphism $\XX\times_{\BB}\YY\to \YY$ (whose domain is the pullback in $\RTopS$) is fully faithful as well. Thus, we may define:
\begin{definition}
	\label{def:subterminalBTopos}
	A $\BB$-topos $\I{X}$ is said to be \emph{subterminal} if the global sections functor $\Gamma_{\I{X}}$ is fully faithful, or equivalently if the associated geometric morphism $f_\ast\colon \XX\to\BB$ is fully faithful.
\end{definition}
By Theorem~\ref{thm:BTopoiRelativeTopoi}, any subterminal $\BB$-topos $\I{X}$ determines and is determined by a left exact and accessible Bousfield localisation of $\BB$ and therefore in particular by a class of maps $S$ in $\BB$ for which $\Gamma(\I{X})\simeq\Loc_S(\BB)$. The main goal of this section is to characterise those collections of maps $S$ that arise from and give rise to a subterminal $\BB$-topos $\I{X}$ in this way, and to describe the associated endofunctor
\begin{equation*}
	\Gamma_{\I{X}}\const_{\I{X}}\colon\Univ\to\Univ
\end{equation*} 
by an explicit colimit formula in terms of $S$, akin to Lurie's sheafification formula from~\cite[\S~6.2.2]{htt}. 

We begin with the following definition:
\begin{definition}
	\label{def:plusConstruction}
	Let $d\colon \I{I}\to\Univ$ be a functor of $\BB$-categories, where $\I{I}$ is small.
	We define the \emph{$+$-construction} $(-)^{+}_{d}\colon \Univ\to\Univ$ relative to $d$ as the composition
	\begin{equation*}
		\Univ\xhookrightarrow{h_{\Univ}}\IPSh(\Univ)\xrightarrow{d^\ast} \IPSh(\I{C})\xrightarrow{\colim_{\I{C}^\op}}\Univ,
	\end{equation*}
	i.e.\ by the formula $(-)^+_{d} =\colim_{\I{I}^\op}\map{\Univ}(d(-),-)$.
\end{definition}

\begin{remark}
	\label{rem:PlusConstructionLex}
	If $\I{I}$ is \emph{cofiltered}, i.e.\ if $\I{I}^\op$ is filtered, then the  $+$-construction $(-)^+_d$ is left exact.
\end{remark}

\begin{remark}
	\label{rem:PlusConstructionUnit}
	If $\I{I}$ is cofiltered, then the diagonal functor $\diag_{\I{I}^\op}\colon\Univ\to\IPSh(\I{I})$ is fully faithful (which follows from $\I{I}$ being weakly contractible, see Remark~\ref{rem:filteredContractible}, and from the explicit formula of the colimit in $\Univ$ from~\cite[Proposition~4.4.1]{MWColimits}). Therefore, by applying the limit functor $\lim_{\I{I}^\op}\colon\IPSh(\I{I})\to\Univ$ to the adjunction unit $\id\to \diag_{\I{I}^\op}\colim_{\I{I}^\op}$, we end up with a natural map $\lim_{\I{I}^\op}\to\colim_{\I{I}^\op}$.
	Now suppose furthermore that the colimit of $d\colon \I{I}\to\Univ$ is the final object $1_{\Univ}\colon 1_{\BB}\to\Univ$. Then the composition
	\begin{equation*}
		\Univ\xhookrightarrow{h_{\Univ}}\IPSh(\Univ)\xrightarrow{d^\ast}\IPSh(\I{I})\xrightarrow{\lim_{\I{I}^\op}} \Univ
	\end{equation*}
	is equivalent to the identity: in fact, this follows from the observation that its left adjoint is given by the composition of $\diag_{\I{I}^\op}$ with the Yoneda extension of $d$ (see~\cite[Remark~7.1.4]{MWColimits}) and therefore preserves final objects. Thus, we obtain a natural map $\phi\colon\id_{\Univ}\to (-)^+_d$.
\end{remark}

To proceed, let us fix a (small) cofiltered $\BB$-category $\I{I}$ and a functor $d\colon \I{I}\to\Univ$ whose colimit is the final object. Since $\I{I}$ is small, there is a $\BB$-regular cardinal $\kappa$ such that the essential image of $d$ is contained in the full subcategory $\Univ[\BB]^\kappa\into\Univ$ determined by the local class of relatively $\kappa$-compact objects in $\BB$ (cf.~Proposition~\ref{prop:relativelyKappaCompactLocal}). We will call such a $\BB$-regular cardinal $\kappa$ \emph{adapted to $d$}. We will identify $\kappa$ with the linearly ordered set of ordinals $< \kappa$. Using transfinite induction, we may now construct a diagram $T_\bullet^d\colon \kappa\to\IFun(\Univ,\Univ)$ by setting $T_0^d= \id$, by defining the map $T_\tau\to T_{\tau + 1}$ to be the morphism $\phi\colon T_\tau^d\to (T_\tau^d)^+_d$ from Remark~\ref{rem:PlusConstructionUnit} and finally by setting $T_{\tau}^d= \colim_{\tau^\prime < \tau} T_{\tau^\prime}^d$ whenever $\tau$ is a limit ordinal.
\begin{definition}
	\label{def:sheafification}
	Let $d\colon \I{I}\to\Univ$ be a functor whose colimit is the final object and whose domain is a cofiltered small $\BB$-category.
	We define the \emph{sheafification functor} $(-)^{\sh}_d$ relative to the functor $d\colon \I{I}\to\Univ$ as the colimit $(-)^{\sh}_d = \colim_{\tau < \kappa} T_{\tau}^d$ in $\IFun(\Univ,\Univ)$, where $\kappa$ is an arbitrary $\BB$-regular cardinal that is adapted to $d$.
\end{definition}
\begin{remark}
	A priori, the sheafification functor $(-)^\sh_d$ depends on the choice of $\BB$-regular cardinal $\kappa$. However, since $d$ takes values in $\Univ^\kappa$ and therefore in $\kappa$-compact objects in $\Univ$ (see Corollary~\ref{cor:kappaCompactObjectsUniverse}), and since $\kappa$ (when viewed as a linearly ordered set) is $\kappa$-filtered, one can show that whenever $\tau \geq \kappa$ is another $\BB$-regular cardinal, the sheafification functor that is constructed with respect to $\tau$ is equivalent to the one constructed with respect to $\kappa$.
\end{remark}
\begin{remark}
	\label{rem:sheafificationLeftExact}
	In the situation of Definition~\ref{def:sheafification}, the sheafification functor $(-)^{\sh}_d$ is left exact since by Remark~\ref{rem:PlusConstructionUnit} it is a filtered colimit of left exact functors (see the argument in the proof of Lemma~\ref{lem:stabilityFiltUCocontinuousFunctors}).
\end{remark}

\begin{example}
	\label{ex:sheafificationLocalClass}
	Let $S$ be a bounded local class of morphisms in $\BB$ which is closed under finite limits in $\Fun(\Delta^1,\BB)$, and let $\iota\colon\Univ[S]\into\Univ$ be the associated inclusion. Then $\Univ[S]$ is small and closed under finite limits in $\Univ$. In particular, $\Univ[S]$ is cofiltered by Proposition~\ref{prop:FinBRegular} and contains the final object of $\Univ$, so that the sheafification functor $(-)^{\sh}_{\iota}$ is well-defined.
\end{example}

\begin{example}
	\label{ex:sheafificationInverseImageLocalClass}
	Let $f_\ast\colon \XX\to\BB$ be a geometric morphism, and let $S$ and $\iota$ be as in Example~\ref{ex:sheafificationLocalClass}. Then the functor $\const_{ f_\ast(\Univ[\XX])}\iota\colon \Univ[S]\to f_\ast(\Univ[\XX])$ transposes to a map $\iota^\prime\colon f^\ast(\Univ[S])\to\Univ[\XX]$ of $\XX$-categories. As $\const_{ f_\ast(\Univ[\XX])}\iota$ preserves the final object, its colimit is $1_{f_\ast(\Univ[\XX])}$, hence the colimit of $\iota^\prime$ is the final object as well. Moreover, the fact that $\Univ[S]$ is a cofiltered $\BB$-category implies that $f^\ast\Univ[\XX]$ is a cofiltered $\XX$-category: in fact, the colimit functor $\colim_{f^\ast(\Univ[S])^\op}\colon \IPSh[\XX](f^\ast(\Univ[S]))\to\Univ[\XX]$ preserves finite limits if and only if the underlying functor of $\infty$-categories  $\PSh_{\XX}(f^\ast(\Univ[S]))\to\XX$ preserves finite limits, and as the latter can be identified with the global sections of
	\begin{equation*}
		{\colim}_{\Univ[S]^\op}\colon\IFun(\Univ[S]^\op, f_\ast(\Univ[\XX]))\to f_\ast(\Univ[\XX]),
	\end{equation*}
	the claim follows from the fact that filtered colimits commute with finite limits in every $\BB$-topos (which one can see by reducing to the case of a presheaf $\BB$-topos where it readily follows from the definitions). Thus, we are in the situation of Definition~\ref{def:sheafification}, so that $(-)^{\sh}_{\iota^\prime}$ is well-defined.
\end{example}

\begin{construction}
	\label{constr:subcatFromLocalClass}
	Suppose that $S$ is a bounded local class of maps in $\BB$. Since $S$ is bounded, there is a $\BB$-regular cardinal $\kappa$ that is adapted to $\iota\colon \Univ[S]\into\Univ$. Let $S^\kappa\subset S$ be the class of maps in $S$ between $\kappa$-compact objects. We let $E\into\Univ_1$ be the subobject that is spanned by the maps $f\colon P\to Q$ in $\Over{\BB}{A}$ (for arbitrary $A\in\BB$) for which $(\pi_A)_!(f)$ is contained in $S$ and the two maps $P\to A$ and $Q\to A$ are relatively $\kappa$-compact. Since both $S$ and the class of relatively $\kappa$-compact maps are local, a map $f$ in $\Univ$ in context $A$ is contained in $E$ if and only if $(\pi_A)_!(f)\in S$ and both $P\to A$ and $Q\to A$ are relatively $\kappa$-compact. In particular, $E$ is small. We define $\I{W}\into\Univ$ as the subcategory that is generated by $E$ in the sense of~\cite[Appendix~B]{MWColimits}. Note that as $E$ is small, the subcategory $\I{W}$ is small as well.
\end{construction}

We can now state the first main result of this section:
\begin{proposition}
	\label{prop:localClassSubtoposSheafification}
	Let $S$ be a bounded local class of morphisms in $\BB$ such that $S$ is closed under finite limits in $\Fun(\Delta^1,\BB)$. Let $\I{W}\into\Univ$ be as in Construction~\ref{constr:subcatFromLocalClass}. Then $\I{X}=\ILoc_{\I{W}}(\Univ)$ is a subterminal $\BB$-topos with the property that $\Gamma(\I{X})\simeq\Loc_S(\BB)$. Moreover, the adjunction unit $\eta\colon \id \to \Gamma_{\I{X}}\const_{\I{X}}$ can be identified with the map $\id\to (-)^{\sh}_\iota$, where $\iota\colon\Univ[S]\into\Univ$ is the inclusion. 
\end{proposition}

\begin{lemma}
	\label{lem:subcatFromLocalClassLocalisation}
	Let $S$ be a bounded local class of morphisms in $\BB$. Let $\I{W}\into\Univ$ be as in Construction~\ref{constr:subcatFromLocalClass}. Then there is an equivalence
	\begin{equation*}
		\Gamma(\ILoc_{\I{W}}(\Univ))\simeq\Loc_{S}(\BB)
	\end{equation*}
	of full subcategories in $\BB$.
\end{lemma}
\begin{proof}
	By Corollary~\ref{cor:presentableCategoryBousfieldLocalisation}, the inclusion $i\colon \ILoc_{\I{W}}(\Univ)\into\Univ$ admits a left adjoint $L$ that exhibits $\ILoc_{\I{W}}(\Univ)$ as an accessible Bousfield localisation of $\Univ$.
	Note that every map in $S$ can be written as a colimit of maps in $S^\kappa$, which implies that every map in $S$ is inverted by $L$. Consequently, we have an inclusion $\Gamma(\ILoc_{\I{W}}(\Univ))\into\Loc_S(\BB)$, so that the claim follows once we verify that every $S$-local object $G\in\BB$ is $\I{W}$-local. This amounts to showing that for every $A\in\BB$ and
	and every map $s\colon P\to Q$ in $\Over{\BB}{A}$ for which $(\pi_A)_!(f)\in S$ and both $P\to A$ and $Q\to A$ are relatively $\kappa$-compact, the map
	\begin{equation*}
		s^\ast\colon \map{\Univ}(Q, \pi_A^\ast G)\to \map{\Univ}(P, \pi_A^\ast G)
	\end{equation*}
	is an equivalence (cf.~Remark~\ref{rem:LocalObjectsGenerators}). By~\cite[Proposition~4.4.11]{MWColimits}, we may identify this morphism with the map
	\begin{equation*}
		\Hom_{\Over{\BB}{A}}(Q, \pi_A^\ast G)\to \Hom_{\Over{\BB}{A}}(P, \pi_A^\ast G).
	\end{equation*}
	By evaluating the latter at any object $B\to A$ in $\Over{\BB}{A}$, we recover the morphism
	\begin{equation*}
		\map{\BB}(Q\times_{A}B, G)\to\map{\BB}(P\times_A B, G),
	\end{equation*}
	which is indeed an equivalence as the maps in $S$ are closed under base change. Hence $G$ is $\I{W}$-local, as claimed.
\end{proof}
Before we can prove Proposition~\ref{prop:localClassSubtoposSheafification}, we first need to make a few remarks on the \emph{internal hom} of a $\BB$-topos $\I{X}$. Recall from Proposition~\ref{prop:universalityOfColimitsCartesianClosed} that colimits being universal in $\I{X}$ precisely means that $\I{X}$ is \emph{cartesian closed}. We denote by
\begin{equation*}
	\Hom_{\I{X}}(-,-)\colon\I{X}^\op\times\I{X}\to\I{X}
\end{equation*}
the internal hom of $\I{X}$ that results from this observation. Note that if $f_\ast\colon\XX\to\BB$ is the geometric morphism associated with $\I{X}$, we deduce from combining~\cite[Proposition~4.4.10]{MWColimits} with~\cite[Corollary~3.1.9]{MWColimits} and~\cite[Lemma~5.2.1]{MCocartesian} that $\Hom_{\I{X}}(-,-)$ is explicitly given by the image of the bifunctor of $\XX$-categories
\begin{equation*}
	\map{\Univ[\XX]}(-,-)\colon\Univ[\XX]^\op\times\Univ[\XX]\to\Univ[\XX]
\end{equation*} 
along $f_\ast$. 
\begin{remark}
	\label{rem:internalHomGlobalSections}
	If $\I{X}$ is a $\BB$-topos, then the composition $\Gamma_{\I{X}}\circ\Hom_{\I{X}}(-,-)$ recovers the mapping bifunctor $\map{\I{X}}(-,-)$. In fact, as Remark~\ref{rem:GlobalSectionsExplicitly} implies that $\Gamma_{\I{X}}$ is corepresented by $1_{\I{X}}$, we deduce that there is a pullback square
	\begin{equation*}
		\begin{tikzcd}
			\Under{\I{X}}{1_{\I{X}}}\arrow[r]\arrow[d] & \Under{(\Univ[\BB])}{1_{\Univ[\BB]}}\arrow[d]\\
			\I{X}\arrow[r, "\Gamma_{\I{X}}"] & \Univ[\BB].
		\end{tikzcd}
	\end{equation*}
	On the other hand,~\cite[Lemma~5.2.1]{MCocartesian} implies that there also is a pullback square
	\begin{equation*}
		\begin{tikzcd}[column sep=large]
			\ITw(\I{X})\arrow[r]\arrow[d, "p_{\I{X}}"] & \Under{\I{X}}{1_{\I{X}}}\arrow[d]\\
			\I{X}^\op\times\I{X}\arrow[r, "{\Hom_{\I{X}}^{\BB}(-,-)}"] & \I{X}.
		\end{tikzcd}
	\end{equation*}
	By pasting these two pullback squares together, the claim follows.
\end{remark}

\begin{proof}[{Proof of Proposition~\ref{prop:localClassSubtoposSheafification}}]
	Let $\kappa$ and $\I{W}\into\Univ$ be as in Construction~\ref{constr:subcatFromLocalClass}, and let us denote by
	\begin{equation*}
		(L\dashv i)\colon \Univ\leftrightarrows\ILoc_{\I{W}}(\Univ)
	\end{equation*}
	the associated Bousfield localisation provided by Corollary~\ref{cor:presentableCategoryBousfieldLocalisation}. In light of Lemma~\ref{lem:subcatFromLocalClassLocalisation}, we only need to show that $\ILoc_{\I{W}}(\Univ)$ is a subterminal $\BB$-topos and to identify the adjunction unit $\id\to iL$ with the sheafification map $\id\to (-)^{\sh}_\iota$. In light of Theorem~\ref{thm:presentationTopoi} and Example~\ref{ex:sheafificationLocalClass}, the former claim is implied by the latter one, so that the proof is finished once we identify $\id\to iL$ with $\id\to (-)^{\sh}_\iota$.
	
	We only need to show that for every object $G\colon A\to \Univ$ in arbitrary context $A\in\BB$, the object $G^{\sh}_\iota$ is $\I{W}$-local and the map $G\to G^{\sh}_\iota$ is inverted by the localisation functor $L$. Note that as $\BB$ is generated by its $\kappa$-compact objects (as $\kappa$ is assumed to be $\BB$-regular), we may assume that $A$ is $\kappa$-compact. In this case, note that $\kappa$ is also $\Over{\BB}{A}$-regular and adapted to $\pi_A^\ast(\iota)$ (by Remark~\ref{rem:BCkappaSmall}) and that we may identify the base change of $(-)^+_\iota$ along $\pi_A^\ast$ with $(-)^+_{\pi_A^\ast(\iota)}\colon \Univ[\Over{\BB}{A}]\to\Univ[\Over{\BB}{A}]$. Therefore, we may also identify the base change of $(-)^{\sh}_\iota$ along $\pi_A^\ast$ with $(-)^{\sh}_{\pi_A^\ast(\iota)}$. Together with Remark~\ref{rem:BCLocalObjects}, this implies that we may replace $\BB$ with $\Over{\BB}{A}$ and $G$ with its transpose $\hat G\colon 1_{\Over{\BB}{A}}\to \Univ[\Over{\BB}{A}]$, so that we may assume without loss of generality that $A\simeq 1_{\BB}$.
	
	We first show that the map $G\to G^{\sh}_{\iota}$ is inverted by $L$, for which it will be enough to show that the map $\phi\colon G\to G^{+}_{\iota}$ is inverted by $L$, or equivalently that the map
	\begin{equation*}
		\phi^\ast\colon\map{\Univ}(G^{+}_{\iota}, i(-))\to \map{\Univ}(G, i(-))
	\end{equation*}
	is an equivalence. Note that by the triangle identities, the map $\lim_{\Univ[S]^\op}\to \colim_{(\Univ[S])^\op}$ can be identified with the composition 
	\begin{equation*}
		\lim_{\Univ[S]^\op}\simeq \colim_{\Univ[S]^\op}\diag_{\Univ[S]^\op}\lim_{\Univ[S]^\op}\to\colim_{\Univ[S]^\op}
	\end{equation*}
	where the first map is induced by the inverse of counit of $\colim_{(\Univ[S])^\op}\dashv \diag_{\Univ[S]^\op}$ and the second map is induced by the counit of $\diag_{\Univ[S]^\op}\dashv\lim_{\Univ[S]^\op}$. Moreover, observe that as $\lim_{\Univ[S]^\op}$ is given by evaluation at the final object $1_{\Univ}\colon 1\to\Univ[S]$, this functor is cocontinuous and therefore given by the left Kan extension of its restriction along the Yoneda embedding $h_{\Univ[S]}\colon\Univ[S]\into\IPSh(\Univ[S])$. As the restriction of $\lim_{\Univ[S]^\op}$ along $h_{\Univ[S]}$ can be identified with $\map{\Univ[S]}(1_{\Univ},-)$ and the latter is equivalent to the inclusion $\iota$, it follows that the left adjoint of $\lim_{\Univ[S]^\op}$ is given by $\iota^\ast h_{\Univ}$ (see~\cite[Remark~7.1.4]{MWColimits}). Altogether, these observations imply that we may decompose $\phi^\ast$ into the chain of morphisms
	\begin{align*}
		\map{\Univ}(G^+_{\iota}, i(-)) &\simeq \map{\Univ}(\colim_{\Univ[S]^\op}\iota^\ast h_{\Univ}(G), i(-))\\
		&\simeq \map{\IPSh(\Univ[S])}(\iota^\ast h_{\Univ}(G), \diag_{\Univ[S]^\op} i(-))\\
		&\to \map{\IPSh(\Univ[S])}(\iota^\ast h_{\Univ}(G), \iota^\ast h_{\Univ} \lim_{\Univ[S]^\op}\diag_{\Univ[S]^\op} i(-))\\
		&\simeq \map{\Univ}(G, i(-))
	\end{align*}
	in which the penultimate map is induced by the adjunction unit $\id \to \iota^\ast h_{\Univ}\lim_{\Univ[S]^\op}$ and where the last equivalence follows from both $\diag_{\Univ[S]^\op}$ and $\iota^\ast h_{\Univ}$ being fully faithful functors.
	Hence, it suffices to show that the map
	\begin{equation*}
		\diag_{\Univ[S]^\op}i\to \iota^\ast h_{\Univ} \lim_{\Univ[S]^\op}\diag_{\Univ[S]^\op} i
	\end{equation*} 
	is an equivalence, i.e.\ that $\diag_{\Univ[S]^\op} i$ takes value in the essential image of $\iota^\ast h_{\Univ}$. 
	To see this, note that since the restriction of the localisation functor $L\colon\Univ\to\ILoc_{\I{W}}(\Univ)$ to $\Univ[S]$ factors through the inclusion $1_{\Univ}\colon 1\into \ILoc_{\I{W}}(\Univ)$, it follows that
	we may identify $\iota^\ast h_{\Univ}i \simeq\map{\Univ}(\iota(-), i(-))$
	with the transpose of the composition
	\begin{align*}
		\Univ[S]^\op\times\ILoc_{\I{W}}(\Univ)\xrightarrow{\pr_1} \ILoc_{\I{W}}(\Univ)\xrightarrow{\map{\ILoc_{\I{W}}}(1_{\Univ},-)} \Univ,
	\end{align*}
	which is precisely $\diag_{\Univ[S]^\op}i$, as desired.
	
	We finish the proof by showing that $G^{\sh}_{\iota}$ is $\I{W}$-local. By the same reduction steps as above, it is enough to show that for every map $s\colon P\to Q$ in $S$ between $\kappa$-compact objects,  the map $s^\ast\colon \map{\Univ}(Q,G^{\sh}_{\iota})\to \map{\Univ}(P, G^{\sh}_{\iota})$ is an equivalence. Note that in light of the adjunction $(\pi_Q)_!\dashv \pi_Q^\ast\colon \Over{\Univ}{Q}\leftrightarrows \Univ$, the map $s^\ast$ can be interpreted as the morphism $s^\ast \colon \map{\Over{\Univ}{Q}}(Q, \pi_Q^\ast G^{\sh}_{\iota})\to \map{\Over{\Univ}{Q}}(P, \pi_Q^\ast G^{\sh}_{\iota})$. By Remark~\ref{rem:internalHomGlobalSections}, we can identify $\map{\Over{\Univ}{Q}}(-,-)$ with the global sections of the internal hom $\Hom_{\Over{\Univ}{Q}}(-,-)$. Hence we may as well show that the map 
	\begin{equation*}
		s^\ast\colon \Hom_{\Over{\Univ}{Q}}(Q, \pi_Q^\ast G^{\sh}_{\iota})\to \Hom_{\Over{\Univ}{Q}}(P, \pi_Q^\ast G^{\sh}_{\iota})
	\end{equation*}
	is an equivalence. In other words, by replacing $\BB$ with $\Over{\BB}{Q}$, we can reduce to the case where $Q\simeq 1$.
	Thus, what is left to show is that $t^\ast\colon G^{\sh}_{\iota}\to \map{\Univ}(P, G^{\sh}_{\iota})$ is an equivalence for every $\kappa$-compact $P\colon 1\to\Univ[S]$, where $t\colon G\to 1_{\Univ}$ is the terminal map. Note that by Corollary~\ref{cor:kappaCompactObjectsUniverse}, the fact that $P$ is $\kappa$-compact even implies that $P$ is  $\ICat_{\BB}^\kappa$-compact when viewed as an object of $\Univ$. Hence, the map $t^\ast$ can be identified with the colimit
	\begin{equation*}
		\colim_{\tau < \kappa}t^\ast_\tau\colon \colim_{\tau <\kappa}T_{\tau}^{\iota}G\to 
		\colim_{\tau < \kappa}\map{\Univ}(P, T_\tau^{\iota} G).
	\end{equation*}
	To show that this map is an equivalence, it will be sufficient to prove that for every ordinal $\tau < \kappa$ the map $\colim_{n\in\mathbb N} t^\ast_{\tau + n}$ is one. To see the latter claim, observe that for every $H\in\BB$ we have a commutative diagram
	\begin{equation*}
		\begin{tikzcd}[column sep=tiny]
			H\arrow[r, "t^\ast"]\arrow[d] &\map{\Univ}(P, H) \arrow[d, "\alpha"]\\ 
			\colim_{\Under{(\Univ[S]^\op)}{P}} \map{\Univ}(\iota(\pi_P)_!(-), H)\arrow[r, "\simeq"]  \arrow[d, "\simeq"] &\colim_{(\Univ[S])^\op} \map{\Univ}(\iota(-), H) \arrow[d, "\beta"] \\
			\colim_{(\Univ[S])^\op} \map{\Univ}(\iota(-), H) \arrow[r, "t^\ast"] &\map{\Univ}(P, \colim_{(\Univ[S])^\op}\map{\Univ}(\iota(-), H)).
		\end{tikzcd}
	\end{equation*}
	Here the composition of the two vertical maps on the left and right can be identified by $\phi$ and $\phi_\ast$, respectively. Moreover, the equivalences in this diagram are induced by the initial map $(\pi_P)_!\colon \Over{(\Univ[S])}{P}\to\Univ[S]$, the map $\alpha$ is induced by $P\colon 1\to \Univ[S]$ and $\beta$ is given by the composition
	\begin{align*}
		\colim_{(\Univ[S])^\op} \map{\Univ}(\iota(-), H)&\to \colim_{(\Univ[S])^\op} \map{\Univ}(P, \map{\Univ}(\iota(-), H)) \\
		&\to \map{\Univ}(P, \colim_{(\Univ[S])^\op}\map{\Univ}(\iota(-), H)).
	\end{align*}
	By substituting $H= T_{\tau + n}^{\iota}G$ for any $n\in\mathbb N$, we deduce that the map $t^\ast_{\tau + n}\to t^\ast_{\tau + n + 1}$ is a retract of an equivalence and must therefore be an equivalence itself.
\end{proof}
We finish this section with the following converse of Proposition~\ref{prop:localClassSubtoposSheafification}:
\begin{proposition}
	\label{prop:SubtoposLocalClass}
	Let $\I{X}$ be a subterminal $\BB$-topos. Then there is a bounded local class $S$ that is closed under finite limits in $\Fun(\Delta^1,\BB)$ such that $\Gamma(\I{X})\simeq\Loc_S(\BB)$. Moreover, for \emph{any} such local class $S$, we obtain an equivalence $\I{X}\simeq\ILoc_{\I{W}}(\Univ)$, where $\I{W}$ is as in Construction~\ref{constr:subcatFromLocalClass}, and the adjunction unit $\eta\colon \id \to \Gamma_{\I{X}}\const_{\I{X}}$ can be identified with the map $\id\to (-)^{\sh}_\iota$, where $\iota\colon\Univ[S]\into\Univ$ is the inclusion. 
\end{proposition}
\begin{proof}
	Let us denote by $j_\ast\colon\XX\into\BB$ the geometric morphism associated with $\I{X}$.
	We begin by proving the second statement, i.e.\ suppose that $S$ is a bounded local class that is closed under finite limits in $\Fun(\Delta^1,\BB)$ such that we have $\XX\simeq\Loc_S(\BB)$. Let $\I{W}\into\Univ$ be as in Construction~\ref{constr:subcatFromLocalClass}. By Lemma~\ref{lem:subcatFromLocalClassLocalisation}, we may identify $\Gamma(\ILoc_{\I{W}}(\Univ))$ with $\XX$. As Proposition~\ref{prop:localClassSubtoposSheafification} moreover implies that $\ILoc_{\I{W}}(\Univ)$ is a subterminal $\BB$-topos, Theorem~\ref{thm:BTopoiRelativeTopoi} implies that we must necessarily have $\ILoc_{\I{W}}(\Univ)\simeq\I{X}$. Hence the same proposition gives rise to the desired identification of the adjunction unit $\eta\colon \id\to\Gamma_{\I{X}}\const_{\I{X}}$.
	
	To complete the proof, it is therefore enough to show that there always exists a bounded local class $S$ that is closed under finite limits in $\Fun(\Delta^1,\BB)$ such that $\XX\simeq\Loc_S(\BB)$. To that end, choose a $\BB$-regular cardinal $\kappa$ for which $\Gamma_{\I{X}}$ is $\IFilt_{\ICat_{\BB}^\kappa}$-cocontinuous. We let $S$ be the class of relatively $\kappa$-compact maps in $\BB$ that are inverted by $j^\ast$. Since by Proposition~\ref{prop:relativelyKappaCompactLocal} the class of relatively $\kappa$-compact maps in $\BB$ is local, we find that $S$ is local as well. Moreover, $S$ is closed under finite limits in $\Fun(\Delta^1,\BB)$ as $j^\ast$ is left exact and as $\kappa$-compact objects in $\BB$ are closed under finite limits (by choice of $\kappa$). Since $S$ is inverted by $j$, we already have an inclusion $\XX\into\Loc_S(\BB)$, so that it suffices to prove that every $S$-local object in $\BB$ is contained in $\BB$. Since $\XX$ is a $\kappa$-accessible localisation of $\BB$ (using Proposition~\ref{prop:CocompleteKappaCocontinuousInternalExternal}) and $\BB$ itself is $\kappa$-accessible, we deduce from the proof of~\cite[Proposition~5.5.4.2]{htt} (or alternatively the proof of Proposition~\ref{prop:BousfieldLocalisationLocalObjects} applied in the case $\BB=\SS$) that $\XX$ is the Bousfield localisation at the class $S^\prime$ of those maps in $\BB$ between $\kappa$-compact objects which are inverted by $j$. Since every such map must be relatively $\kappa$-compact (using that $\kappa$-compact objects are closed under finite limits in $\BB$), every such map is contained in $S$. Hence the claim follows.
\end{proof}

\begin{remark}
	\label{rem:pullbackSubtopos}
	We can use our understanding of subterminal $\BB$-topoi to obtain a quite explicit understanding of pushouts in $\LTopS$ in which one of the two maps is a Bousfield localisation. In fact, suppose that $f_\ast\colon\XX\to\BB$ and $i_\ast\colon\ZZ\into\BB$ be geometric morphisms, where $i_\ast$ is fully faithful. 
	Then $i_\ast(\Univ[\ZZ])$ is a subterminal $\BB$-topos, so that Proposition~\ref{prop:SubtoposLocalClass} implies that we can find a bounded local class $S$ that is closed under compositions and finite limits in $\Fun(\Delta^1,\BB)$ such that $\ZZ=\Loc_S(\BB)$. Let $ \overline{f^*S} $ denote the smallest local class of maps in $\XX$ that contains $f^\ast(S)$. Then we claim that the functor $j_\ast\colon \ZZ\times_{\BB}\XX\into\XX$ exhibits $\ZZ\times_{\BB}\XX$ as the Bousfield localisation of $\XX$ at $\overline{f^\ast S}$. To see this, note that Proposition~\ref{prop:coproductsBTopoi} and Lemma~\ref{lem:LocalisationOfTensorProds} imply that the morphism in $\IRTop_{\BB}$ corresponding to $j_\ast$ is given by $\ILoc_{ \I{W} \boxtimes f_\ast(\Univ[\XX])^\simeq }(\Univ \otimes f_\ast(\Univ[\XX]))\into f_\ast(\Univ[\XX]) $, where $\I{W}\into\Univ$ is the subcategory from Construction~\ref{constr:subcatFromLocalClass}. Since $ \Univ \otimes f_\ast(\Univ[\XX]) \simeq f_\ast(\Univ[\XX]) $, the left-hand side can be identified with the full subcategory of local objects with respect to
	\begin{equation*}
		\I{W}\times f_\ast(\Univ[\XX])^\core \xrightarrow{\const_{i_\ast\Univ[\XX]}(-)\times -} f_\ast(\Univ[\XX]).
	\end{equation*} 
	By the same argument as in the proof of Lemma~\ref{lem:subcatFromLocalClassLocalisation}, this means that an object $U\in \XX$ is contained in $\ZZ\times_{\BB}\XX$ if and only if it is local with respect to every map in $\XX$ of the form
	\begin{equation*}
		f^\ast(s)\times_{f^\ast A} X\colon f^\ast(P)\times_{f^\ast A} X\to f^\ast(Q)\times_{f^\ast A}X
	\end{equation*}
	where $s\colon P\to Q$ is a map in $\I{W}(A)$ and $X$ is an arbitrary object in $\Over{\XX}{f^\ast A}$. By construction of $\I{W}$, the map $(\pi_A)_!(f)$ is contained in $S$, which in turn implies that $f^\ast(s)\times_{f^\ast A} X$ is in $\overline{f^\ast S}$. Hence the claim follows.
\end{remark}

\section{Locally contractible \texorpdfstring{$\BB$}{B}-topoi}
\label{chap:locallyContractible}
An $\infty$-topos $\XX$ is said to be \emph{locally contractible} if the constant sheaf functor $\const_{\XX}\colon\SS\to\XX$ admits a left adjoint $\pi_{\XX}\colon\XX\to\SS$ which is to be thought of as the functor that carries an object $U\in\XX$ to its \emph{homotopy type} (or \emph{shape}) $\pi_{\XX}(U)$. In $1$-topos theory, the corresponding notion is that of a \emph{locally connected} $1$-topos $\EE$, in which the additinal left adjoint carries an object $U\in\EE$ to its set of connected components $\pi_{0}(E)$. In this section, we study the analogous concept for $\BB$-topoi.

We begin in \S~\ref{sec:localContractibility} by recalling the notion of a locally contractible $\BB$-topos and providing a few characterisations of this concept. In \S~\ref{sec:contractibleObjects}, we show that every locally contractible $\BB$-topos is generated by its \emph{contractible objects} in a quite strong sense. Finally, in \S~\ref{sec:smoothBC} we provide a characterisation of locally contractible $\BB$-topoi in terms of \emph{smoothness} of the associated geometric morphisms.

\subsection{Local contractibility}
\label{sec:localContractibility}
The goal of this section is to define the condition of a $\BB$-topos to be locally contractible and to derive a few explicit characterisations of this concept. 
Locally contractible geometric morphisms have been introduced and studied, mostly in the context of sheaves of topological sapces, in \cite[\S 3.2]{aizenbud2021relative} and \cite[\S 3.2]{volpe2023operations}.
We give the following equivalent definition, which is a straightforward generalisation of the notion of a locally contractible $\infty$-topos to the world of $\BB$-topoi:
\begin{definition}
	\label{def:localContractibility}
	A $\BB$-topos $\I{X}$ is \emph{locally contractible} if the unique algebraic morphism $\const_{\I{X}}\colon \Univ\to \I{X}$ admits a left adjoint $\pi_{\I{X}}\colon \I{X}\to\Univ$.
	We call a geometric morphism $f_* \colon \XX \to \BB$ \emph{locally contractible} if $f_*(\Univ[\XX])$ is a locally contractible $\BB$-topos, in which case we denote by $f_!$ the additional left adjoint of $f^\ast$ (i.e.\ the functor $\Gamma(\pi_{f_\ast(\Univ[\XX])})$).
\end{definition}
\begin{remark}
	\label{rem:BCLContr}
	As the property of a functor being a right adjoint is local in $\BB$ (see~\cite[Remark~3.3.6]{MWColimits}) and by making use of Remark~\ref{rem:BCTopoi}, we find that for any cover $\bigsqcup_i A_i\to 1$ in $\BB$, the $\BB$-topos $\I{X}$ is locally contractible if and only if for every $i$ the $\Over{\BB}{A_i}$-topos $\pi_{A_i}^\ast\I{X}$ is locally contractible.
\end{remark}

\begin{remark}
	\label{rem:locallyContractibleExplicit}
	Explicitly, a geometric morphism $f_\ast\colon\XX\to\BB$ is locally contractible precisely if $f^\ast\colon\BB \to \XX$ admits a left adjoint $f_!\colon\XX\to\BB$ such that for every map $s\colon B\to A$ in $\BB$ the induced map
	\begin{equation*}
		f_!(f^\ast B\times_{f^\ast A} -)\to B\times_A f_!(-) 
	\end{equation*}
	is an equivalence. In fact, this follows from the section-wise characterisation of internal adjunctions (Proposition~\ref{prop:existenceAdjointsBeckChevalley}) together with the observation that if $f^\ast$ admits a left adjoint $f_!$, one obtains an induced left adjoint $(f_!)_A$ of $\Over{f^\ast}{A}\colon \Over{\BB}{A}\to\Over{\XX}{f^\ast A}$ for every $A\in\BB$ which is simply given by the composition
	\begin{equation*}
		\Over{\XX}{f^\ast A}\xrightarrow{\Over{(f_!)}{A}}\Over{\BB}{f_! f^\ast A}\xrightarrow{\epsilon_!}\Over{\BB}{A}
	\end{equation*}
	(in which $\epsilon\colon f_! f^\ast\to \id_{\BB}$ is the adjunction counit).
\end{remark}

\begin{example}
	\label{etaleIsLocallyContractible}
	Every \'etale $\BB$-topos is locally contractible. More precisely, one can characterise the class of \'etale $\BB$-topoi as those locally contractible $\BB$-topoi $\I{X}$ for which the additinal left adjoint $\pi_{\I{X}}$ is a conservative functor. This is an immediate consequence of~\cite[Proposition~6.3.5.11]{lurie2009b}.
\end{example}
Recall from Theorem~\ref{thm:BTopoiRelativeTopoi} and Remark~\ref{rem:BToposFromRelativeToposExplicitly} that every $\BB$-topos $\I{X}$ corresponds uniquely to a geometric morphism $f_\ast\colon \XX\to\BB$ such that $\I{X}$ can be recovered by $f_\ast\Univ[\XX]$. The goal of this section is to characterise the property that $\I{X}$ is locally contractible in terms of the geometric morphism $f_\ast$. To that end, recall that a product-preserving functor $g\colon\CC\to\DD$ between cartesian closed $\infty$-categories is said to be cartesian closed if the natural map $g(\Hom(-,-))\to\Hom(g(-),g(-))$ (in which $\Hom(-,-)$ denotes the internal hom in $\CC$ and $\DD$, respectively) is an equivalence. If $\CC$ and $\DD$ are even \emph{locally} cartesian closed and $g$ preserves finite limits, one says that $g$ is locally cartesian closed if the induced functor $\Over{g}{c}\colon\Over{\CC}{c}\to\Over{\DD}{g(d)}$ is cartesian closed for every $c\in\CC$. We now obtain:
\begin{proposition}
	\label{prop:characterisationLocallyContractible}
	Let $\I{X}$ be a $\BB$-topos and let $f_\ast\colon \XX\to\BB$ be the associated $\infty$-topos over $\BB$. Then the following are equivalent:
	\begin{enumerate}
		\item $\I{X}$ is locally contractible;
		\item the unique algebraic morphism $\const_{\I{X}}\colon\Univ[\BB]\to\I{X}$ is $\Univ$-continuous;
		\item the functor $\const_{\I{X}}\colon f^\ast(\Univ[\BB])\to \Univ[\XX]$ (which is obtained by transposing the algebraic morphism $\const_{\I{X}}\colon\Univ[\BB]\to\I{X}$ across the adjunction $f^\ast\dashv f_\ast$) is fully faithful.
		\item the functor $f^\ast\colon \BB\to\XX$ is locally cartesian closed.
	\end{enumerate}
\end{proposition}

Before we can prove Proposition~\ref{prop:characterisationLocallyContractible}, we need the following lemma:
\begin{lemma}
	\label{lem:LCCFullyFaithful}
	Let $\I{X}$ be a $\BB$-topos and let $f_\ast\colon \XX\to \BB$ be the associated $\infty$-topos over $\BB$. Let $A\in \BB$ be an arbitrary object and let $P\to A$ and $Q\to A$ be two $\Over{\BB}{A}$-groupoids. Then the morphism of $\Over{\XX}{f^\ast A}$-groupoids $\map{f^\ast(\Univ[\BB])}(P,Q)\to \map{\Univ[\XX]}(\const_{\I{X}}(P), \const_{\I{X}}(Q))$ that is induced by the action of $\const_{\I{X}}$ recovers the map $f^\ast(\Hom_{\Over{\BB}{A}}(P,Q))\to \Hom_{\Over{\XX}{f^\ast A}}(f^\ast P,f^\ast Q)$.
\end{lemma}
\begin{proof}
	Using Remark~\ref{rem:BCTopoi}, we may assume without loss of generality that $A\simeq 1$. Furthermore, by transposing across the adjunction $f^\ast\dashv f_\ast$, it suffices to show that the map $\Hom_{\BB}(P,Q)\to f_\ast\Hom_{\XX}(f^\ast P, f^\ast Q)$ can be identified with the morphism of $\BB$-groupoids $\map{\Univ[\BB]}(\I{G},\I{H})\to \map{\I{X}}(\const_{\I{X}}(\I{G}),\const_{\I{X}}(\I{H}))$.  Now the latter can be identified with $\eta_\ast\colon \map{\Univ[\BB]}(\I{G},\I{H})\to\map{\Univ[\BB]}(\I{G},\Gamma_{\I{X}}\const_{\I{X}}\I{H})$ (where $\Gamma_{\I{X}}\colon \I{X}\to\Univ$ denotes the unique geometric morphism and where $\eta$ is the adjunction unit). In light of the equivalence $f_\ast \Hom(f^\ast P, f^\ast Q)\simeq \Hom(P, f_\ast f^\ast P)$, we can also identify the former map with $\eta_\ast\colon \Hom_{\BB}(P,Q)\to \Hom_{\XX}(P,f_\ast f^\ast Q)$. Therefore, the claim follows from Proposition~\ref{prop:mappingObjectsInternalUniverse}.
\end{proof}

\begin{proof}[{Proof of Proposition~\ref{prop:characterisationLocallyContractible}}]
	Since $\const_{\I{X}}$ is cocontinuous and preserves finite limits, one deduces from Proposition~\ref{prop:decompositionExistenceColimits} and the adjoint functor theorem (Proposition~\ref{prop:adjointFunctorTheorem}) that~(1) and~(2) are equivalent. In light of Lemma~\ref{lem:LCCFullyFaithful}, it is moreover clear that~(3) and~(4) are equivalent. To complete the proof, we will show that~(2) and~(4) are equivalent. To that end, given any map $p\colon P\to A$ in $\BB$, consider the commutative diagram
	\begin{equation*}
		\begin{tikzcd}
			\Over{\XX}{f^\ast A}\arrow[r, "p^\ast"]\arrow[from=d, "\Over{f^\ast}{A}"] & \Over{\XX}{f^\ast P}\arrow[r]\arrow[from=d, "\Over{f^\ast}{P}"]\arrow[r, "p_!"] & \Over{\XX}{f^\ast A}\arrow[from=d, "\Over{f^\ast}{A}"]\\
			\Over{\BB}{A}\arrow[r, "p^\ast"] & \Over{\BB}{P}\arrow[r, "p_!"] & \Over{\BB}{A}.
		\end{tikzcd}
	\end{equation*}
	Given $q\colon Q\to A$, the natural map $f^\ast\Hom_{\Over{\BB}{A}}(P, Q)\to \Hom_{\Over{\XX}{f^\ast A}}(f^\ast P, f^\ast Q)$ is precisely obtained by evaluating the (horizontal) mate of the composite square at $q$. Since the horizontal mate of the left square being an equivalence (for every such map $p$) precisely means that $\const_{\I{X}}$ is $\Univ$-continuous, we only need to show that the mate of the left square is an equivalence if and only if the mate of the composite square is one. One direction is trivial. As for the other direction, if we know that the map $\phi\colon\Over{f^\ast}{A}p_\ast\to p_\ast\Over{f^\ast}{P}$ is an equivalence for every object in the image of $p^\ast\colon \Over{\BB}{A}\to\Over{\BB}{P}$, then the entire map has to be an equivalence since every object in $\Over{\BB}{P}$ can be written as a pullback of such objects and since both domain and codomain of $\phi$ preserves finite limits.
\end{proof}

\subsection{Contractible objects}
\label{sec:contractibleObjects}
A topological space $X$ is by definition locally contractible if it admits a basis of contractible open subsets. A priori, the definition of a locally contractible $\BB$-topos does not appear to be related to this condition at all. In this section, we reconcile the two notions by showing that local contractibility of a $\BB$-topos can be characterised by the property of it being generated under colimits by its \emph{contractible objects}. We begin with the following definition:
\begin{definition}
	\label{def:contractibleObject}
	Let $\I{X}$ be a $\BB$-topos. An object $U\colon A\to \I{X}$ is said to be \emph{contractible} if
	the functor $\map{\I{X}}(U, \const_{\I{X}}(-))\colon \Univ[\Over{\BB}{A}]\to\Univ[\Over{\BB}{A}]$ is an equivalence.
	We define the full subcategory $\Contr(\I{X})\into\I{X}$ as the fibre of the functor
	\begin{equation*}
		\const_{\I{X}}^\ast h_{\I{X}^\op}^\op\colon\I{X}\into\IFun(\I{X},\Univ)^\op\to\IFun(\Univ,\Univ)^\op
	\end{equation*}
	over the identity $\id\colon\Univ\to\Univ$.
\end{definition}
\begin{remark}
	Note that as $\Univ$ is the initial $\BB$-topos, we find that the inclusion of the identity $\id_{\Univ}\colon 1\to \IFun(\Univ,\Univ)$ determines a fully faithful functor that identifies the domain with the full subcategory $\IFun^\alg(\Univ,\Univ)$. Therefore, the functor $\Contr(\I{X})\into\I{X}$ is indeed fully faithful. Moreover, as the universal property of $\Univ[\Over{\BB}{A}]$ implies that a functor $\Univ[\Over{\BB}{A}]\to\Univ[\Over{\BB}{A}]$ is an equivalence if and only if it is equivalent to the identity, we find that an object $U\colon A\to\I{X}$ is contained in $\Contr(\I{X})$ if and only if it is contractible.
\end{remark}

\begin{remark}
	\label{rem:BCContr}
	If $A\in\BB$ is an arbitrary object, we may combine Remark~\ref{rem:BCTopoi} with~\cite[Lemma~4.7.14]{MYoneda} and~\cite[Lemma~4.2.3]{MYoneda} to deduce that there is a canonical equivalence $\pi_A^\ast\Contr(\I{X})\simeq\Contr(\pi_A^\ast\I{X})$ of full subcategories in $\pi_A^\ast\I{X}$.
\end{remark}

\begin{remark}
	\label{rem:LContrContractibleObjects}
	In the situation of Definition~\ref{def:contractibleObject}, suppose that $\I{X}$ is locally contractible. Then we obtain an equivalence $\const_{\I{X}}^\ast h_{\I{X}^\op}^\op\simeq h_{\Univ^\op}^\op\pi_{\I{X}}$. Since $h_{\Univ^\op}^\op$ is fully faithful and since the identity on $\Univ$ is corepresented by $1_{\Univ}$ (see~\cite[Proposition~4.6.3]{MYoneda}), we thus find that $\Contr(\I{X})$ arises as the fibre of $\pi_{\I{X}}\colon\I{X}\to\Univ$ over $1_{\Univ}\colon 1\into\Univ$. In particular, this means that an object $U\colon A\to\I{X}$ is contractible precisely if $\pi_{\I{X}}(U)\colon A\to\Univ$ transposes to the final object in $\Univ[\Over{\BB}{A}]$.
\end{remark}

For the remainder of this section, let us fix a $\BB$-topos $\I{X}$. 
Recall from Lemma~\ref{lem:presentableCategoryUCompactLex} that we may always find a sound doctrine  $\I{U}$ such that $\I{X}$ is $\I{U}$-accessible and $\I{X}^{\cpt{\I{U}}}$ is closed under finite limits in $\I{X}$. We will denote by $\Contr^{\cpt{\I{U}}}(\I{X})\into\I{X}$ the intersection of $\Contr(\I{X})$ with $\I{X}^{\cpt{\I{U}}}$. The main goal of this section is to prove the following proposition:

\begin{proposition}
	\label{prop:contractibleObjectsFlat}
	Let $\I{X}$ be a $\BB$-topos and let $\I{U}$ be a sound doctrine such that $\I{X}$ is $\I{U}$-accessible and $\I{X}^{\cpt{\I{U}}}$ is closed under finite limits in $\I{X}$. Then the following are equivalent:
	\begin{enumerate}
		\item $\I{X}$ is locally contractible;
		\item the left Kan extension $h_!(j)\colon \IPSh(\Contr^{\cpt{\I{U}}}(\I{X}))\to\I{X}$ of $j\colon\Contr^{\cpt{\I{U}}}(\I{X})\into\I{X}$
		along the Yoneda embedding defines a left exact and accessible Bousfield localisation;
		\item $\I{X}$ is generated by $\Contr(\I{X})$ under colimits.
	\end{enumerate}
\end{proposition}

The proof of Proposition~\ref{prop:contractibleObjectsFlat} is based on the following two lemmas:
\begin{lemma}
	\label{lem:criterionFlatness}
	Let $j\colon\I{C}\into\I{X}$ be a (small) full subcategory such that the identity on $\I{X}$ is the left Kan extension of $j$ along itself. Then $j$ is flat.
\end{lemma}
\begin{proof}
	We need to show that $h_!(j)\colon\IPSh(\I{C})\to\I{X}$ preserves finite limits. By virtue of~\cite[Proposition~6.1.1]{MWColimits}, the final object $1_{\IPSh(\I{C})}\colon1\to\IPSh(\I{C})$ is given by the colimit of the Yoneda embedding $h\colon \I{C}\into\IPSh(\I{C})$, hence $h_!(j)(1_{\IPSh(\I{C})})$ is the colimit of $j$. But as the left Kan extension of $j$ along itself is by assumption the identity on $\I{X}$, the formula from~\cite[Remark~6.3.6]{MWColimits} implies that every object $U\colon 1\to\I{X}$ is the colimit of the composition $\Over{\I{C}}{U}\into\Over{\I{X}}{U}\to \I{X}$. In particular, the final object in $\I{X}$ must be the colimit of $j$ itself. Hence $h_!(j)$ preserves the final object. To complete the proof, it therefore suffices to show that $h_!(j)$ also preserves pullbacks. By Lemma~\ref{lem:YonedaExtensionPullbackPreservation} and in light of~\cite[Remark~6.3.2]{MWColimits} and~\cite[Lemma~4.7.14]{MYoneda}, it will be enough to show that if $\sigma$ is an arbitrary cospan in $\I{C}$ in context $1\in\BB$, the functor $h_!(j)$ preserves the pullback $P$ of $h(\sigma)$. In other words, we need to prove that the induced functor $h_!(j)_\ast\colon \Over{\IPSh(\I{C})}{h(\sigma)}\to\Over{\I{X}}{j(\sigma)}$ preserves the final object. Let $Q\colon 1\to\I{X}$ be the pullback of $ j(\sigma)$. We then have a commutative diagram
	\begin{equation*}
		\begin{tikzcd}
			\Over{\I{C}}{P}\arrow[d, "\simeq"]\arrow[r, hookrightarrow] & \Over{\IPSh(\I{C})}{P}\arrow[d, "\simeq"]\arrow[r] & \Over{\I{X}}{Q}\arrow[d, "\simeq"]\\
			\Over{\I{C}}{\sigma}\arrow[r, hookrightarrow] & \Over{\IPSh(\I{C})}{h(\sigma)}\arrow[r, "h_!(j)_\ast"] & \Over{\I{X}}{j(\sigma)}
		\end{tikzcd}
	\end{equation*}
	in which the upper right horizontal functor can be identified with the composition of $\Over{h_!(j)}{P}\colon\Over{\IPSh(\I{C})}{P}\to\Over{\I{X}}{h_!(j)(P)}$ with the forgetful functor $\Over{\I{X}}{h_!(j)(P)}\to\Over{\I{X}}{Q}$ along the natural map $h_!(j)(P)\to Q$. As both of these functors are cocontinuous (see Corollary~\ref{cor:adjunctionsSliceCategoryPullback} and Proposition~\ref{prop:characterisationCategoriesWithProducts}), the upper right horizontal functor must be cocontinuous as well. By combining this observation with the identification $\Over{\IPSh(\I{C})}{P}\simeq\IPSh(\Over{\I{C}}{P})$ from~\cite[Lemma~6.1.5]{MWColimits} and the universal property of presheaf $\BB$-categories, we conclude that this functor is equivalent to the Yoneda extension of the inclusion $\Over{\I{C}}{P}\into\Over{\I{X}}{Q}$. Therefore, by using the first part of the proof, it will be enough to show that the identity on $\Over{\I{X}}{j(\sigma)}$ is the left Kan extension of the inclusion $\Over{\I{C}}{\sigma}\into\Over{\I{X}}{j(\sigma)}$ along itself. By using the criterion from~\cite[Remark~6.3.6]{MWColimits} together with the fact that any slice $\BB$-category over $\Over{\I{X}}{j(\sigma)}$ can be identified with a slice $\BB$-category over $\I{X}$, this in turn follows from the assumption that the identity on $\I{X}$ is the left Kan extension of $j$ along itself.
\end{proof}

\begin{lemma}
	\label{lem:contractibleObjectsDense}
	If $\I{X}$ is locally contractible, the identity on $\I{X}$ is the left Kan extension of the inclusion $\Contr^{\cpt{\I{U}}}(\I{X})\into \I{X}$ along itself.
\end{lemma}
\begin{proof}
	Note that we have inclusions $\Contr^{\cpt{\I{U}}}(\I{X})\into\I{X}^{\cpt{\I{U}}}\into\I{X}$ in which the left Kan extension of the second inclusion along itself is the identity on $\I{X}$. Consequently, it will be enough to show that the left Kan extension of the inclusion $\Contr^{\cpt{\I{U}}}(\I{X})\into \I{X}$ along the inclusion $\Contr^{\cpt{\I{U}}}(\I{X})\into \I{X}^{\cpt{\I{U}}}$ recovers the inclusion $\I{X}^{\cpt{\I{U}}}\into\I{X}$.
	By using~\cite[Remark~6.3.6]{MWColimits} together with Remark~\ref{rem:BCContr}, this follows once we verify that for any $\I{U}$-compact object $U\colon 1\to\I{X}$, the colimit of the induced inclusion $\Over{\Contr^{\cpt{\I{U}}}(\I{X})}{U}\into\Over{\I{X}}{U}$ is the final object. Now observe that the functor $\Over{(\pi_{\I{X}})}{U}\colon \Over{\I{X}}{U}\to\Over{\Univ}{\pi_{\I{X}}(U)}$ restricts to a functor $\Over{(\pi_{\I{X}})}{U}\colon \Over{\Contr^{\cpt{\I{U}}}(\I{X})}{U}\to \pi_{\I{X}}(U)$. We claim that the right adjoint $ (\const_{\I{X}})_{\pi_{\I{X}}(U)}\colon\Over{\Univ}{\pi_{\I{X}}(U)}\to\Over{\I{X}}{U}$ (which is constructed by composing the functor $\Over{(\const_{\I{X}})}{\pi_{\I{X}}(U)}\colon\Over{\Univ}{\pi_{\I{X}}(U)}\to\Over{\I{X}}{\const_{\I{X}}\pi_{\I{X}}(U)}$ with the pullback morphism $\eta^\ast\colon\Over{\I{X}}{\const_{\I{X}}\pi_{\I{X}}(U)}\to\Over{\I{X}}{U}$ along the adjunction unit $\eta\colon U\to\const_{\I{X}}\pi_{\I{X}}(U)$, see Appendix~\ref{sec:sliceAdjunctions}) restricts to a map $\pi_{\I{X}}(U)\to \Over{\Contr^{\cpt{\I{U}}}(\I{X})}{U}$. In fact, by making use of Remark~\ref{rem:BCContr}, it will be enough to verify that if $x\colon 1\to\pi_{\I{X}}(U)$ is an arbitrary object in context $1\in\BB$, its image along $(\const_{\I{X}})_{\pi_{\I{X}}(U)}$ is $\I{U}$-compact and contractible. By construction, this object fits into a pullback square
	\begin{equation*}
		\begin{tikzcd}
			(\const_{\I{X}})_{\pi_{\I{X}}(U)}(x)\arrow[r]\arrow[d] & 1_{\I{X}}\arrow[d, "\const_{\I{X}}(x)"]\\
			U\arrow[r, "\eta"] & \const_{\I{X}}\pi_{\I{X}}(U).
		\end{tikzcd}
	\end{equation*}
	Note that both $\pi_{\I{X}}$ and $\const_{\I{X}}$ are left adjoint to $\IFilt_{\I{U}}$-cocontinuous functors and therefore preserve $\I{U}$-compact objects. In combination with our assumption that the full subcategory of $\I{U}$-compact objects in $\I{X}$ is closed under finite limits, we thus find that $(\const_{\I{X}})_{\pi_{\I{X}}(U)}(x)$ is $\I{U}$-compact too. Furthermore, note that we may regard $\eta$ as an object in $\Over{\XX}{f^\ast (\pi_{\I{X}}(U))}=\I{X}(\pi_{\I{X}}(U))$, i.e.\ as an \emph{object} in $\I{X}$ in context $\pi_{\I{X}}(U)$. As such, $\eta$ is contractible: in fact, by Remark~\ref{rem:locallyContractibleExplicit} the object $\pi_{\I{X}}(\eta)\in \Univ[\BB](\pi_{\I{X}}(U))$ is explicitly computed as the composition
	\begin{equation*}
		\pi_{\I{X}}(U)\xrightarrow{\pi_{\I{X}}(\eta)} \pi_{\I{X}}\const_{\I{X}}\pi_{\I{X}}(U)\xrightarrow{\epsilon}\pi_{\I{X}}(U)
	\end{equation*} 
	(where in the first map $\eta$ is regarded as a \emph{morphism} in $\I{X}$ in context $1\in\BB$ and where $\epsilon$ is the counit of the adjunction $\pi_{\I{X}}\dashv \const_{\I{X}}$), hence the claim follows from the triangle identities. Now viewing $\eta$ as an object in $\I{X}$ in context $\pi_{\I{X}}(U)$, the above pullback square exhibits the global object $(\const_{\I{X}})_{\pi_{\I{X}}(U)}(x)\in \I{X}(1)=\XX$ as the image of $\eta\in\I{X}(\pi_{\I{X}}(U))$ along the transition map $x^\ast\colon \I{X}(\pi_{\I{X}}(U))\to\I{X}(1)$. Therefore, $\eta$ being a contractible object implies that $(\const_{\I{X}})_{\pi_{\I{X}}(U)}(x)$ must be contractible as well. Thus we conclude that $(\const_{\I{X}})_{\pi_{\I{X}}(U)}(x)$ is contained in $\Over{\Contr^{\cpt{\I{U}}}(\I{X})}{U}$, as claimed. 
	
	So far, our arguments have shown that we have a commutative square
	\begin{equation*}
		\begin{tikzcd}
			\Over{\Contr^{\cpt{\I{U}}}(\I{X})}{U}\arrow[r, hookrightarrow] & \Over{\I{X}}{U}\\
			\pi_{\I{X}}(U)\arrow[u]\arrow[r, hookrightarrow] & \Over{\Univ}{\pi_{\I{X}}(U)}.\arrow[u, "(\const_{\I{X}})_{\pi_{\I{X}}(U)}"]
		\end{tikzcd}
	\end{equation*}
	Since the vertical maps in this diagram are right adjoints, they are in particular final. Since furthermore $(\const_{\I{X}})_{\pi_{\I{X}}(U)}$ is cocontinuous, the colimit of the upper horizontal map is the image of the colimit of the lower horizontal map along $(\const_{\I{X}})_{\pi_{\I{X}}(U)}$. To complete the proof, it is therefore enough to prove that the colimit of the lower horizontal map is the final object in $\Over{\Univ}{\pi_{\I{X}}(U)}$. But this is simply the statement that $\pi_{\I{X}}(U)$ is the colimit of the constant diagram $\pi_{\I{X}}(U)\to 1\into\Univ$ with value $1_{\Univ}$, which is clear.
\end{proof}

\begin{proof}[{Proof of Proposition~\ref{prop:contractibleObjectsFlat}}]
	Suppose first that $\I{X}$ is locally contractible. By combining Lemmas~\ref{lem:contractibleObjectsDense} and~\ref{lem:criterionFlatness}, the map $h_!(j)\colon\IPSh(\Contr^{\cpt{\I{U}}}(\I{X}))\to\I{X}$ is left exact, so it suffices to show that this functor is a Bousfield localisation. Since it is cocontinuous, it has a right adjoint $r$ (which is automatically accessible). The counit of this adjunction carries an object $U\colon A\to\I{X}$ to the canonical map from the colimit of $\Over{\pi_A^\ast\Contr^{\cpt{\I{U}}}(\I{X})}{U}\to \pi_A^\ast\I{X}$ to $U$. By again using Lemma~\ref{lem:contractibleObjectsDense}, this map is an equivalence, hence the claim follows. If~(2) is satisfied, the map $h_!(j)\colon\IPSh(\Contr^{\cpt{\I{U}}}(\I{X}))\to\I{X}$ being a (left exact and accessible) Bousfield localisation in particular implies that $\I{X}$ is generated under colimits by $\Contr^{\cpt(\I{U})}(\I{X})$, in the sense that the smallest full subcategory of $\I{X}$ that contains $\Contr^{\cpt(\I{U})}(\I{X})$ and that is closed under $\ICat_{\BB}$-colimits in $\I{X}$ must already be $\I{X}$ itself. In particular,~(3) follows. Lastly, if~(3) is satisfied, consider the commutative diagram
	\begin{equation*}
		\begin{tikzcd}
			\Contr(\I{X})\arrow[r, hookrightarrow]\arrow[d] & \I{P}\arrow[r, hookrightarrow] \arrow[d]& \I{X}\arrow[d, "\const_{\I{X}}^\ast h_{\I{X}^\op}^\op"]\\
			1\arrow[r, hookrightarrow, "1_{\Univ}"] & \Univ\arrow[r, hookrightarrow, "h_{\Univ^\op}^\op"] & \IFun(\Univ,\Univ)^\op
		\end{tikzcd}
	\end{equation*}
	in which both squares are pullbacks. Since $h_{\Univ^\op}^\op$ and $h_{\I{X}^\op}^\op$ are cocontinuous functors~\cite[Proposition~5.2.9]{MWColimits}, the inclusion $\I{P}\into\I{X}$ is closed under $\Univ$-colimits (see Lemma~\ref{lem:pullbackCocompleteRightFibration}) and must therefore be an equivalence. Hence $\const_{\I{X}}^\ast h_{\I{X}^\op}^\op$ takes values in $\Univ\into\IFun(\Univ,\Univ)^\op$, which precisely means that $\const_{\I{X}}$ has a left adjoint $\pi_{\I{X}}$. Hence $\I{X}$ is locally contractible.
\end{proof}

\subsection{Classification of smooth geometric morphisms}
\label{sec:smoothBC}
In $1$-topos theory, it is a well-known result (see~\cite[Corollary~C.3.3.16]{johnstone2002}) that locally connected geometric morphisms are precisely the \emph{smooth} maps, i.e.\ those that satisfy \emph{smooth base change}. Our main goal in this section is to prove the $\infty$-toposic analogue of this result. We begin with the following definition:
\begin{definition}
	\label{def:smoothness}
	Let $f_\ast\colon \XX\to \BB$ be a geometric morphism of $\infty$-topoi. We say that $f_\ast$ is \emph{smooth} if for every diagram
	\begin{equation*}
		\begin{tikzcd}
			\YY^\prime\arrow[r, "k^\prime_\ast"] \arrow[d, "g_\ast^\prime"] & \YY\arrow[d, "g_\ast"]\arrow[r, "k_\ast"] & \XX\arrow[d, "f_\ast"]\\
			\AA^\prime\arrow[r, "h^\prime_\ast"] & \AA\arrow[r, "h_\ast"] & \BB
		\end{tikzcd}
	\end{equation*}
	in $\RTopS$ in which both squares are pullbacks, the mate transformation $g^\ast h^\prime_\ast\to k^\prime_\ast(g^\prime)^\ast$ is an equivalence.
\end{definition}

We may now formulate the main result of this section as follows:
\begin{theorem}
	\label{thm:locallyContractibleSmoothBaseChange}
	Let $\I{X}$ be a $\BB$-topos and let $f_\ast\colon \XX\to\BB$ be the associated geometric morphism. Then $\I{X}$ is locally contractible if and only if $f_\ast$ is smooth.
\end{theorem}

The proof of Theorem~\ref{thm:locallyContractibleSmoothBaseChange} relies on a few reduction steps. Our first goal is to establish that the property of a geometric morphism to be locally contractible is stable under taking \emph{powers} by $\BB$-categories: Recall from Proposition~\ref{prop:cotensoringLTop} that the large $\BB$-category $\ILTop_{\BB}$ admits a powering bifunctor $(-)^{(-)}\colon \ICat_{\BB}^\op\times\ILTop_{\BB}\to\ILTop_{\BB}$. We now find:
\begin{lemma}
	\label{lem:TensoringPreservesLocallyContractible}
	Let $\I{C}$ be a $\BB$-category and let $\I{X}$ be a locally contractible $\BB$-topos. Then the geometric morphism $(\Gamma_{\I{X}})_\ast\colon\I{X}^{\I{C}}\to\Univ^{\I{C}}$ exhibits $\Fun_{\BB}(\I{C},\I{X})$ as a locally contractible $\Fun_{\BB}(\I{C},\Univ)$-topos.
\end{lemma}
\begin{proof}
	Since the algebraic morphism associated with $(\Gamma_{\I{X}})_\ast$ is given by $(\const_{\I{X}})_\ast$, the functor $(\pi_{\I{X}})_\ast$ defines a further left adjoint of $(\const_{\I{X}})_\ast$. Therefore, Remark~\ref{rem:locallyContractibleExplicit} implies that we only need to show that for every map $F\to G$ in $\Fun_{\BB}(\I{C},\Univ)$ and every map $H\to (\const_{\I{X}})_\ast(G)$ in $\Fun_{\BB}(\I{C},\I{X})$, the canonical morphism
	\begin{equation*}
		(\pi_{\I{X}})_\ast((\const_{\I{X}})_\ast F\times_{(\const_{\I{X}})_\ast G}H)\to F\times_{H}(\pi_{\I{X}})_\ast H
	\end{equation*}
	is an equivalence. It will be enough to show that this map becomes an equivalence after being evaluated at an arbitrary object $c\colon A\to \I{C}$ in context $A\in\BB$. In light of Remark~\ref{rem:BCLContr} and~\cite[Lemma~4.2.3]{MYoneda}, we can replace $\BB$ with $\Over{\BB}{A}$, so that we can reduce to the case $A\simeq 1$. But as pullbacks in functor $\BB$-categories are computed object-wise~\cite[Proposition~4.3.2]{MWColimits} and as evaluating the unit and counit of the adjunction $(\pi_{\I{X}})_\ast\dashv (\const_{\I{X}})_\ast$ at $c$ recovers the unit and counit of the adjunction $\pi_{\I{X}}\dashv\const_{\I{X}}$, the claim follows from the assumption that $\I{X}$ is locally contractible and Remark~\ref{rem:locallyContractibleExplicit}.
\end{proof}

Before we can prove Theorem~\ref{thm:locallyContractibleSmoothBaseChange}, we also need the following result:
\begin{lemma}
	\label{lem:constantPresheafIsSheafLocallyContractible}
	Let $\I{X}$ be a locally contractible $\BB$-topos and let $\I{U}$ be a sound doctrine such that $\I{X}$ is $\I{U}$-accessible and $\I{X}^{\cpt{\I{U}}}$ is closed under finite limits in $\I{X}$. Then the diagonal map $\diag\colon\Univ\to\IPSh(\Contr^{\cpt{\I{U}}}(\I{X}))$ takes values in $\I{X}\into\IPSh(\Contr^{\cpt{\I{U}}}(\I{X}))$.
\end{lemma}
\begin{proof}
	We need to show that if $\eta\colon \id\to iL$ is the unit of the adjunction $L\dashv i\colon \I{X}\leftrightarrows \IPSh(\Contr^{\cpt{\I{U}}}(\I{X}))$, then the induced morphism $\eta \diag\colon \diag\to iL\diag$ is an equivalence. As we may check this object-wise and by using Remarks~\ref{rem:BCLContr},~\ref{rem:BCUComp},~\ref{rem:BCInd} and~\ref{rem:BCContr} together with~\cite[Lemma~4.2.3]{MYoneda}, we only need to show that for any object $U\colon 1\to\Contr^{\cpt{\I{U}}}(\I{X})$ the map $U^\ast\eta\diag\colon U^\ast\diag\to U^\ast iL\diag$
	is an equivalence in $\Univ$. Note that we have a chain of equivalences
	\begin{equation*}
		U^\ast iL\diag\simeq \map{\I{X}}(U,\const_{\I{X}})\simeq\map{\Univ}(\pi_{\I{X}}(U), -).
	\end{equation*}
	As $\pi_{\I{X}}(U)\simeq 1_{\Univ}$, we thus find that $U^\ast iL\diag\simeq \id$. Since also $U^\ast\diag$ is equivalent to the identity and since the universal property of $\Univ$ implies that $\map{\IFun(\Univ,\Univ)}(\id,\id)\simeq 1_{\Univ}$, the claim follows.
\end{proof}

\begin{proof}[{Proof of Theorem~\ref{thm:locallyContractibleSmoothBaseChange}}]
	Suppose first that $f_\ast$ is smooth. Then $f_\ast$ in particular satisfies condition~(2) of Proposition~\ref{prop:characterisationLocallyContractible} and is therefore locally contractible. To prove the converse direction, suppose that we have two pullback squares
	\begin{equation*}
		\begin{tikzcd}
			\YY^\prime\arrow[r, "k^\prime_\ast"]\arrow[d, "g^\prime_\ast"]&\YY\arrow[r, "k_\ast"]\arrow[d, "g_\ast"] & \XX\arrow[d, "f_\ast"]\\
			\AA^\prime\arrow[r, "h^\prime_\ast"] &\AA\arrow[r, "h_\ast"] & \BB
		\end{tikzcd}
	\end{equation*}
	of $\infty$-topoi in which $f_\ast$ is locally contractible.
	By viewing $\AA$ as a $\BB$-topos and using Theorem~\ref{thm:presentationTopoi}, we may factor $h_\ast$ into a composition $\AA\into \Fun_{\BB}(\I{C}^\op,\Univ[\BB])\to \BB$. Since the commutative square
	\begin{equation*}
		\begin{tikzcd}
			\IFun(\I{C}^\op, \I{X})\arrow[r, "\lim"]\arrow[d, "(\Gamma_{\I{X}})_\ast"] & \I{X}\arrow[d, "\Gamma_{\I{X}}"]\\
			\IPSh(\I{C})\arrow[r, "\lim"] & \Univ
		\end{tikzcd}
	\end{equation*}
	is a pullback in $\IRTop_{\BB}$ (see Example~\ref{ex:PushoutPresheafCategory}) and on account of Lemma~\ref{lem:TensoringPreservesLocallyContractible}, this allows us to reduce to the case where $h_\ast$ is already an embedding. But then $k_\ast$ must be an embedding as well, so that the mate of the left square is an equivalence if and only if the mates of the right one and the composite one are equivalences. Hence, to complete the proof, it will be enough to show that if we are given any pullback square
	\begin{equation*}
		\begin{tikzcd}
			\YY\arrow[r, "k_\ast"]\arrow[d, "g_\ast"] & \XX\arrow[d, "f_\ast"]\\
			\AA\arrow[r, "h_\ast"] & \BB
		\end{tikzcd}
	\end{equation*}
	in which $f_\ast$ is locally contractible, the mate transformation $f^\ast h_\ast\to k_\ast g^\ast$ is an equivalence. By the same argument as above (and the fact that $\const_{\I{X}}$ is continuous), we can moreover still assume that $h_\ast$ and $k_\ast$ are embeddings. To proceed, we make use of Proposition~\ref{prop:contractibleObjectsFlat} to obtain a commutative diagram
	\begin{equation*}
		\begin{tikzcd}
			\YY\arrow[r, hookrightarrow, "k_\ast"]\arrow[d, hookrightarrow] & \XX\arrow[d, hookrightarrow]\\
			\Fun_{\BB}(\Contr^{\cpt{\I{U}}}(\I{X})^\op, h_\ast(\Univ[\AA]))\arrow[r, hookrightarrow] \arrow[d, "\lim"]& \Fun_{\BB}(\Contr^{\cpt{\I{U}}}(\I{X})^\op,\Univ[\BB])\arrow[d, "\lim"]\\
			\AA\arrow[r, "h_\ast", hookrightarrow] & \BB
		\end{tikzcd}
	\end{equation*}
	in which both squares are pullbacks. Since the mate of the lower square is evidently an equivalence and since Lemma~\ref{lem:constantPresheafIsSheafLocallyContractible} implies that the diagonal map $\diag\colon\BB\to \Fun_{\BB}(\Contr^{\cpt{\I{U}}}(\I{X})^\op,\Univ[\BB])$ factors through $\XX$, it will be enough to show that the diagonal map $\diag\colon\AA\to \Fun_{\BB}(\Contr^{\cpt{\I{U}}}(\I{X})^\op, h_\ast\Univ[\AA])$ factors through $\YY$ as well. Let us therefore pick an arbitrary object $A\in\AA$. By~\cite[Lemmas~6.3.3.4]{htt}, the upper square in the above diagram is a pullback square of $\infty$-categories, hence it suffices to show that the image of $\diag(A)$ in $\Fun_{\BB}(\Contr^{\cpt{\I{U}}}(\I{X})^\op,\Univ[\BB])$ is contained in $\XX$. But as the mate of the lower square is an equivalence, this latter object is equivalent to $\diag h_\ast(A)$, hence another application of Lemma~\ref{lem:constantPresheafIsSheafLocallyContractible} yields the claim.
\end{proof}

\section{Localic $\BB$-topoi}
\label{sec:localicBTopoi}
In higher topos theory, the $1$-category of \emph{locales} (with left exact left adjoints as maps) arises as a coreflective subcategory of the $\infty$-category $\LTopS$ of $\infty$-topoi. The inclusion is given by sending a locale $L$ to the $\infty$-topos $\Shv(L)$ of sheaves on $L$, and the coreflection carries an $\infty$-topos $\XX$ to the locale $\Sub(\XX)$ of subterminal (i.e.\  $(-1)$-truncated) objects in $\XX$. An $\infty$-topos $\XX$ is said to be localic if it is equivalent to $\Shv(\Sub(\XX))$.

In this section, we give a brief exposition of the analogous story in the world of $\BB$-topoi. We do not aim to provide a comprehensive study of localic $\BB$-topoi, but rather restrict our attention to those aspects of the theory that allow us to \emph{define} the notion of a localic $\BB$-topos and to provide an external characterisation of this concept in the case where $\BB$ is itself localic.

We begin in \S~\ref{sec:BPosets} and~\ref{sec:PresentableBPosets} by providing the necessary background material on $\BB$-posets. In \S~\ref{sec:BLocales} we define and characterise $\BB$-locales, and in \S~\ref{sec:SheavesOnBLocales} we construct the $\BB$-topos of sheaves on a $\BB$-locale, which we use in~\S~\ref{sec:LocalicReflection} to show that $\BB$-locales are a coreflective localisation of $\BB$-topoi. In \S~\ref{sec:BLocalesVsRelativeLocales} we discuss how localic $\BB$-topoi correspond to localic $\infty$-topoi over $\BB$ in the case where $\BB$ is itself localic. Lastly, \S~\ref{sec:compactnessLocalicTopoi} discusses some compactness conditions of $\BB$-locales and how they are inherited by the associated $\BB$-topoi of sheaves.
Finally in \S~\ref{sec:locallyProper}, we study $ \BB $-locales arising from maps of topological spaces.

\subsection{$\BB$-posets}
\label{sec:BPosets}
Recall that an $\infty$-category $\CC$ is (equivalent to) a poset precisely if if for all objects $c$ and $d$ in $\CC$ the mapping $\infty$-groupoid $\map{\CC}(c,d)$ is $(-1)$-truncated. In this section we discuss a generalisation of this concept to $\BB$-categories.

Recall that the class of $(-1)$-truncated maps in $\BB$ is precisely the collection of morphisms that are internally right orthogonal to the map $1\sqcup 1\to 1$ (in the sense of~\cite[\S~2.5]{MYoneda}). In particular, this class is local, so that we may define:
\begin{definition}
	\label{def:subterminalBGrpd}
	The subuniverse $\ISub_{\BB}\into\Univ[\BB]$ of \emph{subterminal $\BB$-groupoids} is the full subcategory of $\Univ[\BB]$ that is determined by the local class of $(-1)$-truncated morphisms in $\BB$. A $\Over{\BB}{A}$-groupoid $\I{G}$ is said to be a subterminal $\Over{\BB}{A}$-groupoid if it defines an object of $\ISub_{\BB}$ (in context $A$).
\end{definition}

\begin{remark}
	As the functor $(\pi_A)_!\colon \Over{\BB}{A}\to\BB$ creates pullbacks, a map $P\to B$ in $\Over{\BB}{A}$ defines an object $B\to \ISub_{\Over{\BB}{A}}$ if and only if the underlying map in $\BB$ defines an object $(\pi_A)_!(B)\to \ISub_{\BB}$. Thus, the equivalence $\pi_A^\ast\Univ\simeq\Univ[\Over{\BB}{A}]$ restricts to an equivalence $\pi_A^\ast\ISub_{\BB}\simeq\ISub_{\Over{\BB}{A}}$ for every $A\in\BB$.
\end{remark}

By~\cite[Example~3.4.2]{anel2020} the internal saturation of the map $1\sqcup 1\to 1$ (i.e. the class of covers) in $\BB$ is closed under base change, so that the factorisation system between these and $(-1)$-truncated maps even forms a \emph{modality}. As a consequence, one finds:
\begin{proposition}
	\label{prop:subterminalBGrpdsPresentablyEmbedded}
	The $\BB$-category $\ISub_{\BB}$ is an accessible Bousfield localisation of $\Univ[\BB]$ (in the sense of Definition~\ref{def:accessibleLocalisation}) and therefore in particular presentable.
\end{proposition}
\begin{proof}
	By Example~\ref{ex:modalityDescent} the subcategory $\ISub_{\BB}\into\Univ[\BB]$ is reflective. Since the inclusion $\ISub_{\BB}\into\Univ[\BB]$ is section-wise accessible, Corollary~\ref{cor:AccIsSectWiseAccWhenCocomp} implies that the localisation must be accessible. 
\end{proof}

\begin{definition}
	\label{def:BPoset}
	A $\BB$-category $\I{C}$ is said to be a \emph{$\BB$-poset} if the mapping bifunctor $\map{\I{C}}$ takes values in $\ISub_{\BB}$. The full subcategory of $\ICat_{\BB}$ that is spanned by the $\Over{\BB}{A}$-posets for each $A\in\BB$ is denoted by $\IPos_{\BB}$ and its underlying $\infty$-category of global sections by $\Pos(\BB)$.
\end{definition}

\begin{remark}
	\label{rem:PosetsUniversalMappingGroupoid}
	A $\BB$-category $\I{C}$ is a $\BB$-poset precisely if the map $\I{C}_1\to\I{C}_0\times\I{C}_0$ is $(-1)$-truncated. In fact, since this map exhibits $\I{C}_1$ as the mapping $\Over{\BB}{\I{C}_0\times\I{C}_0}$-groupoid between the two objects $\pr_0,\pr_1\colon\I{C}_0\times\I{C}_0\rightrightarrows\I{C}$, this is clearly a necessary condition. The fact that it is also sufficient follows from the observation that every mapping $\Over{\BB}{A}$-groupoid of $\I{C}$ is a pullback of this map.
\end{remark}
\begin{remark}
	\label{rem:localityBPosets}
	Since the class of $(-1)$-truncated maps in $\BB$ is local, we deduce from Remark~\ref{rem:PosetsUniversalMappingGroupoid} that if $(s_i)\colon\bigsqcup_i A_i\onto A$ is a cover in $\BB$, a $\Over{\BB}{A}$-category $\I{C}\colon A\to\ICat_{\BB}$ defines a $\Over{\BB}{A}$-poset if and only if the $\Over{\BB}{A_i}$-category $s_i^\ast\I{C}$ defines a $\Over{\BB}{A_i}$-poset for every $i$. In particular, \emph{every} object of $\IPos_{\BB}$ in context $A\in\BB$ encodes a $\Over{\BB}{A}$-poset, and one has a canonical equivalence $\pi_A^\ast\IPos_{\BB}\simeq\IPos_{\Over{\BB}{A}}$.
\end{remark}

\begin{remark}
	\label{rem:BPosetsSectionwise}
	Remark~\ref{rem:PosetsUniversalMappingGroupoid} and the fact that a map is $(-1)$-truncated precisely if it is so section-wise imply that a  $\BB$-category $\I{C}$ is a $\BB$-poset if and only if $\I{C}(A)$ is an poset for every $A\in\BB$. Together with Remark~\ref{rem:localityBPosets}, this implies that we obtain an equivalence $\IPos_{\BB}\simeq \Pos\otimes \Univ$ (where $\Pos$ is the $1$-category of posets).
\end{remark}

\begin{remark}
	\label{rem:BPosetsClassically}
	Recall that if $\I{C}$ is a $\BB$-category, then the map $s_0\colon \I{C}_0\to\I{C}_1$ is a monomorphism in $\BB$. Together with Remark~\ref{rem:PosetsUniversalMappingGroupoid}, this implies that if $\I{P}$ is a $\BB$-poset, then each $\I{P}_0$ is contained in the underlying $1$-topos $\Disc(\BB)\into\BB$ of $0$-truncated objects. Consequently, we may identify $\Pos(\BB)$ with the full subcategory of $\Simp{\Disc(\BB)}$ that is spanned by the complete Segal objects $\I{P}$ for which the map $\I{P}_1\to\I{P}_0\times\I{P}_0$ is a monomorphism. Hence $\Pos(\BB)$ is equivalent to the $1$-category of internal posets in the $1$-topos $\Disc(\BB)$ in the sense of~\cite[\S~B2.3]{johnstone2002}.
\end{remark}

\subsection{Presentable $\BB$-posets}
\label{sec:PresentableBPosets}
In this section we study \emph{presentable} $\BB$-posets. We begin with the following definition:

\begin{definition}
	\label{def:subterminalObjects}
	If $\I{C}$ is a $\BB$-category, we define the full subcategory $\ISub(\I{C})\into\I{C}$ of \emph{subterminal objects} as the pullback $\I{C}\times_{\IPSh(\I{C})}\iFun(\I{C}^{\op},\ISub_{\BB})$.
\end{definition}

\begin{remark}
	\label{rem:BCSubterminal}
	If $\I{C}$ is a $\BB$-category and $A\in\BB$ is an arbitrary object, then there is a canonical equivalence $\pi_A^\ast\ISub(\I{C})\simeq \ISub(\pi_A^\ast\I{C})$ of full subcategories in $\pi_A^\ast\I{C}$.
\end{remark}

\begin{proposition}
	\label{prop:subterminalObjectsPresentable}
	For every presentable $\BB$-category $\I{D}$ there is a canonical equivalence $\ISub(\I{D})\simeq\I{D}\otimes\ISub_{\BB}$ of full subcategories in $\I{D}$. In particular, $\ISub(\I{D})$ is an accessible Bousfield localisation of $\I{D}$ and therefore presentable as well.
\end{proposition}
\begin{proof}
	Recall from Proposition~\ref{prop:UnivSheavesRepresentables} that the Yoneda embedding $\I{D}\into\IPSh(\I{D})$ identifies $\I{D}$ with the full subcategory $\IShv_{\BB}(\I{D})\into\IPSh(\I{D})$ that is spanned by the continuous functors (in arbitrary context). Therefore, it will be enough to show that the square
	\begin{equation*}
		\begin{tikzcd}
			\I{D}\otimes\ISub_{\BB}\arrow[r, hookrightarrow]\arrow[d, hookrightarrow] & \iFun(\I{D}^{\op},\ISub_{\BB})\arrow[d, hookrightarrow]\\
			\IShv_{\BB}(\I{D})\arrow[r, hookrightarrow] & \IPSh(\I{D})
		\end{tikzcd}
	\end{equation*}
	is a pullback. Together with the usual base change arguments, this means that we only need to check that a functor $F\colon \I{D}^\op\to\ISub_{\BB}$ is continuous if and only if its postcomposition with the inclusion $\ISub_{\BB}\into\Univ$ is. This follows immediately from the fact that the inclusion $\ISub_{\BB}\into\Univ$ has a left adjoint and is therefore continuous and conservative.
\end{proof}
For every presentable $\BB$-category $\I{D}$, we will denote the left adjoint of the inclusion $\ISub(\I{D})\into\I{D}$ by $(-)^{\Sub}$ and refer to it as the \emph{subterminal truncation functor}.
\begin{example}
	\label{ex:subterminalPresheaves}
	If $\I{C}$ is an arbitrary $\BB$-category, then Proposition~\ref{prop:subterminalObjectsPresentable} provides us with an equivalence of $\BB$-categories
	$\ISub(\IPSh(\I{C}))\simeq\iFun(\I{C}^{\op},\ISub_{\BB})$. Moreover, in light of the equivalence $\IPSh(\I{C})\simeq\IRFib_{\I{C}}$, we may identify $\iFun(\I{C}^{\op},\ISub_{\BB})$ with the full subcategory of $\IRFib_{\I{C}}$ that is spanned by the right fibrations $p\colon\I{P}\to\pi_A^\ast\I{C}$ (in arbitrary context $A\in\BB$) which are fully faithful, i.e.\ which are \emph{sieves} in the $\Over{\BB}{A}$-poset $\pi_A^\ast\I{C}$. To see the latter claim, first note that by Remark~\ref{rem:BCSubterminal}, we may replace $\BB$ with $\Over{\BB}{A}$ so that we can assume that $A\simeq 1$. In this case, since $p_0\colon \I{P}_0\to\I{C}_0$ can be identified with the image of the tautological object $\I{C}_0\to \I{C}$ along the functor $F\colon\I{C}^\op \to \Univ$ that classifies $p$ and since every object in $\I{C}$ (in arbitrary context) arises as a pullback of the tautological object, $F$ takes values in $\ISub_{\BB}$ if and only if $p_0$ is a monomorphism. Therefore it suffices to show that $p_0$ is monic if and only if $p$ is fully faithful. This follows from considering the commutative diagram
	\begin{equation*}
		\begin{tikzcd}
			\I{P}_1\arrow[d, "{(d_1,d_0)}"]\arrow[r, "\id"] & \I{P}_1\arrow[d]\arrow[r] & \I{C}_1\arrow[d, "{(d_1,d_0)}"]\\
			\I{P}_0\times\I{P}_0\arrow[r, "p_0\times\id"] & \I{C}_0\times\I{P}_0\arrow[r, "\id\times p_0"] & \I{C}_0\times\I{C}_0
		\end{tikzcd}
	\end{equation*}
	in which the right square is a pullback as $p$ is a right fibration and where the left square is a pullback since $p_0$ is a monomorphism.
	
	Furthermore, the above observation implies that we may compute the subterminal truncation functor $(-)^{\Sub}\colon\IPSh(\I{C})\to\ISub(\IPSh(\I{C}))$ on the level of right fibrations by taking essential images: If $F\colon \I{C}^\op\to\Univ$ is a presheaf, then $F^{\Sub}$ classifies the essential image of the right fibration $\Over{\I{C}}{F}\to\I{C}$. In fact, this is a consequence of the straightforward observation that the essential image is still a right fibration.
\end{example}

By definition, if $\I{C}$ is a $\BB$-category, then $\ISub(\I{C})$ is a $\BB$-poset. Our next goal is to show that if $\I{C}$ is {presentable}, then $\ISub(\I{C})$ can be characterised as the \emph{largest} accessible Bousfield localisation of $\I{C}$ with that property.

\begin{lemma}
	\label{lem:SubFunctorial}
	Let $(l\dashv r)\colon\I{D}\leftrightarrows\I{C}$ be an adjunction of $\BB$-categories. Then there is a commutative square
	\begin{equation*}
		\begin{tikzcd}
			\ISub(\I{D})\arrow[r, "r"]\arrow[d, hookrightarrow]& \ISub(\I{C})\arrow[d, hookrightarrow]\\
			\I{D}\arrow[r, "r"] & \I{C}
		\end{tikzcd}
	\end{equation*}
	which is a pullback when $r$ is fully faithful.
\end{lemma}
\begin{proof}
	Unwinding the definitions, it will be enough to show that we have a commutative square
	\begin{equation*}
		\begin{tikzcd}
			\iFun(\I{D}^{\op},\ISub_{\BB})\arrow[d, hookrightarrow]\arrow[r, "r_!", hookrightarrow] & \iFun(\I{C}^{\op},\ISub_{\BB})\arrow[d, hookrightarrow]\\
			\IPSh(\I{D})\arrow[r, "r_!", hookrightarrow] & \IPSh(\I{C})
		\end{tikzcd}
	\end{equation*}
	that is a pullback when $r$ is fully faithful. The existence of this square follows from observing that $r_!$ can be identified with $l^\ast$.
	If $r$ is moreover fully faithful, then $l$ is a localisation and therefore in particular essentially surjective. Thus, the second claim follows.
\end{proof}

\begin{proposition}
	\label{prop:UMPSubterminal}
	Let $\I{D}$ be a presentable $\BB$-category and $\I{P}$ be a presentable $\BB$-poset. Then composition with the inclusion $\ISub_{\BB}(\I{D})\into\I{D}$ induces an equivalence
	\begin{equation*}
		\iFun^R(\I{P}, \ISub(\I{D}))\into\iFun^R(\I{P},\I{D}).
	\end{equation*}
\end{proposition}
\begin{proof}
	By Remark~\ref{rem:BCSubterminal}, it will be enough to show that every right adjoint functor $\I{P}\to \I{D}$ factors through $\ISub(\I{D})$. This follows immediately from Lemma~\ref{lem:SubFunctorial}.
\end{proof}

In light of Remark~\ref{rem:BCSubterminal}, Proposition~\ref{prop:UMPSubterminal} implies:
\begin{corollary}
	\label{cor:presentablePosetsCoreflective}
	The full subcategory of $\ILPr_{\BB}$ that is spanned by the presentable $\Over{\BB}{A}$-posets for all $A\in\BB$ is reflective, with the left adjoint given by sending a presentable $\BB$-category $\I{D}$ to $\ISub(\I{D})$.\qed
\end{corollary}

\begin{remark}
	\label{rem:subterminalObjectsDiagonal}
	Lemma~\ref{lem:SubFunctorial} furthermore implies that if $\I{D}$ is a presentable $\BB$-category and $d\colon 1\to\I{D}$ is an arbitrary object, then $d$ is subterminal if and only if the diagonal $d\to d\times d$ is an equivalence. In fact, by choosing a presentation of $\I{D}$ as an accessible Bousfield localisation of a presheaf $\BB$-category and making use of Lemma~\ref{lem:SubFunctorial}, we may assume that $\I{D}\simeq\IPSh(\I{C})$ and hence that $d$ can be identified with a right fibration over $\I{C}$. Then the claim follows immediately from Example~\ref{ex:subterminalPresheaves}.
	In particular, this observation implies that $\ISub(\I{D})$ can be identified with the sheaf $\Sub(\I{D}(-))$ on $\BB$.
\end{remark}

Finally, we arrive at the following characterisation of presentable $\BB$-posets:
\begin{proposition}
	\label{prop:classificationPresentablePosets}
	For an (a priori large) $\BB$-category $\I{D}$, the following are equivalent:
	\begin{enumerate}
		\item $\I{D}$ is a presentable $\BB$-poset;
		\item $\I{D}\simeq \ISub(\I{E})$ for some presentable $\BB$-category $\I{E}$;
		\item $\I{D}$ is small and cocomplete;
		\item $\I{D}$ is small, and the Yoneda embedding $h\colon \I{D}\into\IPSh(\I{D})$ admits a left adjoint;
		\item $\I{D}$ is a small $\BB$-poset, and the Yoneda embedding $h\colon \I{D}\into\iFun(\I{D}^{\op},\ISub_{\BB})$ admits a left adjoint.
	\end{enumerate}
\end{proposition}
\begin{proof}
	(1) and~(2) are equivalent by Proposition~\ref{prop:subterminalObjectsPresentable}. If $\I{D}$ is a presentable $\BB$-poset, then Lemma~\ref{lem:SubFunctorial} combined with Example~\ref{ex:subterminalPresheaves} shows that there is a small $\BB$-category $\I{C}$ such that $\I{D}$ arises as a Bousfield localisation of $\iFun(\I{C}^{\op},\ISub_{\BB})$ for some small $\BB$-category $\I{C}$. Hence, as the latter is small, so is $\I{D}$.
	Therefore~(1) implies~(3). Conversely, every small and cocomplete $\BB$-category is presentable (this follows from our characterisation of presentable $\BB$-categories as the accessible and cocomplete ones, see Corollary~\ref{cor:characterisationPresentableSheaves}), and since every small and cocomplete $\infty$-category is a poset, we conclude by employing Remark~\ref{rem:BPosetsSectionwise} that~(3) implies~(1).
	Furthermore,~(3) and~(4) are equivalent by the universal property of presheaf $\BB$-categories (see \cite[Theorem 7.1.1]{MWColimits}). Finally,~(4) implies~(5) by Lemma~\ref{lem:SubFunctorial} (and since we already know that~(3) forces $\I{D}$ to be a $\BB$-poset), and~(5) implies~(3) since $\ISub_{\BB}$ is cocomplete.
\end{proof}

We end this section with the observation that all colimits in a presentable $\BB$-poset are $\BB$-groupoidal and can be computed by an explicit formula:
\begin{proposition}
	\label{prop:colimitsBLocales}
	Let $\I{D}$ be a presentable $\BB$-poset and let $d\colon\I{I}\to\I{D}$ be a diagram. Then the inclusion $\I{I}^{\core}\to\I{I}$ induces an equivalence $\colim d\vert_{\I{I}^{\core}}\simeq \colim d$. Moreover, this colimit can be explicitly computed as
	\begin{equation*}
		\colim d\simeq \bigvee_{\substack{i\colon G\to \I{I}\\ G\in\GG}} (\pi_G)_!(d(i)),
	\end{equation*}
	where $\GG\into\BB$ is a small dense full subcategory.
\end{proposition}
\begin{proof}
	Consider the full subcategory $ \EE\subset\Over{\Cat(\BB) }{\I{D}}$ spanned by the diagrams $d\colon\I{I}\to\I{D}$ for which the inclusion $ \I{I}^{\core}\to \I{I} $ induces an equivalence $\colim d\vert_{\I{I}^\core}\simeq \colim d$. To prove the first statement, we need to show that $\EE=\Over{\Cat(\BB)}{\I{D}}$.
	
	To that end, first observe that if $h_{\I{D}}\colon\I{D}\into\IPSh(\I{D})$ denotes the Yoneda embedding, then $\colim h_{\I{D}} d$ classifies the right fibration $p\colon \I{P}\to \I{D}$ that arises from factoring $d$ into a final functor and a right fibration.
	In other words, $p$ is the image of $d$ under the localisation functor $L\colon \Over{\Cat(\BB)}{\I{D}}\to\RFib(\I{D})$, and we may compute the colimit of $ d $ by applying the left adjoint $ l \colon \IRFib(\I{D}) \to \I{D} $ (which exists by Proposition~\ref{prop:classificationPresentablePosets}) to $ p $.
	Since both $ l $ and $ L $ are cocontinuous, it follows that for every $\infty$-category $\KK$ and every diagram $\phi \colon \KK \to \Over{ \Cat(\BB) }{\I{D}}$ with colimit $d\colon\I{I}\to\I{D}$, we have a canonical equivalence $	\colim lL\phi \simeq \colim d$. 
	
	Now let $\phi^\core\colon \KK\to \Over{\Cat(\BB)}{\I{D}}$ be the composition of $\phi$ with the core $\BB$-groupoid functor $(-)^{\core}$. We then obtain a natural comparison map $\colim \phi^\core\to \I{I}^\core$ which has the property that the composition of this map with the inclusion $\I{I}^\core\to\I{I}$ can be identified with the colimit of the canonical morphism $\phi^\core\to\phi$. As a consequence, we obtain maps
	\[
	\colim lL \phi^\core \to \colim  d\vert_{\I{I}^{\core}} \to \colim d
	\]
	in which the composition can be identified with $\colim lL \phi^{\core}\to\colim lL\phi$. Hence, if $\phi$ takes values in $\EE$, the latter map is an equivalence, which implies that the map $\colim d\vert_{\I{I}^{\core}}\to\colim d$ is one as well since $ \I{D} $ is a poset. Thus, we conclude that $\EE$ is closed under colimits in $\Over{\Cat(\BB)}{\I{D}}$. 
	
	Consequently, since every $ \BB $-category can be written as a colimit of $\BB$-categories of the form $ G \otimes \Delta^n $  for $G\in \BB$ and $n\in\mathbb N$ (cf.~\cite[Lemma~4.5.2]{MYoneda}), it suffices to see that every diagram of the form $d\colon G\otimes\Delta^n\to \I{D}$ is in $\EE$.
	To that end, note that the colimit of a diagram $d \colon G \otimes \Delta^n \to \I{D}$ is given by applying $(\pi_G)_!\colon \I{D}(G)\to\I{D}(1)$ to the colimit of the transposed map $d^\prime\colon \Delta^n\to \I{D}(G)$, which is simply $d^\prime(n)$.
	Likewise, the colimit of the induced diagram $ \bigsqcup_n G =  (G \otimes \Delta^n)^\core\to \I{D}$ is given by applying $(\pi_{G})_!$ to the supremum of the objects $d'(i)$ for $i \in \Delta^n$.
	Since $ d'(i) \leq d'(n) $ for all $ i \in \Delta^n $, we deduce that $d\in\EE$, as desired.
	
	As for the second statement of the proposition, note that we have an equivalence
	\[
	\I{I}^\core \simeq \colim_{\substack{i\colon G\to \I{I}\\ G\in\GG}} G
	\]
	since $ \GG$ is dense in $ \BB $. 
	Thus the description of $\colim d\vert_{\I{I}^\core}$ follows from the observation at the beginning of the proof.
\end{proof}

\subsection{$\BB$-locales}
\label{sec:BLocales}
In this section we define what it means for a $\BB$-poset to be  a \emph{$\BB$-locale} and provide a first characterisation of this notion.

\begin{definition}
	\label{def:BLocale}
	A $\BB$-category $\I{L}$ is said to be a \emph{$\BB$-locale} if
	\begin{enumerate}
		\item $\I{L}$ is a $\BB$-poset,
		\item $\I{L}$ is presentable, and
		\item colimits are universal in $\I{L}$ (in the sense of \S~\ref{sec:universalColimits}).
	\end{enumerate}
	A functor $f\colon K\to L$ between $\BB$-locales is called an algebraic morphism of $\BB$-locales if it is cocontinuous and preserves finite limits. We denote by $\ILLoc_{\BB}\into \IPos_{\BB}$ the subcategory that is spanned by the algebraic morphisms of $\Over{\BB}{A}$-locales for all $A\in\BB$.
\end{definition}

\begin{remark}
	In the situation of Definition~\ref{def:BLocale}, note that colimits are universal in $\I{L}$ if and only if for every $A\in\BB$ and every $U\colon A\to\I{L}$ the functor $U\times -\colon \pi_A^\ast\I{L}\to\pi_A^\ast\I{L}$ is cocontinuous. In fact, since for every $V\colon A\to \I{L}$ with a map $U\to V$ the diagram
	\begin{equation*}
		\begin{tikzcd}[column sep=large]
			\Over{(\pi_A^\ast\I{L})}{V}\arrow[r, "U\times_V -"] \arrow[d, "(\pi_V)_!"] & \Over{(\pi_A^\ast\I{L})}{V}\arrow[d, "(\pi_V)_!"]\\
			\pi_A^\ast\I{L}\arrow[r, "U\times -"] & \pi_A^\ast\I{L}
		\end{tikzcd}
	\end{equation*}
	commutes, the composition $(\pi_V)_!(U\times_V -)$ is cocontinuous. As $(\pi_V)_!$ is conservative, this implies that $U\times_V -$ is already cocontinuous. 
\end{remark}

\begin{remark}
	\label{rem:BLocaleLocal}
	Since the property of a $\BB$-category $\I{L}$ to be a presentable $\BB$-poset is local in $\BB$ (combine Remark~\ref{rem:PresentabilityLocalCondition} with Remark~\ref{rem:localityBPosets}) and since the property of a functor to be cocontinuous is local in $\BB$ as well \cite[Remark~5.2.3]{MWColimits}, one concludes that for every cover $\bigsqcup A_i\onto 1$ in $\BB$, the $\BB$-category $\I{L}$ is a $\BB$-locale if and only if  the $\Over{\BB}{A_i}$-category $\pi_{A_i}^\ast \I{L}$ is a $\Over{\BB}{A_i}$-locale.
\end{remark}

\begin{remark}
	\label{rem:BCLocales}
	The subobject of $(\ICat_{\BBB})_1$ that is spanned by the algebraic morphisms between $\Over{\BB}{A}$-locales (for each $A\in\BB$) is stable under composition and equivalences in the sense of Proposition~\ref{prop:classificationSubcategories}.
	Since moreover cocontinuity and the property that a functor preserves finite limits are local conditions and on account of Remark~\ref{rem:BLocaleLocal}, we conclude that a map $A\to (\ICat_{\BBB})_1$ is contained in $(\ILLoc_{\BB})_1$ if and only if it defines an algebraic morphism between $\Over{\BB}{A}$-locales. In particular, if $\I{L}$ and $\I{M}$ are $\Over{\BB}{A}$-locales, the image of the monomorphism
	\begin{equation*}
		\map{\ILLoc_{\BB}}(\I{L},\I{M})\into\map{\ICat_{\BBB}}(\I{L},\I{M})
	\end{equation*}
	is spanned by the algebraic morphisms, and there is a canonical equivalence $\pi_A^\ast\ILLoc_{\BB}\simeq\ILLoc_{\Over{\BB}{A}}$.
\end{remark}

\begin{remark}
	\label{rem:BLocalesClassically}
	In light of Remark~\ref{rem:BPosetsClassically}, it is easy to see that $\LLoc(\BB)\into\Pos(\BB)$ can be identified with the category of internal locales in $\Disc(\BB)$ in the sense of~\cite[\S~C1.6]{johnstone2002}. In other words, our notion of an internal locale coincides with the classical one.
\end{remark}

\begin{lemma}
	\label{lem:BousfieldLocalisationUniversalityColimits}
	Let $\I{D}$ be a presentable $\BB$-category with universal colimits,  and let $l\colon\I{D}\to\I{L}$ be a Bousfield localisation that preserves binary products. Suppose furthermore that $\I{L}$ is a $\BB$-poset. $\I{L}$ is a $\BB$-locale.
\end{lemma}
\begin{proof}
	We need to show that colimits are universal in $\I{L}$, i.e.\ that for every $A\in\BB$ and every object $U\colon A\to \I{L}$ the functor $U\times - \colon \pi_A^\ast\I{L}\to\pi_A^\ast\I{L}$ is cocontinuous, or equivalently has a right adjoint. Now $l$ preserving binary products implies that $U\times -$ carries every map in $\I{D}$ (in arbitrary context) that is inverted by $l$ to one that is inverted by $l$ as well. Hence the functor $\ihom_{\I{D}}(i(U), i(-))$ (where $\ihom_{\I{D}}(-,-)$ is the internal hom in $\I{D}$) takes values in $\I{L}$, which yields the claim.
\end{proof}

\begin{proposition}
	\label{prop:CharacterisationsLocales}
	For a $\BB$-category $\I{L}$, the following are equivalent:
	\begin{enumerate}
		\item $\I{L}$ is a $\BB$-locale.
		\item \begin{enumerate}
			\item $\I{L}$ takes values in the $1$-category $\LLoc$ of locales;
			\item $\I{L}$ is $\Univ[\BB]$-cocomplete;
			\item for every map $s\colon B\to A$ in $\BB$, the functor $s_!\colon \I{L}(B)\to \I{L}(A)$ is a cartesian fibration.
		\end{enumerate}
		\item $\I{L}$ is small, and the Yoneda embedding $\I{L}\into\IPSh(\I{L})$ admits a left adjoint which preserves finite products;
		\item $\I{L}$ is a small $\BB$-poset, and the Yoneda embedding $\I{L}\into\iFun(\I{L}^{\op},\ISub_{\BB})$ admits a left exact left adjoint. 
	\end{enumerate}
\end{proposition}
\begin{proof}
	First, we show that~(1) and~(2) are equivalent. To that end, if $\I{L}$ is a $\BB$-locale, then for each $A\in\BB$ the $\infty$-category $\I{L}(A)$ is a presentable poset in which colimits are universal. Therefore, $\I{L}(A)$ is a locale. Moreover, $\I{L}$ being cocomplete implies that for every map $s\colon B\to A$ in $\BB$ the transition functor $s^\ast\colon \I{L}(A)\to \I{L}(B)$ is cocontinuous. Likewise, $\I{L}$ having finite limits implies that $s^\ast$ preserves finite limits. Therefore~(2a) follows. Moreover, condition~(2b) is part of the definition of a $\BB$-locale, and condition~(2c) is a reformulation of the condition that $\Univ$-colimits are universal in $\I{L}$ (see Example~\ref{ex:universalityGroupoidalColimitsCartesianFibration}).
	Conversely, if the three conditions in~(2) are satisfied, then $\I{L}$ is both $\Univ$- and $\ILConst$-cocomplete and section-wise presentable. Hence Theorem~\ref{thm:characterisationPresentableCategories} implies that $\I{L}$ is presentable. By Remark~\ref{rem:BPosetsSectionwise}, the assumption that $\I{L}$ is section-wise given by a poset implies that $\I{L}$ is a $\BB$-poset. Finally, the fact that $\I{L}$ takes values in $\Loc$ implies that $\ILConst$-colimits are universal in $\I{L}$, so that it suffices to verify that $\Univ$-colimits are universal in $\I{L}$ as well. Again, this is a consequence of Example~\ref{ex:universalityGroupoidalColimitsCartesianFibration}.
	
	Next, if $\I{L}$ is a $\BB$-locale, then Proposition~\ref{prop:classificationPresentablePosets} implies that the Yoneda embedding $\I{L}\into\IPSh(\I{L})$ has a left adjoint $l$. Moreover, as colimits are universal in $\I{L}$ and as $\IPSh(\I{L})$ is generated by $\I{L}$ under colimits,  the comparison map $l(-\times -)\to l(-)\times l(-)$ is an equivalence already when its restriction to $\I{L}$ is one, which is trivially true. Hence~(1) implies~(3). If we assume~(3), then Proposition~\ref{prop:classificationPresentablePosets} implies that $\I{L}$ is a small $\BB$-poset and that the Yoneda embedding $\I{L}\into\iFun(\I{L}^{\op},\ISub_{\BB})$ has a left adjoint. Explicitly, this left adjoint arises as the restriction of the left adjoint $\IPSh(\I{L})\to\I{L}$ and therefore preserves finite products. But since pullbacks in $\BB$-posets coincide with binary products, this is already enough to conclude that this functor is left exact. Hence~(4) follows. Finally, if~(4) holds, then $\I{L}$ is presentable by Proposition~\ref{prop:classificationPresentablePosets}. Moreover, using Lemma~\ref{lem:BousfieldLocalisationUniversalityColimits} it will be enough to show that the subterminal truncation functor $(-)^{\Sub}\colon\IPSh(\I{L})\to\iFun(\I{L}^{\op},\ISub_{\BB})$ preserves binary products, which is an immediate consequence of Example~\ref{ex:subterminalPresheaves}.
\end{proof}

Using Proposition~\ref{prop:CharacterisationsLocales}, it is now easy to show that the $\BB$-poset of subterminal objects in a $\BB$-topos is a $\BB$-locale. More precisely, one has:

\begin{proposition}
	\label{prop:coreflectionTopoiLocales}
	The functor $\ISub\colon \ILPr_{\BB}\to \ILPr_{\BB}$ from Corollary~\ref{cor:presentablePosetsCoreflective} restricts to a functor $\ISub\colon \ILTop_{\BB}\to\ILLoc_{\BB}$.
\end{proposition}
\begin{proof}
	By combining Remark~\ref{rem:BCSubterminal} and Remark~\ref{rem:BCLocales}, it is enough to show that for every algebraic morphism $f^\ast\colon \I{X}\to\I{Y}$ of $\BB$-topoi the induced map $\ISub(f^\ast)\colon \ISub(\I{X})\to\ISub(\I{Y})$ is an algebraic morphism of $\BB$-locales. First, let us show that $\ISub(\I{X})$ (and therefore by symmetry also $\ISub(\I{Y})$) is a $\BB$-locale. To that end, choose a presentation of $\I{X}$ as a left exact and accessible Bousfield localisation $L\colon \IPSh(\I{C})\to\I{X}$. Then Remark~\ref{rem:subterminalObjectsDiagonal} implies that $L$ restricts to a left exact and accessible Bousfield localisation $\iFun(\I{C}^{\op},\ISub_{\BB})\to\ISub(\I{X})$, hence the claim follows from Proposition~\ref{prop:CharacterisationsLocales}. Second, since we already know that $\ISub(f^\ast)$ is cocontinuous, it is enough to show that it is left exact as well. But on account of Remark~\ref{rem:subterminalObjectsDiagonal}, this functor arises as the restriction of $f^\ast$ to subterminal objects, which immediately yields the claim. 
\end{proof}

\subsection{Sheaves on a $\BB$-locale}
\label{sec:SheavesOnBLocales}
In this section we introduce and study the $\BB$-category of \emph{sheaves} on a $\BB$-locale.
\begin{definition}
	\label{def:covering}
	Let $\I{L}$ be a $\BB$-locale and let $U\colon A\to \I{L}$ be an object. A \emph{covering} of $U$ is a diagram $d\colon \I{G}\to\pi_A^\ast\I{L}$ with colimit $U$, where $\I{G}$ is a $\Over{\BB}{A}$-groupoid.
\end{definition}
\begin{remark}
	\label{rem:coveringsExplicitly}
	Explicitly, a covering of $U$ is given by a map	$s\colon B\to A$ in $\BB$ together with an object $V\colon B\to \I{L}$ such that $s_!(V)\simeq U$.
\end{remark}

\begin{example}
	\label{ex:usualCoveringsAreRecovered}
	Let $\I{L}$ be a $\BB$-locale and  $U\colon A\to\I{L}$ be an object. Then every covering $(j_i\colon U_i\to U)_{i\in I}$ in $\I{L}(A)$ (in the conventional sense) can be regarded as a covering in the sense of Definition~\ref{def:covering} by setting $\I{G}=I$ and $d=(j_i)_{i\in I}$.
\end{example}

Recall from Proposition~\ref{prop:CharacterisationsLocales} that if $\I{L}$ is a $\BB$-locale, the Yoneda embedding $h_{\I{L}}\colon\I{L}\into\iFun(\I{L}^{\op},\ISub_{\BB})$ admits a (left exact) left adjoint $l$. We denote by $\eta\colon \id_{\iFun(\I{L}^{\op},\ISub_{\BB})}\to h_{\I{L}}l$ the adjunction unit.
\begin{definition}
	\label{def:coveringSieve}
	Let $\I{L}$ be a $\BB$-locale and let $d\colon \I{G}\to\pi_A^\ast\I{L}$ be a covering of an object $U\colon A\to\I{L}$. Then the induced map $\eta \colim h_{\I{L}}d\colon S_{d}=\colim h_{\I{L}}d\into h_{\I{L}}(U)$ in $\iFun(\I{L}^{\op},\ISub_{\BB})$ is referred to as the \emph{covering sieve} associated with $d$.
\end{definition}

\begin{remark}
	\label{rem:coveringSieveLocal}
	Let $\I{L}$ be a $\BB$-locale and $d\colon\I{G}\to\pi_A^\ast\I{L}$ be a covering of an object $U\colon A\to\I{L}$. Then, for every map $s\colon B\to A$ in $\BB$, we obtain an equivalence $s^\ast S_d\simeq S_{s^\ast d}$ that commutes with the canonical equivalence $s^\ast h_{\I{L}}(U)\simeq h_{\I{L}}(s^\ast U)$.
\end{remark}

\begin{definition}
	\label{def:coveringSubcategory}
	If $\I{L}$ is a $\BB$-locale, we denote by $\ICov\into\iFun(\I{L}^{\op},\ISub_{\BB})\into\IPSh(\I{L})$ the subcategory that is spanned by the covering sieves in arbitrary context.
\end{definition}

\begin{remark}
	\label{rem:localityCov}
	For every $A\in\BB$, one may identify $\pi_A^\ast\ICov\into\iFun(\pi_A^\ast\I{L}^\op,\ISub_{\Over{\BB}{A}})$ with the subcategory of covering sieves in $\pi_A^\ast\I{L}$.
\end{remark}

\begin{remark}
	\label{rem:coveringSubcategorySmall}
	The $\BB$-category $\ICov$ is small. In fact, first note that by Remark~\ref{rem:coveringSieveLocal}, the subcategory $\ICov\into\IPSh(\I{L})$ is already spanned by all covering sieves of objects in context $G\in \GG$, where $\GG$ is a small full subcategory of $\BB$ that generates $\BB$ under colimits. Furthermore, since $\I{L}$ is small, the collection of all coverings of objects in fixed context $G$ is parametrised by a small set. Hence, the full subcategory of $\IPSh(L)^{\Delta^1}$ that is spanned by the covering sieves must be small.
	In light of the very construction of a subcategory from a collection of morphisms (see \cite[\S~B.2]{MWColimits}), the claim thus follows from the fact that the $1$-image of a small $\BB$-category in a locally small $\BB$-category must also be small (see for example~\cite[Lemma~4.7.5]{MYoneda}).
\end{remark}

\begin{definition}
	\label{def:Sheaves}
	Let $\I{L}$ be a $\BB$-locale. We define the $\BB$-category $\IShv_{\BB}(\I{L})$ of \emph{sheaves} on $\I{L}$ to be the Bousfield localisation $\ILoc_{\ICov}(\IPSh(\I{C}))$. We will furthermore denote the underlying $\infty$-category of global sections of $\IShv_{\BB}(\I{L})$ by $\Shv_{\BB}(\I{L})$.
\end{definition}

\begin{remark}
	\label{rem:localityPrincipleSheaves}
	By Remark~\ref{rem:localityCov}, for every $A\in\BB$ there is a canonical equivalence $\pi_A^\ast\IShv_{\BB}(\I{L})\simeq\IShv_{\Over{\BB}{A}}(\pi_A^\ast\I{L})$ of full subcategories of $\IPSh[\Over{\BB}{A}](\pi_A^\ast\I{L})$.
\end{remark}

\begin{remark}
	\label{rem:representablesAreSheaves}
	If $\I{L}$ be a $\BB$-locale, then Proposition~\ref{prop:CharacterisationsLocales} implies that the Yoneda embedding $\I{L}\into\IPSh(\I{L})$ admits a left adjoint $l\colon \IPSh(\I{L})\to\I{L}$. By construction, this functor carries $\ICov$ into $\I{L}^\core$. In other words, $l$ factors through the sheafification functor $\IPSh(\I{L})\to \IShv_{\BB}(\I{L})$. By passing to right adjoints, this implies that the Yoneda embedding factors through the inclusion $\IShv_{\BB}(\I{L})\into\IPSh(\I{L})$, which means that every representable presheaf on $\I{L}$ is already a sheaf.
\end{remark}

The main goal of this section is to prove that $\IShv_{\BB}(\I{L})$ is a $\BB$-topos. More precisely, we will show:
\begin{proposition}
	\label{prop:SheavesFormTopos}
	For any $\BB$-locale $\I{L}$, the localisation functor $\IPSh(\I{L})\to\IShv_{\BB}(\I{L})$ preserves finite limits. In particular, $\IShv_{\BB}(\I{L})$ is a $\BB$-topos.
\end{proposition}
The proof of Proposition~\ref{prop:SheavesFormTopos} is based on the following three lemmas:
\begin{lemma}
	\label{lem:CoveringSievesExternally}
	For every $\BB$-locale $\I{L}$, the $\infty$-category $\Shv_{\BB}(\I{L})$ is the Bousfield localisation of $\PSh_{\BB}(\I{L})$ at the set
	\begin{equation*}
		W= \{(\pi_A)_!(S)\into(\pi_A)_!h(U)~\vert~ A\in\BB,~U\colon A\to\I{L},~S\into h(U)~\textrm{covering sieve}\}
	\end{equation*}
	of morphisms in $\PSh_{\BB}(\I{L})$.
\end{lemma}
\begin{proof}
	A presheaf $F\colon\I{L}^\op\to\Univ$ is a sheaf if and only if for every $A\in\BB$ and every covering sieve $S\into h(U)$ in context $A$ the morphism $\phi\colon\map{\IPSh(\I{L})}(h(U), \pi_A^\ast F)\to\map{\IPSh(\I{L})}(S, \pi_A^\ast F)$ is an equivalence in $\Over{\BB}{A}$. Recall from~\cite[Corollary~4.6.8]{MYoneda} that if $s\colon B\to A$ is a map in $\BB$, then on local sections over $A$ the map $\phi$ recovers the morphism
	\begin{equation*}
		\map{\IPSh(\I{L})(B)}(s^\ast h(U), \pi_B^\ast F)\to \map{\IPSh(\I{L})(B)}(s^\ast S, \pi_B^\ast F)
	\end{equation*}
	of mapping $\infty$-groupoids, which by adjunction can in turn be identified with the map
	\begin{equation*}
		\map{\PSh_{\BB}(\I{L})}((\pi_B)_! s^\ast h(U), F)\to \map{\PSh_{\BB}(\I{L})}(\pi_B)_! s^\ast S,  F).
	\end{equation*}
	Hence $F$ is a sheaf precisely if the latter map is an equivalence for every covering sieve $S\into h(U)$ in context $A$ and every map $s\colon B\to A$ in $\BB$. Together with Remark~\ref{rem:coveringSieveLocal}, this yields the claim.
\end{proof}

\begin{lemma}
	\label{lem:localEquivalencesClosedUnderPullbacks}
	Let $\I{L}$ be a locale and let $S\into h(U)$ be a covering sieve on an object $U\colon A\to\I{L}$. Then for every map $V\to U$ in $\I{L}(A)$ the map $h(V)\times_{h(U)} S\into h(V)$ is a covering sieve.
\end{lemma}
\begin{proof}
	We may assume without loss of generality that $A\simeq 1$. Now if $d\colon\I{G}\to\I{L}$ is a covering of $U$ giving rise to the covering sieve $S$, then universality of colimits in $\iFun(\I{L}^{\op},\ISub_{\BB})$ and the fact that $h$ preserves limits implies that $h (V)\times_{h(U)}S$ is the colimit of the diagram
	\begin{equation*}
		\I{G}\xrightarrow{d}\I{L}\xrightarrow{-\times V}\I{L}\xhookrightarrow{h}\iFun(\I{L}^{\op},\ISub_{\BB}).
	\end{equation*}
	Since universality of colimits in $\I{L}$ implies that the diagram $d(-)\times V\colon\I{G}\to\I{L}$ is a covering of $V$, the claim follows.
\end{proof}

\begin{lemma}
	\label{lem:coveringSievesProducts}
	Let $\I{L}$ be a $\BB$-locale and let $S_0\into h(U)$ and $S_1\into h(U)$ be covering sieves on an object $U\colon A\to\I{L}$. Then $S_0\times_{h(U)} S_1\into h(U)$ is a covering sieve as well.
\end{lemma}
\begin{proof}
	We may assume without loss of generality that $A\simeq 1$.
	Let $d_0\colon \I{G}_0\to\I{L}$ be a covering giving rise to the covering sieve $S_0$, and let $d_1\colon \I{G}_1\to\I{L}$ be a covering giving rise to $S_1$. Define $\I{G}=\I{G}_0\times\I{G}_1$ and let $d\colon \I{G}\to\I{L}$ be the diagram given by the composition
	\begin{equation*}
		\I{G}_0\times\I{G}_1\xrightarrow{d_0\times d_1}\I{L}\times\I{L}\xrightarrow{- \times - } \I{L}.
	\end{equation*}
	Then we have $\colim d \simeq U$ since colimits are universal in $\I{L}$ and since $U\times U\simeq U$ in $\I{L}$. Therefore, it is enough to show that the induced map $\colim h_{\I{L}}d\into h(U)$ in $\iFun(\I{L}^\op, \ISub_{\BB})$ can be identified with $S_0\times_{h(U)} S_1\into h(U)$ This follows from the fact that $h_{\I{L}} d$ is given by the composition
	\begin{equation*}
		\I{G}_0\times\I{G}_1\xrightarrow{h_{\I{L}}d_0\times h_{\I{L}}d_1} \iFun(\I{L}^\op, \ISub_{\BB})\times\iFun(\I{L}^\op,\ISub_{\BB})\xrightarrow{-\times -}\iFun(\I{L}^\op,\ISub_{\BB})
	\end{equation*}
	and the very same argument as above, using that colimits are universal in $\iFun(\I{L}^\op,\ISub_{\BB})$ as well.
\end{proof}

\begin{proof}[{Proof of Proposition~\ref{prop:SheavesFormTopos}}]
	Since the localisation is already accessible (being a Bousfield localisation at a small subcategory, see Theorem~\ref{thm:characterisationPresentableCategories}), the second claim follows from the first by Theorem~\ref{thm:presentationTopoi}. To prove the first, let $T^\prime(A)$ be the class of monomorphisms  $f\colon G\into H$ in the $\infty$-topos $\IPSh[\BB](\I{L})(A)$ (for arbitrary $A\in\BB$) satisfying the condition that for every map $s\colon B\to A$ in $\BB$, every $U\colon B\to\I{L}$ and every map $h(U)\to s^\ast H$ the pullback $s^\ast G\times_{s^\ast H} h(U)\into h(U)$ is a covering sieve in context $B$.  Then $T^\prime(A)$ has the following properties:
	\begin{enumerate}
		\item the maps in $T^\prime(A)$ are closed under pullbacks in $\IPSh(\I{L})(A)$;
		\item the maps in $T^\prime(A)$ are closed under finite limits in $\Fun(\Delta^1,\IPSh(\I{L})(A))$;
		\item every map in $T^\prime(A)$ is inverted by the localisation functor $\IPSh[\BB](\I{L})(A)\to\IShv_{\BB}(\I{L})(A)$;
		\item every covering sieve in context $A\in\BB$ is contained in $T^\prime(A)$. 
	\end{enumerate}
	In fact, the first property is evident, and the second property follows from combining the first one with Lemma~\ref{lem:coveringSievesProducts}. Property~(3) follows from the observation that by descent in the $\Over{\BB}{A}$-topos $\IPSh[\Over{\BB}{A}](\pi_A^\ast\I{L})$, every map in $T^\prime(A)$ is a ($\Over{\BB}{A}$-internal) colimit of covering sieves, which implies (using Remark~\ref{rem:localityPrincipleSheaves}) that it is inverted by the localisation functor $\IPSh[\BB](\I{L})(A)\to\IShv_{\BB}(\I{L})(A)$. The last property is an immediate consequence of Lemma~\ref{lem:localEquivalencesClosedUnderPullbacks}.
	
	Let us set $T^\prime=\bigcup_{A\in\BB}(\pi_A)_! T^\prime(A)$ and let $T$ be the smallest local class of morphisms in $\PSh_{\BB}(\I{L})$ that contains $T^\prime$.
	Then $T$ is bounded since it only contains monomorphisms.
	Moreover, $T$ is closed under finite limits in $\Fun(\Delta^1,\PSh_{\BB}(\I{L}))$.
	To see this, the fact that every map in $T$ is locally (in the $\infty$-topos $\PSh_{\BB}(\I{L})$) contained in $T^\prime$ implies that it suffices to show that for every cospan $f_0\to f \leftarrow f_1$ with $f_0$ and $f_1$ in $T^\prime$, their pullback is in $T^\prime$ as well. Note that if $s\colon B\to A$ is a map in $\BB$ and if $g$ is a map in $\IPSh[\BB](\I{L})(B)$ such that $s_!(g)\in T^\prime(A)$, we have $g\in T^\prime(B)$.
	Therefore, we may assume that both $f_0$ and $f_1$ are in $T^\prime(A)$ for some $A\in\BB$. In this case, the claim immediately follows from properties~(1) and~(2) of $T^\prime(A)$.
	The same argument moreover shows that every map in $T$ is inverted by the localisation functor $\PSh_{\BB}(\I{L})\to\Shv_{\BB}(\I{L})$ as it can be written as a $\Delta^{\op}$-indexed colimits of maps in $T^\prime$.
	
	By employing Lemma~\ref{lem:CoveringSievesExternally} and property~(4) above, we now conclude that there is an equivalence $\Shv_{\BB}(\I{L})\simeq\Loc_{T}(\PSh_{\BB}(\I{L}))$ of Bousfield localisations of $\PSh_{\BB}(\I{L})$.
	Using Proposition~\ref{prop:localClassSubtoposSheafification}, we thus obtain hat $\PSh_{\BB}(\I{L})\to\Shv[\BB](\I{L})$ is left exact.
	In light of Remark~\ref{rem:localityPrincipleSheaves}, this is already sufficient to conclude that the entire functor of $\BB$-categories $\IPSh(\I{L})\to\IShv_{\BB}(\I{L})$ is left exact.
\end{proof}

\subsection{The localic reflection of $\BB$-topoi}
\label{sec:LocalicReflection}
In the previous section, we introduces the $\BB$-topos of sheaves on a $\BB$-locale. In this section, we show that this construction is the universal way to attach a $\BB$-topos to a $\BB$-locale. More precisely, we show:

\begin{proposition}
	\label{prop:UMPSheaves}
	Let $\I{L}$ be a $\BB$-locale. Then the Yoneda embedding $h\colon \I{L}\into\IShv_{\BB}(\I{L})$ induces an equivalence $\I{L}\simeq\ISub(\IShv_{\BB}(\I{L}))$, and for every $\BB$-topos $\I{X}$ precomposition with $h$ induces an equivalence
	\begin{equation*}
		\iFun^\alg(\IShv_{\BB}(\I{L}),\I{X})\simeq \iFun^\alg(\I{L},\ISub(\I{X})),
	\end{equation*}
	where the left-hand side denotes the $\BB$-category of algebraic morphisms between the $\BB$-topoi $\IShv_{\BB}(\I{L})$ and $\I{X}$ and the right-hand side denotes the $\BB$-category of algebraic morphisms between the $\BB$-locales $\I{L}$ and $\ISub(\I{X})$.
\end{proposition}
\begin{proof}
	We begin by showing the first claim. To that end, note that by Lemma~\ref{lem:SubFunctorial} a sheaf $F\colon \I{L}^\op\to\Univ$ is subterminal if and only if it takes values in $\ISub_{\BB}$. Together with the usual base change arguments, this implies that the first claim follows once we verify that every such sheaf $F\colon \I{L}^\op\to\ISub_{\BB}$ is representable. Note that by Example~\ref{ex:subterminalPresheaves}, the associated right fibration $p\colon\Over{\I{L}}{F}\to\I{L}$ is fully faithful. Let $U\colon 1\to\I{L}$ be the colimit of $p$. We then obtain a canonical map $F\to h(U)$ in $\ISub_{\BB}(\IShv_{\BB}(\I{L}))$.
	To show the claim, it is therefore enough to produce a map in the opposite direction, which by Yoneda's lemma is equivalent to show that $F(U)\simeq 1_{\Univ}$. To see this, note that by Proposition~\ref{prop:colimitsBLocales} the object $U$ is the colimit of the restriction of $p$ to $\I{G}=(\Over{\I{L}}{F})^\core$. In other words, we have a covering of $U$ given by $p\vert_{\I{G}}$. Let $S\into h(U)$ be the associated covering sieve. Then, since $F$ is a sheaf, we obtain an equivalence $F(U)\simeq \map{\IPSh(\I{L})}(S, F)$. To complete the proof of the first claim, we therefore need to show that the right-hand side can be identified with $1_{\Univ}$. But as $F$ is subterminal, we may in turn identify $\map{\IPSh(\I{L})}(S, F)$ with $\map{\iFun(\I{L}^{\op},\ISub_{\BB})}(S, F)\simeq\lim F p\vert_{\I{G}}$.
	Thus, the claim follows once we show that $Fp\vert_{\I{G}}\colon\I{G}\to \ISub_{\BB}$ is final in $\iFun(\I{G},\ISub_{\BB})$. Note that the associated object $P\into \I{G}$ in $\Sub(\Over{\BB}{\I{G}})$ is explicitly obtained as the fibre of $p\colon \Over{\I{L}}{F}\to\Univ$ over $p\vert_{\I{G}}$. Thus, the inclusion $\I{G}\into\Over{\I{L}}{F}$ induces a section $\I{G}\to P$, which immediately yields the claim.
	
	We now show the second claim. Let $l\colon \IPSh(\I{L})\to\IShv_{\BB}(\I{L})$ be the localisation functor. We now have maps
	\begin{equation*}
		\iFun^{\alg}(\IShv_{\BB}(\I{L}),\I{X})\xhookrightarrow{l^\ast} 
		\iFun^{\alg}(\IPSh(\I{L}),\I{X})\simeq \iFun^{\lex}(\I{L},\ISub(\I{X})) \hookleftarrow\iFun^\alg(\I{L},\ISub(\I{X}))
	\end{equation*}
	in which the fact that $l^\ast$ is fully faithful follows from the universal property of localisations and where the equivalence in the middle follows from Diaconescu's theorem~\ref{cor:internalDiaconescu} and the straightforward observation that by Remark~\ref{rem:subterminalObjectsDiagonal} every left exact functor $\pi_A^\ast\I{L}\to\pi_A^\ast\I{X}$ necessarily factors through $\pi_A^\ast\ISub(\I{X})$.
	Thus, by using Remarks~\ref{rem:localityPrincipleSheaves} and~\ref{rem:BCSubterminal} together with \cite[Remark~5.2.3]{MWColimits}, the claim follows once we show that a left exact functor $f\colon \I{L}\to \ISub(\I{X})$ is cocontinuous if and only if the left Kan extension $h_!(if)\colon\IPSh(\I{L})\to\I{X}$ (where $i\colon \ISub(\I{X})\into\I{X}$ is the inclusion) carries $\ICov$ into $\I{X}^{\core}$.
	To see this, note that as $h_!(if)$ is an algebraic morphism, it restricts to a functor $\iFun(\I{L}^{\op},\ISub_{\BB})\to\ISub(\I{X})$ which is cocontinuous as well. Consequently, for every object $U\colon A\to\I{L}$ and every covering $d\colon \I{G}\to\pi_A^\ast\I{L}$ of $U$, the image of the associated covering sieve $S_d\into h(U)$ along $h_!(if)$ can be identified with the canonical morphism $\colim f d\to f(U)$. In other words, $h_!(if)$ carries $\ICov$ into $\I{X}^\simeq$ precisely if $f$ is $\Univ$-cocontinuous.
	But by using Proposition~\ref{prop:colimitsBLocales}, this already implies that $f$ is cocontinuous.
	Hence the claim follows.
\end{proof}
\begin{corollary}
	\label{cor:localicReflection}
	The $\BB$-category $\ILLoc_{\BB}$ is a coreflective subcategory of $\ILTop_{\BB}$, where the inclusion is given by carrying a $\BB$-locale to its associated sheaf $\BB$-topos and the coreflection sends a $\BB$-topos to its underlying $\BB$-locale of subterminal objects.
\end{corollary}
\begin{proof}
	Combine Proposition~\ref{prop:coreflectionTopoiLocales} with Proposition~\ref{prop:UMPSheaves}.
\end{proof}

\begin{definition}
	\label{def:localicBTopos}
	A $\BB$-topos $\I{X}$ is \emph{localic} if it is contained in the essential image of $\IShv_{\BB}(-)\colon\ILLoc_{\BB}\into\ILTop_{\BB}$.
\end{definition}

\subsection{Localic $\BB$-topoi as relative locales}
\label{sec:BLocalesVsRelativeLocales}
If $\BB$ is a localic $\infty$-topos, then $\BB$-locales precisely correspond to locales under $\Sub(\BB)$. More precisely, note that as a consequence of Corollary~\ref{cor:localicReflection}, the $\BB$-locale $\ISub_{\BB}$ is the \emph{initial} $\BB$-locale. Since moreover the global sections functor $\Gamma_{\BB}$ restricts to a functor $\LLoc(\BB)\to \LLoc$, we thus obtain an induced functor $\LLoc(\BB)\to \Under{\LLoc}{\Sub(\BB)}$.
\begin{proposition}
	\label{prop:internalVsRelativeLocales}
	If $\BB$ is a localic $\infty$-topos, then the functor $\Gamma\colon \LLoc(\BB)\to\Under{\LLoc}{\Sub(\BB)}$ is an equivalence of $\infty$-categories.
\end{proposition}
\begin{proof}
	Since by Remark~\ref{rem:BLocalesClassically} the $\infty$-category $\LLoc(\BB)$ can be identified with the $1$-category of internal locales in $\Disc(\BB)$, the statement reduces to the analogous result in $1$-topos theory, see~\cite[Theorem~C1.6.3]{johnstone2002}.
\end{proof}

\begin{corollary}
	\label{cor:GlobalSectionsInternalSheaves}
	For every $\BB$-locale $\I{L}$, the $\infty$-topos $\Shv_{\BB}(\I{L})$ can be canonically identified with $\Shv(\Gamma\I{L})$.
\end{corollary}
\begin{proof}
	As a result of Remark~\ref{rem:subterminalObjectsDiagonal}, we have a commutative diagram
	\begin{equation*}
		\begin{tikzcd}
			\LTop(\BB)\arrow[d, "\ISub"]\arrow[r, "\Gamma"] & \Under{(\LTopS)}{\BB}\arrow[d, "\Sub"]\\
			\LLoc(\BB)\arrow[r, "\Gamma"] & \Under{\LLoc}{\Sub(\BB)}
		\end{tikzcd}
	\end{equation*}
	(where we note that as $\LLoc$ is a $1$-category coherence issues do not arise). Therefore, the claim follows from Proposition~\ref{prop:internalVsRelativeLocales} and the fact that by Theorem~\ref{thm:BTopoiRelativeTopoi} the upper horizontal map is an equivalence as well.
\end{proof}

\begin{remark}
	\label{rem:BLocaleFromRelativeLocale}
	The inverse to the equivalence from Proposition~\ref{prop:internalVsRelativeLocales} can be made explicit as follows: Given an algebraic morphism of locales $f^\ast\colon \Sub(\BB)\to L$, let $f^\ast\colon \BB\to\Shv(L)$ be the associated algebraic morphism of $\infty$-topoi. Then $f_\ast\ISub_{\Shv(L)}$ is a $\BB$-locale (as can be easily verified using Proposition~\ref{prop:CharacterisationsLocales}) whose underlying locale recovers $L$ and is therefore the $\BB$-locale associated to $L$. Explicitly, this $\BB$-locale can be described as the sheaf $\Over{L}{f^\ast(-)}$ on $\Sub(\BB)$, i.e.\ the $\CatSS$-valued functor that is classified by the cartesian fibration $\Sub(\BB)\times_{L} \Fun(\Delta^1,\I{L})\to \Sub(\BB)$.
\end{remark}

\subsection{Compactness conditions for $\BB$-locales}
\label{sec:compactnessLocalicTopoi}
In this section, we study how certain compactness properties of $\BB$-locales are inherited by their associated localic $\BB$-topoi. To that end, recall that if $\I{D}$ is a presentable $\BB$-category, we say that $\I{D}$ is \emph{compactly generated} if the inclusion $\I{D}^{\compact}\into\I{D}$ of the full subcategory of compact objects induces via left Kan extension an equivalence $\IInd(\I{D}^{\compact})\simeq\I{D}$. We will furthermore say that $\I{D}$ is \emph{compactly assembled} if $\I{D}$ is a retract (in $\ILPr_{\BB}$) of a compactly generated $\BB$-category. We may now define:
\begin{definition}
	\label{def:locallyStablyCompactBLocale}
	A $\BB$-locale $\I{L}$ is said to be \emph{locally coherent} if it is compactly generated and if $\I{L}^{\compact}$ is closed under binary products in $\I{L}$. We say that $\I{L}$ is \emph{coherent} if it is locally coherent and $1_{\I{L}}$ is compact.
	
	Furthermore, $\I{L}$ is said to be \emph{(locally) stably compact} if it is a retract in $\ILLoc_{\BB}$ of a (locally) coherent $\BB$-locale.
\end{definition}

\begin{remark}
	\label{rem:localityCoherence}
	Since the existence and preservation of limits is local in $\BB$ by \cite[Remark 5.2.3]{MWColimits} and one has $\pi_A^\ast\IInd(\I{L}^{\compact})\simeq\IInd[\Over{\BB}{A}](\pi_A^\ast\I{L}^{\compact})$ by Remark~\ref{rem:BCUComp} and Remark~\ref{rem:BCInd} for every $A\in\BB$, we deduce that for every cover $(\pi_{A_i})\colon\colon\bigsqcup_i A_i\onto 1$ in $\BB$, a $\BB$-locale $\I{L}$ is (locally) coherent if and only if $\pi_{A_i}^\ast \I{L}$ is a (locally) coherent $\Over{\BB}{A_i}$-locale for every $i$.
\end{remark}

Then main goal of this section is to relate sheaves on a locally coherent locale with \emph{finitary sheaves} on its compact objects:
\begin{definition}
	\label{def:finitarySheaf}
	Let $\I{P}$ be a $\BB$-poset with finite colimits and binary products. A presheaf $F\colon\I{P}^\op\to\Univ$ is said to be a  \emph{finitary sheaf} if
	\begin{enumerate}
		\item $F(\varnothing_{\I{P}})\simeq 1_{\Univ}$;
		\item for every two objects $U,V\colon A\rightrightarrows\I{P}$ in arbitrary context $A\in\BB$, the commutative square
		\begin{equation*}
			\begin{tikzcd}
				F(U\vee V)\arrow[r]\arrow[d] & F(V)\arrow[d]\\
				F(U)\arrow[r] & F(U\wedge V)
			\end{tikzcd}
		\end{equation*}
		is a pullback.
	\end{enumerate}
	We let $\IShv_{\BB}^{\fin}(\I{P})$ be the full subcategory of $\IPSh(\I{P})$ that is spanned by those presheaves $\pi_A^\ast\I{P}^\op\to\Univ[\Over{\BB}{A}]$ (in arbitrary context $A\in\BB$) which are finitary sheaves on $\pi_A^\ast\I{P}$.
\end{definition}

\begin{remark}
	\label{rem:localityFinitarySheaves}
	As preservation of (co)limits is a local property \cite[Remark~4.2.1]{MWColimits}, we deduce that for every cover $\bigsqcup_i A_i\onto 1$ in $\BB$ a presheaf $F\colon \I{P}^\op\to\Univ$ is a finitary sheaf if and only if the presheaf $\pi_{A_i}^\ast(F)$ is a finitary sheaf on $\pi_A^\ast\I{P}$.
	In particular, an object $A\to\IPSh(\I{P})$ is contained in $\IShv_{\BB}^{\fin}(\I{P})$ if and only if it transposes to a finitary sheaf on $\pi_A^\ast\I{P}$, and we obtain a canonical equivalence of $\BB$-categories $\pi_A^\ast\IShv_{\BB}^{\fin}(\I{P})\simeq\IShv_{\Over{\BB}{A}}^{\fin}(\pi_A^\ast\I{P})$ for every $A\in\BB$.
\end{remark}

Recall from \S~\ref{sec:sheaves} that if $\I{C}$ is a $\IFilt$-cocomplete $\BB$-category, we denote by $\IShv_{\BB}^{\IFilt}(\I{C})$ the full subcategory of $\IPSh(\I{C})$ that is spanned by the \emph{$\IFilt$-sheaves}, i.e.\ by those functors $\pi_A^\ast\I{C}^\op\to\Univ[\Over{\BB}{A}]$ (in arbitrary context $A\in\BB$) whose opposite is $\IFilt$-cocontinuous. We now obtain the following characterisation of sheaves on a $\BB$-locale:
\begin{proposition}
	\label{prop:characterisationSheavesLimits}
	Let $\I{L}$ be a $\BB$-locale. Then $\IShv_{\BB}(\I{L})\simeq \IShv_{\BB}^{\fin}(\I{L})\cap\IShv_{\BB}^{\IFilt}(\I{L})$ as full subcategories in $\IPSh(\I{L})$.
\end{proposition}
\begin{proof}
	By combining Remarks~\ref{rem:localityFinitarySheaves} and~\ref{rem:localityPrincipleSheaves} with \cite[Remark~5.2.4]{MWColimits}, we need to show that for every $A\in\BB$, a presheaf $F\colon\pi_A^\ast\I{L}^\op\to\Univ[\Over{\BB}{A}]$ is a sheaf if and only if
	\begin{enumerate}
		\item $F^\op\colon \pi_A^\ast\I{L}\to\Univ[\Over{\BB}{A}]^\op$ is $\pi_A^\ast\IFilt$-cocontinuous;
		\item $F(\varnothing_{\pi_A^\ast\I{L}})\simeq 1_{\Univ[\Over{\BB}{A}]}$;
		\item for every two objects $U,V\colon B\rightrightarrows\I{L}$ in arbitrary context $B\in\Over{\BB}{A}$, the commutative square
		\begin{equation*}
			\begin{tikzcd}
				F(U\vee V)\arrow[r]\arrow[d] & F(V)\arrow[d]\\
				F(U)\arrow[r] & F(U\wedge V)
			\end{tikzcd}
		\end{equation*}
		is a pullback.
	\end{enumerate}
	By replacing $\BB$ with $\Over{\BB}{A}$, we may assume that $A\simeq 1$. Now suppose first that $F$ is a sheaf. To show that~(1) is satisfied, we need to verify that for every diagram $d\colon \I{I}\to\pi_A^\ast\I{L}$ where $\I{I}$ is a filtered $\Over{\BB}{A}$-category and $A\in\BB$ is arbitrarily chosen, the natural map $(\pi_A^\ast F)(\colim d)\to\lim (\pi_A^\ast F)d$ is an equivalence. By replacing $\BB$ with $\Over{\BB}{A}$, we may again assume without loss of generality that $A\simeq 1$. As $\I{I}$ is filtered, the functor $\colim_{\I{I}}\colon\iFun(\I{I}, \IPSh(\I{L}))\to\IPSh(\I{L})$ preserves finite limits and therefore (by Remark~\ref{rem:subterminalObjectsDiagonal}) subterminal objects. Therefore, we deduce that $\colim h_{\I{L}}d$ is subterminal. Now by Proposition~\ref{prop:colimitsBLocales}, we may replace $\I{I}$ by $\I{I}^\core$ and can thus assume that $\I{I}$ is a $\BB$-groupoid. Therefore, the sheaf condition implies that we obtain an equivalence
	\begin{equation*}
		F(\colim d)\simeq \map{\IPSh(\I{L})}(\colim h_{\I{L}}d, F) \simeq\lim Fd.
	\end{equation*}
	This shows that $F$ is a $\IFilt$-sheaf. Condition~(2) follows from the observation that as $\varnothing_{\I{L}}$ is the colimit of the unique diagram $\varnothing\to\I{L}$, we obtain an equivalence $F(\varnothing_{\I{L}})\simeq \map{\IPSh(\I{L})}(\varnothing_{\IPSh(\I{L})},F)\simeq 1_{\Univ}$. Lastly, to show that condition~(3) is met, we may again replace $\BB$ with $\Over{\BB}{B}$ and can therefore assume that $B\simeq 1$. Now as $U\vee V$ is the coproduct of $U$ and $V$ in $\I{L}$, the claim follows from the fact that the pushout $h_{\I{L}}(U)\sqcup_{h_{\I{L}}(U\wedge V)}h_{\I{L}}(V)$ in $\IPSh(\I{L})$ computes the coproduct of $h_{\I{L}}(U)$ and $h_{\I{L}}(V)$ in $\iFun(\I{L}^{\op},\ISub_{\BB})$.
	
	Conversely, suppose that $F$ satisfies the three conditions. To show that $F$ is a sheaf, we need to verify that for every covering $d\colon\I{G}\to\pi_A^\ast\I{L}$ of an object $U\colon A\to\I{L}$ the functor $\map{\IPSh(\I{L})}(-,F)$ carries the induces covering sieve $S_{d}\into h(U)$ to an equivalence. By replacing $\BB$ with $\Over{\BB}{A}$, we may again assume that $A\simeq 1$. First, let us show the claim in the case where $\I{G}$ is finite, i.e.\ a locally constant sheaf of finite $\infty$-groupoids. Upon passing to a suitable cover, we can even assume that $\I{G}$ is (the constant $\BB$-category associated with) a finite $\infty$-groupoid. Since $\I{L}$ is a $\BB$-poset, we can even assume that $\I{G}$ is a finite set. By induction, it suffices to cover the cases $\I{G}=\varnothing$ and $\I{G}=1\sqcup 1$. By the above argumentation, these two cases follow immediately from conditions~(2) and~(3).
	
	For the general case, let $\IFin_{\BB}$ be the internal class of finite $\BB$-categories. Since $\IFin_{\BB}$ has the decomposition property (see \S~\ref{sec:decompositionProperty} and \S~\ref{sec:finiteBCategories}), we may find a filtered $\BB$-category $\I{I}$ and a diagram $k\colon \I{I}\to \IFin_{\BB}\into\ICat_{\BB}$ such that $\I{G}\simeq\colim k$.
	Note that since $\I{G}$ is a $\BB$-groupoid and since the groupoidification of a finite $\BB$-category is a finite $\BB$-groupoid, postcomposing $k$ with the groupoidification functor yields a diagram $k^\prime\colon \I{I}\to \Univ\cap\IFin_{\BB}\into\ICat_{\BB}$ that also has colimit $\I{G}$. Therefore, we deduce from Proposition~\ref{prop:decompositionColimitsPSh} and by making use of the subterminal truncation functor $(-)^{\Sub}\colon\IPSh(\I{L})\to\iFun(\I{L}^{\op},\ISub_{\BB})$
	that there is a diagram $d^\prime\colon \I{I}\to\iFun(\I{L}^{\op},\ISub_{\BB})$ such that  (a) we have $\colim d^\prime\simeq \colim h_{\I{L}}d$ and such that (b) for every object $i\colon A\to\I{I}$ in arbitrary context $A\in\BB$ there is a finite $\Over{\BB}{A}$-groupoid $\I{H}_{i}$ together with a diagram $d_{i}\colon \I{H}_{i}\to \pi_{A}^\ast\I{L}$ such that $d^\prime(i)\simeq \colim h_{\I{L}}d_{i}$. From~(a) we deduce that if $l\colon \iFun(\I{L}^{\op},\ISub_{\BB})\to\I{L}$ is the left adjoint of the Yoneda embedding, the unit of the adjunction $l\dashv h_{\I{L}}$ determines morphisms
	\begin{equation*}
		\colim h_{\I{L}} d\simeq \colim d^\prime\xrightarrow{\alpha} \colim h_{\I{L}}l d^\prime\xrightarrow{\beta} h_{\I{L}}(\colim l d^\prime)\simeq h_{\I{L}}(\colim d)
	\end{equation*}
	in $\iFun(\I{L}^{\op},\ISub_{\BB})$. As $\I{I}$ is filtered, the same argumentation as above implies that the colimit in the middle is already the colimit in $\IPSh(\I{L})$. Thus condition~(1) implies that $\map{\IPSh(\I{L})}(-,F)$ carries $\beta$ to an equivalence. To finish the proof, it is therefore enough to show that this functor also sends $\alpha$ to an equivalence. For this, we only need to show that for every object $i\colon A\to\I{I}$ in arbitrary context $A\in\BB$ the map $d^\prime(i)\to h_{\I{L}}ld^\prime(i)$ is sent to an equivalence. By~(b), we find that $d^\prime(i)$ is of the form $\colim h_{\I{L}} d_i$ for some diagram $d_i\colon \I{H}_i\to \pi_A^\ast\I{L}$ where $\I{H}_i$ is a finite $\Over{\BB}{A}$-groupoid. Since this case has already been shown above, the result follows.
\end{proof}

\begin{lemma}
	\label{lem:finitarySheafLocallyCoherent}
	Let $\I{L}$ be a locally coherent $\BB$-locale and let $F\colon\I{L}^\op\to\Univ$ be a $\IFilt$-sheaf on $\I{L}$. Then $F$ is a sheaf on $\I{L}$ if and only if $F\vert_{\I{L}^{\compact}}$ is a finitary sheaf on $\I{L}^{\compact}$.
\end{lemma}
\begin{proof}
	By Proposition~\ref{prop:characterisationSheavesLimits}, we need to show that $F$ is a finitary sheaf on $\I{L}$ if and only if $F\vert_{\I{L}^{\compact}}$ is a finitary sheaf on $\I{L}^{\compact}$.
	As $\I{L}$ is locally coherent and therefore $\I{L}^{\compact}$ is closed under binary products in $\I{L}$, it is clear that the condition is necessary. Moreover, as $\I{L}^{\compact}$ contains the initial object, it is clear that $F$ satisfies condition~(1) of the definition of a finitary sheaf if and only if $F\vert_{\I{L}^{\compact}}$ does.
	Therefore, we only need to show that if $F\vert_{\I{L}^{\compact}}$ is a finitary sheaf, then for every pair of objects $U,V\colon A\rightrightarrows\I{L}$, the map $F(U\vee V)\to F(U)\times_{F(U\wedge V)}F(V)$ is an equivalence.
	Using Remark~\ref{rem:localityFinitarySheaves} and Remark~\ref{rem:BCUComp}, we may replace $\BB$ with $\Over{\BB}{A}$ and can therefore assume that $A\simeq 1$.
	Note that it follows from the bifunctoriality of $-\wedge -$ that for a fixed $U$, both the map $U\wedge V\to V$ and the map $U\wedge V\to U$ are natural in $V$, i.e. define morphisms in $\iFun(\I{L},\I{L})$.
	Therefore, we obtain a cospan $\diag(U)\leftarrow U\wedge -\to \id_{\I{L}}$ in $\iFun(\I{L},\I{L})$ (where $\diag\colon \I{L}\to\iFun(\I{L},\I{L})$ is the diagonal map). By taking the colimit of this diagram, we end up with a commutative square
	\begin{equation*}
		\begin{tikzcd}
			U\wedge- \arrow[r]\arrow[d] & \id_{\I{L}}\arrow[d]\\
			\diag(U)\arrow[r] & U\vee -
		\end{tikzcd}
	\end{equation*}
	in $\iFun(\I{L},\I{L})$. Since colimits are universal in $\I{L}$, the functor $U\wedge -$ is cocontinuous. Furthermore, the functor $\diag(U)$ is $\IFilt$-cocontinuous: in fact, as it can be identified with $U\wedge \diag(1_{\I{L}})(-)$, it suffices to see that $\diag(1_{\I{L}})\simeq 1_{\iFun(\I{L},\I{L})}$ is $\IFilt$-cocontinuous.
	As in the proof of Lemma~\ref{lem:stabilityFiltUCocontinuousFunctors}, this is a consequence of the fact that filtered colimits in $\I{L}$ are left exact, which is easily shown using Lemma~\ref{lem:SubFiltAccessible} below and the fact that $\I{L}$ is a left exact localisation of $\iFun(\I{L}^\op,\ISub_{\BB})$, see Proposition~\ref{prop:CharacterisationsLocales}. Thus, as $\IFilt$-cocontinuous functors are clearly closed under pushouts in $\iFun(\I{L},\I{L})$, the above commutative diagram is a square of $\IFilt$-cocontinuous functors.
	By again using that filtered colimits in $\I{L}$ are left exact, this observation now implies that by postcomposition with (the opposite of) $F$, we end up with a
	morphism $F(U \vee -)\to F(U)\times_{F(U\wedge -)} F(-)$ of $\IFilt$-cocontinuous functors $\I{L}\to\Univ^\op$. Since $\I{L}\simeq\IInd(\I{L}^{\compact})$, the universal property of $\IInd(\I{L}^{\compact})$ thus implies that this morphism is an equivalence already when its restriction to $\I{L}^{\compact}$ is one. Together with our assumption on $F$, it follows that if $U$ is compact, then the map $F(U\vee V)\to F(U)\times_{F(U\wedge V)}F(V)$ is an equivalence for \emph{all} $V\colon 1\to\I{L}$. By symmetry and the fact that the context of $U$ and $V$ has been arbitrarily chosen, this now implies that the morphism $F(-\vee V)\to F(-)\times_{F(-\wedge V)}F(V)$ is an equivalence when restricted to $\I{L}^{\compact}$ and must therefore be an equivalence on all of $\I{L}$. Hence the claim follows.
\end{proof}

For later use we also record the following Lemma:

\begin{lemma}
	\label{lem:finitarySheavesCompactlyGenerated}
	Let $\I{P}$ be a poset with finite colimits and binary products. Then $\IShv_{\BB}^{\fin}(\I{P})$ is closed under $\IFilt$-colimits in $\IPSh(\I{P})$.
\end{lemma}
\begin{proof}
	We need to show that for every $A\in\BB$ and every diagram $d\colon \I{I}\to\pi_A^\ast \IShv_{\BB}^{\fin}(\I{P})$ where $\I{I}$ is a filtered $\Over{\BB}{A}$-category, the colimit of $d$ is contained in $\IShv_{\BB}^{\fin}(\I{P})$. Using Remark~\ref{rem:localityFinitarySheaves}, we may replace $\BB$ with $\Over{\BB}{A}$ and can therefore assume that $A\simeq 1$. We may compute the colimit of $d$ as the composition
	\begin{equation*}
		\I{P}^\op\xrightarrow{d^\prime}\iFun(\I{I},\Univ)\xrightarrow{\colim_{\I{I}}}\Univ
	\end{equation*}
	where $d^\prime$ is the transpose of $d\colon \I{I}\to\IPSh(\I{P})$. As $\I{I}$ is filtered, the functor on the right preserves finite limits. Moreover, the assumption that $d$ takes values in $\IShv_{\BB}^{\fin}(\I{P})$ and the fact that limits in functor $\BB$-categories can be computed objectwise imply that $d^\prime$ is a $\iFun(\I{I},\Univ)$-valued finitary sheaf on $\I{P}$. Hence the claim follows.
\end{proof}

The main result of this section is the following description of sheaves on a locally coherent locale:

\begin{proposition}
	\label{prop:sheavesarefinitarysheaves}
	Let $ \I{L} $ be a locally coherent locale.
	Then there is a canonical equivalence of $ \BB $-topoi
	\[
	\IShv_{\BB}(\I{L})\simeq\IShv_{\BB}^{\fin}(\I{L}^{\compact}).
	\]
\end{proposition}
\begin{proof}
	By Proposition~\ref{prop:characterisationSheavesLimits}, we have an identification $\IShv_{\BB}(\I{L})\simeq\IShv_{\BB}^{\fin}(\I{L})\cap\IShv_{\BB}^{\IFilt}(\I{L})$ of full subcategories of $\IPSh(\I{L})$.
	In particular, we obtain an inclusion $\IShv_{\BB}(\I{L})\into \IShv_{\BB}^{\IFilt}(\I{L})\simeq\IPSh(\I{L}^{\compact})$ (where we use that $\I{L}\simeq\IInd(\I{L}^{\compact})$ and the universal property of $\IInd(\I{L}^{\compact})$).
	Using Remark~\ref{rem:localityPrincipleSheaves} together with Lemma~\ref{lem:finitarySheafLocallyCoherent}, we now find that an object $A\to \IPSh(\I{L}^{\compact})$ is contained in $\IShv_{\BB}(\I{L})$ if and only if it transposes to a finitary sheaf on $\pi_A^\ast\I{L}^{\compact}$, proving the claim.
\end{proof}

\begin{corollary}
	\label{cor:Sheaves_on_loc_coherent}
	Let $ \I{L} $ be a $ \BB $-locale.
	\begin{enumerate}
		\item If $ \I{L} $ is locally coherent then $ \IShv_{\BB}(\I{L}) $ is compactly generated.
		\item If $ \I{L} $ is locally stably compact then $ \Shv_{\BB}(L) $ is compactly assembled.
	\end{enumerate}
\end{corollary}
\begin{proof}
	Note that (2) is an immediate consequence of the definitions and (1).
	To see (1) note that we have an inclusion
	\[
	\IShv_{\BB}(\I{L}) \simeq \IShv_{\BB}^{\fin}(\I{L}^{\compact}) \hookrightarrow \IPSh(\I{L}^{\compact})
	\]
	which by Lemma~\ref{lem:finitarySheavesCompactlyGenerated} preserves filtered colimits.
	Thus the claim follows from Corollary~\ref{cor:AccessibleLocalisation}.
\end{proof}

\subsection{The internal locale of a locally proper and separated map}
\label{sec:locallyProper}
The goal of this section is to show that for a proper and separated map of topological spaces $ f \colon Y \to X $, the $ \Shv(X) $-locale given by $U\mapsto \OO(f^{-1}(U))$ is a stably compact locale.
This is a direct consequence of \cite{Johnstone1981} but we decided to also provide a separate proof of Johnstone's result in the language of $\Shv(X)$-locales that we developed above.
With future applications in mind, we will prove a slightly more general statement about \emph{locally} proper and separated maps of topological spaces (which Johnstone also mentions in~\cite[\S~C4.1]{johnstone2002} but never explicitly spells out).

We begin by recalling the definition of a locally proper map from~\cite{Schnuerer2016}:
\begin{definition}
	\label{def:locallyProper}
	A continuous map $f\colon Y\to X$ of topological spaces is said to be \emph{locally proper} if for every $y\in Y$ and every open neighbourhood $V$ of $y$ there is a neighbourhood $K\subset V$ of $y$ and an open neighbourhood $U$ of $f(y)$ such that $f(K)\subset U$ and such that the induced map $K\to U$ is proper (i.e.\ universally closed).
\end{definition}

\begin{remark}
	\label{rem:locallyProperSeparatedLocalCondition}
	The property of a map $f\colon Y\to X$ to be locally proper and separated is local in the target: if $X=\bigcup_i U_i$ is an open covering, then $f$ is locally proper and separated if and only if each of the restrictions $f^{-1}(U_i)\to U_i$ has that property~\cite[Lemma~2.7]{Schnuerer2016}.
\end{remark}

\begin{remark}
	\label{rem:ProperSeparatedImpliesLocallyProper}
	Every proper and separated morphism is also locally proper~\cite[Proposition~2.12]{Schnuerer2016}. This is the relative version of the fact that compact Hausdorff spaces are locally compact as well.
\end{remark}

\begin{remark}
	\label{rem:localPropernessAlternativeDefinition}
	In the situation of Definition~\ref{def:locallyProper}, if $f$ is separated and locally proper, then for every $y\in Y$ and every open neighbourhood $V$ of $y$ there is an open neighbourhood $V^\prime\subset V$ and an open neighbourhood $U$ of $f(y)$ such that $f(V^\prime)\subset U$ and such that the closure of $V^\prime$ in $f^{-1}(U)$ is proper over $U$. In fact, $f$ being separated implies that its restriction $f^{-1}(U)\to U$ is separated as well. Therefore,~\cite[Lemma~9.12]{Schnuerer2016} implies that if $K\subset V$ is as in Definition~\ref{def:locallyProper}, then $K$ is closed in $f^{-1}(U)$. Hence the closure of the interior of $K$ (again in $f^{-1}(U)$) is a closed subset of $K$ and therefore also proper over $U$.
\end{remark}

To proceed, recall that if $f\colon Y\to X$ is a map of topological spaces, we obtain an algebraic morphism of locales $f^\ast\colon \OO(X)\to\OO(Y)$, where $\OO(X)$ and $\OO(Y)$ denote the locales of open subsets of $X$ and $Y$, respectively. By Proposition~\ref{prop:internalVsRelativeLocales}, $f^\ast$ gives rise to a $\Shv(X)$-locale $\I{O}_{X}(Y)$ that is explicitly given by the sheaf on $X$ that carries an open $U\in\OO(X)$ to the locale $\OO(f^{-1}(U))$ (see Remark~\ref{rem:BLocaleFromRelativeLocale}). Recall, furthermore, that we refer to a $\BB$-locale $\I{L}$ as (locally) stably compact if it arises as a retract in $\ILLoc_{\BB}$ of a (locally) coherent $\BB$-locale (see Definition~\ref{def:locallyStablyCompactBLocale}). Now the main goal of this section is to show:
\begin{proposition}
	\label{prop:continuousInternalLocaleInTopology}
	If $f\colon Y\to X$ is a locally proper and separated morphism of topological spaces, then $\I{O}_{X}(Y)$ is a locally stably compact $\Shv(X)$-locale. If $f$ is even proper, then $\I{O}_{X}(Y)$ is stably compact.
\end{proposition}
The proof of Proposition~\ref{prop:continuousInternalLocaleInTopology} requires a few preparations first. To begin with, we need to construct a candidate for a (locally) coherent $\Shv(X)$-locale of which $\I{O}_{X}(Y)$ is a retract. We will use the following general observation:

\begin{proposition}
	\label{prop:IndLExplicitly}
	Let $\I{L}$ be a $\BB$-locale and let $j\colon\I{P}\into\I{L}$ be a full subposet that is closed under binary products and finite colimits. Then
	\begin{enumerate}
		\item the left Kan extension $h_!(j)\colon\IInd(\I{P})\to\I{L}$ is cocontinuous;
		\item $\IInd(\I{P})$ is a locally coherent $\BB$-locale which is coherent if $\I{P}$ contains the final object of $\I{L}$;
		\item a right fibration over $\I{P}$ (in arbitrary context $A\in\BB$) is contained in the essential image of the inclusion $\IInd(\I{P})\into\IRFib_{\I{P}}$ if and only if it the inclusion of a sieve in $\pi_A^\ast\I{P}$ (i.e.\ a fully faithful right fibration) that is closed under finite colimits.
	\end{enumerate}
\end{proposition}
The proof of Proposition~\ref{prop:IndLExplicitly} requires the following two lemmas:
\begin{lemma}
	\label{lem:SubFiltAccessible}
	The inclusion $\ISub_{\BB}\into\Univ$ preserves filtered colimits
\end{lemma}
\begin{proof}
	Using Remark~\ref{rem:BCSubterminal} and Example~\ref{ex:subterminalPresheaves}, it suffices to show that for every filtered $\BB$-category $\I{I}$, the functor $\colim_{\I{I}}\colon\iFun(\I{I},\Univ)\to\Univ$ restricts to subterminal objects. By Remark~\ref{rem:subterminalObjectsDiagonal}, this is an immediate consequence of $\colim_{\I{I}}$ being left exact.
\end{proof}

\begin{lemma}
	\label{lem:filteredRightFibrationFiniteColimits}
	Let $\I{C}$ be a $\BB$-poset with finite colimits and let $p\colon\I{P}\into\I{C}$ be a sieve (i.e.\ a fully faithful right fibration). Then $\I{P}$ is filtered if and only if it is closed under finite colimits in $\I{C}$.
\end{lemma}
\begin{proof}
	It will be sufficient to show that whenever $d\colon \I{K}\to\I{P}$ is a finite diagram, then $\Under{\I{P}}{d}$ admits an initial object which is carried to the initial object in $\Under{\I{C}}{pd}$ along the induced functor $p_\ast\colon \Under{\I{P}}{d}\to\Under{\I{C}}{pd}$. Note that $p\colon \I{P}\to\I{C}$ being a sieve implies that $p_\ast$ is one as well. Now since $\I{C}$ has finite colimits, $\Under{\I{C}}{pd}$ admits an initial object $\colim (pd)$. Since $p_\ast$ is a right fibration, the inclusion $\Under{\I{P}}{d}\vert_{\colim (pd)}\into\Under{\I{P}}{d}$ of $p_\ast$ over $\colim (pd)$ is initial~\cite[Proposition~4.4.7]{MYoneda}. Since $\I{P}$ is assumed to be filtered, we furthermore have $(\Under{\I{P}}{d})^\gp\simeq 1$. Therefore, we must have $\Under{\I{P}}{d}\vert_{\colim (pd)}\simeq 1$ as this is already a subterminal $\BB$-groupoid (since $p$ is fully faithful, see Example~\ref{ex:subterminalPresheaves}). Hence $\Under{\I{P}}{d}$ admits an initial object which is preserved by $p_\ast$.
\end{proof}

\begin{proof}[{Proof of Proposition~\ref{prop:IndLExplicitly}}]
	The fact that $\I{P}$ has finite colimits implies that the $\BB$-category $\IInd(\I{P})$ is presentable and that the left Kan extension $h_!(j)\colon\IInd(\I{P})\to \I{L}$ is cocontinuous by Corollary~\ref{cor:UCocompleteIndUPresentable}, which shows~(1). 
	
	To show~(2), since $\IInd(\I{P})$ is by definition compactly generated and since we may always identify $\IInd(\I{P})^{\compact}\simeq\I{P}$ (as $\I{P}$ is a $\BB$-poset), we only need to verify that $\IInd(\I{P})$ is indeed a $\BB$-locale.
	To that end, note that $\IInd(\I{P})$ being presentable implies that the inclusion $\IInd(\I{P})\into\IPSh(\I{P})$ admits a left adjoint $l\colon \IPSh(\I{P})\to\IInd(\I{P})$ by \cite[Corollary~7.1.14]{MWColimits}.
	Moreover, Lemma~\ref{lem:SubFiltAccessible} implies that the inclusion $\iFun(\I{P}^{\op},\ISub_{\BB})\into\IPSh(\I{P})$ preserves filtered colimits. Therefore, $\IInd(\I{P})$ must be contained in $\iFun(\I{P}^{\op},\ISub_{\BB})$ and is therefore in particular a $\BB$-poset.
	Hence, we only need to check that $l$ preserves binary products (see Lemma~\ref{lem:BousfieldLocalisationUniversalityColimits}). This is equivalent to $\IInd(\I{P})$ being an exponential ideal in $\IPSh(\I{P})$, i.e.\ that for every object $F\colon A\to \IInd(\I{P})$ and every object $G\colon A\to \IPSh(\I{P})$ (in arbitrary context $A\in\BB$), the internal hom $\ihom_{\IPSh(\I{P})}(G,F)$ is contained in $\IInd(\I{P})$.
	By using \cite[Proposition~7.1.11]{MWColimits}, we can assume that $A\simeq 1$.
	Upon writing $G$ as a colimit of representables and using that the inclusion $\IInd(\I{P})\into\IPSh(\I{P})$ is continuous, we may assume without loss of generality that $G$ is itself representable by an object $U\colon 1\to \I{P}$.
	Thus, Yoneda's lemma and the fact that the Yoneda embedding is continuous imply that $\ihom_{\IPSh(\I{P})}(G,F)$ can be identified with the presheaf $F(U\times -)$.
	Note that by Proposition~\ref{prop:SheavesFlatness} a presheaf is contained in $\IInd(\I{P})$ if and only if it carries finite colimits in $\I{P}$ to limits. Thus, as $F$ by assumption has this property and since colimits are universal in $\I{L}$, the claim follows.
	
	Lastly, in light of Example~\ref{ex:subterminalPresheaves}, statement~(3) is an immediate consequence of Lemma~\ref{lem:filteredRightFibrationFiniteColimits}.
\end{proof}

In light of Proposition~\ref{prop:IndLExplicitly}, our task is now to find a full subposet of $\I{O}_{X}(Y)$ that is closed under binary products and finite colimits. To that end, note that the datum of an object in $\I{O}_{X}(Y)$ in context $U\subset X$ is precisely given by an open subset $V\subset f^{-1}(U)$. With that in mind, we may now define:
\begin{definition}
	\label{def:properClosure}
	Let $f\colon Y\to X$ be a locally proper and separated map of topological spaces. We say that an object $V\subset f^{-1}(U)$  has \emph{proper closure} if its closure $\overline{V}$ in $f^{-1}(U)$ is proper over $U$.
	We define the subposet $\I{O}^{\mathrm{pc}}_X(Y)\into\I{O}_{X}(Y)$ as the full subposet of $\I{O}_{X}(Y)$ that is spanned by these objects.
\end{definition}

\begin{remark}
	\label{rem:closureProperInvariance}
	In the situation of Definition~\ref{def:properClosure}, note that $f$ being separated implies that if $\overline{V}$ is proper over $U$, then $\overline{V}$ is also closed in $Y$ (see~\cite[Lemma~9.12]{Schnuerer2016}). Therefore, $\overline{V}$ is also the closure of $V$ in $Y$ in this case.
\end{remark}

A priori, the subposet $\I{O}^{\mathrm{pc}}_X(Y)$ is only \emph{spanned} by the objects with proper closure, so there could potentially be more objects. Our next result shows that this cannot happen:
\begin{lemma}
	\label{lem:properClosureSheaf}
	An object $V\subset f^{-1}(U)$ in $\I{O}_{X}(Y)$ is contained in $\I{O}^{\mathrm{pc}}_X(Y)$ if and only if it has proper closure.
\end{lemma}
\begin{proof}
	By definition, the condition is sufficient, so it suffices to prove that it is also necessary. This amounts to showing that the property of having proper closure is \emph{local} on the target: if $U=\bigcup_i U_i$ is a covering and if $\overline{V\cap f^{-1}(U_i)}\subset f^{-1}(U_i)$ is proper over $U_i$, then $\overline{V}\subset f^{-1}(U)$ is proper over $U$. Since properness is local on the target~\cite[\S~9.5]{Schnuerer2016}, this follows from the identity $\overline{V\cap f^{-1}(U_i)}= \overline{V}\cap f^{-1}(U_i)$.
\end{proof}

\begin{remark}
	\label{rem:BCproperClosure}
	Note that if $U\subset X$ is an arbitrary open subset, we may identify $\Over{\Shv(X)}{U}\simeq\Shv(U)$. In light of this identification, the $\Shv(U)$-locale $\pi_U^\ast\I{O}_{X}(Y)$ can be identified with $\I{O}_U(f^{-1}(U))$. Moreover, Lemma~\ref{lem:properClosureSheaf} implies that we obtain a canonical equivalence $\pi_U^\ast\I{O}^{\mathrm{pc}}_X(Y)\simeq \I{O}^{\mathrm{pc}}_U(f^{-1}(U))$ of full subposets in $
	\I{O}(f^{-1}(U)/U)$ (see also Remark~\ref{rem:locallyProperSeparatedLocalCondition}).
\end{remark}

Having an explicit description of the full subposet $\I{O}^{\mathrm{pc}}_X(Y)\into\I{O}_{X}(Y)$, we now proceed by showing that it satisfies the conditions of Proposition~\ref{prop:IndLExplicitly}:
\begin{lemma}
	\label{lem:properClosureFiniteLimitsColimits}
	$\I{O}^{\mathrm{pc}}_X(Y)$ is closed under binary products and finite colimits in $\I{O}_{X}(Y)$.
\end{lemma}
\begin{proof}
	Since the map $\varnothing\to X$ is always proper, Lemma~\ref{lem:properClosureSheaf} implies that it is enough to show that for every two objects $V\subset f^{-1}(U)$ and $V^\prime\subset f^{-1}(U)$ whose closure (in $f^{-1}(U)$) is proper over $U$, both $\overline{V\cup V^\prime}\to U$ and $\overline{V\cap V^\prime}\to U$ are proper. The first map is proper by~\cite[\S~9.7]{Schnuerer2016} and the fact that union and closure commute. The second map is proper as it can be decomposed into the composition $\overline{V\cap V^\prime}\into \overline{V}\to U$ where the first map is a closed embedding (hence proper) and the second map is proper by assumption.
\end{proof}

\begin{proposition}
	\label{prop:properClosureDenseInOpens}
	The $\Shv(X)$-category $\IInd[\Shv(X)](\I{O}^{\mathrm{pc}}_X(Y))$ is a locally coherent $\Shv(X)$-locale, and the left Kan extension $\IInd[\Shv(X)](\I{O}^{\mathrm{pc}}_X(Y))\to\I{O}_{X}(Y)$ of the inclusion is a Bousfield localisation.
\end{proposition}
\begin{proof}
	In light of Lemma~\ref{lem:properClosureFiniteLimitsColimits}, the first claim follows from Proposition~\ref{prop:IndLExplicitly}, so that it suffices to show the second one.
	We need to prove that the counit of the adjunction $\I{O}_{X}(Y)\leftrightarrows \IInd[\Shv(X)](\I{O}^{\mathrm{pc}}_X(Y))$ is an equivalence. Using Remark~\ref{rem:BCproperClosure}, it will be enough to check this on a global object $V\subset Y$.
	By \cite[Remark~6.3.6]{MWColimits}, this amounts to showing that $V$ is the colimit of the diagram $\Over{\I{O}^{\mathrm{pc}}_X(Y)}{V}\to \I{O}_{X}(Y)$. Using Proposition~\ref{prop:colimitsBLocales}, we only need to verify that $V\simeq \bigcup_{V^\prime\subset f^{-1}(U)\cap V} V^\prime$, where $U$ runs though all open subsets of $X$ and $V^\prime$ runs through all objects in $\I{O}^{\mathrm{pc}}_X(Y)(U)$ which are contained in $V$. This is an immediate consequence of the fact that $Y$ is locally proper and separated over $X$ (see Remark~\ref{rem:localPropernessAlternativeDefinition}).
\end{proof}

The following Lemma is a suitable relative analogue of the fact that in a locally compact Hausdorff space, every open covering of a compact subset has a finite refinement:

\begin{lemma}
	\label{lem:properSubsetsCoveringsLocallyFinite}
	Let $V\subset f^{-1}(U)$ be an object in $\I{O}^{\mathrm{pc}}_X(Y)(U)$, let $(V^\prime_j\subset f^{-1}(U_j))_{j\in J}$ be a family of objects in $\I{O}_{X}(Y)$ and suppose that $\overline{V}\subset \bigcup_{j\in J} V_j^\prime$. Then there is a covering $U=\bigcup_i U_i$ in $X$ such that for each $i$ there is a finite subset $J_i\subset J$ such that $U_i\subset U_{j}$ for all $j\in J_i$ and such that $\overline{V}\cap f^{-1}(U_i)\subset \bigcup_{j\in J_i} V^\prime_j$.
\end{lemma}
\begin{proof}
	In light of Remark~\ref{rem:BCproperClosure}, we may replace $Y/X$ by $f^{-1}(U)/U$ and each object $V_j^\prime\subset f^{-1}(U_j)$ by its intersection $V_j^\prime\cap f^{-1}(U)\subset f^{-1}(U_j\cap U)$ and can thus assume without loss of generality that $U=X$. Now since $\overline{V}$ is proper over $X$, its fibre $\overline{V}\vert_x$ over every $x\in X$ is compact (as being proper is stable under base change). Therefore, for each $x\in X$ we have a finite subset $J_x\subset J$ such that $\overline{V}\vert_x\subset \bigcup_{j\in J_x} V^\prime_j$. We can assume that $x\in U_j$ for all $j\in J_x$, since otherwise $V^\prime_j\vert_x$ would be empty. Now let $Z$ be the complement of $\bigcup_{j\in J_x} V^\prime_j$ in $Y$. Then $\overline{V}\cap Z$ is closed in $\overline{Y}$, hence $f(\overline{V}\cap Z)$ is closed in $X$ (as proper maps are always closed). By construction, a point $x^\prime\in X$ is contained in $f(\overline{V}\cap Z)$ precisely if $\overline{V}\vert_{x^\prime}$ is not contained in $\bigcup_{j\in J_x} V^\prime_j$. Therefore, if $U$ is the complement of $f(\overline{V}\cap Z)$ in $X$, then $U$ contains precisely those points $x^\prime\in X$ for which $\overline{V}\vert_{x^\prime}\subset \bigcup_{j\in J_x} V^\prime_j$. In other words, we have $\overline{V}\cap f^{-1}(U)\subset \bigcup_{j\in J_x} V^\prime_j$. Since $x\in U$, we may shrink $U$ if necessary so that it is contained in $\bigcap_{j\in J_x} U_x$. Now the claim follows.
\end{proof}

\begin{proof}[{Proof of Proposition~\ref{prop:continuousInternalLocaleInTopology}}]
	By Proposition~\ref{prop:properClosureDenseInOpens}, the left Kan extension $l\colon\IInd(\I{O}^{\mathrm{pc}}_X(Y))\to\I{O}_{X}(Y)$ is a Bousfield localisation. Therefore, we only need to show that $l$ admits a left adjoint $\lambda$ which preserves finite limits.
	
	We begin by showing that $l(X)$ has a left adjoint $\lambda_X$. On account of Proposition~\ref{prop:IndLExplicitly}, this amounts to showing that for every $V\subset Y$, there is sieve $\lambda_X(V)\colon\I{P}\into \I{O}^{\mathrm{pc}}_X(Y)$ which is closed under finite colimits such that for every other sieve $q\colon\I{Q}\into\I{O}^{\mathrm{pc}}_X(Y)$ with the same property and for which $V\subset\colim q$, we have $\I{P}\into \I{Q}$. We define $\I{P}$ to be the full subposet of $\I{O}^{\mathrm{pc}}_X(Y)$ which is spanned by those $V^\prime\subset f^{-1}(U)$ whose closure is contained in $V$. This property is clearly local in $X$, so that every object of $\I{P}$ in context $U\subset X$ will be of this form. Moreover, if $V^{\prime\prime}\subset V^\prime$ and $V^\prime$ is in $\I{P}(U)$, so is $V^{\prime\prime}$. Therefore, $\I{P}\into\I{O}^{\mathrm{pc}}_X(Y)$ is a sieve. Furthermore, $\I{P}$ is closed under finite colimits. Now let $V^\prime\subset f^{-1}(U)$ be an arbitrary object in $\I{P}$ in context $U\subset X$ and let $q\colon \I{Q}\into\I{O}^{\mathrm{pc}}_X(Y)$ be a sieve which is closed under finite colimits such that $V\subset\colim q$. We need to show that $V^\prime$ is contained in $\I{Q}$. By assumption, the closure $\overline{V^\prime}$ is contained in $\colim q$. Using Proposition~\ref{prop:colimitsBLocales}, we may identify
	\begin{equation*}
		\colim q \simeq \bigcup_{\substack{V^{\prime\prime}\in \I{Q}(U)\\U\subset X}} V^{\prime\prime}.
	\end{equation*}
	Therefore, Lemma~\ref{lem:properSubsetsCoveringsLocallyFinite} implies that there is a covering $U=\bigcup_{i\in I} U_i$ such that for each $i$ there are finitely many $V^{\prime\prime}_{i_1},\dots,V^{\prime\prime}_{i_n}\in \I{Q}(U_i)$ with the property that $V^\prime\cap f^{-1}(U_i)\subset\bigcup_{j=1}^n V^{\prime\prime}_{i_j}$. As $\I{Q}$ is closed under finite colimits, the right-hand side is contained in $\I{Q}(U_i)$. Consequently, $V^\prime$ is locally contained in $\I{Q}$ and must therefore also be globally contained in $\I{Q}$. 
	
	Now by carrying out the above argument with $f\vert_{f^{-1}(U)}$ in place of $f$, Remark~\ref{rem:BCproperClosure} implies that $l(U)$ admits a left adjoint $\lambda_U$ for every $U\subset X$. Furthermore, for every pair of opens $U\subset U^\prime\subset X$ and every $V^\prime\subset f^{-1}(U^\prime)$, it follows readily from the constructions that the restriction of $\lambda_{U^\prime}(V^\prime)$ to $U$ can be identified with $\lambda_U(V^\prime\cap f^{-1}(U))$. Therefore, we deduce from \cite[Corollary~3.2.11]{MWColimits} that $l$ admits a left adjoint, as desired. It is then clear from its explicit construction that this left adjoint preserves finite limits.
	
	Lastly, if $f$ is proper, then $\I{O}^{\mathrm{pc}}_X(Y)$ contains the final object of $\I{O}_{X}(Y)$, which immediately implies that $\I{O}_{X}(Y)$ is stably compact.
\end{proof}

\appendix

\chapter{Miscellaneous results on $\BB$-categories}
\numberwithin{equation}{section}
\section{Colimits in slice $\BB$-categories}

It is well-known that if $\CC$ is an $\infty$-category and $c\in\CC$ is an arbitrary object, the colimit of a diagram $d\colon \II\to \Over{\CC}{c}$ can be computed as the colimit of the underlying diagram $(\pi_c)_!d\colon \II\to\CC$. In this section we will establish the analogous statement for $\BB$-categories.

\begin{lemma}
	\label{lem:sliceFibrationInitialObject}
	Let $\I{C}$ be a $\BB$-category and let $f\colon c\to d$ be a map in $\I{C}$ in context $1\in\BB$ such that $c$ is an initial object in $\I{C}$. Then $f$ defines an initial object in $\Over{\I{C}}{d}$.
\end{lemma}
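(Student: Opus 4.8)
The plan is to establish initiality of $f$ by computing mapping $\BB$-groupoids out of it. Recall that an object $x\colon A\to\I{E}$ of a $\BB$-category is initial if and only if, for every object $y\colon B\to\pi_A^\ast\I{E}$, the mapping $\BB$-groupoid $\map{\pi_A^\ast\I{E}}(\pi_B^\ast x,y)$ is the terminal object of $\Over{\BB}{B}$ (the analogue of the classical fact, \cite{Martini2021a}). Applying this to $f\colon 1\to\Over{\I{C}}{d}$, and using that forming slice $\BB$-categories commutes with étale base change — so $\pi_A^\ast\Over{\I{C}}{d}\simeq\Over{\pi_A^\ast\I{C}}{\pi_A^\ast d}$ — and that $\pi_A^\ast c$ remains an initial object of $\pi_A^\ast\I{C}$, I would first reduce to the case $A=1$: it then suffices to show that $\map{\Over{\I{C}}{d}}(f,g)\simeq 1$ in $\BB$ for every object $g\colon x\to d$ of $\Over{\I{C}}{d}$ in context $1$.

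Next I would record the description of mapping $\BB$-groupoids in the slice. Since $d\colon 1\to\I{C}$, the projection $(\pi_d)_!\colon\Over{\I{C}}{d}\to\I{C}$ is the right fibration classified by the representable presheaf $\map{\I{C}}(-,d)\colon\I{C}^{\op}\to\Univ$; combining this with Yoneda's lemma yields, for objects $f\colon c\to d$ and $g\colon x\to d$ of $\Over{\I{C}}{d}$, a pullback square
\begin{equation*}
\begin{tikzcd}
\map{\Over{\I{C}}{d}}(f,g)\arrow[r]\arrow[d] & \map{\I{C}}(c,x)\arrow[d, "g\circ(-)"]\\
1\arrow[r, "f"] & \map{\I{C}}(c,d)
\end{tikzcd}
\end{equation*}
in $\BB$, natural in $g$. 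Because $c$ is initial in $\I{C}$, the presheaf $\map{\I{C}}(c,-)$ is terminal, so both $\map{\I{C}}(c,x)$ and $\map{\I{C}}(c,d)$ are the terminal object $1\in\BB$; hence the pullback square identifies $\map{\Over{\I{C}}{d}}(f,g)$ with $1\times_1 1\simeq 1$, which finishes the argument.

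The hard part is the second step: pinning down the mapping $\BB$-groupoids of $\Over{\I{C}}{d}$. If this is not directly available in \cite{Martini2021a}, I would deduce it from the general fact that for a right fibration $p\colon\I{P}\to\I{C}$ classified by $G\colon\I{C}^{\op}\to\Univ$, the mapping $\BB$-groupoid between objects $\tilde c,\tilde x$ of $\I{P}$ lying over $c,x$ is the fibre of the map $\map{\I{C}}(c,x)\to G(c)$ — induced by functoriality of $G$ together with the point of $G(x)$ classifying $\tilde x$ — over the point of $G(c)$ classifying $\tilde c$; specialising to $G=\map{\I{C}}(-,d)$ gives the square above. An alternative route that avoids mapping groupoids is to use the iterated-slice identity $\Under{(\Over{\I{C}}{d})}{f}\simeq\Over{(\Under{\I{C}}{c})}{f}$ together with the fact that $(\pi_c)_!\colon\Under{\I{C}}{c}\xrightarrow{\ \sim\ }\I{C}$ is an equivalence (being the left fibration classified by the terminal presheaf, as $c$ is initial), under which the canonical projection $\Under{(\Over{\I{C}}{d})}{f}\to\Over{\I{C}}{d}$ becomes an equivalence — and an object is initial precisely when the projection from its undercategory is an equivalence.
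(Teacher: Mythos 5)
Your main argument is correct and is essentially the paper's own proof: both compute $\map{\Over{\I{C}}{d}}(f,g)$ via the right-fibration structure of the slice projection and conclude from initiality of $c$ plus étale base change — indeed your pullback square, since $\map{\I{C}}(c,d)\simeq 1$, reduces exactly to the paper's identification $\map{\Over{\I{C}}{d}}(f,g)\simeq\map{\I{C}}(c,x)\simeq 1$. One caveat on your proposed alternative route: the iterated-slice identity $\Under{(\Over{\I{C}}{d})}{f}\simeq\Over{(\Under{\I{C}}{c})}{f}$ is Lemma~\ref{lem:underOverCategoriesCommute} of the paper, whose proof there uses the present lemma, so inside this paper that route would be circular unless you establish the identity independently.
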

\begin{proof}
	Let $g\colon c^\prime\to d$ be an arbitrary map in $\I{C}$ in context $1\in\BB$. We have an evident commutative square
	\begin{equation*}
		\begin{tikzcd}
			\Over{(\Over{\I{C}}{d})}{g}\arrow[r, "\simeq"]\arrow[d, "(\pi_g)_!"] & \Over{\I{C}}{c^\prime}\arrow[d, "(\pi_{c^\prime})_!"] \\
			\Over{\I{C}}{d}\arrow[r, "(\pi_d)_!"] & \I{C}.
		\end{tikzcd}
	\end{equation*}
	in which the upper horizontal map is an equivalence as it is a right fibration that preserves final objects.
	Moreover, since $c$ is initial, the diagram
	\begin{equation*}
		\begin{tikzcd}
			1\arrow[r, "f"]\arrow[d, "\id"] & \Over{\I{C}}{d}\arrow[d, "(\pi_d)_!"]\\
			1\arrow[r, "c"] & \I{C}
		\end{tikzcd}
	\end{equation*}
	is a pullback. Consequently, we obtain an equivalence $\map{/d}(f,g)\simeq\map{\I{C}}(c,c^\prime)$. Since $c$ is initial, we conclude that $\map{/d}(f,g)\simeq 1$. By replacing $\BB$ with $\Over{\BB}{A}$, the same conclusion holds for every map $g\colon c^\prime\to \pi_A^\ast d$ in context $A$. Hence, we deduce from~\cite[Proposition~4.3.14]{MYoneda} that $f$ is initial.
\end{proof}

\begin{lemma}
	\label{lem:underOverCategoriesCommute}
	Let $\I{C}$ be a $\BB$-category and let $f\colon c\to d$ be a map in $\I{C}$ in context $1\in\BB$. Then there is an equivalence $\Over{(\Under{\I{C}}{c})}{f}\simeq\Under{(\Over{\I{C}}{d})}{f}$ that commutes with the projections to $\Over{\I{C}}{d}$ and $\Under{\I{C}}{c}$.
\end{lemma}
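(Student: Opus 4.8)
The plan is to realise both iterated slices as one and the same $\BB$-category of \emph{factorisations} of $f$. Set
\begin{equation*}
	\I{F} := A\times_{\I{C}^{\Delta^1}}\I{C}^{\Delta^2},
\end{equation*}
the pullback of $f\colon A\to\I{C}^{\Delta^1}$ along the restriction $(\delta^{\{0,2\}})^\ast\colon\I{C}^{\Delta^2}\to\I{C}^{\Delta^1}$ to the long edge of $\Delta^2$. An object of $\I{F}$ in context $B$ is a pair $(a\colon B\to A,\ \tau\colon B\to\I{C}^{\Delta^2})$ with $(\delta^{\{0,2\}})^\ast\tau\simeq f\circ a$, so that the $\delta^{\{0,1\}}$-edge of $\tau$ has source $c\circ a$ and its $\delta^{\{1,2\}}$-edge has target $d\circ a$; hence restriction to these two edges (remembering the $A$-component) gives functors $\I{F}\to\Under{\I{C}}{c}$ and $\I{F}\to\Over{\I{C}}{d}$. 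I would then produce equivalences $\Over{(\Under{\I{C}}{c})}{f}\xrightarrow{\ \sim\ }\I{F}\xleftarrow{\ \sim\ }\Under{(\Over{\I{C}}{d})}{f}$ compatible with these projections to $\Under{\I{C}}{c}$ and $\Over{\I{C}}{d}$; composing them gives the assertion.

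For the first equivalence I would unwind the defining pullbacks of the slices recalled in \S~\ref{sec:recollection}. Writing $\Under{\I{C}}{c}\simeq A\times_{\I{C}}\I{C}^{\Delta^1}$ (pullback along $c$ and the source map) and using that the powering functor $\iFun{\const_{\BB}\Delta^1}{-}$, being a right adjoint, preserves limits, one rewrites $(\Under{\I{C}}{c})^{\Delta^1}$ as a pullback of $\I{C}^{\Delta^1\times\Delta^1}$ against $A$ along the restriction to $\Delta^1\times\{0\}$, which is forced to be the constant edge on $c$. Imposing in addition that the target of this morphism of $\Under{\I{C}}{c}$ equals $f$, and discarding the redundant factors, leaves the $\BB$-category of squares $\Delta^1\times\Delta^1\to\I{C}$ whose $\Delta^1\times\{0\}$-edge is degenerate and whose $\{1\}\times\Delta^1$-edge is $f$. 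Now the edge-collapse $\Delta^1\times\Delta^1\to\Delta^2$ exhibits $\Delta^2$ as the pushout $\Delta^0\sqcup_{\Delta^1\times\{0\}}(\Delta^1\times\Delta^1)$ in $\CatS$, so applying $\iFun{\const_{\BB}(-)}{\I{C}}$ identifies $\I{C}^{\Delta^2}$ with the $\BB$-category of such squares; tracing through which simplices of $\Delta^2$ the various restrictions correspond to, the above pullback becomes $\I{F}$, the "target $=f$" constraint becoming the long-edge constraint, the canonical slice projection $(\pi_f)_!\colon\Over{(\Under{\I{C}}{c})}{f}\to\Under{\I{C}}{c}$ becoming restriction to $\delta^{\{0,1\}}$, and restriction to $\delta^{\{1,2\}}$ giving the functor to $\Over{\I{C}}{d}$. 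The unwinding of $\Under{(\Over{\I{C}}{d})}{f}$ is entirely dual; the point to note is that there the collapsed edge is $\Delta^1\times\{1\}$, but the long edge of the resulting triangle is again $\delta^{\{0,2\}}$, so one lands on the same $\I{F}$, now with $(\pi_f)_!\colon\Under{(\Over{\I{C}}{d})}{f}\to\Over{\I{C}}{d}$ corresponding to restriction to $\delta^{\{1,2\}}$ and the auxiliary projection to $\Under{\I{C}}{c}$ to restriction to $\delta^{\{0,1\}}$.

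The work here is almost entirely bookkeeping: managing the several nested limits and the face/degeneracy identities, and—the one step that is not quite formal—verifying the pushout presentation $\Delta^2\simeq\Delta^0\sqcup_{\Delta^1\times\{0\}}(\Delta^1\times\Delta^1)$. At the level of simplicial sets the naive quotient has two parallel edges from the contracted vertex to the top vertex, but the image of the second non-degenerate triangle is a $2$-simplex with a degenerate face witnessing that these edges agree, so the collapse is a categorical equivalence onto $\Delta^2$; this can be cited or checked by hand. A harmless simplification is to replace $\BB$ by $\Over{\BB}{A}$ at the outset so that $A\simeq 1$ and $f$ is a genuine morphism; otherwise $A$ is simply carried along as an extra pullback factor, as above. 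Finally, if the join description of slice $\BB$-categories (in the spirit of the definition of $\Tw$ via $(\Delta^n)^{\op}\diamond\Delta^n$) is available from the references, there is a shorter route: both sides corepresent $\I{K}\mapsto\iFun{\Delta^0\diamond\I{K}\diamond\Delta^0}{\I{C}}$ with the two extremal vertices and the edge between them pinned to $f$, by associativity of the join, and Yoneda gives the equivalence together with the compatibility with the two projections, which come from restricting along $\I{K}\diamond\Delta^0\hookrightarrow\Delta^0\diamond\I{K}\diamond\Delta^0$ and $\Delta^0\diamond\I{K}\hookrightarrow\Delta^0\diamond\I{K}\diamond\Delta^0$.
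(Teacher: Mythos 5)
Your proof is correct, but it takes a genuinely different route from the paper's. You identify both iterated slices with the explicit factorisation category $A\times_{\I{C}^{\Delta^1}}\I{C}^{\Delta^2}$ (pulled back along the long-edge restriction $(\delta^{\{0,2\}})^\ast$) by unwinding the defining pullbacks, using that powering preserves limits, and invoking the pushout presentation $\Delta^2\simeq\Delta^0\sqcup_{\Delta^1\times\{0\}}(\Delta^1\times\Delta^1)$ in $\CatS$; this presentation is indeed valid (pushing out along $\Delta^1\to\Delta^0$ computes the localisation at the bottom edge, and a square with invertible bottom edge is naturally the same datum as a $2$-simplex), and your identification of the projections with restriction along $\delta^{\{0,1\}}$ and $\delta^{\{1,2\}}$ comes out right on both sides. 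The paper argues instead purely fibrationally: after reducing to $A\simeq 1$, it observes that $(\pi_c)_!$ induces a left fibration $\Over{(\Under{\I{C}}{c})}{f}\to\Over{\I{C}}{d}$, lifts the initial object $\id_f$ of $\Under{(\Over{\I{C}}{d})}{f}$ against this left fibration to obtain a comparison functor over $\Over{\I{C}}{d}$ carrying $\id_f$ to the ``identity square'' $\phi$, and then shows $\phi$ is initial using Lemma~\ref{lem:sliceFibrationInitialObject}, so that both left fibrations over $\Over{\I{C}}{d}$ are corepresented by $f$ and the comparison is an equivalence. Your route buys a symmetric, explicit model in which both projections are visible simultaneously and no input beyond the defining pullbacks is required, at the cost of the combinatorial verification of the edge collapse and a fair amount of nested-limit bookkeeping (note that the ``degenerate bottom edge'' condition must be read, as you implicitly do, as a pullback along the degeneracy $s_0$ rather than as a property of the square); the paper's route is shorter given the available toolkit of initial objects, orthogonality against left fibrations and Yoneda, and avoids simplicial combinatorics entirely, but is less symmetric and leans on the auxiliary Lemma~\ref{lem:sliceFibrationInitialObject}.
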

\begin{proof}
	Note that the projection $(\pi_c)_!\colon\Under{\I{C}}{c}\to\I{C}$ induces a left fibration $(\pi_c)_!\colon \Over{(\Under{\I{C}}{c})}{f}\to\Over{\I{C}}{d}$. By considering the commutative square
	\begin{equation*}
		\begin{tikzcd}
			c\arrow[r, "\id"]\arrow[d, "\id"] & c\arrow[d, "f"]\\
			c\arrow[r, "f"] & d
		\end{tikzcd}
	\end{equation*}
	as an object $\phi\colon 1\to \Over{(\Under{\I{C}}{c})}{f}$, we obtain a commutative square
	\begin{equation*}
		\begin{tikzcd}
			1\arrow[r, "\phi"]\arrow[d, "\id_f"] & \Over{(\Under{\I{C}}{c})}{f}\arrow[d, "(\pi_c)_!"]\\
			\Under{(\Over{\I{C}}{d})}{f}\arrow[r, "(\pi_c)_!"]\arrow[ur, dotted] & \Over{\I{C}}{d}.
		\end{tikzcd}
	\end{equation*}
	As the left vertical map is initial, the dotted filler exists, hence the proof is complete once we show that $\phi$ is initial too. By construction, the right fibration $(\pi_f)_!\colon \Over{(\Under{\I{C}}{c})}{f}\to \Under{\I{C}}{c}$ carries $\phi$ to an initial object. The desired result therefore follows from Lemma~\ref{lem:sliceFibrationInitialObject}.
\end{proof}

\begin{proposition}
	\label{prop:sliceFibrationColimits}
	Let $\I{I}$ and $\I{C}$ be $\BB$-categories and let $c\colon 1\to\I{C}$ be an object. Let $d\colon \I{I}\to\Over{\I{C}}{c}$ be a diagram and suppose that the diagram $(\pi_c)_! d\colon\I{I}\to\I{C}$ admits a colimit in $\I{C}$. Then $d$ admits a colimit in $\Over{\I{C}}{c}$, and $(\pi_c)_!$ preserves this colimit.
\end{proposition}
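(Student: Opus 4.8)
The plan is to verify directly that the copresheaf $\map{\iFun{\I{I}}{\Over{\I{C}}{c}}}(d,\diag(-))$ on $\Over{\I{C}}{c}$ — whose corepresenting object, if it exists, is by definition $\colim d$ — is corepresentable, with corepresenting object lying over $\I{K}:=\colim((\pi_c)_!d)$. Since $\iFun{\I{I}}{-}$ preserves pullbacks and satisfies $\iFun{\I{I}}{\I{C}^{\Delta^1}}\simeq\iFun{\I{I}}{\I{C}}^{\Delta^1}$, it carries the slice $\Over{\I{C}}{c}$ to a slice: there is an equivalence $\iFun{\I{I}}{\Over{\I{C}}{c}}\simeq\Over{\iFun{\I{I}}{\I{C}}}{\diag(c)}$ under which $d$ becomes an object of the right-hand side, i.e.\ a morphism $g\colon(\pi_c)_!d\to\diag(c)$ in $\iFun{\I{I}}{\I{C}}$, and under which the diagonal $\Over{\I{C}}{c}\to\iFun{\I{I}}{\Over{\I{C}}{c}}$ becomes the functor on slices induced by $\diag\colon\I{C}\to\iFun{\I{I}}{\I{C}}$ (which sends $c$ to $\diag(c)$). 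The universal property of the colimit cocone $\lambda\colon(\pi_c)_!d\to\diag(\I{K})$ then factors $g$ as $\diag(\phi)\circ\lambda$ for a unique morphism $\phi\colon\I{K}\to c$, which I view as an object of $\Over{\I{C}}{c}$ in context $1$ with $(\pi_c)_!(\phi)\simeq\I{K}$.

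The heart of the argument is to show that the left fibration $q\colon\Under{(\Over{\I{C}}{c})}{d}\to\Over{\I{C}}{c}$ classifying this copresheaf — that is, the base change of $\Under{\iFun{\I{I}}{\Over{\I{C}}{c}}}{d}\to\iFun{\I{I}}{\Over{\I{C}}{c}}$ along the diagonal — admits an initial object lying over $\phi$. Transporting along the equivalence above, this left fibration becomes the base change of $\Under{(\Over{\iFun{\I{I}}{\I{C}}}{\diag c})}{g}\to\Over{\iFun{\I{I}}{\I{C}}}{\diag c}$ along the functor induced by $\diag$. Now Lemma~\ref{lem:underOverCategoriesCommute}, applied to the morphism $g$ in $\iFun{\I{I}}{\I{C}}$, gives an equivalence $\Under{(\Over{\iFun{\I{I}}{\I{C}}}{\diag c})}{g}\simeq\Over{(\Under{\iFun{\I{I}}{\I{C}}}{(\pi_c)_!d})}{g}$ compatible with the projections to $\Over{\iFun{\I{I}}{\I{C}}}{\diag c}$ and to $\Under{\iFun{\I{I}}{\I{C}}}{(\pi_c)_!d}$. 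Base changing along $\diag$ and using that $\Under{\iFun{\I{I}}{\I{C}}}{(\pi_c)_!d}\times_{\iFun{\I{I}}{\I{C}}}\I{C}$ — the $\BB$-category of cocones under $(\pi_c)_!d$ with vertex in $\I{C}$ — is equivalent over $\I{C}$ to $\Under{\I{C}}{\I{K}}$ (which is exactly the assertion that $\I{K}=\colim((\pi_c)_!d)$, and under which $g$ corresponds to the object $\phi\colon\I{K}\to c$ of $\Under{\I{C}}{\I{K}}$), I expect to arrive at an equivalence $\Under{(\Over{\I{C}}{c})}{d}\simeq\Over{(\Under{\I{C}}{\I{K}})}{\phi}$ over $\Over{\I{C}}{c}$, where $\phi$ on the right is the object $\I{K}\to c$ of $\Under{\I{C}}{\I{K}}$. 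Since $\id_{\I{K}}$ is an initial object of $\Under{\I{C}}{\I{K}}$, Lemma~\ref{lem:sliceFibrationInitialObject} (applied with $\Under{\I{C}}{\I{K}}$ in place of $\I{C}$) shows that the morphism $\id_{\I{K}}\to\phi$ corresponding to $\phi$ is an initial object of $\Over{(\Under{\I{C}}{\I{K}})}{\phi}$; it lies over $\phi\in\Over{\I{C}}{c}$. By the dual of the characterisation of corepresentable copresheaves in terms of initial sections of the associated left fibration, this shows $\colim d\simeq\phi$. Finally, since $\phi$ was obtained by factoring $g$ through the colimit cocone of $(\pi_c)_!d$, we get $(\pi_c)_!(\colim d)\simeq\I{K}\simeq\colim((\pi_c)_!d)$, and the colimit cocone of $d$ — which is the lift of the colimit cocone of $(\pi_c)_!d$ along the right fibration $(\pi_c)_!$ with cone point $\phi$, using the same factorisation — is carried by $(\pi_c)_!$ back to the colimit cocone of $(\pi_c)_!d$, so $(\pi_c)_!$ preserves this colimit.

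The step that is not formal is the bookkeeping in the second paragraph: checking that $\iFun{\I{I}}{-}$ commutes with slices strongly enough that $d$, the diagonal and the projection $q$ transport as claimed, and that the successive base changes, together with the identification of cocones under $(\pi_c)_!d$ with vertex in $\I{C}$ with $\Under{\I{C}}{\I{K}}$, are all compatible with the structure maps down to $\Over{\I{C}}{c}$, so that the final equivalence $\Under{(\Over{\I{C}}{c})}{d}\simeq\Over{(\Under{\I{C}}{\I{K}})}{\phi}$ genuinely holds over $\Over{\I{C}}{c}$ and not merely as abstract $\BB$-categories. Once these compatibilities are established, the conclusion is a formal combination of Lemma~\ref{lem:sliceFibrationInitialObject} and Lemma~\ref{lem:underOverCategoriesCommute} with the fact that $\id_{\I{K}}$ is initial in $\Under{\I{C}}{\I{K}}$.
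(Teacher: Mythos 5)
Your proof is correct and follows essentially the same route as the paper's: both transpose $d$ to a cocone $(\pi_c)_!d\to\diag(c)$, apply Lemma~\ref{lem:underOverCategoriesCommute} to this map and pull back along the diagonal, and then use Lemma~\ref{lem:sliceFibrationInitialObject} to exhibit an initial object of $\Under{(\Over{\I{C}}{c})}{d}$ lying over the induced map $\I{K}\to c$. The only cosmetic difference is that you route the argument through the corepresenting slice $\Under{\I{C}}{\I{K}}$ (initiality of $\id_{\I{K}}$), whereas the paper works directly with the $\BB$-category of cocones under $(\pi_c)_!d$ and the initiality of its colimit cocone, which amounts to the same thing under the corepresentability equivalence.
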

\begin{proof}
	On account of the equivalence $\IFun(\I{I},\Over{\I{C}}{c})\simeq\Over{\IFun(\I{I},\I{C})}{\diag(c)}$, the diagram $d\colon \I{K}\to\Over{\I{C}}{c}$ corresponds to an object $d^\prime=(\pi_c)_! d\to \diag(c)$ in $\Over{\IFun(\I{I},\I{C})}{\diag(c)}$, which can be equivalently regarded as a cocone $\overline{d^\prime}\colon 1\to \Under{\I{C}}{d^\prime}$. One therefore obtains a unique map 
	\begin{equation*}
		\begin{tikzcd}[column sep={3em,between origins}]
			& d^\prime \arrow[dl]\arrow[dr, "\overline{ d^\prime}"]&\\
			\diag(\colim d^\prime)\arrow[rr] && \diag(c)
		\end{tikzcd}
	\end{equation*}
	in $\Under{\I{C}}{d^\prime}$ (by the universal property of initial objects, see~\cite[Proposition~4.3.14]{MYoneda}) which can be regarded as an object in $\Over{(\Under{\I{C}}{d^\prime})}{\overline{ d^\prime}}$.
	Now Lemma~\ref{lem:underOverCategoriesCommute} gives rise to an equivalence
	\begin{equation*}
		\Over{(\Under{\IFun(\I{I},\I{C})}{d^\prime})}{\overline{ d^\prime}}\simeq \Under{\IFun(\I{I},\Over{\I{C}}{c})}{d}
	\end{equation*}
	over $\IFun(\I{I},\Over{\I{C}}{c})$ the pullback of which along the diagonal map determines an equivalence $\Over{(\Under{\I{C}}{d^\prime})}{\overline{ d^\prime}}\simeq \Under{(\Over{\I{C}}{c})}{d}$ that fits into a commutative diagram
	\begin{equation*}
		\begin{tikzcd}[column sep={4.5em,between origins}]
			\Over{(\Under{\I{C}}{d^\prime})}{\overline{ d^\prime}}\arrow[rr, "\simeq"]\arrow[dr, "(\pi_{\overline{d^\prime}})_!"'] && \Under{(\Over{\I{C}}{c})}{d}\arrow[dl, "(\pi_c)_!"]\\
			& \Under{\I{C}}{d}
		\end{tikzcd}
	\end{equation*}
	Consequently, the colimit cocone $d^\prime\to\colim d^\prime$ lifts along $(\pi_c)_!$ to a cocone under $d$. By Lemma~\ref{lem:sliceFibrationInitialObject}, this lift defines an initial object and therefore a colimit cocone, hence the claim follows.
\end{proof}

\begin{corollary}
	\label{cor:sliceCategoryUCocomplete}
	Let $\I{U}$ be an internal class of $\BB$-categories and let $\I{C}$ be a $\I{U}$-cocomplete $\BB$-category. For every object $c\colon 1\to\I{C}$, the slice $\BB$-category $\Over{\I{C}}{c}$ is $\I{U}$-cocomplete, and the forgetful functor $(\pi_c)_!$ is $\I{U}$-cocontinuous.\qed
\end{corollary}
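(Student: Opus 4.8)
The plan is to deduce the statement directly from Proposition~\ref{prop:sliceFibrationColimits}, which already contains the essential geometric input; the remaining work is routine bookkeeping with étale base change and with the definition of $\I{U}$-cocompleteness.

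First I would unwind the definitions: I must show that for every $A\in\BB$ and every $\I{I}\in\I{U}(A)$ the $\Over{\BB}{A}$-category $\pi_A^\ast\Over{\I{C}}{c}$ admits $\I{I}$-indexed colimits and that $\pi_A^\ast((\pi_c)_!)$ preserves them. Since slice $\BB$-categories are compatible with étale base change — the slice is defined by a pullback involving $\I{C}^{\Delta^1}$, and $\pi_A^\ast$ preserves pullbacks and commutes with $(-)^{\Delta^1}$ because $\Delta^1$ is a constant $\BB$-category — one has $\pi_A^\ast\Over{\I{C}}{c}\simeq\Over{\pi_A^\ast\I{C}}{\pi_A^\ast c}$ with $\pi_A^\ast((\pi_c)_!)$ identified with $(\pi_{\pi_A^\ast c})_!$; moreover $\I{U}$-cocompleteness is a local condition~\cite[Remark~5.1.3]{Martini2021a}, so $\pi_A^\ast\I{C}$ is $\pi_A^\ast\I{U}$-cocomplete. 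Replacing $\BB$ by $\Over{\BB}{A}$ therefore reduces us to the case $A\simeq 1$, and since the existence of a colimit functor $\colim_{\I{I}}$ is equivalent to the existence of $\colim d$ for each individual diagram $d$, each of which can in turn be transported to context $1$ by a further base change, it suffices to show that every diagram $d\colon\I{I}\to\Over{\I{C}}{c}$ with $\I{I}\in\I{U}(1)$ admits a colimit in $\Over{\I{C}}{c}$ that is preserved by $(\pi_c)_!$.

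Given such a $d$, I would form the underlying diagram $(\pi_c)_!d\colon\I{I}\to\I{C}$. As $\I{C}$ is $\I{U}$-cocomplete and $\I{I}\in\I{U}(1)$, this diagram has a colimit in $\I{C}$, so Proposition~\ref{prop:sliceFibrationColimits} applies and yields at once that $d$ has a colimit in $\Over{\I{C}}{c}$ and that $(\pi_c)_!$ preserves it. This establishes the $\I{U}$-cocompleteness of $\Over{\I{C}}{c}$ and the $\I{U}$-cocontinuity of $(\pi_c)_!$ simultaneously.

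I do not expect any genuine obstacle: the only points that require a little care are the compatibility of slices with base change and the standard reduction of "the colimit functor exists" to "every individual diagram in context $1$ has a colimit", both of which are part of the general machinery of~\cite{Martini2021a}. The substantive content — that colimits in a slice $\BB$-category are computed on underlying diagrams — has already been isolated in Proposition~\ref{prop:sliceFibrationColimits}, so the corollary is essentially a formal consequence.
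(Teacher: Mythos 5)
Your proof is correct and matches the paper's intention exactly: the corollary is stated with no written proof precisely because it follows from Proposition~\ref{prop:sliceFibrationColimits} together with the standard étale base change and locality arguments you spell out. Nothing is missing.
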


\section{Adjunctions of slice $\BB$-categories}
\label{sec:sliceAdjunctions}

In this section we collect some basic facts about adjunctions between slice $\BB$-categories. More precisely, we show how an adjunction between $\BB$-categories induces an adjunction on slice $\BB$-categories, and we furthermore investigate the relation between the existence of pullbacks in a $\BB$-category and adjunctions between its slices. Everything discussed in this appendix is well-known for $\infty$-categories, and our proofs are straightforward adaptations of their $\infty$-categorical counterparts.
\begin{proposition}
	\label{prop:adjunctionsSliceCategories}
	Let $(l\dashv r)\colon\I{C}\leftrightarrows\I{D}$ be an adjunction between $\BB$-categories, and let $c\colon A\to\I{C}$ be an arbitrary object. Then the induced functor $\Over{r}{c}\colon\Over{\I{C}}{c}\to\Over{\I{D}}{r(c)}$ of $\Over{\BB}{A}$-categories admits a left adjoint $l_c$ that is explicitly given by the composition
	\begin{equation*}
		l_c\colon\Over{\I{D}}{r(c)}\xrightarrow{\Over{l}{r(c)}}\Over{\I{C}}{lr(c)}\xrightarrow{(\epsilon c)_!}\Over{\I{C}}{c},
	\end{equation*}
	where $\epsilon c\colon lr(c)\to c$ is the counit of the adjunction $(l\dashv r)$.
\end{proposition}
\begin{proof}
	Using~\cite[Remark~4.2.2]{MYoneda}, we may replace $\BB$ by $\Over{\BB}{A}$ and the adjunction $l\dashv r$ by its image along $\pi_A^\ast$ and can therefore assume without loss of generality that $A\simeq 1$. Let us fix an adjunction unit $\eta$ and an adjunction counit $\epsilon$. In light of the commutative diagram
	\begin{equation*}
		\begin{tikzcd}[row sep=small]
			&&& \Over{\I{C}}{c}\arrow[rr, "\Over{r}{c}"]\arrow[dddl, bend left, "(\pi_c)_!"] && \Over{\I{D}}{r(c)}\arrow[dddl, bend left, "(\pi_{r(c)})_!"]\\
			\Over{\I{D}}{r(c)}\arrow[rr, "\Over{l}{r(c)}"]\arrow[dd, "(\pi_{r(c)})_!"']\arrow[urrr, bend left, "l_c"'] && \Over{\I{C}}{lr(c)}\arrow[ur, "(\epsilon c)_!"]\arrow[rr, "\Over{r}{lr(c)}", near end, crossing over]\arrow[dd, "(\pi_{lr(c)})_!"'] && \Over{\I{D}}{rlr(c)}\arrow[ur, "(\epsilon r(c))_!"]\arrow[dd, "(\pi_{rlr(c)})_!"']\\
			&&& && \\
			\I{D}\arrow[rr, "l"] && \I{C}\arrow[rr, "r"] && \I{D}
		\end{tikzcd}
	\end{equation*}
	we obtain an equivalence $rl(\pi_{r(c)})_!\simeq(\pi_{r(c)})_!\Over{r}{c}l_c$, which in turn yields a commutative diagram
	\begin{equation*}
		\begin{tikzcd}[column sep=large]
			1\arrow[d, hookrightarrow, "d^0"]\arrow[r, "\id_{r(c)}"]\arrow[rr,bend left, "\epsilon r(c)"]&\Over{\I{D}}{r(c)}\arrow[d, "d^0"]\arrow[r, "\Over{r}{c}l_c"] & \Over{\I{D}}{r(c)}\arrow[d, "(\pi_{r(c)})_!"]\\
			\Delta^1\arrow[r, "\id\otimes \id_{r(c)}"] \arrow[rr, bend right, "r\eta c"]&\Delta^1\otimes\Over{\I{D}}{r(c)}\arrow[r, "\eta(\pi_{r(c)})_!"]\arrow[ur, dashed, "\eta_c"] & \I{D}.
		\end{tikzcd}
	\end{equation*}
	Note that as $d^0$ is a final functor, the lift $\eta_c$ exists. Moreover, since restricting $\eta_c$ along $\id\otimes \id_{r(c)}$ produces a lift of the outer square in the above diagram, the uniqueness of such lifts and the triangle identities for the adjunction $l\dashv r$ imply that $\eta_c$ carries the final object $\id_{r(c)}$ to the map in $\Over{\I{D}}{r(c)}$ that is encoded by the commutative triangle
	\begin{equation*}
		\begin{tikzcd}
			r(c)\arrow[dr, "\id_{r(c)}"']\arrow[r, "r\eta(c)"] & rlr(c)\arrow[d, "\epsilon r(c)"]\\
			& r(c).
		\end{tikzcd}
	\end{equation*}
	In particular, the functor $\Over{\I{D}}{r(c)}\xrightarrow{d^1}\Delta^1\otimes\Over{\I{D}}{r(c)}\xrightarrow{\eta_c}\Over{\I{D}}{r(c)}$ preserves the final object. Since this functor by construction commutes with the projection $(\pi_{r(c)})_!$, it must therefore be equivalent to the identity on $\Over{\I{D}}{r(c)}$, so that $\eta_c$ encodes a map $\id_{\Over{\I{D}}{r(c)}}\to\Over{r}{c}l_c$. 
	
	Dually, we also have an equivalence $lr (\pi_c)_!\simeq (\pi_c)_!l_c\Over{r}{c}$, so that the map $\epsilon(\pi_c)_!\colon\Delta^1\otimes\Over{\I{C}}{c}\to\I{C}$ encodes a morphism of functors $(\pi_c)_!l_c\Over{r}{c}\to(\pi_c)_!$. By an analogous argument as above, we can now construct a lift $\epsilon_c\colon \Delta^1\otimes\Over{\I{C}}{c}\to\Over{\I{C}}{c}$ of $\epsilon(\pi_c)_!$ along $(\pi_c)_!$ that encodes a morphism of functors $l_c\Over{r}{c}\to\id_{\Over{\I{C}}{c}}$. To complete the proof, it now suffices to show that the two compositions $\Over{r}{c}\xrightarrow{\eta_c\Over{r}{c}}\Over{r}{c}l_c\Over{r}{c}\xrightarrow{\Over{r}{c}\epsilon_c}\Over{r}{c}$ and $l_c\xrightarrow{l_c\eta_c}l_c\Over{r}{c}l_c\xrightarrow{\epsilon l_c} l_c$ are equivalences, cf.~\cite[Proposition~3.1.4]{MWColimits}. Using that $(\pi_{r(c)})_!$ and $(\pi_{c})_!$ are right fibrations and therefore in particular conservative, it suffices to show that these two morphisms become equivalences after postcomposition with $(\pi_{r(c)})_!$ and $(\pi_{c})_!$, respectively. Therefore, the claim follows from the triangle identities for $\eta$ and $\epsilon$, together with the observation that by construction we may identify $(\pi_{r(c)})_!\eta_c\simeq \eta (\pi_{r(c)})_!$ and $(\pi_c)_!\epsilon_c\simeq\epsilon (\pi_c)_!$.
\end{proof}

\begin{corollary}
	\label{cor:adjunctionsSliceCategoryPullback}
	Let $(l\dashv r)\colon\I{C}\to\I{D}$ be an adjunction between $\BB$-categories, and suppose that $\I{C}$ admits pullbacks. Then for any object $d\colon A\to\I{D}$ in $\I{D}$ in context $A\in\BB$, the induced functor $\Over{l}{d}\colon\Over{\I{D}}{d}\to\Over{\I{C}}{l(c)}$ admits a right adjoint $r_d$ that is explicitly given by the composition
	\begin{equation*}
		r_d\colon\Over{\I{C}}{l(d)}\xrightarrow{\Over{r}{l(d)}} \Over{\I{D}}{rl(d)}\xrightarrow{(\eta d)^\ast}\Over{\I{D}}{d}
	\end{equation*}
	in which $(\eta d)^\ast$ is the pullback functor along the adjunction unit $\eta d\colon d\to rl(d)$.
\end{corollary}
\begin{proof}
	To begin with, note that the discussion in \S~\ref{sec:descent} shows that since $\I{C}$ has pullbacks, the functor $(\eta d)^\ast$ indeed exists and is right adjoint to the projection $(\eta d)_!$. Now by Proposition~\ref{prop:adjunctionsSliceCategories}, the functor $\Over{r}{l(d)}\colon \Over{\I{C}}{l(d)}\to\Over{\I{D}}{rl(d)}$ admits a left adjoint $l_{r(d)}$ that is given by the composition $(\epsilon l(d))_! \Over{l}{rl(d)}$. Therefore, the functor $r_d$ is right adjoint to the composition
	\begin{equation*}
		\Over{D}{d}\xrightarrow{(\eta d)_!}\Over{\I{D}}{rl(d)}\xrightarrow{\Over{l}{rl(d)}}\Over{C}{lrl(d)}\xrightarrow{(\epsilon l(d))_!}\Over{\I{C}}{l(d)}.
	\end{equation*}
	It now suffices to notice that on account of the triangle identities, this functor is equivalent to $\Over{l}{d}$.
\end{proof}

\begin{lemma}
	\label{lem:mappingGroupoidsSlice}
	Let $\I{C}$ be a $\BB$-category and let $c\colon 1\to\I{C}$ be an arbitrary object. For any map $f\colon d\to c$, there is a pullback square
	\begin{equation*}
		\begin{tikzcd}
			\map{\Over{\I{C}}{c}}(-,f)\arrow[r, "(\pi_c)_!"]\arrow[d] & \map{\I{C}}((\pi_c)_!(-),d)\arrow[d, "f_\ast"]\\
			\diag(1_{\Univ})\arrow[r] & \map{\I{C}}((\pi_c)_!(-),c)
		\end{tikzcd}
	\end{equation*}
	in $\IPSh(\Over{\I{C}}{c})$.
\end{lemma}
\begin{proof}
	Since $(\pi_c)_!$ is a right fibration and therefore a cartesian fibration in which every map is a cartesian morphism (see the discussion in~\cite[\S~3.2]{MCocartesian}), the claim follows from the very definition of cartesian morphisms and the fact that $\id_c$ is final in $\Over{\I{C}}{c}$.
\end{proof}

\begin{proposition}
	\label{prop:characterisationCategoriesWithProducts}
	Let $\I{C}$ be a $\BB$-category with a final object $1_{\I{C}}\colon 1\to\I{C}$. Then the following are equivalent:
	\begin{enumerate}
		\item $\I{C}$ admits finite products;
		\item for every $c\colon A\to\I{C}$ in context $A\in\BB$, the projection $(\pi_c)_!\colon\Over{\I{C}}{c}\to\pi_A^\ast\I{C}$ admits a right adjoint $\pi_c^\ast$.
	\end{enumerate}
	Moreover, if either of these conditions is satisfied, the composition $(\pi_c)_!\pi_c^\ast$ is equivalent to the endofunctor $-\times c$ on $\pi_A^\ast\I{C}$.
\end{proposition}
\begin{proof}
	Let us first assume that $\I{C}$ admits finite products. By replacing $\BB$ with $\Over{\BB}{A}$ and $\I{C}$ with $\pi_A^\ast\I{C}$, we may assume that $A\simeq 1$. Suppose that $d\colon 1\to\I{C}$ is an arbitrary object. On account of the equivalence $1_{\I{C}}\times c\simeq c$, we have a commutative square
	\begin{equation*}
		\begin{tikzcd}
			1\arrow[r, "\id"]\arrow[d, "d_0"] &1\arrow[d, "1_{\I{C}}"']\arrow[r, "\id_{c}"] & \Over{\I{C}}{c}\arrow[d, "(\pi_c)_!"]\\
			\Delta^1\arrow[r, "\pi_d"] & \I{C}\arrow[r, "-\times c"] \arrow[ur, "\pi_c^\ast", dashed]& \I{C}
		\end{tikzcd}
	\end{equation*}
	(in which $\pi_d\colon d\to 1_{\I{C}}$ denotes the unique map), and since $1_{\I{C}}$ is final, the lift $\pi_c^\ast$ exists. Note that the projection $\pr_0\colon -\times c\to \id_{\I{C}}$ defines a map $\epsilon\colon (\pi_c)_!\pi_c^\ast\to\id_{\I{C}}$. Now the fact that $\pi_c^\ast$ by construction preserves final objects implies that this functor carries the unique map $\pi_d\colon d\to 1_{\I{C}}$ to the unique map $\pi_{\pi_c^\ast(d)}\colon \pi_c^\ast(d)\to \id_c$. As this implies that the image of $\pi_{\pi_c^\ast(d)}$ along $(\pi_c)_!$ recovers the projection $\pr_1\colon d\times c\to c$, the commutative square
	\begin{equation*}
		\begin{tikzcd}
			\map{\I{C}}(-,(\pi_c)_!\pi_c^\ast(d))\arrow[r, "\epsilon_\ast"]\arrow[d, "(\pi_c)_!(\pi_{\pi_c^\ast(d)})_\ast"] & \map{\I{C}}(-, d)\arrow[d]\\
			\map{\I{C}}(-, c)\arrow[r] & \diag(1_{\Univ})
		\end{tikzcd}
	\end{equation*}
	is a pullback in $\IPSh(\I{C})$. Together with Lemma~\ref{lem:mappingGroupoidsSlice}, this shows that the composition
	\begin{equation*}
		\map{\Over{\I{C}}{c}}(-,\pi_c^\ast(d))\xrightarrow{(\pi_c)_!}\map{\I{C}}((\pi_c)_!(-),(\pi_c)_!\pi_c^\ast(d))\xrightarrow{\epsilon_\ast}\map{\I{C}}((\pi_c)_!(-),d)
	\end{equation*}
	is an equivalence.  By replacing $\BB$ with $\Over{\BB}{A}$ and $\I{C}$ with $\pi_A^\ast\I{C}$, the same assertion is true for any object $d\colon A\to\I{C}$. Hence $\pi_c^\ast$ is right adjoint to $(\pi_c)_!$.
	
	Conversely, suppose that $(\pi_c)_!$ admits a right adjoint $\pi_c^\ast$ for all objects $c\colon A\to\I{C}$, and let us show that $\I{C}$ admits finite products. By induction, it suffices to consider binary products. Given any pair of objects $(c,d)\colon A\to\I{C}\times\I{C}$, we need to show that the presheaf $\map{\pi_A^\ast\I{C}}(\diag(-), (c,d))$ is representable. We may again assume that $A\simeq 1$. Let us show that the object $(\pi_c)_!\pi_c^\ast(d)$ represents this presheaf. Note that there is a pullback square
	\begin{equation*}
		\begin{tikzcd}
			\map{\I{C}\times\I{C}}(\diag(-),(c,d))\arrow[r]\arrow[d] & \map{\I{C}}(-,d)\arrow[d]\\
			\map{\I{C}}(-,d)\arrow[r] & \diag(1_{\Univ}).
		\end{tikzcd}
	\end{equation*}
	To complete the proof, it therefore suffices to show that the maps $\epsilon_\ast\colon \map{\I{C}}(-, (\pi_c)_!\pi_c^\ast(d))\to\map{\I{C}}(-, d)$ and $(\pi_c)_!(\pi_{\pi_c^\ast(d)})_\ast\colon \map{\I{C}}(-, (\pi_c)_!\pi_c^\ast(d))\to\map{\I{C}}(-, c)$ exhibit $\map{\I{C}}(-, (\pi_c)_!\pi_c^\ast(d))$ as a product of $\map{\I{C}}(-,c)$ and $\map{\I{C}}(-, d)$ in $\IPSh(\I{C})$. By the object-wise criterion for equivalences and~\cite[Corollary~3.1.9]{MWColimits}, this follows once we show that for every $z\colon 1\to\I{C}$ the commutative square
	\begin{equation*}
		\begin{tikzcd}
			\map{\I{C}}(z, (\pi_c)_!\pi_c^\ast(d))\arrow[r, "\epsilon_\ast"]\arrow[d, "(\pi_c)_!(\pi_{\pi_c^\ast(d)})_\ast"] & \map{\I{C}}(z,d)\arrow[d]\\
			\map{\I{C}}(z,c)\arrow[r] & 1
		\end{tikzcd}
	\end{equation*}
	is a pullback square in $\BB$. By descent in $\BB$ and Lemma~\ref{lem:mappingGroupoidsSlice}, this follows once we show that for any map $f\colon \pi_A^\ast(z)\to \pi_A^\ast(c)$ the composition
	\begin{equation*}
		\map{\Over{\I{C}}{c}}(f,\pi_c^\ast(d))\xrightarrow{(\pi_c)_!}\map{\I{C}}(\pi_A^\ast(z), \pi_A^\ast (\pi_c)_!\pi_c^\ast(d))\xrightarrow{\pi_A^\ast(\epsilon)_\ast} \map{\I{C}}(\pi_A^\ast(z), \pi_A^\ast(d))
	\end{equation*}
	is an equivalence. Since this is just the adjunction property of $(\pi_c)_!\dashv \pi_c^\ast$, the claim follows.
\end{proof}
\begin{remark}
	\label{rem:unitCounitSliceAdjunctions}
	In the situation of Proposition~\ref{prop:characterisationCategoriesWithProducts}, the proof shows that in light of the equivalence $(\pi_c)_!\pi_c^\ast\simeq -\times c$, the counit of the adjunction $(\pi_c)_!\dashv\pi_c^\ast$ can be identified with $\pr_0\colon -\times c\to \id_{\pi_A^\ast\I{C}}$. Similarly, if $d\to c$ is an arbitrary map in context $A\in\BB$, the unit $d\to \pi_c^\ast(\pi_c)_!$ is characterised by the condition that the composition $(\pi_c)_!d\to(\pi_c)_!\pi_c^\ast(\pi_c)_! d\simeq ((\pi_c)_! d)\times c\to (\pi_c)_! d$ is equivalent to the identity. It is therefore determined by the map $(\pi_c)_! d\to ((\pi_c)_! d)\times c$ that is given by the identity on the first factor and the structure map $d\to c$ on the second factor.
\end{remark}

\begin{corollary}
	\label{cor:characterisationCategoriesWithPullbacks}
	For any $\BB$-category $\I{C}$, the following are equivalent:
	\begin{enumerate}
		\item $\I{C}$ admits pullbacks;
		\item for every map $f\colon c\to d$ in $\I{C}$ in context $A\in\BB$, the projection $f_!\colon\Over{\I{C}}{c}\to\Over{\I{C}}{d}$ admits a right adjoint $f^\ast$.
	\end{enumerate}
	Moreover, if either of these conditions are satisfied, then the composition $f_! f^\ast$ can be identified with the pullback functor $-\times_d c$.
\end{corollary}
\begin{proof}
	In light of Proposition~\ref{prop:characterisationCategoriesWithProducts}, it will be enough to show that $\I{C}$ admits pullbacks if and only if for every object $c\colon A\to \I{C}$ the $\Over{\BB}{A}$-category $\Over{\I{C}}{c}$ admits binary products. Using Example~\ref{ex:externalLimitsColimits}, this is easily reduced to the corresponding statement for $\infty$-categories, which appears as~\cite[Theorem~6.6.9]{cisinski2019a}.
\end{proof}

\section{Decomposition of colimits}
In~\cite[\S~4.7]{MWColimits} we showed that whenever $\I{C}$ is a $\BB$-category that admits colimits indexed by $\kappa$-small constant $\BB$-categories and $\KK\to \Cat(\BB),~k\mapsto\I{J}_k$ is a diagram that is indexed by a $\kappa$-small constant $\BB$-category $\KK$, then $\I{C}$ admits colimits indexed by $\I{J}=\colim_k \I{J}_k$ as soon as it admits $\I{J}_k$-indexed colimits for all $k\in \KK$. In this section, our goal is to generalise this result by allowing $\KK$ to be an arbitrary $\BB$-category instead of merely a constant one. More precisely, we will show:
\begin{proposition}
	\label{prop:decompositionExistenceColimits}
	Let $\I{U}$ be an internal class, let $d\colon\I{I}\to\I{U}$ be a diagram such that $\I{I}\in\I{U}(1)$, and let $\I{K}=\colim d$. Then every $\I{U}$-cocomplete $\BB$-category admits $\I{K}$-indexed colimits, and every $\I{U}$-cocontinuous functor between $\I{U}$-cocomplete $\BB$-categories preserves $\I{K}$-indexed colimits.
\end{proposition}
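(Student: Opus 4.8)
The plan is to reduce the statement about colimits indexed by the (possibly non-constant) $\BB$-category $\I{K} = \colim d$ to the already-established statement for colimits indexed by a \emph{constant} $\BB$-category, which is~\cite[\S~4.7]{Martini2021a}. First I would unwind what $\I{K} = \colim d$ means when the diagram $d\colon\I{I}\to\I{U}\into\ICat_{\BB}$ takes values in the $\BB$-category of $\BB$-categories: since $\ICat_{\BB}$ arises as a reflective subcategory of $\IPSh_{\Univ}(\Delta)$ and the localisation is cocontinuous, the colimit $\I{K}$ is computed by first taking the colimit of the underlying $\Delta$-indexed diagram of simplicial objects and then localising. Equivalently, using the Grothendieck-style decomposition of colimits over a base, one can write $\I{K}$ as a two-step colimit: the colimit over $\I{I}$ of the constant family assembles, via the left fibration $\Under{\I{I}}{-}$, into the total $\BB$-category $\int_{\I{I}} d$ and one has $\I{K}\simeq\colim(\int_{\I{I}} d \to \I{I})$ together with the fibrewise colimits. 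The key point I want is that $\I{K}$-indexed colimits can be built from $\I{I}$-indexed colimits together with $d(i)$-indexed colimits for the various objects $i\colon A\to\I{I}$, all of which lie in $\pi_A^\ast\I{U}$ by hypothesis.

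The main step is therefore the following \emph{colimit decomposition lemma}: if $\I{C}$ is $\I{U}$-cocomplete and $p\colon\I{E}\to\I{I}$ is a $\BB$-category over $\I{I}\in\I{U}(1)$ whose fibres $\I{E}_i$ (for $i\colon A\to\I{I}$) lie in $\pi_A^\ast\I{U}(A)$, then $\I{C}$ admits $\I{E}$-indexed colimits and they are computed as the $\I{I}$-indexed colimit of the fibrewise colimits. This is the internal analogue of the Fubini-type formula $\colim_{\I{E}} f \simeq \colim_{i\in\I{I}}\colim_{\I{E}_i} f|_{\I{E}_i}$; the tool to prove it is the theory of Kan extensions from~\cite[\S~6.3]{Martini2021a} applied to the projection $p\colon\I{E}\to\I{I}$, realising $\colim_{\I{E}}$ as $\colim_{\I{I}}\circ\, p_!$ where $p_!$ is the relative left Kan extension along $p$, which exists fibrewise because each fibre lies in $\I{U}$ and $\I{C}$ is $\I{U}$-cocomplete. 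Then I would apply this with $\I{E} = \int_{\I{I}} d$ (or more precisely with whatever total space presents $\I{K}$ as a colimit over $\I{I}$), noting that by construction its fibres are exactly the $d(i)$, hence lie in $\I{U}$, and that $\I{I}\in\I{U}(1)$ handles the outer colimit. For the functoriality claim, a $\I{U}$-cocontinuous functor $f\colon\I{C}\to\I{D}$ preserves the outer $\I{I}$-indexed colimit and the fibrewise $d(i)$-indexed colimits (after the usual base change to $\Over{\BB}{A}$), hence by the decomposition formula preserves the $\I{K}$-indexed colimit as well; the passage to arbitrary contexts $A\in\BB$ is handled by replacing $\BB$ with $\Over{\BB}{A}$ throughout, using that $\I{U}$-cocompleteness and $\I{U}$-cocontinuity are local conditions and that $\pi_A^\ast$ commutes with the relevant constructions.

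The hard part will be setting up the colimit decomposition lemma cleanly in the internal setting — in particular identifying the correct total $\BB$-category presenting $\I{K}$ as an $\I{I}$-indexed colimit of the $d(i)$'s and verifying that the relative left Kan extension $p_!$ along its projection to $\I{I}$ exists under the hypothesis that the fibres lie in $\I{U}$. One must be careful that $\I{K}$ is the colimit of $d$ \emph{in $\ICat_{\BB}$}, which is a localisation of a presheaf $\BB$-category, so the total space and its fibres must be extracted correctly (this is essentially the content of the Grothendieck construction / straightening machinery of~\cite{Martini2021a, Martini2022}); once that identification is in place, the rest is a formal manipulation with Kan extensions and the already-cited result~\cite[\S~4.7]{Martini2021a} for the constant case, together with routine localisation-of-base-change arguments.
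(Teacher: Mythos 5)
Your plan diverges from the paper's proof and, as written, has two genuine gaps. First, the identification of $\I{K}$ is not correct as stated: the colimit of $d\colon\I{I}\to\ICat_{\BB}$ is not "$\colim(\int_{\I{I}}d\to\I{I})$ together with fibrewise colimits" in any direct sense; classically it is the \emph{localisation} of the unstraightened cocartesian fibration $\int_{\I{I}}d$ at the cocartesian morphisms. So even if your decomposition lemma were in place, it would only produce $\int_{\I{I}}d$-indexed colimits, and you would still have to show that the comparison functor $\int_{\I{I}}d\to\I{K}$ (an internal localisation) is final in order to transfer them to $\I{K}$-indexed colimits. Neither the identification of colimits in $\ICat_{\BB}$ via unstraightening nor the finality of internal localisation functors is available in the background you cite, so this is a substantial missing layer, not a routine verification. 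Second, the decomposition lemma itself is false in the generality you state it: for an arbitrary $p\colon\I{E}\to\I{I}$ with fibres in $\I{U}$, left Kan extension along $p$ is computed over comma categories of the form $\I{E}\times_{\I{I}}\Over{\I{I}}{i}$, not over the fibres, and without a cocartesian structure you cannot even assemble the fibrewise colimits into an $\I{I}$-indexed diagram (consider $\partial\Delta^1\to\Delta^1$: the fibres are points, but there is no induced arrow between the fibrewise colimits). The Fubini formula you want needs $p$ to be a cocartesian fibration plus the (internal) finality of the fibre inclusions into these comma categories, which again requires machinery beyond what is established.

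The paper avoids all of this by working inside $\IPSh_{\Univ}(\I{C})$, where colimits always exist. Given $p\colon\I{K}\to\I{C}$, one forms $F=\colim h_{\I{C}}p$ in $\IPSh_{\Univ}(\I{C})$; viewing $d$ as a diagram in $\Over{(\ICat_{\BB})}{\I{C}}$ with colimit $p\colon\I{K}\to\I{C}$ (by the slice-colimit result, Proposition~\ref{prop:sliceFibrationColimits}) and pushing it through the reflective localisation $\Over{(\ICat_{\BB})}{\I{C}}\to\IRFib_{\I{C}}\simeq\IPSh_{\Univ}(\I{C})$, one exhibits $F$ as an $\I{I}$-indexed colimit of objects of $\Sml_{\Univ}^{\I{U}}(\I{C})$ (Proposition~\ref{prop:decompositionColimitsPSh}); since $\I{I}\in\I{U}(1)$, this puts $F$ in $\IPSh_{\Univ}^{\I{U}}(\I{C})$, and the left adjoint of $\I{C}\into\IPSh_{\Univ}^{\I{U}}(\I{C})$, which exists because $\I{C}$ is $\I{U}$-cocomplete, reflects the colimit cocone back into $\I{C}$; cocontinuity of a $\I{U}$-cocontinuous $f$ then follows from the induced square of free $\I{U}$-cocompletions. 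In effect the paper realises your "fibrewise colimits followed by an $\I{I}$-colimit" at the level of right fibrations, where no unstraightening, no Fubini formula, and no finality-of-localisations input is needed. If you want to pursue your route you must first supply the internal unstraightening description of colimits in $\ICat_{\BB}$, internal finality of localisations, and the fibrewise Kan extension formula for cocartesian fibrations; each is plausible but none is free.
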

Our strategy for the proof of Proposition~\ref{prop:decompositionExistenceColimits} is to take the colimit of a $\I{K}$-indexed diagram in the free cocompletion of $\I{C}$ (i.e.\ in $\IPSh(\I{C}))$ and to show that this colimit can be reflected back into $\I{C}$. We therefore need to study such $\I{K}$-indexed colimits in $\IPSh(\I{C})$ first.

\begin{lemma}
	\label{lem:RFibReflectiveSubcategory}
	For every $\BB$-category $\I{C}$, the large $\BB$-category $\IRFib_{\I{C}}$ is a reflective subcategory of $\Over{(\ICat_{\BB})}{\I{C}}$.
\end{lemma}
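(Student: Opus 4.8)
The plan is to verify the statement section‑wise and then promote it to an internal statement via the criterion for the existence of a left adjoint of a functor of $\BB$‑categories. Recall that $\IRFib_{\I{C}}$ is the large $\BB$‑category associated with the sheaf $A\mapsto\RFib(\pi_A^\ast\I{C})$ of right fibrations over $\pi_A^\ast\I{C}$ in $\Cat(\Over{\BB}{A})$, while $\Over{(\ICat_{\BB})}{\I{C}}$ is associated with the sheaf $A\mapsto\Over{\Cat(\Over{\BB}{A})}{\pi_A^\ast\I{C}}$; since base change preserves right fibrations, the section‑wise inclusions assemble into a functor of large $\BB$‑categories $\IRFib_{\I{C}}\into\Over{(\ICat_{\BB})}{\I{C}}$, which is fully faithful because it is section‑wise so. By \cite[Proposition~3.2.8]{Martini2021a}, to show this inclusion admits a left adjoint it suffices to check: (i) for every $A\in\BB$ the inclusion $\RFib(\pi_A^\ast\I{C})\into\Over{\Cat(\Over{\BB}{A})}{\pi_A^\ast\I{C}}$ admits a left adjoint $L_A$; and (ii) for every map $s\colon B\to A$ in $\BB$ the canonical transformation $L_B s^\ast\to s^\ast L_A$ is an equivalence.

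For (i) I would apply the factorisation‑system machinery recalled in the preliminaries to the presentable, cartesian closed $\infty$‑category $\CC=\Cat(\Over{\BB}{A})$ and the one‑element set $S=\{d^0\colon\Delta^0\into\Delta^1\}$. This produces a factorisation system $(\LL',\RR')$ in which $\RR'$ is precisely the class of right fibrations between $\Over{\BB}{A}$‑categories, and $\LL'$ is the internal saturation of $S$, namely the saturation of $\{d^0\times\id_c\mid c\in\CC\}$. One must observe that $\LL'$ is already the saturation of a \emph{small} set of maps: since $\CC$ is presentable it is generated under small colimits by a small subcategory $\GG\into\CC$, and since $d^0\times(-)$ preserves colimits (as $\CC$ is cartesian closed) and saturations are closed under colimits in $\Fun(\Delta^1,\CC)$, the class $\LL'$ coincides with the saturation of $\{d^0\times\id_g\mid g\in\GG\}$. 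Taking fibres over $\pi_A^\ast\I{C}\in\CC$ then yields, by the cited general fact, a reflective subcategory $\Over{\RR'}{\pi_A^\ast\I{C}}\leftrightarrows\Over{\CC}{\pi_A^\ast\I{C}}$, and $\Over{\RR'}{\pi_A^\ast\I{C}}=\RFib(\pi_A^\ast\I{C})$ by definition of right fibrations. Concretely, $L_A$ sends $f\colon\I{P}\to\pi_A^\ast\I{C}$ to the right‑fibration part $r$ of its unique factorisation $f\simeq r l$ with $l\in\LL'$ and $r\in\RR'$.

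For (ii), the key point is that $s^\ast\colon\Cat(\Over{\BB}{A})\to\Cat(\Over{\BB}{B})$ preserves both classes of the factorisation system. Being an algebraic morphism, $s^\ast$ preserves finite products and small colimits and fixes $d^0$; hence it carries $\{d^0\times\id_c\}$ into the analogous family over $\Over{\BB}{B}$ and, by cocontinuity, sends $\LL'$ into its counterpart. Likewise $s^\ast$ preserves right fibrations (an algebraic morphism preserves final functors and right fibrations, cf. the argument in the proof of Lemma~\ref{lem:CocompleteImpliesFinallyConstant}). Thus, given $f\simeq r l$ as above, applying $s^\ast$ gives $s^\ast f\simeq(s^\ast r)(s^\ast l)$ with $s^\ast l$ in the left and $s^\ast r$ in the right class over $\Over{\BB}{B}$, so this is the factorisation of $s^\ast f$ and $L_B(s^\ast f)\simeq s^\ast r\simeq s^\ast(L_A f)$, naturally in $f$.

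I expect the main obstacle to be the bookkeeping of the various section‑wise $\infty$‑categories — matching $\IRFib_{\I{C}}(A)$ and $\Over{(\ICat_{\BB})}{\I{C}}(A)$ with $\RFib(\pi_A^\ast\I{C})$ and $\Over{\Cat(\Over{\BB}{A})}{\pi_A^\ast\I{C}}$ in a way compatible with base change, and confirming that the ``fibres of a factorisation system'' statement from the preliminaries genuinely applies (which rests on the smallness reduction for $\LL'$ noted above). Once this is set up, (i) and (ii) are formal, and \cite[Proposition~3.2.8]{Martini2021a} produces the desired left adjoint, exhibiting $\IRFib_{\I{C}}$ as a reflective subcategory of $\Over{(\ICat_{\BB})}{\I{C}}$.
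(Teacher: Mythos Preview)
Your proposal is correct and follows essentially the same approach as the paper: both verify the section-wise left adjoints and the Beck--Chevalley condition, then invoke \cite[Proposition~3.2.8]{Martini2021a}. The only difference is packaging: the paper phrases the sections as $\RFib(\I{C}\times A)\into\Over{\Cat(\BB)}{\I{C}\times A}$ (working inside $\Cat(\BB)$ throughout) and dispatches (ii) in one line by citing that every map $s$ in $\BB$ is \emph{smooth} in the sense of \cite[\S 4.4]{Martini2021}, which exactly says that base change along $\id_{\I{C}}\times s$ preserves the (final, right fibration) factorisation; your argument that the algebraic morphism $s^\ast\colon\Cat(\Over{\BB}{A})\to\Cat(\Over{\BB}{B})$ preserves both classes is the same content unpacked.
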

\begin{proof}
	To begin with, we note that the sheaf associated with $\Over{(\ICat_{\BB})}{\I{C}}$ is given by $\Over{\Cat(\BB)}{\I{C}\times -}$. In fact, the latter defines a $\PSh_{\SSS}(\BB)$-category, and there is a right fibration of $\PSh_{\SSS}(\BB)$-categories $p\colon \Over{\Cat(\BB)}{\I{C}\times -}\to\Over{\Cat(\BB)}{-}$ that is section-wise given by postcomposition with the projection onto the second factor. By~\cite[Proposition~3.3.5]{MYoneda}, the codomain can be identified with (the underlying $\PSh_{\SSS}(\BB)$-category of) the large $\BB$-category $\ICat_{\BB}$. Since $\Over{\Cat(\BB)}{\I{C}\times -}$ has a final object (in the $\PSh_{\SSS}(\BB)$-categorical sense, which is easily deduced from~\cite[Examples~4.1.11 and~4.1.14]{MWColimits}) that is carried to $\I{C}$ along the right fibration $p$, we thus obtain an equivalence $\Over{(\ICat_{\BB})}{\I{C}}\simeq\Over{\Cat(\BB)}{\I{C}\times -}$ of $\PSh_{\SSS}(\BB)$-categories. Since the domain is a (large) $\BB$-category, so is the codomain, and this equivalence defines an identification of (large) $\BB$-category. Now using this identification, we find that the inclusion $\RFib(\I{C}\times -)\into\Over{\Cat(\BB)}{\I{C}\times -}$ determines a fully faithful functor $i\colon \IRFib_{\I{C}}\into\Over{(\ICat_{\BB})}{\I{C}}$ such that $i(A)$ admits a left adjoint $L_A$ for every $A\in\BB$. Moreover, if $s\colon B\to A$ is an arbitrary map in $\BB$, the fact that $s$ is smooth implies that the natural map $L_Bs^\ast\to s^\ast L_A$ is an equivalence (see the discussion in~\cite[\S~4.4]{MYoneda}), hence the claim follows.
\end{proof}

\begin{proposition}
	\label{prop:decompositionColimitsPSh}
	Let $\I{U}$ be an internal class of $\BB$-categories and let $\I{C}$ be a small $\BB$-category. Then, for any diagram $d\colon \I{I}\to\I{U}$ with colimit $\I{K}$ in $\ICat_{\BB}$ and any diagram $p\colon \I{K}\to\I{C}$ with colimit $F$ in $\IPSh(\I{C})$, there is a diagram $d^\prime\colon \I{I}\to \ISml^{\I{U}}(\I{C})$ such that $F\simeq \colim d^\prime$.
\end{proposition}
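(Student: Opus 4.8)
The plan is to reduce the statement about colimits in $\IPSh_{\Univ}(\I{C})$ to a statement about colimits in $\ICat_{\BB}$ via the Grothendieck construction, where the desired decomposition is already part of the hypothesis. First I would use the equivalence $\IPSh_{\Univ}(\I{C})\simeq\IRFib_{\I{C}}$ to translate the diagram $p\colon\I{K}\to\I{C}$ into a $\I{K}$-indexed diagram of right fibrations over $\I{C}$, i.e.\ a diagram $\widetilde p\colon \I{K}\to\IRFib_{\I{C}}\into\Over{(\ICat_{\BB})}{\I{C}}$. By Lemma~\ref{lem:RFibReflectiveSubcategory}, $\IRFib_{\I{C}}$ is a reflective subcategory of $\Over{(\ICat_{\BB})}{\I{C}}$, so its colimit $F$ (equivalently $\Over{\I{C}}{F}\to\I{C}$) is computed by forming the colimit of $\widetilde p$ in $\Over{(\ICat_{\BB})}{\I{C}}$ and then applying the localisation functor $L$. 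Using Corollary~\ref{cor:sliceCategoryUCocomplete} (and~\cite[Proposition~4.2.11]{Martini2021a}), this colimit in $\Over{(\ICat_{\BB})}{\I{C}}$ can in turn be computed by forming the colimit $\I{M}$ of $(\pi_{\I{C}})_!\widetilde p\colon \I{K}\to\ICat_{\BB}$ and equipping it with the induced map to $\I{C}$.

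Next I would exploit the hypothesis that $\I{K}\simeq\colim d$ for a diagram $d\colon \I{I}\to\I{U}$ with $\I{I}\in\I{U}(1)$. By the associativity/transitivity of colimits (one may compute $\colim_{\I{K}}$ as an $\I{I}$-indexed colimit of the colimits over the fibres of the Grothendieck construction of $d$, cf.\ the Bousfield-Kan style decomposition used in~\cite[\S~4.7]{Martini2021a}), the colimit $\I{M}$ of $(\pi_{\I{C}})_!\widetilde p$ over $\I{K}$ decomposes as a colimit of an $\I{I}$-indexed diagram $i\mapsto \I{M}_i$, where each $\I{M}_i$ is the colimit over $\Over{\I{K}}{d(i)}$-type pieces; more precisely, restricting $p$ along the canonical maps $d(i)\to\I{K}$ and taking colimits in $\IPSh_{\Univ}(\I{C})$ gives presheaves $F_i$ whose Grothendieck constructions $\Over{\I{C}}{F_i}$ are colimits in $\ICat_{\BB}$ of diagrams indexed by the $\I{U}$-category $d(i)$. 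Since $d(i)\in\I{U}(1)$, each such $\Over{\I{C}}{F_i}$ lies in $\I{U}^{\colim}(1)$, hence $F_i$ defines an object of $\Sml_{\Univ}^{\I{U}}(\I{C})$; this yields the desired diagram $d^\prime\colon\I{I}\to\Sml_{\Univ}^{\I{U}}(\I{C})$, $i\mapsto F_i$, together with an equivalence $F\simeq\colim_{\I{I}} d^\prime$ because colimits commute and $L$ is cocontinuous.

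The main obstacle I expect is making the decomposition $\colim_{\I{K}}\simeq \colim_{\I{I}}\colim_{(\text{fibres})}$ precise in the internal setting, since $\I{K}$ is a genuine $\BB$-category (not a constant one) and the "fibres of $d$'' are really slices of the Grothendieck construction $\int d$; the bookkeeping of the relevant final/cofinal functors and the identification $\Over{\I{C}}{F_i}\simeq\colim_{d(i)}(\cdots)$ must be carried out carefully using the Grothendieck construction for $\BB$-categories and the fact that left fibrations are classified by functors into $\Univ$. A secondary subtlety is verifying that the localisation $L\colon\Over{(\ICat_{\BB})}{\I{C}}\to\IRFib_{\I{C}}$ genuinely commutes with the $\I{I}$-indexed colimit in question, which follows from $L$ being a left adjoint (Lemma~\ref{lem:RFibReflectiveSubcategory}) but should be stated explicitly. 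Once these are in place, the conclusion $F\simeq\colim d^\prime$ is immediate, and smallness of each $F_i$ in $\Sml_{\Univ}^{\I{U}}(\I{C})$ follows directly from the definition of $\Sml_{\Univ}^{\I{U}}(\I{C})$ recalled in~\S~\ref{sec:weaklyUFiltered}.
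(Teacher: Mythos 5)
The decisive step in your argument --- the re-indexing $\colim_{\I{K}}\simeq\colim_{\I{I}}\colim_{d(i)}$ for a colimit presentation $\I{K}\simeq\colim_{\I{I}}d$ with an \emph{internal} index category $\I{I}$ --- is precisely what is not available at this point, and this is a genuine gap. The decomposition results you cite from~\cite{Martini2021a} only treat diagrams indexed by \emph{constant} $\BB$-categories $\KK$; extending them to arbitrary internal $\I{I}$ is the purpose of this appendix, and the paper does so in Proposition~\ref{prop:decompositionExistenceColimits}, whose proof \emph{uses} Proposition~\ref{prop:decompositionColimitsPSh}. So as written your argument is circular, or at best rests on an unproved transitivity principle which you only flag as an ``obstacle''. (For the specific cocomplete target $\IPSh_{\Univ}(\I{C})$ one could prove the needed decomposition directly, e.g.\ from the limit decomposition $\iFun{\I{K}}{\IPSh_{\Univ}(\I{C})}\simeq\lim_{\I{I}^{\op}}\iFun{d(-)}{\IPSh_{\Univ}(\I{C})}$ together with a computation of mapping $\BB$-groupoids into a diagonal, but that argument has to be supplied, not assumed.) There is also a smaller slip at the end: $\Over{\I{C}}{F_i}$ is not the colimit in $\ICat_{\BB}$ of the $d(i)$-indexed diagram of comma categories --- it is its reflection into $\IRFib_{\I{C}}$ --- and in any case a $d(i)$-indexed colimit of arbitrary $\BB$-categories need not lie in $\I{U}^{\colim}$. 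The correct reason that $F_i$ lies in $\Sml_{\Univ}^{\I{U}}(\I{C})$ is that the reflection comes with a final functor $d(i)\to\Over{\I{C}}{F_i}$ over $\I{C}$, and colimit classes are closed under final functors out of their members.

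The paper sidesteps the re-indexing entirely: it regards $d$ itself (via the colimit cocone composed with $p$) as an $\I{I}$-indexed diagram in $\Over{(\ICat_{\BB})}{\I{C}}$, observes by Proposition~\ref{prop:sliceFibrationColimits} that its colimit there is $p\colon\I{K}\to\I{C}$, and then applies the reflection $L\colon\Over{(\ICat_{\BB})}{\I{C}}\to\IRFib_{\I{C}}\simeq\IPSh_{\Univ}(\I{C})$, which is cocontinuous by Lemma~\ref{lem:RFibReflectiveSubcategory} and sends $p$ to $\Over{\I{C}}{F}$ because $\I{K}\to\Over{\I{C}}{F}$ is final over $\I{C}$. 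This gives $F\simeq\colim Ld$ with $Ld$ valued in $\Sml_{\Univ}^{\I{U}}(\I{C})$ in one stroke; your intended objects $F_i$ are exactly the $Ld(i)$, so your endpoint is correct, but the passage through $\colim_{\I{K}}$ and its decomposition is what must either be avoided (as the paper does) or honestly proved.
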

\begin{proof}
	The cocone $d\to \diag(\I{C})$ implies that we may view $d$ as a diagram $d\colon\I{I}\to\Over{\I{U}}{\I{C}}\into\Over{(\ICat_{\BB})}{\I{C}}$. By Proposition~\ref{prop:sliceFibrationColimits}, the colimit of this diagram is $p\colon\I{K}\to\I{C}$.
	Since $F\simeq\colim p$, there is a final functor $\I{K}\to\Over{\I{C}}{F}$ over $\I{C}$, hence Lemma~\ref{lem:RFibReflectiveSubcategory} implies that the localisation functor $L\colon\Over{(\ICat_{\BB})}{\I{C}}\to\IRFib_{\I{C}}$ carries $p\colon \I{K}\to\I{C}$ to the right fibration $\Over{\I{C}}{F}\to\I{C}$. In other words, the presheaf $F$ arises as the colimit of the diagram $d^\prime=Ld\colon \I{I}\to \Over{\I{U}}{\I{C}}\to\IRFib_{\I{C}}\simeq\IPSh(\I{C})$. It now suffices to observe that by construction of $L$, this functor takes values in $\ISml^{\I{U}}(\I{C})$.
\end{proof}

\begin{proof}[{Proof of Proposition~\ref{prop:decompositionExistenceColimits}}]
	Let $f\colon\I{C}\to\I{D}$ be a $\I{U}$-cocontinuous functor between $\I{U}$-cocomplete $\BB$-categories. 
	Consider the commutative diagram
	\begin{equation*}
		\begin{tikzcd}
			\I{C}\arrow[r, "f"]\arrow[d, hookrightarrow, "h_{\I{C}}"] & \I{D}\arrow[d, hookrightarrow, "h_{\I{D}}"] \\
			\IPSh^{\I{U}}(\I{C})\arrow[r, "\hat f"] & \IPSh^{\I{U}}(\I{D})
		\end{tikzcd}
	\end{equation*}
	that arises from applying the universal property of $\IPSh^{\I{U}}(\I{C})$ to the composition $\I{C}\to\I{D}\into\IPSh^{\I{U}}(\I{D})$. As $\I{C}$ and $\I{D}$ are $\I{U}$-cocomplete, the vertical inclusions admit left adjoints $L_{\I{C}}$ and $L_{\I{D}}$~\cite[Corollary~7.1.14]{MWColimits}. Now if $p\colon \I{K}\to\I{C}$ is a diagram, Proposition~\ref{prop:decompositionColimitsPSh} implies that there is a diagram $p^\prime\colon \I{I}\to \IPSh^{\I{U}}(\I{C})$ such that $\colim p^\prime$ is equivalent to the colimit of $h_{\I{C}}p$. In particular, the colimit of $h_{\I{C}}p$ is contained in $\IPSh^{\I{U}}(\I{C})$. Consequently, $\colim L_{\I{C}}p^\prime$ defines a colimit of $p$~\cite[Proposition~5.2.5]{MWColimits}. By replacing $\I{C}$ with $\I{D}$, this argument also shows that every diagram $\I{K}\to\I{D}$ admits a colimit in $\I{D}$. Moreover, as $f$ and $\hat f$ are $\I{U}$-cocontinuous, the universal property of $\IPSh^{\I{U}}(\I{C})$ implies that the canonical map $L_{\I{D}}\hat f\to f L_{\I{C}}$ is an equivalence. Consequently, as $L_{\I{D}}\hat f$ preserves the colimit of $h_{\I{C}}p$, so does $fL_{\I{C}}$. As the colimit cocone of $p$ is the image of the colimit cocone of $h_{\I{D}}p$ along $L_{\I{C}}$, we conclude that $f$ preserves the colimit of $p$. Now by replacing $\BB$ with $\Over{\BB}{A}$ and repeating the above argumentation, one concludes that both $\I{C}$ and $\I{D}$ admit $\I{K}$-indexed colimits and that $f$ commutes with such colimits.
\end{proof}

\section{Cocompletion with relations}
\label{sec:cocompletionRelations}
Let $\I{U}$ be an internal class, and let $\I{C}$ be a $\BB$-category. In~\cite{MWColimits}, we constructed the \emph{free $\I{U}$-cocompletion} $\IPSh^{\I{U}}(\I{C})$ of $\I{C}$, i.e.\ the universal $\I{U}$-cocomplete $\BB$-category that is equipped with a functor $\I{C}\to\IPSh^{\I{U}}(\I{C})$. The goal of this section is to generalise this result by imposing that a chosen collection of cocones in $\I{C}$ (that are indexed by objects of $\I{U}$) become colimit cocones in the free $\I{U}$-cocompletion. Our proof of this result is a straightforward adaptation of the discussion in~\cite[\S~5.3.6]{htt}.

Let us fix a small collection $R=(\overline{d}_i\colon \I{K}_i^\triangleright\to \pi_{A_i}^\ast \I{C})_{i\in I}$ of cocones with $A_i\in\BB$ and $\I{K}_i\in \I{U}(A_i)$ for all $i\in I$. Let $\I{S}_R\into\IPSh(\I{C})$ be the (non-full) subcategory that is spanned by the canonical maps $(f_i\colon \colim h_{\I{C}}d_i\to h_{\I{C}}\overline{d}_i(\infty))_{i\in I}$ in $\IPSh(\I{C})$ (with each $f_i$ being in context $A_i$ for $i\in I$), where $d_i$ denotes the restriction of $\overline{d}_i$ along the inclusion $\I{K}_i\into\I{K}_i^\triangleright$ and where $\infty\colon A_i\to \I{K}_i^{\triangleright}$ denotes the cone point.

Let us set $\I{D}=\ILoc_{\I{S}_R}(\IPSh(\I{C}))$. By Corollary~\ref{cor:presentableCategoryBousfieldLocalisation}, the inclusion $i\colon\I{D}\into\IPSh(\I{C})$ admits a left adjoint $L\colon \IPSh(\I{C})\to\I{D}$. In particular, $\I{D}$ is cocomplete~\cite[Proposition~5.2.6]{MWColimits}. We define the (large) $\BB$-category $\IPSh(\I{C})^{(\I{U},R)}(\I{C})$ as the smallest full subcategory of $\I{D}$ that contains the essential image of $Lh_{\I{C}}\colon \I{C}\to\I{D}$ and that is closed under $\I{U}$-colimits in $\I{D}$, and we let $j_{\I{C}}\colon\I{C}\to\IPSh^{(\I{U},R)}(\I{C})$ be the map that is obtained by composing $Lh_{\I{C}}$ with the inclusion. 
\begin{remark}
	\label{rem:BCCocompletionRelations}
	Given any object $A\in\BB$, we denote by $\pi_A^\ast R$ the set of cocones $(\pi_{A}^\ast (\overline{d}_i))_{i\in I}$. We then obtain an equivalence $\pi_A^\ast\I{S}_R\simeq \I{S}_{\pi_A^\ast R}$ of subcategories in $\IPSh[\Over{\BB}{A}](\pi_A^\ast\I{C})$. Hence Remark~\ref{rem:BCLocalObjects} and the same argument as in the proof of~\cite[Proposition~7.1.11]{MWColimits} shows that one obtains a canonical equivalence of large $\Over{\BB}{A}$-categories
	\begin{equation*}
		\pi_A^\ast\IPSh^{(\I{U}, R)}(\I{C})\simeq\IPSh[\Over{\BB}{A}]^{(\pi_A^\ast\I{U}, \pi_A^\ast R)}(\pi_A^\ast\I{C})
	\end{equation*} 
	with respect to which $\pi_A^\ast j_{\I{C}}$ corresponds to the map $j_{\pi_A^\ast\I{C}}$.
\end{remark}

For any $\I{U}$-cocomplete large $\BB$-category $\I{E}$, we will denote by $\IFun(\I{C},\I{E})_R$ the full subcategory of $\IFun(\I{C},\I{E})$ that arises as the pullback
\begin{equation*}
	\begin{tikzcd}
		\IFun(\I{C},\I{E})_R\arrow[r, hookrightarrow]\arrow[d, hookrightarrow] & \IFun(\I{C},\I{E})\arrow[d, "(h_{\I{C}})_!"]\\
		\IFun(\IPSh(\I{C}),\I{E})_{\I{S}_R}\arrow[r, hookrightarrow] & \IFun(\IPSh(\I{C}),\I{E}).
	\end{tikzcd}
\end{equation*}

We now obtain:
\begin{proposition}
	\label{prop:CocompletionRelations}
	For every $i\in I$ the cocone $(j_{\I{C}})_\ast(\overline{d}_i)$ is a colimit cocone in $\IPSh^{(\I{U},R)}(\I{C})$, and for every $\I{U}$-cocomplete large $\BB$-category $\I{E}$, precomposition with $j_{\I{C}}$ induces an equivalence
	\begin{equation*}
		j_{\I{C}}^\ast\colon \IFun^{\cocont{\I{U}}}(\IPSh^{(\I{U},R)}(\I{C}),\I{E})\simeq \IFun(\I{C},\I{E})_R.
	\end{equation*}
\end{proposition}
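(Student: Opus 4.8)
The plan is to follow the strategy of \cite[\S~5.3.6]{htt}, building on the machinery for free $\I{U}$-cocompletion and Bousfield localisations already set up. First I would recall the setup: $\I{D}=\I{S}_R^{-1}\IPSh_{\Univ}(\I{C})$ is a Bousfield localisation of $\IPSh_{\Univ}(\I{C})$ by Proposition~\ref{prop:presentableCategoryBousfieldLocalisation}, so the localisation functor $L$ has a fully faithful right adjoint $i$, and $\I{D}$ is cocomplete. Since $\IPSh_{\Univ}^{(\I{U},R)}(\I{C})$ is by definition the smallest full subcategory of $\I{D}$ containing the image of $Lh_{\I{C}}$ and closed under $\I{U}$-colimits, it is $\I{U}$-cocomplete, and the inclusion into $\I{D}$ is $\I{U}$-cocontinuous by construction.

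The first claim --- that each $(j_{\I{C}})_\ast(\overline{d}_i)$ is a colimit cocone --- should come from unwinding what it means for an object to be $\I{S}_R$-local combined with Proposition~\ref{prop:criterionLocalisationBousfield}. The localisation functor $L\colon \IPSh_{\Univ}(\I{C})\to\I{D}$ preserves colimits, and $L$ inverts each $f_i\colon \colim h_{\I{C}}d_i\to h_{\I{C}}\overline{d}_i(\infty)$ by construction of $\I{S}_R$. Since $j_{\I{C}}=Lh_{\I{C}}$ up to the (fully faithful, $\I{U}$-cocontinuous) inclusion $\IPSh_{\Univ}^{(\I{U},R)}(\I{C})\into\I{D}$, we get that $\colim (j_{\I{C}})_\ast d_i \to (j_{\I{C}})_\ast\overline{d}_i(\infty)$ is the image of an equivalence, hence an equivalence; this is exactly the statement that $(j_{\I{C}})_\ast(\overline{d}_i)$ is a colimit cocone in $\I{D}$, and since the inclusion into $\I{D}$ detects and preserves $\I{U}$-colimits it is a colimit cocone in $\IPSh_{\Univ}^{(\I{U},R)}(\I{C})$ as well. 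One has to be slightly careful that the colimit cocone $(j_{\I C})_\ast(\overline d_i)$ as a cocone over $\I{K}_i^\triangleright$ literally coincides with the canonical cocone exhibiting $\colim (j_{\I C})_\ast d_i$; this is a standard identification via \cite[Proposition~6.3.8]{Martini2021a} relating the cone construction with the colimit cocone.

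For the universal property, I would argue as follows. By the universal property of the free $\I{U}$-cocompletion \cite[Corollary~7.1.12]{Martini2021a}, restriction along $h_{\I{C}}\colon \I{C}\into\IPSh_{\Univ}^{\I{U}}(\I{C})$ gives an equivalence $\iFun{\IPSh_{\Univ}^{\I{U}}(\I{C})}{\I{E}}^{\cocont{\I{U}}}\simeq\iFun{\I{C}}{\I{E}}$ for $\I{U}$-cocomplete $\I{E}$. Since $\IPSh_{\Univ}^{\I{U}}(\I{C})$ is cocomplete, \cite[Corollary~7.1.12]{Martini2021a} also provides a left adjoint $L'$ to the inclusion $\IPSh_{\Univ}^{\I{U}}(\I{C})\into\IPSh_{\Univ}(\I{C})$, and $\I{D}$ is the further localisation at $\I{S}_R$. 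I would then use the universal property of localisations (cf.~\cite[\S~2.10]{Martini2021a}) to identify $\I{U}$-cocontinuous functors $\I{D}\to\I{E}$ with those $\I{U}$-cocontinuous functors $\IPSh_{\Univ}^{\I{U}}(\I{C})\to\I{E}$ that invert $\I{S}_R$, and these in turn correspond under $h_{\I{C}}^\ast$ precisely to functors $\I{C}\to\I{E}$ lying in $\iFun{\I{C}}{\I{E}}_R$ --- that is exactly the condition encoded by the defining pullback square of $\iFun{\I{C}}{\I{E}}_R$. Finally, since $\IPSh_{\Univ}^{(\I{U},R)}(\I{C})$ is the smallest full $\I{U}$-cocomplete subcategory of $\I{D}$ containing the image of $j_{\I{C}}$, restriction of $\I{U}$-cocontinuous functors along $\IPSh_{\Univ}^{(\I{U},R)}(\I{C})\into\I{D}$ is fully faithful and essentially surjective onto those functors generated by their restriction to the image of $j_{\I C}$; composing these identifications and tracking that everything is natural in $\I{E}$, and handling the étale-local statements via Remark~\ref{rem:BCCocompletionRelations}, yields the asserted equivalence $j_{\I{C}}^\ast$. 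The main obstacle I anticipate is the bookkeeping in this last chain: matching the various ``full subcategory spanned by functors inverting $\I{S}_R$'' descriptions across the two-stage localisation $\IPSh_{\Univ}(\I{C})\to\IPSh_{\Univ}^{\I{U}}(\I{C})\to\I{D}$ and checking that the restriction along $\IPSh_{\Univ}^{(\I{U},R)}(\I{C})\into\I{D}$ really is an equivalence on $\I{U}$-cocontinuous functors, which requires knowing that a $\I{U}$-cocontinuous functor out of $\I{D}$ is determined by its restriction to $\IPSh_{\Univ}^{(\I{U},R)}(\I{C})$ --- this should follow because every object of $\I{D}$ is a $\I{U}$-colimit of objects in the image of $j_{\I C}$, which is precisely the content of Proposition~\ref{prop:decompositionExistenceColimits} applied to the decomposition property, or more directly from the co-Yoneda–type argument in Lemma~\ref{lem:coYonedaGeneralised}.
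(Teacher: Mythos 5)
Your treatment of the first claim is fine and is essentially the paper's: $L$ inverts each $f_i$ and preserves colimits, and the inclusion $\IPSh_{\Univ}^{(\I{U},R)}(\I{C})\into\I{D}$ is $\I{U}$-cocontinuous and detects $\I{U}$-colimits. The universal property, however, has two genuine gaps. First, your ``two-stage localisation'' picture does not exist: $\IPSh_{\Univ}^{\I{U}}(\I{C})$ is only $\I{U}$-cocomplete, not cocomplete, so the inclusion $\IPSh_{\Univ}^{\I{U}}(\I{C})\into\IPSh_{\Univ}(\I{C})$ has no left adjoint in general (the cited corollary requires cocompleteness of the subcategory), and $\I{D}=\I{S}_R^{-1}\IPSh_{\Univ}(\I{C})$ is a localisation of the full presheaf $\BB$-category, not of $\IPSh_{\Univ}^{\I{U}}(\I{C})$. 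Consequently, the universal property of localisations identifies functors out of $\I{D}$ with functors out of $\IPSh_{\Univ}(\I{C})$ inverting $\I{S}_R$, and to pass between those and functors out of $\I{C}$ one needs left Kan extension along $h_{\I{C}}$ into $\I{E}$ --- which requires $\I{E}$ to be \emph{cocomplete}, whereas the statement only assumes $\I{U}$-cocompleteness. This is precisely the central difficulty of the proposition, and your argument never addresses it.

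Second, your last step is false as stated: it is not true that every object of $\I{D}$ is a $\I{U}$-colimit of objects in the image of $j_{\I{C}}$ --- $\I{D}$ is generated by that image under \emph{all} small colimits, and if $\I{U}$-colimits sufficed then $\IPSh_{\Univ}^{(\I{U},R)}(\I{C})$ would be all of $\I{D}$, defeating the purpose of the definition. Moreover, what the equivalence actually requires is not that $\I{U}$-cocontinuous functors out of $\I{D}$ be determined by their restriction, but that every $\I{U}$-cocontinuous functor out of $\IPSh_{\Univ}^{(\I{U},R)}(\I{C})$ (equivalently, every object of $\iFun{\I{C}}{\I{E}}_R$) be hit --- an extension problem that again needs a cocomplete target. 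The paper circumvents both issues by first embedding $\I{E}$ $\I{U}$-cocontinuously and fully faithfully into the cocomplete $\BB$-category $\I{E}^\prime=\iFun{\I{E}}{\Univ}^{\op}$, running the Kan-extension/localisation argument with target $\I{E}^\prime$ (where $(h_{\I{C}})_!$ and the universal property of $L$ apply), proving that the counit $(j_{\I{C}})_!j_{\I{C}}^\ast f\to f$ is an equivalence for $\I{U}$-cocontinuous $f$ by the argument of the free-cocompletion theorem in~\cite{Martini2021a}, and only then checking that the resulting equivalence restricts to functors landing in $\I{E}$. Some version of this détour (or another mechanism for extending along $j_{\I{C}}$ without cocompleteness of $\I{E}$) is needed to make your outline into a proof.
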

\begin{proof}
	Note that by construction of $\I{D}$, the map $j_{\I{C}}$ carries each of the cocones $\overline{d}_i$ to a colimit cocone in $\IPSh^{(\I{U},R)}(\I{C})$, hence the first claim follows immediately. The proof of the second claim employs a similar strategy as in the proof of~\cite[Theorem~7.1.13]{MWColimits}. First, if $\I{E}$ is an arbitrary $\I{U}$-cocomplete $\BB$-category, note that the Yoneda embedding induces a $\I{U}$-cocontinuous functor $\I{E}\into \I{E}^\prime=\IFun(\I{E},\Univ[\BB])^{\op}$ into a cocomplete $\BB$-category. By Corollary~\ref{cor:UniversalPropertyLocalObjects} and the universal property of presheaf $\BB$-categories, we now obtain an equivalence
	\begin{equation*}
		(Lh_{\I{C}})_!\colon \IFun(\I{C},\I{E}^\prime)_R\simeq \IFun^\cc(\IPSh(\I{C}),\I{E}^\prime)_{\I{S}_R}\simeq  \IFun^\cc(\I{D},\I{E}^\prime)
	\end{equation*}
	As the inclusion $\IPSh^{(\I{U},R)}(\I{C})\into\I{D}$ is by construction $\I{U}$-cocontinuous, we therefore obtain an induced inclusion
	\begin{equation*}
		(j_{\I{C}})_!\colon \IFun(\I{C},\I{E}^\prime)_R\into\IFun^{\cocont{\I{U}}}(\IPSh^{(\I{U},R)}(\I{C}),\I{E}^\prime).
	\end{equation*}
	Now if $f\colon \IPSh^{(\I{U},R)}(\I{C})\to\I{E}^\prime$ is a $\I{U}$-cocontinuous functor, precisely the same argument as the one employed in the proof of~\cite[Theorem~7.1.13]{MWColimits} shows that the adjunction counit $\epsilon\colon (j_{\I{C}})_! j_{\I{C}}^\ast f\to f$ is an equivalence and that $f$ is therefore contained in the essential image of $(j_{\I{C}})_!$. Together with Remark~\ref{rem:BCCocompletionRelations}, this shows that $(j_{\I{C}})_!$ is an equivalence. Finally, the same argumentation as in the proof of~\cite[Theorem~7.1.13]{MWColimits} also shows that this equivalence restricts to the desired equivalence $\IFun(\I{C},\I{E})_R\into\IFun^{\cocont{\I{U}}}(\IPSh^{(\I{U},R)}(\I{C}),\I{E})$.
\end{proof}

\begin{remark}
	\label{rem:CocompletionRelationsSmall}
	In the situation of Proposition~\ref{prop:CocompletionRelations}, if $\I{U}$ is assumed to be small (i.e.\ a \emph{doctrine} in the terminology of \S~\ref{sec:regularClasses}) implies that $\IPSh^{(\I{U},R)}(\I{C})$ is small as well. In fact, as $\I{D}$ is locally small, the essential image of $Lh_{\I{C}}\colon \I{C}\to\IPSh(\I{C})\to\I{D}$ is small~\cite[Lemma~4.6.5]{MYoneda}. Hence we can make use of the same argument as in~\cite[Remark~7.1.7]{MWColimits} to deduce that $\IPSh^{(\I{U},R)}(\I{C})$ must also be small.
\end{remark}

\begin{construction}
	\label{constr:BoxtimesOfRi}
	For the remainder of this section, let us fix $\BB$-categories $\I{C}^1,\dots,\I{C}^n$ and a doctrine $\I{U}$. Let $\GG\subset\BB$ be a small subcategory of generators, and let us set
	\begin{equation*}
		R_k=\bigsqcup_{G\in\GG}\left\{\overline{d}\colon\I{K}^\triangleright\to \pi_G^\ast\I{C}^k~\vert~\I{K}\in \I{U}(G)\right\}
	\end{equation*}
	as well as
	\begin{equation*}
		S_k=\bigsqcup_{G\in\GG}\left\{\overline{d}\colon\I{K}^\triangleright\to \pi_G^\ast\IPSh^{(\I{U},R_k)}(\I{C}^k)~\vert~\I{K}\in \I{U}(G),~\overline{d}~\text{is a colimit cocone}\right\}.
	\end{equation*}
	Furthermore, let $\square_{k=1}^n R_k$ be the set of all diagrams of the form
	\begin{equation*}
		(c_1,\dots,c_{l-1},\id,c_{l+1},\dots,c_n)\overline{d}\colon\I{K}^\triangleright\to \pi_{G}^\ast\I{C}^l\to \prod_{k=1}^n\pi_{G}^\ast\I{C}^k
	\end{equation*}
	where $\overline{d}$ is an element of $R_l$ and $c_k\colon G\to\I{C}^k$ is an arbitrary object for each $k\neq l$. Let $\square_{k=1}^n S_k$ be defined analogously. Then for any internal class $\I{V}$ that contains $\I{U}$, the composition
	\begin{equation*}
		\I{C}^1\times\cdots\times\I{C}^n\to\IPSh^{(\I{U},R_1)}(\I{C}^1)\times\cdots\times\IPSh^{(\I{U},R_n)}(\I{C}^n)\to \IPSh^{(\I{V}, \square_{k=1}^n S_k)}(\IPSh^{(\I{U},R_1)}(\I{C}^1)\times\cdots\times\IPSh^{(\I{U},R_n)}(\I{C}^n))
	\end{equation*}
	carries each cocone in $\square_{k=1}^n R_k$ to a colimit cocone, hence Proposition~\ref{prop:CocompletionRelations} determines a functor
	\begin{equation*}
		\phi\colon\IPSh^{(\I{V}, \square_{k=1}^n R_k)}( \I{C}^1\times\cdots\times\I{C}^n)\to\IPSh^{(\I{V},\square_{k=1}^n S_k)}(\IPSh^{(\I{U},R_1)}(\I{C}^1)\times\cdots\times \IPSh^{(\I{U},R_n)}(\I{C}^n)).
	\end{equation*}
\end{construction}

\begin{proposition}
	\label{prop:cocompletionRelationsTransitivity}
	The canonical map
	\begin{equation*}
		\phi\colon\IPSh^{(\I{V}, \square_{k=1}^n R_k)}( \I{C}^1\times\cdots\times\I{C}^n)\to\IPSh^{(\I{V},\square_{k=1}^n S_k)}(\IPSh^{(\I{U},R_1)}(\I{C}^1)\times\cdots\times \IPSh^{(\I{U},R_n)}(\I{C}^n))
	\end{equation*}
	is an equivalence.
\end{proposition}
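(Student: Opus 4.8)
The plan is to deduce that $\phi$ is an equivalence from a universal–property argument: I would show that for \emph{every} $\I{V}$-cocomplete large $\BB$-category $\I{E}$, precomposition with $\phi$ induces an equivalence on $\I{V}$-cocontinuous functor $\BB$-categories into $\I{E}$, and then conclude by the standard argument (apply this equivalence once with $\I{E}$ the source of $\phi$ to obtain a $\I{V}$-cocontinuous $\psi$ with $\psi\phi\simeq\id$, and once with $\I{E}$ the target of $\phi$ to get $\phi\psi\simeq\id$, both times by chasing identity functors). Throughout I abbreviate $\I{P}_k=\IPSh_{\Univ}^{(\I{U},R_k)}(\I{C}^k)$, $\square R=\square_{k=1}^{n}R_k$, $\square S=\square_{k=1}^{n}S_k$, and write $j\colon\prod_k\I{C}^k\to\IPSh_{\Univ}^{(\I{V},\square R)}(\prod_k\I{C}^k)$, $j'\colon\prod_k\I{P}_k\to\IPSh_{\Univ}^{(\I{V},\square S)}(\prod_k\I{P}_k)$ and $j_{\I{C}^k}\colon\I{C}^k\to\I{P}_k$ for the canonical functors, so that $\phi\circ j\simeq j'\circ(\prod_k j_{\I{C}^k})$ by construction.

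The equivalence $\phi^\ast$ would be built from the following chain, natural in $\I{E}$. Two applications of Proposition~\ref{prop:CocompletionRelations} (one at each end) identify $\iFun{\IPSh_{\Univ}^{(\I{V},\square S)}(\prod_k\I{P}_k)}{\I{E}}^{\cocont{\I{V}}}$ with $\iFun{\prod_k\I{P}_k}{\I{E}}_{\square S}$ via $j'^{\ast}$, and $\iFun{\IPSh_{\Univ}^{(\I{V},\square R)}(\prod_k\I{C}^k)}{\I{E}}^{\cocont{\I{V}}}$ with $\iFun{\prod_k\I{C}^k}{\I{E}}_{\square R}$ via $j^{\ast}$; using $\phi\circ j\simeq j'\circ(\prod_k j_{\I{C}^k})$ one checks that under these identifications $\phi^\ast$ is precomposition with $\prod_k j_{\I{C}^k}$. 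Since each $\I{P}_k$ is already $\I{U}$-cocomplete and $S_k$ consists of all $\I{U}$-shaped colimit cocones in $\I{P}_k$, unwinding the definition of $\iFun{-}{-}_{\square S}$ and using that $\I{U}$-cocontinuity is a local condition shows
\[
\iFun{\textstyle\prod_k\I{P}_k}{\I{E}}_{\square S}\;\simeq\;\iFun{\textstyle\prod_k\I{P}_k}{\I{E}}^{\mult{\I{U}}},
\]
exactly as in Remark~\ref{rem:UniversalMultilinFunctor}. Currying via the evident $n$-ary generalisation of Lemma~\ref{lem:limitpreservationswap} rewrites the right-hand side as the iterated $\BB$-category $\iFun{\I{P}_1}{\iFun{\I{P}_2}{\cdots\iFun{\I{P}_n}{\I{E}}^{\cocont{\I{U}}}\cdots}^{\cocont{\I{U}}}}^{\cocont{\I{U}}}$, and likewise identifies $\iFun{\prod_k\I{C}^k}{\I{E}}_{\square R}$ with $\iFun{\I{C}^1}{\iFun{\I{C}^2}{\cdots\iFun{\I{C}^n}{\I{E}}_{R_n}\cdots}_{R_2}}_{R_1}$. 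Processing these iterated expressions from the inside out and applying the single-variable case of Proposition~\ref{prop:CocompletionRelations} at each layer — using $\I{P}_l=\IPSh_{\Univ}^{(\I{U},R_l)}(\I{C}^l)$ and that $R_l$ contains \emph{all} $\I{U}$-shaped cocones in $\I{C}^l$ — replaces each $\iFun{\I{P}_l}{-}^{\cocont{\I{U}}}$ by $\iFun{\I{C}^l}{-}_{R_l}$ via $j_{\I{C}^l}^{\ast}$. Composing the whole chain gives precisely precomposition with $\prod_k j_{\I{C}^k}$, so $\phi^\ast$ is an equivalence.

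Two points need care. First, for the inside-out induction one needs that $\iFun{\I{A}}{\I{E}'}^{\cocont{\I{U}}}$ is again $\I{U}$-cocomplete whenever $\I{E}'$ is, so that the single-variable Proposition~\ref{prop:CocompletionRelations} applies at each stage; this holds because $\I{U}$-colimits of functors are computed pointwise and a pointwise $\I{U}$-colimit of $\I{U}$-cocontinuous functors is again $\I{U}$-cocontinuous (cf.\ the colimit-decomposition results of \S\ref{chap:filtered}, resp.\ \cite{Martini2021a}). Second — and this is the genuine obstacle — one must carefully unwind the pullback definition of $\iFun{-}{-}_{\square R}$ to see that it matches the iterated expression above, keeping track via Remark~\ref{rem:BCCocompletionRelations} of the étale base change to $\Over{\BB}{G}$ that is implicit whenever a partial evaluation at objects $c_k\colon G\to\I{C}^k$ (over some $G\in\GG$) is formed, and one must verify that the resulting chain of identifications is genuinely natural in $\I{E}$ and is implemented by precomposition with $\prod_k j_{\I{C}^k}$. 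Everything else is a routine, if lengthy, diagram chase; the hard part is thus not any individual step but the organisation of these multivariable manipulations.
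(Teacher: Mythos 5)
Your proposal is correct and takes essentially the same route as the paper: both reduce, via Proposition~\ref{prop:CocompletionRelations} applied at each end together with a Yoneda/universal-property argument, to showing that precomposition with $\prod_k j_{\I{C}^k}$ induces an equivalence $\iFun{\prod_k\IPSh_{\Univ}^{(\I{U},R_k)}(\I{C}^k)}{\I{E}}_{\square_k S_k}\simeq\iFun{\prod_k\I{C}^k}{\I{E}}_{\square_k R_k}$, which is then handled one variable at a time using currying and object-wise detection of colimits --- your inside-out processing of the fully curried expression is precisely the paper's induction on $n$. The detour through $\I{U}$-multilinear functors and the explicit remarks on pointwise $\I{U}$-cocompleteness of the intermediate functor $\BB$-categories are just slightly more explicit packaging of steps the paper leaves implicit.
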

\begin{proof}
	Note that in light of Remark~\ref{rem:CocompletionRelationsSmall}, the map $\phi$ is a well-defined morphism in the $\BB$-category $\ICat_{\BB}^{\cocont{\I{V}}}$ of $\I{V}$-cocomplete $\BB$-categories and $\I{V}$-cocontinuous functors. By combining Yoneda's lemma with Remark~\ref{rem:BCCocompletionRelations} and~\cite[Remark~5.3.4]{MWColimits}, the result thus follows once we verify that for every $\I{V}$-cocomplete $\BB$-category $\I{E}$ the restriction functor
	\begin{equation*}
		\phi^\ast\colon \IFun^{\cocont{\I{V}}}(\IPSh^{(\I{V},\square_{k=1}^n S_k)}(\IPSh^{(\I{U},R_1)}(\I{C}^1)\times\cdots\times \IPSh^{(\I{U},R_n)}(\I{C}^n)),\I{E})\to\IFun^{\cocont{\I{V}}}(\IPSh^{(\I{V}, \square_{k=1}^n R_k)}( \I{C}^1\times\cdots\times\I{C}^n),\I{E})
	\end{equation*}
	is an equivalence. Using Proposition~\ref{prop:CocompletionRelations}, this is in turn equivalent to the map
	\begin{equation*}
		\IFun(\IPSh^{(\I{U},R_1)}(\I{C}^1)\times\cdots\times \IPSh^{(\I{U},R_n)}(\I{C}^n),\I{E})_{\square_{k=1}^n S_k}\to\IFun(\I{C}^1\times\cdots\times\I{C}^n,\I{E})_{\square_{k=1}^n R_k}
	\end{equation*}
	being an equivalence. We will use induction on $n$ to show that this functor is an equivalence. If $n=1$, this is precisely the content of Proposition~\ref{prop:CocompletionRelations}. For $n>1$, the construction of $\square_{k=1}^n R_k$ and $\square_{k=1}^n S_k$ together with Lemma~\ref{lem:limitpreservationswap} imply that the above map can be identified with the morphism
	\begin{equation*}
		\begin{tikzcd}
			\IFun(\IPSh^{(\I{U},R_1)}(\I{C}^1)\times\cdots\times \IPSh^{(\I{U},R_{n-1})}(\I{C}^{n-1}),\IFun^{\cocont{\I{U}}}(\IPSh^{(\I{U}, R_n)}(\I{C}^n),\I{E}))_{\square_{k=1}^{n-1} S_k}\arrow[d]\\
			\IFun( \I{C}^1\times\cdots\times\I{C}^{n-1},\IFun(\I{C}^n,\I{E})_{R_n})_{\square_{k=1}^{n-1} R_k}.
		\end{tikzcd}
	\end{equation*}
	As Proposition~\ref{prop:CocompletionRelations} implies that the map $\IFun(\I{C}^n,\I{E})_{R_n}\to \IFun^{\cocont{\I{U}}}(\IPSh^{(\I{U}, R_n)}(\I{C}^n),\I{E})$ is an equivalence, the claim thus follows by the induction hypothesis.
\end{proof}

\chapter{Locally constant sheaves}
\numberwithin{equation}{chapter}
\label{app:locallyConstantSheaves}

This section is devoted to the study of locally constant sheaves in $\infty$-topoi.
For the entire section, let us fix a compactly generated $ \infty $-category $ \EE $. Recall that we write $ {\Shv_\EE(\BB)} = \Fun^{\lim}(\BB^\op, \EE) = \BB \otimes \EE $, where $ -\otimes- $ denotes the tensor product in $\LPrS$.
By applying $ -\otimes \EE $ to the constant sheaf functor $ \const_\BB \colon \mathcal{S} \rightarrow \BB $ we obtain an adjunction
\[
(\const_{\BB}\dashv\Gamma_\BB)\colon \EE \leftrightarrows \Shv_\EE(\BB).
\]
Similarly, by applying $-\otimes \EE$ to the adjunction $(\pi_{A}^\ast\dashv (\pi_{A})_\ast)\colon  \BB\leftrightarrows \Over{\BB}{A}$ for some $A\in\BB$, we obtain an induced adjunction
\begin{equation*}
	(\pi_{A}^\ast\dashv (\pi_{A})_\ast)\colon \Shv_{\EE}(\BB) \leftrightarrows \Shv_{\EE}(\Over{\BB}{A}).
\end{equation*}
Furhtermore, if there is an accessible left exact localisation $L \dashv i \colon \PSh(\CC) \to \BB$ we get an induced localisation $ L_\EE \dashv i_\EE \colon \PSh_\EE(\CC) \to \Shv_{\EE}(\BB) $.

\begin{definition}
	Let us fix the following terminology:
	\begin{enumerate}
		\item We call $ \const_\BB(K) $ the \emph{constant sheaf associated to} $ K\in \EE $. The objects in the essential image of $ \const_\BB $ are called \emph{constant $\EE$-valued sheaves}.
		\item We call an $\EE$-valued sheaf $ F $ \emph{constant with compact values if} it is of the form $ \const_{\BB}(K) $ for some compact object $ K \in \EE $.
		\item 	An $\EE$-valued sheaf $ F$ is called \emph{locally constant} if there is a cover $ (\pi_{A_i}) \colon \bigsqcup_{i} A_i \onto 1 $ in $\BB$ such that for every $ i $ the $\EE$-valued sheaf $ \pi_{A_i}^* F\in\Shv_{\EE}(\Over{\BB}{A_i})$ is constant.
		\item We call an $\EE$-valued sheaf $ F $ \emph{locally constant with compact values} if we can find a cover $ (s_i)_i \colon\bigsqcup_i A_i\onto 1$ in $\BB$ such that $ s_i^* F $ is constant with compact values.
		\item We denote by $ \LConst^\EE(\BB) $ the full subcategory of $\Shv_{\EE}(\BB)$ spanned by the locally constant sheaves, and by $ \LConst_\compact^{\EE}(\BB) $ the full subcategory spanned by the locally constant sheaves with compact values.
	\end{enumerate}
\end{definition}

The key result that we will show in this section is the following:

\begin{proposition}
	\label{prop:Localsectionsofsheafification}
	Suppose that $ L \colon \PSh(\CC) \to \BB $ is a left exact and accessible localisation, and let $ F$ be an $\EE$-valued presheaf on $\CC$.
	Then for any $ c \in \CC $, any $K \in \EE^\omega$ and any map $ K \to L_\EE F (c) $ there is a collection of morphisms $ (s_i\colon c_i \to c)_{i\in I} $ in $\CC$ such that $ (Ls_i)\colon \bigsqcup_i L(c_i)\onto L(c) $ is a cover in $\BB$ and for any $ i \in I $ the composite $ K \to L_\EE F(c) \xrightarrow{s_i^*} L_\EE F(c_i)  $ factors as a composite $ K \xrightarrow{m_i} F(c_i) \xrightarrow{\varepsilon_F (c_i)} L_\EE F(c_i) $ for some $ m_i \colon K \to F(c_i) $.
\end{proposition}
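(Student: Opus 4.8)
The plan is to reduce first to the case $\EE=\SS$ of sheaves of spaces, and then to analyse the $\SS$-valued sheafification by combining the structure theory of accessible left exact localisations of presheaf $\infty$-topoi with the \v{C}ech (plus-)construction.

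\emph{Reduction to $\EE=\SS$.} Here one uses that $\EE$ is compactly generated and that $K$ is a compact object of $\EE$. The functor $\map{\EE}(K,-)\colon\EE\to\SS$ preserves all limits and, since $K$ is compact, filtered colimits; as $L_\EE$ is built from these two operations (small object argument, cf.\ \cite[\S 2.3]{Anel2020a}), applying $\map{\EE}(K,-)$ objectwise to the localisation map $\varepsilon_F\colon F\to L_\EE F$ produces a morphism $F'\to G'$ of presheaves of spaces on $\CC$, where $F'=\map{\EE}(K,F(-))$ and $G'=\map{\EE}(K,L_\EE F(-))$. Since $L_\EE F$ defines a limit-preserving functor $\BB^\op\to\EE$, the presheaf $G'$ is limit-preserving on $\BB^\op$, hence lies in $\Shv_\SS(\BB)=\BB$; and the morphism $F'\to G'$ is a local equivalence for the $\SS$-valued sheafification $L\colon\PSh(\CC)\to\BB$. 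For this one uses that $\varepsilon_F$ is a local equivalence for $L_\EE$, that $\map{\EE}(e,-)$ carries the generating local equivalences of $L_\EE$ to local equivalences of $L$ for $e$ ranging over a set of compact generators, and that $K$ is a retract of a finite colimit of such generators, so that $\map{\EE}(K,\varepsilon_F(-))$ is a retract of a finite limit of local equivalences of $L$ — both operations preserve local equivalences because $L$ is left exact. Hence $G'\simeq L(F')$, the map $F'\to G'$ is identified with $\varepsilon_{F'}$, and the datum $K\to L_\EE F(c)$ becomes a point $\ast\to L(F')(c)$; a factorisation of this point through $\varepsilon_{F'}(c_i)\colon F'(c_i)\to L(F')(c_i)$ over a covering family is exactly a factorisation of the original morphism through $\varepsilon_F(c_i)$. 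It therefore suffices to prove the statement for $\EE=\SS$ and $K=\ast$.

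\emph{The case $\EE=\SS$, $K=\ast$.} By the structure theory of accessible left exact localisations \cite[\S\S 6.2, 6.5]{htt}, $\BB$ is a cotopological localisation $b\colon\Shv_\tau(\CC)\to\BB$ of the $\infty$-topos of $\tau$-sheaves for a suitable Grothendieck topology $\tau$ on $\CC$, so $LF\simeq b(a_\tau F)$ with $a_\tau$ the topological sheafification. The unit $a_\tau F\to b(a_\tau F)=LF$ is $\infty$-connective, hence a cover (effective epimorphism) in $\BB$; so any point $\ast\to LF(c)$ lifts, after pulling back along a $\tau$-covering family $(c_i\to c)_i$ (whose image $\bigsqcup_i Lc_i\onto Lc$ is a cover in $\BB$), to a point of $a_\tau F(c_i)$. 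It remains to descend such a point through $a_\tau F$, i.e.\ to handle a topological localisation. Both the plus-construction $(G^+)(d)=\colim_{R}\map{\PSh(\CC)}(R,G)$ — the colimit over the filtered poset of $\tau$-covering sieves $R$ of $d$ — and the transfinite iteration $a_\tau F\simeq\colim_\alpha F^{(\alpha)}$ are compatible with restriction along the étale maps $\CC_{/c_i}\to\CC$, so one may argue over each slice separately. A point of $(G^+)(d)$ factors, $\ast$ being compact and the poset of covering sieves filtered, through some $\map{\PSh(\CC)}(R,G)$, i.e.\ through a morphism of presheaves $R\to G$; restricting along each $c'\to d$ lying in $R$ yields a point of $G(c')$ whose image in $(G^+)(c')$ is the given point restricted to $c'$. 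By well-founded recursion on $\alpha$ — at successor stages refining the covering family as above, at limit stages using $F^{(\alpha)}(d)=\colim_{\beta<\alpha}F^{(\beta)}(d)$ so that the point comes from some $F^{(\beta)}(d)$ with $\beta<\alpha$ — one descends the point to a point of $F(c')$ over a $\tau$-covering family of $c$; composing with the family produced by the cotopological step (covers compose) gives the required $(s_i\colon c_i\to c)_i$ and the factorisations $\varepsilon_F(c_i)\circ m_i\simeq s_i^*u$.

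The main obstacle is the reduction to $\EE=\SS$: one must check carefully that $\map{\EE}(K,L_\EE F(-))$ really is the $\SS$-valued sheafification of $\map{\EE}(K,F(-))$, which hinges on $K$ being \emph{compact} and on the compatibility of a small-set presentation of $L_\EE$ with one of $L$. A secondary point is that the plus-construction and the cotopological localisation must be seen to commute with the étale base change $\CC_{/c}\to\CC$, which is what licenses running the inductive descent over the slices.
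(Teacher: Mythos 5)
Your overall skeleton (reduce to $\EE=\SS$ and $K=1$ using compact generation of $\EE$ and compactness of $K$, then treat the $\SS$-valued point case) matches the paper's, but the pivotal step of your reduction has a gap as written. You need to know that the objectwise map $\map{\EE}(K,\varepsilon_F(-))\colon F'\to G'$ is inverted by $L$, so that $G'\simeq LF'$ with the map identified with the unit. Your justification — that $\map{\EE}(e,-)$ sends the \emph{generating} $L_\EE$-local equivalences to $L$-local equivalences for compact generators $e$, and that $K$ is a retract of a finite colimit of generators — does not yield this: $\varepsilon_F$ is not a generating local equivalence but lies in the strong saturation of the generators, and that saturation is built using pushouts, transfinite composites and colimits in the arrow category, none of which $\map{\EE}(e,-)$ (which preserves only limits and filtered colimits) commutes with objectwise. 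So even for a single compact generator $e$, the claim that $\map{\EE}(e,\varepsilon_F(-))$ is an $L$-local equivalence does not follow from the facts you list; the retract-of-a-finite-limit reduction from $K$ to generators is fine but rests on exactly the missing case. The statement you need is true, and the efficient way to get it is the identification $\Shv_\EE(\BB)\simeq\Shv_{\BB}^{\omega}(\EE^{\cpt{\omega}})$ (equivalently $\BB\otimes\EE\simeq\Fun^{\lex}((\EE^{\cpt{\omega}})^{\op},\BB)$), under which $L_\EE\dashv i_\EE$ is literally postcomposition with $L\dashv i$, so that $\map{\EE}(K,L_\EE F(-))\simeq L\bigl(\map{\EE}(K,F(-))\bigr)$ for $K$ compact; this is precisely what the paper invokes (following Haine), and your argument needs either to cite it or to prove it — your "main obstacle" remark correctly locates the problem, but the text does not resolve it.

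For the case $\EE=\SS$, $K=1$ you take a genuinely different and much heavier route than the paper. You factor $L$ as topological followed by cotopological, use that the cotopological unit is $\infty$-connective (hence an effective epimorphism, so sections lift along a covering sieve), and then descend through a transfinite iteration of the plus construction by well-founded induction. This can be made to work, but it imports several nontrivial inputs that need care or citation: the convergence of the transfinite plus construction to $\infty$-sheafification (not in~\cite{htt} in this form), the local lifting of sections along effective epimorphisms in a non-hypercomplete sheaf topos, and the bookkeeping ensuring the covering families consist of morphisms of $\CC$. The paper's Lemma~\ref{lem:MapsBetweenLConstAreLocallyConstSpaces} gets the same conclusion in a few lines with no structure theory at all: cover $F$ by representables, pull this cover back in $\PSh(\CC)$ along the map $c\to LF$ classified by the point, use left exactness of $L$ (it preserves the pullback and the cover, and inverts $F\to LF$) to see the resulting family becomes a cover of $L(c)$, and then cover the pullback by representables; the factorisations $m_i$ fall out of the commuting square. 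You may keep your argument, but the direct pullback argument is both shorter and avoids the extra inputs.
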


As an immediate consequence we obtain the following:

\begin{corollary}
	\label{prop:MapsBetweenLconstThingsAreLocallyConst}
	Let $ f \colon \const_\BB(K) \rightarrow \const_\BB(M) $ be a morphism in $ \Shv_\EE(\BB) $ where $ K $ is compact.
	Then there is a cover $ (\pi_{A_i}) \colon \bigsqcup_i A_i\onto 1 $ in $\BB$ and maps $ f_i \colon K \rightarrow M $ in $\EE$ for each $i$ such that $ \pi_{A_i}^* f$ is equivalent to $  \const_{\BB_{/A_i}}(f_i) $.
\end{corollary}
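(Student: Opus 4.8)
The plan is to deduce the corollary from Proposition~\ref{prop:Localsectionsofsheafification} by a spreading-out argument based on a presentation of $\BB$. First I would fix a small $\infty$-category $\CC$ together with a left exact accessible localisation $L\colon\PSh(\CC)\to\BB$, with $\EE$-linear sheafification $L_\EE\colon\PSh_\EE(\CC)\to\Shv_\EE(\BB)$, fully faithful inclusion $i_\EE$, and unit $\varepsilon\colon\id\to i_\EE L_\EE$. Since $\const_\BB$ is obtained by applying $-\otimes\EE$ to the algebraic morphism $\SS\to\BB$, which factors through $\PSh(\CC)$, one has $\const_\BB(K)\simeq L_\EE(\underline K)$ and $\const_\BB(M)\simeq L_\EE(\underline M)$, where $\underline K,\underline M\in\PSh_\EE(\CC)$ are the constant presheaves. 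Thus $f$ corresponds under $i_\EE$ to a natural transformation $\underline K\to i_\EE\const_\BB(M)$ of $\EE$-valued presheaves on $\CC$, i.e.\ to a family of maps $f_c\colon K\to(L_\EE\underline M)(c)$ in $\EE$ indexed by $c\in\CC$; since $\underline K$ is constant, naturality gives $(L_\EE\underline M)(s)\circ f_c\simeq f_{c'}$ for every morphism $s\colon c'\to c$ in $\CC$.

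For each $c\in\CC$ I would apply Proposition~\ref{prop:Localsectionsofsheafification} to $F=\underline M$ and the map $f_c\colon K\to(L_\EE\underline M)(c)$; this is the one place compactness of $K$ enters, namely to meet the hypothesis of that proposition. It produces morphisms $(s_i\colon c_i\to c)_{i\in I_c}$ in $\CC$ such that $(Ls_i)\colon\bigsqcup_i L(c_i)\onto L(c)$ is a cover in $\BB$ and, for each $i$, the map $s_i^\ast f_c\colon K\to(L_\EE\underline M)(c_i)$ factors as $\varepsilon_{\underline M}(c_i)\circ m_i$ for some $m_i\colon K\to\underline M(c_i)=M$. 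By the naturality noted above $s_i^\ast f_c\simeq f_{c_i}$, hence $f_{c_i}\simeq\varepsilon_{\underline M}(c_i)\circ m_i$.

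Letting $(c,i)$ range over all $c\in\CC$ and all $i\in I_c$ (a small set), the next step is to check that $\{L(c_i)\to L(c)\to 1_\BB\}_{(c,i)}$ is a cover of $1_\BB$. Indeed $\bigsqcup_{c\in\CC}h_c\onto 1$ in $\PSh(\CC)$ is an effective epimorphism, being objectwise surjective on $\pi_0$ because $\id_d$ witnesses nonemptiness of $\bigsqcup_c\Map_\CC(d,c)$ for every $d$, and $L$ sends effective epimorphisms to effective epimorphisms, being colimit-preserving and left exact and hence compatible with \v{C}ech nerves; so $\bigsqcup_c L(c)\onto 1_\BB$ is a cover, and composing with the covers $\bigsqcup_i L(c_i)\onto L(c)$ gives the claim. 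Setting $A_{(c,i)}=L(c_i)$ and $f_{(c,i)}=m_i$, it then remains to prove $\pi_{A_{(c,i)}}^\ast f\simeq\const_{\Over{\BB}{A_{(c,i)}}}(m_i)$ for each $(c,i)$.

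Fixing $c$ and $i$ and writing $A=L(c_i)$: lex accessible localisations are stable under slicing, so $L$ restricts to one $\Over{\PSh(\CC)}{h_{c_i}}\simeq\PSh(\Over{\CC}{c_i})\to\Over{\BB}{A}$, compatibly with $\EE$-linear sheafification and with $i_\EE$; moreover $\pi_A^\ast\const_\BB(-)\simeq\const_{\Over{\BB}{A}}(-)$, and along the étale map $\pi_A$ the presheaf $\pi_A^\ast(i_\EE\const_\BB M)$ on $\Over{\CC}{c_i}$ sends $(d\to c_i)$ to $(i_\EE\const_\BB M)(d)$. Since $\Over{\CC}{c_i}$ has terminal object $\id_{c_i}$, a natural transformation out of the constant presheaf $\underline K$ on $\Over{\CC}{c_i}$ is determined by its component at $\id_{c_i}$; under the identifications above, $\pi_A^\ast f$ has component $f_{c_i}$ there, while $\const_{\Over{\BB}{A}}(m_i)=L_\EE(\underline{m_i})$ has component $\varepsilon_{\underline M}(c_i)\circ m_i$, and these coincide by the second paragraph, so $\pi_A^\ast f\simeq\const_{\Over{\BB}{A}}(m_i)$. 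I expect the only genuinely nontrivial input to be Proposition~\ref{prop:Localsectionsofsheafification} itself; granting it, the main obstacle is the bookkeeping of these pullback and restriction identifications — making sure that $\pi_A^\ast f$ and $\const_{\Over{\BB}{A}}(m_i)$ really are computed by the same component at the terminal object of $\Over{\CC}{c_i}$, which rests on the compatibility of $\EE$-linear sheafification with slicing and on the identity $s_i^\ast f_c\simeq f_{c_i}$.
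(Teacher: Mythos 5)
Your proof is correct and follows essentially the same route as the paper: everything reduces to Proposition~\ref{prop:Localsectionsofsheafification}, whose factorisation $f_{c_i}\simeq\varepsilon_{\underline{M}}(c_i)\circ m_i$ is then translated, slice by slice, into the statement that the restriction of $f$ is constant. The only difference is organisational: the paper chooses the presentation so that $\CC$ has a final object and applies the proposition once at that object (so the resulting cover is immediately a cover of $1_\BB$), whereas you apply it at every representable and assemble the cover from $\bigsqcup_c L(c)\onto 1$, at the cost of the (correct, routine) étale base-change compatibilities for the sheafification units that you defer as bookkeeping.
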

\begin{proof}
	We may pick a left exact accessible localisation $ L \colon \PSh(\CC) \to  \BB$ where $ \CC $ has a final object $ 1 $.
	The morphism $ f $ corresponds to a map $ \tilde{f} \colon K \to \Gamma\const_\BB(M) = \const_\BB(M)(1)$.
	By Proposition~\ref{prop:Localsectionsofsheafification} we may now find a covering $ (\pi_{Lc_i}) \colon \bigsqcup_i L(c_i) \onto  1$ and commutative squares
	\[\begin{tikzcd}
		{ M = \underline{M}(c_i)} & {\const_\BB(M)(c_i)} \\
		K & {\const_\BB(M)(1)}
		\arrow[from=1-1, to=1-2]
		\arrow[from=2-1, to=2-2, "\tilde{f}"]
		\arrow[from=2-2, to=1-2,"\pi_{Lc_i}^*"]
		\arrow["{m_i}", from=2-1, to=1-1]
	\end{tikzcd}\]
	where $ \underline{M} $ denotes the constant $ M $-valued presheaf.
	Let $ f_i = \const_\BB(m_i)$. 
	Then the above square translates to the statement that $ \pi_{Lc_i}^*(f_i) $ is equivalent to $ \const_{\BB_{/Lc_i}}(f_i) $, and the claim follows.
\end{proof}

\begin{corollary}
	\label{cor:LConstPO}
	The full subcategory $  \LConst_\compact^{\EE}(\BB)\into\EE\otimes\BB $ is closed under finite colimits and retracts.
\end{corollary}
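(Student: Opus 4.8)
The plan is to reduce the statement to the (easy) fact that $\EE^\omega$ is closed under finite colimits and retracts in $\EE$ — true since $\EE$ is compactly generated — via the identification recorded in the remark preceding the statement: $\ILConst_\omega^\EE$ is the essential image of the internal functor $\mathsf c\colon\const_\BB\EE^\omega\to\EE\otimes\Univ$ corresponding, under $\Fun_\BB(\const_\BB\EE^\omega,\EE\otimes\Univ)\simeq\Fun(\EE^\omega,\Shv_\EE(\BB))$, to the constant–sheaf functor. Since section-wise $\mathsf c$ is given by the (finite–colimit preserving) constant–sheaf functors $\const_{\Over{\BB}{A}}\colon\EE^\omega\to\Shv_\EE(\Over{\BB}{A})$, Proposition~\ref{prop:critfinlim} shows that $\const_\BB\EE^\omega$ has finite colimits, that $\mathsf c$ is $\IFin_\BB$-cocontinuous, and (being a functor at all) that $\mathsf c$ preserves retracts; likewise $\const_\BB\EE^\omega$ is idempotent complete. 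Granting that $\mathsf c$ is \emph{fully faithful}, the corollary is then formal: given a finite — resp.\ a retract — diagram in $\LConst_\omega^\EE(\BB)$, choose a cover $\bigsqcup_iA_i\onto1$ over which each of the (finitely many) objects becomes constant with compact values; the restricted diagram over $A_i$ now takes values in the full subcategory of objects constant with compact values, which by full faithfulness is internally equivalent to $\const_{\Over{\BB}{A_i}}\EE^\omega$, so it descends to a diagram valued in $\EE^\omega$; its finite colimit — resp.\ its idempotent splitting — exists in $\EE^\omega$, is carried by the cocontinuous $\const_{\Over{\BB}{A_i}}$ (resp.\ by an arbitrary functor, splittings being absolute) to the corresponding object of $\Shv_\EE(\Over{\BB}{A_i})$, and is again constant with compact values. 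Taking global sections yields the claim for $\LConst_\omega^\EE(\BB)\into\Shv_\EE(\BB)$.

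For the finite–colimit part one can stay more hands‑on and avoid full faithfulness: it suffices to treat the initial object and pushouts. The initial object of $\Shv_\EE(\BB)$ is $\const_\BB$ of the initial object of $\EE$, which is compact. For a pushout of a span $Y\leftarrow X\to Z$ in $\LConst_\omega^\EE(\BB)$, pass to a cover trivialising $X,Y,Z$ as constant sheaves with compact values and then, since the source $X$ of each leg is then constant with a compact value, refine further using Corollary~\ref{prop:MapsBetweenLconstThingsAreLocallyConst} so that both legs become $\const$ of maps in $\EE$; as $\pi_{A_{ij}}^\ast$ and $\const_{\Over{\BB}{A_{ij}}}$ are cocontinuous, the pushout restricts over each piece to $\const$ of a pushout of compact objects of $\EE$, which is compact. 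The retract part, however, genuinely needs the stronger input: writing $F$ as a retract of $G\in\LConst_\omega^\EE(\BB)$ with coherent idempotent $e$ on $G$, over a cover $G$ becomes $\const_{\Over{\BB}{A_i}}(N_i)$ with $N_i\in\EE^\omega$, and one must descend the \emph{whole} coherent idempotent $\pi_{A_i}^\ast e$ on $\const N_i$ — not just a single morphism, which is all Corollary~\ref{prop:MapsBetweenLconstThingsAreLocallyConst} supplies — to a coherent idempotent on $N_i$, whose splitting in the idempotent complete $\EE^\omega$ then exhibits $\pi_{A_i}^\ast F$ (locally) as constant with compact value.

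The main obstacle is therefore exactly the full faithfulness of $\mathsf c$, i.e.\ the identity $\map{\EE\otimes\Univ}(\const K,\const L)\simeq\const_\BB\Map_\EE(K,L)$ for $K,L\in\EE$. Corollary~\ref{prop:MapsBetweenLconstThingsAreLocallyConst} already gives that the canonical comparison map out of $\const_\BB\Map_\EE(K,L)$ is an effective epimorphism (every local section is of the form $\const_\BB(f)$); what is left is its being a monomorphism, and I would deduce this by reducing to the context $1$ — where the assertion becomes $\Map_\EE(K,\Gamma_\BB\const_\BB L)\simeq\Gamma_\BB\const_\BB\Map_\EE(K,L)$ — and observing that $\Gamma_\BB\const_\BB$ is cotensoring by the shape of the topos $\BB$, hence commutes with mapping spaces. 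Equivalently, this records that the endomorphism sheaf of $\const N$ is the constant sheaf on $\operatorname{End}_\EE(N)$, which is what lets the $\operatorname{Idem}$‑shaped datum of a coherent idempotent descend over a cover. This last point is the only step that requires real work; everything else is bookkeeping with covers together with the cocontinuity of $\pi_A^\ast$ and of $\const$ used throughout the paper.
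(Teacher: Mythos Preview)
Your hands-on argument for the initial object and pushouts is exactly the paper's proof: trivialise the three objects on a cover, then apply Corollary~\ref{prop:MapsBetweenLconstThingsAreLocallyConst} to each leg of the span (each has source $\const$ of a compact object) to make the whole span $\const$ of a span in $\EE^\omega$. No difference here.

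For retracts you go further than the paper, which only says the proof ``proceeds in the same way''. You are right to flag this as too quick: in the retract diagram $F\to G\to F$ only $G$ is known to lie in $\LConst_\omega^\EE$, so after trivialising $G=\const N$ one can apply Corollary~\ref{prop:MapsBetweenLconstThingsAreLocallyConst} only to the idempotent $e=ir\colon\const N\to\const N$, not to the section or retraction separately. Your proposed remedy --- full faithfulness of $\mathsf c\colon\const_\BB\EE^\omega\to\EE\otimes\Univ$ --- is a correct statement. A cleaner proof than your ``cotensoring by the shape'' sketch: under $\Shv_\EE(\Over{\BB}{A})\simeq\Fun^{\lex}((\EE^\omega)^{\op},\Over{\BB}{A})$ the object $\const K$ becomes the representable $(\const_{\Over{\BB}{A}})_\ast h_K$, so Yoneda gives
\[
\Map_{\Shv_\EE(\Over{\BB}{A})}(\const K,\const L)\simeq\Gamma_{\Over{\BB}{A}}\const_{\Over{\BB}{A}}\Map_\EE(K,L)=(\const_\BB\Map_\EE(K,L))(A),
\]
which is exactly the value at $A$ of the mapping $\Over{\BB}{A}$-groupoid in $\const_\BB\EE^\omega$.

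There is, however, a residual gap in your final step --- and the paper's terse argument shares it. Full faithfulness lifts the coherent idempotent $\Idem\to\Shv_\EE(\Over{\BB}{A})$ on $\const N$ to a coherent idempotent in $(\const_\BB\EE^\omega)(A)$; it does \emph{not} by itself descend it to a coherent idempotent on $N$ in $\EE^\omega$. You claim this descent follows because ``the endomorphism sheaf of $\const N$ is the constant sheaf on $\End_\EE(N)$'', but a coherent idempotent is an $\Idem$-shaped diagram, and $\Idem$ is not a compact $\infty$-category. So the analogue of Corollary~\ref{prop:MapsBetweenLconstThingsAreLocallyConst} one categorical level up --- that any $\Idem$-diagram in $\const_\BB\EE^\omega$ is locally constant --- does not follow from the compact case; equivalently, you have not shown that $(\const_\BB\EE^\omega)(A)$ is idempotent complete. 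One way to close this is to first use the cofinal functor $\mathbb N\to\Idem$ and cocontinuity of $\const$ to get $F\simeq\const M$ on the cover (this part only needs the single morphism $e_0$), and then argue separately that $M\in\EE^\omega$; but that last step is not addressed in your write-up.
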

\begin{proof}
	We start by showing the claim about finite colimits.
	Since $\LConst_\compact^{\EE}(\BB)$ contains the final object it suffices to see that it is closed under pushouts.
	So let us consider a span $ F \leftarrow G \rightarrow H $ in $ \LConst_\compact^{\EE}(\BB)$. 
	We may pass to a cover in $\BB$ to assume that $ F,G $ and $ H $ are constant.
	Thus by proposition~\ref{prop:MapsBetweenLconstThingsAreLocallyConst} we may further pass to a cover so that we can assume that the span above is given by applying $ \const_\BB $ to a span in $ \EE^\compact $.
	So the claim follows since $ \EE^\compact $ is closed under finite colimits and $ \const_\BB $ preserves finite colimits. 
	The proof that $\LConst_\compact^{\EE}(\BB)$ is closed under retracts proceeds in the same way.
\end{proof}

In order to prove Proposition~\ref{prop:Localsectionsofsheafification}, we first need to treat the special case where $\EE=\SS$ and $K=1$:
\begin{lemma}
	\label{lem:MapsBetweenLConstAreLocallyConstSpaces}
	Let $F \in \PSh(\CC)$ and let $ f \colon 1\rightarrow L F (c) $ be a map for some $ c \in \CC$.
	Then there is a collection of morphisms $ (s_i\colon c_i \to c)_{i\in I} $ in $\CC$ such that $ (Ls_i)\colon \bigsqcup_i L(c_i)\onto L(c) $ is a cover in $\BB$ and maps $ m_i \colon 1 \rightarrow F(c_i) $ for each $i$ such that $ s_i^* f$ is equivalent to the composite $ 1 \xrightarrow{m_i} F(c_i) \xrightarrow{\varepsilon_F (c_i)} LF(c_i)  $.
\end{lemma}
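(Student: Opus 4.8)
The plan is to reformulate the statement in terms of the reflector $L$ and then attack it by the transfinite small object argument, in the spirit of the proof of Lemma~\ref{lem:kappaCompactFibrantReplacement}. First I would record that, since $L$ is a left exact accessible localization, the class $W$ of maps in $\PSh(\CC)$ it inverts is strongly saturated, small--generated, and (by left exactness) stable under base change. Running the small object argument for $L$ (cf.\ the proof of Lemma~\ref{lem:kappaCompactFibrantReplacement} and~\cite[\S~2.3]{Anel2020a}) produces for every $F\in\PSh(\CC)$ a transfinite sequence $F=F(0)\to F(1)\to\cdots$ with $LF\simeq\colim_{k<\lambda}F(k)$ computed in $\PSh(\CC)$, where each $F(k)\to F(k+1)$ is a pushout along a coproduct of maps that are base changes of maps from a fixed generating set $S\subseteq W$ --- in particular along a coproduct of maps in $W$ --- and where the unit $\varepsilon_F\colon F\to LF$ is the canonical map $F(0)\to\colim_k F(k)$. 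Since colimits in $\PSh(\CC)$ are pointwise and $1=\Delta^0$ is a compact object of $\SS$, the colimit $LF(c)\simeq\colim_k F(k)(c)$ is filtered and the given section $f\colon 1\to LF(c)$ lifts to a section $f_0\in F(k_0)(c)$ for some $k_0<\lambda$.

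It then suffices to prove, by transfinite induction on $k$, the following claim: for every $c\in\CC$ and every section $g\in F(k)(c)$ there is a family $(s_i\colon c_i\to c)_{i\in I}$ of morphisms of $\CC$ with $\bigsqcup_i L(c_i)\onto L(c)$ an effective epimorphism in $\BB$, together with sections $m_i\in F(c_i)$ such that $s_i^\ast g$ is the image of $m_i$ under the canonical map $F(0)(c_i)\to F(k)(c_i)$. The case $k=0$ is trivial, and the limit case follows again from the pointwise computation of colimits and the compactness of $\Delta^0$. For the successor step I would write $F(k+1)=F(k)\sqcup_A B$ with $A\to B$ a coproduct of cell maps $a_s\colon Y_s\to Z_s$ lying in $W$ and $A\to F(k)$ the attaching map. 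Given $g\colon c\to F(k+1)$, form the pullbacks $P=F(k)\times_{F(k+1)}c$, $Q_s=Z_s\times_{F(k+1)}c$, and $Q_s'=Y_s\times_{F(k+1)}c$ in $\PSh(\CC)$. The two coprojections of a pushout are jointly effective epimorphic (as one checks after $0$-truncation), so $P\sqcup\bigsqcup_s Q_s\to c$ is an effective epimorphism, whence $L(P)\sqcup\bigsqcup_s L(Q_s)\onto L(c)$. By left exactness the base change $Q_s'\to Q_s$ of $a_s$ lies in $W$, so $L(Q_s')\xrightarrow{\ \sim\ }L(Q_s)$ and therefore $L(P)\sqcup\bigsqcup_s L(Q_s')\onto L(c)$. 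Now $P$ maps to $F(k)$ by projection and each $Q_s'$ maps to $F(k)$ through $Y_s\to A\to F(k)$, and in both cases the composite to $F(k+1)$ agrees with the map induced by $g$. Presenting $P$ and each $Q_s'$ as colimits of representables over their categories of elements, and using that $L$ preserves these colimits and effective epimorphisms, one extracts morphisms $c'\to c$ of $\CC$ whose images jointly cover $L(c)$, each equipped with a section $c'\to F(k)$ whose image in $F(k+1)(c')$ is $(c'\to c)^\ast g$. Applying the induction hypothesis to these sections over $F(k)$ and composing the resulting covers gives the desired refinement of $g$ down to sections of $F=F(0)$. Feeding $(k_0,c,f_0)$ into the claim and pushing forward along $F(k_0)\to LF$ then yields $(s_i\colon c_i\to c)_i$, a cover, and $m_i\in F(c_i)$ with $s_i^\ast f=\varepsilon_F(c_i)\circ m_i$, which is exactly the assertion.

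The main obstacle is the successor step: it hinges on correctly reading off the geometry of the pushout $F(k)\sqcup_A B$ --- that its coprojections are jointly effective epimorphic and that the $Z_s$--part can be replaced by the $Y_s$--part up to $W$--equivalence via left exactness --- and then on descending the effective epimorphisms so obtained in $\BB$ back to genuine covering families indexed by morphisms of $\CC$, using the co-Yoneda presentation and the fact that $L$ preserves colimits and effective epimorphisms. I expect no real difficulty in the transfinite bookkeeping itself or in the compactness reductions; the care required is entirely in ensuring that the cover produced at each stage is indexed by honest $\CC$--morphisms and is compatible with the transition maps of the sequence $F(-)$.
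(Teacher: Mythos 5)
Your argument is essentially correct, but it takes a much heavier route than the paper. The paper's proof is a one-step version of your successor step: it chooses a cover $\bigsqcup_j d_j \onto F$ by representables, forms the pullback of $\bigsqcup_j d_j \to F \to LF$ along the Yoneda transpose $c \to LF$ of $f$ in $\PSh(\CC)$, and then uses exactly the two facts driving your induction --- that $L$ is left exact (so it preserves this pullback, and the unit $\varepsilon_F$, being inverted by $L$, causes the pulled-back map to become a cover after applying $L$) and that $L$ preserves effective epimorphisms --- before covering the pullback by representables and reading off the $m_i$ via Yoneda. Your transfinite scaffolding is not needed for this: the unit $F \to LF$ is itself a single map inverted by $L$, so the ``one-cell'' instance of your successor argument already gives the lemma, and the lift of $f$ to a stage $F(k_0)$ and the descent through the tower only re-derive what base-change stability of the inverted maps gives directly. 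The one soft spot in your write-up is the appeal to the small object argument as in Lemma~\ref{lem:kappaCompactFibrantReplacement} and \cite[\S~2.3]{Anel2020a}: that machinery produces reflections onto \emph{internally orthogonal} objects for a factorisation system, whereas a general accessible left exact localisation is a Bousfield-type localisation, so to obtain a cell presentation of $\varepsilon_F$ with all cells inverted by $L$ one must either enlarge the generating set (e.g.\ by iterated codiagonals, which stay in the strongly saturated class) or simply observe that any presentation with cells inverted by $L$ suffices --- the trivial one-step presentation being enough, at which point your proof collapses to the paper's. So the approach is valid, but it buys nothing over the direct pullback argument except considerably more bookkeeping; its merit is that the successor-step mechanism you isolated (base-change stability of $L$-equivalences, preservation of effective epimorphisms, refinement by representables) is precisely the right one.
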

\begin{proof}
	We pick a cover $ (t_j) \colon \bigsqcup_j d_j \onto F $ in $ \PSh(\CC) $.
	Consider the pullback square
	\[\begin{tikzcd}
		{\bigsqcup_j A_j} & {\bigsqcup_i d_j} \\
		c & LF
		\arrow[from=1-2, to=2-2]
		\arrow["f"', from=2-1, to=2-2]
		\arrow[from=1-1, to=2-1]
		\arrow[from=1-1, to=1-2]
	\end{tikzcd}\]
	in $ \PSh(\CC) $.
	Covering each $ A_j \in \PSh(\CC)$ with representables $ c_k^j $ then yields the desired collection of maps $ (s_k^j) \colon \bigsqcup_{j,k}c_k^j \onto c $.
\end{proof}
To reduce the general case to the above lemma we use the ideas of \cite[\S 2]{haine2021}.
Indeed, the fact that $\EE$ is by assumption compactly generated means that we may identify $\EE\simeq\Fun^{\lex}((\EE^\compact)^\op,\SS)$. Consequently, we obtain an equivalence $\Shv_{\EE}(\BB)\simeq \Fun^{\lex}((\EE^\compact)^\op,\BB)$. In light of these identifications, the adjunction $ L_\EE \dashv i_\EE \colon \PSh_{\EE}(\CC) \to \Shv_{\EE}(\BB)$ translates into the adjunction $\Fun^{\lex}((\EE^\compact)^\op,\PSh(\CC))\leftrightarrows\Fun^{\lex}((\EE^\compact)^\op,\BB)$ that is obtained by postcomposition with $L \dashv i$. An analogous observation shows that for $ c \in \CC $ the evaluation functor $ \ev_c^\EE \colon  \Shv_{\EE}(\BB) \to \EE $ is equivalent to the functor $ \ev_{c,*} \colon \Fun^{\lex}((\EE^\compact)^\op,\BB)\to \Fun^{\lex}((\EE^\compact)^\op,\SS) $ given by composing with $ \ev_c \colon \BB \to \SS $.

\begin{proof}[{Proof of Proposition~\ref{prop:Localsectionsofsheafification}}]
	Since $ K $ is compact, the above discussion and Yoneda's lemma allow us to identify $ K \to LF(c) $ with a map $ f \colon 1 \to LF(c)(K) \simeq L(F(K))(c)  $.
	Therefore we are in the situation of Lemma~\ref{lem:MapsBetweenLConstAreLocallyConstSpaces} and get a collection of morphisms $ (s_i\colon c_i \to c)_{i\in I} $ in $\CC$ such that $ (Ls_i)\colon \bigsqcup_i L(c_i)\onto L(c) $ is a cover in $\BB$ and maps $ n_i \colon 1 \rightarrow F(K)(c) $ such that for each $i$ we have a commutative square
	\[\begin{tikzcd}
		{F(K)(c_i)} & {L(F(K))(c_i)} \\
		1 & {L(F(K))(c)}.
		\arrow["f", from=2-1, to=2-2]
		\arrow["{s_i^*}", from=2-2, to=1-2]
		\arrow[from=1-1, to=1-2]
		\arrow["{n_i}"', from=2-1, to=1-1]
	\end{tikzcd}\]
	Via Yoneda's lemma the maps $ n_i $ now yield the desired maps $ m_i \colon K \to F(c_i) $.
\end{proof}

\bibliographystyle{halpha}
\bibliography{references.bib}
\end{document}